\documentclass[11pt,reqno]{amsart}
\usepackage[T1]{fontenc}
\usepackage{lmodern,amsmath,amssymb,mathtools,mathrsfs,booktabs,array,microtype}
\usepackage[margin=1.05in]{geometry}
\usepackage[colorlinks=true,linkcolor=blue,citecolor=red,urlcolor=blue]{hyperref}
\usepackage[nameinlink,noabbrev]{cleveref}
\AtBeginDocument{\let\equationalias\label}
\usepackage{needspace}
\numberwithin{equation}{section}
\newtheorem{maintheorem}{Theorem}

\newtheorem{theorem}{Theorem}[section]
\newtheorem{lemma}[theorem]{Lemma}
\newtheorem{proposition}[theorem]{Proposition}
\newtheorem{corollary}[theorem]{Corollary}
\theoremstyle{remark}
\newtheorem{remark}[theorem]{Remark}
\crefname{maintheorem}{Theorem}{Theorems}
\newcommand{\C}{\mathbb C}
\newcommand{\R}{\mathbb R}

\newcommand{\CP}{\mathbb P}

\newcommand{\Ric}{\operatorname{Ric}}
\newcommand{\tr}{\operatorname{tr}}
\newcommand{\Vol}{\operatorname{Vol}}
\newcommand{\Id}{\operatorname{Id}}
\newcommand{\eps}{\varepsilon}

\newcommand{\Rea}{\operatorname{Re}}

\newcommand{\diag}{\operatorname{diag}}
\newcommand{\Sym}{\operatorname{Sym}}
\newcommand{\ddV}{\,dV_h}

\newcommand{\ip}[2]{\langle #1,#2\rangle}
\title[Chern HSC: Rigidity and Counterexamples]
{Constant Chern Holomorphic Sectional Curvature:
Rigidity and Counterexamples to the Flatness Conjecture}
\author[X. Qin]{Xiangsen Qin}
\address{Xiangsen Qin: Chern Institute of Mathematics and LPMC, Nankai University \\
Tianjin 300071, China}
\email{qinxiangsen@nankai.edu.cn}

\author[Y. Tian]{Yuanhong Tian}
\address{Yuanhong Tian: School of Mathematics and Statistics, Beijing Institute of Technology\\
Beijing, 100081, China}
\email{yuanhongtian@bit.edu.cn}
\subjclass[2020]{Primary 53C55, 32Q05; Secondary 32Q15, 53C25.}
\keywords{Chern holomorphic sectional curvature, Hermitian metric, K\"ahler rigidity, Fujiki class, Hermitian threefold, octonions, compact quotient.}
\hypersetup{pdftitle={Constant Chern Holomorphic Sectional Curvature: Rigidity and Counterexamples to the Flatness Conjecture},pdfauthor={Xiangsen Qin and Yuanhong Tian}}

\newtheoremstyle{bolddefinition}
  {\topsep}{\topsep}{\normalfont}{}{\bfseries}{.}{.5em}{}
\theoremstyle{bolddefinition}
\newtheorem{definition}[theorem]{Definition}
\newcommand{\rank}{\operatorname{rank}}
\newcommand{\adj}{\operatorname{adj}}
\newcommand{\Span}{\operatorname{span}}
\newcommand{\im}{\operatorname{im}}
\newcommand{\End}{\operatorname{End}}

\usepackage{enumitem}
\providecommand{\cP}{\mathcal P}
\providecommand{\sL}{\mathscr L}

\providecommand{\dV}{\,dV_h}

\providecommand{\Rq}{\mathcal Q_{\mathrm{def}}}

\providecommand{\dd}{\partial\bar\partial}
\providecommand{\cK}{\mathcal K}
\providecommand{\cJ}{\mathcal J}
\providecommand{\cE}{\mathcal E}

\providecommand{\leafv}{\mathsf v}

\allowdisplaybreaks[2]
\begin{document}
\raggedbottom

\begin{abstract}
We prove rigidity results for Hermitian metrics of constant Chern
holomorphic sectional curvature and construct counterexamples
to the flatness conjecture. On a compact complex manifold in Fujiki's
class $\mathcal C$ of dimension $n\ge2$, every such metric with
nonpositive curvature is K\"ahler; its universal cover is complex
hyperbolic in the negative case and Euclidean in the zero case.
On an arbitrary compact complex threefold, nonzero constant curvature
forces K\"ahlerness, while zero curvature forces Chern flatness.
For every complex dimension $n\ge7$, we construct compact Hermitian
manifolds carrying balanced metrics with zero Chern holomorphic sectional curvature,
vanishing first and second Chern--Ricci tensors, and nonzero full Chern
curvature. The rigidity proofs use weighted integral comparison,
differential compatibility in complex dimension three, and compactness
obstructions from common kernels and null foliations. The counterexamples arise from a 
positive invariant Hermitian metric on a seven-dimensional complex quadric. Its Chern curvature is expressed by the octonion
associator, whose alternating four-tensor makes holomorphic sectional
curvature and both Ricci contractions vanish while retaining nonzero
curvature. The metric
descends to compact quotients; products with flat complex tori give
the higher-dimensional counterexamples.
\end{abstract}
\maketitle
\setcounter{tocdepth}{1}
\tableofcontents

\section{Introduction}\label{sec:introduction}
For a K\"ahler metric, holomorphic sectional curvature determines the
full curvature tensor. For a Hermitian metric it determines only the
component symmetric in both the holomorphic and antiholomorphic index
pairs. The remaining components are invisible to holomorphic sectional
curvature (HSC), but they must still satisfy the Chern Bianchi identities
and arise from a positive metric. This paper asks when those geometric
requirements force the invisible components to vanish, and when they
allow nonzero curvature.

Balas asked whether a compact non-K\"ahler Hermitian manifold can have
nonzero constant Chern HSC \cite{Balas1985}. For the Chern connection,
the constant HSC conjecture predicts that a compact Hermitian metric
of nonzero constant HSC is K\"ahler, whereas one with zero HSC is Chern
flat \cite{ZhengSurvey}. We call the latter assertion the flatness
conjecture. We prove both assertions on compact complex threefolds.
In Fujiki's class, we prove that nonpositive constant Chern HSC forces
the given metric to be K\"ahler. We also construct compact
counterexamples to the flatness conjecture in every complex dimension
at least seven.

All manifolds are connected and smooth. Fujiki's class $\mathcal C$
consists of compact complex manifolds bimeromorphic to compact K\"ahler
manifolds. We use $R^h$ for the Chern curvature of the given metric $h$,
$\omega_h$ for its fundamental form, and
$\Ric_h^{(1)}=-i\partial\bar\partial\log\det h$.
The conventions and tensor types are fixed in
Section~\ref{sec:conventions}.

\begin{maintheorem}[Nonpositive curvature in Fujiki's class]
\label{n:thm:main}
Let $X$ be a compact complex manifold of dimension $n\ge2$ in Fujiki's
class $\mathcal C$, and let $h$ be a smooth Hermitian metric with
$H_h\equiv c\le0$. Then $h$ is K\"ahler. More precisely:
\begin{enumerate}
\item If $c<0$, then $X$ is projective and $K_X$ is ample. With
$a=(n+1)c/2$, $h$ is the unique K\"ahler--Einstein metric satisfying
\[
\Ric_h^{(1)}=a\omega_h,\qquad
[\omega_h]=\frac{2\pi}{-a}c_1(K_X).
\]
Its universal cover is complex hyperbolic space of holomorphic sectional
curvature $c$.
\item If $c=0$, then $h$ is flat K\"ahler. Its universal cover is
Euclidean, and a finite \'etale cover of $X$ is a complex torus on which
the lifted metric is translation invariant.
\end{enumerate}
\end{maintheorem}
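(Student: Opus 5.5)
The plan is to show first that $h$ is K\"ahler, by comparing the curvature of $h$ with that of an auxiliary K\"ahler metric. Once $h$ is K\"ahler the rest is classical. A K\"ahler metric of constant holomorphic sectional curvature $c$ is a complex space form. Its universal cover is complex hyperbolic space if $c<0$ and $\C^n$ if $c=0$. Its Ricci form is $\frac{(n+1)c}{2}\omega_h$, which gives the Einstein constant $a$. When $c<0$, $-\Ric_h^{(1)}$ is positive, so $K_X$ is ample, $X$ is projective, and $[\omega_h]=\frac{2\pi}{-a}c_1(K_X)$. Uniqueness follows from Aubin--Yau. When $c=0$, Bieberbach's theorem gives a finite \'etale cover that is a torus with translation-invariant lifted metric.

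The K\"ahler step is the main one. In Fujiki's class $\mathcal C$ the $\partial\bar\partial$-lemma holds, and by Demailly--P\u{a}un $X$ carries Kähler currents. I would use the constant HSC to get a pointwise decomposition of $R^h$. Symmetrizing gives
\[
\tfrac14\sum_{\sigma,\tau} R_{\sigma(i)\overline{\tau(j)}\sigma(k)\overline{\tau(l)}}
=\tfrac{c}{2}\bigl(h_{i\bar j}h_{k\bar l}+h_{i\bar l}h_{k\bar j}\bigr).
\]
Contracting this yields an identity relating the two Chern--Ricci forms $\Ric^{(1)}$ and $\Ric^{(2)}$, the constant $c$, and torsion terms of the form $|T|^2$ and $\partial^*$ of the torsion 1-form $\eta$. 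Concretely, I expect a formula of the shape
\[
\Ric^{(1)}\wedge\omega^{n-1}=\tfrac{(n+1)c}{2}\,\omega^n+(\text{divergence terms})+\kappa\,|T|^2\omega^n
\]
with a definite sign for $\kappa$, and similarly $\mathrm{Scal}^{(1)}=\frac{n(n+1)}{2}c+\alpha|T|^2+\beta\,\mathrm{div}(\eta)$ for the Chern scalar curvature.

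Next comes the weighted integral comparison. By Gauduchon's theorem, choose a weight $e^{f}$ so that $e^{f}\omega^{n-1}$ is $\partial\bar\partial$-closed. Integrating against this Gauduchon form kills the divergence terms and leaves
\[
\int_X \Ric^{(1)}\wedge e^f\omega^{n-1} = \tfrac{(n+1)c}{2}\int e^f\omega^n + \kappa\int |T|^2 e^f\omega^n .
\]
The left side is $2\pi\, c_1(K_X^{-1})\cdot[e^f\omega^{n-1}]$, an Aeppli pairing. For $c\le0$ I need to bound it in the opposite direction using $\mathcal C$. I would compare with the Kähler current and a second integral identity from $\Ric^{(2)}$, where the torsion enters with the opposite sign. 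Subtracting the two identities, the $c$-terms cancel, and the cohomological sides coincide because of the $\partial\bar\partial$-lemma. What remains is $\int|T|^2e^f\omega^n\le0$, hence $T=0$.

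The main obstacle is getting a genuinely signed torsion term with the correct sign when $c\le 0$. The HSC symmetrization identity carries torsion-derivative terms, not just $|T|^2$, so I must check that after integration against the Gauduchon weight these reduce to nonnegative quantities. If the sign fails, my fallback is to apply the Chern--Ricci form identity on the Kähler modification to the pullback metric and use Yau's Schwarz lemma. Nonpositive HSC of $h$ combined with a Kähler metric then bounds volume forms, which pins down equality cases.
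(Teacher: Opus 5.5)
Your deduction of the remaining conclusions once $h$ is known to be K\"ahler is fine. The K\"ahler step, however, has a genuine gap: the signed identity you hope to extract does not exist.

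\textbf{Why the proposed identity fails.} Wedging any Chern--Ricci form with $e^f\omega_h^{n-1}$ sees only its $h$-trace.
\begin{itemize}
\item Both $\Ric^{(1)}$ and $\Ric^{(2)}$ have trace $\sum_{i,k}R_{i\bar ik\bar k}=s_h$. Hence $\Ric^{(1)}\wedge e^f\omega_h^{n-1}=\Ric^{(2)}\wedge e^f\omega_h^{n-1}$ pointwise, and subtracting your two weighted identities gives $0=0$. Besides, $\Ric^{(2)}$ is not closed, so it has no cohomological side on which the $\partial\bar\partial$-lemma could act.
\item The other two contractions have trace $\widehat s_h$. After tracing, polarization gives only $s_h+\widehat s_h=n(n+1)c$, and $s_h-\widehat s_h=\tr_h(i\partial\bar\eta)$ holds for every Hermitian metric.
\end{itemize}
So every identity of your type involves only $c$ and the torsion trace $\eta$. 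There is no $\kappa|T|^2$ term, and primitive torsion is invisible; for example, a balanced metric has $\eta=0$. The full torsion appears only in the uncontracted identity
\[
R_{i\bar jk\bar\ell}+R_{k\bar\ell i\bar j}
=c\bigl(h_{i\bar j}h_{k\bar\ell}+h_{i\bar\ell}h_{k\bar j}\bigr)
+\tfrac12h_{p\bar q}T^p_{ik}\overline{T^q_{j\ell}}-\tfrac12D_{i\bar jk\bar\ell},
\]
where $D$ is the coefficient array of $i\partial\bar\partial\omega_h$. If you contract this with $h$, the torsion part of $h^{i\bar j}h^{k\bar\ell}D_{i\bar jk\bar\ell}$ cancels $\tfrac12|T|^2$ exactly; that is why only $\eta$ survives. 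Note also that your $c$-terms cancel, so the hypothesis $c\le0$ is never used, although it is indispensable.

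\textbf{What is missing.} You need a genuine smooth K\"ahler metric $g$ on $X$ with which to contract that identity; a K\"ahler current does not suffice. In the paper the sign enters exactly here. A rank-one Chern--Lu inequality $\Delta_\gamma w\ge\rho w$ on $\CP^1$ shows that $H_h\le0$ excludes rational curves, and the Hacon--Li--Xie criterion then makes $X$ K\"ahler.

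For K\"ahler $g$, the Chern--Lu formula for $\Id:(X,g)\to(X,h)$ contains $g^{i\bar j}g^{k\bar\ell}R^h_{i\bar jk\bar\ell}$. Its torsion term cancels against the skew part of $\nabla\partial\Id$. The remaining term $g^{i\bar j}g^{k\bar\ell}D_{i\bar jk\bar\ell}\,dV_g=2\,i\partial\bar\partial\omega_h\wedge\omega_g^{n-2}/(n-2)!$ integrates to zero because $d\omega_g=0$.

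What survives controls only the symmetric part $\mathsf S$ of $\nabla^h-\nabla^g$, so $g$ must also be matched to $h$:
\begin{itemize}
\item For $c<0$ you need $K_X$ ample. The paper obtains this from Tang's nefness, a noncollapse estimate for twisted K\"ahler--Einstein metrics, the base-point-free theorem, and rational chain connectedness of fibers. You then take the K\"ahler--Einstein metric $g$ and add a second Stokes identity with weight $e^{2F}$, $F=\log(dV_g/dV_h)$. The result is a sum of nonnegative terms that forces $h=g$.
\item For $c=0$ you need $c_1(X)=0$ (Tang) and a Ricci-flat $g$. A Bochner argument then upgrades $\mathsf S=0$ to $\nabla^gh=0$.
\end{itemize}
Your fallback does not supply these ingredients. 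The pullback of $h$ to a K\"ahler modification degenerates on the exceptional set, and a Schwarz lemma gives inequalities, not the identification $h=g$.
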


Wu--Yau proved canonical ampleness for projective manifolds carrying a
K\"ahler metric of negative HSC, and Tosatti--Yang removed the
projectivity assumption \cite{WuYau16,TosattiYang17}.
For Hermitian metrics on compact K\"ahler manifolds,
Broder--Stanfield proved canonical ampleness under negative HSC and
the pluriclosed condition $\partial\bar\partial\omega_h=0$
\cite{BroderStanfield}. For constant negative HSC, Tang proved that
the prescribed pluriclosed metric is K\"ahler \cite{TangConstant}.
Broder--Tang established K\"ahler flatness for pluriclosed metrics
with zero HSC, including on manifolds in Fujiki's class; their result
without pluriclosedness assumes the stronger condition of zero real
bisectional curvature \cite{BroderTang}. For arbitrary Hermitian
metrics on a compact K\"ahler manifold, Tang proved that nonpositive
HSC makes $K_X$ nef and that zero HSC gives $c_1(X)=0$ \cite{TangNef}.
Theorem~\ref{n:thm:main} identifies the original metric without a
pluriclosed hypothesis: it is a complex hyperbolic space-form metric
for a negative constant and K\"ahler flat for the zero constant.

The absence of rational curves and the K\"ahler criterion of
Hacon--Li--Xie \cite{HaconLiXie} first provide a K\"ahler background
on $X$. To control the prescribed metric, we retain the full
$i\partial\bar\partial\omega_h$ term in the Chern--Lu identity;
its contribution vanishes after integration against a K\"ahler form.
In the negative case, the resulting trace estimate and a noncollapse
argument upgrade Tang's nefness conclusion to ampleness. The
Aubin--Yau theorem then supplies the normalized negative
K\"ahler--Einstein metric $g$ \cite{Aubin,Yau}. Two weighted Stokes
identities give a nonnegative integral identity whose equality case
is $h=g$. Its metric gap is controlled by
$\Phi(M)=\tr M-\log\det M-n$, applied to $M=g^{-1}h$:
this function is nonnegative and vanishes exactly when the two
metrics agree. In the zero case, Tang's vanishing of $c_1(X)$ and
Yau's theorem supply a Ricci-flat K\"ahler background. The integrated
identity annihilates the symmetric part of the connection difference,
and a tensor Bochner argument makes $h$ parallel and hence K\"ahler
flat.

In complex dimension three, a different argument gives rigidity
without a Fujiki-class assumption.

\begin{maintheorem}[Constant curvature on Hermitian threefolds]
\label{p:thm:main}\label{p:cor:cp}\label{z:thm:threefold}
Let $X$ be a compact complex threefold, and let $h$ be a smooth
Hermitian metric with constant Chern HSC $c$.
\begin{enumerate}
\item If $c\ne0$, then $h$ is K\"ahler. For $c>0$, $(X,h)$ is
holomorphically isometric to $\CP^3$ with a scaled Fubini--Study metric;
for $c<0$, its universal cover is complex hyperbolic three-space.
In both cases the holomorphic sectional curvature is $c$.
\item If $c=0$, then the original metric is Chern flat: $R^h=0$.
\end{enumerate}
\end{maintheorem}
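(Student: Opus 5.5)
Our approach is pointwise algebraic analysis of the Chern curvature followed by differential compatibility via the Bianchi identities. Write $R^h_{i\bar j k\bar l}$ for the Chern curvature and let $T$ be the Chern torsion. Constant HSC $c$ means that the symmetrization of $R$ in $(i,k)$ and in $(\bar j,\bar l)$ equals $\tfrac c2(h_{i\bar j}h_{k\bar l}+h_{i\bar l}h_{k\bar j})$. Hence $R=R_c+Q$, where $R_c$ is the constant-curvature Kähler tensor and $Q$ has vanishing fully symmetric part. $Q$ is not arbitrary. The first Chern Bianchi identity expresses the antisymmetric part of $R$ in $(i,k)$ through $\nabla T$ and $T\ast T$, namely
\[
R_{i\bar j k\bar l}-R_{k\bar j i\bar l}=-\nabla_{\bar j}T^{l}_{ik}\ \text{(suitably lowered)}.
\]
The second Bianchi identity constrains $\nabla R$. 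The plan is to show that in dimension three these constraints force $T=0$ when $c\neq0$, and force $Q=0$ (hence $R=0$) when $c=0$.

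\textbf{Step 1 (algebra in dimension 3).} For $n=3$, $\Lambda^2T^{1,0}\cong (T^{1,0})^*\otimes K^{-1}$. So the torsion is equivalent to a $(1,0)$-valued form $\tau$ twisted by $K^{-1}$, namely $T^l_{ik}=\epsilon_{ikm}\tau^{ml}$ with $\tau$ a $3\times3$ tensor after trivializing $K$ locally. We would decompose $Q$ into its $(i,k)$- and $(\bar j,\bar l)$-antisymmetric pieces, which in dimension three are again $3\times3$ matrices. The symmetric-symmetric piece is pinned to $R_c$ by the HSC condition. We would then write the Bianchi identities as equations on these small matrices.

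\textbf{Step 2 (differential compatibility).} We feed the first Bianchi relation into the second Bianchi identity $\nabla_{[m}R_{i]\bar j k\bar l}=T\ast R$, using that $\nabla R_c=0$ because $h$ is Chern-parallel. The aim is to obtain an identity of the schematic form
\[
c\,T + (\text{terms quadratic in }T,\nabla T)=\partial$-exact or divergence terms.
\]
Contracting appropriately should give a pointwise identity like $\bar\partial^*\!$-type divergence $= c|T|^2+|\nabla T|^2$-type quantity, or, dually, an algebraic identity showing that $T$ lies in a common kernel. We expect this to be the main obstacle: extracting, from the dimension-three index structure, a scalar identity whose sign is definite. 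We would first try contracting the second Bianchi identity against $\bar T$ and tracing with $h$, integrating over $X$, and using the Gauduchon representative in the conformal class to kill boundary terms. For $c>0$ and $c<0$ the sign of $c|T|^2$ should then force $T=0$. If a definite sign fails, we would instead use the compactness obstructions alluded to in the abstract. On the open set where $T\ne0$, the relations would force the kernel of $\tau$ to define a holomorphic null distribution or foliation along which the metric degenerates. Pushing to a maximum of $|T|^2$ on the compact manifold, we would then show this set is empty.

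\textbf{Step 3 (conclusions).} Once $T=0$, the metric $h$ is Kähler with constant HSC $c\ne0$, so it is a complex space form. For $c>0$, compactness and simple connectivity ($X$ is Fano with $\pi_1$ finite, and the space form quotient of $\CP^3$ is trivial since $\CP^3$ has odd complex dimension and any isometric free action must have a fixed point by Lefschetz) give $\CP^3$ with scaled Fubini--Study metric. For $c<0$, the universal cover is the complex hyperbolic ball. For $c=0$, $T$ need not vanish, so the target is $Q=0$ directly. Here $R_c=0$, and $R$ is entirely determined by its antisymmetric parts, i.e. by $\bar\partial T$. The same Bianchi contraction now gives a nonnegative integrand, schematically $|Q|^2$ plus divergence. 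Integrated against the Gauduchon volume form, it yields $Q=0$ and hence $R^h=0$, possibly after first using the null-foliation argument to reduce $Q$ to a rank-one normal form.
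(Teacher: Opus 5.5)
Your Step~1 matches the paper's curvature decomposition (Lemma~\ref{c0:lem:decomp}). The proof, however, would have to happen in Step~2, and there you give a hope rather than an argument; the identity you hope for cannot do the job.

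\textbf{The case $c\neq 0$.} An integrated identity of the form $\int_X(c|T|^2+|\nabla T|^2)=0$, with or without a Gauduchon weight, forces $T=0$ for at most one sign of $c$; for $c<0$ the two terms compete. More fundamentally, the primitive (trace-free) part of torsion is exactly what polarization leaves undetermined. Integrated contractions of this type, such as the balanced identities of Chen--Li, give only bounds, not vanishing, in the positive case.

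The paper instead proves a pointwise local alternative (Theorem~\ref{auto-sing:local-alternative}): in dimension three, nonzero primitive torsion $A$ forces a nonzero common kernel $\mathscr K$ of the normalized curvature defect. That is, there is a covector $\nu$ with $2P(\cdot,\cdot)(\nu,w)=\nu\wedge w$ for all $w$. This comes from compressing the Chern transport and commutator equations into the polynomial system $B(z)V(z)=3q(z)Gz$ and excluding every regular and singular pencil. Only then does a global argument enter. Real analyticity extends $\mathscr K$, and its dual line is holomorphic with Chern totally geodesic leaves. These leaves are excluded by the Bott normal-bundle degree for $c>0$ and by a weighted divergence identity for $c<0$.

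Your fallback (a null distribution from the kernel of $\tau$, then a maximum of $|T|^2$) is not this kernel, and you give no mechanism that produces it. Finally, killing primitive torsion does not yet give $T=0$: $A=0$ only makes $h$ locally conformally K\"ahler. An additional theorem (Huang--Wan) is needed to remove the pure-trace torsion, and your plan never separates these two parts of $T$.

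\textbf{The case $c=0$.} The gap here is more serious. The paper's compact counterexamples in every dimension $n\ge7$ are balanced, hence already Gauduchon, with zero Chern HSC and $R\ne0$. So no Bianchi contraction can yield ``$|Q|^2$ plus a divergence'' unless it uses the three-dimensional $\epsilon$-encoding essentially, and you exhibit none.

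Even in dimension three, the integral identity that does exist only gives a dichotomy. That identity is the scalar equation $\mathscr L\delta_0=\mathcal Q_{\mathrm{def}}\ge0$ with $\delta_0=2|A|^2-4\tr Q$, together with the square identity $\int u\,\mathscr L u=-\int|\partial u-\tfrac12u\eta|^2$. It shows that either $h$ is Chern flat or $p=4\tr Q-2|A|^2>0$ everywhere. Excluding the second branch is the bulk of the proof:
\begin{itemize}
\item the output-rank bound $\rank\mathcal P\le2$;
\item the local rank-two analysis forcing $W=0$, together with the three integrations showing that $W\equiv0$ implies flatness;
\item in rank one, the closed semipositive form $\alpha=\rho_h-i\partial\bar\partial\log p$ with $\alpha^3=0$.
\end{itemize}
The null foliations of $\alpha$ are then excluded via the bounded normal-metric classification, a Killing-field and finite-cover argument, a Bochner identity, and a leafwise maximum principle. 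None of this is anticipated in your plan.

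A minor point: in Step~3 the deck group of $\CP^3$ is trivial because every element of $PU(4)$ has an eigenline and hence a fixed point. Odd complex dimension plays no role.
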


In dimension two, Balas--Gauduchon proved the nonpositive cases
\cite{BalasGauduchon}, and Apostolov--Davidov--Mu\v{s}karov completed
the result \cite{ADM96}. For threefolds, Ma--Nie treated normal
balanced metrics with nonpositive constant curvature \cite{MaNie24},
and Chen--Li proved the negative and zero cases for balanced metrics
\cite{ChenLi26}. Theorem~\ref{p:thm:main} removes balancedness in the
nonpositive case and treats positive curvature for arbitrary
Hermitian threefolds. Its zero-curvature conclusion does not force
torsion to vanish: Chern-flat non-K\"ahler metrics occur, for example,
on the Iwasawa manifold.

The balanced integral identities in \cite{ChenLi26} use the vanishing
torsion trace and give bounds, rather than torsion vanishing, in the
positive case. Our argument instead starts from differential
compatibility. For $c\ne0$, polarization alone leaves the primitive,
or trace-free, part of torsion undetermined. Polynomial solvability
of its derivative equations and the Chern commutator identities show
that, in dimension three, nonzero primitive torsion forces a common
kernel of the normalized curvature defect. Compactness excludes this
kernel: in positive curvature, a totally geodesic leaf has incompatible
Chern and Bott degrees; in negative curvature, a weighted divergence
identity gives a contradiction. The remaining pure-trace torsion
case is locally conformally K\"ahler, so Huang--Wan's theorem applies
\cite{HW26}.

For $c=0$, the normalized defect is unavailable. We first prove a
strict scalar alternative: either the metric is Chern flat or a
scalar weight $p$ is positive everywhere. A square identity for the
quadratic form of a scalar operator, involving
$\partial-\eta/2$ with $\eta$ the torsion trace, provides the sign
control and equality rigidity needed for this alternative.
Differential compatibility then bounds the output rank of the mixed
curvature component by two. The rank-two branch forces recurrent
primitive torsion, which a nonnegative integral identity excludes on
a compact manifold. In the remaining rank-one branch, the positive
weight defines a closed semipositive $(1,1)$-form $\alpha$ with
$\alpha^3=0$. Its rank gives a second, distinct division into cases.
When the null leaves are curves, a nonconstant transverse metric
produces a local Killing field; continuation and descent to a finite
cover allow a global norm maximum argument. A constant transverse
metric leads instead to a Bochner contradiction. When the null
leaves are surfaces, a maximum principle gives the final contradiction.
The appendices record the finite component calculations used in these
reductions.

Complete noncompact Hermitian metrics with zero Chern HSC and nonzero
Chern curvature were constructed by Chen--Chen--Nie
\cite{ChenChenNie}. The next theorem realizes this phenomenon on
compact manifolds, with balanced metrics whose two Chern--Ricci
tensors also vanish.

\begin{maintheorem}[Compact counterexamples to flatness]\label{ce:thm:main}
For every integer $n\ge7$ there is a compact connected Hermitian
manifold $(X_n,h_n)$ of complex dimension $n$ such that
\[
H_{h_n}\equiv0,\qquad R^{h_n}\not\equiv0,\qquad
d\omega_{h_n}^{n-1}=0,\qquad
\Ric_{h_n}^{(1)}=\Ric_{h_n}^{(2)}=0.
\]
\end{maintheorem}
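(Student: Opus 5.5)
The construction follows the route announced in the abstract. We start from a seven-dimensional complex homogeneous space: the quadric
\[
Q^7 \cong \mathrm{Spin}(9)/\ldots,
\]
viewed as the space of null lines in $\C^9$ or through its octonionic model. On it we build an invariant Hermitian metric, non-K\"ahler in general, whose Chern curvature is governed by an alternating tensor. Concretely, we work with a group $G$ acting holomorphically and transitively on an open orbit $D$ of the relevant complex manifold. The natural candidate is a noncompact real form $\mathrm{Spin}(7,\C)\supset G_2$-type symmetry acting on $\C^7=\mathrm{Im}\,\mathbb O\otimes\C$. The isotropy is chosen to fix a $G_2$-type structure, so that the invariant tensors available are the metric, the $3$-form $\phi$ and the $4$-form $*\phi$. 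The associator $[x,y,z]=(xy)z-x(yz)$ is totally alternating on imaginary octonions and is related to $*\phi$.

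\textbf{Step 1.} Write down an invariant $(1,0)$-frame on $D$ with structure equations
\[
d\theta^i=\tfrac12\,\lambda\,\phi^i{}_{jk}\,\theta^j\wedge\theta^k+\cdots,
\]
so that the Chern connection forms and the torsion are invariant and explicitly computable. The torsion will be proportional to $\phi$. \textbf{Step 2.} Compute the Chern curvature by the structure equations. We expect
\[
R_{i\bar j k\bar l}=\kappa\,\Psi_{i\bar j k\bar l},
\]
where $\Psi$ comes from the associator, i.e.\ from $*\phi$ with two indices conjugated via the metric, which is alternating in an appropriate sense. Since $\Psi$ is alternating in the pair $(i,k)$, its symmetrization vanishes, giving $H_h\equiv0$. \textbf{Step 3.} Check both Ricci contractions: $\sum_i R_{i\bar i k\bar l}$ and $\sum_k R_{i\bar j k\bar k}$ both vanish because contracting $*\phi$ on two slots gives zero. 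Also check $R\not\equiv0$, which holds since $*\phi\ne0$. \textbf{Step 4.} Check balancedness. Invariance plus $G_2$-irreducibility forces the torsion trace $\eta$, an invariant $1$-form, to vanish, hence $d\omega^{6}=0$. Positivity of $h$ must be verified separately, since the abstract stresses that a positive metric exists.

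\textbf{Step 5: compact quotients.} If $D=G/K$ with $G$ a real semisimple group of noncompact type and $K$ compact, Borel's theorem provides a torsion-free cocompact lattice $\Gamma\subset G$. The $G$-invariant metric then descends to $X_7=\Gamma\backslash D$, and all the properties above are local and pass to the quotient. If instead the structure is a complex Lie group acting on itself (as for the Iwasawa-type examples), we use cocompact lattices in the complex Lie group; both variants are acceptable. \textbf{Step 6: higher dimensions.} For $n\ge7$, set
\[
X_n=X_7\times T^{n-7},\qquad h_n = h_7\oplus(\text{flat metric}).
\]
The Chern curvature splits as $R^{h_7}\oplus0$. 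HSC vanishes because the mixed terms of $R(v,\bar v,v,\bar v)$ for $v=(v_1,v_2)$ only involve $R^{h_7}(v_1,\cdot)$, which vanishes by polarization only if $H\equiv0$ implies vanishing of the symmetrized curvature on $X_7$; it does, since the symmetrization of $\Psi$ is $0$. Both Ricci tensors add, so they vanish. Balancedness holds because $\omega^{n-1}$ is a sum of products of $\omega_7^{k}$ with closed flat forms, and $d\omega_7^{6}=0$ together with $\omega_7^{k}=0$ for $k\ge8$ suffices. Indeed, the only terms involve $\omega_7^{6}\wedge\omega_0^{n-7}$ and $\omega_7^7\wedge\omega_0^{n-8}$, and $\omega_7^7$ is a top form, hence closed.

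\textbf{Main obstacle.} The hard step is Steps 1–2: finding the correct homogeneous model and normalization so that the curvature is exactly the associator tensor with no residual symmetric part, while the metric stays positive. We would first try the ansatz in which the metric is the standard one on $\mathrm{Im}\,\mathbb O\otimes\C$ twisted by the $G_2$-structure and the torsion is $\lambda\phi$. We would then solve for $\lambda$ killing the symmetric, K\"ahler-like part of $R$, using the octonion identities $\phi\cdot\phi=g\wedge g+*\phi$ type contractions.
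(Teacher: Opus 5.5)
Your Steps 3--6 are essentially the paper's endgame, and they would go through once you had a positive invariant metric on a compact quotient with $R(u,\bar v)w=-[u,\bar v,w]_{\mathbb O}$. But Steps 1--2 contain the whole theorem, and your proposal never produces a complex manifold on which this happens. Each model you name fails:
\begin{itemize}
\item The projective quadric $Q^7$ of null lines in $\C^9$ is compact, K\"ahler and simply connected. By Theorem~\ref{n:thm:main}(2) it admits no Hermitian metric with zero Chern HSC at all, and it has no quotients to take.
\item $\operatorname{Spin}(7,\C)$ is a complexification, not a real form. On $\C^7=\operatorname{Im}\mathbb O\otimes\C$ it preserves $B(z,z)$, so it has no seven-dimensional orbit. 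Its stabilizers are noncompact complex groups. Even on $\mathbb O_{\C}$ the stabilizer is the complex group $G_2^{\C}$. Noncompact stabilizers give neither an invariant positive metric nor a proper lattice action.
\item The fallback ``complex Lie group acting on itself'' is not acceptable. A left-invariant metric is constant in the global holomorphic frame of left-invariant fields, so $h^{-1}\partial h=0$ and the metric is Chern flat. The Iwasawa manifold appears in the paper only as a Chern-flat example for this reason.
\item The ``open orbit $D$'' is never specified.
\end{itemize}

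The missing idea is the affine quadric $\mathscr Q_7=\{z\in\mathbb O_{\C}:B(z,z)=1\}$ with the real form $\mathsf G=\operatorname{Spin}_0(7,1)$ acting through the octonionic Clifford representation. Its Lie algebra is spanned by the $L_{e_i}L_{e_j}$ and the $iL_{e_j}$. This action is transitive, since every point can be written $z=k\exp(riL_u)1$, and its stabilizer is the compact group $G_2=\operatorname{Aut}(\mathbb O)$. Compactness of the stabilizer is exactly what does two things:
\begin{itemize}
\item it lets the standard Hermitian form on $T_1=V_{\C}$ propagate to a positive invariant metric;
\item it lets a torsion-free Borel--Selberg lattice act freely and properly discontinuously.
\end{itemize}
Moreover, $V_{\C}$ is an irreducible $G_2$-module of real type, so the invariant metric is unique up to scale. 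Your ``main obstacle'' plan therefore has no parameter $\lambda$ to tune. A priori the invariant curvature at $1$ also contains the terms $\delta_{ij}\delta_{kl}$, $\delta_{il}\delta_{kj}$ and $\delta_{ik}\delta_{jl}$. Their cancellation must be computed, not arranged.

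The paper does this computation from the global Gram matrix $\mathsf M_z=(|x|^2+|y|^2)I-2iL_{yx^\star}$ in the chart $z(w)=(\sqrt{1-\sum_a(w^a)^2},w)$, keeping the moving-frame terms. It obtains $\mathbf g_{i\bar j}(0)=\delta_{ij}I+C_{e_i\times e_j}-e_je_i^{\mathsf T}$, and from this the associator curvature.

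With that in place, your remaining steps are sound, and two of them are genuine alternatives to the paper's arguments:
\begin{itemize}
\item $G_2$-irreducibility kills the invariant covector $\eta$ without computing $T(e_i,e_j)=-2e_i\times e_j$.
\item Your expansion of $(\omega_7+\omega_0)^{n-1}$ proves balancedness of the product directly.
\end{itemize}
For the product's HSC there are simply no mixed terms, because every curvature component with a torus index vanishes.
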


These metrics give counterexamples to the flatness conjecture in every
complex dimension $n\ge7$.

The construction starts from compact quotients of the complex
seven-dimensional quadric $\{z\in\C^8:\sum z_j^2=1\}$, whose
existence was established by Kobayashi--Yoshino \cite{CEKY}.
We construct an explicit positive invariant Hermitian metric on the
quadric and compute its Chern curvature. In a suitable unitary frame,
the curvature is expressed by the octonion associator. Its alternating
four-tensor makes HSC and both Ricci contractions vanish, while
nonassociativity leaves the full curvature nonzero. Products with flat complex $(n-7)$-tori give balanced counterexamples in every dimension $n>7$, with zero Chern HSC, vanishing first and second Chern--Ricci tensors, and nonzero Chern curvature. These  counterexamples lie outside Fujiki's class by
Theorem~\ref{n:thm:main}.

The results here do not settle the unrestricted zero-curvature problem
in complex dimensions four, five, and six; seven is not claimed to be
the smallest counterexample dimension. For nonzero constant curvature
in dimensions at least four, the common-kernel obstruction still
applies, but the local implication from primitive torsion to a common
kernel is proved here only in dimension three.

Section~\ref{sec:conventions} establishes the common conventions and one
analyticity proposition for all dimensions and all constants.
The proofs then follow the order of the main theorems.
Section~\ref{sec:negative-geometry} proves Theorem~\ref{n:thm:main}
on Fujiki-class rigidity.
Sections~\ref{b:sec:threefold} and \ref{app:kernel} prove
Theorem~\ref{p:thm:main}(1), and Sections~\ref{c0:sec:c0} and
\ref{c0:sec:null-foliations} prove Theorem~\ref{z:thm:threefold}(2).
Finally, Section~\ref{sec:high-dimensional-examples} constructs the
counterexamples of Theorem~\ref{ce:thm:main}.
The appendices follow these proof lines: integrated and kernel
identities, local source certificates, zero-curvature identities and
rank reductions, bounded normal profiles, and the octonionic metric calculation.

\section{Curvature decomposition and analytic preliminaries}\label{sec:conventions}

Throughout, $X$ is a connected compact complex manifold without boundary,
unless a statement is explicitly local, and $h$ is the given smooth
Hermitian metric. In Section~\ref{sec:negative-geometry},
$g$ denotes an auxiliary K\"ahler metric. Set
\[
 \omega_h=i h_{j\bar k}\,dz^j\wedge d\bar z^k,
 \qquad dV_h=\frac{\omega_h^n}{n!},\qquad
 \Delta_h u=h^{j\bar k}\partial_j\partial_{\bar k}u.
\]
The associated real Riemannian metric is $g_{\R}=2\operatorname{Re}h$.
We write $\nabla'=\nabla^{1,0}$, $\nabla''=\nabla^{0,1}$, and
$\nabla^j=h^{j\bar k}\nabla_{\bar k}$ for a raised antiholomorphic
derivative.
For a real function $u$, the notation
$\langle\eta,\bar\partial u\rangle:=h^{p\bar q}\eta_p\partial_{\bar q}u$
denotes the natural bilinear contraction between the two types.
An inner product between tensors of the same type is the Hermitian
pairing fixed below.
For a $(1,1)$-form $\alpha=i\alpha_{j\bar k}dz^j\wedge d\bar z^k$,
$\tr_h\alpha=h^{j\bar k}\alpha_{j\bar k}$. Thus
\begin{equation}\label{eq:trace-wedge}
 \alpha\wedge\frac{\omega_h^{n-1}}{(n-1)!}
 =\tr_h\alpha\,dV_h.
\end{equation}
For the Chern connection $\nabla=\nabla^h$, we use the conventions
\[
 R(U,V)=[\nabla_U,\nabla_V]-\nabla_{[U,V]},\qquad
 T(U,V)=\nabla_UV-\nabla_VU-[U,V].
\]
The Hermitian inner product is linear in its first argument. In an
$h$-unitary frame $(e_i)$ we write
\[
 R_{i\bar j k\bar\ell}=\langle R(e_i,\bar e_j)e_k,e_\ell\rangle_h,
 \qquad T(e_i,e_k)=T^\ell{}_{ik}e_\ell.
\]
In holomorphic coordinates this gives
\begin{equation}\label{eq:curvature-coordinate}
 R_{i\bar j k\bar\ell}
 =-\partial_i\partial_{\bar j}h_{k\bar\ell}
 +h^{p\bar q}(\partial_i h_{k\bar q})(\partial_{\bar j}h_{p\bar\ell}).
\end{equation}
All holomorphic sectional curvatures in this paper are those of this
connection:
\[
 H_h(\xi)=\frac{R(\xi,\bar\xi,\xi,\bar\xi)}{|\xi|_h^4}.
\]

\subsection{Polarization and Chern identities}
The curvature satisfies the Hermitian symmetry
$\overline{R_{i\bar j k\bar\ell}}=R_{j\bar i\ell\bar k}$.
When $H_h\equiv c$, polarization of the quartic identity gives
\begin{equation}\label{p:eq:HSC}
\begin{split}
 R_{i\bar j k\bar\ell}+R_{k\bar j i\bar\ell}
 +R_{i\bar\ell k\bar j}+R_{k\bar\ell i\bar j}
 =2c(\delta_{ij}\delta_{k\ell}+\delta_{i\ell}\delta_{kj}).
\end{split}
\end{equation}
This identity determines the component symmetric in the holomorphic
and antiholomorphic index pairs.
The complex space-form tensor in the same convention is
\begin{equation}\label{p:eq:Rsf}
 (R_{\mathrm{sf}})_{i\bar j k\bar\ell}
 =\frac c2(\delta_{ij}\delta_{k\ell}+\delta_{i\ell}\delta_{kj}).
\end{equation}
The Chern identities needed below are
\begin{align}
 R_{i\bar j k\bar\ell}-R_{k\bar j i\bar\ell}
 &=-T^\ell{}_{ik,\bar j},\label{p:eq:firstBi}\\
 \nabla_pR_{i\bar j k\bar\ell}-\nabla_iR_{p\bar j k\bar\ell}
 &=T^r{}_{ip}R_{r\bar j k\bar\ell},\label{p:eq:diffBi}\\
 \nabla^2_{q,p}U-\nabla^2_{p,q}U&=-T^r{}_{qp}\nabla_rU.
 \label{p:eq:purecomm}
\end{align}
The covariant Hessian includes the induced connection on the derivative
index. Equation~\eqref{p:eq:purecomm} and its conjugate hold for every
tensor $U$, since the Chern curvature has type $(1,1)$. The torsion
term is retained, and covariant derivatives act on all bundle factors.

Tensor norms sum over ordered indices, for example
\[
 |T|^2=\sum_{i,k,\ell}|T^\ell{}_{ik}|^2.
\]
No factorial is inserted for alternating or symmetric tensor slots.
Differential forms carry the exterior-power metric, so
$|\partial\omega_h|^2=|T|^2/2$. All unqualified norms are formed with
$h$; the mixed norm for the identity map will be specified separately.
We write $\|U\|_2^2=\int_X|U|_h^2\,dV_h$.

\subsection{Ricci contractions and the torsion trace}
Write $\rho^{(j)}$ for the coefficient tensors of the four Chern--Ricci
forms $\Ric_h^{(j)}$. In a unitary frame their coefficients are
\[
 \rho^{(1)}_{i\bar j}=\sum_kR_{i\bar j k\bar k},\quad
 \rho^{(2)}_{i\bar j}=\sum_kR_{k\bar k i\bar j},\quad
 \rho^{(3)}_{i\bar j}=\sum_kR_{i\bar k k\bar j},\quad
 \rho^{(4)}_{i\bar j}=\sum_kR_{k\bar j i\bar k}.
\]
In particular,
$\Ric_h^{(1)}=-i\partial\bar\partial\log\det h$ is closed and represents
$2\pi c_1(X)$. We identify a Hermitian coefficient tensor with its real
$(1,1)$-form when taking an adjoint or a wedge product.
Set $a=(n+1)c/2$ whenever $H_h\equiv c$.

Let $\eta$ be the Chern torsion one-form of $h$ in Tang's convention.  Thus,
in an $h$-unitary coframe $\{\varphi^1,\ldots,\varphi^n\}$,
\[
\eta=\sum_i\eta_i\varphi^i,
\qquad
\eta_i=\sum_r T^r_{ri},
\]
and \cite[(2.5)]{TangConstant} gives
\begin{equation}\label{n:eq:eta-balanced-formula}
\partial\omega_h^{n-1}=-\eta\wedge\omega_h^{n-1}.
\end{equation}
Let
\[
s_h=\tr_h\Ric_h^{(1)},
\qquad
\widehat s_h=\tr_h\Ric_h^{(3)},
\qquad
\chi=s_h-\widehat s_h.
\]
Tang's Chern--Ricci identities \cite[(2.7)]{TangConstant} are
\begin{equation}\label{n:eq:ricci13-general}
\Ric_h^{(4)}-\Ric_h^{(1)}
=i\bar\partial\eta,
\qquad
\Ric_h^{(3)}-\Ric_h^{(1)}
=-i\partial\bar\eta.
\end{equation}
Consequently
\begin{equation}\label{n:eq:chi-trace}
\chi=\tr_h\bigl(i\partial\bar\eta\bigr).
\end{equation}

Contracting \eqref{p:eq:HSC} with
$h^{i\bar j}h^{k\bar\ell}$ gives
\begin{equation}\label{n:eq:two-scalars}
s_h+\widehat s_h=n(n+1)c=2na.
\end{equation}
Since $s_h$ and $c$ are real, \eqref{n:eq:two-scalars} also shows that
$\widehat s_h$, and hence $\chi$, is real-valued.  Combining
\eqref{n:eq:two-scalars} with the definition of $\chi$ yields
\begin{equation}\label{n:eq:scalar-with-chi}
s_h=na+\frac12\chi.
\end{equation}
These identities hold for every Hermitian metric of constant Chern holomorphic sectional curvature.

\subsection{Threefold tensors}\label{sec:threefold-tensors}
For the threefold arguments assume $n=3$; the torsion encoding does not
require a restriction on $c$.
Let $V=T^{1,0}X$ and
$K_X=\det V^*$.  The torsion trace and its trace-free part are
\begin{equation}\label{b:eq:primitive}
 \eta_i=\sum_kT^k{}_{ki},\qquad
 (T^\circ)^k{}_{ij}=T^k{}_{ij}
 +\frac12(\eta_i\delta_j^k-\eta_j\delta_i^k).
\end{equation}
Using $\Lambda^2V^*\simeq V\otimes K_X$, write in a unitary frame
indexed by $0,1,2$
\begin{equation}\label{b:eq:encoding}
 T^l{}_{ij}=\eps_{ija}C^{al},\qquad
 A=\frac{C+C^{\mathsf T}}2,\qquad
 (T^\circ)^l{}_{ij}=\eps_{ija}A^{al},\qquad \eps_{012}=1.
\end{equation}
Here $C\in V\otimes V\otimes K_X$ and
$A\in\operatorname{Sym}^2V\otimes K_X$; all their derivatives
include the connection on $K_X$.  The superscript $\mathsf T$ denotes
ordinary transpose. With this convention,
$C=A-\tfrac12[\eta]_\times$, where $[v]_\times w=v\times w$.
In particular, $A=0$ means pure-trace torsion.
If $e'=eM$ and $N=M^{-1}$, their canonical-line weights give
\begin{equation}\label{eq:fund-frame}
 C'=\det(M)NCN^{\mathsf T},\qquad
 A'=\det(M)NAN^{\mathsf T}.
\end{equation}
Ordinary transpose $\mathsf T$ carries no conjugation; $*$ denotes
the Hermitian adjoint.

For $c\ne0$, polarization defines the normalized curvature defect
$P\in\Lambda^2V^*\otimes\operatorname{Sym}^2V$ by
\begin{equation}\label{b:eq:defect}
 \frac12(R_{i\bar j k\bar l}+R_{i\bar l k\bar j})
 =\frac c2(\delta_{ij}\delta_{kl}+\delta_{il}\delta_{kj})
   +cP_{ik}^{jl}.
\end{equation}
The barred slots are raised with $h$. The first Bianchi identity also gives
\begin{equation}\label{b:eq:P-torsion}
 P_{ik}^{jl}=-\frac{\nabla_{\bar j}T^l{}_{ik}
                         +\nabla_{\bar l}T^j{}_{ik}}{4c}
\end{equation}
in a unitary frame.  For covectors $v,w$, put
$P(e_i,e_k)(v,w)=P_{ik}^{jl}v_jw_l$ and define
\begin{equation}\label{b:eq:common-kernel}
 \mathscr K_x
 =\{v\in V_x^*:2P(\cdot,\cdot)(v,w)=v\wedge w\text{ for all }w\in V_x^*\}.
\end{equation}
Because $P(\cdot,\cdot)(v,w)=P(\cdot,\cdot)(w,v)$, any two elements of $\mathscr K_x$
have zero wedge product.  Hence $\dim_{\C}\mathscr K_x\le1$.

\subsection{Tensor types and the unnormalized decomposition}\label{c0:sec:conv}
The inverse formula
$C^{al}=\frac12\sum_{i,k}\eps_{ika}T^l{}_{ik}$ and
$\sum_{i,k}\eps_{ika}\eps_{ikb}=2\delta_{ab}$ give
$|T^\circ|^2=2|A|^2$. The trace-free condition on torsion is the
contraction in \eqref{b:eq:primitive}; it is not the ordinary matrix
trace of $A$. In all tensor identities, symmetrization means the average
over the indicated permutations. We write $u\odot v=u\otimes v+v\otimes u$
when an unnormalized symmetric product is needed.

We use the exterior derivative $(\partial\eta)_{ik}=\partial_i\eta_k-\partial_k\eta_i$ and the covariant alternation
\begin{equation}\label{c0:eq:a}
 a_{ik}:=\nabla_i\eta_k-\nabla_k\eta_i,\qquad (\partial\eta)_{ik}=a_{ik}+T^r_{ik}\eta_r .
\end{equation}
The Ricci identities for the Chern connection read, on any tensor field $F$,
\begin{equation}\label{c0:eq:ricci}
 [\nabla_p,\nabla_q]F=-T^r_{pq}\nabla_rF,\qquad
 [\nabla_{\bar p},\nabla_{\bar q}]F=-\overline{T^r_{pq}}\nabla_{\bar r}F,\qquad
 [\nabla_p,\nabla_{\bar q}]F=R_{p\bar q}\cdot F,
\end{equation}
where (second covariant derivatives on the left) $R_{p\bar q}$ acts on an upper index, and on a lower antiholomorphic index, by $v^a\mapsto\sum_mR_{p\bar qm\bar a}v^m$, on a lower holomorphic index by $\xi_b\mapsto-\sum_mR_{p\bar qb\bar m}\xi_m$, and on a factor $K_X^{w}$ by multiplication with $-w\rho_{p\bar q}$, $\rho_{p\bar q}:=\sum_kR_{p\bar qk\bar k}=\rho^{(1)}_{p\bar q}$.

The following decomposition makes the information lost by HSC explicit.
\begin{lemma}[Curvature decomposition]\label{c0:lem:decomp}
On a Hermitian threefold with $H_h\equiv c$, there are unique
$\cP\in V\otimes\Sym^2V\otimes K_X$ and a Hermitian endomorphism
$Q$ of $V\otimes K_X$ such that, in a unitary frame,
\begin{equation}\label{c0:eq:decomp}
\begin{split}
R_{i\bar j k\bar l}={}&\frac c2(\delta_{ij}\delta_{kl}+\delta_{il}\delta_{kj})
+\eps_{ika}\cP^{a,jl}+\eps_{jlb}\overline{\cP^{b,ik}}
+\eps_{ika}\eps_{jlb}Q_{ab}.
\end{split}
\end{equation}
For $c\ne0$, $\cP=c\mathsf P$, where
$P_{ik}^{jl}=\eps_{ika}\mathsf P^{a,jl}$. At $c=0$ the tensor
$\cP$ is defined directly by this decomposition, with no division by $c$.
\end{lemma}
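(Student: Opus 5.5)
This is pointwise linear algebra on the fibre $W=V^*\otimes\bar V^*\otimes V^*\otimes\bar V^*$ of curvature-type tensors in dimension three. We write $R=R_{\mathrm{sf}}+D$. By \eqref{p:eq:HSC} and \eqref{p:eq:Rsf}, the symmetrization of $D$ in $(i,k)$ and simultaneously in $(j,l)$ vanishes. Decompose each pair into symmetric and alternating parts:
\[
V^*\otimes V^*=\Sym^2V^*\oplus\Lambda^2V^*,
\qquad
\Lambda^2V^*\simeq V\otimes K_X,
\]
where the isomorphism is $\xi\mapsto\tfrac12\eps_{ika}\xi_{ik}$, as in \eqref{b:eq:encoding}. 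The pair $(\bar j,\bar l)$ splits the same way, with the conjugate identification. Then $D$ splits into four blocks: (sym,sym), (alt,sym), (sym,alt) and (alt,alt). The (sym,sym) block is exactly the symmetrization controlled by polarization, so it vanishes.

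Next we name the surviving blocks. The (alt,sym) block is a tensor $\eps_{ika}\cP^{a,jl}$ with $\cP^{a,jl}$ symmetric in $j,l$. The (sym,alt) block is $\eps_{jlb}\cP'^{b,ik}$, and the (alt,alt) block is $\eps_{ika}\eps_{jlb}Q_{ab}$. Uniqueness is immediate, because the decomposition of $W$ into these blocks is a direct sum. The coefficients are recovered by contraction with $\tfrac12\eps$, using $\sum_{i,k}\eps_{ika}\eps_{ikb}=2\delta_{ab}$. For example,
\[
\cP^{a,jl}=\tfrac18\,\eps_{ika}\bigl(R_{i\bar jk\bar l}+R_{i\bar lk\bar j}\bigr),
\qquad
Q_{ab}=\tfrac14\,\eps_{ika}\eps_{jlb}R_{i\bar jk\bar l},
\]
where $R_{\mathrm{sf}}$ contributes nothing to either formula since it is symmetric in $(i,k)$.

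The Hermitian symmetry $\overline{R_{i\bar jk\bar l}}=R_{j\bar il\bar k}$ swaps the roles of the two pairs and conjugates. Applied to the block formulas, it forces $\cP'=\overline{\cP}$, which yields the term $\eps_{jlb}\overline{\cP^{b,ik}}$. It also forces $\overline{Q_{ab}}=Q_{ba}$, so $Q$ is Hermitian.

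The tensor types must be checked under unitary frame changes $e'=eM$, using \eqref{eq:fund-frame}. The alternating index $a$ carries the weight $\det M$, so $\cP$ lies in $V\otimes\Sym^2V\otimes K_X$, once the barred $\Sym^2$ slots are identified with $\Sym^2V$ via $h$. For $Q$, one index carries $K_X$ and the other carries $\overline{K_X}$. Their product is $|K_X|^2$, which is trivialized by $h$, so $Q$ is a genuine Hermitian endomorphism of $V\otimes K_X$. Getting these weights consistent is the only delicate bookkeeping. Since $\Lambda^2V^*\simeq V\otimes K_X$ in dimension three, the line-bundle weights match those fixed in \eqref{b:eq:encoding}.

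Finally, for $c\ne0$ we compare with \eqref{b:eq:defect}. The left side there is the $(j,l)$-symmetrization of $R$. Under that symmetrization the (sym,alt) and (alt,alt) blocks drop out, leaving
\[
R_{\mathrm{sf}}+\eps_{ika}\cP^{a,jl}.
\]
Hence $cP_{ik}^{jl}=\eps_{ika}\cP^{a,jl}$, i.e.\ $\cP=c\mathsf P$. At $c=0$ the same block formula defines $\cP$ without any division by $c$.
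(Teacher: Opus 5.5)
Your proposal is correct and follows essentially the same route as the paper's proof: you split both index pairs into symmetric and alternating parts, let polarization kill the symmetric--symmetric block of $R-R_{\mathrm{sf}}$, encode the alternating--symmetric block by $\cP$ through $\Lambda^2V^*\simeq V\otimes K_X$, and use Hermitian symmetry to force the conjugate block and a Hermitian $Q$, with uniqueness coming from complementarity of the four projections. Your explicit check of $\cP=c\mathsf P$ against \eqref{b:eq:defect} usefully fills in a step the paper leaves implicit.

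There is one harmless slip. The recovery formula should read $\cP^{a,jl}=\tfrac14\sum_{i,k}\eps_{ika}(R_{i\bar jk\bar l}+R_{i\bar lk\bar j})$. The $(j,l)$-sum equals $2(R_{\mathrm{sf}})_{i\bar jk\bar l}+2\eps_{ikb}\cP^{b,jl}$, and $\sum_{i,k}\eps_{ika}\eps_{ikb}=2\delta_{ab}$, so your $\tfrac18$ is off by a factor of two. Nothing in your argument depends on that constant.
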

\begin{proof}
Split the two index pairs into their symmetric and alternating parts.
Polarization \eqref{p:eq:HSC} fixes the symmetric--symmetric part.
The parallel isomorphism $\Lambda^2V^*\simeq V\otimes K_X$
encodes the alternating--symmetric part by $\cP$.
Hermitian symmetry determines the symmetric--alternating part as its
conjugate and makes the double-alternating encoding Hermitian.
The four symmetry projections are complementary, proving uniqueness.
\end{proof}
In particular, for $c=0$,
\begin{equation}\label{c0:eq:pol}
R_{i\bar j k\bar l}+R_{k\bar j i\bar l}
+R_{i\bar l k\bar j}+R_{k\bar l i\bar j}=0.
\end{equation}
The first Bianchi identity also gives, for every $c$,
\begin{equation}\label{eq:fund-barC}
\nabla_{\bar j}C^{al}=-2\cP^{a,jl}-2\eps_{jlb}Q_{ab}.
\end{equation}
The $j,l$ slots of $\cP^{a,jl}$ are raised antiholomorphic slots;
under the curvature action they behave as upper holomorphic slots.
The $a$ slot carries the factor $K_X$. In $Q_{ab}$ the two canonical
weights cancel as an endomorphism. The output rank of $\cP$ is the rank
of $\cP:\Sym^2V^*\longrightarrow V\otimes K_X$.

The following list records the objects used across the proof sections.
Branch-specific matrices are defined where they enter the argument.
\begin{center}\small
\begin{tabular}{@{}p{.13\textwidth}>{\raggedright\arraybackslash}p{.39\textwidth}>{\raggedright\arraybackslash}p{.40\textwidth}@{}}
\toprule
Object & Type or normalization & Role\\
\midrule
$\eta$ & Torsion trace in $V^*$. & Divergence correction and the scalar square operator.\\
$C,A$ & $C\in V\otimes V\otimes K_X$;
$A\in\Sym^2V\otimes K_X$. & Encode full torsion and its primitive part.\\
$P,\mathsf P$ & The $c\ne0$ defect of \eqref{b:eq:defect} and its threefold epsilon encoding. & Transport and the common-kernel alternative.\\
$\cP$ & In $V\otimes\Sym^2V\otimes K_X$;
$\cP=c\mathsf P$ when $c\ne0$. & Unnormalized mixed curvature; its output rank controls the zero-curvature argument.\\
$Q$ & Hermitian endomorphism in the decomposition \eqref{c0:eq:decomp}. & Double-alternating curvature and the moving-image determinant.\\
$s_h,\tau$ & $s_h=\tr_h\Ric_h^{(1)}$, $\tau=\tr Q$;
$s_h=2\tau$ at $c=0$. & Distinguish scalar contractions from $Q$ itself.\\
$p,f$ & $p=4\tau-2|A|^2$ at $c=0$; $f=\log p$ when $p>0$. & Positive weight for the global rank and foliation arguments.\\
$\mathscr K,\ker\alpha$ & $\mathscr K\subset V^*$ is defined in \eqref{b:eq:common-kernel}; $\ker\alpha\subset V$ for the form \eqref{c0:r1:alpha-def}. & The covector kernel at $c\ne0$ and the null distribution at $c=0$, respectively.\\
$W$ & In $V^*\otimes\Sym^2V\otimes K_X$. & Defect of the primitive-torsion recurrence, defined in \eqref{c0:r1:definitions}.\\
\bottomrule
\end{tabular}
\end{center}
All canonical-line factors carry their induced Chern connections.
Ordinary transpose $\mathsf T$ carries no conjugation, while $*$ denotes
the Hermitian adjoint. Threefold indices are $0,1,2$ and octonionic
indices are $1,\ldots,7$. In the zero-curvature calculations we also
write $s=s_h$ and $\hat s=\widehat s_h$.

\subsection{Analyticity in every dimension}
\begin{proposition}\label{univ:analyticity}\label{lemma:analyticity}\label{c0:r1:analytic}
A smooth Hermitian metric with constant Chern holomorphic sectional
curvature, in any complex dimension and for any real constant $c$,
is real analytic in holomorphic coordinates.
\end{proposition}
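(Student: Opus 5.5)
The plan is to show that constant Chern holomorphic sectional curvature is an overdetermined elliptic, real-analytic, second-order PDE system for the coefficients $h_{k\bar l}$. We then reduce it to a determined elliptic system and apply Morrey's analytic regularity theorem for nonlinear elliptic systems.

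First, write the condition in holomorphic coordinates. By \eqref{eq:curvature-coordinate} and polarization \eqref{p:eq:HSC}, written in coordinates rather than a unitary frame, $H_h\equiv c$ is equivalent to
\[
F(h)_{i\bar j k\bar l}:=\operatorname{Sym}_{(i,k)}\operatorname{Sym}_{(\bar j,\bar l)}
\Bigl(-\partial_i\partial_{\bar j}h_{k\bar l}
+h^{p\bar q}\partial_ih_{k\bar q}\,\partial_{\bar j}h_{p\bar l}
-\tfrac c2\,(h_{i\bar j}h_{k\bar l}+h_{i\bar l}h_{k\bar j})\Bigr)=0 .
\]
This is a system of equations that is real-analytic (indeed rational) in the $2$-jet of $h$ on the open set of positive definite matrices. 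Its linearization at any $h$ has principal part $\dot h\mapsto -\operatorname{Sym}\operatorname{Sym}(\partial_i\partial_{\bar j}\dot h_{k\bar l})$. The principal symbol at a nonzero real covector $\xi$ is therefore, up to sign, $\dot h\mapsto \operatorname{Sym}\operatorname{Sym}(\zeta_i\bar\zeta_j\dot h_{k\bar l})$, where $\zeta=\xi^{1,0}\neq0$.

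The key algebraic step is that this symbol is injective. A tensor in $\operatorname{Sym}^2V^*\otimes\overline{\operatorname{Sym}^2V^*}$ is determined by its associated polynomial $v\mapsto T(v,\bar v,v,\bar v)$. The polynomial associated with the symbol image is $|\zeta(v)|^2\,\dot h(v,\bar v)$. Polynomials in $(v,\bar v)$, viewed as independent real variables, form an integral domain, and $\zeta(v)\not\equiv0$. Hence vanishing of the image forces $\dot h(v,\bar v)\equiv0$, so $\dot h=0$ because $\dot h$ is Hermitian. Thus $F$ is overdetermined elliptic at every positive definite $h$, uniformly in $h$.

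Next, convert to a determined system. Let $L_h$ be the linearized operator of $F$ at $h$. Its coefficients depend analytically on the $2$-jet of $h$. Let $L_h^{*}$ be its formal adjoint with respect to the Euclidean coordinate measure and a fixed flat pairing. Any solution satisfies the fourth-order system
\[
G(h):=L_h^{*}F(h)=0,
\]
which has as many equations as unknowns and depends real-analytically on the $4$-jet of $h$. Its linearization at a solution has principal symbol $\sigma(L_h)^{*}\sigma(L_h)(\xi)$, where the terms produced by differentiating $L_h^*$ drop out because $F(h)=0$. This symbol is positive definite by injectivity, so $G$ is a nonlinear elliptic system in the sense of Petrowsky with analytic dependence on its arguments. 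Morrey's theorem on the analyticity of smooth solutions of nonlinear analytic elliptic systems, or equivalently the Morrey--Nirenberg result for overdetermined elliptic systems applied directly to $F$, then gives real analyticity of $h$ in holomorphic coordinates. The argument is local and works in every dimension and for every real $c$.

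The main obstacle is the symbol injectivity, together with the bookkeeping needed to ensure that the reduction to $G$ genuinely produces an elliptic principal symbol at the given solution. The injectivity is settled by the integral-domain argument above. For the reduction, I would apply the cleaner overdetermined version of Morrey's theorem directly to $F$ if the citation permits, which avoids differentiating $L_h^*$.
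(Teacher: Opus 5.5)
Your proof is correct, but it reaches Morrey's theorem through a different reduction than the paper.

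\emph{The paper's route.} It never keeps the full polarized system. It contracts the four-slot polarization with the fixed coordinate matrix $\delta^{k\bar l}$, which yields a square second-order system $\mathscr A h=\mathscr N(h,Dh)$ of $n^2$ real equations with a constant-coefficient principal part. It then computes the symbol of $\mathscr A$ explicitly at $dx^0$, using unitary invariance. Left-multiplying by the constant invertible map $\mathcal S_0(v)=v+(\tr v)I$ gives the strong-ellipticity bound $\langle u,\mathcal S_0\sigma(\mathscr A)(\xi)u\rangle\ge\frac14|\xi|^2|u|^2$. Morrey's Theorem~6.7.6 then applies with $t_k=2$.

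\emph{Your route.} You keep the overdetermined system and prove that its symbol is injective by the integral-domain argument $|\zeta(v)|^2\dot h(v,\bar v)\equiv0\Rightarrow\dot h=0$. You then square up to a determined fourth-order system whose symbol $\sigma^{*}\sigma$ is positive definite.

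\emph{Comparison.} Your route is more conceptual: it needs no choice of contraction, no explicit symbol matrix, and no ad hoc multiplier such as $\mathcal S_0$. The paper's route stays at second order, has an explicit ellipticity constant, and avoids forming any adjoint.

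\emph{A simplification for your version.} The principal part of $F$ is the constant-coefficient operator $P_0\dot h=-\Sym_{(i,k)}\Sym_{(\bar j,\bar l)}(\partial_i\partial_{\bar j}\dot h_{k\bar l})$, which does not depend on $h$. You can therefore square up with its constant-coefficient adjoint $P_0^{*}$ instead of $L_h^{*}$. Then $P_0^{*}F(h)=0$ is a semilinear fourth-order system with elliptic principal part $P_0^{*}P_0$, and its nonlinear terms involve at most third derivatives of $h$. With this choice you need neither the remark about differentiating $L_h^{*}$ nor an overdetermined version of Morrey's theorem. (Those extra terms are in any case of order at most two in $\dot h$, so they never affect the principal symbol.)
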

\begin{proof}\label{app:analyticity-calculation}
In fixed holomorphic coordinates set
$\mathcal Q_{i\bar j k\bar l}
=h^{a\bar b}(\partial_i h_{k\bar b})(\partial_{\bar j}h_{a\bar l})$.
Contracting the four-slot polarization with the fixed coordinate matrix
$\delta^{k\bar l}$ gives $\mathscr A h=\mathscr N(h,Dh)$, where
\begin{equation}\label{eq:global-operator}
 \begin{split}
 (\mathscr A u)_{i\bar j}=-\bigl(&
 \sum_k\partial_k\partial_{\bar k}u_{i\bar j}
 +\partial_i\partial_{\bar j}\tr u+\sum_k\partial_k\partial_{\bar j}u_{i\bar k}
 +\sum_k\partial_i\partial_{\bar k}u_{k\bar j}\bigr).
 \end{split}
\end{equation}
and
\[
 \begin{aligned}
 \mathscr N_{i\bar j}
 ={}&2c\sum_k(h_{i\bar j}h_{k\bar k}+h_{i\bar k}h_{k\bar j})-\sum_k(\mathcal Q_{i\bar j k\bar k}+\mathcal Q_{k\bar j i\bar k}
 +\mathcal Q_{i\bar k k\bar j}+\mathcal Q_{k\bar k i\bar j}).
 \end{aligned}
\]
Choose a fixed real orthonormal basis of the Hermitian matrices, so
that $h$ has $n^2$ real component functions on a domain in $\R^{2n}$.
The operator $\mathscr A$ has constant coefficients: the contraction
above uses the fixed matrix $\delta^{k\bar l}$, not $h^{k\bar l}$.
The map $\mathscr N(h,Dh)$ is real analytic on the positive Hermitian
cone, because matrix inversion is analytic there and the remaining
operations are polynomial.
Left-multiply the system by
$\mathcal S_0(v)=v+(\tr v)I$. This is an invertible real-linear map,
with inverse $\mathcal S_0^{-1}(v)=v-(\tr v)I/(n+1)$, so the new
system is equivalent to the original one.
Write $z^j=x^j+iy^j$, so
$\partial_j=(\partial_{x^j}-i\partial_{y^j})/2$, and use the real
principal symbol obtained by replacing each $\partial_{x^j}$ and
$\partial_{y^j}$ by $i\xi_{x^j}$ and $i\xi_{y^j}$.
By unitary invariance, it suffices to check the unit real covector
$\xi_0=dx^0$. For
$u=\left(\begin{smallmatrix}x&z^*\\z&W\end{smallmatrix}\right)$ and
$t=\tr W$, the exact symbol is
$\sigma(\mathscr A)(\xi_0)u
=\frac14\left(\begin{smallmatrix}4x+t&2z^*\\2z&W\end{smallmatrix}\right)$.
Thus
\begin{equation}\label{hd-elliptic-square}
 \begin{split}
 4\langle u,\mathcal S_0\sigma(\mathscr A)(\xi_0)u\rangle
 &=8x^2+7xt+2t^2+4|z|^2+|W|^2\\
 &=x^2+\tfrac14t^2+7(x+t/2)^2+4|z|^2+|W|^2
 \ge |u|^2.
 \end{split}
\end{equation}
By homogeneity, the real principal symbol satisfies the explicit bound
$\langle u,\mathcal S_0\sigma(\mathscr A)(\xi)u\rangle
\ge\frac14|\xi|^2|u|^2$ at every real covector. Thus the
linearization of
$\mathcal S_0\mathscr A h-\mathcal S_0\mathscr N(h,Dh)=0$
is a strongly elliptic system; its principal part is the same
constant-coefficient operator, since $\mathscr N$ contains only
zeroth and first derivatives.
On a sufficiently small closed coordinate ball, $h$ remains in the
positive cone and, being smooth, belongs to $C^{2,\mu}$ for every
$0<\mu<1$. These are the hypotheses of Morrey's interior theorem
for analytic nonlinear systems
\cite[\S6.7, Theorem~6.7.6, p.~271]{Morrey}.
In the notation of that section, take $N=n^2$, $s_j=0$, and $t_k=2$;
subtracting the degree-two Taylor polynomial at the chosen interior
point gives the normalization in (6.7.3)--(6.7.5).
Hence $h$ is real analytic near every point.
\end{proof}

\section{Rigidity in Fujiki's class}
\label{sec:negative-geometry}
We first obtain a K\"ahler background and an integrated Chern--Lu
identity. For $c<0$, these yield ampleness of the canonical bundle.
The auxiliary K\"ahler metric and the prescribed Hermitian metric
play different roles: the first enables integration by parts, while the
equality case must identify the second.
\subsection{Excluding rational curves and obtaining a K\"ahler background}

\begin{lemma}[Fujiki-to-K\"ahler reduction]\label{n:lem:fujiki-kahler}
Let $X$ be a compact complex manifold in Fujiki's class $\mathcal C$, and let
$h$ be a Hermitian metric satisfying $H_h\le0$.
Then $X$ contains no rational curve.  Consequently $X$ is K\"ahler.
\end{lemma}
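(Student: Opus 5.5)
The plan has two steps: exclude rational curves using the curvature-decreasing property of holomorphic sectional curvature on curves, then invoke the Hacon--Li--Xie criterion \cite{HaconLiXie}.

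\textbf{Step 1: no rational curves.} Suppose $f:\CP^1\to X$ is a nonconstant holomorphic map, and let $S\subset\CP^1$ be the finite set where $df=0$. On $\CP^1\setminus S$ consider the pulled-back semipositive $(1,1)$-form $f^*\omega_h$. The restriction of the Chern connection of $h$ to the line subbundle $f_*T\CP^1\subset f^*TX$ gives, by the Gauss--Codazzi-type decreasing property of curvature on holomorphic subbundles, the bound
\[
K_{f^*h}\le H_h(f_*\partial_z)\le 0
\]
for the Gaussian curvature of $f^*h$ away from $S$. This bound holds for Chern curvature of arbitrary Hermitian metrics. At the zeros of $df$, the metric $f^*h=|s|^2\,\omega_0$ acquires conical singularities of positive cone excess, where $s$ is a holomorphic section of $T\CP^1$-type vanishing on $S$.

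Gauss--Bonnet with these singularities gives
\[
2\pi\chi(\CP^1)=\int_{\CP^1\setminus S}K\,dA+2\pi\sum_{p\in S}(\text{order of }df\text{ at }p).
\]
A cleaner way to say this: $\operatorname{Ric}(f^*h)=-i\partial\bar\partial\log|f_*\partial_z|^2$ is a current equal to $K\,dA$ minus positive Dirac masses at $S$. Its total mass is $4\pi>0$, but every piece is $\le0$. This is a contradiction, so $X$ contains no rational curve.

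\textbf{Step 2: conclude K\"ahler.} Hacon--Li--Xie showed that a compact complex manifold in Fujiki's class $\mathcal C$ containing no rational curves is K\"ahler. Applying their theorem to $X$ gives the second assertion.

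\textbf{Main obstacle.} The obstacle is the curvature comparison for the pullback. One must check that the decreasing property holds for the Chern connection of a non-K\"ahler $h$ restricted to the image line: the second fundamental form term has the correct sign, and the holomorphic sectional curvature, not some other contraction, appears. I would verify this directly from the coordinate formula \eqref{eq:curvature-coordinate}. Writing $v=f_*\partial_z$, one computes
\[
-\partial\bar\partial\log|v|_h^2=\frac{R(v,\bar v,v,\bar v)}{|v|^4}|v|^2\,idz\wedge d\bar z-\frac{|v|^2|\nabla v|^2-|\langle\nabla v,v\rangle|^2}{|v|^4}\,idz\wedge d\bar z,
\]
using that $v$ is a holomorphic section of $f^*TX$. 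The second term is nonpositive by Cauchy--Schwarz. Hence $i\partial\bar\partial\log|v|^2_h\ge -H_h(v)|v|^2\,i\,dz\wedge d\bar z\ge0$ off $S$.

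The singularities at $S$ only add positive Lelong masses to $i\partial\bar\partial\log|v|^2$. Integrating over $\CP^1$, where $\log|v|^2_h$ is a global potential relative to the curvature of $T\CP^1$ of degree $2$, yields $0\le$ total mass $=-4\pi$. This is the contradiction used in Step 1.
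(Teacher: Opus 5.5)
Your proposal is correct and is essentially the paper's argument: the paper also excludes rational curves via the rank-one Chern--Lu (curvature-decreasing) inequality for the pullback and then applies the Hacon--Li--Xie criterion, whose hypotheses hold because smooth $X$ is strongly $\mathbb Q$-factorial and klt. The difference is that the paper integrates $\Delta_\gamma w\ge\rho w$ for the smooth function $w=|\partial\varphi|^2_{\gamma,h}$ rather than $\log w$, so no cone points or Lelong masses arise; also, your first Gauss--Bonnet display has the wrong sign on the cone term. It should read $2\pi\chi(\CP^1)=\int K\,dA-2\pi\sum_p m_p$, with $m_p$ the vanishing order of $df$ at $p$, as your current-theoretic formulation correctly states; with the sign as written, no contradiction would follow.
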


\begin{proof}
\Needspace{6\baselineskip}
\medskip
\noindent\textbf{Step 1. Excluding rational curves.}
Suppose that $\varphi:\CP^1\to X$ is a nonconstant holomorphic map. Choose a
Fubini--Study metric $\gamma$ on $\CP^1$ and set
\[
 \Ric_\gamma=\rho\,\omega_\gamma,\qquad \rho>0,
 \qquad w=|\partial \varphi|^2_{\gamma,h}.
\]
At every point where $d\varphi\ne0$, the rank-one Chern--Lu formula for a
holomorphic map from a K\"ahler curve to a Hermitian target gives
\begin{equation}\label{n:eq:rank-one-cl-fujiki}
\Delta_\gamma w
\ge \rho w-H_h(d\varphi)w^2.
\end{equation}
Since the source has complex dimension one, the target curvature term is
the Chern holomorphic sectional curvature of the line spanned by $d\varphi$.
The assumption $H_h\le0$ therefore gives
\[
\Delta_\gamma w\ge \rho w-H_h(d\varphi)w^2\ge \rho w.
\]
The inequality extends across the critical points of $\varphi$ by smoothness,
since $w=0$ there. Integrating over $\CP^1$ gives
\[
0=\int_{\CP^1}\Delta_\gamma w\,dV_\gamma
\ge \rho\int_{\CP^1}w\,dV_\gamma.
\]
It follows that $w\equiv0$, contradicting the choice of $\varphi$.
Thus $X$ contains no rational curve.

\Needspace{6\baselineskip}
\medskip
\noindent\textbf{Step 2. Applying the K\"ahler criterion.}
Since $X$ is smooth, every rank-one reflexive (divisorial) sheaf is a
line bundle. Thus $X$ is strongly $\mathbb Q$-factorial in the sense of
Hacon--Li--Xie, and the trivial generalized pair $(X,0)$ is generalized klt.  Hacon--Li--Xie
\cite[Corollary~1.4, p.~2]{HaconLiXie} prove that for a strongly
$\mathbb Q$-factorial compact generalized klt space in Fujiki's class
$\mathcal C$, K\"ahlerness is equivalent to the absence in
$\overline{\mathrm{NA}}(X)$ of a class of the form $-[C]$, where $C$ is a
rational curve. Since Step~1 excludes rational curves on $X$, this
criterion implies that $X$ is K\"ahler.
\end{proof}

\subsection{An integrated Chern--Lu identity}
The following identity retains the contribution of
$i\partial\bar\partial\omega_h$. Its integral against a complementary
power of a K\"ahler form vanishes.

\begin{lemma}[Integrated Chern--Lu identity]\label{n:lem:ICL}
Let $X$ be compact of complex dimension $n\ge2$, let $g$ be K\"ahler, and let $h$ be Hermitian with $H_h\equiv c$. Suppose
\[
\Ric_g^{(1)}=a_0\,\omega_g+\beta,
\]
where $a_0\in\mathbb R$ and $\beta$ is a real $(1,1)$-form.  Put
\[
 u=\tr_g h,\qquad q=\tr\bigl((g^{-1}h)^2\bigr),\qquad
 P_\beta=\tr\bigl(g^{-1}\beta\,g^{-1}h\bigr).
\]
Then
\begin{equation}\label{n:eq:icl-pointwise}
\Delta_g u
=|\mathsf S|^2+a_0 u+P_\beta-\frac c2(u^2+q)+\frac14\mathcal D_g,
\end{equation}
where $\mathsf S$ is the symmetric part of $\nabla^h-\nabla^g$. Its norm uses
$g$ on the two covariant slots and $h$ on the output:
\[
 |\mathsf S|^2=g^{i\bar j}g^{k\bar\ell}h_{p\bar q}
 \mathsf S^p{}_{ik}\overline{\mathsf S^q{}_{j\ell}}.
\]
We set
\[
\mathcal D_g=g^{i\bar j}g^{k\bar\ell}
D_{i\bar j k\bar\ell}
\]
with $D$ normalized in holomorphic coordinates by
\begin{equation}
D_{i\bar j k\bar\ell}
=\partial_i\partial_{\bar j}h_{k\bar\ell}
+\partial_k\partial_{\bar\ell}h_{i\bar j}
-\partial_i\partial_{\bar\ell}h_{k\bar j}
-\partial_k\partial_{\bar j}h_{i\bar\ell}.
\label{n:eq:D-coordinate}
\end{equation}
Equivalently, the ordered exterior-form convention is
\begin{equation}\label{n:eq:D-ordered-form}
i\partial\bar\partial\omega_h
=\sum_{\substack{i<k\\j<\ell}}
D_{i\bar j k\bar\ell}\,
dz^i\wedge dz^k\wedge d\bar z^j\wedge d\bar z^\ell.
\end{equation}
Moreover,
\begin{equation}\label{n:eq:D-wedge}
\mathcal D_g\,dV_g
=2\,i\partial\bar\partial\omega_h
\wedge\frac{\omega_g^{n-2}}{(n-2)!},
\end{equation}
and hence
\begin{equation}\label{n:eq:D-integral-zero}
\int_X\mathcal D_g\,dV_g=0.
\end{equation}
If $\beta\ge0$, then $P_\beta\ge0$.
\end{lemma}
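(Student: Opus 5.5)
The plan is to compute $\Delta_g u$ directly in $g$-normal holomorphic coordinates at a point $x$. These are coordinates with $g_{i\bar j}(x)=\delta_{ij}$ and $dg(x)=0$, which is possible because $g$ is K\"ahler. In such coordinates $u=g^{i\bar j}h_{i\bar j}$, and since the first derivatives of $g$ vanish,
\[
\Delta_g u=\sum_{i,k}\partial_k\partial_{\bar k}h_{i\bar i}
-\sum_{i,k}\partial_k\partial_{\bar k}g_{i\bar i}\cdot h_{i\bar i}\ \text{(suitably indexed)}.
\]
The second term is the curvature of $g$; contracted appropriately, it yields $\tr(g^{-1}\Ric_g\,g^{-1}h)=a_0u+P_\beta$.

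For the first term, I will use \eqref{n:eq:D-coordinate} to rewrite $\partial_k\partial_{\bar k}h_{i\bar i}$ as
\[
\partial_i\partial_{\bar i}h_{k\bar k}-\partial_i\partial_{\bar k}h_{k\bar i}-\partial_k\partial_{\bar i}h_{i\bar k}+D_{i\bar i k\bar k}.
\]
Summing the version of this with $\tfrac12$ weights over both orderings produces $\tfrac14\mathcal D_g$ plus the symmetrized sum
\[
-\tfrac14\sum(R_{i\bar i k\bar k}+R_{k\bar i i\bar k}+R_{i\bar k k\bar i}+R_{k\bar k i\bar i}),
\]
with each $R$ expressed via \eqref{eq:curvature-coordinate}. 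The precise bookkeeping of the factor $\tfrac14$ in front of $\mathcal D_g$ is the step I expect to be the main obstacle. Getting it right means matching the normalization of \eqref{n:eq:D-ordered-form} against the ordered-index wedge, so I will verify it on a test case, for example $h$ differing from $g$ only in a single $2\times2$ block.

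Next comes the polarization step. I will apply \eqref{p:eq:HSC} in $g$-coordinates. Contracted with $g^{i\bar j}g^{k\bar\ell}$, but with the $h$-normalization of $\delta$ replaced by $h_{i\bar j}h_{k\bar\ell}+h_{i\bar\ell}h_{k\bar j}$ (the coordinate form of \eqref{p:eq:HSC}), the symmetrized curvature sum equals $2c(u^2+q)$. This gives the term $-\tfrac c2(u^2+q)$. The quadratic first-derivative terms $h^{p\bar q}\partial h\,\bar\partial h$ from \eqref{eq:curvature-coordinate} assemble, after the same symmetrization, into exactly $h^{p\bar q}\mathsf S_{p\cdot\cdot}\overline{\mathsf S_{q\cdot\cdot}}$, summed with $g$. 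The reason is that at $x$ we have $\Gamma^g=0$ and $(\nabla^h-\nabla^g)^p{}_{ik}=h^{p\bar q}\partial_ih_{k\bar q}$, whose symmetrization in $i,k$ is $\mathsf S$. Symmetrizing in the holomorphic pair is precisely what the four-term polarization does. This yields $|\mathsf S|^2$ with the stated mixed norm and proves \eqref{n:eq:icl-pointwise}.

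For \eqref{n:eq:D-wedge}, I will compute $i\partial\bar\partial\omega_h\wedge\omega_g^{n-2}/(n-2)!$ at $x$ with $\omega_g$ standard. The wedge with $\omega_g^{n-2}/(n-2)!$ picks out the coefficients with $\{i,k\}=\{j,\ell\}$. Using \eqref{n:eq:D-ordered-form}, only the terms $D_{i\bar i k\bar k}$ with $i<k$ survive, each with sign $+1$ relative to $dV_g$ once the $i^2$ factors are tracked. Since $D$ is symmetric under swapping $(i,j)\leftrightarrow(k,\ell)$ and $D_{i\bar i i\bar i}=0$, the sum $\sum_{i<k}D_{i\bar ik\bar k}$ equals $\tfrac12\mathcal D_g$, which gives the factor $2$. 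Then \eqref{n:eq:D-integral-zero} follows by Stokes, since $d\omega_g=0$ and $i\partial\bar\partial\omega_h\wedge\omega_g^{n-2}=d\bigl(i\bar\partial\omega_h\wedge\omega_g^{n-2}\bigr)$ on the closed manifold $X$.

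Finally, $P_\beta\ge0$ for $\beta\ge0$ holds because $P_\beta=\tr\bigl(g^{-1/2}\beta g^{-1/2}\cdot g^{-1/2}hg^{-1/2}\bigr)$ is the trace of a product of two positive semidefinite Hermitian matrices, and such a trace is nonnegative.
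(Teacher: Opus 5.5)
Your direct computation in $g$-normal coordinates works, and it takes a genuinely different route from the paper's. However, the displayed rewriting at the step you flagged has the wrong signs. From \eqref{n:eq:D-coordinate},
\[
\partial_k\partial_{\bar k}h_{i\bar i}
=D_{i\bar ik\bar k}-\partial_i\partial_{\bar i}h_{k\bar k}
+\partial_i\partial_{\bar k}h_{k\bar i}+\partial_k\partial_{\bar i}h_{i\bar k}.
\]
Your version, summed over $i,k$, would assert $2\sum_{i,k}\partial_i\partial_{\bar k}h_{k\bar i}=\mathcal D_g$, which is false in general.

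The bookkeeping is cleanest with two sums at $x$:
\[
X=\sum_{i,k}\partial_k\partial_{\bar k}h_{i\bar i},\qquad
Y=\sum_{i,k}\partial_i\partial_{\bar k}h_{k\bar i}.
\]
The remaining sum $\sum_{i,k}\partial_k\partial_{\bar i}h_{i\bar k}$ equals $Y$ after relabeling. Summing the definition of $D$ gives $\mathcal D_g=2(X-Y)$. Contracting \eqref{n:eq:polarization} with $g^{i\bar j}g^{k\bar\ell}$ and inserting \eqref{eq:curvature-coordinate} gives $-2(X+Y)+4|\mathsf S|^2=2c(u^2+q)$. Here the four quadratic terms add up to $\sum_{i,k}|K_{ik}+K_{ki}|_h^2$, with $K^p{}_{ik}=h^{p\bar q}\partial_ih_{k\bar q}$. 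Hence
\[
X=\tfrac12(X+Y)+\tfrac12(X-Y)=|\mathsf S|^2-\tfrac c2(u^2+q)+\tfrac14\mathcal D_g,
\]
which settles the factor $\tfrac14$ without a test case. In the Ricci term, note that $\sum_k\partial_k\partial_{\bar k}g^{i\bar j}=\sum_kR^g_{k\bar k j\bar i}$ is a priori a second-Ricci contraction. It is the K\"ahler symmetry of $R^g$ that identifies it with the coefficients of $\Ric^{(1)}_g$.

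With this correction, your proof and the paper's rest on the same algebra but package it differently. The paper quotes Tang's identity $D=h_{p\bar q}T^p{}_{ik}\overline{T^q{}_{j\ell}}-C$ and combines it with polarization to get $\mathfrak s_g=\frac c2(u^2+q)+\frac14\mathcal T-\frac14\mathcal D_g$. It then applies the Chern--Lu formula for $\Id:(X,g)\to(X,h)$ and splits $|\nabla\partial\Id|^2=|\mathsf S|^2+\frac14\mathcal T$, so that the torsion contraction cancels.

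In your version torsion never appears. Polarization only sees the symmetric part of the connection difference, and the definition of $D$ absorbs the antisymmetric combination $X-Y$. So you need no external identity and no cancellation, and the argument is self-contained. The paper's route instead presents the lemma as Tang's pluriclosed identity with the $i\partial\bar\partial\omega_h$ term retained, inside the standard Chern--Lu framework.

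Your treatment of \eqref{n:eq:D-wedge}, the Stokes step, and $P_\beta\ge0$ matches the paper's. The trace of a product of positive semidefinite matrices is equivalent to its eigenframe argument.
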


\begin{proof}
Put
\[
 \mathfrak s_g=g^{i\bar j}g^{k\bar\ell}R^h_{i\bar j k\bar\ell},\qquad
 \mathcal T=g^{i\bar j}g^{k\bar\ell}h_{p\bar q}
 T^p_{ik}\overline{T^q_{j\ell}}.
\]
The curvature expansion in Appendix~\ref{app:icl-calculations}
retains the full $i\partial\bar\partial\omega_h$ term.
Contracting \eqref{n:eq:general-tang} gives
\begin{equation}\label{n:eq:Q-contraction}
\mathfrak s_g=\frac c2(u^2+q)+\frac14\mathcal T-\frac14\mathcal D_g.
\end{equation}

For the holomorphic identity map $\Id:(X,g)\to(X,h)$, equip its
differential with the connection induced by $g$ on the source and $h$ on
the target. The Chern--Lu formula, with $g$ K\"ahler, reads
\begin{equation}\label{n:eq:chern-lu-before-cancel}
\Delta_g u
=|\nabla\partial\Id|^2
+\tr\!\bigl(g^{-1}\Ric_g^{(1)}g^{-1}h\bigr)-\mathfrak s_g.
\end{equation}
The Ricci term equals $a_0 u+P_\beta$.  Since the source metric $g$ is
K\"ahler, its Chern torsion vanishes.  The skew part in the two holomorphic
indices of $\nabla\partial\Id$ is therefore $T/2$.  If $\mathsf S$ denotes its
symmetric part, then the symmetric and skew parts are orthogonal, with the stated tensor norm
\begin{equation}\label{n:eq:S-T-splitting}
|\nabla\partial\Id|^2=|\mathsf S|^2+\frac14\mathcal T.
\end{equation}
Substituting \eqref{n:eq:Q-contraction} and \eqref{n:eq:S-T-splitting} into
\eqref{n:eq:chern-lu-before-cancel} cancels the torsion contraction and gives
\eqref{n:eq:icl-pointwise}.

The exterior coefficient calculation in
Appendix~\ref{app:icl-calculations} proves \eqref{n:eq:D-wedge}.
Its factor $2$ occurs because the tensor contraction counts each
matching unordered index pair twice, whereas the ordered exterior
expression counts it once. Since $g$ is K\"ahler,
$\partial\omega_g=\bar\partial\omega_g=0$, and hence Stokes' theorem gives
\[
\int_X\mathcal D_g\,dV_g
=2\int_Xi\partial\bar\partial\omega_h
\wedge\frac{\omega_g^{n-2}}{(n-2)!}=0.
\]
Finally, choose a $g$-unitary frame diagonalizing $h$, with
positive eigenvalues $\lambda_i$. If $\beta\ge0$, then
\[
 P_\beta=\sum_i\lambda_i\beta_{i\bar i}\ge0,
\]
which proves the last assertion.
\end{proof}

\subsection{From nefness to ampleness}

\begin{lemma}[Canonical ampleness from constant negative HSC]\label{n:lem:ample}
Let $X^n$ be compact K\"ahler, $n\ge2$, and let $h$ be any Hermitian metric with $H_h\equiv c<0$. Then $K_X$ is ample.
\end{lemma}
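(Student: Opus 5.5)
The plan is to combine Tang's nefness theorem with the integrated identity of Lemma~\ref{n:lem:ICL} applied to twisted K\"ahler--Einstein background metrics. This should give a uniform noncollapsing estimate that forces $K_X$ to be big. Ampleness then follows from the absence of rational curves (Lemma~\ref{n:lem:fujiki-kahler}).

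\textbf{Step 1: nefness and twisted backgrounds.} Set $L=2\pi c_1(K_X)$. By Tang \cite{TangNef}, $L$ is nef. Fix a K\"ahler form $\omega_0$ with $\omega_0\le\omega_h$ pointwise, after rescaling. For every $\eps>0$ the class $L+\eps[\omega_0]$ is K\"ahler. The Aubin--Yau argument \cite{Aubin,Yau}, in the twisted form, then gives a K\"ahler metric $g_\eps$ with
\[
\Ric^{(1)}_{g_\eps}=-\omega_{g_\eps}+\eps\omega_0,\qquad [\omega_{g_\eps}]=L+\eps[\omega_0].
\]
Apply Lemma~\ref{n:lem:ICL} with $g=g_\eps$, $a_0=-1$ and $\beta=\eps\omega_0\ge0$. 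Integrating \eqref{n:eq:icl-pointwise} uses $\int\Delta_g u\,dV_g=0$, \eqref{n:eq:D-integral-zero}, $|\mathsf S|^2\ge0$, $P_\beta\ge0$ and $q\ge0$. The result is
\[
\frac{|c|}{2}\int_X u^2\,dV_{g_\eps}\le\int_X u\,dV_{g_\eps}.
\]
Write $V_\eps=\int_X\omega_{g_\eps}^n/n!$. Cauchy--Schwarz gives $\int u^2\le 4V_\eps/c^2$, and hence
\[
\int_X \omega_h\wedge\frac{\omega_{g_\eps}^{n-1}}{(n-1)!}=\int_X u\,dV_{g_\eps}\le \frac{2}{|c|}V_\eps .
\]
This is the key point: the torsion and $i\partial\bar\partial\omega_h$ contributions have disappeared after integration, and the resulting constant does not depend on $\eps$.

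\textbf{Step 2: bigness by comparing orders in $\eps$.} Since $\omega_0\le\omega_h$, the left side dominates
\[
\int\omega_0\wedge\omega_{g_\eps}^{n-1}=\sum_{j=0}^{n-1}\binom{n-1}{j}\eps^j[\omega_0]^{j+1}\cdot L^{n-1-j},
\]
a purely cohomological quantity. The right side is likewise a polynomial $\frac1{n!}\sum_j\binom nj\eps^j[\omega_0]^j\cdot L^{n-j}$. Because $L$ is nef, all the mixed intersection numbers $[\omega_0]^k\cdot L^{n-k}$ are nonnegative, and the term with $k=n$ is positive. Let $k$ be the least index with $[\omega_0]^k\cdot L^{n-k}>0$. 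If $k\ge1$, the left side contains the term $\eps^{k-1}$ with a positive coefficient. The right side is $O(\eps^k)$. Letting $\eps\to0$ contradicts the inequality, so $k=0$, that is, $L^n>0$. Thus $K_X$ is nef and big. A compact K\"ahler manifold carrying a big line bundle is Moishezon, hence projective.

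\textbf{Step 3: from nef and big to ample.} Because $X$ is projective, the basepoint-free theorem applies to the nef and big line bundle $K_X$: some multiple $mK_X$ is basepoint free and defines a birational morphism $X\to X_{\mathrm{can}}$. Suppose this morphism contracted a curve, that is, suppose there were a curve $C$ with $K_X\cdot C=0$. Then the exceptional locus would be covered by rational curves, by Kawamata's theorem on exceptional loci of birational contractions, or equivalently by the cone theorem applied to a $K_X$-trivial extremal face after a small perturbation. Lemma~\ref{n:lem:fujiki-kahler} excludes rational curves. Hence $K_X$ is strictly positive on every curve and semiample, and therefore ample.

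I expect Step 2 to be the main obstacle. The pointwise identity cannot be used through the maximum principle, because $\mathcal D_g$ has no sign. Only an integrated and $\eps$-uniform inequality survives, and it must be converted into a strict intersection-theoretic inequality. The argument therefore depends on two things: the comparison $\omega_0\le\omega_h$, and keeping the constant $2/|c|$ independent of $\eps$. I would check in particular that the nonnegativity of every coefficient, which comes from nefness of $L$, is exactly what allows the lowest-order comparison.
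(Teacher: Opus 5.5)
Your proposal is correct, and it reaches bigness by a different route than the paper.

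\textbf{The bigness step.} The paper uses the sharper trace bound $\int_X u_\eps\,dV_{g_\eps}\le nV_\eps$, obtained with the normalization $a_0=(n+1)c/2$ and $q\ge u^2/n$. It then proves analytic noncollapse $V_\eps\ge\delta>0$:
\begin{itemize}
\item with $F_\eps=\log(dV_{g_\eps}/dV_h)$, AM--GM gives $\int e^{(1-1/n)F_\eps}dV_h\le\int e^{F_\eps}dV_h$;
\item the twisted equation together with $\Ric^{(1)}_h$ gives the uniform quasi-plurisubharmonic bound $i\partial\bar\partial F_\eps\ge -C\omega_0$;
\item a Green-function $L^1$ estimate and Jensen's inequality then force $\sup F_\eps\ge0$, hence $V_\eps\ge\delta$.
\end{itemize}
Positivity of $c_1(K_X)^n$ then follows by letting $\eps\to0$ in the volume polynomial.

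You instead bound $\omega_h$ below by a K\"ahler form $\omega_0$. This turns the left side of the trace estimate into the intersection number $[\omega_0]\cdot(L+\eps[\omega_0])^{n-1}/(n-1)!$. Nefness makes every coefficient $[\omega_0]^k\cdot L^{n-k}$ nonnegative, and comparing lowest orders in $\eps$ forces $L^n>0$.

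Your route is purely cohomological: it uses no pluripotential estimates and never touches $\Ric_h$ or $dV_h$. In the limit it even gives a quantitative inequality, $L^n\ge\frac{n|c|}{2}[\omega_0]\cdot L^{n-1}$. The paper's route gives something different: a uniform volume lower bound for the twisted K\"ahler--Einstein metrics themselves.

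\textbf{The ampleness step.} The paper uses Fujino's basepoint-free theorem and Hacon--McKernan's rational chain connectedness of the fibres of the $K_X$-trivial morphism $X\to X_{\mathrm{can}}$. Your appeal to Kawamata's theorem is loose. That theorem concerns contractions of $K$-negative extremal rays, and curves contracted here have $K_X\cdot C=0$, so it does not apply directly.

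Your perturbed cone-theorem alternative is sound once made explicit:
\begin{itemize}
\item Kodaira's lemma gives $mK_X\sim A+E$ with $A$ ample and $E$ effective.
\item For small $\delta>0$ the pair $(X,\delta E)$ is klt.
\item With no rational curves, the cone theorem makes $K_X+\delta E\equiv(1+m\delta)K_X-\delta A$ nef.
\item Hence $(1+m\delta)K_X$ is nef plus ample, so it is ample.
\end{itemize}
This last argument does not even need the basepoint-free theorem.
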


\begin{proof}
By \cite[Theorem~1.1(1), p.~2]{TangNef}, $K_X$ is nef. Moreover, $X$ contains no
rational curves by Lemma~\ref{n:lem:fujiki-kahler}, since every compact
K\"ahler manifold belongs to Fujiki's class $\mathcal C$.

\Needspace{6\baselineskip}
\medskip
\noindent\textbf{Step 1. Twisted metrics and a trace estimate.}
Put $a=(n+1)c/2<0$ and fix a K\"ahler form $\omega_0$.  For $0<\varepsilon\le1$ define the
K\"ahler class
\begin{equation}\label{n:eq:alpha-epsilon}
\alpha_\varepsilon
=\frac1{-a}\bigl(2\pi c_1(K_X)+\varepsilon[\omega_0]\bigr).
\end{equation}
It is K\"ahler because $K_X$ is nef.  Moreover
\[
a\alpha_\varepsilon+\varepsilon[\omega_0]
=-2\pi c_1(K_X)=2\pi c_1(X).
\]
For each fixed $\varepsilon>0$, the positive-exponent
Monge--Amp\`ere theorem
\cite[\S6, Lemma~2, p.~378]{Yau} gives a unique K\"ahler form
$\omega_\varepsilon\in\alpha_\varepsilon$, with associated metric
$g_\varepsilon$, satisfying the twisted negative K\"ahler--Einstein equation
\begin{equation}\label{n:eq:twisted-ke}
\Ric_{g_\varepsilon}^{(1)}
=a\,\omega_\varepsilon+\varepsilon\omega_0.
\end{equation}

For a representative $\widehat\omega_\varepsilon$ of
$\alpha_\varepsilon$, the $\partial\bar\partial$ lemma writes the
Ricci discrepancy as $i\partial\bar\partial\zeta_\varepsilon$.
The equation for its potential is
\[
(\widehat\omega_\varepsilon+i\partial\bar\partial\varphi_\varepsilon)^n
=e^{-a\varphi_\varepsilon+\zeta_\varepsilon}
 \widehat\omega_\varepsilon^n.
\]
Here $-a>0$, precisely the positive-exponent hypothesis in Yau's
theorem. Existence is used for each fixed $\varepsilon$; the uniform
estimate needed below is proved separately.

Set
\[
 u_\varepsilon=\tr_{g_\varepsilon}h,\qquad
 q_\varepsilon=\tr\bigl((g_\varepsilon^{-1}h)^2\bigr),\qquad
 V_\varepsilon=\int_XdV_{g_\varepsilon},\qquad b=-\frac c2>0.
\]
Applying Lemma~\ref{n:lem:ICL} with $\beta=\varepsilon\omega_0$ and integrating gives
\begin{equation}\label{n:eq:integrated-epsilon}
 0=\int_X\left[
 |\mathsf S_\varepsilon|^2+\varepsilon P_\varepsilon
 +b\bigl(u_\varepsilon^2+q_\varepsilon-(n+1)u_\varepsilon\bigr)
 \right]dV_{g_\varepsilon},
\end{equation}
where $P_\varepsilon=\tr(g_\varepsilon^{-1}\omega_0\,g_\varepsilon^{-1}h)\ge0$.
Since $q_\varepsilon\ge u_\varepsilon^2/n$, we obtain
\begin{equation}\label{n:eq:u2-u}
 \int_Xu_\varepsilon^2\,dV_{g_\varepsilon}
 \le n\int_Xu_\varepsilon\,dV_{g_\varepsilon}.
\end{equation}
Consequently, Cauchy--Schwarz gives
\[
 \left(\int_Xu_\varepsilon\,dV_{g_\varepsilon}\right)^2
 \le V_\varepsilon\int_Xu_\varepsilon^2\,dV_{g_\varepsilon}
 \le nV_\varepsilon\int_Xu_\varepsilon\,dV_{g_\varepsilon}.
\]
Dividing by the positive last integral yields the first-moment bound
\begin{equation}\label{n:eq:u-firstmoment}
 \int_Xu_\varepsilon\,dV_{g_\varepsilon}\le nV_\varepsilon.
\end{equation}

\Needspace{6\baselineskip}
\medskip
\noindent\textbf{Step 2. Noncollapse and bigness.}
To obtain a uniform positive lower bound for $V_\varepsilon$, put
\[
 F_\varepsilon=\log\frac{dV_{g_\varepsilon}}{dV_h},
 \qquad \theta=1-\frac1n\in(0,1).
\]
The eigenvalues of $h$ relative to $g_\varepsilon$ have product
$e^{-F_\varepsilon}$. Arithmetic--geometric mean therefore gives
$u_\varepsilon\ge n e^{-F_\varepsilon/n}$. Thus \eqref{n:eq:u-firstmoment}
implies
\begin{equation}\label{n:eq:exp-ineq}
 \int_Xe^{\theta F_\varepsilon}\,dV_h
 \le \frac1n\int_Xu_\varepsilon\,dV_{g_\varepsilon}
 \le V_\varepsilon
 =\int_Xe^{F_\varepsilon}\,dV_h.
\end{equation}

The functions $F_\varepsilon$ have a uniform quasi-plurisubharmonic lower
Hessian bound.  Indeed, by \eqref{n:eq:twisted-ke},
\begin{equation}
i\partial\bar\partial F_\varepsilon
=\Ric_h^{(1)}-\Ric_{g_\varepsilon}^{(1)}
=\Ric_h^{(1)}-a\omega_\varepsilon-\varepsilon\omega_0
\ge-C\omega_0,\label{n:eq:qpsh-lower}
\end{equation}
where $C$ is independent of $0<\varepsilon\le1$, since
$-a\omega_\varepsilon\ge0$ and
$\Ric_h^{(1)}-\varepsilon\omega_0$ is uniformly bounded below.
Let
\[
m_\varepsilon=\sup_XF_\varepsilon,
\qquad
\psi_\varepsilon=F_\varepsilon-m_\varepsilon.
\]
Then $\sup_X\psi_\varepsilon=0$, $\psi_\varepsilon\le0$, and
\eqref{n:eq:qpsh-lower} holds with $\psi_\varepsilon$ in place of
$F_\varepsilon$. Put $dV_0=\omega_0^n/n!$, $V_0=\int_XdV_0$, and
$\Delta_0=\operatorname{tr}_{\omega_0}(i\partial\bar\partial)$.
For the fixed compact K\"ahler background, choose the Green kernel
of $\Delta_0$ shifted so that $G\le0$, and put
$C_G=\sup_x\int_X(-G(x,y))\,dV_0(y)<\infty$.
The lower bound $\Delta_0\psi_\varepsilon\ge-nC$ and Green's formula
at a maximum point give
\[
-V_0^{-1}\int_X\psi_\varepsilon\,dV_0
=\int_XG(x_\varepsilon,y)\Delta_0\psi_\varepsilon(y)\,dV_0(y)
\le nCC_G.
\]
Since $dV_h\le\Lambda dV_0$ for a fixed $\Lambda$ and
$-\psi_\varepsilon\ge0$, this yields
\begin{equation}\label{n:eq:qpsh-L1}
\int_X(-\psi_\varepsilon)\,dV_h\le\Lambda nCC_GV_0=:C_1.
\end{equation}
Writing $V_h=\Vol_h(X)$, Jensen's inequality with respect to the
probability measure $dV_h/V_h$ gives
\begin{equation}\label{n:eq:qpsh-delta}
\int_Xe^{\psi_\varepsilon}dV_h
\ge V_h\exp\!\left(\frac1{V_h}\int_X\psi_\varepsilon dV_h\right)
\ge V_he^{-C_1/V_h}=: \delta>0.
\end{equation}
On the other hand, \eqref{n:eq:exp-ineq} becomes
\[
e^{\theta m_\varepsilon}
\int_Xe^{\theta\psi_\varepsilon}dV_h
\le e^{m_\varepsilon}
\int_Xe^{\psi_\varepsilon}dV_h.
\]
Hence
\[
e^{(1-\theta)m_\varepsilon}
\ge
\frac{\int_Xe^{\theta\psi_\varepsilon}dV_h}
     {\int_Xe^{\psi_\varepsilon}dV_h}
\ge1,
\]
because $\psi_\varepsilon\le0$ and $0<\theta<1$.  Thus
$m_\varepsilon\ge0$, and \eqref{n:eq:qpsh-delta} gives
\begin{equation}\label{n:eq:noncollapse}
V_\varepsilon
=e^{m_\varepsilon}\int_Xe^{\psi_\varepsilon}dV_h
\ge\delta.
\end{equation}

Since the volume of a K\"ahler metric depends only on its cohomology class,
\begin{equation}\label{n:eq:volume-polynomial}
V_\varepsilon
=\frac1{n!}\alpha_\varepsilon^n
=\frac1{n!(-a)^n}
\bigl(2\pi c_1(K_X)+\varepsilon[\omega_0]\bigr)^n.
\end{equation}
Letting $\varepsilon\downarrow0$ in \eqref{n:eq:noncollapse} and
\eqref{n:eq:volume-polynomial} gives
\[
\frac{(2\pi)^n}{n!(-a)^n}\,c_1(K_X)^n
=\lim_{\varepsilon\downarrow0}V_\varepsilon\ge\delta>0.
\]
Thus $c_1(K_X)^n>0$.  Since $K_X$ is nef and $X$ is K\"ahler,
\cite[Theorem~0.5, p.~1249]{DemaillyPaun} shows that $c_1(K_X)$ contains a K\"ahler
current.  The singular-metric criterion for bigness
\cite[Proposition~8.2(i), p.~473]{DemaillyHyperbolic} then gives that $K_X$ is big,
and $X$ is Moishezon; see also \cite[Remark~3.6, p.~1265]{DemaillyPaun}.
Since $X$ is smooth, compact, K\"ahler and Moishezon, it is projective
by Moishezon's theorem
\cite[Chapter~I, \S3, Theorem~11, p.~167]{Moishezon66}.

\Needspace{6\baselineskip}
\medskip
\noindent\textbf{Step 3. The canonical morphism.}
On the smooth projective
variety $X$, apply the base-point-free theorem to $D=K_X$
\cite[Theorem~0.1, p.~1]{FujinoBPF}.  Indeed, for
any integer $r>1$,
\[
rD-K_X=(r-1)K_X
\]
is nef and big. The log-bigness condition in that theorem adds no
restriction for the smooth pair $(X,0)$: it has no log canonical centers
\cite[Definition~1.2, p.~2]{FujinoBPF}. Hence $|mK_X|$ is base-point free for all sufficiently
divisible $m$.  After replacing the image by its normalization and taking
Stein factorization, the associated morphism can be written
\begin{equation}\label{n:eq:canonical-morphism}
\Phi:X\longrightarrow Y,
\qquad
mK_X\simeq\Phi^*\mathscr A_Y,
\end{equation}
where $Y$ is normal, the fibers of $\Phi$ are connected, and $\mathscr A_Y$ is ample.
Because $K_X$ is big, $\dim Y=n$; hence $\Phi$ is birational.

We apply the relative rational-chain-connectedness theorem directly to
this morphism. The pair $(X,0)$ is klt because $X$ is smooth. For every
curve $C$ contracted by $\Phi$, \eqref{n:eq:canonical-morphism} gives
\[
 mK_X\cdot C=\Phi^*\mathscr A_Y\cdot C=0,
\]
so $-K_X$ is $\Phi$-nef. Relative bigness is tested on the generic
fiber: here that fiber is a point, since $\Phi$ is birational, and
the restriction of any line bundle to a point is big. Thus $-K_X$ is
also $\Phi$-big. The morphism is projective, so Hacon--McKernan
\cite[Corollary~1.4, p.~2, arXiv version~2]{HaconMcKernan}
implies that every fiber of $\Phi$ is rationally chain connected.
Therefore, if $\Phi$ had
a positive-dimensional fiber, that fiber would contain a rational curve.
This contradicts the absence of rational curves on $X$ proved at the
beginning of the argument.  Hence every fiber of $\Phi$ is zero-dimensional.
The proper morphism $\Phi$ is therefore finite (indeed, being finite and
birational onto the normal variety $Y$, it is an isomorphism).  Since the
pullback of an ample line bundle by a finite morphism is ample,
$mK_X=\Phi^*\mathscr A_Y$ is ample, and consequently $K_X$ is ample.
\end{proof}

\label{sec:negative-comparison}
\subsection{A weighted equality of the metrics}
Canonical ampleness produces a negative K\"ahler--Einstein background,
but does not identify the original Hermitian metric. The next identity
does so. Its two weights compensate for the torsion term that prevents
ordinary integration of the Hermitian Laplacian.

\begin{lemma}[Comparison with the negative K\"ahler--Einstein metric]
\label{n:lem:comparison}
Let $X^n$ be compact, $n\ge2$, let $H_h\equiv c<0$, and let $g$ be a
K\"ahler metric with $\Ric_g^{(1)}=-\kappa\omega_g$, where
$b=-c/2>0$ and $\kappa=(n+1)b$. Then $h=g$.
\end{lemma}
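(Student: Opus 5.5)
The plan is to combine the integrated Chern--Lu identity of Lemma~\ref{n:lem:ICL}, with $g$ as the K\"ahler background, and a second weighted Stokes identity that controls $\log\det(g^{-1}h)$. Together they should give a nonnegative integrand whose integral vanishes, and the only way this can happen is $g^{-1}h=\Id$. We measure the gap between the metrics by $\Phi(M)=\tr M-\log\det M-n\ge0$ applied to $M=g^{-1}h$. Here $\Phi(M)=0$ exactly when $M=\Id$.

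\textbf{First identity.} Apply Lemma~\ref{n:lem:ICL} with $a_0=-\kappa$ and $\beta=0$. Since $-c/2=b$ and $\kappa=(n+1)b$, the pointwise formula becomes
\[
\Delta_g u=|\mathsf S|^2+b\bigl(u^2+q-(n+1)u\bigr)+\tfrac14\mathcal D_g .
\]
Integrating against $dV_g$ and using $\int\mathcal D_g\,dV_g=0$ gives
\[
0=\int_X\bigl[|\mathsf S|^2+b(u^2+q-(n+1)u)\bigr]dV_g .
\]

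\textbf{Second identity.} Set $F=\log(dV_g/dV_h)=-\log\det M$. Then $i\partial\bar\partial F=\Ric_h^{(1)}-\Ric_g^{(1)}=\Ric_h^{(1)}+\kappa\omega_g$. Take the trace with respect to $g$ and integrate using $\omega_g^{n-1}$, which is closed. The left side integrates to zero, so
\[
0=\int_X\tr_g\Ric_h^{(1)}\,dV_g+n\kappa V_g .
\]
To make this usable, express $\tr_g\Ric_h^{(1)}=g^{i\bar j}\rho^{(1)}_{i\bar j}$ through the polarization \eqref{p:eq:HSC} and the Chern--Ricci relations \eqref{n:eq:ricci13-general}. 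The symmetric part of $R^h$ contributes $\tfrac c2(u\,\tr_g h^{-1}\cdot\ldots)$-type quantities. The non-symmetric part is $\partial$- or $\bar\partial$-exact in $\eta$ and in $i\partial\bar\partial\omega_h$, and those terms should integrate away against $\omega_g^{n-1}$ or $\omega_g^{n-2}$. I may need to weight this identity by $e^{\pm F}$ or by $\det M$, which is what ``two weights'' suggests. The aim is for the torsion/divergence contribution $\langle\eta,\bar\partial F\rangle$ to cancel against the torsion part of $\mathsf S$, or to appear as a square.

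\textbf{Combining and equality case.} Add a suitable multiple of the second identity to the first. The target is an identity of the form
\[
0=\int_X\bigl[|\mathsf S'|^2+b\,\Psi(M)\bigr]e^{\lambda F}dV_g ,
\]
where $\Psi(M)\ge0$ is built from $\tr M$, $\tr M^2$ and $\log\det M$. Using the eigenvalues $\lambda_i$ of $M$, the candidate is $\Psi=\sum_i(\lambda_i^2-\lambda_i)+\Phi(M)$-type combinations that are nonnegative, with equality only when all $\lambda_i=1$. Then $M=\Id$ everywhere, so $h=g$.

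\textbf{Main obstacle.} The hardest step is finding the correct weights so that the integration by parts of the Hermitian Laplacian of $F$ leaves no uncontrolled torsion terms. Here $\Delta_h$ is not self-adjoint with respect to $dV_h$, and the error is $\eta$. The first thing to try is integrating $\Delta_h F$ against $dV_h$ with the correction $\tr_h(i\partial\bar\eta)=\chi$ from \eqref{n:eq:chi-trace} and the scalar relation \eqref{n:eq:scalar-with-chi}. After that, I would check whether the pointwise inequality $\lambda-1-\log\lambda\ge0$ closes the argument.
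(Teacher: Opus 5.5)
Your first identity is correct and is exactly what the argument needs: Lemma~\ref{n:lem:ICL} with $a_0=-\kappa$ and $\beta=0$ gives $0=\int_X[|\mathsf S|^2+b(u^2+q-(n+1)u)]\,dV_g$. The gap is the second identity, which carries the whole weight of the lemma and which you leave unestablished. The version you actually write down cannot supply it. Since $i\partial\bar\partial F$ is exact and $\omega_g^{n-1}$ is closed, the identity $\int_X\tr_g\Ric_h^{(1)}\,dV_g=-n\kappa V_g$ holds for \emph{every} Hermitian $h$: it only records that $\Ric_h^{(1)}$ and $\Ric_g^{(1)}$ are cohomologous. It would help only together with a pointwise formula for $\tr_g\Ric_h^{(1)}$, and polarization does not give one. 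Contracting \eqref{p:eq:HSC} with $g^{i\bar j}h^{k\bar\ell}$ controls only the $g$-trace of the sum of the four Chern--Ricci forms. Here $\Ric_h^{(3)}$ and $\Ric_h^{(4)}$ differ from $\Ric_h^{(1)}$ by the exact terms in \eqref{n:eq:ricci13-general}. However, $\Ric_h^{(2)}$ differs from it by terms involving the $h$-divergence of the full torsion, not just of $\eta$, and these do not simply integrate away against $dV_g$. The $i\partial\bar\partial\omega_h\wedge\omega_g^{n-2}$ mechanism of Lemma~\ref{n:lem:ICL} needs both traces taken with $g$. Your hoped-for endgame is also off. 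The torsion has already been removed from $|\mathsf S|^2$, so nothing there can absorb $\eta$. Moreover, the signless term $(n+1)b(u-n)=\kappa(u-n)$ is neutralized by a term in $v=\tr M^{-1}=\tr_h\omega_g$ carrying a different weight, and your single-weight $\Psi$ built from $\tr M$, $\tr M^2$, $\log\det M$ contains no such term.

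The missing idea is the one you list only as something to try: take the $h$-trace. Since $\tr_h\Ric_g^{(1)}=-\kappa v$, \eqref{n:eq:scalar-with-chi} with $a=-\kappa$ gives $\Delta_hF=\kappa(v-n)+\chi/2$. Put $\Theta_h=\omega_h^{n-1}/(n-1)!$. Then $\partial\Theta_h=-\eta\wedge\Theta_h$ and $i\partial\bar\partial\Theta_h=(|\eta|_h^2-\chi)\,dV_h$, so $\int_X\Delta_h\phi\,dV_h$ equals both $\Rea\int_X\langle\partial\phi,\eta\rangle_h\,dV_h$ and $\int_X\phi(|\eta|_h^2-\chi)\,dV_h$. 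Applying both to $\phi=e^{tF}$ eliminates $\chi$:
\[
\kappa\int_Xe^{tF}(v-n)\,dV_h=\int_Xe^{tF}\Bigl[-t\Bigl|\partial F-\tfrac{t+2}{4t}\eta\Bigr|^2+\tfrac{(t-2)^2}{16t}|\eta|^2\Bigr]dV_h .
\]
The choice $t=2$ kills the remainder and gives $\kappa\int_Xe^{2F}(v-n)\,dV_h=-2\int_Xe^{2F}|\partial F-\eta/2|^2\,dV_h$. Now rewrite your first identity using $dV_g=e^F\,dV_h$ and $u^2+q-(n+1)u=(n+1)(u-n)+(u-n)^2+\sum_i(\lambda_i-1)^2$, and add the two identities. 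The result is that $\kappa\int_X\mathscr G\,dV_h$ plus nonnegative integrals vanishes, where
\[
\mathscr G=e^F(u-n)+e^{2F}(v-n)=e^F\Phi(M)+e^{2F}\Phi(M^{-1})+F(e^{2F}-e^F)\ge0 .
\]
Hence $\Phi(M)\equiv0$, that is, $h=g$. The two different weights $e^F$ and $e^{2F}$ are what combine the individually signless quantities $u-n$ and $v-n$ into the metric gap.
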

\begin{proof}
Put $M=g^{-1}h$, and let $\lambda_i>0$ be its eigenvalues. Write
\[
 u=\tr M,\quad v=\tr M^{-1},\quad q=\tr M^2,
 \qquad F=\log(dV_g/dV_h)=-\log\det M.
\]
The positive matrix function
\[
 \Phi(M)=\tr M-\log\det M-n
          =\sum_i(\lambda_i-1-\log\lambda_i)
\]
is nonnegative and vanishes exactly at $M=I$, by $\log x\le x-1$.
Thus $\Phi$ measures the failure of the two metrics to agree, including
changes of scale. Neither $u-n$ nor $v-n$ has a fixed sign separately;
the volume weights combine them into this matrix defect and its inverse.
The comparison expression therefore has the pointwise decomposition
\begin{equation}\label{n:eq:metric-gap-decomposition}
 \mathscr G:=e^F(u-n)+e^{2F}(v-n)
 =e^F\Phi(M)+e^{2F}\Phi(M^{-1})+F(e^{2F}-e^F)\ge0.
\end{equation}
Its last term is nonnegative by monotonicity of the exponential.

We derive the weighted identity, including the choice of exponent.
For $\Theta_h=\omega_h^{n-1}/(n-1)!$, the torsion-trace identity gives
$\partial\Theta_h=-\eta\wedge\Theta_h$ and
\begin{equation}\label{n:eq:ddbar-omega-nminus1}
 i\partial\bar\partial\Theta_h=(|\eta|_h^2-\chi)dV_h.
\end{equation}
Integration by parts, once and twice respectively, gives for real $\phi$
\begin{equation}\label{n:eq:stokes-second}\equationalias{n:eq:stokes-first}
 \int_X\Delta_h\phi\,dV_h
 =\Rea\int_X\langle\partial\phi,\eta\rangle_h\,dV_h,\qquad \int_X\Delta_h\phi\,dV_h
 =\int_X\phi(|\eta|_h^2-\chi)dV_h.
\end{equation}
Here the first sign follows from
$i\bar\partial\phi\wedge\partial\Theta_h
=i\eta\wedge\bar\partial\phi\wedge\Theta_h$.
Taking the $h$-trace of
$i\partial\bar\partial F=\Ric_h^{(1)}-\Ric_g^{(1)}$ yields
\begin{equation}\label{n:eq:lap-F-general}
 \Delta_hF=\kappa(v-n)+\chi/2.
\end{equation}
Apply both Stokes formulas to $e^{tF}$, $t>0$. With $x=\partial F$,
they imply
\begin{align}
 \int_Xe^{tF}(\Delta_hF+t|x|^2)dV_h
 &=\int_Xe^{tF}\Rea\langle x,\eta\rangle_h dV_h,
       \label{n:eq:weighted-stokes-A}\\
 \int_Xe^{tF}\chi\,dV_h
 &=\int_Xe^{tF}(|\eta|^2-t\Rea\langle x,\eta\rangle_h)dV_h.
       \label{n:eq:weighted-chi}
\end{align}
Substituting into \eqref{n:eq:lap-F-general} gives
\begin{equation}\label{n:eq:weighted-master}
 \kappa\int_X e^{tF}(v-n)dV_h
 =\int_X e^{tF}\left[-t\left|x-\frac{t+2}{4t}\eta\right|^2
             +\frac{(t-2)^2}{16t}|\eta|^2\right]dV_h.
\end{equation}
The remainder vanishes precisely when $t=2$, so
\begin{equation}\label{n:eq:t2-square}
 \kappa\int_X e^{2F}(v-n)dV_h
       =-2\int_Xe^{2F}|\partial F-\eta/2|^2dV_h.
\end{equation}

Lemma~\ref{n:lem:ICL}, with $\beta=0$, gives the second exact identity
\begin{equation}\label{n:eq:comparison-icl}
 0=\int_X\left[|\mathsf S|^2+b\{u^2+q-(n+1)u\}\right]dV_g.
\end{equation}
The algebraic decomposition
\begin{equation}\label{n:eq:trace-shift-squares}
 u^2+q-(n+1)u=(n+1)(u-n)+(u-n)^2+\sum_i(\lambda_i-1)^2
\end{equation}
and $dV_g=e^FdV_h$ now turn the sum of
\eqref{n:eq:t2-square} and \eqref{n:eq:comparison-icl} into
\begin{equation}\label{n:eq:combined-comparison}
\begin{aligned}
0={}&\kappa\int_X\mathscr G\,dV_h\\
 &+\int_X\left[|\mathsf S|^2+
 b\left((u-n)^2+\sum_i(\lambda_i-1)^2\right)\right]dV_g\\
 &+2\int_Xe^{2F}|\partial F-\eta/2|^2dV_h.
\end{aligned}
\end{equation}
Each line is nonnegative and must vanish. The first gives
$\Phi(M)=0$, hence $h=g$, by \eqref{n:eq:metric-gap-decomposition}.
The second gives $\mathsf S=0$ and $\lambda_i=1$ for every $i$.
The third gives $\partial F=\eta/2$; since $h=g$ makes $F=0$,
it also gives $\eta=0$.
All expressions are invariant functions of $M$; no eigenframe is
differentiated in this argument.
\end{proof}

\begin{proof}[Proof of Theorem~\ref{n:thm:main} when $c<0$]
By Lemma~\ref{n:lem:fujiki-kahler}, the Fujiki-class manifold $X$ is K\"ahler.  We
may therefore apply Lemma~\ref{n:lem:ample}, which gives that $K_X$ is ample.  In
particular $X$ is projective by the Kodaira embedding theorem.  The
Aubin--Yau theorem \cite[Theorem~5, p.~385]{Yau} supplies the unique K\"ahler--Einstein
metric $g$ in the class
\[
[\omega_g]=\frac{2\pi}{-a}c_1(K_X)
\]
with $\Ric_g^{(1)}=a\omega_g$.  Then
Lemma~\ref{n:lem:comparison} gives $h=g$.  In particular $h$ is K\"ahler,
\[
\Ric_h^{(1)}=a\omega_h,
\qquad
[\omega_h]=\frac{2\pi}{-a}c_1(K_X).
\]
For a K\"ahler metric, constant holomorphic sectional curvature $c$
determines the full curvature tensor:
\[
R^h_{i\bar j k\bar\ell}
=\frac c2\bigl(h_{i\bar j}h_{k\bar\ell}+h_{i\bar\ell}h_{k\bar j}\bigr).
\]
Because $X$ is compact, $h$ is complete.  The complete simply connected
K\"ahler space-form classification
\cite[Chapter~IX, \S7, Theorem~7.9, pp.~170--171]{KN69}
therefore identifies the universal cover
with complex hyperbolic space of holomorphic sectional curvature $c$.
\end{proof}

\subsection{The zero-curvature case}\label{z:sec:zero}
The integrated identity at zero curvature eliminates only the symmetric
part of the connection difference. The following lemma converts that
first-order equation into parallelness on a Ricci-flat K\"ahler background.

\begin{lemma}[A symmetric-derivative equality on a Ricci-flat background]
\label{z:lem:parallel-endomorphism}
Let $(X,g)$ be a compact Ricci-flat K\"ahler manifold, and let
$D=\nabla^g$ denote its Chern connection and all induced tensor connections.
Suppose that $L$ is a smooth $g$-self-adjoint endomorphism of $T^{1,0}X$
satisfying
\begin{equation}\label{z:eq:symmetric-derivative-zero}
 D_iL^k{}_j+D_jL^k{}_i=0.
\end{equation}
Then $D L=0$.
\end{lemma}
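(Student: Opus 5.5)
The plan is a pair of integrated Weitzenb\"ock identities for $L$, combined so that the full curvature term cancels. Only Ricci-flatness is available, so the full Riemann tensor has to be eliminated rather than estimated. Lower the index with $g$ and work in a $g$-unitary frame, writing $L_{j\bar k}=g_{m\bar k}L^m{}_j$. Self-adjointness of $L$ gives $\overline{L_{j\bar k}}=L_{k\bar j}$, and \eqref{z:eq:symmetric-derivative-zero} says that $D_iL_{j\bar k}$ is alternating in $(i,j)$. Conjugating, $D_{\bar l}L_{j\bar k}=\overline{D_lL_{k\bar j}}$ is alternating in $(l,k)$. Because $D''L$ is the conjugate of $D'L$ up to transposition, it suffices to show $D'L=0$.

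\textbf{Step 1: trace identities.} Contracting the antisymmetry gives two pieces of first-order information:
\[
\sum_iD_iL_{i\bar k}=0,\qquad \sum_jD_jL_{k\bar j}=-\sum_jD_kL_{j\bar j}=-\partial_k\tr L .
\]
The second identity turns every divergence appearing below into a derivative of the scalar $\tr L$. I will also use the Ricci identity \eqref{c0:eq:ricci} for the Chern connection of $g$. Since $g$ is K\"ahler, it has no torsion, its curvature has the full K\"ahler symmetries, and $\sum_iR_{i\bar i k\bar m}=\sum_iR_{k\bar m i\bar i}=0$ by Ricci-flatness.

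\textbf{Step 2: two integrated identities.} Put $I=\int_X|D'L|^2dV_g$. The first computation writes
\[
I=\int_X D_iL_{j\bar k}\,\overline{D_iL_{j\bar k}}\,dV_g=\int_X D_iL_{j\bar k}\,D_{\bar i}L_{k\bar j}\,dV_g,
\]
uses the conjugate antisymmetry to replace $D_{\bar i}L_{k\bar j}$ by $-D_{\bar j}L_{k\bar i}$, integrates $D_i$ by parts, and commutes $D_iD_{\bar j}$ into $D_{\bar j}D_i$. The contracted slot produces a Ricci term, which vanishes. The uncontracted slot leaves
\[
\mathcal R=\int_X R_{i\bar jk\bar m}L_{j\bar k}L_{m\bar i}\,dV_g .
\]
The derivative term becomes $\pm\int_X|\partial\tr L|^2dV_g$ after one more integration by parts using Step 1.

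The second computation starts from the cross pairing
\[
J=\int_X D_iL_{j\bar k}\,\overline{D_jL_{i\bar k}}\,dV_g ,
\]
which equals $-I$ directly by the $(i,j)$-antisymmetry. Here I integrate $D_{\bar j}$ by parts instead and commute $D_{\bar j}D_i$ into $D_iD_{\bar j}$. After relabelling, the surviving curvature contraction is the same $\mathcal R$, but it arises from the opposite commutator. Again the Ricci contraction drops out and the remaining term is a multiple of $\int_X|\partial\tr L|^2dV_g$ via Step 1.

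\textbf{Step 3: eliminate $\mathcal R$.} Adding or subtracting the two identities, whichever removes $\mathcal R$, should leave a relation of the form
\[
\alpha I+\beta\int_X|\partial\tr L|^2\,dV_g=0,\qquad \alpha,\beta>0 .
\]
This forces $D'L=0$, and conjugation then gives $D''L=0$, so $DL=0$. If instead the coefficients come out with opposite signs, I would add the scalar identity
\[
\int_X|\partial\tr L|^2\,dV_g=-\int_X\tr L\,\Delta_g\tr L\,dV_g .
\]
Using $\Delta_g\tr L=\sum_{i,j}D_{\bar i}D_iL_{j\bar j}$ and the divergence relations of Step 1, I expect $\Delta_g\tr L$ to be a divergence of a divergence of $L$ that vanishes, making $\tr L$ constant, which again closes the argument.

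\textbf{Main obstacle.} The crux is the sign bookkeeping in Step 2. I must check carefully, with the conventions of \eqref{c0:eq:ricci}, that the two commutations really produce the same full-curvature contraction $\mathcal R$ with opposite signs relative to $I$, so that it cancels and does not double. I will carry out that index check first, since the rest of the argument is routine integration by parts on a compact K\"ahler manifold.
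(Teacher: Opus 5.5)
There is a genuine gap, and it sits exactly at the step you flagged. Step~3 cannot work, because your two identities are the same identity.

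After your substitution, the first computation evaluates $\psi=\int_X D_iL_{j\bar k}\,D_{\bar j}L_{k\bar i}\,dV_g=-I$, and your $J$ is literally this same integral $\psi$. The first route moves $D_i$ and commutes $D_iD_{\bar j}\to D_{\bar j}D_i$; the second moves $D_{\bar j}$ and commutes back. The two commuted integrals are
\[
 \int_X L_{j\bar k}D_{\bar j}D_iL_{k\bar i}\,dV_g,\qquad
 \int_X (D_iD_{\bar j}L_{j\bar k})L_{k\bar i}\,dV_g .
\]
One more integration by parts turns each of them into $-\int_X\sum_jD_{\bar j}L_{j\bar k}\sum_iD_iL_{k\bar i}\,dV_g$, so they are equal. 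Hence the two curvature terms are forced to agree: the opposite commutator is exactly compensated by the opposite integration by parts. Explicitly, with the conventions of \eqref{c0:eq:ricci} and the Ricci contractions discarded, both routes yield
\[
 I+\int_X|\partial\tr L|^2\,dV_g=-\mathcal R .
\]
Adding the two identities doubles $\mathcal R$, and subtracting them gives $0=0$.

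Your fallback does not rescue this. With $\tr L$ constant you would still only have $I=-\mathcal R$, and Ricci-flatness gives no sign for the uncontracted term $\mathcal R$. Nor do the first-order divergence relations make $\tr L$ constant. Inserting $\partial_k\tr L=-\sum_jD_jL_{k\bar j}$ into $\Delta_g\tr L$ and commuting returns the tautology $\Delta_g\tr L=\Delta_g\tr L$, since the only commutator terms there are Ricci contractions.

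The missing idea is second-order. Differentiate \eqref{z:eq:symmetric-derivative-zero} once more. Then $D_rD_iL^k{}_j$ is symmetric in $(r,i)$, because the K\"ahler Chern connection is torsion-free with curvature of type $(1,1)$. It is also skew in $(i,j)$, and the standard permutation argument makes it zero.

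This is how the paper proceeds. The tensor $\mathsf J=D^{1,0}L$ satisfies $D^{1,0}\mathsf J=0$. On $E=(T^{1,0}X)^*\otimes\End(T^{1,0}X)$ the Bochner--Kodaira identity reads
\[
 \|D^{1,0}\mathsf J\|^2-\|D^{0,1}\mathsf J\|^2=\Rea\int_X\langle g^{a\bar b}R^E_{a\bar b}\mathsf J,\mathsf J\rangle\,dV_g .
\]
It involves only the contracted curvature, which is a sum of Ricci actions and hence zero. Thus $\mathsf J$ is parallel, $(D^{1,0})^*\mathsf J=0$, and $\|\mathsf J\|^2=\Rea\int_X\langle L,(D^{1,0})^*\mathsf J\rangle\,dV_g=0$.

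The full curvature never enters there, because the two derivative indices are contracted with each other. In your computation it is the contraction of $D_i$ against the slot $\bar i$ of $L_{k\bar i}$ that produces $\mathcal R$, and first-order information alone cannot remove it.
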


\begin{proof}
We first show that the pure holomorphic Hessian vanishes, then use
Ricci-flatness and integration by parts to eliminate the first derivative.
Put
\[
 U_{rij}{}^k=D_rD_iL^k{}_j,
\]
where the second covariant derivative includes the connection on the
derivative index $i$. Since $g$ is K\"ahler, $D$ is torsion-free and
has no $(2,0)$-curvature. Hence $U$ is symmetric in $r,i$.
Differentiating \eqref{z:eq:symmetric-derivative-zero} shows that it is
skew-symmetric in $i,j$. These two symmetries give
\[
 U_{rij}{}^k=U_{irj}{}^k=-U_{ijr}{}^k
 =-U_{jir}{}^k=U_{jri}{}^k=U_{rji}{}^k=-U_{rij}{}^k.
\]
Thus $U=0$.

Regard $\mathsf J=D^{1,0}L$ as a section of the holomorphic Hermitian bundle
\[
 E=(T^{1,0}X)^*\otimes\operatorname{End}(T^{1,0}X),
\]
with all metrics, norms, and connections induced by $g$. The preceding calculation
is precisely $D^{1,0}\mathsf J=0$. The contracted curvature of $T^{1,0}X$
is the Ricci endomorphism of $g$. On $E$, curvature acts on the output
slot and with the opposite sign on each covariant slot. Contracting each
of these actions therefore gives zero, so
\[
 g^{a\bar b}R^E_{a\bar b}=0.
\]
For every smooth section $V$ of $E$, integration by parts and commutation
of the two covariant derivatives give
\[
 \|D^{1,0}V\|_{L^2(g)}^2
 -\|D^{0,1}V\|_{L^2(g)}^2
 =\operatorname{Re}\int_X
 \left\langle g^{a\bar b}R^E_{a\bar b}V,V\right\rangle_g\,dV_g
 =0.
\]
Here $R^E_{a\bar b}=[D_a,D_{\bar b}]$, in accordance with
the Chern curvature convention. Applying this identity to $V=\mathsf J$ gives
$D^{0,1}\mathsf J=0$, hence $D \mathsf J=0$.
In particular, $(D^{1,0})^*\mathsf J=0$. A final integration by parts yields
\[
 \|\mathsf J\|_{L^2(g)}^2
 =\operatorname{Re}\int_X\langle D^{1,0}L,\mathsf J\rangle_g\,dV_g
 =\operatorname{Re}\int_X
 \langle L,(D^{1,0})^*\mathsf J\rangle_g\,dV_g=0.
\]
Therefore $D^{1,0}L=0$. Since $D$ preserves $g$ and $L$ is
self-adjoint, taking adjoints also gives $D^{0,1}L=0$.
\end{proof}

\begin{proof}[Proof of Theorem~\ref{n:thm:main} when $c=0$]
By Lemma~\ref{n:lem:fujiki-kahler}, the underlying manifold $X$ is
K\"ahler. Tang's theorem \cite[Theorem~1.1(2), p.~2]{TangNef}, applied to the
original Hermitian metric $h$, gives
$c_1(X)=0$ in $H^2(X,\R)$. Fix a K\"ahler form $\theta$ on the now K\"ahler manifold $X$.
The $\partial\bar\partial$ lemma gives a smooth real function $\zeta$ with
$\Ric(\theta)=i\partial\bar\partial \zeta$.
Choose its additive constant so that
\[
 \int_X e^\zeta\theta^n=\int_X\theta^n.
\]
Yau's theorem \cite[Theorem~1, p.~363; Theorem~2, p.~364]{Yau}
then solves
$(\theta+i\partial\bar\partial\varphi)^n=e^\zeta\theta^n$
with a positive K\"ahler form. Its Ricci form vanishes, so it defines
a Ricci-flat K\"ahler metric $g$ in the chosen class.
The Ricci form of $h$ is not used in this existence step.

Apply Lemma~\ref{n:lem:ICL} with $a_0=0$, $\beta=0$, and $c=0$.
For $u=\tr_g h$ it gives
\begin{equation}\label{z:eq:icl-zero}
 \Delta_g u=|\mathsf S|^2+\frac14\mathcal D_g,
 \qquad
 \int_X\mathcal D_g\,dV_g=0.
\end{equation}
Integrating against $dV_g$ shows that $\mathsf S=0$ everywhere.
To express this as an equation for the metric, write $D=\nabla^g$ and
$K=\nabla^h-\nabla^g$. Since both Chern connections have the same
$(0,1)$ part, their holomorphic coefficients satisfy
\begin{equation}\label{z:eq:connection-difference}
 K^p{}_{ij}=h^{p\bar\ell}D_i h_{j\bar\ell},\qquad
 \mathsf S^p{}_{ij}
 =\frac12h^{p\bar\ell}
   \bigl(D_i h_{j\bar\ell}+D_j h_{i\bar\ell}\bigr).
\end{equation}
Thus $D_i h_{j\bar\ell}+D_j h_{i\bar\ell}=0$.
Define the positive $g$-self-adjoint endomorphism $L$ by
$h(v,\bar w)=g(Lv,\bar w)$. Since $Dg=0$, this last equation becomes
\[
 D_iL^k{}_j+D_jL^k{}_i=0.
\]
Lemma~\ref{z:lem:parallel-endomorphism} now gives $DL=0$, hence $Dh=0$.
The torsion-free connection $D$ preserves the complex structure and $h$;
therefore it preserves $\omega_h$, and alternating its covariant
derivative gives $d\omega_h=0$. The original metric $h$ is K\"ahler.
Only at this point may K\"ahler polarization be used: $H_h\equiv0$
then gives $R^h=0$.

Compactness makes $h$ complete, and
the lifted metric on the universal cover is complete as well.
The flat case of the complex space-form classification
\cite[Chapter~IX, \S7, Theorem~7.9 and p.~171]{KN69}
identifies that cover holomorphically and isometrically with $\C^n$.
Thus $X$ is $\C^n/\Gamma$, where
$\Gamma\subset U(n)\ltimes\C^n$ is a discrete, cocompact group
acting freely by deck transformations.
Bieberbach's translation theorem \cite{Bieberbach1911}, in the form
recalled in \cite[\S1, p.~401]{Bieberbach1912}, supplies $2n$ independent
translations: the underlying Euclidean action is discrete and cocompact.
Thus $\Lambda=\Gamma\cap\C^n$ is a normal full lattice.
The quotient $\Gamma/\Lambda$ embeds in its orthogonal automorphism
group, which is finite because the images of a lattice basis have
prescribed lengths and range over finite subsets of the lattice.
Consequently
$\C^n/\Lambda\to X$ is a finite \'etale cover by a complex torus,
and the lifted metric is translation invariant.
\end{proof}

\section{The threefold local alternative}\label{b:sec:threefold}
Let $(X,h)$ be a Hermitian threefold with constant Chern HSC $c\ne0$;
compactness is not assumed. Write $V=T^{1,0}X$. The torsion matrix $C$
and its symmetric part $A=\Sym C$ are defined by \eqref{b:eq:encoding};
$A$ encodes the primitive torsion. With the normalized remainder $P$
from \eqref{b:eq:defect}, the common-curvature kernel is
\[
 \mathscr K_x=\{\nu\in V_x^*:2P(\cdot,\cdot)(\nu,w)
                    =\nu\wedge w\ \text{for every }w\in V_x^*\}.
\]

\begin{theorem}[The threefold local alternative]\label{auto-sing:local-alternative}
At every point of a Hermitian threefold with constant Chern HSC $c\ne0$,
\[
 A_x\ne0\quad\Longrightarrow\quad\mathscr K_x\ne0.
\]
\end{theorem}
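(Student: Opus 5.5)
The plan is to turn the differential compatibility conditions of the Chern identities into \emph{pointwise algebraic} constraints on $(A,P,Q)$ at a fixed point $x$. Then, assuming $A_x\neq0$, I would produce an explicit covector in $\mathscr K_x$, presumably built from $A_x$. Throughout, $c\neq0$ is fixed and compactness is never used.

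\textbf{Step 1: encode the first Bianchi identity.} By \eqref{eq:fund-barC} and Lemma~\ref{c0:lem:decomp}, the antiholomorphic derivative of torsion is
\[
\nabla_{\bar j}C^{al}=-2c\,\mathsf P^{a,jl}-2\eps_{jlb}Q_{ab}.
\]
The part symmetric in $j,l$ is governed by $\mathsf P$, hence by $P$. The alternating part is governed by $Q$. Splitting $C=A-\tfrac12[\eta]_\times$ then expresses $\nabla''A$ and $\nabla''\eta$ linearly in $(\mathsf P,Q)$. In this way $P$ is, up to the factor $-1/(4c)$, the symmetrized $\bar\partial$-derivative of the primitive torsion, in agreement with \eqref{b:eq:P-torsion}.

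\textbf{Step 2: derive the compatibility equations.} I would differentiate once more and impose the commutation relations \eqref{c0:eq:ricci}.
\begin{itemize}
\item The $(0,2)$ commutator $[\nabla_{\bar p},\nabla_{\bar q}]C=-\overline{T^r_{pq}}\nabla_{\bar r}C$ gives an identity, quadratic in the data, relating $\nabla''\mathsf P$ and $\nabla''Q$ to $\bar T\cdot(\mathsf P,Q)$.
\item The $(1,1)$ commutator $[\nabla_p,\nabla_{\bar q}]C=R_{p\bar q}\cdot C$ involves $\nabla'\mathsf P$ and $\nabla'Q$.
\end{itemize}
By the differential Bianchi identity \eqref{p:eq:diffBi}, the suitable alternation $\nabla_pR_{i\bar jk\bar l}-\nabla_iR_{p\bar jk\bar l}$ of $\nabla'R$ is algebraic, namely $T*R$. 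In the $\eps$-encoding the alternated slot $(p,i)$ becomes an upper index. So after contracting the $(1,1)$ relation against $\eps$ in $(p,k)$ and its conjugate structure, the unknown first derivatives should either drop out or appear only through a linear system.

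Treating $\nabla'A$, $\nabla'\eta$, $\nabla'\mathsf P$, $\nabla'Q$ at $x$ as unknowns, the resulting system is linear in these unknowns, with coefficients polynomial in $(A,\eta,\mathsf P,Q)$. Its solvability (the existence of first derivatives at $x$) is a polynomial condition. I expect it to reduce to a bilinear identity of the form
\[
\Bigl(2P(\cdot,\cdot)(v,w)-v\wedge w\Bigr)\ \text{contracted with}\ A=0 \qquad\text{for all }v,w.
\]
In words: the normalized defect $2P-\wedge$ is annihilated by the primitive torsion. The constant $c$ enters only through $R-R_{\mathrm{sf}}=c(2P-\wedge)$-type combinations, which is why the normalization by $c\neq0$ is essential.

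\textbf{Step 3: extract a kernel covector.} Assume $A_x\neq0$ and use \eqref{eq:fund-frame} with a unitary $M$ to put the symmetric matrix $A_x$ in Takagi normal form $\operatorname{diag}(\sigma_0,\sigma_1,\sigma_2)$ with $\sigma_0>0$. Then split into cases according to $\rank A_x\in\{1,2,3\}$. In each case I would solve the identity from Step~2 component by component. The goal is to show that some covector $\nu\neq0$ satisfies $2P(\cdot,\cdot)(\nu,w)=\nu\wedge w$ for all $w$; I expect $\nu=e^0$ (the top singular direction), or a combination of the $e^a$ with $\sigma_a\neq0$. The bound $\dim\mathscr K_x\le1$ recorded after \eqref{b:eq:common-kernel} is consistent with a single distinguished direction, and it shows the answer does not depend on choices.

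\textbf{Main obstacle.} The hard step is Step~2: organizing the second-order compatibility so that the derivative unknowns genuinely decouple, leaving a clean algebraic obstruction rather than a weaker degeneracy such as a condition on $\det A$ only. I would first try the specific contraction of the $(1,1)$ commutator with $\eps_{pka}$. Failing that, I would write out the full linear system in the normal frame of Step~3 and compute the cokernel condition explicitly, by finite component calculation as the appendices suggest. The rank-two and rank-three cases of $A$ are where I would expect the casework to be delicate.
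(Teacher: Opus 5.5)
\textbf{There is a genuine gap, located in Step~2, where you place the main obstacle.} You assume that solvability at the single point $x$ of a linear system for first derivatives reduces to an algebraic identity in $(A,P)$ that produces a kernel. It does not.

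The first-order information that actually decouples is different from what you set up. $\nabla'P$ is not an unknown: Lemma~\ref{lem:general-curvature-transport} gives it algebraically as $F(T,P)$. The pure $(2,0)$ commutator of this transport law, together with the cyclic torsion identity, is linear in $X=\nabla'C$. The paper shows that this system is equivalent to the source equation $B(z)V(z)=3q(z)Gz$ with pencil $B(z)=J_p(z)^{\mathsf T}-[z]_\times$, and that $\mathscr K$ is exactly the constant left kernel of $B$ (Lemma~\ref{auto-sing:nondegeneracy}).

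When $\mathscr K_x=0$, this equation does not become inconsistent at $x$. For a regular pencil with $\gcd(q,\Delta)=1$ it only forces $G=0$ and $\mathcal D=0$, i.e.\ the recurrence $\nabla'C=\mathcal H(C)$. That state is excluded only by differentiating the recurrence on an open set:
its pure commutator gives $j=0$ and $\det C=0$;
the derivative of $G=0$ forces $C$ to have rank one;
the mixed commutator, symmetrized in the barred slots, then gives $a^2c=0$ and $7c+a\mathscr K_1=0$.

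Your $(1,1)$ commutator cannot be used pointwise either. It introduces $\nabla''\nabla'C$ and $\nabla'Q$ as fresh unknowns. The paper removes these only after $\nabla'C$ is known as an algebraic function on an open set, and after that symmetrization. Every other branch works the same way. The adjugate identities $\mathfrak r=0$ and $JC^{\mathsf T}z=0$ need $\Delta\equiv0$ on an open set. The linear-source, radial and rank-one-cut exclusions all differentiate factorizations of $q$ and $\Delta$, or the cut itself, which hold only on open strata.

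The missing idea is to argue by contradiction on the open set $\{A\ne0,\ \mathscr K=0\}$. You stratify it by regular versus singular pencil, the degree of $\gcd(q,\Delta)$, and the rank of the linear right syzygy, and prolong on each stratum. The pointwise statement then follows from openness. No covector $\nu$ is ever constructed from the data at $x$.

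\textbf{Step~3 would also fail as stated.} On any open set where $\mathscr K\ne0$, Lemma~\ref{hd-kernel-lemma} gives $\mathscr K\circ T=0$, i.e.\ $C\nu=0$ for $\nu\in\mathscr K$. Hence $A(\nu,\nu)=0$ and $A\nu=\tfrac12\eta\times\nu$. The theorem makes $\mathscr K\ne0$ on the whole open set $\{A\ne0\}$, so every kernel covector there is isotropic for $q(z)=z^{\mathsf T}Az$. In a Takagi frame $q(e^0)=\sigma_0>0$, so $\nu=e^0$ is impossible. In general $\nu$ depends on $\eta$ and $P$, not on $A_x$ alone.

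The bilinear identity you anticipate is also not what the compatibility produces. In its natural reading, contracting $2P(\cdot,\cdot)(v,w)-v\wedge w$ against the symmetric tensor $A$ kills the wedge term. That leaves a linear condition on $P$, which cannot single out a covector.
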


The proof has four steps. First, the Chern transport and commutator
equations are encoded as a polynomial equation for the first derivative
of torsion. Second, every regular polynomial pencil is excluded on the
locus $A\ne0$. Third, a singular pencil with no common kernel has a
linear syzygy: its invertible case gives $A=0$, while its rank-two case
gives an excluded moving rank-one cut. These alternatives leave a
nonzero common kernel. The proof is completed at the end of the section.

\subsection{General curvature transport}
The transport identity used in this reduction holds in every dimension
$n\ge2$. For this lemma let $V=T^{1,0}X$ have dimension $n$, and define
the normalized curvature remainder, in a unitary frame, by
\begin{equation}\label{hd-P-definition}
 \mathcal R_{ik}^{jl}:=\tfrac12(R_{i\bar j k\bar l}+R_{i\bar l k\bar j})
 =\frac c2(\delta_i^j\delta_k^l+\delta_i^l\delta_k^j)+cP_{ik}^{jl}.
\end{equation}
Polarization gives $P\in\Lambda^2V^*\otimes\Sym^2V$.

\begin{lemma}[Chern transport of the normalized curvature remainder]
\label{lem:general-curvature-transport}
On a Hermitian manifold with constant Chern HSC $c\ne0$, the tensor $P$
satisfies
\begin{equation}\label{hd-full-F}
\begin{aligned}
 \nabla_pP_{ik}^{jl}={}&\tfrac14(
 T^j{}_{ip}\delta_k^l+T^l{}_{ip}\delta_k^j
 +T^j{}_{ik}\delta_p^l+T^l{}_{ik}\delta_p^j
 +T^j{}_{pk}\delta_i^l+T^l{}_{pk}\delta_i^j)\\
 &+\tfrac12(T^r{}_{ip}P_{rk}^{jl}
          +T^r{}_{ik}P_{rp}^{jl}+T^r{}_{pk}P_{ri}^{jl}).
\end{aligned}
\end{equation}
No condition on the common-curvature kernel is required.
\end{lemma}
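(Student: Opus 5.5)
The plan is to differentiate the defining relation \eqref{hd-P-definition} and feed in the differential Bianchi identity \eqref{p:eq:diffBi}. We then solve the resulting linear system for $\nabla P$ using only its antisymmetry in $i,k$.

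First, the space-form part $\frac c2(\delta_i^j\delta_k^l+\delta_i^l\delta_k^j)$ is Chern-parallel, since $\nabla h=0$ in a unitary frame. Hence
\[
 X_{pik}:=\nabla_p\mathcal R_{ik}^{jl}=c\,\nabla_pP_{ik}^{jl},
\]
with $j,l$ suppressed. In particular $X_{pik}=-X_{pki}$, because $P\in\Lambda^2V^*\otimes\Sym^2V$ and $\nabla$ preserves this type.

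Second, apply \eqref{p:eq:diffBi} once as written and once with $j\leftrightarrow l$, and average. This yields
\[
 X_{pik}-X_{ipk}=B_{pik},\qquad
 B_{pik}:=T^r{}_{ip}\,\mathcal R_{rk}^{jl}
 =T^r{}_{ip}\Bigl(\tfrac c2(\delta_r^j\delta_k^l+\delta_r^l\delta_k^j)+cP_{rk}^{jl}\Bigr).
\]

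Third, solve for $X$ by the standard cyclic trick. Using antisymmetry in the last two slots, write $B_{pik}=X_{pik}+X_{ikp}$, $B_{ikp}=X_{ikp}+X_{kpi}$ and $B_{kpi}=X_{kpi}+X_{pik}$. Then
\[
 2X_{pik}=B_{pik}-B_{ikp}+B_{kpi}.
\]
Dividing by $2c$ gives $\nabla_pP_{ik}^{jl}$ explicitly.

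It remains to expand the three $B$ terms and rewrite them in the normalized form of \eqref{hd-full-F}. The only tools needed are $T^r{}_{ab}=-T^r{}_{ba}$ and $P_{ab}^{jl}=-P_{ba}^{jl}$. The $\delta$-terms produce the six torsion terms with coefficient $\frac c2\cdot\frac1{2c}=\frac14$. The $P$-terms produce $\frac12(T^r{}_{ip}P_{rk}^{jl}+T^r{}_{ik}P_{rp}^{jl}+T^r{}_{pk}P_{ri}^{jl})$ after the index reorderings.

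No kernel hypothesis enters anywhere. The step I expect to be the only real obstacle is this last sign and index bookkeeping: each of $B_{ikp}$ and $B_{kpi}$ must be rewritten so that the torsion pairs read $(i,p)$, $(i,k)$, $(p,k)$ as in the statement. As a check, I would confirm that the final expression is antisymmetric in $i,k$ and symmetric in $j,l$.
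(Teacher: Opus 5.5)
Your proposal is correct and matches the paper's proof: differentiated polarization makes $\nabla_p\mathcal R_{ik}^{jl}$ antisymmetric in $(i,k)$, the second Bianchi identity gives $X_{pik}-X_{ipk}=T^r{}_{ip}\mathcal R_{rk}^{jl}$, and the cyclic combination $2X_{pik}=B_{pik}-B_{ikp}+B_{kpi}$ yields \eqref{hd-Bianchi-inversion}, after which the parallel space-form term and division by $c$ give \eqref{hd-full-F}. The only difference is presentational: the paper fixes a barred argument $\bar Y$ and polarizes the barred slots at the end, which is equivalent to your direct averaging over $j\leftrightarrow l$.
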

\begin{proof}
Extend all arguments parallel at a point and fix $\bar Y$. Put
$r(U,W)=R(U,\bar Y,W,\bar Y)$ and
$d_{pik}=(\nabla_pR)_{i\bar Y k\bar Y}$.
Differentiated polarization gives $d_{pik}=-d_{pki}$, and second
Bianchi gives
$d_{pik}-d_{ipk}=r(T(e_i,e_p),e_k)$.
Using the three differences in the identity
\[
 2d_{pik}=(d_{pik}-d_{ipk})-(d_{ikp}-d_{kip})
                         +(d_{kpi}-d_{pki})
\]
and polarizing the barred arguments yields
\begin{equation}\label{hd-Bianchi-inversion}
 (\nabla_p\mathcal R)_{ik}^{jl}
 =\tfrac12(T^r{}_{ip}\mathcal R_{rk}^{jl}
          +T^r{}_{ik}\mathcal R_{rp}^{jl}
          +T^r{}_{pk}\mathcal R_{ri}^{jl}).
\end{equation}
Substitute \eqref{hd-P-definition}. The space-form term is parallel,
and division by the nonzero constant $c$ gives \eqref{hd-full-F}.
\end{proof}

We now return to complex dimension three.

Put $\mathsf P^a=(\mathsf P^{a,bd})$. Polynomial contractions
$u\cdot v=u^{\mathsf T}v$ are bilinear, while $\langle u,v\rangle_h$
is Hermitian. For a vector $\beta$ and a covector $z$, write
$\beta(z)=\beta^{\mathsf T}z$.
Write $P_{ij}^{bd}=\eps_{ija}\mathsf P^{a,bd}$ and let $z$ be a formal
covector. The canonical weights are those of Section~\ref{sec:threefold-tensors}.
\subsection{Transport and source compression}
Contracting \eqref{hd-full-F} with $\eps_{ika}/2$ gives
\begin{equation}\label{eq:fund-F}
\nabla_p\mathsf P^{a,bd}=F_p^{a,bd}(C,\mathsf P):=
\tfrac12(C^{ab}\delta_p^d+C^{ad}\delta_p^b-\delta_p^aA^{bd})
+C^{ar}\eps_{rpe}\mathsf P^{e,bd}
+\tfrac12\delta_p^a\eta_e\mathsf P^{e,bd}.
\end{equation}
Indeed, before contraction its right side is
\[
\tfrac12\sum_r\{T^r{}_{ip}\mathcal R_{r\bar b k\bar d}
+T^r{}_{ik}\mathcal R_{r\bar b p\bar d}
+T^r{}_{pk}\mathcal R_{r\bar b i\bar d}\},
\]
where $\mathcal R$ is the barred-symmetric curvature. Substituting
\eqref{b:eq:defect} and
$\eps_{ijk}\eps_{uvk}=\delta_{iu}\delta_{jv}-\delta_{iv}\delta_{ju}$
gives \eqref{eq:fund-F}.

At a fixed point, regard $C$ and $\mathsf P$ as coefficients and
$X_p=\nabla_pC$ as the unknown first derivative. Differentiating
\eqref{eq:fund-F} and using the pure-type commutator gives
\begin{equation}\label{eq:fund-rawpure}
\begin{split}
0={}&D_CF_p[X_q]+D_{\mathsf P}F_p[F_q]-D_CF_q[X_p]-D_{\mathsf P}F_q[F_p]
+T^r{}_{qp}F_r.
\end{split}
\end{equation}
Here $F_p$ is linear in its first matrix input; $D_{\mathsf P}F_p$ consists of its
two terms linear in $\mathsf P$. The cyclic torsion Bianchi identity~\eqref{eq:cyclic} gives three
further equations,
\begin{equation}\label{eq:fund-divergence}
\sum_pX_p^{pb}-\sum_p\eta_pC^{pb}=0,\qquad b=0,1,2.
\end{equation}

\subsubsection*{The derivative defect}
Let $(T_p)^l{}_k=T^l{}_{pk}$.  For any matrix $Z$, write
$\eta_i(Z)=\eps_{kia}Z^{ak}$ for its linear torsion-trace contraction; also set
$T_p(Z)^l{}_k=\eps_{pka}Z^{al}$.
We choose the algebraic reference derivative $\mathcal H(C)$ below
because it already satisfies the contracted torsion Bianchi equation
and reduces the remaining pure commutator to a linear equation for
$\mathcal D=X-\mathcal H$. The symmetric tensor $G$ is the coefficient
of its inhomogeneous term. Define
\begin{equation}\label{eq:fund-HDG}
\mathcal{H}_p=T_pC-CT_p^{\mathsf T}+\eta_pC,
\qquad \mathcal{D}_p=X_p-\mathcal{H}_p,
\qquad j=-C\eta=-A\eta=-C^{\mathsf T}\eta,
\qquad {G}=A-\eta_a\mathsf P^a.
\end{equation}
Epsilon contraction yields
\begin{align}
T_pC+CT_p^{\mathsf T}+\eta_pC&=-e_pj^{\mathsf T},\label{eq:fund-Haux}\\
\Sym\mathcal{H}_p&=2\eta_pA+\tfrac12(e_pj^{\mathsf T}+je_p^{\mathsf T}),\label{eq:fund-Hsym}\\
\eta_a(\mathcal{H}_p)&=-4(\operatorname{adj} C)_{pa}+2\eta_p\eta_a
+3\eps_{pat}j^t,\label{eq:fund-etaH}\\
\sum_p\mathcal{H}_p^{pb}-\sum_p\eta_pC^{pb}&=0.\label{eq:fund-Hdiv}
\end{align}

For $(q,p)=(1,2),(2,0),(0,1)$, indexed by $k=0,1,2$, substitution of
$X=\mathcal{H}+\mathcal{D}$ into \eqref{eq:fund-rawpure} gives exactly
\begin{equation}\label{eq:fund-pure}
\boxed{
F_p^{a,bd}(\mathcal{D}_q,\mathsf P)-F_q^{a,bd}(\mathcal{D}_p,\mathsf P)
+\tfrac32 A^{ka}{G}^{bd}=0,
\qquad\sum_p\mathcal{D}_p^{pb}=0.}
\end{equation}
Indeed the terms involving $\mathcal{D}$ are the displayed linear difference;
the remaining terms involving $\mathsf P$ combine to
$-3(C^{ka}+C^{ak})\eta_e\mathsf P^{e,bd}/4$, and the terms independent of $\mathsf P$
combine to $3(C^{ka}+C^{ak})A^{bd}/4$. Equation
\eqref{eq:fund-Hdiv} gives the shifted divergence.

\subsubsection*{The recovery mechanism}
The diagonal and first antisymmetric diagonal derivative determine
a biquadratic polynomial up to an algebraic curvature tensor.
In three dimensions that remaining tensor is a symmetric form on
$\Lambda^2\C^3$, hence has six coefficients. Lemma~\ref{lem:biquadratic-all-n}
and Corollary~\ref{cor:six-evaluations} give its inverse polarization
and the six evaluations that recover these coefficients. These recover
the part of the derivative that is invisible to the polynomial compression.

\subsubsection*{Exact recovery}
For arbitrary $D_p$ satisfying $\sum_pD_p^{pb}=0$, put
$E_p=\Sym D_p$ and $\beta^b=\sum_pE_p^{pb}$. There is a unique symmetric
matrix ${\mathsf H_{\rm rec}}_{pj}$ such that
\begin{equation}\label{eq:fund-Dsplit}
D_p^{ab}=E_p^{ab}+\tfrac12(\delta_p^b\beta^a-\delta_p^a\beta^b)
-\tfrac12\eps_{abj}{\mathsf H_{\rm rec}}_{pj}.
\end{equation}
For example, contraction of the skew part gives
$\eta_j(D_p)=-{\mathsf H_{\rm rec}}_{pj}+\eps_{pjr}\beta^r$. Thus the parametrization has exactly
$24$ free components.
Define
\[
V_p(z)=z^{\mathsf T}E_pz-2z_p\beta(z),\qquad
p_a(z)=z^{\mathsf T}\mathsf P^az,\qquad {q}(z)=z^{\mathsf T}Az,
\]
For this polynomial vector $p$, let
$(J_p)_{ab}=\partial p_a/\partial z_b$. Define the linear pencil
\begin{equation}\label{eq:fund-pencil}
{B}(z)=J_p(z)^{\mathsf T}-[z]_\times,
\quad B_{ba}=2\mathsf P^{a,bd}z_d-\eps_{bda}z_d,
\quad\Delta=\det{B},\quad{J}=\operatorname{adj}{B}.
\end{equation}
Every identity in $z$ is an identity of polynomial coefficients.
In particular, $q\ne0$ or $\Delta\ne0$ means that the corresponding
polynomial is not identically zero at the base point.
The $E\leftrightarrow V$ transformation is invertible: if $V_p^{ab}$
is the symmetric coefficient matrix of $V_p$ and
$\gamma^b=\sum_pV_p^{pb}$, then
\begin{equation}\label{eq:fund-recovery}
\beta=-\gamma/3,
\qquad E_p^{ab}=V_p^{ab}-\tfrac13(\delta_p^a\gamma^b+\delta_p^b\gamma^a).
\end{equation}

\begin{proposition}[Exact equivalence of source compression]\label{prop:fund-compression}
For every symmetric $A$ and every $\mathsf P$, with $(q,p)=(1,2),(2,0),(0,1)$ indexed by $k=0,1,2$, the full system
\[
F_p^{a,bd}(D_q,\mathsf P)-F_q^{a,bd}(D_p,\mathsf P)+\tfrac32 A^{ka}Z^{bd}=0,
\quad \sum_pD_p^{pb}=0,\quad Z=Z^{\mathsf T},
\]
is naturally equivalent to the polynomial system
\begin{equation}\label{eq:fund-compression}
{B}(z)V(z)=3{q}(z)Zz,\qquad Z=Z^{\mathsf T}.
\end{equation}
Here $D$ is the unknown triple of matrices, $Z$ is an unknown symmetric
matrix, and $V$ is an unknown vector of quadratic polynomials.
For fixed $A,\mathsf P$, this correspondence is a bijection between the
two solution spaces: every polynomial solution $(V,Z)$ determines a
unique $D$ by \eqref{eq:fund-recovery}, the six-value recovery formula,
and \eqref{eq:fund-Dsplit}.
\end{proposition}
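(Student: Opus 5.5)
The plan is to pass to polynomials by contracting the tensor system against a formal covector $z$ and then to invert the contraction. Given $D$ with $\sum_pD_p^{pb}=0$, split it by \eqref{eq:fund-Dsplit} into $E_p=\Sym D_p$, the vector $\beta$, and the symmetric matrix $\mathsf H_{\rm rec}$. Then form $V_p(z)=z^{\mathsf T}E_pz-2z_p\beta(z)$.

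First I will check the forward direction. Fix the index $a$ and contract the boxed pure equation against $z_bz_d$, then against $\eps_{kqp}$-type weights so that the pair $(q,p)$ becomes a vector index. Term by term, this should reproduce the following:
\begin{itemize}
\item the linear part of $F_p(D_q)-F_q(D_p)$ gives $(J_p^{\mathsf T}-[z]_\times)V$, where the $-[z]_\times$ piece comes from the $\frac12(C^{ab}\delta^d_p+\dots)$ terms after substituting $C\mapsto D$;
\item the $C^{ar}\eps_{rpe}\mathsf P^{e,bd}$ term gives the Jacobian $2\mathsf P^{a,bd}z_d$;
\item the source $\frac32A^{ka}Z^{bd}$ gives $3q(z)Zz$.
\end{itemize}
The point to watch is that $\mathsf H_{\rm rec}$ enters only through $\eps_{abj}$, which is killed by the symmetric contraction $z_bz_d$. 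So the compressed equation sees $D$ only through $V$, and this is exactly the loss of information the recovery must repair.

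Second, I will show the compression loses nothing. The map $E\mapsto V$ is invertible by \eqref{eq:fund-recovery}, which one checks by taking the trace $\gamma=\sum_pV_p^{pb}=-3\beta$. What remains is to recover $\mathsf H_{\rm rec}$ (six unknowns) from the full uncompressed equations once $V$ and $Z$ are known. For this I will use the recovery mechanism. The full pure equation is a biquadratic identity in $(z,w)$, obtained by contracting against $z_bw_d$ with $z\neq w$. Its diagonal is the compressed equation, and its first antisymmetric diagonal derivative is determined by the diagonal data up to an algebraic curvature tensor on $\Lambda^2\C^3$. By Lemma~\ref{lem:biquadratic-all-n} that tensor is a symmetric form with six coefficients, and Corollary~\ref{cor:six-evaluations} gives six evaluations that determine it. Matching those six coefficients with the six entries of $\mathsf H_{\rm rec}$ gives a linear map, which I expect to be a nonzero multiple of the identity, so $\mathsf H_{\rm rec}$ is uniquely solved.

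Third, I will verify the converse. Starting from a polynomial solution $(V,Z)$, define $D$ by the recovery formulas. Then show the full system holds: the symmetric part holds by the compressed equation, and the antisymmetric part holds by the choice of $\mathsf H_{\rm rec}$. The divergence constraint $\sum_pD_p^{pb}=0$ is built into \eqref{eq:fund-Dsplit}, since the trace of the $\beta$-terms cancels $\beta$ against $\sum E$. Counting dimensions, $24$ free components of $D$ match $3\times 6$ coefficients of $V$ plus $6$ of $\mathsf H_{\rm rec}$. Combined with injectivity in both directions, this gives the bijection.

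The main obstacle is the second step: showing that the six-value recovery is compatible with all antisymmetric components, rather than only consistent on the diagonal. This is an explicit finite linear-algebra check. I would first do it in the basis $e_0,e_1,e_2$ with $\mathsf P=0$ and then argue that the $\mathsf P$-terms enter both sides identically through $B$.
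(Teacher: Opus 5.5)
Your skeleton is the paper's: parametrize $D$ by $(E,\beta,\mathsf H_{\rm rec})$ through \eqref{eq:fund-Dsplit}, invert $E\leftrightarrow V$ by \eqref{eq:fund-recovery}, and fix $\mathsf H_{\rm rec}$ with Lemma~\ref{lem:biquadratic-all-n} and Corollary~\ref{cor:six-evaluations}. However, the compression is set up incorrectly, and the step connecting $BV=3qZz$ to the hypotheses of that lemma is missing.

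\textbf{The compression.} Contracting $(b,d)$ with $z_bz_d$ while $k$ and $a$ stay free leaves a $3\times3$ array of quadrics. Its source term is $\tfrac32A^{ka}\,z^{\mathsf T}Zz$, not $3q(z)Zz$; the free index of $BV-3qZz$ has to come from the slot $b$. The right object is the biquadratic $\mathcal Q(w,z)$ obtained by contracting $(k,a)$ with $w_kw_a$ and $(b,d)$ with $z_bz_d$, with source $\tfrac32q(w)\,z^{\mathsf T}Zz$ included. It captures the whole system: once $\sum_pD_p^{pb}=0$, the part of the linear difference that is skew in $(k,a)$ vanishes identically. Indeed, the $\delta_p^d,\delta_p^b$ terms cancel the $-\tfrac12\delta_p^a(\Sym D_q)^{bd}$ term, and the $\eps_{rpe}\mathsf P$ term cancels the $\eta\mathsf P$ term.

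\textbf{Why $\mathsf H_{\rm rec}$ drops out.} Your reason is wrong: $\eps_{abj}$ is skew in the output index $a$ and one symmetric slot, so $z_bz_d$ does not annihilate it. What actually happens is that, after substituting \eqref{eq:fund-Dsplit},
$\mathcal Q(w,z)=-p(z)\cdot V(w)+\sum_{p,a,b}z_av^pE_p^{ab}w_b+\tfrac12\mathsf H_{\rm rec}(v,v)+\tfrac32q(w)\,z^{\mathsf T}Zz$ with $v=z\times w$. So $\mathsf H_{\rm rec}$ enters only through a term that vanishes to second order on $w=z$. One then gets $BV-3qZz=-\partial_z\mathcal Q(w,z)|_{w=z}$, with the derivative taken in the second argument only; this is \eqref{eq:fund-compression-identity}.

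\textbf{The missing step.} You need to show that $BV=3qZz$ yields $d_{\mathcal Q}=0$ and $j_{\mathcal Q}=0$. Note that the diagonal $\mathcal Q(z,z)$ is a scalar quartic, not the compressed vector equation. Write $\mathcal Q_\Delta(z)=\mathcal Q(z,z)$ and $\mathcal Q_a(w,z)=\tfrac12(\mathcal Q(w,z)-\mathcal Q(z,w))$, so that $BV-3qZz=-\tfrac12\partial_z\mathcal Q_\Delta+\partial_w\mathcal Q_a(w,z)|_{w=z}$. Contract with $z$ and use Euler's identity: $z\cdot\partial_z\mathcal Q_\Delta=4\mathcal Q_\Delta$ and $z\cdot\partial_w\mathcal Q_a|_{w=z}=2\mathcal Q_a(z,z)=0$. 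Hence $\mathcal Q_\Delta=0$, then $\partial_z\mathcal Q_\Delta=0$, then $j_{\mathcal Q}=0$.

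With this in hand, the obstacle you single out is automatic. The lemma says the residual computed with $\mathsf H_{\rm rec}=0$ is exactly $-\tfrac12\Sigma(z\times w,z\times w)$. The explicit $\tfrac12\mathsf H_{\rm rec}(v,v)$ term shows the matching map is the identity, not merely some nonzero multiple. So $\mathsf H_{\rm rec}=\Sigma$ cancels every component, and it is unique because the vectors $z\times w$ exhaust $\C^3$.

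The roles in your third step are reversed: $j_{\mathcal Q}=0$ removes the entire part of $\mathcal Q$ antisymmetric under $w\leftrightarrow z$, while $\mathsf H_{\rm rec}$ cancels the remaining exchange-symmetric, curvature-type part. The converse direction is immediate from the same identity. Your fallback of checking at $\mathsf P=0$ and arguing that $\mathsf P$ enters ``identically through $B$'' is not an argument, and it is not needed.
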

\begin{proof}
Let $v=z\times w$ and denote the linear difference of the two $F$ terms
by $\mathcal{L}_{\mathsf P}D$. Substituting \eqref{eq:fund-Dsplit} gives
\[
(\mathcal{L}_{\mathsf P}D)(w,w;z,z)
=-p(z)\cdot V(w)+\sum_{p,a,b}z_av^pE_p^{ab}w_b+\tfrac12{\mathsf H_{\rm rec}}(v,v).
\]
The first index pair and the second pair are each symmetric. Put
\[
\mathcal Q(w,z)=(\mathcal{L}_{\mathsf P}D)(w,w;z,z)+\tfrac32{q}(w)z^{\mathsf T}Zz,
\]
and let $\mathcal Q_a(w,z)=(\mathcal Q(w,z)-\mathcal Q(z,w))/2$ and
$\mathcal Q_\Delta(z)=\mathcal Q(z,z)$. Direct differentiation of the
displayed polynomial gives
\begin{equation}\label{eq:fund-compression-identity}
{B} V-3{q} Zz=-\tfrac12\partial_z\mathcal Q_\Delta
+\left.\partial_w\mathcal Q_a(w,z)\right|_{w=z}.
\end{equation}
The ${\mathsf H_{\rm rec}}$ term has zero diagonal and first diagonal derivative.
If $BV=3qZz$, contraction with $z$ and Euler's identity give
$\mathcal Q_\Delta=0$, then the first diagonal derivative of
$\mathcal Q_a$ vanishes. With ${\mathsf H_{\rm rec}}=0$, the polarization lemma and
Corollary~\ref{cor:six-evaluations} therefore give the unique ${\mathsf H_{\rm rec}}$
canceling the residual. Equations \eqref{eq:fund-recovery} and
\eqref{eq:fund-Dsplit} recover $D$. The converse follows from
\eqref{eq:fund-compression-identity}.
\end{proof}

For the given metric take $D=\mathcal D$ and $Z=G$. Thus $Z$ is now
fixed by $C,\mathsf P$, and the recovered $D$ is the derivative defect.
The polynomial equation allows the possible derivatives to be
classified by the determinant and common factors of $B$. The identity
${G} z=C^{\mathsf T}z-{B}\eta/2$ gives
\begin{equation}\label{eq:fund-source}
\boxed{{B} V=3{q}{G} z,\qquad
W=V+\tfrac32{q}\eta,\qquad {B} W=3{q} C^{\mathsf T}z.}
\end{equation}

\subsubsection*{Source and determinant derivatives}
For a symmetric matrix $E_i$, write $E_i(z)=z^{\mathsf T}E_i z$.
Taking symmetric parts in $X_i=\mathcal H_i+\mathcal D_i$ and
using \eqref{eq:fund-Hsym} gives
\begin{equation}\label{eq:fund-a-derivative}
 \nabla_iq=E_i(z)+2\eta_iq+z_i j(z)
 =V_i+2\eta_iq+z_i(2\beta+j)(z).
\end{equation}
Put $L_i=C[e_i]_\times+e_i\eta^{\mathsf T}/2$ and
$\mathfrak r=\operatorname{tr}(JC^{\mathsf T})$.  The complete
first transport and its consequences are
\begin{equation}\label{eq:fund-transport}
\begin{aligned}
 \nabla_i p(z)&=(Cz)z_i-\tfrac12e_iq+L_i p(z),\qquad\nabla_iB=B L_i^{\mathsf T}+2z_iC^{\mathsf T}
                              -C^{\mathsf T}ze_i^{\mathsf T},\\
 \nabla_i\Delta&=\tfrac32\eta_i\Delta+2z_i\mathfrak r
                                         -(JC^{\mathsf T}z)_i.
\end{aligned}
\end{equation}
Contract \eqref{eq:fund-F} with $z_bz_d$ to obtain the first line.
Differentiate this line in the auxiliary variable and transpose:
\[
 \nabla_iB=z_iC^{\mathsf T}+e_i(Cz)^{\mathsf T}
                  -Az e_i^{\mathsf T}+D_zp^{\mathsf T}L_i^{\mathsf T}.
\]
Insert $D_zp^{\mathsf T}=B+[z]_\times$,
$C=A-[\eta]_\times/2$, and
$[z]_\times[e_i]_\times=e_i z^{\mathsf T}-z_iI$.
The terms outside $BL_i^{\mathsf T}$ combine to
$2z_iC^{\mathsf T}-C^{\mathsf T}ze_i^{\mathsf T}$, giving the
second line.  The cofactor derivative of a determinant is
$\nabla_i\Delta=\operatorname{tr}(J\nabla_iB)$ without an
invertibility assumption.  Use $JB=\Delta I$ and
$\operatorname{tr}L_i=3\eta_i/2$ to obtain the last line.

Equation \eqref{eq:fund-frame} gives
\[
 p'(z')=\det(M)Np(N^{\mathsf T}z'),\qquad
 B'(z')=\det(M)NB(N^{\mathsf T}z')N^{\mathsf T},
\]
and hence
\[
 q'(z')=\det(M)q(N^{\mathsf T}z'),\qquad
 \Delta'(z')=\det(M)\Delta(N^{\mathsf T}z').
\]
Thus $q$ and $\Delta$ have the same canonical weight.
At a point where $q$ is a nonzero polynomial, $\Delta/q$ is a scalar
homogeneous rational function of degree one in $z$. It is a scalar
linear polynomial precisely when $q$ divides $\Delta$ in
$\mathbb C[z_0,z_1,z_2]$. In that case its coefficients form a vector
section: the canonical weights cancel in the quotient.
All factor derivatives below are covariant derivatives.

\subsection{A recurrent torsion obstruction}
\begin{proposition}[Vanishing of primitive torsion at the recurrent endpoint]
\label{alg14:recurrence}
Let $U$ be an open subset of a Hermitian threefold with $H_h\equiv c\ne0$.
If $\mathcal D=0$ and $G=0$ on $U$, then $A=0$ on $U$.
\end{proposition}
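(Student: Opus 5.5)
Under the hypotheses $\mathcal D=0$ and $G=0$, the torsion is fully recurrent on $U$:
\[
\nabla_pC=\mathcal H_p(C)=T_pC-CT_p^{\mathsf T}+\eta_pC,
\]
and the mixed curvature is tied to the primitive torsion by $A=\eta_a\mathsf P^a$. The plan is to play this first-order recurrence off against the transport equation \eqref{eq:fund-F} for $\mathsf P$ and the antiholomorphic first Bianchi relation \eqref{eq:fund-barC}. Together these should produce an algebraic identity forcing $A=0$ pointwise.

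\textbf{Step 1: differentiate the source identity.} Differentiate $G=A-\eta_a\mathsf P^a=0$ in a holomorphic direction $p$. The pieces are as follows.
\begin{itemize}
\item $\nabla_pA$ is given by $\Sym\mathcal H_p$ from \eqref{eq:fund-Hsym}.
\item $\nabla_p\eta$ is the linear contraction $\eta(\mathcal H_p)$ from \eqref{eq:fund-etaH}.
\item $\nabla_p\mathsf P$ is $F_p(C,\mathsf P)$ from \eqref{eq:fund-F}.
\end{itemize}
This yields, for each $p$, an algebraic equation in $C$ and $\mathsf P$ alone. Its inhomogeneous term comes from the $\tfrac12(C^{ab}\delta_p^d+C^{ad}\delta_p^b-\delta_p^aA^{bd})$ part of $F_p$ contracted against $\eta$, together with the $-4\operatorname{adj}C$ term of $\eta(\mathcal H_p)$ contracted against $\mathsf P$.

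Substituting $\eta_a\mathsf P^a=A$ should collapse most terms. I expect an identity of the shape
\[
\tfrac12\bigl(e_p\,(C^{\mathsf T}\eta)^{\mathsf T}+\dots\bigr)-\tfrac12\eta_pA-4(\operatorname{adj}C)_{pa}\mathsf P^a+\dots=0 .
\]
Taking the trace over $p$ with the appropriate contractions should give scalar relations such as $\eta_p\nabla_p$ of $\operatorname{tr}A$ or $|A|^2$.

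\textbf{Step 2: use the conjugate direction.} The holomorphic relations of Step 1 likely only give $A\eta=0$ together with constraints on $\operatorname{adj}C\cdot\mathsf P$. To go further, I would apply $\nabla_{\bar j}$ to the recurrence $\nabla_pC=\mathcal H_p$. Commuting via \eqref{c0:eq:ricci} gives
\[
\nabla_p\nabla_{\bar j}C-\nabla_{\bar j}\nabla_pC=-R_{p\bar j}\cdot C .
\]
Into this I insert the following.
\begin{itemize}
\item $\nabla_{\bar j}C=-2\cP-2\eps Q$, from \eqref{eq:fund-barC}.
\item The decomposition of Lemma~\ref{c0:lem:decomp} for $R_{p\bar j}$.
\item $\nabla_p\mathsf P$, again from \eqref{eq:fund-F}.
\end{itemize}
The result is an algebraic system relating $Q$, $\mathsf P$ and $C$.

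Contracting this system with $\overline{A}$, by taking the Hermitian inner product in the $\Sym^2V\otimes K_X$ slots, should isolate a term $c\,|A|^2$ coming from the space-form part $\tfrac c2(\delta\delta+\delta\delta)$ of the curvature acting on $C$. All remaining terms should be expressible through $A\eta$ or through $G$, both of which vanish. The target conclusion is $c|A|^2=0$ pointwise, hence $A=0$ because $c\ne0$.

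\textbf{Main obstacle.} The hard part is the bookkeeping in Step 2: the $Q$-terms and the $\cP$ quadratic terms must cancel after pairing with $\overline A$, rather than leaving an indefinite remainder. If exact cancellation fails, the fallback is an open-set argument.
\begin{itemize}
\item On the open set where $A\ne0$, normalize the frame using \eqref{eq:fund-frame} so that $A$ is diagonal.
\item Use the recurrence to show that $\log|A|^2$ has $\partial\bar\partial$ equal to an explicit curvature expression.
\item Show that this expression contradicts $H_h\equiv c\ne0$ through the polarization \eqref{p:eq:HSC} evaluated at eigenvectors of $A$.
\end{itemize}
By the analyticity of Proposition~\ref{univ:analyticity}, vanishing on a dense open set then gives $A=0$ on all of $U$.
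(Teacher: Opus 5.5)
\textbf{The gap is in Step~2, and it is more than bookkeeping: the contraction you propose cannot isolate $c|A|^2$.}

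After you symmetrize the mixed commutator of $\nabla'C=\mathcal H(C)$ in $(j,l)$, the $\nabla'Q$ terms drop, but $Q$ itself survives. It enters through $\nabla_{\bar j}C=-2\cP-2\eps Q$ inside $D\mathcal H_p[\nabla_{\bar j}C]$, and through the $Q$-block of $R_{p\bar j}\cdot C$. Already the scalar part of $Q$ behaves exactly like $c$:
\begin{itemize}
\item For $Q=q_0I$, the symmetrized $Q$-terms are $-\tfrac32q_0(\delta_{pj}C^{al}+\delta_{pl}C^{aj})+q_0\delta_{pa}A^{jl}$.
\item The inhomogeneous $c$-terms are $-\tfrac34c(\delta_{pj}C^{al}+\delta_{pl}C^{aj})+\tfrac12c\,\delta_{pa}A^{jl}$. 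They come from the space-form block of $R_{p\bar j}\cdot C$ and from $\tfrac12(C^{aj}\delta_p^l+C^{al}\delta_p^j-\delta_p^aA^{jl})$ in $\nabla_p\cP$.
\end{itemize}
So these terms enter only through $c+2q_0$, and for $Q=-\tfrac c2I$ they cancel in every component.

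Your contraction (trace $p=j$, pair $(a,l)$ with $\bar A$) therefore returns $-\tfrac52\bigl(c+\tfrac23\tr Q\bigr)|A|^2$, plus trace-free-$Q$ terms and $\cP$-terms. The other natural contraction ($p=a$, pair $(j,l)$ with $\bar A$) gives these terms coefficient zero. Since $\tr Q$ is an independent curvature component, not expressible through $A\eta$ or $G$, the identity you obtain does not force $A=0$ without further control of $Q$.

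The fallback does not repair this. The HSC polarization \eqref{p:eq:HSC} only sees the symmetric--symmetric block of the curvature. It gives no information about $\cP$ or $Q$.

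\textbf{What is missing is the extra input that pins down $Q$ and $\cP$.} The paper obtains it in three stages that your proposal does not have.
\begin{enumerate}
\item Getting $j=0$ does not come from differentiating $G$. When $G=0$, that derivative is $e_pj^{\mathsf T}+je_p^{\mathsf T}+4(\operatorname{adj}C)_{pa}\mathsf P^a-2\eps_{per}j^r\mathsf P^e$, which mixes $j$ with $\operatorname{adj}C\cdot\mathsf P$. The paper instead takes the pure $(2,0)$ commutator of the recurrence itself. This is a cubic identity whose $(a,b)$-symmetrization is $6A^{ab}j^l=0$, giving $j=0$ and then $\det C=0$.
\item Since $G=0$ and $A\ne0$ force $\eta\ne0$, the null equations $C\eta=C^{\mathsf T}\eta=0$ give $\operatorname{adj}C=\kappa\eta\eta^{\mathsf T}$. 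Only now does your Step~1 bite: it gives $4\kappa\eta_pA=0$. Hence $C$ has rank one and $\nabla'C=2\eta C$.
\item Because $C$ stays rank one on an open set, $\nabla''C=-2\cP-2\eps Q$ must be tangent to the rank-one variety. These barred tangency equations tie entries of $Q$ to entries of $\cP$, and this is what breaks the $c+2q_0$ degeneracy.
\end{enumerate}
Only the symmetrized mixed commutator together with these tangency rows, in the frame $C=\varrho e_0\otimes(ae_0+e_2)$, yields the two residual identities $a^2c=0$ and $7c+a\mathscr K_1=0$. These contradict $c\ne0$. Note that the final contradiction is not of the form $c|A|^2=0$.
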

\begin{proof}
Suppose $A$ is nonzero at some point of $U$. Restrict to a nonempty
neighborhood on which $A\ne0$. Since $\mathcal D=0$, the connection
equation on this neighborhood is
\[
 \nabla'_pC=\mathcal H_p(C)
    =T_pC-CT_p^{\mathsf T}+\eta_pC.
\]
Its pure commutator is
\[
 \mathcal K^{a,bl}
 =(2C^{ab}+C^{ba})j^l-C^{al}j^b+C^{bl}j^a
      +2\det(C)\eps_{abl},\qquad j=-C\eta=-A\eta.
\]
For the cycles \((q,p)=(1,2),(2,0),(0,1)\), indexed by \(a\),
the left side is
\(D\mathcal H_p[\mathcal H_q]-D\mathcal H_q[\mathcal H_p]
+T^r_{qp}\mathcal H_r\); it vanishes by the Chern
\((2,0)\) commutator.  Its full derivative is
\[
 D\mathcal H_p[Z]=T_p(Z)C+T_p(C)Z-ZT_p(C)^{\mathsf T}
 -CT_p(Z)^{\mathsf T}+\eta_p(Z)C+\eta_p(C)Z .
\]
Substitution gives the cubic formula. Symmetrizing \(a,b\) yields \(6A^{ab}j^l=0\).
As \(A\ne0\), it follows that \(j=0\) and then \(\det C=0\).
Moreover \(G=0\) and \(A\ne0\) imply \(\eta\ne0\).
The left and right null equations \(C\eta=C^{\mathsf T}\eta=0\)
give
\[
 \operatorname{adj}C=\kappa\eta\eta^{\mathsf T}
\]
locally: a frame putting the nonzero
covector \(\eta\) along the first coordinate makes the first row
and column of \(C\) zero, so only one cofactor can remain.
Using \eqref{eq:fund-Hsym}--\eqref{eq:fund-etaH},
differentiate $G=A-\eta_a\mathsf P^a=0$. Since $j=0$, this gives
\[
 0=\nabla'_pG=4(\operatorname{adj}C)_{pa}\mathsf P^a
   +\tfrac52\eta_pG=4\kappa\eta_pA.
\]
Hence \(\kappa=0\), so \(\operatorname{adj}C=0\).
The nonzero matrix \(C\) therefore has rank one on the whole open
set.  The identity
\[
 \mathcal H_p(vu^{\mathsf T})=2\eta_p(vu^{\mathsf T})
\]
follows from \(T_pC=0\) and \(CT_p^{\mathsf T}=-\eta_pC\).
Consequently \(\nabla'_pC=2\eta_pC\).

The mixed Chern equation excludes the remaining rank-one state.
Symmetrizing this equation in
its two output slots cancels the undetermined $\nabla Q$ terms while
retaining the curvature action on both vector slots and $K_X$.
In a unitary frame at a single point, write
$C=\varrho e_0\otimes(ae_0+e_2)$ with $\varrho>0$ and $a\ge0$.
The mixed equations and the barred tangent equations of the rank-one
locus have the two residual combinations
\[
\mathscr E_1=a^2c=0,\qquad \mathscr E_2=7c+a\mathscr K_1=0.
\]
Their complete equations, ordering, and coefficients are given in
Appendix~\ref{app:recurrent-certificate}. The scale is divided out
only after differentiating; no derivative of $\varrho$ is set to zero.
Since $c\ne0$, the first identity gives $a=0$, and the second is then
impossible. Thus $A$ vanishes throughout $U$.
\end{proof}

\begin{corollary}[Coprime regular pencils]\label{alg14:coprime}
There is no nonempty open set with $c\ne0$, $q\ne0$, $\Delta\ne0$,
and $\gcd(q,\Delta)=1$.
\end{corollary}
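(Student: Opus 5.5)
The plan is to solve the compressed equation \eqref{eq:fund-source} explicitly on the open set, and to show that the coprimality hypothesis forces the recurrent endpoint of Proposition~\ref{alg14:recurrence}. Suppose $U$ is a nonempty open set on which $c\ne0$, $q\ne0$, $\Delta\ne0$ and $\gcd(q,\Delta)=1$ hold pointwise as polynomials in $z$.

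\textbf{Step 1: divisibility.} Since $\Delta\ne0$, the pencil $B(z)$ is invertible over the field $\C(z)$. We may therefore multiply $BV=3qGz$ by $J=\operatorname{adj}B$ to get
\[
\Delta V=3q\,JGz .
\]
Now $q$ is a quadratic and $\Delta$ is a cubic, and they are coprime. Hence $q$ divides every component of $V$.

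\textbf{Step 2: identify $V$.} Each component of $V$ is quadratic, so $V=q\lambda$ for a constant vector $\lambda$. Substituting back gives
\[
q\bigl(B\lambda-3Gz\bigr)=0 .
\]
Since $q\ne0$, this yields the linear identity $B(z)\lambda=3Gz$ in $z$.

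\textbf{Step 3: solve the linear identity.} Expand $B(z)\lambda=2\sum_a\lambda_a\mathsf P^a z+\lambda\times z$. Comparing coefficients of $z$ gives
\[
2\lambda_a\mathsf P^a-[\lambda]_\times=3G .
\]
The symmetric and skew parts of this equation separate. The skew part forces $\lambda=0$, and the symmetric part then gives $G=0$. With $\lambda=0$ we have $V=0$, so $E$ and $\beta$ vanish by \eqref{eq:fund-recovery}.

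\textbf{Step 4: identify $D$ and conclude.} Proposition~\ref{prop:fund-compression} says the solution $D$ is uniquely determined by $(V,Z)$. The zero triple solves the system with $V=0$ and $Z=G=0$, since the $\mathsf H_{\rm rec}$ freedom is fixed by the six-value recovery. Hence $\mathcal D=0$ on $U$. Proposition~\ref{alg14:recurrence} then gives $A=0$ on $U$, so $q\equiv0$, contradicting $q\ne0$.

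\textbf{Main obstacle.} The delicate step is Step 4. One must check that uniqueness in Proposition~\ref{prop:fund-compression} really pins $\mathcal D=0$ pointwise, with no residual $\mathsf H_{\rm rec}$ freedom. One must also track canonical weights, so that $\lambda$ is a legitimate, smoothly varying section and the algebraic conclusion holds on all of $U$ rather than only at generic points. If the skew part in Step 3 does not kill $\lambda$ directly, I would first feed $V=q\lambda$ into the transport \eqref{eq:fund-a-derivative} to force $\lambda=0$.
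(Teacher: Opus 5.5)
Your proof is correct and is essentially the paper's: the paper applies the same adjugate/coprimality divisibility to the shifted form $BW=3qC^{\mathsf T}z$, where the skew part gives $W=\tfrac32q\eta$ (equivalent to your $\lambda=0$, since $W=V+\tfrac32q\eta$ and $Gz=C^{\mathsf T}z-B\eta/2$), and then concludes $V=G=0$, $\mathcal D=0$ by exact recovery and Proposition~\ref{alg14:recurrence}. Two small remarks: there is a harmless sign slip, since the coefficient identity is $2\lambda_a\mathsf P^a+[\lambda]_\times=3G$; and your worry about smoothness of $\lambda$ is moot, because the argument is pointwise.
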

\begin{proof}
Multiplying $BW=3qC^{\mathsf T}z$ by $J$ shows that $q$ divides $W$.
Write $W=3qv$. Then $Bv=C^{\mathsf T}z$.
The coefficient matrix of $D p^{\mathsf T}v$ is symmetric;
the skew part of this equation gives $v=\eta/2$.
Thus $V=0$ and $G=0$, and exact recovery gives $\mathcal D=0$.
Proposition~\ref{alg14:recurrence} supplies the contradiction.
\end{proof}

\subsection{Coverage of the polynomial alternatives}
The singular reduction will produce a moving rank-one cut of $P$.
We state the obstruction before using it; its proof follows the two
cut endpoint calculations in Appendix~\ref{app:alternative}.
For $u\in V$, write $u^2=u\otimes u$.

\begin{corollary}[Exclusion of a rank-one cut on a threefold]
\label{r17:no-rank-one-cut}
Let a Hermitian threefold have constant Chern holomorphic sectional
curvature $c\ne0$.  There is no nonempty open set carrying
smooth $u\ne0,\xi$ with $\xi(u)=1$ and
\[
 P(u,x)=-\tfrac12u\odot x+\xi(x)u^2\qquad(x\in V).
\]
\end{corollary}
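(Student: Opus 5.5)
The plan is to argue by contradiction on a nonempty open set $U$ carrying such $u,\xi$, and to differentiate the cut relation using the transport equation. We normalize first. The relation is invariant under $u\mapsto\lambda u$, $\xi\mapsto\lambda^{-1}\xi$ only when the $\xi(x)u^2$ term rescales compatibly. Since $\xi(u)=1$ fixes this, the pair $(u,\xi)$ is determined by $P$ up to the stated symmetry. Evaluating at $x=u$ gives $P(u,u)=0$ automatically, and $P(u,x)$ for $x\in\ker\xi$ equals $-\tfrac12 u\odot x$. After shrinking $U$, we choose a local unitary frame $e_0,e_1,e_2$ adapted so that $u=\varrho e_0$ and $\xi$ has components $(1/\varrho,\xi_1,\xi_2)$. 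In the $\eps$-encoding $P_{ij}=\eps_{ija}\mathsf P^a$, the cut prescribes $\mathsf P^1$ and $\mathsf P^2$ completely in terms of $u,\xi$, and leaves only $\mathsf P^0$ (the component $P(e_1,e_2)$) free.

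The first main step is to insert this ansatz into the transport \eqref{hd-full-F}, equivalently \eqref{eq:fund-F}. The left side is computed from $\nabla u$ and $\nabla\xi$, which are unknown $(1,0)$-derivatives. Comparing the components of $\nabla_p\mathsf P^1$ and $\nabla_p\mathsf P^2$ gives an overdetermined linear system for $\nabla' u$ and $\nabla'\xi$ with coefficients in $C$ and $\mathsf P^0$. Solving it should express $\nabla'u$, $\nabla'\xi$ and the relevant torsion components algebraically. I expect it to force most of $A$ to vanish, leaving a short list of surviving torsion parameters. This is the ``first cut endpoint'' calculation.

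The second step brings in the antiholomorphic direction via \eqref{b:eq:P-torsion}. Since $P=-(\nabla_{\bar j}T+\nabla_{\bar l}T)/(4c)$, the cut ansatz also prescribes the symmetrized barred derivative of torsion. Combining this with the mixed commutator in \eqref{c0:eq:ricci}, applied to $u$ or to $C$ and symmetrized in the two output slots as in Proposition~\ref{alg14:recurrence} so that the unknown $\nabla Q$ terms cancel, should give scalar identities proportional to $c$. The hope is to reach residual equations of the form $\mathscr E=\text{(nonzero multiple of) } c=0$, mirroring $\mathscr E_1,\mathscr E_2$ in the recurrent case. If instead the residual is only a constraint on the parameters, I would use the pure commutator \eqref{eq:fund-rawpure} on the surviving torsion to kill it. This is the ``second cut endpoint''.

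The main obstacle is the bookkeeping in the second step. One must carry the canonical weight on $\mathsf P$ and $C$, the curvature action on $u$, and the non-parallel frame through the mixed commutator without dropping terms, and extract combinations in which $Q$ and $\nabla\mathsf P^0$ cancel. I would first try to contract everything against $u\otimes u$ and $\xi$, since the cut structure makes those contractions rigid. I would also try to isolate the coefficient of $c$ coming from the space-form part of $R$, because that is the only term certain to be nonzero.
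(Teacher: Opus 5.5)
\textbf{Verdict: genuine gap.} Your proposal is a plan in which both decisive steps are stated as expectations (``should express'', ``I expect'', ``the hope is''). The mechanisms that actually make the argument close are missing.

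\textbf{First step.} The transport identity does not ``force most of $A$ to vanish''. Put $W=\mathbb Cu$ and $\pi:V\to V/W$, and differentiate the cut. In the direction $p=u$ one gets $\pi^2F_u(u,x)=0$ automatically. For general $p$, evaluate the differentiated cut at $x=v_p=\nabla_pu$ and use $P(v_p,v_p)=0$ (alternation) to get $\pi(v_p)^2=0$. This quadratic evaluation is what bypasses the unknown block $\mathsf P^0$; solving a linear system with $\mathsf P^0$ as coefficients does not. It yields $\nabla'W\subset W$ and $T(V,V)\subset W$, so $C=w\otimes u$ and $A=\operatorname{Sym}(w\otimes u)$ is generally nonzero.

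Two further inputs are needed before any barred analysis:
\begin{itemize}
\item the pure commutator \eqref{eq:pure-main} evaluated on $p,q\in\ker\xi$, which gives $T=-\eta\wedge(u\otimes\xi)$ with $\eta(u)=0$ (you invoke the pure commutator only as a late fallback);
\item the barred symmetric Bianchi identity, which gives $\operatorname{im}P\subset W\odot V$ and cuts $\mathsf P^0$ down to three moduli.
\end{itemize}

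\textbf{Second step.} This is where the gap is serious. The argument needs a three-way case split that your plan does not make.
\begin{itemize}
\item $T\equiv0$: then \eqref{b:eq:P-torsion} gives $P=0$, contradicting the cut.
\item $T\ne0$ with $T(W^\perp,W^\perp)=0$: here the contradiction is geometric, not a commutator residual. Barred Bianchi gives $P(W^\perp,W^\perp)=0$. Its holomorphic derivative gives $\nabla'W^\perp\subset W^\perp$, so $W\oplus W^\perp$ is Chern parallel. Curvature must preserve this splitting, which forces $c\,\xi|_{W^\perp}=0$. Transporting $P(\cdot,\cdot)(\nu,\nu)=0$ then forces $\eta=0$, contradicting $T\ne0$.
\item $T(W^\perp,W^\perp)\ne0$: the first-order holomorphic transport and barred Bianchi equations are coupled, because the real chart moduli have conjugate holomorphic and antiholomorphic derivatives. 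They do not produce a multiple of $c$. They produce an intermediate real relation $\beta(\beta+2|w|^2+3\varrho^2)=0$, hence two roots for the remaining real modulus. Each root must be treated as an identity on an open set and differentiated. Only then do explicit eliminations give a contradiction: $R_{0\bar01\bar0}=cs\ne0$ against a parallel splitting for the zero root, and $-18crL_{\mathrm{cut}}^2K_{\mathrm{cut}}\ne0$ plus a separate $r=0$ identity for the nonzero root.
\end{itemize}

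Your hope that symmetrizing the mixed commutator of $C$ cancels $\nabla Q$ and leaves a nonzero multiple of $c$ has no supporting computation. Unlike the recurrent case, $\nabla'C$ here still contains an undetermined covector $\zeta$, from $\nabla_p\eta=2\eta(p)\eta-\xi(p)\zeta$. The barred derivatives of $u$ also enter as further unknowns. So there is no evident reason the residual closes without the root analysis. As written, nothing in your proposal actually excludes a solution.
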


For each base point $x$, the entries of $B_x(z)$ belong to
$\mathbb C[z_0,z_1,z_2]$. A pencil is \emph{regular} when
$\Delta_x(z)=\det B_x(z)$ is not the zero polynomial, and \emph{singular}
when it is identically zero. These conditions concern its coefficients;
the determinant of a regular pencil may still vanish at particular covectors $z$.
On $A\ne0$, put $d=\gcd(q,\Delta)$ in the regular case, with each
common factor counted to the minimum of its multiplicities in $q$ and
$\Delta$. Since $\deg q=2$, the three possible gcd degrees are listed below.
Lemma~\ref{lem:local-factor-charts} supplies the required open coefficient
charts, including repeated factors and their nonzero scales.

For a singular pencil with $\mathscr K=0$, let $U$ be the coefficient
matrix of a linear right syzygy $B(z)Uz=0$. Its existence and smooth
local choice are established in Lemma~\ref{auto-sing:linear-syzygy}.
\begin{center}\small
\begin{tabular}{@{}p{.19\textwidth}p{.25\textwidth}p{.47\textwidth}@{}}
\toprule
Branch on $A\ne0$ & Reduced problem & Conclusion and reference\\
\midrule
$\Delta\not\equiv0$, $\deg d=0$
& $q\mid W$; hence $\mathcal D=G=0$
& Excluded by Corollary~\ref{alg14:coprime}.\\[3pt]
$\Delta\not\equiv0$, $\deg d=1$
& $q=\ell m$, $W=m{\mathsf H_{\rm src}}z$
& Theorem~\ref{alg14:linear-source} reduces the source;
  Theorems~\ref{alg14:distinct-closed} and
  \ref{geo14:repeated-closed} exclude distinct and repeated factors.\\[3pt]
$\Delta\not\equiv0$, $\deg d=2$
& $\Delta=q\ell$
& Proposition~\ref{quot14:reduction} reduces to a linear source,
  after the recurrent and radial endpoints are excluded by
  Proposition~\ref{alg14:recurrence} and
  Lemma~\ref{quot14:radial-endpoint}.\\[3pt]
$\Delta\equiv0$, $\mathscr K\ne0$
& A constant left kernel of $B$
& The required conclusion; Lemma~\ref{auto-sing:nondegeneracy}.\\[3pt]
$\Delta\equiv0$, $\mathscr K=0$, $\rank U=3$
& An invertible linear right syzygy
& Lemma~\ref{auto-sing:invertible-U} gives $A=0$, a contradiction.\\[3pt]
$\Delta\equiv0$, $\mathscr K=0$, $\rank U=2$
& A moving rank-one cut
& Lemma~\ref{auto-sing:rank-two-U} gives the cut excluded by
  Corollary~\ref{r17:no-rank-one-cut}.\\
\bottomrule
\end{tabular}
\end{center}
In the singular rows, $\rank B=2$ over $\mathbb C(z_0,z_1,z_2)$ by
Lemma~\ref{auto-sing:nondegeneracy}, and $\rank U\ge2$ by
Lemma~\ref{auto-sing:linear-syzygy}; hence these rows are exhaustive.
The final proof of this section passes from the open-set exclusions
to the pointwise alternative.

\subsection{Regular pencils and coefficient charts}
All differential identities used here hold on open neighborhoods.
For holomorphic derivatives choose a local $\nabla'$-parallel volume
section and determinant-one complex frames. Such a section exists
because the determinant Chern connection has zero $(2,0)$ curvature;
in a holomorphic volume frame its connection form is $\partial$-exact.
Differentiate parameters before making a normalization at one point.
\begin{lemma}[Local coefficient charts]\label{lem:local-factor-charts}
Every nonempty open set with \(q\ne0\) and \(\Delta\ne0\)
contains a nonempty smaller open set on which the rank of \(A\),
the degree of \(\gcd(q,\Delta)\), and the multiplicities of the
selected linear factors in \(\Delta\) are constant. On this set the
factors and polynomial quotients used below can be chosen smoothly.
A repeated quadratic is written \(q=\kappa\ell^2\), with
\(\kappa\ne0\); its scale and its derivatives are retained.
\end{lemma}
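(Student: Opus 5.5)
The plan is a standard stratification argument for real-analytic (or merely smooth) data combined with lower semicontinuity of rank. By Proposition~\ref{univ:analyticity} the metric is real analytic, so $C$, $A$, $\mathsf P$ and hence all coefficients of $q$, $B$ and $\Delta$ are real-analytic functions of the base point in a determinant-one frame. Fix a nonempty open set $\Omega$ with $q\ne0$ and $\Delta\ne0$ as polynomials. I would obtain the desired subset by successive shrinking, each time making one more discrete invariant constant. At each stage the invariant is either lower semicontinuous (so its maximum is attained on an open set) or an integer bounded above whose extremal value holds on a nonempty open subset.

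The steps, in order, are as follows.
\begin{enumerate}
\item The rank of the symmetric matrix $A$ is lower semicontinuous. On the open subset where it is maximal, it is constant.
\item Work in the coefficient space of quadratic forms $q$ and cubic forms $\Delta$. The degree of $\gcd(q,\Delta)$ takes values $0,1,2$ and is detected by vanishing of subresultant-type determinants, which are polynomial in the coefficients. The locus where the degree is at least $k$ is therefore closed. Choose the smallest $k$ whose complement locus is not dense; its interior then has exactly degree $k$ on an open set. Equivalently, take the minimal generic degree on $\Omega$. Since the lower locus is open, either it is nonempty, or the degree is at least $k$ everywhere and equal to $k$ on the open set where the degree-$(k+1)$ determinant is nonzero.
\item Do the same for the factorization type of $q$ (distinct factors versus $q=\kappa\ell^2$, detected by the discriminant) and for the multiplicities of the selected linear factors of $\Delta$, using discriminants and resultants of the quotients.
\end{enumerate}

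Once all invariants are constant, smooth selection of factors follows from the implicit function theorem. For distinct linear factors of $q$, the roots of the binary restriction are simple, so they vary smoothly after fixing a normalization, for example a coefficient equal to $1$ on a further shrunk set where that coefficient is nonzero. For $q=\kappa\ell^2$ I would normalize $\ell$ by a nonvanishing coefficient and set $\kappa=q/\ell^2$, which is smooth and nonzero. The gcd $d$ with constant degree is then smooth, since it is given by these factors with constant multiplicities. The quotients $q/d$ and $\Delta/d$, and any further factors of $\Delta$ of constant multiplicity, are then smooth by polynomial division, whose coefficients are rational in the data with nonvanishing denominators.

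The main obstacle is repeated factors. A root of multiplicity $m$ is not a smooth function of the coefficients in general. However, with the multiplicity constant on the set, it is a simple root of the $(m-1)$-st derivative of the restricted binary form, so the implicit function theorem still applies. I would verify this for the repeated cases ($q=\kappa\ell^2$ and double factors of the cubic $\Delta$) explicitly. I would also keep in mind the covariance from \eqref{eq:fund-frame}: the factors carry canonical weights, so normalizations must be made in a fixed determinant-one frame built from a local $\nabla'$-parallel volume section. Derivatives of the scale $\kappa$ are kept, not set to zero.
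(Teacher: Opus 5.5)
Your proposal is correct and follows essentially the same route as the paper. You fix a constant-rank chart for \(A\) (which already fixes the factorization type of \(q\)), then shrink successively to open parts on which the finitely many closed divisibility and multiplicity conditions either hold identically or fail, then choose the factors smoothly (a local ordering of the two distinct factors in rank two; \(\ell=z_i+\sum_{j\ne i}(A_{ij}/A_{ii})z_j\) and \(\kappa=A_{ii}\) in rank one), and finally obtain the quotients from a linear division system with a nonvanishing minor. Your extra appeals to real analyticity and to the implicit function theorem for repeated roots are not needed---continuity of the coefficients and the explicit rank-one normalization suffice---but they do no harm.
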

\begin{proof}
First restrict to a constant-rank chart of \(A\). A rank-three
quadratic is irreducible, so its gcd with \(\Delta\) has degree zero
or two. A rank-two quadratic has two distinct projective linear
factors, which admit a smooth local ordering. For rank one, choose
\(A_{ii}\ne0\), set
\(\ell=z_i+\sum_{j\ne i}(A_{ij}/A_{ii})z_j\), and take
\(\kappa=A_{ii}\). This retains the nonzero scale.
Divisibility by \(q\) or its selected factors, and factor multiplicities
at most three in \(\Delta\), are finite closed coefficient conditions.
Restrict successively to a nonempty open part where each condition
holds identically or fails; this fixes the gcd degree and multiplicities.
Polynomial division is a linear coefficient system, and a nonzero
maximal minor gives its smooth quotient. Thus every nonempty open set under consideration contains a required
chart. This suffices for the open-set exclusion; the pointwise result
follows from openness of a nonzero coefficient and is proved at the
end of the section.
\end{proof}

\label{fin14:coverage}
\begin{theorem}[Regular-source exclusion]\label{fin14:regular}
For a Hermitian threefold with constant Chern holomorphic sectional
curvature \(c\ne0\), there is no nonempty open set on which
\[
 A\ne0,\qquad \det B(z)\not\equiv0.
\]
\end{theorem}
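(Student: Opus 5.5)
Suppose, for contradiction, that a nonempty open set $U$ carries $A\ne0$ and $\Delta=\det B\not\equiv0$. Then $q$ is also nonzero on $U$, because $q$ is the quadratic form of the nonzero symmetric matrix $A$. By Lemma~\ref{lem:local-factor-charts} we may shrink $U$ so that the following are constant: the rank of $A$, the degree of $d=\gcd(q,\Delta)$, and the multiplicities of the selected linear factors. On the shrunken set the factors and polynomial quotients are smooth. Since $\deg q=2$, there are three branches, $\deg d\in\{0,1,2\}$, and they exhaust the regular rows of the coverage table. I treat each one using the source equation $BW=3qC^{\mathsf T}z$ from \eqref{eq:fund-source}.

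For $\deg d=0$ there is nothing new to do: Corollary~\ref{alg14:coprime} excludes this chart directly. Its proof runs as follows. Multiplying by the adjugate $J$ gives $\Delta W=3qJC^{\mathsf T}z$, and coprimality then forces $q\mid W$. This leads to $V=0$ and $G=0$, and then $\mathcal D=0$ by Proposition~\ref{prop:fund-compression}. Proposition~\ref{alg14:recurrence} then gives $A=0$, which is a contradiction.

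For $\deg d=1$, write $q=\ell m$ with $\ell\mid\Delta$ and $m\nmid\Delta$. The rank-one chart of Lemma~\ref{lem:local-factor-charts} gives $q=\kappa\ell^2$; I allow $m=\kappa\ell$ there, which is the repeated case. Applying the adjugate argument to the factor $m$ coprime to $\Delta$ shows that $m\mid W$, so $W=m\mathsf H_{\rm src}z$ for a linear source. Substituting back gives $B\mathsf H_{\rm src}z=3\ell C^{\mathsf T}z$. This is a linear-source equation, and I will invoke Theorem~\ref{alg14:linear-source} to reduce the structure of $\mathsf H_{\rm src}$ in terms of $C$, $\ell$ and $\mathsf P$. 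The two subcases, $\ell\ne m$ projectively and $\ell\parallel m$, are then excluded by Theorems~\ref{alg14:distinct-closed} and \ref{geo14:repeated-closed}. In the repeated subcase the scale $\kappa$ and its derivatives must be kept, which is why the chart records it.

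For $\deg d=2$ we have $\Delta=q\ell$ with $\ell$ linear. Here the transport formula for $\nabla_i\Delta$ in \eqref{eq:fund-transport}, compared with $\nabla_iq$ from \eqref{eq:fund-a-derivative}, constrains the covariant derivative of the vector section $\ell=\Delta/q$. Proposition~\ref{quot14:reduction} turns this into a linear source problem, apart from two endpoints. The recurrent endpoint has $\mathcal D=G=0$ and is excluded by Proposition~\ref{alg14:recurrence}. The radial endpoint is excluded by Lemma~\ref{quot14:radial-endpoint}. What remains reduces to the previous branch's linear-source exclusions.

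Every branch therefore yields a contradiction, and no such open set exists. I expect the main obstacle to be the $\deg d=1$ repeated-factor case. There the source cannot be separated by coprimality, and excluding it requires differentiating the normalization $q=\kappa\ell^2$ and checking the mixed Chern equations on the remaining degenerate locus. My first attempt would be to differentiate the relation $B\mathsf H_{\rm src}z=3\ell C^{\mathsf T}z$ using \eqref{eq:fund-transport}, and then look for an incompatibility proportional to $c$, as in the recurrent certificate.
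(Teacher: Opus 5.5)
Your proposal is correct and is essentially the paper's proof: it uses the same coefficient charts from Lemma~\ref{lem:local-factor-charts}, the same split by $\deg\gcd(q,\Delta)$, and the same cited exclusions (Corollary~\ref{alg14:coprime}, the linear-source theorems, and Proposition~\ref{quot14:reduction} with its recurrent and radial endpoints). Two small corrections: first, in the repeated $\deg d=1$ subcase $m=\kappa\ell$ does divide $\Delta$, so $m\mid W$ must come, as in the paper, from the coprimality of $q/d$ and $\Delta/d$ in $\Delta W=3qJC^{\mathsf T}z$, not from ``$m$ coprime to $\Delta$''; second, in the $\deg d=2$ branch the linear-source exclusions apply because the factor multiplying $C^{\mathsf T}z$ after Proposition~\ref{quot14:reduction} divides $q$ and hence $\Delta=q\ell$, which is exactly their hypothesis (your closing plan for attacking the repeated case is unnecessary, since Theorem~\ref{geo14:repeated-closed} already excludes it).
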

\begin{proof}
Work on a nonempty coefficient chart from
Lemma~\ref{lem:local-factor-charts}. Multiplying the exact source
$BW=3qC^{\mathsf T}z$ by $J=\operatorname{adj}B$ gives
\[
 \Delta W=3qJC^{\mathsf T}z,
 \qquad (q/d)\mid W,\qquad d=\gcd(q,\Delta).
\]
The divisibility is componentwise in $\mathbb C[z_0,z_1,z_2]$:
$q/d$ and $\Delta/d$ are coprime, with multiplicities included.
For $\deg d=0$, Corollary~\ref{alg14:coprime} applies. For
$\deg d=1$, write $q=\ell m$, $d=\ell$. The quotient has the form
\[
 W=m{\mathsf H_{\rm src}}z,\qquad
 B{\mathsf H_{\rm src}}z=3\ell C^{\mathsf T}z,
\]
where ${\mathsf H_{\rm src}}$ is smooth and $\ell\mid\Delta$.
The linear-source results in the second row of the table exclude
this chart, including the scalar-zero and singular-source cases
treated in Theorem~\ref{alg14:linear-source}.

It remains to reduce degree two. Then $q\mid\Delta$, so
$\Delta=q\ell$ for a smooth nonzero linear polynomial $\ell$.
Proposition~\ref{quot14:reduction} excludes the recurrent and radial
endpoints and reduces every remaining open chart to a linear source
of the displayed form, after interchanging the factor names.
The selected factor still divides $\Delta$, because it divides $q$.
Thus the same linear-source exclusion applies, with its exact
multiplicity hypothesis satisfied.

These cases exhaust the gcd degrees. Every putative surviving open
set contains one of the coefficient charts just excluded, so no such
open set exists.
\end{proof}

\subsection{Singular pencils}\label{auto-sing:section}
A constant polynomial kernel vector is independent of $z$;
its coefficients may vary with the point of the manifold.

\begin{lemma}[Polynomial kernels and the cofactor identity]
\label{auto-sing:nondegeneracy}
The pencil \(B\) has no nonzero constant right kernel, and
\[
 \sum_{i,j}\partial_{z_i}\partial_{z_j}J_{ij}=12.
\]
Its constant left kernel is the complete common-curvature kernel
\[
 \mathscr K
 =\{\nu\in V^*:2P(\cdot,\cdot)(\nu,w)=\nu\wedge w\text{ for every }w\in V^*\}.
\]
In particular, \(\Delta=0\) implies that \(B\) has rank exactly
two over the fraction field \(\mathbb C(z_0,z_1,z_2)\).
\end{lemma}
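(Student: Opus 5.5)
The plan is to read off each claim directly from the explicit formula \(B_{ba}=2\mathsf P^{a,bd}z_d-\eps_{bda}z_d\). Since \(B\) is linear in \(z\), a constant vector \(\nu\) (independent of \(z\)) lies in a kernel exactly when each linear coefficient does.

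\emph{Left kernel.} A constant covector \(\nu\) satisfies \(\nu^{\mathsf T}B(z)=0\) for all \(z\) iff
\[
2\nu_b\mathsf P^{a,bd}=\nu_b\eps_{bda}\quad\text{for all }a,d.
\]
I will contract this with \(\eps_{ika}w_d\) and use \(P_{ik}^{bd}=\eps_{ika}\mathsf P^{a,bd}\). The left side becomes \(2P(e_i,e_k)(\nu,w)\). On the right, \(\eps_{ika}\eps_{bda}=\delta_{ib}\delta_{kd}-\delta_{id}\delta_{kb}\) gives \(\nu_iw_k-\nu_kw_i=(\nu\wedge w)(e_i,e_k)\). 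The contraction with \(\eps_{ika}\) is invertible on the \(a\) slot, so the two conditions are equivalent, and the constant left kernel is exactly \(\mathscr K\).

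\emph{Right kernel.} A constant \(\mu\) with \(B(z)\mu=0\) must satisfy \(2\mathsf P^{a,bd}\mu_a=\eps_{bda}\mu_a\) for all \(b,d\). The left side is symmetric in \(b,d\), since \(\mathsf P^a\in\Sym^2\); the right side is skew. Both therefore vanish, so \(\eps_{bda}\mu_a=0\) for all \(b,d\), which forces \(\mu=0\).

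\emph{Cofactor identity.} \(J=\operatorname{adj}B\) is quadratic in \(z\), so \(\sum\partial_{z_i}\partial_{z_j}J_{ij}\) is a constant. I expect this to be the main computation. I would write \(B=B_P+B_\eps\), with \(B_P\) the \(\mathsf P\)-part and \(B_\eps=-[z]_\times\) up to sign and transpose conventions. Each cofactor is a sum of products of two entries, and the expression \(\sum_{ij}\partial_i\partial_j J_{ij}\) is a bilinear pairing of the entries. There are three kinds of terms:
\begin{itemize}
\item The \(B_\eps\)–\(B_\eps\) term uses \(\operatorname{adj}[z]_\times=zz^{\mathsf T}\). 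This gives \(\sum\partial_i\partial_j(z_iz_j)=\sum_{i,j}(\delta_{ij}\delta_{ij}+\delta_{ij}\delta_{ij})\), i.e.\ \(2\cdot3\)-type contributions; I would recompute the exact value carefully.
\item The \(B_P\)–\(B_P\) term contracts the symmetric \(\mathsf P\) slots against \(\eps\)-structures arising from the cofactor. It should vanish by the same symmetric-versus-skew argument used for the right kernel.
\item The mixed terms each contain one \(\mathsf P\) paired with an \(\eps\). These too should vanish by symmetry, or alternatively reduce to the trace-free relation \(\sum_b\mathsf P^{b,bd}\)-type contractions. If such a contraction survives, I would check it against the polarization identity, which ought to force the relevant trace to vanish.
\end{itemize}
The constant \(12\) must then come entirely from the \(\eps\eps\) part. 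I would confirm this by evaluating at \(\mathsf P=0\), where \(J=\pm zz^{\mathsf T}\) up to the transpose convention. The result has to be independent of \(\mathsf P\) once the mixed terms are shown to be zero.

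\emph{Rank.} If \(\Delta\equiv0\), the cofactor identity shows \(J\not\equiv0\), so \(\rank B\ge2\) over \(\mathbb C(z_0,z_1,z_2)\). Since \(\det B=0\), the rank is exactly two.
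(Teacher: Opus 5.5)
Your proposal is correct and follows the paper's proof. The right kernel uses the same symmetric-versus-skew argument, the left kernel uses the same $\eps_{ika}$ contraction, and the rank conclusion is the same. The cofactor identity uses the same splitting $B=J_p^{\mathsf T}-[z]_\times$. There the $\mathsf P\mathsf P$ part is the Piola cancellation: a $z$-derivative of $J_p^{\mathsf T}$ is symmetric in a pair that the cofactor's $\eps$ alternates. The surviving mixed contractions are multiples of $\eps_{iab}\mathsf P^{a,bi}$ and $\eps_{iab}\mathsf P^{b,ai}$, which vanish by symmetry of the last two slots of $\mathsf P$, so no trace or polarization input is needed (none is available in general).

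One arithmetic correction: for fixed $i,j$ one has $\partial_{z_i}\partial_{z_j}(z_iz_j)=1+\delta_{ij}$. The $\eps\eps$ contribution is therefore $\sum_{i,j}(1+\delta_{ij})=9+3=12$, not the $\sum_{i,j}2\delta_{ij}=6$ your displayed formula gives.
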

\begin{proof}
If \(Bt=0\) with \(t\) constant, then
\[
 \partial_z(t^{\mathsf T}p)+[t]_\times z=0.
\]
The coefficient matrix of the first linear polynomial is a
symmetric Hessian, while \([t]_\times\) is skew-symmetric.
Both vanish, giving \(t=0\).

For the cofactor identity, write \(B=A_0+K\), where
\((A_0)_{ij}=\partial_i p_j\) and
\(K_{ij}=\eps_{ijk}z_k=-([z]_\times)_{ij}\).
The cofactor formula is
\[
 (\operatorname{adj}B)_{ij}
 =\frac12\eps_{iab}\eps_{jcd}B_{ca}B_{db}.
\]
Its double divergence is quadratic in the first derivatives of
\(A_0,K\), since both matrices are linear in \(z\).
The \(A_0A_0\) part vanishes by the Piola cancellation: commuting
the derivatives of \(p\) pairs a symmetric derivative pair
with an alternating epsilon pair. In the two mixed terms the
epsilon contractions reduce to scalar multiples of
\(\eps_{iab}\mathsf P^{a,bi}\) and
\(\eps_{jcd}\mathsf P^{d,cj}\), respectively. They vanish by the
symmetry of the last two indices of \(\mathsf P^{a,kl}\).
Finally \(\operatorname{adj}K=zz^{\mathsf T}\), whose double
divergence is \(6+6=12\). This proves the identity and ensures
\(J\not\equiv0\).

The exact constant-left-kernel identity is
\[
 \eps_{ija}(B(z)^{\mathsf T}\nu)_a
 =2P(e_i,e_j)(\nu,z)-(\nu_i z_j-\nu_j z_i).
\]
Indeed \(D_zp\,\nu\) gives the first term, and
\(\eps_{ija}([z]_\times\nu)_a
   =z_i\nu_j-z_j\nu_i\).
The epsilon identification is invertible, proving the claimed
equivalence. The last assertion now follows from
\(\det B=0\) and \(\operatorname{adj}B\ne0\).
\end{proof}

\begin{lemma}[Adjugate equations]
\label{auto-sing:adjugate-equations}
If \(\Delta(z)=0\) identically on an open set, then
\[
 \mathfrak r=0,\qquad JC^{\mathsf T}z=0
\]
on that open set.
\end{lemma}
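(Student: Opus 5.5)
The plan is to differentiate the identity $\Delta\equiv0$ covariantly and read off the two claimed equations from the transport formula \eqref{eq:fund-transport}. Since $\Delta$ vanishes identically as a polynomial in $z$ at every point of the open set, all its covariant derivatives $\nabla_i\Delta$ vanish as polynomials in $z$. The third line of \eqref{eq:fund-transport} holds without any invertibility assumption. It therefore gives, for each $i=0,1,2$,
\[
 0=\tfrac32\eta_i\Delta+2z_i\mathfrak r-(JC^{\mathsf T}z)_i
 =2z_i\mathfrak r-(JC^{\mathsf T}z)_i .
\]
As polynomial identities this reads $JC^{\mathsf T}z=2\mathfrak r\,z$, where $\mathfrak r=\operatorname{tr}(JC^{\mathsf T})$ is a quadratic polynomial in $z$.

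The next step extracts $\mathfrak r=0$ from this vector identity. For that I would use the cofactor structure: $\Delta\equiv0$ gives $BJ=JB=\Delta I=0$, so every column of $J$ lies in the right kernel of $B$. Lemma~\ref{auto-sing:nondegeneracy} says $B$ has rank exactly two over $\mathbb C(z_0,z_1,z_2)$, so $J$ has rank one there. Write $J=xy^{\mathsf T}$ over the fraction field, with $Bx=0$ and $y^{\mathsf T}B=0$.

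The key observation is that $z$ is not, in general, a right kernel vector of $B$. From $B(z)z=D_zp^{\mathsf T}z-z\times z=2p(z)$ by Euler's identity, we get $Bz=2p(z)$. Since $JC^{\mathsf T}z=x\,(y^{\mathsf T}C^{\mathsf T}z)$ is parallel to $x$, the identity $JC^{\mathsf T}z=2\mathfrak r z$ forces one of two alternatives. Either $\mathfrak r=0$, or $z$ is parallel to $x$, and then $Bz=0$, i.e.\ $p(z)\equiv0$ identically.

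In the second case $\mathsf P=0$. Then $B=-[z]_\times$, and $\Delta=\det(-[z]_\times)=0$ indeed holds, so this alternative must be handled separately. There $J=zz^{\mathsf T}$, which gives $\mathfrak r=z^{\mathsf T}C^{\mathsf T}z=q(z)$ and $JC^{\mathsf T}z=q(z)z$. The identity then reads $q z=2qz$, so $q=0$ and hence $\mathfrak r=0$ in this case too. Because the alternative is a polynomial one, $\mathfrak r$ is a polynomial that vanishes wherever $z\not\parallel x$, hence identically. This proves $\mathfrak r=0$, and then $JC^{\mathsf T}z=2\mathfrak r z=0$ follows from the identity above.

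The main obstacle is making the rank-one factorization argument rigorous over the polynomial ring rather than the fraction field. I would avoid choosing $x,y$ globally and instead argue with the adjugate identity directly. Contracting $JC^{\mathsf T}z=2\mathfrak r z$ on the left with $B$ gives $0=BJC^{\mathsf T}z=2\mathfrak r\,Bz=4\mathfrak r\,p(z)$. Since $\mathbb C[z]$ is a domain, either $\mathfrak r\equiv0$ or $p\equiv0$, and the case $p\equiv0$ reduces to the $\mathsf P=0$ computation above. This version is cleaner and is the one I would write, pointwise at each base point of the open set.
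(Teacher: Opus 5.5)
Your strategy is the paper's: differentiate $\Delta\equiv0$ with the determinant transport in \eqref{eq:fund-transport} to get $JC^{\mathsf T}z=2\mathfrak r z$. Then multiply by $B$ to get $\mathfrak r\,Bz=0$, and treat the branch $Bz\equiv0$ by showing it forces $p\equiv0$, $B=-[z]_\times$, $J=zz^{\mathsf T}$ and $\mathfrak r=q$, whence $qz=2qz$. That last computation is correct.

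The step that fails is the identity $B(z)z=2p(z)$, which both of your versions use. Euler's identity gives $(D_zp)\,z=2p$. But $B$ contains the transpose $D_zp^{\mathsf T}$, and the Jacobian $(\partial p_a/\partial z_b)=(2\mathsf P^{a,bd}z_d)$ is not symmetric, because $\mathsf P^{a,bd}$ is symmetric only in $b,d$. The correct formula is $(Bz)_b=\sum_az_a\partial_bp_a=\partial_bg-p_b$, with $g=z^{\mathsf T}p$. For example, $p=(0,z_0z_2,-z_0z_1)^{\mathsf T}$ has $B(0,z_1,z_2)^{\mathsf T}=0$, hence $\Delta\equiv0$. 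Yet $g=0$ and $Bz=-p\ne2p$. So the equation $2\mathfrak r\,Bz=4\mathfrak r\,p$ is unjustified as written. The same goes for ``$Bz=0$, i.e.\ $p\equiv0$'' in your first version.

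The repair is the paper's Euler argument. Euler's identity does apply to the scalar $z^{\mathsf T}Bz=(D_zp\,z)^{\mathsf T}z=2g$. Hence if $\mathfrak r\not\equiv0$, the polynomial domain gives $Bz\equiv0$. Then $2g=z^{\mathsf T}Bz=0$, so $\partial_zg=0$ and $p=\partial_zg-Bz=0$. Your $\mathsf P=0$ computation then gives the contradiction, so $\mathfrak r=0$ and $JC^{\mathsf T}z=2\mathfrak r z=0$. With this substitution your proof coincides with the paper's, and the rank-one factorization $J=xy^{\mathsf T}$ is not needed.
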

\begin{proof}
Since $\Delta$ vanishes on the open set, determinant transport
\eqref{eq:fund-transport} gives $0=2z_i\mathfrak r-(JC^{\mathsf T}z)_i$.
Hence \(JC^{\mathsf T}z=2\mathfrak r z\).
Multiplying by \(B\) gives \(\mathfrak r Bz=0\).

If \(\mathfrak r\) were a nonzero polynomial at a point, the
polynomial domain would give \(Bz=0\) there.
For the cubic \(g=z^{\mathsf T}p\), Euler's identity gives
\[
 Bz=\partial_zg-p,\qquad z^{\mathsf T}Bz=2g.
\]
Thus \(g=p=0\), so \(B=-[z]_\times\), \(J=zz^{\mathsf T}\),
and \(\mathfrak r=z^{\mathsf T}C^{\mathsf T}z=z^{\mathsf T}Az\).
But \(JC^{\mathsf T}z=2\mathfrak r z\) now says
\(\mathfrak r z=2\mathfrak r z\), contradicting
\(\mathfrak r\ne0\).
Therefore \(\mathfrak r=0\), proving both equations.
\end{proof}

\begin{lemma}[Linear syzygies of a singular pencil]
\label{auto-sing:linear-syzygy}
At a point with \(\Delta=0\) and \(\mathscr K=0\), there are
matrices \(U,N\), both of rank at least two, such that
\[
 J(z)=(Uz)(Nz)^{\mathsf T},\qquad B(z)Uz=0.
\]
The space of homogeneous linear right syzygies \(B(z)Vz=0\)
is one-dimensional. On an open set satisfying these hypotheses,
every point has a neighborhood on which \(U\) can be chosen smoothly
and nowhere zero.
\end{lemma}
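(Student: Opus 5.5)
The plan is to work entirely with polynomial algebra over $\mathbb C[z_0,z_1,z_2]$, starting from Lemma~\ref{auto-sing:nondegeneracy}. Since $\Delta=0$, the pencil $B$ has rank exactly two over the fraction field. Hence the adjugate $J$ is a nonzero $3\times3$ matrix of quadratic polynomials of rank one. A rank-one polynomial matrix over a UFD factors as $J=\lambda\,a\,b^{\mathsf T}$ with $a,b$ primitive polynomial vectors, where $a$ spans $\ker B$ and $b$ spans the left kernel. The degrees satisfy $\deg a+\deg b+\deg\lambda=2$.

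First I would rule out the degree-zero cases. If $\deg a=0$, then $a$ is a nonzero constant right kernel vector, which the Lemma excludes. If $\deg b=0$, then $b$ is a constant left kernel vector, which lies in $\mathscr K=0$. So $\deg a,\deg b\ge1$, which forces $\deg a=\deg b=1$ and $\lambda$ constant. Absorbing $\lambda$, we get $J=(Uz)(Nz)^{\mathsf T}$ with $BUz=J B=\ldots$; more precisely $BJ=\Delta I=0$ gives $B(z)Uz\,(Nz)^{\mathsf T}=0$, hence $B(z)Uz=0$.

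Next I would show that the ranks of $U$ and $N$ are at least two. Suppose $\rank U\le1$, say $U=u\,\ell^{\mathsf T}$. Then $Uz=\ell(z)u$, and $B(z)u=0$, so $u$ is a constant right kernel vector, a contradiction. The same argument applied to $B^{\mathsf T}$ and $N$ produces a constant left kernel vector, which would be a nonzero element of $\mathscr K$. Here we also need $Uz\neq0$; this holds because $J\ne0$, which follows from the double-divergence identity $\sum\partial_i\partial_jJ_{ij}=12$.

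For one-dimensionality, I would argue as follows. Any linear right syzygy $Vz$ lies in $\ker B$, which is the line spanned by the primitive vector $Uz$ over the fraction field. Therefore $Vz=r\,Uz$ with $r$ rational. Primitivity of $Uz$ and the degree count force $r$ to be a constant. This step requires $Uz$ to be primitive, meaning its entries have no common linear factor. That follows from $\rank U\ge2$: if all entries shared a linear factor $\ell$, then $Uz=\ell\,u$ with $u$ constant, so $\rank U\le1$.

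Finally, smoothness. I would use the cofactor identity to pick, near a given point, a fixed entry pair $(i,j)$ and coefficients with $J_{ij}\ne0$. The map from the quadratic coefficients of $J$ to its factorization can then be normalized. Concretely, I would choose a monomial coefficient of $U$ to be $1$ at the base point. The one-dimensional linear system $B(z)Vz=0$ has smoothly varying coefficients and constant rank $8$ among its $9$ unknowns, because its solution space is exactly one-dimensional at every point. So its kernel is a smooth line bundle, and a nowhere-zero local section gives $U$; then $N$ is recovered smoothly by dividing $J$ by $Uz$. I expect the main subtlety to be this constant-rank step: it relies on having proved one-dimensionality pointwise on the whole open set. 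That is why I place the uniqueness argument before the smoothness claim.
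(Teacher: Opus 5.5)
Your proposal is correct and follows essentially the same route as the paper: factor the rank-one adjugate $J$ as a scalar times primitive homogeneous vectors, use the absence of constant right kernels and $\mathscr K=0$ to force both factors to be linear, obtain one-dimensionality of the linear syzygies from primitivity and homogeneity, and get a smooth nowhere-zero generator $U$ from the coefficient system in $9$ unknowns, whose rank is $8$ throughout the open set. The only difference is in the rank-two step: you cancel the linear factor to produce a constant right or left kernel vector, whereas the paper argues that a rank-one coefficient matrix would give entries with a common linear factor, contradicting primitivity; both arguments work.
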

\begin{proof}
The nonzero quadratic matrix \(J\) has fraction-field rank one.
Factor out the greatest common divisor of the entries of a
nonzero column and then those of a row. Unique factorization
in \(\mathbb C[z_0,z_1,z_2]\) gives
\[
 J=g(z)a(z)b(z)^{\mathsf T},
\]
with primitive homogeneous polynomial vectors \(a,b\).
Indeed, a denominator in a rational multiplier of a primitive
polynomial vector would divide every component; thus the column and
row multipliers are polynomial.

The equation \(BJ=0\) makes \(a\) a right syzygy. It cannot have
degree zero, by Lemma~\ref{auto-sing:nondegeneracy}.
Likewise \(JB=0\) and \(\mathscr K=0\) exclude degree zero
for \(b\). Their degrees sum to at most two, so both are linear
and \(g\) is a nonzero scalar, which can be absorbed into \(b\).
Write \(a=Uz\), \(b=Nz\). A primitive linear vector cannot have
coefficient rank one, since its entries would share a linear
factor; hence both ranks are at least two.

Over the fraction field, every right syzygy is a rational
multiple of \(Uz\). If \(Vz\) is another linear syzygy, primitivity
makes this multiplier polynomial; homogeneity makes it constant.
Thus the coefficient map
\[
 \operatorname{End}(\mathbb C^3)\longrightarrow
       \mathbb C^3\otimes\operatorname{Sym}^2(\mathbb C^3)^*,
 \qquad V\longmapsto B(z)Vz
\]
has a one-dimensional kernel at every point of the stated open
set. Its coefficients depend smoothly on the base point. At a chosen point,
fix a nonzero rank-eight minor and shrink the neighborhood so that
this minor remains nonzero. Normalizing the remaining free coefficient
to one and solving these eight equations gives a smooth nowhere-zero
generator \(U\); the other equations hold because the kernel has
dimension one throughout this neighborhood.
\end{proof}

\begin{lemma}[An invertible linear syzygy forces zero primitive torsion]
\label{auto-sing:invertible-U}
Suppose \(\Delta=0\), \(\mathscr K=0\),
\(\mathfrak r=0\), \(JC^{\mathsf T}z=0\), and \(B(z)Uz=0\)
with \(U\) invertible. Then \(A=0\).
\end{lemma}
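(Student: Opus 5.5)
We would attack Lemma~\ref{auto-sing:invertible-U} purely algebraically at a single point, using only the polynomial identities in $z$. From Lemma~\ref{auto-sing:linear-syzygy} we have the factorization $J(z)=(Uz)(Nz)^{\mathsf T}$, with $U$ invertible by hypothesis. The first step is to make $U=I$ by a change of the auxiliary variable: we use the frame transformation rules $B'(z')=\det(M)NB(N^{\mathsf T}z')N^{\mathsf T}$ and $C'=\det(M)NCN^{\mathsf T}$. Choosing $M$ so that $U$ becomes a scalar matrix converts the syzygy $B(z)Uz=0$ into $B(z)z=0$. This change is not unitary, but all the relations involved, namely $B=J_p^{\mathsf T}-[z]_\times$, the adjugate equations, and the symmetry of $A$, are $GL_3$-covariant with canonical weights. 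If the covariance turns out to be awkward, we will instead keep $U$ general and work with $U^{-1}$ throughout.

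With $B(z)z=0$, the Euler argument from the proof of Lemma~\ref{auto-sing:adjugate-equations} applies. For $g=z^{\mathsf T}p$ we have $Bz=\partial_zg-p$ and $z^{\mathsf T}Bz=2g$. Hence $g=0$, so $p=0$ and $\mathsf P=0$. Consequently $B=-[z]_\times$ and $J=zz^{\mathsf T}$. Substituting into $JC^{\mathsf T}z=0$ gives $(z^{\mathsf T}C^{\mathsf T}z)z=0$, so $q(z)=z^{\mathsf T}Az\equiv0$ and therefore $A=0$. If the reduction to $U=I$ is not available, the general-$U$ version of this step reads $B(z)Uz=0$, that is,
\[
\partial_z(p)^{\mathsf T}Uz=[z]_\times Uz .
\]
We would then decompose $U$ into its symmetric and skew parts relative to the Hessian structure and use $J=(Uz)(Nz)^{\mathsf T}$ together with $JC^{\mathsf T}z=0$. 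This gives $(Nz)^{\mathsf T}C^{\mathsf T}z\equiv0$, so $C N^{\mathsf T}$ is skew, while $\mathfrak r=\operatorname{tr}(JC^{\mathsf T})=0$ gives $(Nz)^{\mathsf T}C^{\mathsf T}Uz\equiv0$, so $U^{\mathsf T}CN^{\mathsf T}$ is skew as well.

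The main obstacle is this last combination when $U$ cannot be normalized. We must show that the two skewness conditions, combined with the structure of $N$ forced by the adjugate of a pencil of the form $J_p^{\mathsf T}-[z]_\times$, imply that the symmetric part $A$ of $C$ vanishes. Our first attempt would be to use invertibility of $U$ and the rank bound $\rank N\ge2$ to solve for $C$ as $C=S(N^{\mathsf T})^{-1}$ with $S$ skew, possibly after restricting to the image of $N$. We would then check that the compatibility with $U^{\mathsf T}CN^{\mathsf T}$ being skew forces $C$ to be skew, that is, $A=0$. If $\rank N=2$, a separate short case analysis on the kernel of $N$ will be needed.
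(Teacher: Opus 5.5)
There is a genuine gap, and it lies at the heart of the lemma.

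\textbf{The reduction to scalar $U$ fails.} Under a change of frame, the linear right syzygy transforms by similarity, $U'=g^{-\mathsf T}Ug^{\mathsf T}$, up to its irrelevant scalar normalization; this is recorded in the proof of Lemma~\ref{auto-sing:rank-two-U}. So the similarity class of $U$ modulo scalars is an invariant, and you can reach $B(z)z=0$ only if $U$ is already scalar. In that special case your Euler argument is correct: $p=0$, $J=zz^{\mathsf T}$, $q\equiv0$. It says nothing about a general invertible $U$.

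\textbf{The fallback lacks the key relation.} Your two skewness conditions are essentially right, apart from a transpose slip: $JC^{\mathsf T}z=0$ gives that $K:=CN$ (not $CN^{\mathsf T}$) is skew. Then $\mathfrak r=0$ gives that $U^{\mathsf T}CN$ is skew, which, given $K$ skew, is equivalent to $KU$ being skew. These two conditions alone do not force $C=KN^{-1}$ to be skew. For $U=I$ the second condition coincides with the first, and $KN^{-1}$ with $K$ skew and $N$ unconstrained is not skew in general. You need the precise relation between $N$ and $U$. Your proposal only alludes to ``the structure of $N$ forced by the adjugate'' without identifying it or saying how to obtain it.

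\textbf{The missing idea is to contract the syzygy first.} Put $g=p^{\mathsf T}Uz$. The syzygy gives $\partial_zg=z\times Uz+U^{\mathsf T}p$, and contracting with $z$ gives $3g=g$. Hence $g=0$ and
\[
 U^{\mathsf T}p=-z\times Uz.
\]
Differentiate this in $z$ and use $U^{\mathsf T}[z]_\times+[z]_\times U=(\operatorname{tr}U)[z]_\times-[Uz]_\times$. This yields
\[
 B=-L[z]_\times U^{-1},\qquad L=(\operatorname{tr}U)I-2U^{\mathsf T}.
\]
Consequently:
\begin{enumerate}
\item The constant left kernel of $B$ is exactly $\ker L^{\mathsf T}$, so $\mathscr K=0$ makes $L$ invertible.
\item $J=\frac{\det L}{\det U}(Uz)(L^{-\mathsf T}z)^{\mathsf T}$, so $N$ is a multiple of $L^{-\mathsf T}$. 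In particular, your separate rank-two case for $N$ never arises.
\item With $K=CL^{-\mathsf T}$, the two conditions say $K$ and $KU$ are skew. Then $C=KL^{\mathsf T}=(\operatorname{tr}U)K-2KU$ is a sum of skew matrices, so $A=0$.
\end{enumerate}
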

\begin{proof}
The following calculation initially requires no invertibility.
Put \(g=p^{\mathsf T}Uz\). The syzygy equation gives
\[
 \partial_zg=z\times Uz+U^{\mathsf T}p.
\]
Contracting with \(z\) gives \(3g=g\), so \(g=0\) and
\[
 U^{\mathsf T}p=-z\times Uz.
\]
When \(U\) is invertible, differentiating this identity and using
\[
 U^{\mathsf T}[z]_\times+[z]_\times U
   =(\operatorname{tr}U)[z]_\times-[Uz]_\times
\]
gives
\[
 B=-L[z]_\times U^{-1},\qquad
 L=(\operatorname{tr}U)I-2U^{\mathsf T}.
\]
The constant left kernel is \(\ker L^{\mathsf T}\):
\(B^{\mathsf T}\nu=0\) for every \(z\) is equivalent to
\([z]_\times L^{\mathsf T}\nu=0\) for every \(z\).
Consequently \(L\) is invertible and
\[
 J=\frac{\det L}{\det U}(Uz)(L^{-\mathsf T}z)^{\mathsf T}.
\]

Set \(K=CL^{-\mathsf T}\). From \(JC^{\mathsf T}z=0\) and
\(\mathfrak r=0\), respectively, we obtain
\[
 K+K^{\mathsf T}=0,\qquad
 K^{\mathsf T}U+U^{\mathsf T}K=0.
\]
The second identity, together with the first, says that \(KU\)
is skew-symmetric. Therefore
\[
 C=KL^{\mathsf T}=(\operatorname{tr}U)K-2KU
\]
is skew-symmetric, proving \(A=0\).
\end{proof}

\begin{lemma}[Rank-two syzygies]
\label{auto-sing:rank-two-U}
On an open set with \(\Delta=0\), \(\mathscr K=0\), and a
smooth rank-two generator \(U\) of the linear right syzygies, every
point has a neighborhood with a smooth complex frame in which
\[
 p(z)=\bigl(h(z),z_0z_2,-z_0z_1\bigr)^{\mathsf T},
\]
where \(h\) is an arbitrary smooth-coefficient quadratic.
In that frame \(u=e_0\), \(\xi=e^0\) satisfy the identity on the neighborhood
\[
 P(u,x)=-\frac12u\odot x+\xi(x)u^2,\qquad \xi(u)=1.
\]
\end{lemma}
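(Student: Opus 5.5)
The plan is to extract the normal form from the syzygy identity already derived at the start of the proof of Lemma~\ref{auto-sing:invertible-U}, which did not use invertibility. Put $g=p^{\mathsf T}Uz$. Contracting $\partial_zg=z\times Uz+U^{\mathsf T}p$ with $z$ gives $g=0$, hence
\[
 U^{\mathsf T}p(z)=-z\times Uz
\]
on the open set. The task is to normalize $U$ by a smooth frame change and then read off $p$.

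\textbf{Step 1: the transformation law of $U$.} From the transformation law $B'(z')=\det(M)NB(N^{\mathsf T}z')N^{\mathsf T}$ following \eqref{eq:fund-frame}, a syzygy transforms as $U'=N^{-\mathsf T}UN^{\mathsf T}$. Since $U$ is only determined up to a nonzero smooth scalar (Lemma~\ref{auto-sing:linear-syzygy}), we may classify $U$ up to conjugation and rescaling.

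\textbf{Step 2: constraints on rank-two $U$.} Take $w\in\ker U^{\mathsf T}$. Pairing the displayed identity with $w$ gives $\det(w,z,Uz)=0$ for all $z$, which is a strong restriction on $U$. I would combine this with two further facts:
\begin{itemize}
\item each $p_a$ is a quadratic form, so $U^{\mathsf T}p$ must have components in the span of the $p_a$ and obey the full syzygy $B(z)Uz=0$, not only its contraction with $p$;
\item $\mathscr K=0$ and the one-dimensionality of the syzygy space.
\end{itemize}
Together these should exclude every rank-two Jordan type except a multiple of $\diag(0,1,1)$. Concretely, I would treat three cases: nonzero distinct eigenvalues on the image, a nilpotent block, and $U$ nilpotent of rank two. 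In each I expect either $\det(w,z,Uz)\not\equiv0$, or a constant left kernel (contradicting $\mathscr K=0$), or a second linear syzygy (contradicting uniqueness). Because the rank is constant and the conjugacy type is then constant on a neighbourhood, the kernel and image of $U$ give smooth line and plane subbundles. A smooth frame adapted to them, rescaled so the determinant weight is absorbed into the scalar ambiguity of $U$, realizes $U=\diag(0,1,1)$ smoothly.

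\textbf{Step 3: reading off $p$.} In this frame $Uz=(0,z_1,z_2)$ and $z\times Uz=(0,-z_0z_2,z_0z_1)$. The identity then gives $p_1=z_0z_2$ and $p_2=-z_0z_1$, while $p_0=h$ is unconstrained.

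\textbf{Step 4: the cut identity.} Use $P_{ij}^{bd}=\eps_{ija}\mathsf P^{a,bd}$ with $u=e_0$:
\begin{itemize}
\item $P(e_0,e_1)=\mathsf P^2$ has polynomial $-z_0z_1$, i.e. $-\tfrac12(e_0\otimes e_1+e_1\otimes e_0)=-\tfrac12u\odot e_1$.
\item $P(e_0,e_2)=-\mathsf P^1$ similarly gives $-\tfrac12u\odot e_2$.
\item $P(e_0,e_0)=0=-\tfrac12u\odot u+u^2$.
\end{itemize}
Hence $P(u,x)=-\tfrac12u\odot x+\xi(x)u^2$ with $\xi=e^0$, and $\xi(u)=1$.

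The main obstacle is Step 2, the exclusion of the non-semisimple and unequal-eigenvalue rank-two forms of $U$. I would attack it first via the determinant condition $\det(w,z,Uz)\equiv0$, and then via symmetry of the Hessians of the $p_a$ in the full syzygy $BUz=0$.
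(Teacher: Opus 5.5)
Your overall route is the paper's: the identity $U^{\mathsf T}p=-z\times Uz$, normalization of $U$ by the similarity $U'=N^{-\mathsf T}UN^{\mathsf T}$, reading off $p_1=z_0z_2$ and $p_2=-z_0z_1$, and the epsilon conversion to the cut. Steps~1, 3 and~4 are correct.

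\textbf{The gap is Step~2.} It is the real content of the lemma, and you leave it as an expectation (``should exclude'', ``I expect''). The identity you plan to start from is also attached to the wrong kernel. Pairing $U^{\mathsf T}p=-z\times Uz$ with $w$ gives $(Uw)^{\mathsf T}p$ on the left. This vanishes for $w\in\ker U$, not for $w\in\ker U^{\mathsf T}$. For $w\in\ker U^{\mathsf T}$ the determinant identity does not follow, and it is false in general. For instance, if $U=\lambda I+kt^{\mathsf T}$ then $\ker U^{\mathsf T}=\C t$ and $\det(t,z,Uz)=(t\cdot z)\det(t,z,k)$, which is nonzero unless $t$ is parallel to $k$.

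\textbf{The missing idea.} Take $k\in\ker U$. The single identity $\det(k,z,Uz)\equiv0$ already finishes Step~2, with no case analysis. You do not need $\mathscr K=0$, uniqueness of the syzygy, or symmetry of the Hessians of the $p_a$ here.
\begin{itemize}
\item Since $\det(k,z,Uz)=-z^{\mathsf T}[k]_\times Uz$, the identity says $[k]_\times U$ is skew-symmetric.
\item A skew $3\times3$ matrix with $k$ in its left kernel is a multiple of $[k]_\times$. So $[k]_\times(U-\lambda I)=0$, i.e.\ $U=\lambda I+kt^{\mathsf T}$.
\item Rank two forces $\lambda\ne0$.
\item $Uk=0$ gives $t^{\mathsf T}k=-\lambda$. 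Hence $U^2=\lambda U$ and $\operatorname{tr}U=2\lambda$.
\end{itemize}
Thus $\lambda=\operatorname{tr}U/2$ is smooth and nowhere zero, and $U/\lambda$ is a smooth rank-two projector. Its kernel and image are complementary smooth subbundles, which give the adapted frame with $U/\lambda=\diag(0,1,1)$. This also supplies the smooth normalizing scalar and the constancy of the conjugacy type, which your Step~2 asserts without justification.

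\textbf{On your Jordan-type plan.} It would also succeed if run with $w\in\ker U$. Each excluded type gives a nonzero $\det(w,z,Uz)$:
\begin{itemize}
\item $\diag(0,\mu_1,\mu_2)$ with $\mu_1\ne\mu_2$ gives $(\mu_2-\mu_1)z_1z_2$;
\item $J_2(0)\oplus(\mu)$ gives $\mu z_1z_2$;
\item $0\oplus J_2(\mu)$ and $J_3(0)$ both give $-z_2^2$.
\end{itemize}
However, the other exits you anticipate, a constant left kernel or a second syzygy, never occur, and as written the step is not proved.
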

\begin{proof}
Choose \(0\ne k\in\ker U\) at a point. The identity
\(U^{\mathsf T}p=-z\times Uz\), established above without an
invertibility assumption, implies \(k^{\mathsf T}(z\times Uz)=0\).
Equivalently \([k]_\times U\) is skew-symmetric.
Its left kernel contains \(k\); a three-dimensional skew matrix
with this kernel is a scalar multiple of \([k]_\times\).
Hence
\[
 [k]_\times U=\lambda[k]_\times,\qquad
 U=\lambda I+kt^{\mathsf T}.
\]
The scalar \(\lambda\) cannot be zero, because otherwise
\(\operatorname{rank}U\le1\). From \(Uk=0\) we obtain
\(t^{\mathsf T}k=-\lambda\). Thus
\[
 U^2=\lambda U,\qquad \operatorname{tr}U=2\lambda.
\]
The smooth function \(\lambda=\operatorname{tr}U/2\) is nowhere
zero, so \(U/\lambda\) is a smooth rank-two projector.
Its image and kernel are smooth complementary subbundles. After
shrinking around the chosen point, take local frames of these
subbundles to obtain \(U/\lambda=\operatorname{diag}(0,1,1)\).
Scaling the syzygy does not alter \(B(z)Uz=0\).

Under $g\in GL(3,\mathbb C)$,
\[
 B'(z)=(\det g)^{-1}gB(g^{\mathsf T}z)g^{\mathsf T}.
\]
Its linear right syzygy consequently transforms by similarity,
\(U'=g^{-\mathsf T}Ug^{\mathsf T}\), up to its irrelevant scalar.
Thus the projector normalization is an allowed smooth complex frame change.

In this frame,
\[
 \begin{pmatrix}0\\p_1\\p_2\end{pmatrix}
 =-z\times(0,z_1,z_2)^{\mathsf T}
 =\begin{pmatrix}0\\z_0z_2\\-z_0z_1\end{pmatrix}.
\]
The first row \(p_0=h\) remains arbitrary. The epsilon conversion
now gives
\[
 P(e_0,e_1)=-\frac12e_0\odot e_1,\qquad
 P(e_0,e_2)=-\frac12e_0\odot e_2.
\]
Alternation gives \(P(e_0,e_0)=0\), completing the cut identity.
\end{proof}

\begin{proof}[Proof of Theorem~\ref{auto-sing:local-alternative}]
Let \(O=\{A\ne0\}\). Theorem~\ref{fin14:regular} rules out a
nonempty open set with \(A\ne0\), \(\Delta\not\equiv0\).
If any coefficient of \(\Delta\) were nonzero at a point of \(O\),
it would remain nonzero nearby, giving such a forbidden open set.
Therefore \(\Delta=0\) as a polynomial identity throughout \(O\).

Suppose \(O_0=O\cap\{\mathscr K=0\}\) were nonempty.
This set is open, since absence of a kernel is injectivity of a
finite coefficient map. Choose a point of \(O_0\) and shrink to
a nonempty neighborhood \(U_0\subset O_0\) on which
Lemma~\ref{auto-sing:linear-syzygy} supplies a smooth nowhere-zero
linear syzygy generator \(U\), of rank at least two.
Lemma~\ref{auto-sing:adjugate-equations} gives
\(\mathfrak r=0\) and \(JC^{\mathsf T}z=0\) on \(U_0\).
An invertible value of \(U\) would give \(A=0\) by
Lemma~\ref{auto-sing:invertible-U}; hence \(U\) has rank two
throughout \(U_0\). After a further restriction,
Lemma~\ref{auto-sing:rank-two-U} gives the moving cut on
this nonempty neighborhood, contradicting
Corollary~\ref{r17:no-rank-one-cut}. Thus \(O_0\) is empty.

\end{proof}

\section{The common-kernel obstruction and nonzero curvature}
\label{app:kernel}\label{hd-entry}\label{chap:kernel-obstruction}

We prove that a common kernel cannot exist on an open subset of a compact
Hermitian manifold with constant Chern HSC $c\ne0$. The argument works in
every complex dimension $n\ge2$; write $V=T^{1,0}X$ and let $P$ be
the normalized remainder in \eqref{hd-P-definition}.

For covectors $v,w\in V^*$, put
\begin{equation}\label{hd-kernel-definition}
 \mathscr B(v,w)=2P(\cdot,\cdot)(v,w)-v\wedge w,\qquad
 \mathscr K=\ker\{v\mapsto\mathscr B(v,\cdot)\}.
\end{equation}

\begin{theorem}[Common-kernel obstruction]
\label{hd-scope}\label{thm:nonzero-kernel-all-n}\label{thm:no-common-kernel-open}\label{hd-rank-exclusion}
If $(X,h)$ is compact and connected of complex dimension $n\ge2$,
with constant Chern HSC $c\ne0$,
then $\mathscr K=0$ on an open dense subset of $X$.
\end{theorem}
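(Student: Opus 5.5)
By Proposition~\ref{univ:analyticity}, $h$ is real analytic. The tensor $P$ is therefore analytic too, and so is the finite coefficient map $v\mapsto\mathscr B(v,\cdot)$. On the connected manifold $X$, either this map is injective on an open dense set, or its determinant-type minors vanish identically and $\mathscr K\ne0$ everywhere. Since $\dim\mathscr K_x\le1$ by the wedge argument after \eqref{b:eq:common-kernel}, the plan is to suppose $\mathscr K\ne0$ on a nonempty open set. By analyticity $\mathscr K$ is then a line field $L\subset V^*$ away from a proper analytic subset, and it extends across that subset as a coherent (saturated) rank-one subsheaf. We aim for a contradiction separately for $c>0$ and $c<0$, as the introduction indicates.

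\textbf{Step 1: the kernel is parallel along a distribution.} We differentiate the defining identity $2P(\cdot,\cdot)(\nu,w)=\nu\wedge w$, with $\nu$ a local generator of $L$, using the transport formula \eqref{hd-full-F} of Lemma~\ref{lem:general-curvature-transport}. The right side of \eqref{hd-full-F} is algebraic in $T$ and $P$. Substituting the kernel identity should give an expression for $\nabla'\nu$ modulo $\nu$. Together with \eqref{b:eq:P-torsion}, which expresses $P$ through $\nabla''T$, we expect $\nabla''\nu\in\nu\otimes V^*$ modulo terms that vanish on contraction. The target is to show that the annihilator $\mathcal F=\ker\nu\subset V$ is an integrable holomorphic distribution with totally geodesic leaves. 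On a leaf, the restriction of $P$ vanishes, so the induced metric has curvature exactly the space-form tensor \eqref{p:eq:Rsf}. I expect this step to be the main obstacle. Extracting integrability and total geodesy from \eqref{hd-full-F} requires careful bookkeeping of the torsion components $T^\cdot{}_{\mathcal F\mathcal F}$ and $\nu(T)$, and it must hold in every dimension $n$. I would first test it in the unitary frame with $\nu=e^1$ and read off the components of \eqref{hd-full-F} with indices in $\mathcal F$.

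\textbf{Step 2: $c>0$.} A leaf $Y$ of $\mathcal F$ is a complex hypersurface on which the induced Chern curvature is the space form of curvature $c>0$. We compare two degrees of the normal bundle $N=V/\mathcal F\simeq L^*$. The first is its Chern degree, computed from the curvature restricted to the leaf via the kernel identity. The second is forced by Bott's vanishing for the holomorphic foliation, i.e.\ the Bott connection on $N$ along leaves is flat. The kernel identity fixes the curvature of $L$ along $\mathcal F$ as a nonzero multiple of $c\,\omega_h|_{\mathcal F}$, which contradicts flatness of the Bott partial connection. Compactness of $X$ enters here: we restrict to a leaf closure or integrate over $X$ against $\omega_h^{n-1}$, using that the first Chern form of $L$ is closed.

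\textbf{Step 3: $c<0$.} We form the real function $f=\log|\nu|^2$ for a local generator, which is globally defined up to the analytic singular set, or its normalized analogue. Using Step 1, we compute $\Delta_h f$ and correct by the torsion trace. The aim is an identity $\operatorname{div}(Y)=\lambda|c|+(\text{squares})$ with $\lambda>0$, for a vector field $Y$ built from $\partial f$ and $\eta$, in the spirit of the Stokes formulas \eqref{n:eq:stokes-second}. Integrating over compact $X$ then gives a contradiction. Analyticity guarantees that the singular set has measure zero and that the boundary terms vanish.
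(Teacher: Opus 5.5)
\textbf{There is a genuine gap in Step~1: you foliate by the wrong distribution.}

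The kernel identity $2P(\cdot,\cdot)(\nu,w)=\nu\wedge w$ only constrains the components of $P$ in which one output slot is paired with $\nu$. For $x,y,z,w\in\ker\nu$, the components $P(x,y)(h(\cdot,z),h(\cdot,w))$ are left free. So $P$ need not vanish on $\ker\nu$, and a leaf of $\ker\nu$ has no reason to carry the space-form curvature.

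Moreover, \eqref{hd-full-F} gives only $(1,0)$-derivatives of $P$. Differentiating the kernel identity therefore controls $\nabla'\nu$, never $\nabla''\nu$. What it actually yields, after first deriving $\nu\circ T=0$, is $\nabla^{1,0}\mathscr K\subset\mathscr K$. By metric compatibility this makes the Hermitian dual line $L\subset V$ of $\mathscr K$ holomorphic. (This is the paper's $L$; your $L$ is $\mathscr K$ itself.) Meanwhile $E=\ker\nu=L^\perp$ is merely $\nabla^{1,0}$-invariant. Holomorphicity of $E$ would require $\pi_E\nabla'Y=0$, i.e.\ $\mathsf B_L=0$ and $a=0$, and $\mathsf B_L=0$ is not available a priori.

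The correct foliation is by the complex curves tangent to $L$. Even their total geodesy is a real step: the defect $a=\pi_E\nabla_YY$ is removed by the nilpotent-trace contradiction $0=D_{\bar Y}\tr(\mathsf S_L|_G)=-2c\operatorname{rank}G$.

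\textbf{Steps~2 and~3 fail for the same reason.}

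Along $E$, the kernel identity and first Bianchi give $R(X_0,\bar Z,Y,\bar Y)=-\tfrac12h(\mathsf H_LX_0,Z)$ with $\mathsf H_L=\mathsf B_L^*\mathsf S_L$. This is not a multiple of $c\,\omega_h$. The constant enters instead through $\pi_ER(Y,\bar Y)|_E=2cI-\tfrac12\mathsf H_L$, which is the curvature of $E\cong V/L$ along $L$. Combined with $D_{\bar Y}\mathsf S_L=\mathsf H_L-2cI$, its trace on a leaf of $L$ is $(n-1)c\,\omega_h$ up to a $\bar\partial$-exact term.

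A degree contradiction needs a compact leaf. Integrating against $\omega_h^{n-1}$ over $X$ does not work, because $\omega_h$ is not closed. The paper gets compactness because leaves of $L$ are real totally geodesic curves of Gaussian curvature $c$. They are therefore complete, and for $c>0$ compact by Bonnet--Myers. Then $\deg(V/L)=(n-1)c\operatorname{Area}/2\pi>0$, which contradicts flatness of the Bott connection.

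For $c<0$, $\log|\nu|^2$ depends on the local generator, and your sketch supplies no positive term. The paper integrates the divergence of the real field $\tfrac12(\bar wY+w\bar Y)$ along $L$. Here $w$ is built from traces of $\mathsf B_L$ and of the blocks of $\mathsf S_L$, and the weight is $|e_r(\mathsf B_L)|$; its exponent is forced by cancelling the sign-indefinite $\tr\mathsf H_L$. The paper then handles the case $\mathsf B_L=0$ using $D_{\bar Y}\tr\mathsf S_L=-2c(n-1)$.

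\textbf{A minor point on the singular set.} None arises. Once $v\mapsto\mathscr B(v,\cdot)$ has rank at most $n-1$ everywhere by analyticity, $\dim\mathscr K\le1$ forces rank exactly $n-1$. So $\mathscr K$ is a line bundle on all of $X$, and the completeness argument uses this.
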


A nonzero kernel on an open set extends to a global line by real
analyticity. Its Chern equations then force its leaves to be totally
geodesic. Positive curvature contradicts the degree of the Bott normal
bundle; negative curvature contradicts an integrated divergence identity.
The proof is completed after these two sign cases.

\subsection{Kernel transport and continuation}
\label{hd-definition}\label{hd-transport}\label{hd-kernel-transport}
Retain the normalized curvature remainder $P$ from
\eqref{hd-P-definition}.
Here $P\in\Lambda^2V^*\otimes\Sym^2V$, and
$P(e_i,e_k)(v,w)=P_{ik}^{jl}v_jw_l$ for covectors $v,w$.
Since
\begin{equation}\label{hd-kernel-antisymmetry}
 \mathscr B(v,w)-\mathscr B(w,v)=-2v\wedge w,
\end{equation}
two kernel elements have zero wedge product, so
\begin{equation}\label{hd-kernel-dimension}
 \dim_{\C}\mathscr K_x\le1.
\end{equation}
Where $\mathscr K$ is a line, let $L$ be its Hermitian dual and
$E=L^\perp=\operatorname{Ann}\mathscr K$. For a unit section $Y$ of
$L$, $\nu=h(\,\cdot\,,Y)$ spans $\mathscr K$; here $h(U,Y)$ denotes
the Hermitian pairing, linear in $U$. Substitution into
\eqref{hd-P-definition} gives the equivalent curvature identity
\begin{equation}\label{hd-kernel-curvature}
 R(U,\bar Y,V_0,\bar Z)+R(U,\bar Z,V_0,\bar Y)
 =2c\,h(U,Y)h(V_0,Z).
\end{equation}

For $u\in V$ and $v\in V^*$ put $T_u(x)=T(u,x)$ and
$\vartheta_v=v\circ T$. For a two-form $b$, write
$\iota_ub=b(u,\cdot)$ and
$(T_u\cdot b)(x,y)=b(T(u,x),y)+b(x,T(u,y))$.
\begin{lemma}[Kernel transport]\label{hd-transport-lemma}
\begin{equation}\label{hd-B-transport}
 (\nabla_u\mathscr B)(v,w)
 =w(u)\vartheta_v+w\wedge\iota_u\vartheta_v
 -\tfrac12\{T_u\cdot\mathscr B(v,w)
                  +(\iota_u\mathscr B(v,w))\circ T\}.
\end{equation}
\end{lemma}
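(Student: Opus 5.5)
This is a direct computation from Lemma~\ref{lem:general-curvature-transport}. Differentiating the definition \eqref{hd-kernel-definition}, the term $v\wedge w$ is built from the fixed covectors $v,w$, which we extend parallel at the point, so it contributes nothing. Hence
\[
(\nabla_u\mathscr B)(v,w)=2(\nabla_uP)(\cdot,\cdot)(v,w).
\]
We then insert \eqref{hd-full-F} with $p$ the index of $u$, contracting the upper slots with $v_jw_l$. That formula has two groups of terms: the six linear torsion terms multiplied by $\tfrac14$, and the three terms $\tfrac12 T^r{}_{\cdot\cdot}P_{r\cdot}^{jl}$.

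\textbf{Linear torsion terms.} We contract each of the six terms with $v_jw_l$ and then with $2u^p$, and read off the two-form in $(i,k)$.
\begin{itemize}
\item $T^j{}_{ik}\delta_p^l$ gives $w(u)\,\vartheta_v(e_i,e_k)$.
\item $T^l{}_{ik}\delta_p^j$ gives $v(u)\,\vartheta_w$.
\item $T^j{}_{ip}\delta_k^l$ and $T^j{}_{pk}\delta_i^l$ together give $T^j{}_{ip}w_k-T^j{}_{ik}w_i$, which after alternation is $(w\wedge\iota_u\vartheta_v)$ up to sign. The pair involving $T^l$ gives $v\wedge\iota_u\vartheta_w$.
\end{itemize}
With the factor $2\cdot\tfrac14=\tfrac12$, this produces the symmetrization in $(v,w)$ of $w(u)\vartheta_v+w\wedge\iota_u\vartheta_v$.

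\textbf{Terms quadratic in $T$ and $P$.} We rewrite $2P(v,w)$ as $\mathscr B(v,w)+v\wedge w$. The three terms $T^r{}_{ip}P_{rk}+T^r{}_{ik}P_{rp}+T^r{}_{pk}P_{ri}$ then become
\[
-\tfrac12\bigl\{T_u\cdot\mathscr B(v,w)+(\iota_u\mathscr B(v,w))\circ T\bigr\},
\]
up to sign conventions, plus the same expression with $\mathscr B$ replaced by $v\wedge w$. We expand that $v\wedge w$ piece explicitly, for instance $(v\wedge w)(T(u,x),y)=v(T(u,x))w(y)-\dots$. It consists of terms $\vartheta_v(u,x)w(y)$, $\vartheta_w(u,x)v(y)$ and $(v\wedge w)\circ T$-type terms.

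\textbf{Cancellation.} The claim is that, combined with the symmetrized linear terms, this $v\wedge w$ piece exactly cancels the $v(u)\vartheta_w$ and $v\wedge\iota_u\vartheta_w$ halves. What survives is the asymmetric expression in \eqref{hd-B-transport}. We check this coefficient by coefficient in a unitary frame, separating the terms by the slot into which $u$ enters.

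\textbf{Main obstacle.} The hardest step is this sign and coefficient bookkeeping, in particular the identification of $(\iota_u\mathscr B)\circ T$ with the cyclic term $T^r{}_{ik}P_{rp}$ and the cancellation of the $v$-versus-$w$ asymmetry. As a consistency check, we will verify \eqref{hd-kernel-antisymmetry}. By that identity, the antisymmetric part of the right-hand side must equal $-2\nabla_u(v\wedge w)=0$ for parallel $v,w$. This cross-check is cheap and pins down the signs.
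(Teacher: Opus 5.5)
Your proposal is correct and follows the paper's proof: contract \eqref{hd-full-F} with $v,w$ to get the symmetrized linear terms plus $-T_u\cdot P(v,w)-(\iota_uP(v,w))\circ T$, replace $2P$ by $\mathscr B+v\wedge w$, and use $T_u\cdot(v\wedge w)=-w\wedge\iota_u\vartheta_v+v\wedge\iota_u\vartheta_w$ and $(\iota_u(v\wedge w))\circ T=v(u)\vartheta_w-w(u)\vartheta_v$ to cancel the $v$-halves and double the $w$-halves. The signs you left open are in fact exact: the pair $T^j{}_{ip}\delta_k^l+T^j{}_{pk}\delta_i^l$ gives precisely $w\wedge\iota_u\vartheta_v$ (your displayed $T^j{}_{ik}w_i$ should read $T^j{}_{kp}w_i$), and the quadratic terms give precisely $-T_u\cdot P-(\iota_uP)\circ T$.
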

\begin{proof}
Lemma~\ref{lem:general-curvature-transport} gives the derivative of $P$. Substituting
$2P(\cdot,\cdot)(v,w)=\mathscr B(v,w)+v\wedge w$ gives the transport formula;
the slot contractions and torsion terms are recorded in
Appendix~\ref{app:kernel-transport-calculation}.
\end{proof}

\begin{lemma}[Holomorphicity of the dual kernel line]
\label{hd-kernel-lemma}
On an open set where $\mathscr K\ne0$,
\begin{equation}\label{hd-kernel-consequences}
 \mathscr K\circ T=0,\qquad \nabla^{1,0}\mathscr K\subset\mathscr K.
\end{equation}
Its Hermitian dual $L$ is holomorphic, and $\nabla^{1,0}E\subset E$.
\end{lemma}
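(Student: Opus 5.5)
The plan is to differentiate the defining identity of the kernel and read off two pieces of information from the transport formula \eqref{hd-B-transport}. On an open set where $\mathscr K\ne0$ it is a line, by \eqref{hd-kernel-dimension}. After possibly shrinking, I choose a smooth local section $\nu$ spanning $\mathscr K$, so that $\mathscr B(\nu,w)=0$ for every covector $w$.

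I differentiate this identity in a holomorphic direction $u$:
\[
 (\nabla_u\mathscr B)(\nu,w)+\mathscr B(\nabla_u\nu,w)=0 .
\]
In the first term, the bracketed terms of \eqref{hd-B-transport} vanish because $\mathscr B(\nu,w)=0$. What remains is
\[
 w(u)\vartheta_\nu+w\wedge\iota_u\vartheta_\nu+\mathscr B(\nabla_u\nu,w)=0
 \qquad\text{for all }u,w.
\]
To separate the two unknowns $\vartheta_\nu$ and $\nabla_u\nu$, I split $\nabla_u\nu$ into a component along $\nu$ and a complement $\mu_u\in E^{*}$-type directions transverse to $\nu$. The component along $\nu$ drops out, since $\nu\in\mathscr K$. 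Thus the identity becomes
\[
 \mathscr B(\mu_u,w)=-w(u)\vartheta_\nu-w\wedge\iota_u\vartheta_\nu .
\]

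Next I would exploit the antisymmetry \eqref{hd-kernel-antisymmetry} together with symmetrization in $(u,w)$ after identifying $u$ with the covector $w$ through the metric. The cleanest route is to take $w=\nu$. Then $\mathscr B(\mu_u,\nu)=\mathscr B(\nu,\mu_u)-2\mu_u\wedge\nu=-2\mu_u\wedge\nu$. Hence
\[
 -2\mu_u\wedge\nu=-\nu(u)\vartheta_\nu-\nu\wedge\iota_u\vartheta_\nu .
\]
For $u$ with $\nu(u)=0$, this forces the components of $\mu_u$ and $\iota_u\vartheta_\nu$ to match. For $u=Y$ with $\nu(Y)=1$, it expresses $\vartheta_\nu$ as a wedge with $\nu$. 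Since $\vartheta_\nu=\nu\circ T$ is a two-form, this step shows $\vartheta_\nu=\nu\wedge\theta$ for some $\theta$.

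Feeding this back into the general-$w$ equation, I expect a linear system forcing both $\theta=0$ and $\mu_u=0$. The left side $\mathscr B(\mu_u,w)$ is controlled by the nondegeneracy of $v\mapsto\mathscr B(v,\cdot)$ on covectors transverse to $\mathscr K$. The right side then has a rigid form, $w(u)\,\nu\wedge\theta+w\wedge(\nu(u)\theta-\theta(u)\nu)$, and this must be symmetric-compatible with the identity \eqref{hd-kernel-antisymmetry}. I expect that using $\mathscr B(\mu,w)-\mathscr B(w,\mu)=-2\mu\wedge w$ for two different choices of $w$ yields $\theta=0$, hence $\mathscr K\circ T=0$. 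Then $\mathscr B(\mu_u,\cdot)=0$ gives $\mu_u\in\mathscr K$, that is $\mu_u=0$, so $\nabla^{1,0}\mathscr K\subset\mathscr K$. This algebraic elimination is the main obstacle. I would carry it out in a unitary frame with $\nu=e^0$, using the curvature form \eqref{hd-kernel-curvature} if the abstract manipulation stalls.

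Finally, I pass to the dual line. Because $\nabla$ is metric, $\nabla^{1,0}\mathscr K\subset\mathscr K$ is equivalent, after conjugating and taking Hermitian duals, to $\nabla^{0,1}L\subset L$. For the Chern connection, whose $(0,1)$ part is $\bar\partial$, this says $L$ is a holomorphic subbundle. Likewise $E=\operatorname{Ann}\mathscr K$ is preserved by $\nabla^{1,0}$: for $x\in E$, $\nu(\nabla_u x)=u(\nu(x))-(\nabla_u\nu)(x)=0$, since $\nabla_u\nu$ is proportional to $\nu$.
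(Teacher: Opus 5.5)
Your setup matches the paper's: differentiate $\mathscr B(\nu,w)=0$ using \eqref{hd-B-transport}, set $w=\nu$, use \eqref{hd-kernel-antisymmetry}, and contract with $Y$, $\nu(Y)=1$, to get $\vartheta_\nu=\nu\wedge\theta$. Your final passage to $L$ and $E$ is also fine. The gap is the decisive step, showing $\nu\wedge\theta=0$. You only announce it (``I expect''), and the mechanisms you suggest would not deliver it.

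For $u$ with $\nu(u)\ne0$, the $w=\nu$ equation gives $\mu_u\equiv-\nu(u)\theta$ modulo $\nu$. The general-$w$ identity then only prescribes $\mathscr B(\theta,\cdot)$, which is an a priori unknown piece of the curvature defect. Nondegeneracy of $v\mapsto\mathscr B(v,\cdot)$ therefore gives nothing in these directions. Nor can \eqref{hd-kernel-antisymmetry} be used with two choices of $w$, because none of your equations controls $\mathscr B(w,\mu_u)$ for general $w$. Note also that $\theta$ is only defined modulo $\nu$, so the correct target is $\theta\in\mathbb{C}\nu$, not $\theta=0$.

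The missing idea is to restrict to directions $u\in\ker\nu$.

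\begin{itemize}
\item Substitute $\vartheta_\nu=\nu\wedge\theta$ and $\iota_u\vartheta_\nu=\nu(u)\theta-\theta(u)\nu$ into your $w=\nu$ equation. It becomes $\nu\wedge(\mu_u+\nu(u)\theta)=0$.
\item Hence $\mu_u=0$ whenever $\nu(u)=0$. Your remark about $u$ with $\nu(u)=0$ was pointing here, but you did not draw this conclusion.
\item For such $u$ the $\mathscr B$-term drops out, and the differentiated identity reads $w(u)\,\nu\wedge\theta-\theta(u)\,w\wedge\nu=0$ for every $w$.
\item Choose $w=\theta$. This gives $2\theta(u)\,\nu\wedge\theta=0$ for all $u\in\ker\nu$.
\item If $\nu\wedge\theta\ne0$, then $\theta$ annihilates $\ker\nu$, so $\theta\in\mathbb{C}\nu$, a contradiction. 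Hence $\vartheta_\nu=\nu\circ T=0$.
\item Only now does nondegeneracy enter. The differentiated identity reduces to $\mathscr B(\mu_u,\cdot)=0$ for every $u$, so $\mu_u\in\mathscr K$; being transverse, $\mu_u=0$.
\end{itemize}

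This is exactly the paper's argument, with its $\alpha=\iota_{u_0}\vartheta$ playing the role of your $\theta$. No unitary-frame computation is needed, and \eqref{hd-kernel-curvature} adds nothing, since it is only a restatement of $\nu\in\mathscr K$.
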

\begin{proof}
The coefficient map $v\mapsto\mathscr B(v,\cdot)$ has constant rank
$n-1$, so $\mathscr K$ is a smooth line. For a nonzero section $\nu$,
put $z_u=\nabla_u\nu$ and $\vartheta=\nu\circ T$. Differentiating
$\mathscr B(\nu,w)=0$ and using \eqref{hd-B-transport} gives
\begin{equation}\label{hd-differentiated-kernel}
 \mathscr B(z_u,w)+w(u)\vartheta+w\wedge\iota_u\vartheta=0.
\end{equation}
Set $w=\nu$. Since $\mathscr B(z_u,\nu)=2\nu\wedge z_u$,
wedging with $\nu$ gives $\nu\wedge\vartheta=0$. Choose $u_0$ with
$\nu(u_0)=1$ and contract this identity: $\vartheta=\nu\wedge\alpha$,
where $\alpha=\iota_{u_0}\vartheta$. The same equation now gives
$z_u=-\nu(u)\alpha+\lambda(u)\nu$.
For $u\in\ker\nu$, take $w=\alpha$ in
\eqref{hd-differentiated-kernel}; it gives
$2\alpha(u)\nu\wedge\alpha=0$. Thus $\alpha$ is proportional to
$\nu$, so $\vartheta=0$. The differentiated identity then says
$z_u\in\mathscr K$. Differentiating $\nu(Z)=0$ for $Z\in E$ gives
$\nabla^{1,0}E\subset E$. Metric compatibility then gives
$\nabla^{0,1}L\subset L$, which is holomorphicity.
\end{proof}

\begin{proposition}[Continuation]\label{hd-global-kernel}
If $X$ is connected and $\mathscr K\ne0$ on an open set, then
$\mathscr K$ is a global real analytic line satisfying
\eqref{hd-kernel-consequences}. Its dual $L$ is holomorphic,
$\nabla^{1,0}E\subset E$, and
\begin{equation}\label{hd-torsion-output}
 T(V,V)\subset E.
\end{equation}
\end{proposition}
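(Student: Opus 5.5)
The proof uses real analyticity (Proposition~\ref{univ:analyticity}) to continue the kernel line from the open set to all of $X$. In a local holomorphic frame, $h$, $T$ and $P$ are real analytic, so the coefficient map $v\mapsto\mathscr B(v,\cdot)$, from $V^*$ to $V^*\otimes\Lambda^2V^*\otimes V$, has real analytic entries. Its kernel is nonzero exactly where all $n\times n$ minors vanish. These minors vanish on the given open set. Since $X$ is connected and the minors are real analytic, they vanish identically on $X$. Hence $\mathscr K_x\ne0$ at every point, and by \eqref{hd-kernel-dimension}, $\dim\mathscr K_x=1$ everywhere. So $v\mapsto\mathscr B(v,\cdot)$ has constant rank $n-1$, and $\mathscr K$ is a real analytic line subbundle of $V^*$. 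In a local frame it is spanned by a nonvanishing column of the adjugate-type cofactor vector of a maximal nonzero minor; such a column exists locally because the rank is constant.

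The local consequences then hold everywhere. Lemma~\ref{hd-kernel-lemma} only requires $\mathscr K\ne0$ on an open set, and we now have this on all of $X$. It gives $\mathscr K\circ T=0$ and $\nabla^{1,0}\mathscr K\subset\mathscr K$. It also gives that the Hermitian dual $L$ is holomorphic, that is $\nabla^{0,1}L\subset L$, and that $\nabla^{1,0}E\subset E$ for $E=L^\perp=\operatorname{Ann}\mathscr K$. These are pointwise and first-order statements, so the covering by such open sets gives them globally.

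It remains to prove \eqref{hd-torsion-output}. For any $\nu\in\mathscr K$ and $u,x\in V$, the identity $\mathscr K\circ T=0$ says $\nu(T(u,x))=0$. Hence $T(u,x)\in\operatorname{Ann}\mathscr K=E$, which is $T(V,V)\subset E$.

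The only delicate point is the passage from vanishing minors to a smooth line bundle. Because the kernel dimension is exactly one at every point, the rank is constant, and so a local analytic choice of spanning section exists near each point. No isolated degeneracy can occur, since the bound $\dim\mathscr K_x\le1$ rules out a jump of the kernel dimension.
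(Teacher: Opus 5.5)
Your proposal is correct and follows the paper's proof essentially verbatim. Real analyticity (Proposition~\ref{univ:analyticity}) lets the vanishing of the maximal minors of $v\mapsto\mathscr B(v,\cdot)$ (the paper's ``$n$th exterior power'') propagate from the open set to all of connected $X$. The bound \eqref{hd-kernel-dimension} then forces constant rank $n-1$, Lemma~\ref{hd-kernel-lemma} applies globally, and $T(V,V)\subset E$ is just $\mathscr K\circ T=0$ read through $E=\operatorname{Ann}\mathscr K$. One harmless slip: the target of that coefficient map is $V\otimes\Lambda^2V^*\cong\operatorname{Hom}(V^*,\Lambda^2V^*)$, not $V^*\otimes\Lambda^2V^*\otimes V$.
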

\begin{proof}
The map $v\mapsto\mathscr B(v,\cdot)$ is real analytic by
Proposition~\ref{univ:analyticity}. Its $n$th exterior power vanishes on
an open set and hence throughout connected $X$. Its rank is at most
$n-1$, and \eqref{hd-kernel-dimension} forces equality everywhere.
Its kernel is therefore an analytic line, and
Lemma~\ref{hd-kernel-lemma} applies globally. The torsion assertion is
$\mathscr K\circ T=0$; the holomorphic line $L$ is integrable.
\end{proof}

\subsection{Total geodesy}
\label{hd-defect}\label{hd-corank-one}\label{sec:nilpotent-trace}
Let $D$ be the induced connections on $E\simeq V/L$, $L$, and
their tensor products. For a unit section $Y$ of $L$ and $X_0\in E$,
define
\[
 {\mathsf B_L}_YX_0=\pi_E\nabla_{X_0}Y,\qquad {\mathsf S_L}_YX_0=T(Y,X_0),\qquad
 a(Y,Y)=\pi_E\nabla_YY.
\]
Thus ${\mathsf B_L},{\mathsf S_L}\in L^*\otimes\End E$ and $a\in(L^*)^{\otimes2}\otimes E$.
Write $D_-=D_{\bar Y}$; unit-frame formulas suppress parallel metric
factors of $L$, and all derivatives retain every tensor slot.

\begin{proposition}[Equations for the transverse derivatives]
\label{hd-defect-equations}
The common-kernel condition implies
\begin{equation}\label{hd-defect-system}
 \begin{aligned}
 D_-{\mathsf S_L}&={\mathsf H_L}-2cI,& D_-{\mathsf B_L}&=-\tfrac12{\mathsf H_L}-aa^*,\\
 {\mathsf H_L}&={\mathsf B_L}^*{\mathsf S_L}={\mathsf S_L}^*{\mathsf B_L},& D_-a&=0,\qquad {\mathsf S_L}^*a=0.
 \end{aligned}
\end{equation}
\end{proposition}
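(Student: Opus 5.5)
The approach is to derive all five relations in \eqref{hd-defect-system} from two sources. The first is the curvature identity \eqref{hd-kernel-curvature}, which pins down every mixed component of $R$ that carries a $\bar Y$ slot. The second is the Chern structure equations for the splitting $V=L\oplus E$ of Lemma~\ref{hd-kernel-lemma} and Proposition~\ref{hd-global-kernel}. We use $\mathscr K\circ T=0$, so $T(V,V)\subset E$; holomorphicity of $L$; and $\nabla^{1,0}E\subset E$, equivalently $\nabla^{0,1}L\subset L$. The last condition makes the second fundamental forms in antiholomorphic directions vanish, so the only transverse data are ${\mathsf B_L}$, ${\mathsf S_L}$ and $a$. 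We work in a local unit frame $Y$ of $L$ and a unitary frame of $E$, and differentiate only after writing every identity tensorially.

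First we express the relevant curvature components through ${\mathsf B_L},{\mathsf S_L},a$. Holomorphicity of $L$ gives $\nabla_{\bar Z}Y\in L$ for all $Z$. The Gauss-type formula for $R(\cdot,\bar Y)$ acting on $Y$ and on $E$ then expresses $R(X_0,\bar Y,Y,\bar Z)$ and $R(X_0,\bar Y,V_0,\bar Z)$, for $X_0,V_0,Z\in E$, as $D_{\bar Y}$ of the second fundamental form plus quadratic terms ${\mathsf B_L}^*{\mathsf B_L}$ and $aa^*$. The first Bianchi identity \eqref{p:eq:firstBi} converts the difference $R_{i\bar j k\bar l}-R_{k\bar j i\bar l}$ into $-\nabla_{\bar j}T$. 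Taking $j=\bar Y$ and one holomorphic slot equal to $Y$, the derivative $\nabla_{\bar Y}T(Y,X_0)$ becomes $D_-{\mathsf S_L}$ plus the correction terms from differentiating the frame $Y$. Those corrections are where ${\mathsf B_L}^*{\mathsf S_L}$ appears.

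Next we feed in \eqref{hd-kernel-curvature}. Choosing $(U,V_0,Z)$ among $Y$ and $E$-vectors fixes the symmetrized pair $R(U,\bar Y,V_0,\bar Z)+R(U,\bar Z,V_0,\bar Y)$, namely $2c\,h(U,Y)h(V_0,Z)$. The Bianchi identity fixes the antisymmetrized part in the holomorphic slots. Combining the two with $U=Y$, $V_0=X_0\in E$ yields $D_-{\mathsf S_L}={\mathsf H_L}-2cI$, and with both holomorphic slots in $E$ it yields $D_-{\mathsf B_L}=-\tfrac12{\mathsf H_L}-aa^*$. The coefficient $-2c$ comes from the $\delta$-term, with both the $(Y,Y)$ and $(X_0,Z)$ pairings contributing. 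The component $U=V_0=Y$, $Z\in E$ of \eqref{hd-kernel-curvature} has right side zero. Together with the normal equation, it gives $D_-a=0$ and the algebraic relation ${\mathsf S_L}^*a=0$, because the torsion term there is $T(Y,\cdot)$ paired against $a$. Finally, the two expressions ${\mathsf B_L}^*{\mathsf S_L}$ and ${\mathsf S_L}^*{\mathsf B_L}$ arise from the two orderings in the Hermitian symmetry $\overline{R_{i\bar jk\bar l}}=R_{j\bar il\bar k}$ applied to the same component, which forces them to agree and defines ${\mathsf H_L}$.

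The main obstacle is the sign and index bookkeeping in the Gauss–Codazzi step. We must decide which quadratic term, ${\mathsf B_L}^*{\mathsf S_L}$ or $aa^*$, comes from which derivative of the frame, and obtain the factor $-\tfrac12$. We would first verify everything on the model case $n=2$, where $E$ is a line and all operators are scalars. That case fixes the signs and normalizations, which we then transplant to general $E$, keeping the canonical and metric factors of $L$ suppressed as stated.
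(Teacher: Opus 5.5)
\textbf{Verdict.} Your ingredients are the right ones: the kernel identity \eqref{hd-kernel-curvature}, first Bianchi, Hermitian symmetry, and the Gauss formula for $L\oplus E$. But the step that produces ${\mathsf H_L}$ is missing, and you place it in the wrong component.

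\textbf{Where your plan stalls.} Set $N=\pi_ER(Y,\bar Y)|_E$ and $\mathcal A X_0=\pi_ER(X_0,\bar Y)Y$.
\begin{itemize}
\item The kernel identity with $U=Y$, $V_0=X_0$ gives $N+\mathcal A^*=2cI$. Since $N$ is Hermitian, $\mathcal A$ is Hermitian too.
\item The Bianchi component you propose has barred slot $\bar Y$, holomorphic slots $Y,X_0$, and output in $E$. It gives exactly $D_-{\mathsf S_L}=\mathcal A-N$. The frame corrections there contribute nothing: $T(\nabla_{\bar Y}Y,X_0)$ is absorbed because ${\mathsf S_L}$ carries the phase of $Y$, and $T(Y,\pi_L\nabla_{\bar Y}X_0)\in T(Y,L)=0$. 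So no ${\mathsf B_L}^*{\mathsf S_L}$ term arises there.
\item The Gauss formula gives $\pi_ER(X_0,\bar Y)Y=-(D_-{\mathsf B_L})X_0-aa^*X_0$, with no ${\mathsf B_L}^*{\mathsf B_L}$ term. The relevant component has holomorphic slots $X_0\in E$ and $Y$, not both slots in $E$.
\end{itemize}
At this point you only have $D_-{\mathsf S_L}=\mathcal A+\mathcal A^*-2cI$ and $D_-{\mathsf B_L}=-\mathcal A-aa^*$, with $\mathcal A$ still undetermined. No amount of sign bookkeeping, or checking the case $n=2$, produces ${\mathsf H_L}$ or the factor $-\tfrac12$ from this.

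\textbf{The missing component.} You need the first Bianchi identity with barred slot $\bar Z$ ($Z\in E$), holomorphic slots $X_0,Y$, and output $\bar Y$.
\begin{itemize}
\item Curvature side: by the kernel identity and Hermitian symmetry, $R(X_0,\bar Z,Y,\bar Y)-R(Y,\bar Z,X_0,\bar Y)=-h((\mathcal A+\mathcal A^*)X_0,Z)$.
\item Torsion side: the $L$-component of $T(X_0,Y)=-{\mathsf S_L}X_0$ vanishes identically. Its $\bar Z$-derivative, however, picks up the output connection through $\pi_E\nabla_ZY={\mathsf B_L}Z$:
\[
 h\bigl(\nabla_{\bar Z}(T(X_0,Y)),Y\bigr)=-h\bigl(T(X_0,Y),\nabla_ZY\bigr)=h({\mathsf S_L}X_0,{\mathsf B_L}Z).
\]
\end{itemize}
Hence $\mathcal A+\mathcal A^*={\mathsf B_L}^*{\mathsf S_L}$. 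Self-adjointness of $\mathcal A$ then gives $2\mathcal A={\mathsf B_L}^*{\mathsf S_L}={\mathsf S_L}^*{\mathsf B_L}={\mathsf H_L}$, and both transverse equations follow. Hermitian symmetry by itself only gives $\mathcal A=\mathcal A^*$; it is this Bianchi component that identifies $\mathcal A$.

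\textbf{The equations for $a$.}
\begin{itemize}
\item For ${\mathsf S_L}^*a=0$: the Bianchi identity with both holomorphic slots equal to $Y$ is vacuous. Use instead the $L$-output component in $(Y,X_0;\bar Y)$. Its torsion side is $h({\mathsf S_L}X_0,a)$, and its curvature side vanishes because $R(U,\bar Y,V_0,\bar Y)=c\,\nu(U)\nu(V_0)$ is symmetric in $U,V_0$.
\item For $D_-a=0$: the same specialization $Z=Y$ of \eqref{hd-kernel-curvature}, together with Hermitian symmetry, gives $R(Y,\bar Y)Y=cY$. Its normal component is $-D_-a$, so $D_-a=0$.
\end{itemize}
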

\begin{proof}
Apply the first Chern Bianchi identity and metric compatibility
to \eqref{hd-kernel-curvature}, using frames adapted to $L\oplus E$.
The off-diagonal connection coefficients contribute $aa^*$, including
in an adapted frame whose induced connection forms vanish at the point.
Appendix~\ref{app:kernel-defect-calculation} computes the curvature
blocks for an arbitrary geodesy defect $a$ and differentiates
$\nu\circ T=0$ to obtain ${\mathsf S_L}^*a=0$.
\end{proof}

\begin{lemma}[Vanishing of the geodesy defect]
\label{lem:nilpotent-defect}
The equations \eqref{hd-defect-system}, with $c\ne0$, imply $a=0$.
\end{lemma}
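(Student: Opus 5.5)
Our approach is purely algebraic and pointwise along the leaves, using only the system \eqref{hd-defect-system}. The guiding idea is that $a$ is $D_-$-parallel and annihilated by ${\mathsf S_L}^*$, while $D_-{\mathsf S_L}$ differs from $-2cI$ only by ${\mathsf H_L}$. Feeding $a$ into the derivative of ${\mathsf S_L}$ should therefore isolate the term $-2c\,a$, which can vanish only if $a=0$.

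Concretely, we first differentiate the identity ${\mathsf S_L}^*a=0$ in the $D_-$ direction; more precisely, we use its conjugate form $a^*{\mathsf S_L}=0$, which is the $\bar Y$-compatible version. Since $D_-a=0$, the conjugate derivative $D_+a^*$ vanishes after taking adjoints, so we should pair $a^*$ with the $D_-$-equation for ${\mathsf S_L}$. Applying $D_-$ to $\langle {\mathsf S_L}x,a\rangle$ for a suitable section $x$, together with $D_-a=0$, gives
\[
 a^*({\mathsf H_L}-2cI)=0 .
\]
To remove ${\mathsf H_L}$ we use ${\mathsf H_L}={\mathsf S_L}^*{\mathsf B_L}$, together with ${\mathsf H_L}^*={\mathsf B_L}^*{\mathsf S_L}$ from the same line of the system. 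This gives $a^*{\mathsf H_L}=a^*{\mathsf B_L}^*{\mathsf S_L}$ in one form, while in the other form $({\mathsf H_L}a)$ contains ${\mathsf S_L}^*a=0$ as a factor. Choosing whichever side places ${\mathsf S_L}^*$ next to $a$, we get ${\mathsf H_L}a={\mathsf S_L}^*{\mathsf B_L}a$, or ${\mathsf H_L}^*a={\mathsf B_L}^*{\mathsf S_L}a$. Since ${\mathsf H_L}={\mathsf H_L}^*$ by the equality of the two expressions, we can write ${\mathsf H_L}a=({\mathsf S_L}^*{\mathsf B_L})^*a={\mathsf B_L}^*{\mathsf S_L}a$. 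This is not immediately zero, so a second relation will be needed.

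That second relation comes from the ${\mathsf B_L}$-equation. Contracting $D_-{\mathsf B_L}=-\tfrac12{\mathsf H_L}-aa^*$ with $a$ and $a^*$, and again using $D_-a=0$, yields a scalar identity. Its trace part contains $-|a|^4$, and after combining with the first identity it contains a term of the form $c|a|^2$. The plan is to eliminate $\langle{\mathsf H_L}a,a\rangle$ between the two identities, that is, to take $2\times$ the ${\mathsf B_L}$-contraction plus the ${\mathsf S_L}$-contraction with appropriate weights. The goal is a relation $2c|a|^2=\lambda|a|^4$ in which a derivative term is hidden as an exact $D_-$ of a quantity vanishing by ${\mathsf S_L}^*a=0$. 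If needed, we would then differentiate once more, using $D_-a=0$ so that $|a|^2$ is holomorphic along $L$ in the appropriate sense, to force both sides to vanish. With $c\ne0$ this gives $a=0$.

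The main obstacle is the bookkeeping of which derivative ($D_Y$ versus $D_{\bar Y}$) acts on $a$ versus $a^*$, because only $D_-a$ is known. The first attempt will be to write every identity with $a$ unconjugated and act only with $D_-$. If a term $D_-a^*$ appears, we would instead use the pointwise identity $a^*({\mathsf H_L}-2cI)a=0$. Here positivity is the key point: ${\mathsf H_L}$ restricted to $a$ equals $\langle{\mathsf S_L}a,{\mathsf B_L}a\rangle$, and this is controlled by ${\mathsf S_L}^*a=0$, leaving $-2c|a|^2=0$.
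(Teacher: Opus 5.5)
There is a genuine gap, and it occurs at the first step. For the Hermitian pairing, which is linear in the first slot, metric compatibility gives $\bar Y\langle U,V\rangle=\langle D_-U,V\rangle+\langle U,D_+V\rangle$. You cannot let $D_-$ act on $\mathsf S_L$ while $D_+$ acts on $a^*$ in one Leibniz expansion. Differentiating $\langle\mathsf S_Lx,a\rangle=0$ along $\bar Y$ therefore gives
\[
 \langle(\mathsf H_L-2cI)x,a\rangle+\langle\mathsf S_LD_-x,a\rangle+\langle\mathsf S_Lx,D_+a\rangle=0,
\]
and \eqref{hd-defect-system} says nothing about $D_+a$. Equivalently, $D_-(\mathsf S_L^*a)=(D_+\mathsf S_L)^*a$ and $D_+(\mathsf S_L^*a)=(\mathsf H_L-2cI)a+\mathsf S_L^*D_+a$. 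So all you learn is $(\mathsf H_L-2cI)a\in\im\mathsf S_L^*$, not $a^*(\mathsf H_L-2cI)=0$. The same free $D_+a$ appears in $\bar Y\langle\mathsf B_La,a\rangle$ and in $\bar Y|a|^2=\langle a,D_+a\rangle$. Hence the proposed elimination toward $2c|a|^2=\lambda|a|^4$ is not available, and $|a|^2$ is not holomorphic along the leaf.

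Your fallback fails as well. $\mathsf S_L^*a=0$ says $a\perp\im\mathsf S_L$, not $\mathsf S_La=0$, and $\langle\mathsf H_La,a\rangle=\langle\mathsf S_La,\mathsf B_La\rangle$ is unconstrained. On $E=\C^2$ take $\mathsf S_Le_0=e_1$, $\mathsf S_Le_1=0$, $\mathsf B_L=\lambda\mathsf S_L$ with $\lambda$ real, and $a=e_0$. Then $\mathsf S_L^*a=0$ and $\mathsf H_L=\mathsf S_L^*\mathsf B_L=\mathsf B_L^*\mathsf S_L=\diag(\lambda,0)$. For $\lambda=2c$ even your target identity $a^*(\mathsf H_L-2cI)=0$ holds with $a\ne0$. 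So no pointwise contraction against $a$ can close the argument.

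What is missing is a quantity whose $D_-$-derivative involves only the prescribed derivatives. The paper takes $C_0=\C a$, $\pi:E\to E/C_0$, $m=\rank E$, and
\[
 G=\{v:\mathsf S_L^mv=0,\ \pi\mathsf B_L\mathsf S_L^jv=0\ (0\le j<m)\}.
\]
Because $\mathsf S_L^*\mathsf B_L=\mathsf B_L^*\mathsf S_L$, the map $\mathsf B_L$ sends $\ker\mathsf S_L$ into $\ker\mathsf S_L^*$. That space has the same dimension and contains $a$, so $\pi\mathsf B_L|_{\ker\mathsf S_L}$ has nonzero kernel and $G\ne0$; this is exactly where $a\ne0$ enters. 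One checks $\mathsf S_LG\subset G$ and $\mathsf B_LG\subset C_0$, hence $\mathsf H_L|_G=\mathsf S_L^*\mathsf B_L|_G=0$. Using $D_-a=0$ and $\im(aa^*)\subset C_0$, one also gets $D_-G\subset G$. Then $\tr(\mathsf S_L|_G)\equiv0$ by nilpotency, whereas $D_-\tr(\mathsf S_L|_G)=\tr((\mathsf H_L-2cI)|_G)=-2c\rank G\ne0$. This trace argument never requires $D_+$-information, which is precisely what your scalar contractions lack.
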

\begin{proof}
Work on an open set where $a\ne0$. Set $m=\rank E$, let $C_0$ be
the line spanned by $a$, and let $\pi:E\to E/C_0$ be the quotient.
Then $C_0\subset\ker {\mathsf S_L}^*$, $D_-C_0\subset C_0$, and
$\im(aa^*)\subset C_0$. Define
\begin{equation}\label{eq:nilpotent-G}
 G=\{v:{\mathsf S_L}^mv=0,\quad\pi {\mathsf B_L}{\mathsf S_L}^jv=0\ (0\le j<m)\}.
\end{equation}
For $v\in\ker {\mathsf S_L}$, ${\mathsf S_L}^*{\mathsf B_L}v={\mathsf B_L}^*{\mathsf S_L}v=0$, so ${\mathsf B_L}$ maps $\ker {\mathsf S_L}$ to
$\ker {\mathsf S_L}^*$. These spaces have equal dimension, whereas quotienting
$\ker {\mathsf S_L}^*$ by $C_0$ lowers its dimension. Thus
$\ker(\pi {\mathsf B_L}|_{\ker {\mathsf S_L}})\ne0$, and $G\ne0$.
Restrict to an open set where the defining coefficient map has maximal
rank; $G$ is then a positive-rank smooth subbundle. Its definition gives
${\mathsf S_L}G\subset G$, ${\mathsf B_L}G\subset C_0$, and ${\mathsf H_L}G={\mathsf S_L}^*{\mathsf B_L}G=0$. Hence, for $v\in G$,
\begin{equation}\label{eq:nilpotent-power-derivative}
 (D_-{\mathsf S_L}^j)v
 =\sum_{k=0}^{j-1}{\mathsf S_L}^k({\mathsf H_L}-2cI){\mathsf S_L}^{j-1-k}v
 =-2cj{\mathsf S_L}^{j-1}v.
\end{equation}
The increasing sequence $\ker {\mathsf S_L}^j$ stabilizes by $j=m$, so
$\ker {\mathsf S_L}^{m+1}=\ker {\mathsf S_L}^m$. Differentiating ${\mathsf S_L}^{m+1}v=0$ and using
${\mathsf S_L}^mv=0$ therefore gives ${\mathsf S_L}^mD_-v=0$.
Differentiating $\pi {\mathsf B_L}{\mathsf S_L}^jv=0$ with the quotient connection gives
$\pi {\mathsf B_L}{\mathsf S_L}^jD_-v=0$: the term with $D_-{\mathsf B_L}$ vanishes because
${\mathsf H_L}G=0$ and $\im(aa^*)\subset C_0$; the term with $D_-{\mathsf S_L}^j$ vanishes
by \eqref{eq:nilpotent-power-derivative}. Thus $D_-G\subset G$.
The restriction ${\mathsf S_L}|_G$ is nilpotent and has trace zero, but its induced
connection gives
\begin{equation}\label{eq:nilpotent-trace-contradiction}
 0=D_-\tr({\mathsf S_L}|_G)=\tr(({\mathsf H_L}-2cI)|_G)=-2c\rank G,
\end{equation}
a contradiction.
\end{proof}

\begin{theorem}[Total geodesy of the common-kernel line]\label{thm:automatic-geodesy}
On any open set where $\mathscr K\ne0$, its Hermitian dual $L$ is
holomorphic and its leaves are Chern totally geodesic.
\end{theorem}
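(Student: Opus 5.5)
The plan is to assemble the statement from Lemma~\ref{hd-kernel-lemma}, Proposition~\ref{hd-defect-equations} and Lemma~\ref{lem:nilpotent-defect}; the only new work is to check that the Chern second fundamental form of a leaf is exactly the geodesy defect $a$. On an open set $\Omega$ where $\mathscr K\ne0$, the coefficient map $v\mapsto\mathscr B(v,\cdot)$ has constant rank $n-1$ by \eqref{hd-kernel-dimension}. Hence $\mathscr K$ is a smooth line, and Lemma~\ref{hd-kernel-lemma} shows that its Hermitian dual $L$ is a holomorphic line subbundle of $V$ with $\nabla^{1,0}E\subset E$ and $\mathscr K\circ T=0$. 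A holomorphic line field is integrable, so $L$ is tangent to a foliation by holomorphic curves. This settles the first assertion; no compactness or continuation is needed here, since everything is local on $\Omega$.

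For total geodesy, take a local unit section $Y$ of $L$. A leaf is Chern totally geodesic exactly when $\nabla_UW$ stays in $L$ for all sections $U,W$ of $L$, in both types. The $(1,0)$ condition reduces, by tensoriality and after writing $U=fY$, $W=gY$, to $\pi_E\nabla_YY=0$, which is the statement $a=0$. The $(0,1)$ directions along the leaf already preserve $L$, because $L$ is holomorphic and $\nabla^{0,1}=\bar\partial$ on $V$. Conjugate directions then follow by conjugation. The shape of the leaf inside $(X,h)$ is therefore measured by $a\in(L^*)^{\otimes2}\otimes E$ alone.

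It remains to prove $a=0$. Proposition~\ref{hd-defect-equations} shows that the kernel curvature identity \eqref{hd-kernel-curvature} yields the system \eqref{hd-defect-system} for ${\mathsf S_L}$, ${\mathsf B_L}$, ${\mathsf H_L}$ and $a$. Lemma~\ref{lem:nilpotent-defect}, using only $c\ne0$, then gives $a=0$ on any open subset of $\Omega$. The argument is by contradiction: on the open set where $a\ne0$, the invariant subbundle $G$ is built, and the trace identity \eqref{eq:nilpotent-trace-contradiction} forces $-2c\rank G=0$. Hence $\{a\ne0\}\cap\Omega$ is empty, so $a\equiv0$ on $\Omega$, and the leaves are Chern totally geodesic.

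The main point to verify is the first reduction. The second fundamental form uses the splitting $V=L\oplus E$ with $E=L^\perp$, so $\pi_E$ is the orthogonal projection, matching the definition of $a$. One must also confirm that the maximal-rank restriction used inside Lemma~\ref{lem:nilpotent-defect} is harmless: any nonempty open set where $a\ne0$ contains an open subset where the coefficient map defining $G$ has maximal rank, so the contradiction applies there. By contrast, the appeal to Proposition~\ref{hd-defect-equations} and Lemma~\ref{lem:nilpotent-defect} is direct.
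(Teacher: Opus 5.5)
Your proposal is correct and follows the paper's own proof: Lemma~\ref{hd-kernel-lemma} gives the holomorphic line, and Proposition~\ref{hd-defect-equations} together with Lemma~\ref{lem:nilpotent-defect} give $a=0$. Your extra checks are accurate: Chern total geodesy of the leaves reduces to $\pi_E\nabla_YY=0$ once $L$ is holomorphic, and the maximal-rank restriction inside Lemma~\ref{lem:nilpotent-defect} is harmless.
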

\begin{proof}
Lemma~\ref{hd-kernel-lemma} gives the holomorphic line and the
torsion-output condition. Proposition~\ref{hd-defect-equations} and
Lemma~\ref{lem:nilpotent-defect} give $a=0$.
\end{proof}

\subsection{The positive-curvature obstruction}
\label{sec:common-kernel-structure}\label{sec:positive-normal-all-n}
\begin{definition}\label{def:common-kernel-structure}
A \emph{totally geodesic common-kernel line} is a holomorphic line
subbundle $L\subset V$ whose leaves are Chern totally geodesic, such that
$E=L^\perp$ is preserved by $\nabla^{1,0}$, $T(V,V)\subset E$, and
every unit section $Y$ of $L$ satisfies
\begin{equation}\label{neg-common-kernel-curvature}
 R(U,\bar Y,V_0,\bar Z)+R(U,\bar Z,V_0,\bar Y)
 =2c\,h(U,Y)h(V_0,Z).
\end{equation}
\end{definition}

Such leaves are also real totally geodesic. Indeed, for
$K=\nabla-\nabla^{LC}$, the contorsion formula is
\[
 2g_{\R}(K_UW,Z)=g_{\R}(T(U,W),Z)
 -g_{\R}(T(W,Z),U)+g_{\R}(T(Z,U),W).
\]
For leaf-tangent $U,W$ and normal $Z$, all terms vanish: torsion
vanishes on the complex tangent line and its mixed-type pairs, and
all outputs are normal. The induced Gaussian curvature is $c$;
in a leaf coordinate $t$ the metric is $2h_{t\bar t}|dt|^2$, and
both curvatures equal $-h_{t\bar t}^{-1}\partial_t\partial_{\bar t}\log h_{t\bar t}$.

Let $N=(V|_\Sigma)/T^{1,0}\Sigma$ be the normal bundle of a leaf,
with quotient metric. Its induced connection is its Chern connection,
because tangent and normal directions are preserved along the leaf.
Set ${\mathsf S_L}_t[Z]=[T(\partial_t,Z)]$ and $\beta=\tr({\mathsf S_L}_t)\,dt$.
\begin{lemma}[Curvature and degree of the normal bundle]
\label{lemma:normal-degree-all-n}
\begin{equation}\label{eq:global-normal-curvature}\equationalias{eq:global-degree-form}
 \nabla_{\bar t}{\mathsf S_L}_t=2c\,h_{t\bar t}I_N-2R^N_{t\bar t},\qquad i\tr R^N=(n-1)c\,\omega_h|_\Sigma+\frac i2\bar\partial\beta.
\end{equation}
For a compact leaf,
\begin{equation}\label{eq:global-degree}
 \deg N=\frac{(n-1)c}{2\pi}\operatorname{Area}_h(\Sigma).
\end{equation}
\end{lemma}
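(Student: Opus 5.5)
We work along a compact leaf $\Sigma$ with leaf coordinate $t$, an $h$-unit section $Y=h_{t\bar t}^{-1/2}\partial_t$ of $L$, and the quotient bundle $N=(V|_\Sigma)/L$, which is isometric to $E|_\Sigma$. Total geodesy (Theorem~\ref{thm:automatic-geodesy}) means $\nabla_{\partial_t}$ and $\nabla_{\bar\partial_t}$ preserve both $L$ and $E$ along $\Sigma$. Hence the Chern connection of $N$ is simply the restriction of $\nabla$ to $E$, and $R^N_{t\bar t}=R(\partial_t,\bar\partial_t)|_E$.

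\textbf{First formula.} We differentiate ${\mathsf S_L}_t[Z]=[T(\partial_t,Z)]$ in the $\bar t$ direction, using the first Bianchi identity \eqref{p:eq:firstBi} in the form
\[
 R_{i\bar jk\bar l}-R_{k\bar ji\bar l}=-T^l{}_{ik,\bar j}.
\]
Take $i=t$, $k\in E$, $\bar j=\bar t$, and $l\in E$. This gives
\[
 (\nabla_{\bar t}{\mathsf S_L}_t)_k^{\,l}=-R(\partial_t,\bar\partial_t,e_k,\bar e_l)+R(e_k,\bar\partial_t,\partial_t,\bar e_l).
\]
Because $\nabla_{\bar t}$ preserves $L$ and $E$, no extra terms appear. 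The second term comes from the common-kernel identity \eqref{neg-common-kernel-curvature}: set $U=e_k$, $V_0=\partial_t$, $Y=\partial_t/|\partial_t|$, $Z=e_l$. Then
\[
 R(e_k,\bar Y,\partial_t,\bar e_l)+R(e_k,\bar e_l,\partial_t,\bar Y)=2c\,h(e_k,Y)h(\partial_t,e_l)=0,
\]
since $e_k\perp Y$. To identify the remaining cross term we use the polarization identity \eqref{p:eq:HSC} with indices $(t,t,k,l)$:
\[
 R_{t\bar tk\bar l}+R_{k\bar tt\bar l}+R_{t\bar lk\bar t}+R_{k\bar lt\bar t}=2c\,h_{t\bar t}\delta_{kl}.
\]
We also apply \eqref{neg-common-kernel-curvature} with $U=\partial_t$ and $V_0=e_k$, and with $U=e_k$ and $V_0=\partial_t$. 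Together these express $R(e_k,\bar\partial_t,\partial_t,\bar e_l)$ as $2c\,h_{t\bar t}\delta_{kl}-R_{t\bar tk\bar l}$. Substituting yields
\[
 \nabla_{\bar t}{\mathsf S_L}_t=2c\,h_{t\bar t}I_N-2R^N_{t\bar t}.
\]
This is a finite index bookkeeping, in the same spirit as Appendix~\ref{app:kernel-defect-calculation}. The main risk is sign and convention errors, so we will check the step against the equation $D_-{\mathsf S_L}={\mathsf H_L}-2cI$ of Proposition~\ref{hd-defect-equations}. Along a totally geodesic leaf, ${\mathsf B_L}$ restricts to $\pi_E\nabla Y$ on normal directions, and that equation should reproduce the same coefficient.

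\textbf{Second formula.} We take the trace over $N$, which has rank $n-1$:
\[
 \tr R^N_{t\bar t}=(n-1)c\,h_{t\bar t}-\tfrac12\partial_{\bar t}\tr{\mathsf S_L}_t.
\]
Here the trace commutes with $\nabla_{\bar t}$, and $\tr{\mathsf S_L}_t$ is a function times $dt$ with no connection correction, because $\partial_t$ is holomorphic. Multiplying by $i\,dt\wedge d\bar t$ and using $\omega_h|_\Sigma=i h_{t\bar t}dt\wedge d\bar t$ together with $\bar\partial\beta=\partial_{\bar t}(\tr{\mathsf S_L}_t)\,d\bar t\wedge dt$ gives
\[
 i\tr R^N=(n-1)c\,\omega_h|_\Sigma+\tfrac i2\bar\partial\beta.
\]
Two points need care. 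First, the orientation sign $d\bar t\wedge dt=-dt\wedge d\bar t$ must match the sign of the $\beta$ term as stated. Second, $\beta$ must be a global $(1,0)$-form on $\Sigma$. It is, since $\tr{\mathsf S_L}_t\,dt$ is independent of the choice of leaf coordinate.

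\textbf{Degree formula.} Chern--Weil gives
\[
 \deg N=\frac1{2\pi}\int_\Sigma i\tr R^N,
\]
and Stokes kills $\int_\Sigma\bar\partial\beta=\int_\Sigma d\beta$ on the closed curve $\Sigma$. What remains is $(n-1)c/(2\pi)$ times $\int_\Sigma\omega_h=\operatorname{Area}_h(\Sigma)$. This area is taken with respect to the Riemannian metric $2\Rea h$ restricted to $\Sigma$, whose area form is $\omega_h|_\Sigma$.

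We expect the main obstacle to be the curvature-symmetry bookkeeping in the first formula: pinning down the factor $2$ and the sign of $R^N$ means combining \eqref{neg-common-kernel-curvature}, polarization, and Bianchi consistently.
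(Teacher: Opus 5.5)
Your route is the paper's: first Bianchi in $(Y,X_0,\bar Y)$, replacement of the cross term using the common-kernel identity, a trace over $N$, then Chern--Weil and Stokes. The trace step, the orientation sign $d\bar t\wedge dt=-dt\wedge d\bar t$, and the normalization $\operatorname{Area}_h(\Sigma)=\int_\Sigma\omega_h$ are all correct.

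\textbf{The step that needs repair.} As you describe it, the identification
\[
 R(e_k,\bar Y,Y,\bar e_l)=2c\,\delta_{kl}-R(Y,\bar Y,e_k,\bar e_l)
\]
does not follow. The identity \eqref{neg-common-kernel-curvature} constrains only the part of the curvature symmetric in the two barred slots. The polarization identity you invoke is exactly the sum of the two instances you list. So these relations leave the barred-alternating part of $R(e_k,\bar Y,Y,\bar e_l)$ undetermined.

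The missing ingredient is Hermitian symmetry $\overline{R_{i\bar jk\bar l}}=R_{j\bar il\bar k}$. Apply \eqref{neg-common-kernel-curvature} with $U=Y$, $V_0=e_l$, $Z=e_k$, then conjugate. This gives exactly
\[
R(Y,\bar Y,e_k,\bar e_l)+R(e_k,\bar Y,Y,\bar e_l)=2c\,\delta_{kl},
\]
and the instance with $U=e_k$, $V_0=Y$ is not needed.

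In Appendix~\ref{app:kernel-defect-calculation}, where $N$ denotes the operator $\pi_ER(Y,\bar Y)|_E$ and $\mathcal A=\pi_ER(\cdot,\bar Y)Y$, this is the step $N=N^*$, $N+\mathcal A^*=2cI$, hence $\mathcal A=\mathcal A^*=2cI-N$. First Bianchi then gives $D_-{\mathsf S_L}=\mathcal A-N=2cI-2N$. The paper phrases this through \eqref{hd-curvature-H}, which also identifies $\mathcal A={\mathsf H_L}/2$; that identification is not needed for this lemma.

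Two smaller points:
\begin{itemize}
\item Your planned sanity check against $D_-{\mathsf S_L}={\mathsf H_L}-2cI$ fixes the coefficient of $R^N$ only in combination with $N=2cI-{\mathsf H_L}/2$ from \eqref{hd-curvature-H}.
\item In the Bianchi step, the term $T(\nabla_{\bar t}\partial_t,\cdot)$ vanishes because $\partial_t$ is holomorphic, so $\nabla_{\bar t}\partial_t=0$. Preservation of $L$ alone would leave a multiple of ${\mathsf S_L}_t$.
\end{itemize}
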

\begin{proof}
In a unit tangent frame, first Bianchi and \eqref{hd-curvature-H} give
$D_{\bar Y}{\mathsf S_L}_Y=2cI_N-2\pi_ER(Y,\bar Y)|_E$.
The last operator is the curvature of the quotient connection along the
leaf, proving the first identity. Taking the trace
gives the second, since
$\bar\partial\beta=-\partial_{\bar t}\tr({\mathsf S_L}_t)\,dt\wedge d\bar t$.
On a curve $\bar\partial\beta=d\beta$; integration gives the degree.
\end{proof}

\begin{theorem}[The positive-curvature kernel obstruction]
\label{thm:positive-kernel-all-n}
A compact Hermitian manifold with constant Chern HSC $c>0$ admits
no line satisfying Definition~\ref{def:common-kernel-structure}.
\end{theorem}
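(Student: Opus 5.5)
The plan is to derive a contradiction from two computations of the degree of the normal bundle of a compact leaf. Lemma~\ref{lemma:normal-degree-all-n} gives one of them; the Bott partial connection of the holomorphic foliation by curves gives the other.

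Suppose $L\subset V$ satisfies Definition~\ref{def:common-kernel-structure} with $c>0$. Since $L$ is a holomorphic line subbundle, it is integrable and defines a holomorphic foliation of $X$ by complex curves. Fix a leaf $\Sigma$. The leaf is Chern totally geodesic, and by the contorsion computation following the definition it is also real totally geodesic for $g_{\R}$. So every $g_{\R}$-geodesic starting tangent to $\Sigma$ stays in $\Sigma$. Since $X$ is compact, hence complete, the induced metric on $\Sigma$ is complete. Its Gaussian curvature is the constant $c>0$, as recorded after the definition. Bonnet--Myers then shows that $\Sigma$ is compact, and Gauss--Bonnet gives $\chi(\Sigma)=c\operatorname{Area}(\Sigma)/2\pi>0$. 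Hence $\Sigma$ is a compact Riemann surface of genus zero, so $\Sigma\simeq\CP^1$. A compact leaf of a foliation is an embedded submanifold, so $\Sigma$ is an embedded rational curve.

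Lemma~\ref{lemma:normal-degree-all-n} now gives
\[
\deg N=\frac{(n-1)c}{2\pi}\operatorname{Area}_h(\Sigma)>0,\qquad N=(V|_\Sigma)/T^{1,0}\Sigma,
\]
where $N$ is the holomorphic normal bundle.

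The opposite conclusion comes from the Bott connection. For a local holomorphic section $Z$ of $V/L$ along the foliation, set $\nabla^{B}_YZ=[Y,\tilde Z]\bmod L$, where $Y$ is a holomorphic tangent field to the leaves and $\tilde Z$ is any holomorphic lift of $Z$. This is well defined because $[L,L]\subset L$. Restricted to $\Sigma$, the tangent directions exhaust $T^{1,0}\Sigma$, so $\nabla^B$, combined with $\bar\partial_N$, is a holomorphic connection on $N$ over the curve $\Sigma$. Any holomorphic connection on a bundle over a curve is flat, since there are no $(2,0)$-forms, so Chern--Weil gives $c_1(N)=0$. (Equivalently, $\Sigma\simeq\CP^1$ is simply connected, so $N$ is holomorphically trivial.) Thus $\deg N=0$, which contradicts the positive degree above. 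Hence no such line exists.

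I expect the main obstacle to be the first step: justifying that an arbitrary leaf is compact, and in particular that intrinsic completeness of $\Sigma$ follows from real total geodesy. This needs the maximal leaf to contain complete geodesics, which holds because geodesics of $X$ tangent to a totally geodesic foliation remain in a single leaf. The identification of the Bott connection with a holomorphic structure on $N$ compatible with $\bar\partial_N$ is standard, but the degree comparison depends on it.
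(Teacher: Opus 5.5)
Your proposal is correct and follows essentially the same route as the paper: real total geodesy gives intrinsic completeness of each leaf, Bonnet--Myers makes the leaf compact, Lemma~\ref{lemma:normal-degree-all-n} gives $\deg N>0$, and flatness of the Bott connection forces $\deg N=0$. Your extra steps are valid but not needed. These are the use of Gauss--Bonnet to identify $\Sigma\simeq\CP^1$ and the embeddedness of the compact leaf. Chern--Weil applied to the flat Bott connection on the compact leaf already gives the contradiction, without knowing the leaf's genus or whether it is embedded.
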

\begin{proof}
Since the leaves are real totally geodesic, the ambient geodesic
spray is tangent to their unit tangent bundle. This bundle is compact:
it is the unit circle bundle of the global real plane distribution
underlying $L$ over compact $X$. The restricted spray is therefore
complete. Its projected curves are the intrinsic geodesics of the
leaves, so every leaf is complete in its intrinsic metric.

Each leaf is therefore a complete surface with Gaussian curvature
$c>0$. Bonnet--Myers gives diameter at most $\pi/\sqrt c$, and
Hopf--Rinow makes it compact. These are intrinsic statements about the
complete leaf; they do not require it to be embedded.

The normal bundle has a flat complex Bott connection: In a holomorphic foliation chart $(t,z^1,\ldots,z^{n-1})$
with plaques given by constant $z$, declare the normal classes
$[\partial/\partial z^\alpha]$ parallel along each plaque.
On an overlap, the transverse coordinates have the form $z'=F(z)$,
so the normal transition matrix is constant along each plaque.
These local connections therefore glue to a flat complex connection
on $N|_\Sigma$. Its determinant connection is flat, and hence
$\deg N=0$ by Chern--Weil theory. This contradicts
\eqref{eq:global-degree}, since $n\ge2$, $c>0$, and the leaf has
positive area.
\end{proof}

\subsection{The negative-curvature obstruction}
\label{neg-kernel-chapter}
Use $\eta_i=\sum_kT^k{}_{ki}$ and $dV_h=\omega_h^n/n!$.
Identify $E$ with the holomorphic quotient $V/L$, and retain its
quotient-metric connection $D$. For a local unit section $Y$ of $L$,
write
\begin{equation}\label{neg-BS-definitions}
 {\mathsf B_L}(X_0)=\pi_E\nabla_{X_0}Y,\qquad {\mathsf S_L}(X_0)=T(Y,X_0),
 \qquad X_0\in E,
\end{equation}
and set
\begin{equation}\label{neg-traces}
 m=n-1,\qquad\beta=\tr {\mathsf B_L},\qquad s=\tr {\mathsf S_L}=-\eta(Y),\qquad {\mathsf H_L}={\mathsf B_L}^*{\mathsf S_L}.
\end{equation}
The coefficients ${\mathsf B_L},{\mathsf S_L},\beta,s$ have the unit phase of $Y$;
$\beta,s$ are sections of $L^*$, whereas ${\mathsf H_L}\in\End E$ is global.
Write $D_+=D_Y$, $D_-=D_{\bar Y}$, retaining the connection on every slot.

\begin{lemma}[The leafwise Chern equations]
\label{neg-differential-system}
\begin{gather}
 {\mathsf H_L}={\mathsf H_L}^*,\qquad D_-{\mathsf S_L}={\mathsf H_L}-2cI,\qquad D_-{\mathsf B_L}=-\tfrac12{\mathsf H_L},
 \label{neg-mixed-system}\\
 D_+{\mathsf B_L}=-{\mathsf B_L}({\mathsf B_L}+{\mathsf S_L}),\label{neg-pure-Riccati}\\
 D_-{\mathsf H_L}=-{\mathsf S_L}^*{\mathsf H_L}-2c{\mathsf B_L}^*,\qquad D_+{\mathsf H_L}=-{\mathsf H_L}{\mathsf S_L}-2c{\mathsf B_L}.
 \label{neg-H-derivatives}
\end{gather}
\end{lemma}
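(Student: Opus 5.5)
The approach is to compute everything in a local unitary frame $(Y,e_1,\dots,e_m)$ adapted to $V=L\oplus E$, with $Y$ a unit section of $L$. I will use the structure already established for a totally geodesic common-kernel line: $L$ is holomorphic, $\nabla^{1,0}E\subset E$, $T(V,V)\subset E$, $\nu\circ T=0$ with $\nu=h(\cdot,Y)$, and the geodesy defect $a=0$ (Theorem~\ref{thm:automatic-geodesy}).

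The mixed equations \eqref{neg-mixed-system} should be exactly Proposition~\ref{hd-defect-equations} with $a=0$. The equation $D_-{\mathsf S_L}={\mathsf H_L}-2cI$ and the $aa^*$-free equation $D_-{\mathsf B_L}=-\tfrac12{\mathsf H_L}$ can be read off directly. Hermitian symmetry ${\mathsf H_L}={\mathsf H_L}^*$ follows from ${\mathsf B_L}^*{\mathsf S_L}={\mathsf S_L}^*{\mathsf B_L}$, since the adjoint of ${\mathsf B_L}^*{\mathsf S_L}$ is ${\mathsf S_L}^*{\mathsf B_L}$.

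For the pure Riccati equation \eqref{neg-pure-Riccati}, I will use that the Chern curvature has no $(2,0)$ part. Apply $[\nabla_Y,\nabla_{X_0}]-\nabla_{[Y,X_0]}=0$ to $Y$ and project to $E$.

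1. Write $[Y,X_0]=\nabla_YX_0-\nabla_{X_0}Y-T(Y,X_0)$.
2. Use $\nabla_YY\in L$ (geodesy) and $\nabla^{1,0}E\subset E$, so that $\pi_E\nabla_{X_0}Y={\mathsf B_L}X_0$ and the $L$-component of $\nabla_{X_0}Y$ contributes only phase terms that cancel against the unit-phase weight of ${\mathsf B_L}$.
3. The term $-\pi_E\nabla_{\pi_E\nabla_{X_0}Y}Y$ produces $-{\mathsf B_L}{\mathsf B_L}X_0$, and the torsion term produces $-{\mathsf B_L}{\mathsf S_L}X_0$.

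Together these give $D_+{\mathsf B_L}=-{\mathsf B_L}({\mathsf B_L}+{\mathsf S_L})$. The one point needing care is that the $L$-components of $\nabla_{X_0}Y$ and $\nabla_YY$ enter only through the induced connection on $L^*$, which $D_+$ already includes.

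The derivative formulas \eqref{neg-H-derivatives} then follow by the Leibniz rule, with no new curvature input. For $D_-{\mathsf H_L}$, write ${\mathsf H_L}={\mathsf S_L}^*{\mathsf B_L}$ and expand:
\[
D_-({\mathsf S_L}^*{\mathsf B_L})=(D_+{\mathsf S_L})^*{\mathsf B_L}+{\mathsf S_L}^*D_-{\mathsf B_L}.
\]
Here $D_-$ of an adjoint is the adjoint of $D_+$. This step needs $D_+{\mathsf S_L}$, which I would obtain from the pure commutator \eqref{p:eq:purecomm} applied to the torsion, equivalently the $(2,0)$ Bianchi identity $dT$-cyclic restricted to $(Y,Y,X_0)$. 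Combined with $\nu\circ T=0$ and geodesy, that identity should give $D_+{\mathsf S_L}={\mathsf B_L}{\mathsf S_L}-{\mathsf S_L}{\mathsf B_L}$ up to terms that cancel.

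Using the Riccati equation is cleaner. Write ${\mathsf H_L}={\mathsf B_L}^*{\mathsf S_L}$ and take the adjoint of $D_+{\mathsf B_L}$, which gives $D_-{\mathsf B_L}^*=-({\mathsf B_L}^*+{\mathsf S_L}^*){\mathsf B_L}^*$. Then
\[
D_-{\mathsf H_L}=-({\mathsf B_L}^*+{\mathsf S_L}^*){\mathsf B_L}^*{\mathsf S_L}+{\mathsf B_L}^*({\mathsf H_L}-2cI)=-{\mathsf S_L}^*{\mathsf H_L}-2c{\mathsf B_L}^*.
\]
The ${\mathsf B_L}^*{\mathsf H_L}$ terms cancel exactly. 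The formula for $D_+{\mathsf H_L}$ is the adjoint of this one, since ${\mathsf H_L}$ is Hermitian and $D_+$ is adjoint to $D_-$.

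The main obstacle I expect is the Riccati step. There one must track the $L$-component of $\nabla_{X_0}Y$ and the phase conventions precisely. If this proves messy, the fallback is to verify the identity in a frame with $\nabla Y|_x\in E$ at the point, which is possible by a phase change of $Y$.
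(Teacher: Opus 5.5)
Your proposal is correct and is essentially the paper's proof. The mixed equations are Proposition~\ref{hd-defect-equations} with $a=0$. The Riccati equation is the $E$-component of the vanishing $(2,0)$ curvature $R(X_0,Y)Y=0$. Finally, $D_\pm{\mathsf H_L}$ come from differentiating ${\mathsf H_L}={\mathsf B_L}^*{\mathsf S_L}$ using the adjoint Riccati equation and $D_-{\mathsf S_L}={\mathsf H_L}-2cI$, exactly as in Appendix~\ref{app:kernel-differential-calculation}.

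Two small points. With your ordering $[\nabla_Y,\nabla_{X_0}]-\nabla_{[Y,X_0]}$, the bracket term contributes $+{\mathsf B_L}({\mathsf B_L}+{\mathsf S_L})X_0$; your minus signs belong to the ordering $R(X_0,Y)Y$ that the paper uses, though the final equation is the same either way. Also, the tentative detour through $D_+{\mathsf S_L}$ is unnecessary and its guessed formula is unjustified, so drop it.
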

\begin{proof}
The mixed equations are \eqref{hd-defect-system} with $a=0$.
The $(2,0)$ curvature identity $R(X_0,Y)Y=0$ gives the equation for
$D_+{\mathsf B_L}$, and differentiating ${\mathsf H_L}={\mathsf B_L}^*{\mathsf S_L}$ gives the two derivatives of ${\mathsf H_L}$.
The connection and curvature calculations are in
Appendix~\ref{app:kernel-differential-calculation}.
\end{proof}

For $w\in\Gamma(L^*)$, write its unit-frame coefficient as $w=w(Y)$ and set
\begin{equation}\label{neg-Z-definition}
 Z_w=\tfrac12(\overline wY+w\overline Y).
\end{equation}
This is a global real vector field, and $Z_w\psi=\Rea(\overline wY\psi)$.
\begin{lemma}[Divergence along the kernel line]
\label{neg-divergence-lemma}
For a real function $\psi$,
\begin{align}
 \operatorname{div}Z_w
 &=\Rea(D_-w)+\Rea(\overline w(\beta+s)),\label{neg-divergence}\\
 \operatorname{div}(\psi{}Z_w)
 &=\psi\Rea(D_-w)+\Rea(\overline w\{\psi(\beta+s)+Y\psi\}).
 \label{neg-weighted-divergence}
\end{align}
\end{lemma}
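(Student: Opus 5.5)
We compute $\operatorname{div}Z_w$ directly from the Riemannian divergence, expressed through the Chern connection. The Chern connection is metric, so for a real vector field $Z$ the Riemannian divergence is
\[
 \operatorname{div}Z=\tr(\nabla^{LC}Z)=\tr(\nabla Z)-\tr(K_\cdot Z),
\]
where $K=\nabla-\nabla^{LC}$. The contorsion formula recorded before Theorem~\ref{thm:positive-kernel-all-n} gives $\tr(W\mapsto K_WZ)=-g_\R(\text{torsion trace},Z)$. In complex terms this yields the standard identity for a $(1,0)$ field $U$:
\[
 \operatorname{div}_\C U=\sum_i h(\nabla_{e_i}U,e_i)+\eta(U).
\]
The sign is fixed so that it agrees with \eqref{n:eq:eta-balanced-formula}. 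Equivalently, $d(\iota_U dV_h)=(\sum_i\langle\nabla_iU,e_i\rangle-\eta(U))dV_h$ up to the convention sign, which we settle by comparing with $\partial\Theta_h=-\eta\wedge\Theta_h$. The real divergence of $Z_w=\Rea(\overline wY)$ is then $\Rea$ of the complex divergence of the $(1,0)$ field $\overline wY$.

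Next we expand $\sum_i\langle\nabla_{e_i}(\overline wY),e_i\rangle$ in a unitary frame adapted to $L\oplus E$, with $e_0=Y$ and $e_\alpha\in E$. The $e_0$ term contributes the derivative of the coefficient along $Y$. Since $w$ is a section of $L^*$ with the unit phase of $Y$, the combination $\langle\nabla_{Y}(\overline w Y),Y\rangle$ together with its conjugate piece assembles, after taking real parts, into $\Rea(D_-w)$. Because the phase of $Y$ cancels in $\overline wY$, we may use the induced connection on $L^*$. The $E$ directions contribute
\[
 \overline w\sum_\alpha\langle\nabla_{e_\alpha}Y,e_\alpha\rangle=\overline w\tr\mathsf B_L=\overline w\beta,
\]
since $\nabla^{1,0}E\subset E$ and $L$ is holomorphic, and the $L$-component is projected away. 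The torsion term is $\eta(\overline wY)=\overline w\,\eta(Y)=-\overline w s$. The sign convention in the divergence identity must be chosen so that this produces $+\overline w s$, matching \eqref{neg-divergence}. This gives \eqref{neg-divergence}. A subtle point here is that the $(0,1)$ derivative $\nabla_{\bar Y}$ of $\overline wY$ is what pairs with $Y$ in the real trace, which is why $D_-w$ rather than $D_+w$ appears. This requires $\nabla_{\bar Y}Y\in L$, which follows from holomorphicity of $L$ together with metric compatibility.

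Finally, \eqref{neg-weighted-divergence} follows from the Leibniz rule $\operatorname{div}(\psi Z_w)=\psi\operatorname{div}Z_w+Z_w\psi$ together with $Z_w\psi=\Rea(\overline wY\psi)$.

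The main obstacle is bookkeeping of signs and conventions: the sign of $\eta$ in the divergence formula, and whether the $L$-slot derivative appears as $D_-w$ or its conjugate. We would fix both by testing the formula on $\int_X\operatorname{div}(\cdot)\,dV_h=0$ against the Stokes identity \eqref{n:eq:stokes-second}, which uses the same $\eta$ convention.
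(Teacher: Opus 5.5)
Your decomposition is the one in the paper's Appendix~\ref{app:kernel-divergence-calculation}. You write $Z_w=\Rea(\overline wY)$ and split the Chern trace of $\overline wY$ into three pieces: a leaf term, the horizontal term $\overline w\tr{\mathsf B_L}=\overline w\beta$, and a torsion-trace correction. The Leibniz rule then gives \eqref{neg-weighted-divergence}. However, the one substantive sign in the lemma is not actually established.

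\textbf{The torsion sign.} The formula you display, $\operatorname{div}_\C U=\sum_i\langle\nabla_{e_i}U,e_i\rangle+\eta(U)$, is wrong for $\eta_i=\sum_rT^r{}_{ri}$. It would produce $\Rea(\overline w(\beta-s))$. Saying the sign ``must be chosen'' to yield $+\overline ws$ assumes the conclusion. The sign is not a convention. From $\partial_k\log\det h=\sum_i\Gamma^i{}_{ki}=\sum_i\Gamma^i{}_{ik}-\eta_k$ one gets $\operatorname{div}U=\sum_i\nabla_iU^i-\eta(U)$. This is exactly Lemma~\ref{c0:lem:div}, whose proof is valid in every dimension. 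Your contorsion route gives the same result. The trace of $W\mapsto K_WZ$ is $\tr T(\cdot,Z)$, which for $Z=U+\overline U$ equals $2\Rea\eta(U)$ and is subtracted. Hence the torsion contribution is $-\Rea\,\eta(\overline wY)=\Rea(\overline ws)$, with no freedom. Your proposed Stokes comparison would also fix the sign, but the direct derivation takes one line.

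\textbf{The leaf term.} Your explanation of this term is off. The vector $\nabla_{\bar Y}(\overline wY)$ is of type $(1,0)$ and does not enter the trace at all. What enters is the $(1,0)$ derivative:
$\langle\nabla_Y(\overline wY),Y\rangle=Y\overline w+\overline w\langle\nabla_YY,Y\rangle$.
For unit $Y$, metric compatibility gives $\langle\nabla_YY,Y\rangle=-\overline{\langle\nabla_{\bar Y}Y,Y\rangle}$. So this term equals $\overline{D_-w}$, with $D$ the induced connection on $L^*$. Thus $D_-$ appears through conjugation, and its real part is $\Rea(D_-w)$. With these two corrections your argument is complete and coincides with the paper's.
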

\begin{proof}
The connection trace splits into its horizontal and leaf parts;
the torsion trace contributes the remaining $s$ term. This gives
the first identity, and the product rule gives the second.
Appendix~\ref{app:kernel-divergence-calculation} records the coordinate
calculation with the connection on every tensor slot retained.
\end{proof}

\begin{lemma}[Reduction to the invertible block]
\label{neg-kernel-reduction}
On an open set where $\rank {\mathsf B_L}=r\ge1$, $\ker {\mathsf B_L}=\ker {\mathsf B_L}^*$.
For $W=(\ker {\mathsf B_L})^\perp$ and its orthogonal projection $\Pi$,
\begin{equation}\label{neg-active-block}
 {\mathsf B_L}=\Pi {\mathsf B_L}\Pi,\qquad {\mathsf H_L}=\Pi {\mathsf H_L}\Pi,\qquad
 {\mathsf B_L}_W:={\mathsf B_L}|_W\text{ is invertible},\qquad D_\pm\Pi=0.
\end{equation}
\end{lemma}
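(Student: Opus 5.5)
The plan is to derive everything from the algebraic identity $\mathsf H_L=\mathsf B_L^*\mathsf S_L=\mathsf S_L^*\mathsf B_L$ and the self-adjointness $\mathsf H_L=\mathsf H_L^*$ in Lemma~\ref{neg-differential-system}, together with the transport equations \eqref{neg-mixed-system}--\eqref{neg-H-derivatives}, on an open set where $\rank\mathsf B_L=r\ge1$ is constant.

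\emph{Step 1: the kernel equality.} I would show that $\ker\mathsf B_L\subset\ker\mathsf B_L^*$. Once this holds, equal ranks give equality.
\begin{itemize}
\item Take $v\in\ker\mathsf B_L$. Then $\mathsf H_Lv=\mathsf S_L^*\mathsf B_Lv=0$, and since $\mathsf H_L$ is self-adjoint, $\im\mathsf H_L\perp\ker\mathsf B_L$.
\item Differentiate $\mathsf B_Lv=0$ along $D_+$ using the Riccati equation \eqref{neg-pure-Riccati}. This gives $\mathsf B_L(D_+v)=\mathsf B_L(\mathsf B_L+\mathsf S_L)v=\mathsf B_L\mathsf S_Lv$.
\item Use \eqref{neg-H-derivatives}: $D_+\mathsf H_L=-\mathsf H_L\mathsf S_L-2c\mathsf B_L$. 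Applying it to $v$ and using $\mathsf H_Lv=0$ gives $-\mathsf H_L(D_+v)=-\mathsf H_L\mathsf S_Lv$, so $\mathsf H_L(D_+v-\mathsf S_Lv)=0$.
\item Differentiate $\mathsf H_Lv=0$ along $D_-$ with \eqref{neg-H-derivatives}. This gives $\mathsf H_LD_-v=2c\mathsf B_L^*v$ (using $\mathsf S_L^*\mathsf H_Lv=0$). The left side lies in $\im\mathsf H_L$. Pairing with $w\in\ker\mathsf B_L$ gives $2c\langle\mathsf B_L^*v,w\rangle=\langle\mathsf H_LD_-v,w\rangle=\langle D_-v,\mathsf H_Lw\rangle=0$.
\item So $\mathsf B_L^*v\perp\ker\mathsf B_L$, i.e.\ $\mathsf B_L^*v\in\im\mathsf B_L^*$, which is automatic; this alone is not enough. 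I would also show that $\mathsf B_L^*v\in\im\mathsf H_L\cap(\ker\mathsf B_L)^\perp$ forces $\mathsf B_L^*v=0$. To do so, differentiate $\mathsf B_Lv=0$ along $D_-$ with $D_-\mathsf B_L=-\tfrac12\mathsf H_L$: this gives $\mathsf B_LD_-v=\tfrac12\mathsf H_Lv=0$, so $D_-v\in\ker\mathsf B_L$. Hence $\mathsf H_LD_-v=0$, and $c\ne0$ yields $\mathsf B_L^*v=0$.
\end{itemize}
This last computation gives Step 1 cleanly. It also shows that $\ker\mathsf B_L$ is $D_-$-invariant.

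\emph{Step 2: invariance of the projection, $D_\pm\Pi=0$.} Let $K=\ker\mathsf B_L=\ker\mathsf B_L^*$.
\begin{itemize}
\item $D_-$-invariance of $K$ is shown in Step 1. By metric compatibility it gives $D_+$-invariance of $K^\perp=W$; that is, $D_+$ of $\Pi$ vanishes on the off-diagonal block $W\to K$.
\item For $D_+$-invariance of $K$: since $K=\ker\mathsf B_L^*$, differentiate $\mathsf B_L^*v=0$ along $D_+$, using the adjoint of the $D_-$ equation, $D_+\mathsf B_L^*=-\tfrac12\mathsf H_L$ (as $\mathsf H_L$ is self-adjoint). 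This gives $\mathsf B_L^*D_+v=\tfrac12\mathsf H_Lv=0$, so $D_+v\in K$.
\item Hence $K$ is invariant under both $D_\pm$, and so is $W$ by orthogonality. Therefore $D_\pm\Pi=0$.
\end{itemize}

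\emph{Step 3: block form.} From $\ker\mathsf B_L=\ker\mathsf B_L^*=K$ we get $\im\mathsf B_L=K^\perp=W$. Hence $\mathsf B_L=\Pi\mathsf B_L\Pi$, and $\mathsf B_L|_W:W\to W$ is injective, hence invertible. For $\mathsf H_L$: $\mathsf H_L K=0$ and $\mathsf H_L$ is self-adjoint, so $\mathsf H_L=\Pi\mathsf H_L\Pi$.

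The main obstacle is Step 1. The subtle point is which derivative to use: the $D_-$ equation for $\mathsf B_L$ makes $K$ invariant, and then the $D_-$ equation for $\mathsf H_L$ isolates $2c\mathsf B_L^*v$. The phase conventions for sections of $L^*$ need care when forming adjoints, but they only contribute unimodular factors.
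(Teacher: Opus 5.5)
Your proof is correct, and its decisive step is exactly the paper's argument: differentiate $\mathsf B_L v=0$ along $D_-$ with $D_-\mathsf B_L=-\tfrac12\mathsf H_L$ to get $D_-v\in\ker\mathsf B_L$, then differentiate $\mathsf H_Lv=0$ to isolate $-2c\,\mathsf B_L^*v$ (your earlier $D_+$ bullets are harmless but unnecessary). For $D_\pm\Pi=0$ the paper takes a slightly different route: it differentiates $\Pi\mathsf B_L=\mathsf B_L=\mathsf B_L\Pi$ and kills the two off-diagonal blocks of $D_-\Pi$ with the invertible $\mathsf B_L|_W$, while you show directly that $\ker\mathsf B_L$ is $D_+$- and $D_-$-invariant, and the two arguments are equivalent.
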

\begin{proof}
For $v\in\ker {\mathsf B_L}$, ${\mathsf H_L}v={\mathsf S_L}^*{\mathsf B_L}v=0$. Differentiating ${\mathsf B_L}v=0$ and using
$D_-{\mathsf B_L}=-{\mathsf S_L}^*{\mathsf B_L}/2$ gives $D_-v\in\ker {\mathsf B_L}$. Differentiating ${\mathsf H_L}v=0$
then gives $0=(D_-{\mathsf H_L})v=-2c{\mathsf B_L}^*v$. Thus $\ker {\mathsf B_L}\subset\ker {\mathsf B_L}^*$;
equality of dimensions gives equality of kernels, and hence the block
statements. Differentiating $\Pi {\mathsf B_L}={\mathsf B_L}={\mathsf B_L}\Pi$ yields
$(D_-\Pi){\mathsf B_L}={\mathsf B_L}(D_-\Pi)=0$. The derivative of $\Pi^2=\Pi$ has only
off-diagonal blocks, both killed by the invertible ${\mathsf B_L}_W$.
Thus $D_-\Pi=0$, and taking adjoints gives $D_+\Pi=0$.
\end{proof}

\begin{theorem}[The negative-curvature kernel obstruction]
\label{thm:neg-kernel-exclusion}
A compact Hermitian manifold with constant Chern HSC $c<0$ admits
no line satisfying Definition~\ref{def:common-kernel-structure}.
\end{theorem}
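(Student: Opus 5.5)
We argue by contradiction. Suppose $L$ is a line as in Definition~\ref{def:common-kernel-structure} on a compact $X$ with $c<0$. We will build a real vector field of the form $\psi Z_w$ from Lemma~\ref{neg-divergence-lemma} and integrate its divergence over $X$. Stokes' theorem makes the integral vanish; the leafwise Chern equations of Lemma~\ref{neg-differential-system} will make the integrand of one strict sign.

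\textbf{First attempt.} Take the simplest candidate $w=\beta+s$ (or $w=s$, or $w=\beta$), since this is the global section of $L^*$ that already appears in \eqref{neg-divergence}. We compute $D_-w$ from the mixed system \eqref{neg-mixed-system}. Taking traces gives
\[
D_-s=\tr{\mathsf H_L}-2cm,\qquad D_-\beta=-\tfrac12\tr{\mathsf H_L}.
\]
Hence $D_-(\beta+s)=\tfrac12\tr{\mathsf H_L}-2cm$. With $w=\beta+s$, \eqref{neg-divergence} then reads
\[
\operatorname{div}Z_w=\tfrac12\Rea\tr{\mathsf H_L}-2cm+|\beta+s|^2 .
\]
Here $-2cm>0$, because $c<0$ and $m=n-1\ge1$. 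If $\tr{\mathsf H_L}$ could be shown to be nonnegative, or dominated by $|\beta+s|^2$, the integrand would be strictly positive and integration would give $0>0$.

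\textbf{Main obstacle: controlling $\tr{\mathsf H_L}={\rm tr}({\mathsf B_L}^*{\mathsf S_L})$.} This term is Hermitian by \eqref{neg-mixed-system} but has no sign in general. My first try is to change the weight so that it is absorbed.

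On the open set where ${\mathsf B_L}=0$, we have ${\mathsf H_L}=0$, and the identity above is already strictly positive. Elsewhere, work on an open dense set of constant rank $r$ of ${\mathsf B_L}$ and use Lemma~\ref{neg-kernel-reduction} to pass to the invertible block ${\mathsf B_L}_W$, whose projection $\Pi$ is parallel along $Y,\bar Y$. There, ${\mathsf S_L}$ can be replaced by $\Pi{\mathsf S_L}\Pi$ in ${\mathsf H_L}$. The Riccati equation \eqref{neg-pure-Riccati} and \eqref{neg-H-derivatives} give
\[
D_+\log\det{\mathsf B_L}_W=-\tr({\mathsf B_L}_W+\Pi{\mathsf S_L}\Pi),
\]
together with a similar formula for $D_-\log\det$. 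This suggests the weight $\psi=|\det{\mathsf B_L}_W|^{2\gamma}$, or a power of $\det{\mathsf H_L}|_W$.

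The term $Y\psi$ in \eqref{neg-weighted-divergence} then produces $-\gamma\psi\tr({\mathsf B_L}_W+{\mathsf S_L})$. Choosing $\gamma$ (I expect $\gamma=1$ or $1/2$) to cancel the cross term $\overline w(\beta+s)$ should leave a sum of squares plus the positive constant $-2cm\psi$. To handle the rank-changing locus, I would exploit real analyticity (Proposition~\ref{univ:analyticity}), so that the rank is generically constant. I would also choose $\psi$ as a global smooth function, for example $\det(I+{\mathsf H_L}^2)$, rather than one defined blockwise.

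\textbf{Conclusion.} The weighted integral identity yields
\[
0=\int_X\operatorname{div}(\psi Z_w)\,dV_h\ \ge\ -2cm\int_X\psi\,|w|^2\,dV_h ,
\]
with $w$ chosen nonvanishing somewhere, or else we use the constant term directly. This is a contradiction. I expect the delicate part to be finding the exact combination of $w$ and $\psi$ that turns the $\tr{\mathsf H_L}$ term into a square. The positive-constant term $-2cm$ is the source of the contradiction.
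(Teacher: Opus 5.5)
\textbf{Gap: the key cancellation of $\tr{\mathsf H_L}$ is missing, and your route to it rests on an incomplete derivative.} The overall architecture is right: a weighted field $\psi Z_w$, a weight built from $\det{\mathsf B_L}_W$, the positive source $-2c>0$, and the endgame ${\mathsf B_L}\equiv0$, $w=s$. But the proposal stops exactly where the argument has to close, and the mechanism you suggest would not close it.

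Since $\psi$ is real, $Y\psi$ receives both the $D_+$ derivative and the conjugate of the $D_-$ derivative of $e_r=\det{\mathsf B_L}_W$. From $D_+{\mathsf B_L}_W=-{\mathsf B_L}_W({\mathsf B_L}_W+{\mathsf S_L}_W)$ and $D_-{\mathsf B_L}_W=-\tfrac12{\mathsf S_L}_W^*{\mathsf B_L}_W$ one gets
\[
D_+e_r=-(\beta+a_1)e_r,\qquad D_-e_r=-\tfrac12\overline{a_1}\,e_r,\qquad a_1=\tr(\Pi{\mathsf S_L}),\quad a_0=\tr((I-\Pi){\mathsf S_L}).
\]
So for $\psi=|e_r|^{2\sigma}$ one has $Y\psi=-\sigma(\beta+\tfrac32a_1)\psi$, not $-\sigma\psi\tr({\mathsf B_L}_W+{\mathsf S_L})$. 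Hence
\[
\psi^{-1}\{\psi(\beta+s)+Y\psi\}=v_\sigma=(1-\sigma)\beta+(1-\tfrac32\sigma)a_1+a_0 ,
\]
and the coefficient of $a_0$ is $1$ for every $\sigma$. No exponent cancels the cross term. Executing your plan with the incomplete formula (taking $\sigma=1$ to ``cancel'' it) would give a proof resting on a false identity.

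\textbf{What actually works.} The exponent removes the sign-indefinite $t_{\mathsf H_L}=\tr{\mathsf H_L}$, and the choice of $w$ turns the cross term into a square. Tracing \eqref{neg-mixed-system} on $W\oplus\ker{\mathsf B_L}$, with $D_\pm\Pi=0$, gives $D_-\beta=-t_{\mathsf H_L}/2$, $D_-a_1=t_{\mathsf H_L}-2cr$ and $D_-a_0=-2c(m-r)$. Therefore
\[
D_-v_\sigma=(\tfrac12-\sigma)t_{\mathsf H_L}-2c(m-\tfrac32\sigma r),
\]
which forces $\sigma=\tfrac12$, that is, $\psi=|e_r({\mathsf B_L})|$. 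Now take $w$ to be the drift itself, $w=4v_{1/2}=2\beta+a_1+4a_0$. The cross term becomes $\tfrac14\psi|w|^2$ and $D_-w=-2c(4m-3r)$, so
\[
\operatorname{div}(\psi Z_w)=\psi\bigl[-2c(4m-3r)+\tfrac14|w|^2\bigr]>0\quad\text{on }\mathcal U_r .
\]
Thus $\tr{\mathsf H_L}$ is cancelled, never made into a square, and your final inequality involving $\int\psi|w|^2$ is not what comes out.

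\textbf{The rank-drop set and the case order.} The rank-drop set also needs a concrete device. Take $r=\max_X\rank{\mathsf B_L}$ and the global section $e_r({\mathsf B_L})=\tr(\Lambda^r{\mathsf B_L})$, which is nonzero exactly on the open set $\mathcal U_r$. Because $w$ exists only on $\mathcal U_r$ and $\psi$ is only Lipschitz, integrate the divergence of $\chi(\gamma/\varepsilon^2)\sqrt\gamma\,Z_w$ with $\gamma=|e_r|^2$. The bound $|Z_w\gamma|\le C\gamma$ then makes the cutoff error $O(\varepsilon)$. Real analyticity is unnecessary, and a smooth global weight such as $\det(I+{\mathsf H_L}^2)$ would destroy the exact cancellation. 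Finally, the cases must come in order. This argument first shows ${\mathsf B_L}\equiv0$ on all of $X$, and only then is $\int_X\operatorname{div}Z_s\,dV_h=\int_X(-2cm+|s|^2)\,dV_h>0$ available. Knowing ${\mathsf B_L}=0$ on an open set does not suffice.
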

\begin{proof}
Suppose first that ${\mathsf B_L}$ is nonzero somewhere. Put $r=\max_X\rank {\mathsf B_L}\ge1$
and $\mathcal U_r=\{\rank {\mathsf B_L}=r\}$. The global characteristic coefficient
\begin{equation}\label{neg-characteristic-coefficient}
 e_r({\mathsf B_L})=\tr(\Lambda^r{\mathsf B_L})\in\Gamma((L^*)^{\otimes r})
\end{equation}
satisfies, by Lemma~\ref{neg-kernel-reduction},
\begin{equation}\label{neg-maximal-rank-locus}
 e_r({\mathsf B_L})=\det {\mathsf B_L}_W\ne0\text{ on }\mathcal U_r,\qquad
 \mathcal U_r=\{|e_r({\mathsf B_L})|>0\}.
\end{equation}
On this set put $a_1=\tr(\Pi {\mathsf S_L})$, $a_0=\tr((I-\Pi){\mathsf S_L})$,
and the real function $t_{\mathsf H_L}=\tr {\mathsf H_L}$, so $s=a_1+a_0$.
The sections $a_1,a_0$ of $L^*$ obey
\begin{equation}\label{neg-projection-bounds}
 |a_1|\le\sqrt r\,|{\mathsf S_L}|,\qquad |a_0|\le\sqrt{m-r}\,|{\mathsf S_L}|.
\end{equation}
Taking traces in \eqref{neg-mixed-system} gives
\begin{equation}\label{neg-trace-equations}
 D_-\beta=-t_{\mathsf H_L}/2,\qquad D_-a_1=t_{\mathsf H_L}-2cr,\qquad
 D_-a_0=-2c(m-r).
\end{equation}
Since $D_\pm\Pi=0$, compression to $W$ gives
$D_+{\mathsf B_L}_W=-{\mathsf B_L}_W({\mathsf B_L}_W+{\mathsf S_L}_W)$ and $D_-{\mathsf B_L}_W=-{\mathsf S_L}_W^*{\mathsf B_L}_W/2$,
where ${\mathsf S_L}_W=\Pi {\mathsf S_L}|_W$. Jacobi's determinant formula therefore yields
\begin{equation}\label{neg-characteristic-derivatives}
 D_+e_r=-(\beta+a_1)e_r,\qquad
 D_-e_r=-\tfrac12\overline{a_1}e_r.
\end{equation}
For the smooth global function $\gamma=|e_r({\mathsf B_L})|^2$, this implies
\begin{equation}\label{neg-characteristic-norm}
 Y\gamma=-(\beta+\tfrac32a_1)\gamma.
\end{equation}
The weight is forced by the term $t_{\mathsf H_L}$, which has no sign.
For a tentative weight $\psi_\sigma=\gamma^\sigma$, set
\[
v_\sigma=\psi_\sigma^{-1}\{
\psi_\sigma(\beta+s)+Y\psi_\sigma\}
=(1-\sigma)\beta+(1-3\sigma/2)a_1+a_0.
\]
Using \eqref{neg-trace-equations} gives
\[
D_-v_\sigma=(\tfrac12-\sigma)t_{\mathsf H_L}-2c(m-3\sigma r/2).
\]
Thus $\sigma=1/2$ is the unique power that removes $t_{\mathsf H_L}$.
Choose $\psi=\sqrt\gamma$ and multiply $v_{1/2}$ by four:
$w_r=2\beta+a_1+4a_0$. Then
\[
D_-w_r=-2c(4m-3r),\qquad
\psi(\beta+s)+Y\psi=\tfrac14\psi{}w_r.
\]
Equation~\eqref{neg-weighted-divergence} becomes
\begin{equation}\label{neg-maximal-rank-divergence}
 \operatorname{div}(\psi{}Z_{w_r})
 =\psi\left[-2c(4m-3r)+\tfrac14|w_r|^2\right]
 \quad\text{on }\mathcal U_r.
\end{equation}
The bracket is positive, since $c<0$ and $4m-3r\ge m>0$.

To integrate across the rank-drop set, choose a smooth cutoff
$\chi:[0,\infty)\to[0,1]$, zero on $[0,1]$ and one on $[4,\infty)$.
Extend $V_\varepsilon=\chi(\gamma/\varepsilon^2)\sqrt\gamma\,Z_{w_r}$
by zero outside $\mathcal U_r$; it is smooth on $X$.
Compactness and \eqref{neg-projection-bounds}--\eqref{neg-characteristic-norm}
give $|Z_{w_r}\gamma|\le C\gamma$. On the cutoff annulus
$\varepsilon\le\sqrt\gamma\le2\varepsilon$,
\begin{equation}\label{neg-cutoff-error}
 \left|\sqrt\gamma\,
 \frac{\chi'(\gamma/\varepsilon^2)}{\varepsilon^2}
 Z_{w_r}\gamma\right|\le C\varepsilon.
\end{equation}
Integrate $\operatorname{div}V_\varepsilon$ and let
$\varepsilon\downarrow0$. The error tends to zero, and dominated
convergence gives
\begin{equation}\label{neg-maximal-rank-integral}
 0=\int_{\mathcal U_r}|e_r({\mathsf B_L})|
 \left[-2c(4m-3r)+\tfrac14|w_r|^2\right]\ddV,
\end{equation}
contradicting positivity on the nonempty open set $\mathcal U_r$.
Thus ${\mathsf B_L}=0$, so $\beta={\mathsf H_L}=0$ and $D_-s=-2cm$.
Finally \eqref{neg-divergence} gives
\[
 0=\int_X\operatorname{div}Z_s\ddV
  =\int_X(-2cm+|s|^2)\ddV>0,
\]
a contradiction.
\end{proof}

\begin{proof}[Proof of Theorem~\ref{thm:no-common-kernel-open}]
A kernel on an open set extends globally by
Proposition~\ref{hd-global-kernel}. Theorem~\ref{thm:automatic-geodesy}
makes its dual a totally geodesic common-kernel line, contradicting
Theorem~\ref{thm:positive-kernel-all-n} or
Theorem~\ref{thm:neg-kernel-exclusion}, according to the sign of $c$.
The zero-kernel locus is open because the defining bundle map has full
rank there; its complement has empty interior by the preceding argument.
\end{proof}

\subsection{Completion of threefold rigidity}
\begin{proof}[Proof of Theorem~\ref{p:thm:main}(1)]
If $A$ were nonzero at a point, it would remain nonzero on a
nonempty open set.  Then Theorem~\ref{auto-sing:local-alternative} would
contradict Theorem~\ref{thm:no-common-kernel-open}.  Thus $A=0$ everywhere.

Equation~\eqref{b:eq:primitive} now gives
$d\omega_h=\theta\wedge\omega_h$, where
$\theta=-(\eta+\bar\eta)/2$.  Applying $d$ yields
$d\theta\wedge\omega_h=0$.  Wedge multiplication by $\omega_h$
is injective on two-forms in real dimension six: with adjoint
$\Lambda$, the pointwise Lefschetz identity gives
\[
 \|\omega_h\wedge\alpha\|^2
 =\|\alpha\|^2+\|\Lambda\alpha\|^2.
\]
Therefore $d\theta=0$, and $h$ is locally conformally K\"ahler.
Huang--Wan's constant Chern holomorphic sectional curvature theorem
\cite[Theorem~1.2, p.~3]{HW26} makes the given metric K\"ahler.

Compactness gives completeness, which passes to the lifted metric
on the universal cover. The complete simply connected K\"ahler
classification \cite[Chapter~IX, \S7, Theorems~7.8--7.9,
pp.~169--171]{KN69} identifies that cover with scaled $\CP^3$
for $c>0$ and complex hyperbolic three-space for $c<0$.
The deck group acts freely and properly discontinuously by
holomorphic isometries, and the quotient is compact.
In the positive case every projective unitary transformation has
an eigenline and therefore a fixed point. Thus the deck group is
trivial. In the negative case it gives the asserted compact
complex-hyperbolic quotient.
\end{proof}

\section{Zero curvature: scalar dichotomy and rank reduction}\label{c0:sec:c0}
\begin{theorem}[Theorem~\ref{z:thm:threefold}(2)]\label{c0:thm:c0}
Let $(X,h)$ be a compact connected Hermitian manifold of complex dimension three.
If $H_h\equiv0$, then $R^h\equiv0$.
\end{theorem}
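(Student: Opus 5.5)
The plan follows the outline announced in the introduction. It works with the decomposition of Lemma~\ref{c0:lem:decomp} at $c=0$, namely
\[
R_{i\bar j k\bar l}=\eps_{ika}\cP^{a,jl}+\eps_{jlb}\overline{\cP^{b,ik}}+\eps_{ika}\eps_{jlb}Q_{ab},
\]
together with the torsion encoding $C=A-\tfrac12[\eta]_\times$ and the Bianchi relation \eqref{eq:fund-barC}. Thus $R^h=0$ is equivalent to $\cP=0$ and $Q=0$.

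\textbf{Step 1: strict scalar alternative.} We first establish a pointwise identity for the scalar $p=4\tau-2|A|^2$, with $\tau=\tr Q$ and $s_h=2\tau$. The identity should have the shape
\[
\Rea\bigl(\text{divergence}\bigr)+\mathcal L p=\bigl|(\partial-\eta/2)\text{-type square}\bigr|^2+\text{(nonnegative quadratic in }\cP,Q,A),
\]
where $\mathcal L$ is a second-order operator built from $\partial-\eta/2$, in the spirit of the weighted identity \eqref{n:eq:t2-square}. The intended conclusion is $\mathcal L p\ge0$ with equality forcing the curvature to vanish. By compactness, the strong maximum principle and analyticity (Proposition~\ref{univ:analyticity}), this should yield a dichotomy: either $R^h\equiv0$, or $p>0$ everywhere. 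We then assume $p>0$ and aim for a contradiction.

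\textbf{Step 2: rank reduction.} Next we differentiate \eqref{eq:fund-barC}, using the mixed commutators \eqref{c0:eq:ricci} and the polarized identity \eqref{c0:eq:pol}. The goal is to show that the output rank of $\cP:\Sym^2V^*\to V\otimes K_X$ is at most two. This mirrors the polynomial pencil analysis of Section~\ref{b:sec:threefold}, except that the normalized defect must be replaced by the unnormalized $\cP$.

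In the rank-two branch we aim to show that the recurrence defect $W$ vanishes, so that $A$ is recurrent, $\nabla A=\theta\otimes A$. An integral identity of the type used in Lemma~\ref{z:lem:parallel-endomorphism}, weighted by $p$, should then force $A=0$, and this contradicts the branch hypotheses.

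\textbf{Step 3: the rank-one branch and null foliations.} In the rank-one branch, $\cP$ factors through a line. We use $p$ and this line to build a closed semipositive $(1,1)$-form $\alpha$ with $\alpha^3=0$, and then split according to $\rank\alpha\in\{1,2\}$.
\begin{enumerate}
\item If $\rank\alpha=2$, the null leaves are holomorphic curves. When the transverse metric is nonconstant along the leaves, it produces a local Killing field. We continue this field analytically, pass to a finite cover so that it becomes global, and apply a maximum principle to its norm. When the transverse metric is constant, a Bochner identity gives a contradiction.
\item If $\rank\alpha=1$, the null leaves are surfaces. Applying a maximum principle to $f=\log p$ along these leaves gives the contradiction.
\end{enumerate}
In every branch the assumption $p>0$ is therefore impossible, and Step~1 gives $R^h\equiv0$.

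The main obstacle is Step~1. Finding the exact square identity that makes $p$ subharmonic for a torsion-twisted Laplacian, and proving rigidity in its equality case, requires heavy tensor bookkeeping. I would first try combining $\Delta_h\tau$ with $|\nabla A|^2$ via \eqref{eq:fund-barC} and completing the square in $\partial-\eta/2$ with weight $e^{2F}$-type exponents, as in Lemma~\ref{n:lem:comparison}. The next hardest step is the Killing-field continuation in Step~3.
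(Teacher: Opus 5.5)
Your outline reproduces the paper's three stages, but it is not yet a proof, and several of the concrete choices you make would fail.

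\textbf{Step~1.} The operator that actually appears is $\sL u=\Delta u-\Rea\langle\eta,\bar\partial u\rangle+(\tfrac14|\eta|^2-s)u$. Its zeroth-order coefficient has no sign, so no bare maximum principle is available. The identity that holds is $\sL(-p)=\Rq=|\nabla'T^\circ-\tfrac12\eta\otimes T^\circ|^2+3|S|^2\ge0$, so $p$ is a \emph{super}solution. The subharmonicity $\mathcal Lp\ge0$ you aim for would, under the same comparison, give $p\le0$, which is the opposite of what Steps~2--3 need.

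The missing idea is spectral. By the square identity \eqref{c0:eq:F}, $\sL$ is self-adjoint with top eigenvalue $\lambda_1\le0$ and a positive eigenfunction $\varphi$. Comparing $-p$ with $\varphi$ (plus a strong minimum principle) gives $p>0$ when $\lambda_1<0$, and $\lambda_1=0$ is the rigid case. Even in the rigid case, $\Rq=0$ yields only $W=0$ and $\Sym^3\cP=0$, not vanishing curvature. You need the separate barred-energy identity \eqref{c0:eq:barred} to upgrade $S=0$ to $\nabla''T^\circ=0$. You then have to show $\eta=0$, and only then do first Bianchi and polarization give $R=0$. Your route of combining Hessians of $\tau$ and $|A|^2$ is the right one. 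However, the $\tau$ part needs the sourced second-Bianchi transport of $Q$ (Lemma~\ref{c0:lem:Qtr}); \eqref{eq:fund-barC} alone is not enough.

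\textbf{Step~2.} For the rank bound you give no mechanism. The one that works is the transport $\nabla_i\cP=M_i^{(\mathrm{out})}\cP$: its $(2,0)$ commutator at output rank three forces $W=0$ (Proposition~\ref{c0:prop:rank3}). The symmetrized mixed identity then makes $\cP$ totally symmetric, and Lemma~\ref{c0:lem:symcompat} excludes rank three. The region $\operatorname{int}\{A=0\}$ is handled through the LCK structure.

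In the rank-two branch, $A\equiv0$ does not by itself contradict output rank two; the paper's LCK analysis only bounds that rank by two, and a global $A\equiv0$ is disposed of by the compact LCK endpoint. The real work in this branch is:
\begin{enumerate}
\item the long local proof that $W=0$ on $\Omega_{\cP,2}$ (Proposition~\ref{c0:r2:localW});
\item unique continuation for $\wedge^2\cP$ (Lemma~\ref{c0:r1:weak}), which makes $W\equiv0$ global;
\item $\Xi=0$ from the mixed identity;
\item three $p$-weighted integrations that force $\cP\equiv0$, not $A\equiv0$, which Lemma~\ref{c0:r1:Uzero} then excludes.
\end{enumerate}

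\textbf{Step~3.} When $\alpha$ has rank one, a leafwise maximum principle for $f=\log p$ gives no contradiction. On the kernel plane $f_{i\bar j}=\rho^{(1)}_{i\bar j}$, whose leafwise trace is $\tau+Q_{00}$, and $Q_{00}$ has no sign. The paper instead maximizes $e_m(Q)^2r_1^3$ with $r_1=\tr_h\alpha/p$. The leafwise Hessian of $\log|e_m(Q)|$, available through Lemma~\ref{c0:r1:plane-transport}, cancels the $Q_{00}$ term of $\log r_1$. This function also vanishes off the regular locus, so its positive maximum lies on a leaf. Note too that semipositivity of $\alpha=\rho_h-i\dd\log p$ requires the uncontracted matrix identity \eqref{c0:r1:p-hessian}, not only a scalar identity.
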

The curvature here is the full Chern curvature of the original metric.
We first derive a strictly positive scalar on every hypothetical nonflat
solution. This permits a maximum argument for vanishing mixed curvature
and, separately, three integrations that prove flatness when $W=0$,
without a rank assumption. Together with local compatibility, these
arguments reduce the proof to mixed-curvature rank one. The last part uses the null foliations of a closed semipositive
form. All long component contractions are given in the appendix.

\subsection{The scalar identities and their square terms}\label{c0:sec:integral}
Retain the torsion, curvature, canonical-line, and ordered-norm conventions
of Section~\ref{sec:conventions}. Set $V=T^{1,0}X$ and use indices
$0,1,2$. The tensors $A\in\Sym^2V\otimes K_X$, $\cP\in
V\otimes\Sym^2V\otimes K_X$ and $Q=Q^*$ are those of
\eqref{c0:eq:decomp}. The output rank of $\cP$ is the rank of
$\Sym^2V^*\longrightarrow V\otimes K_X$; it will always be distinguished
from the rank of the real $(1,1)$-form introduced below.
\begin{equation}\label{c0:r1:definitions}
 \tau=\tr Q,\quad u=|A|^2,\quad p=4\tau-2u,\quad
 W_i=\nabla_iA-\tfrac12\eta_iA,\quad
 \Xi_{aj}=\sum_{k,b}\eps_{jkb}\cP^{b,ak}.
\end{equation}
In particular $s=2\tau$, $|T^\circ|^2=2u$ and $\tr\Xi=0$.
All covariant derivatives include the derivative inputs and the canonical
line. The endomorphism acting on the output of $\cP$ is
\begin{equation}\label{c0:eq:M}
 (E_i)_{ab}=\eps_{aib},\qquad M_i=AE_i+\tfrac12\eta_i I.
\end{equation}
\begin{lemma}\label{c0:r1:bianchi}
For a zero-Chern-HSC threefold,
\begin{align}
 C_{\bar j}^{al}&=-2{\cP}^{a,jl}-2\eps_{jlb}Q_{ab},\label{c0:r1:Cbar}\\
 A_{\bar j}^{al}&=-{\cP}^{a,jl}-{\cP}^{l,ja}
 -\eps_{jlb}Q_{ab}-\eps_{jab}Q_{lb},\label{c0:r1:Sbar}\\
 \nabla_{\bar j}\eta_i&=2\Xi_{ji}+2\tau\delta_{ij}-2Q_{ji},\label{c0:r1:etabar}\\
 \nabla_i{\cP}&=M_i^{(\mathrm{out})}{\cP}.\label{c0:r1:Utransport}
\end{align}
If $B_i=\nabla_i\bar {\cP}$ and $\mathsf Q_i=\nabla_iQ$, the full sourced second Bianchi identity is
\begin{equation}\label{c0:r1:Qsource}
 (\mathsf Q_i)_{cb}=(M_iQ)_{cb}
 +\eps_{klc}B_k^{b,il}
 +C^{cr}\overline{{\cP}^{b,ri}}
 -\tfrac12\delta_{ic}C^{kr}\overline{{\cP}^{b,rk}}.
\end{equation}
The first Chern--Ricci tensor satisfies
\begin{equation}\label{c0:r1:ricci-lee}
 \rho_{i\bar j}^{(1)}-\tfrac12\nabla_{\bar j}\eta_i
 =\overline{\Xi_{ij}},\qquad \tr_h\rho^{(1)}=2\tau.
\end{equation}
\end{lemma}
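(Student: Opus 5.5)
Most of these identities are direct specializations at $c=0$ of relations already established earlier in the paper; the rest are routine contractions of the decomposition \eqref{c0:eq:decomp}. I would prove them in the listed order.

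\textbf{The $\bar j$-derivatives of torsion.} Equation \eqref{c0:r1:Cbar} is \eqref{eq:fund-barC} with $c=0$; recall that \eqref{eq:fund-barC} came from \eqref{p:eq:firstBi} together with \eqref{c0:eq:decomp}. I would check it once directly. Contract the first Bianchi identity \eqref{p:eq:firstBi} with $\tfrac12\eps_{ika}$, so that $C^{al}=\tfrac12\eps_{ika}T^l{}_{ik}$ appears on the left. On the right only the terms of \eqref{c0:eq:decomp} alternating in $i,k$ survive, namely $\eps_{ika}\cP^{a,jl}$ and $\eps_{ika}\eps_{jlb}Q_{ab}$. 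The identity $\sum_{i,k}\eps_{ika}\eps_{ikb}=2\delta_{ab}$ then produces the factor $-2$. Since the frame identification is parallel, $A=(C+C^{\mathsf T})/2$ commutes with $\nabla_{\bar j}$, and symmetrizing \eqref{c0:r1:Cbar} in $a,l$ gives \eqref{c0:r1:Sbar}.

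\textbf{The derivative of $\eta$.} Because $C=A-\tfrac12[\eta]_\times$, we have $\eta_i=-\eps_{ial}C^{al}$, up to the sign convention fixed by $\eta_i=\sum_kT^k{}_{ki}$. Applying $-\eps_{ial}$ to \eqref{c0:r1:Cbar} gives two pieces.
\begin{itemize}
\item The $\cP$ term contributes $2\eps_{ial}\cP^{a,jl}$, which is $2\Xi_{ji}$ after renaming indices in the definition of $\Xi$.
\item The $Q$ term contributes $2\eps_{ial}\eps_{jlb}Q_{ab}=2(\delta_{ij}\delta_{ab}-\delta_{ib}\delta_{aj})Q_{ab}=2\tau\delta_{ij}-2Q_{ji}$.
\end{itemize}
This is \eqref{c0:r1:etabar}. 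For \eqref{c0:r1:ricci-lee}, contract \eqref{c0:eq:decomp} over $k=l$ with $c=0$ to get $\rho^{(1)}_{i\bar j}$. This produces $\eps_{ika}\cP^{a,jk}$, a conjugate $\cP$ term, and $(\delta_{ij}\delta_{ab}-\delta_{ib}\delta_{aj})Q_{ab}$. Subtracting half of \eqref{c0:r1:etabar} cancels the $\cP$ term and the $Q$ terms and leaves $\overline{\Xi_{ij}}$. Taking the trace and using $\tr\Xi=0$, which holds by the symmetry of $\cP$ in its last two slots, gives $2\tau$.

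\textbf{Transport of $\cP$.} Here \eqref{hd-full-F} is unavailable, because it required dividing by $c$. Instead I would repeat its proof at the level of $\mathcal R$ itself. The inversion identity \eqref{hd-Bianchi-inversion} holds for every $c$, since its derivation used only differentiated polarization and the second Bianchi identity \eqref{p:eq:diffBi}. At $c=0$, \eqref{hd-P-definition} is replaced by $\mathcal R_{ik}^{jl}=\eps_{ika}\cP^{a,jl}$. Contracting \eqref{hd-Bianchi-inversion} with $\eps_{ika}/2$ is exactly the computation that produced \eqref{eq:fund-F}, now without the inhomogeneous $\delta$ terms, which came from the space-form part. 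The result is
\[
\nabla_p\cP^{a,bd}=C^{ar}\eps_{rpe}\cP^{e,bd}+\tfrac12\delta^a_p\eta_e\cP^{e,bd}.
\]
The remaining check is that this equals $(M_p\cP)^{a,bd}$ with $M_p=AE_p+\tfrac12\eta_pI$. Substituting $C=A-\tfrac12[\eta]_\times$ turns the $[\eta]_\times$ part into $-\tfrac12(\eta_p\delta_{ae}-\eta_a\delta_{pe})\cP^{e,bd}$. Combined with the $\delta^a_p$ term, this leaves $\tfrac12\eta_p\cP^{a,bd}$ plus a term $\eta_a\cP^{p,bd}-\delta_{ap}\eta_e\cP^{e,bd}$. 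I expect this last term to vanish or to be absorbed by the sign conventions, but the signs need to be tracked carefully.

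\textbf{The sourced identity for $\nabla Q$.} This is the main obstacle. I would write \eqref{p:eq:diffBi} at $c=0$ with the full decomposition inserted, and contract with $\eps_{ikc}\eps_{jlb}$ to isolate the double-alternating part. The $Q$ terms give $\nabla_i Q$ together with torsion terms $T^r{}_{ip}\,\eps\eps Q$; after epsilon identities these should assemble into $M_iQ$. The conjugate-$\cP$ terms give $\nabla_k\overline{\cP}$ contracted against $\eps_{klc}$, which is the $B_k$ term. The torsion applied to the $\overline\cP$ part of $R$ gives the two $C\overline\cP$ terms, with the trace correction $-\tfrac12\delta_{ic}$. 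The $\cP$ terms on the left should drop out because \eqref{c0:r1:Utransport} already accounts for them.

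This is a finite epsilon bookkeeping exercise. I expect the sign of the trace correction and the placement of $\eta$ inside $M_i$ to be the places where errors arise, so I would verify the final formula by contracting it over $c=i$ and comparing with the trace of \eqref{c0:r1:etabar}, differentiated in a commuted order.
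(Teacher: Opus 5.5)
Your derivations of \eqref{c0:r1:Cbar}, \eqref{c0:r1:Sbar} and \eqref{c0:r1:ricci-lee} follow the paper. Obtaining the transport of $\cP$ from \eqref{hd-Bianchi-inversion}, whose derivation never uses $c\ne0$, is a valid alternative to the paper's Lemma~\ref{c0:lem:transalg}. Your intermediate formula $\nabla_p\cP^{a,bd}=C^{ar}\eps_{rpe}\cP^{e,bd}+\tfrac12\delta^a_p\eta_e\cP^{e,bd}$ is correct.

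The genuine gap is in \eqref{c0:r1:Qsource}. Take the $(j,l)$-alternating part of \eqref{p:eq:diffBi}. This is what removes the $\cP$ terms, because $\cP^{a,jl}$ is symmetric in $j,l$; \eqref{c0:r1:Utransport} plays no role. The result is
\[
 \nabla_pV_b(i,k)-\nabla_iV_b(p,k)=T^r{}_{ip}V_b(r,k),\qquad V_b(i,k)=\overline{\cP^{b,ik}}+\eps_{ika}Q_{ab}.
\]
No $\eps$-contraction of this isolates $\nabla Q$. Contracting the pair $(i,p)$ with $\eps_{ipc}$ turns the $Q$-part into $2\nabla_kQ_{cb}-2\delta_{ck}(\operatorname{div}Q)_b$, where $(\operatorname{div}Q)_b=\sum_a\nabla_aQ_{ab}$. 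Contracting $(i,k)$ instead gives $\nabla_pQ_{cb}+\delta_{cp}(\operatorname{div}Q)_b$.

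Your plan never mentions this divergence, and without it your predictions fail. After division by $2$, the torsion side gives only $C^{cr}\overline{\cP^{b,rk}}+(AE_kQ)_{cb}+\tfrac12\eta_kQ_{cb}-\tfrac12\delta_{ck}(\eta Q)_b$. So you would get $M_kQ$ plus a stray $-\tfrac12\delta_{ck}(\eta Q)_b$, and no $\overline{\cP}$ trace term at all.

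The missing step, as in Lemma~\ref{c0:lem:Qtr}, is to take the trace $c=k$. The $\nabla\overline{\cP}$ term drops because $\overline{\cP^{b,ik}}$ is symmetric in $i,k$. Using $\sum_{k,r}C^{kr}\eps_{rka}=-\eta_a$ then gives
\[
 (\operatorname{div}Q)_b=-\tfrac12\sum_{s,r}A^{sr}\overline{\cP^{b,rs}}+\tfrac12(\eta Q)_b .
\]
Substituting this back cancels the stray $-\tfrac12\delta_{ck}(\eta Q)_b$. It also produces exactly the correction $-\tfrac12\delta_{ic}C^{kr}\overline{\cP^{b,rk}}$ of \eqref{c0:r1:Qsource}, once the derivative index is renamed $i$; $A$ may be replaced by $C$ there by symmetry of $\overline{\cP^{b,rk}}$. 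So the $c=i$ trace you proposed only as a check is actually the step that determines the formula.

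Your transport check is also left open on a wrong premise. The leftover $\eta_a\cP^{p,bd}-\delta_{ap}\eta_e\cP^{e,bd}$ is not zero in general and is not a matter of convention; it comes from mis-contracting $[\eta]_\times$. Since $\eta_i=\sum_kT^k{}_{ki}=+\sum_{a,l}\eps_{ial}C^{al}$, one has $([\eta]_\times)_{ar}=\eps_{amr}\eta_m$. The identity $\sum_r\eps_{ram}\eps_{rpe}=\delta_{ap}\delta_{me}-\delta_{ae}\delta_{mp}$ then makes the $[\eta]_\times$ part equal to $-\tfrac12\delta_{ap}\eta_e\cP^{e,bd}+\tfrac12\eta_p\cP^{a,bd}$. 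This cancels the $\tfrac12\delta^a_p\eta_e\cP^{e,bd}$ term exactly and leaves $(M_p\cP)^{a,bd}$ with nothing left over.

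The same signs matter in your derivation of \eqref{c0:r1:etabar}. The correct facts are $\eta_i=+\eps_{ial}C^{al}$, $\sum_{a,l}\eps_{ial}\cP^{a,jl}=-\Xi_{ji}$, and $\sum_l\eps_{ial}\eps_{jlb}=\delta_{ib}\delta_{aj}-\delta_{ij}\delta_{ab}$. You reversed all three signs, and the reversals cancel; that is the only reason your formula comes out right.
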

\begin{proof}
The contractions of the first Bianchi identity are proved in
Lemma~\ref{c0:lem:bianchi1}; output transport is
Proposition~\ref{c0:prop:Ptransport}. The sourced equation is the
index-reordered form of Lemma~\ref{c0:lem:Qtr}: its derivatives of
$\bar\cP$ are retained. Tracing the curvature decomposition and the
first Bianchi contraction gives \eqref{c0:r1:ricci-lee}.
\end{proof}

Define Hermitian matrices
\begin{equation}\label{c0:r1:GZ}\equationalias{c0:r1:MU}
 \mathsf G(W)_{i\bar j}=\ip{W_i}{W_j},\qquad \mathsf M(\cP)_{i\bar j}=\sum_{a,b}\bigl({\cP}^{a,jb}\overline{{\cP}^{a,ib}}
                  +2{\cP}^{a,jb}\overline{{\cP}^{b,ia}}\bigr).
\end{equation}
The trace of $\mathsf M$ is a squared norm, although $\mathsf M$ need not
be semipositive without an additional rank or symmetry hypothesis.
Indeed, write $F^{ajl}=(\cP^{a,jl}+\cP^{j,al}+\cP^{l,aj})/3$.
Symmetry of the last two slots and expansion of this average give
\[
 3|F|^2=|\cP|^2+
 2\Rea\sum_{a,i,b}\cP^{a,ib}\overline{\cP^{b,ia}}
 =\tr\mathsf M.
\]
The cross-contraction is real by exchanging $a,b$. Thus taking a trace
produces a nonnegative scalar even when the full matrix has no sign.
The uncontracted identity below retains the directional information
needed later for the equation of $\zeta$ and its null distributions.
\begin{lemma}[Uncontracted scalar identity]\label{c0:r1:hessian}
For every zero-Chern-HSC threefold,
\begin{equation}\label{c0:r1:p-hessian}
 p_{i\bar j}-\tfrac12\eta_i p_{\bar j}-\tfrac12\bar\eta_jp_i
 +\tfrac14p\eta_i\bar\eta_j
 =p\rho^{(1)}_{i\bar j}-2\mathsf G(W)_{i\bar j}-4\mathsf M(\cP)_{i\bar j}.
\end{equation}
\end{lemma}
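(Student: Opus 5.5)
Recall that $p=4\tau-2u$ with $\tau=\tr Q$ and $u=|A|^2$. The plan is to compute $p_{i\bar j}=\nabla_{\bar j}\nabla_i p$ directly from the transport equations of Lemma~\ref{c0:r1:bianchi}, handling the two pieces $\tau$ and $u$ separately and then recombining. The left side of \eqref{c0:r1:p-hessian} is the conjugated Hessian $e^{\bar f/2}\,\partial_{\bar j}\bigl(\dots\bigr)$-type expression. Concretely, writing $D_i=\nabla_i-\tfrac12\eta_i$ and $\bar D_j=\nabla_{\bar j}-\tfrac12\bar\eta_j$, it equals $\bar D_jD_ip+\tfrac12p\,\nabla_{\bar j}\eta_i$. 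So I will aim to compute $D_ip$ first and then apply $\bar D_j$.

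\textbf{Step 1: first derivatives.} Since $Q$ and $A$ carry canonical weights, I will rewrite the derivatives in terms of $W_i=D_iA$. For the $u$-part,
\[
\nabla_iu=\ip{\nabla_iA}{A}+\ip{A}{\nabla_{\bar i}A}.
\]
The first term is $\ip{W_i}{A}+\tfrac12\eta_iu$, and the second is given by \eqref{c0:r1:Sbar} in terms of $\cP$ and $Q$. For the $\tau$-part, I take the trace of \eqref{c0:r1:Qsource}. There $\tr(M_iQ)=\tr(AE_iQ)+\tfrac12\eta_i\tau$. The terms $\eps_{klc}B_k^{b,il}$ traced over $c=b$ give a divergence of $\bar\cP$, and the $C\bar\cP$ terms are quadratic couplings. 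The claim to verify is that in $D_i(4\tau-2u)$ the $\eta_i$ multiples cancel against $-\tfrac12\eta_i p$ exactly. The remaining cross terms $\tr(AE_iQ)$ and $\ip{A}{\eps Q}$ should also cancel between $4\tau$ and $2u$; this is what fixes the coefficients $4$ and $-2$ in $p$. What survives should be of the form $D_ip=-2\ip{A}{\cdots}+(\text{divergence of }\bar\cP)$.

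\textbf{Step 2: second derivatives.} I will apply $\bar D_j$ and commute derivatives with \eqref{c0:eq:ricci}. The mixed commutator on the $K_X$ weights produces the $\rho^{(1)}$ action. Together with $\nabla_{\bar j}\eta_i$ from \eqref{c0:r1:etabar} and its Ricci form \eqref{c0:r1:ricci-lee}, this should assemble into $p\rho^{(1)}_{i\bar j}$. Differentiating $\ip{W_i}{A}$ in $\bar j$ gives $\ip{W_i}{W_j}$, which supplies $-2\mathsf G(W)$, plus $\ip{\nabla_{\bar j}W_i}{A}$. For the latter I commute to $\nabla_i\nabla_{\bar j}A$ and substitute \eqref{c0:r1:Sbar} again. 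The holomorphic derivatives of $\cP$ are then removed by the output transport \eqref{c0:r1:Utransport}, $\nabla_i\cP=M_i\cP$. The $\eta$-part of $M_i$ is absorbed by $D_i$, and the $AE_i$-part cancels the remaining $A\cdot\cP$ couplings. The terms involving $\bar\cP$ derivatives from Step 1, after conjugating \eqref{c0:r1:Utransport}, yield the quadratic $\cP\bar\cP$ contractions; the symmetric-in-slots pattern should produce exactly $-4\mathsf M(\cP)$. The $Q$-quadratic terms should cancel, using \eqref{c0:eq:pol}, that is, the vanishing of the totally symmetric curvature part.

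\textbf{Main obstacle.} The hard part is the bookkeeping in Step 2: showing that all $Q\bar Q$, $AQ$ and $\nabla Q$ terms cancel identically, including the undetermined $B_k=\nabla\bar\cP$ terms, which must drop out via the conjugate transport. I would first test the identity in the two degenerate regimes $A=0,\eta=0$ and $\cP=0$ to pin down the coefficients. After that I would carry out the general component contraction in a unitary frame at a point, with $\nabla$-normalized canonical section, as an appendix-type calculation.
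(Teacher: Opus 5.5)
Your plan is sound and uses the same ingredients as the paper's proof (Proposition~\ref{c0:up:fullhessian}): the first-Bianchi formulas for $\nabla''A$ and $\nabla''\eta$, the output transport of $\cP$, the traced sourced equation \eqref{c0:r1:Qsource} with its free source $B_k=\nabla_k\bar\cP$, and the mixed Ricci identities. What differs is the bookkeeping.

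\emph{The paper's route.} It first removes $\eta$ at the point by a pluriharmonic conformal change $\widehat h=e^\varphi h$ with $\varphi=0$ and $\partial\varphi=\eta/2$ there. This change preserves zero HSC, leaves $A$ and $W$ unchanged, and gives $\widehat p=e^{-\varphi}p$. The paper then computes $u_{i\bar j}$ and $\tau_{i\bar j}$ separately through the common matrices $\mathsf N,\mathsf D,\mathsf C$, with the source $B$ carried by $\mathsf D$. These cancel in $4\tau-2u$, and the general gauge is restored by the transformation law of the shifted Hessian.

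\emph{Your route.} You stay gauge covariant. Since the left side is $\bar D_jD_ip+\tfrac12p\nabla_{\bar j}\eta_i$, \eqref{c0:r1:ricci-lee} reduces the claim to $\bar D_jD_ip=p\overline{\Xi_{ij}}-2\mathsf G(W)_{i\bar j}-4\mathsf M(\cP)_{i\bar j}$. You then cancel the $AQ$ terms already in $D_ip$. That first-order cancellation is genuine: $4\tr(AE_iQ)$ cancels the $Q$-part $4\sum_{a,l,b}A^{al}\eps_{ilb}Q_{ba}$ of $-2\ip{A}{\nabla_{\bar i}A}$. It makes the weights $4,-2$ visible at first order. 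The paper's gauge, in exchange, removes the undifferentiated Lee-form terms from the core calculation.

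Two statements in your sketch need adjusting when you carry it out.

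\emph{No $Q$-quadratic terms.} Since no $Q$ survives in $D_ip$, the expression $\bar D_jD_ip$ is linear in $Q$. So there are no $Q$-quadratic terms to cancel. Instead, the $Q\bar\cP$ terms must assemble into $4\tau\overline{\Xi_{ij}}$, and only the remaining $Q$-linear terms cancel. Also, $D_ip$ keeps the term $-2\sum_d\eta_d\overline{\Xi_{id}}$ from the skew part of $C$, which you must carry along.

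\emph{The source $B$.} More importantly, $B$ is not removed by conjugate transport alone. Commuting $\nabla_{\bar j}$ past $\nabla_k$ on $\bar\cP$ and using $\nabla_{\bar j}\bar\cP=\overline{M_j\cP}$ leaves the term $\overline{M_j}^{(\mathrm{out})}B_k$, with $\overline{M_j}=\bar AE_j+\tfrac12\bar\eta_jI$. Its $\tfrac12\bar\eta_j$ part is absorbed by $\bar D_j$. Its $\bar AE_j$ part, traced with $4\eps_{klc}$, cancels term by term against the $B$-part of $\widehat Q_i$ that enters $-2\ip{\nabla_{\bar j}W_i}{A}$ through Lemma~\ref{c0:lem:MA}. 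This is your version of the paper's cancellation of $\mathsf D$. It is the step that makes the identity independent of the undetermined $\nabla'\bar\cP$, so it must be checked explicitly rather than assumed.
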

\begin{proof}
This is Proposition~\ref{c0:up:fullhessian}. Its proof in the
component appendix differentiates the sourced second Bianchi equation,
retains the freely occurring $\nabla'\bar\cP$, and uses the curvature
action on all three vector slots and the canonical line. No rank or
positivity assumption is used.
\end{proof}
\begin{lemma}[Divergence]\label{c0:lem:div}
For a $(1,0)$-vector field $X$, $d(\iota_XdV)=(\operatorname{div}X)\,dV$ with $\operatorname{div}X=\sum_p\nabla_pX^p-\eta_pX^p$ in unitary frames. Consequently, if $H_h\equiv0$,
\begin{equation}\label{c0:eq:stokes}
 2\int_Xs\dV=\int_X|\eta|^2\dV,\qquad \int_X\tr Q\dV=\tfrac14\int_X|\eta|^2\dV .
\end{equation}
\end{lemma}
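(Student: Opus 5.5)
The plan has two parts: first the divergence formula by a frame computation, then the two integral identities by applying Stokes to the $(1,0)$-field dual to the torsion trace and feeding in the zero-curvature contractions.

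\textbf{Divergence formula.} Choose a unitary frame, so that $dV_h=\omega_h^3/3!$ is expressed through the dual coframe. I will compute $d(\iota_XdV)$ using the structure equations of the Chern connection. The torsion enters through $d\varphi^k=-\theta^k{}_j\wedge\varphi^j+\tau^k$, where $\tau^k$ is the $(2,0)$ torsion part. The connection terms cancel by skew-Hermitian symmetry and give the covariant derivative $\sum_p\nabla_pX^p$. The torsion term contributes the contraction $-\sum_pT^r{}_{rp}X^p=-\eta_pX^p$. This is consistent with \eqref{n:eq:eta-balanced-formula} and with the first Stokes formula \eqref{n:eq:stokes-first}: applying it to the gradient field of a function reproduces $\int\Delta_h\phi=\Rea\int\langle\partial\phi,\eta\rangle$. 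I will use that agreement to fix the sign.

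\textbf{Integral identities.} I apply Stokes to the $(1,0)$-vector field $X^p=\overline{\eta_p}$, that is $X=\eta^\sharp$ with index raised by $h$, and take real parts. This gives
\[
0=\int_X\Bigl(\sum_p\nabla_p\overline{\eta_p}-|\eta|^2\Bigr)\dV .
\]
The term $\nabla_p\overline{\eta_p}$ is the conjugate of $\nabla_{\bar p}\eta_p$. By \eqref{c0:r1:etabar},
\[
\sum_p\nabla_{\bar p}\eta_p=2\tr\Xi+6\tau-2\tau=4\tau,
\]
using $\tr\Xi=0$ and $\tr Q=\tau$. Since $\tau$ is real, this yields $\int 4\tau=\int|\eta|^2$. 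Because $s=2\tau$ by \eqref{c0:r1:ricci-lee}, this is the same as $2\int s=\int|\eta|^2$.

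\textbf{Cross-check.} As an independent check, I will combine the general identities. By \eqref{n:eq:scalar-with-chi} with $c=0$ we have $s=\chi/2$. The second Stokes formula with $\phi=1$ gives $\int\chi=\int|\eta|^2$, hence $2\int s=\int\chi=\int|\eta|^2$. This confirms the first identity with no appeal to the threefold decomposition, and the second identity then follows from $s=2\tau$.

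\textbf{Main obstacle.} The hard part is the sign and conjugation bookkeeping: whether the torsion correction in $\operatorname{div}$ is $-\eta_pX^p$, and whether $\nabla_{\bar p}\eta_p$ or its conjugate appears. I will settle both by matching against the already established identity \eqref{n:eq:ddbar-omega-nminus1} and the cross-check above.
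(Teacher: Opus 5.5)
Your proposal is correct and follows the paper's proof: the same field $X^p=\overline{\eta_p}$ and the same trace $\sum_p\nabla_{\bar p}\eta_p=4\tau=2s$ of \eqref{c0:r1:etabar}. The only difference is in the divergence formula. The paper obtains it directly in holomorphic coordinates from $\operatorname{div}X=\partial_pX^p+X^p\partial_p\log\det h$ and $\partial_p\log\det h=\sum_k\Gamma^k_{pk}=\sum_k\Gamma^k_{kp}-\eta_p$, so the torsion sign is fixed by computation rather than by matching against the Stokes identity in the proof of Lemma~\ref{n:lem:comparison}. Your cross-check through $s=\chi/2$ and $\int_X\chi\,dV_h=\int_X|\eta|^2\,dV_h$ is a valid, dimension-free alternative for the first identity.
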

\begin{proof}
In coordinates $\operatorname{div}X=\partial_pX^p+X^p\partial_p\log\det h$ and $\partial_p\log\det h=\sum_k\Gamma^k_{pk}$, so $\operatorname{div}X=\nabla_pX^p-T^k_{kq}X^q$.
For $X^p=h^{p\bar q}\bar\eta_q$, the trace of
\eqref{c0:r1:etabar} gives
\[
 \operatorname{div}X
 =\overline{\sum_p\nabla_{\bar p}\eta_p}-|\eta|^2
 =2s-|\eta|^2.
\]
\end{proof}

Define
\[
 \delta_0=2|A|^2-4\tr Q=-p,\qquad
 \sL u=\Delta u-\operatorname{Re}\langle\eta,\bar\partial u\rangle
 +(\tfrac14|\eta|^2-s)u.
\]
\begin{lemma}[The operator $\sL$]\label{c0:lem:F}
For real $u,v\in C^\infty(X)$, with $Lu:=\Delta u-\operatorname{Re}\langle\eta,\bar\partial u\rangle$,
\[
 \int_XvLu\dV=-\operatorname{Re}\int_X\langle\partial u,\partial v\rangle\dV,\qquad
 \int_Xu\,\sL u\dV=-\int_X\bigl|\partial u-\tfrac12u\eta\bigr|^2\dV .\tag{F}\label{c0:eq:F}
\]
Hence $\sL$ is formally self-adjoint. Its largest eigenvalue
$\lambda_1$ is nonpositive, simple, and has a strictly positive smooth
eigenfunction.
\end{lemma}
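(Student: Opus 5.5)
The plan is to derive both identities from the two Stokes formulas already used in Lemma~\ref{n:lem:comparison}, namely \eqref{n:eq:stokes-second}, or equivalently from the divergence formula of Lemma~\ref{c0:lem:div}. For the first identity, apply $d(\iota_X dV)=(\operatorname{div}X)dV$ to the $(1,0)$-field $X^p=v\,h^{p\bar q}\partial_{\bar q}u$ and to its conjugate. In coordinates $\operatorname{div}X=\nabla_pX^p-\eta_pX^p$, and $\nabla_p(h^{p\bar q}\partial_{\bar q}u)=\Delta u$ because the Chern connection preserves $h$ and $\nabla_p\partial_{\bar q}u=\partial_p\partial_{\bar q}u$ (the $(1,0)$ Christoffel symbols do not act on a $(0,1)$-form). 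This gives
\[
\operatorname{div}X=v\Delta u+\langle\partial v,\partial u\rangle-v\langle\eta,\bar\partial u\rangle .
\]
Integrating and taking real parts yields $\int vLu\,dV=-\Rea\int\langle\partial u,\partial v\rangle\,dV$. The right side is symmetric in $u$ and $v$ for real functions, so $L$ is formally self-adjoint. Hence $\sL$, which differs from $L$ only by a real potential, is also formally self-adjoint.

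For the square identity (F), take $v=u$ and expand
\[
|\partial u-\tfrac12u\eta|^2=|\partial u|^2-\Rea\langle\partial u,u\eta\rangle+\tfrac14u^2|\eta|^2 .
\]
The cross term is $\tfrac12\Rea\langle\partial(u^2),\eta\rangle$. To integrate it, apply the first Stokes formula in \eqref{n:eq:stokes-second} to $\phi=u^2$. Then use the second Stokes formula, again with $\phi=u^2$, to express $\int\Delta(u^2)$ as $\int u^2(|\eta|^2-\chi)$. Here \eqref{n:eq:chi-trace} and \eqref{n:eq:scalar-with-chi} give $\chi=2s$ when $c=0$, since $a=0$. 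The main bookkeeping check is that the coefficients combine to exactly $\int u^2(\tfrac14|\eta|^2-s)$, which is the zeroth-order part of $\sL$. Equivalently, one verifies directly that
\[
\operatorname{div}\bigl(\tfrac12u^2\bar\eta^\sharp\bigr)=\tfrac12u^2(2s-|\eta|^2)+\Rea\langle\partial(u^2),\eta\rangle\cdot\tfrac12 ,
\]
using the divergence computed in Lemma~\ref{c0:lem:div}. Substituting into the expansion of $\int u\,\sL u$ gives (F). I expect this coefficient matching, including the correct real-part and conjugation conventions, to be the only delicate step.

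For the spectral claims, $\sL$ is a second-order elliptic operator on compact $X$ whose principal part is the Laplacian, and it is formally self-adjoint with respect to $dV$. It therefore has discrete real spectrum bounded above, with smooth eigenfunctions. By (F) the quadratic form is nonpositive, so $\lambda_1\le0$. For positivity and simplicity, write $\lambda_1=\sup\{\int u\,\sL u\,dV/\|u\|_2^2\}$. Replacing $u$ by $|u|$ keeps the Rayleigh quotient unchanged, because $|\partial|u|-\tfrac12|u|\eta|=|\partial u-\tfrac12u\eta|$ almost everywhere. Hence $|u|$ is also a maximizer and thus a weak eigenfunction, smooth by elliptic regularity. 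The strong maximum principle (Harnack inequality) applied to $-\sL+\lambda_1+C\ge$-type operators with a bounded zeroth-order term shows that a nonnegative nonzero eigenfunction is strictly positive on connected $X$. Simplicity follows because two orthogonal eigenfunctions cannot both be of one sign. Alternatively, a suitable combination of them would vanish at a point while being a nonnegative eigenfunction, which contradicts strict positivity.
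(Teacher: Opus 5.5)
Your proposal is correct and follows essentially the paper's proof: it uses the same two divergence fields $X^p=v\,h^{p\bar q}\partial_{\bar q}u$ and $u^2h^{p\bar q}\bar\eta_q$ (with $\sum_p\nabla_{\bar p}\eta_p=2s$), then the Rayleigh quotient, the $|u|$ trick, and the strong minimum principle after shifting the zeroth-order coefficient. The differences are minor: your displayed divergence of $\tfrac12u^2\bar\eta^\sharp$ holds only after taking real parts, and you obtain simplicity from the sign of top eigenfunctions (or by subtracting $t\varphi$ with $t=\max u/\varphi$) rather than from the paper's substitution $v=\varphi w$ and the maximum principle for $Lw+2\Rea\langle\partial w,\partial\log\varphi\rangle=0$.
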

\begin{proof}
Apply Lemma~\ref{c0:lem:div} to $X^p=v\,h^{p\bar q}\partial_{\bar q}u$ and take real parts: $\int v\Delta u=-\operatorname{Re}\int\langle\partial u,\partial v\rangle+\operatorname{Re}\int v\langle\eta,\bar\partial u\rangle$. Applying it to $X^p=u^2h^{p\bar q}\bar\eta_q$ gives $\int su^2=\frac12\int|\eta|^2u^2-\operatorname{Re}\int u\langle\eta,\bar\partial u\rangle$. Combining, $\int u\sL u=-\int|\partial u|^2+\operatorname{Re}\int u\langle\eta,\bar\partial u\rangle-\frac14\int|\eta|^2u^2$, which is \eqref{c0:eq:F}.
Since $X$ is compact, the self-adjoint elliptic operator $\sL$ has
discrete spectrum bounded above. Its largest eigenvalue is attained by
the Rayleigh quotient
\[
 \frac{-\int|\partial u|^2\,dV+
       \int (|\eta|^2/4-s)u^2\,dV}{\int u^2\,dV}.
\]
Taking the absolute value of a real maximizer preserves this quotient;
elliptic regularity then gives a smooth nonnegative eigenfunction
$\varphi$. It is strictly positive by the strong minimum principle:
with $b=|\eta|^2/4-s$, choose
$d_0\ge\max_X\max\{b-\lambda_1,0\}$ and apply that principle to
$(L+b-\lambda_1-d_0)\varphi=-d_0\varphi\le0$.
The zeroth-order coefficient is nonpositive, and $\varphi$ is not zero
identically. Equation~\eqref{c0:eq:F} gives $\lambda_1\le0$.
Finally, any eigenfunction $v=\varphi w$ at $\lambda_1$ satisfies
\[
 Lw+2\Rea\langle\partial w,\partial\log\varphi\rangle=0.
\]
The maximum principle on connected compact $X$ makes $w$ constant.
Applying this to real and imaginary parts proves simplicity.
\end{proof}

The square in \eqref{c0:eq:F} has two uses: it fixes the sign of the
principal eigenvalue, and equality determines the Lee form from its
positive eigenfunction. The trace of the tensor Hessian supplies the
nonnegative source to which this spectral information will be applied.

\subsubsection{The scalar equation}
\begin{proposition}[Scalar equation]\label{c0:prop:P}
If $H_h\equiv0$ then, with $S=\Sym(\nabla''A)$ as in \eqref{c0:eq:barA} and ordered norms,
\begin{equation}\label{c0:eq:P}
 \sL \delta_0=\Rq:=\bigl|\nabla'T^\circ{}-\tfrac12\eta\otimes T^\circ{}\bigr|^2+3|S|^2\ \ge0 ,\qquad \delta_0=|T^\circ{}|^2-2s=2|A|^2-4\tr Q.
\end{equation}
\end{proposition}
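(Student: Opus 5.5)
The plan is to obtain \eqref{c0:eq:P} as the trace of the uncontracted Hessian identity in Lemma~\ref{c0:r1:hessian}, rewritten through $\delta_0=-p$, and then to identify the resulting nonnegative terms with $\Rq$.

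\textbf{Step 1: take the trace.} Apply $h^{i\bar j}$ to \eqref{c0:r1:p-hessian}.

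The left side becomes
\[
\Delta p-\tfrac12\langle\bar\partial p,\eta\rangle-\tfrac12\overline{\langle\bar\partial p,\eta\rangle}+\tfrac14p|\eta|^2
=\Delta p-\Rea\langle\eta,\bar\partial p\rangle+\tfrac14|\eta|^2p .
\]

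On the right, $\tr_h\rho^{(1)}=2\tau=s$ by \eqref{c0:r1:ricci-lee}. The other two terms have traces $2\sum_i|W_i|^2$ and $4\tr\mathsf M(\cP)=12|F|^2$.

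This gives
\[
\Delta p-\Rea\langle\eta,\bar\partial p\rangle+(\tfrac14|\eta|^2-s)p=-2|W|^2-12|F|^2,
\]
that is, $\sL p=-2|W|^2-12|F|^2$. Since $\delta_0=-p$, it follows that $\sL\delta_0=2|W|^2+12|F|^2\ge0$.

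\textbf{Step 2: match the squares with $\Rq$.} The identity $|T^\circ|^2=2|A|^2$ comes from the orthonormal epsilon encoding $(T^\circ)^l{}_{ij}=\eps_{ija}A^{al}$. Because this encoding is parallel, including the $K_X$ weight, it also gives $\nabla'T^\circ-\tfrac12\eta\otimes T^\circ\leftrightarrow W$ and
\[
|\nabla'T^\circ-\tfrac12\eta\otimes T^\circ|^2=2|W|^2 .
\]

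For the remaining term, I would use \eqref{c0:r1:Sbar}. It expresses $\nabla''A$ through $\cP$ and $Q$; the $Q$ part is alternating in the pair (barred slot, output slot), so it drops out under total symmetrization. Hence $S=\Sym(\nabla''A)$ is, up to the normalization fixed in \eqref{c0:eq:barA}, $-2F$, the full symmetrization of $\cP$. With that normalization, $3|S|^2=12|F|^2$.

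\textbf{Main obstacle.} The real content is Lemma~\ref{c0:r1:hessian}, which may be assumed. What remains delicate is the bookkeeping of constants: the ordered-norm conventions, the factor in the definition of $S$ in \eqref{c0:eq:barA}, and checking that the sign of the $\Rea\langle\eta,\bar\partial\cdot\rangle$ term agrees with the cross terms $-\tfrac12\eta_ip_{\bar j}-\tfrac12\bar\eta_jp_i$. I would verify the coefficient $3$ on a single test tensor $\cP$ supported on one fully symmetric component.
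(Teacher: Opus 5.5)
Your proposal is correct and follows the paper's proof essentially verbatim. You trace \eqref{c0:r1:p-hessian} using $\tr\rho^{(1)}=2\tau=s$, $\tr\mathsf G=|W|^2$ and $\tr\mathsf M=3|\Sym^3\cP|^2$, match $2|W|^2$ with the shifted unbarred square via the parallel epsilon encoding, match $12|\Sym^3\cP|^2$ with $3|S|^2$ via $S=-2\Sym^3\cP$ from \eqref{c0:eq:barA}, and multiply by $-1$ using $\delta_0=-p$. The test-tensor check of the coefficient $3$ is unnecessary, because the paper already proves $\tr\mathsf M=3|F|^2$ just before Lemma~\ref{c0:r1:hessian}.
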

\begin{proof}

Take the trace of the full Hessian identity \eqref{c0:r1:p-hessian}, whose
derivation in Appendix~\ref{c0:app:c0-identities} uses only the raw Bianchi and Ricci
equations and does not use the present scalar identity. Since
$\tr\rho^{(1)}=2\tau$, $\tr\mathsf G=|W|^2$ and
$\tr\mathsf M=3|\Sym^3\cP|^2$, it gives
\[
 \Delta p-\Rea\langle\eta,\bar\partial p\rangle
 +(\tfrac14|\eta|^2-2\tau)p
 =-2|W|^2-12|\Sym^3\cP|^2.
\]
The parallel epsilon identification gives
$|\nabla'T^\circ{}-\eta\otimes T^\circ{}/2|^2=2|W|^2$,
and \eqref{c0:eq:barA} gives $S=-2\Sym^3\cP$. Multiply by $-1$
and use $\delta_0=-p$ to obtain \eqref{c0:eq:P}. Thus the scalar equation is
a consequence of the component Hessian, with all constants fixed by
ordered-index norms.
\end{proof}

The scalar source controls the shifted unbarred derivative and the
totally symmetric part of the barred derivative. The next identity is
needed to recover the \emph{entire} barred derivative when that symmetric
part vanishes.
\begin{proposition}[Barred energy]\label{c0:prop:barred}
Put $a_{ik}:=\nabla_i\eta_k-\nabla_k\eta_i$ and, for a $(1,0)$-form index $p$,
\[
 b_p=\textstyle\sum_{l,i,k} T^\circ{}^l_{ik}\overline{\nabla_{\bar p}T^\circ{}^l_{ik}},\quad
 c_p=2\sum_{a,l} A^{al}\overline{S^{alp}},\quad
 r_p=-\sum_{i,k} a_{ik}\overline{T^\circ{}^p_{ik}} .
\]
If $H_h\equiv0$ then pointwise $|\nabla''T^\circ{}|^2+\frac12|a|^2-6|S|^2=\operatorname{Re}\operatorname{div}X$ with $X^q=h^{q\bar p}\overline{(b-3c-r)_p}$; hence
\begin{equation}\label{c0:eq:barred}
 \int_X\Bigl(|\nabla''T^\circ{}|^2+\tfrac12|a|^2\Bigr)\dV=6\int_X|S|^2\dV .
\end{equation}
\end{proposition}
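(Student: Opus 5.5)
The plan is a Weitzenböck-type computation for the primitive torsion in the barred direction, arranged so that the zeroth-order terms cancel exactly. The integral identity then follows from Lemma~\ref{c0:lem:div}.

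\emph{First step: the divergence.} Compute $\operatorname{div}X=\sum_q\nabla_qX^q-\eta_qX^q$ for $X^q=h^{q\bar p}\overline{(b-3c-r)_p}$, one term at a time. Work in a unitary frame. Each of $b_p,c_p,r_p$ is a bilinear expression of the form ``torsion quantity'' $\times$ ``conjugate of a barred derivative''. For example, $\overline{b_p}=\sum\overline{T^{\circ}{}^l_{ik}}\,\nabla_{\bar p}T^{\circ}{}^l_{ik}$. Applying $\nabla_{p}$ to $\overline{b_p}$ and summing produces two kinds of terms:
\begin{itemize}
\item the square $|\nabla''T^\circ|^2$, from $\nabla_p\overline{T^\circ}=\overline{\nabla_{\bar p}T^\circ}$;
\item a second-order term $\sum\overline{T^\circ}\,\nabla_p\nabla_{\bar p}T^\circ$.
\end{itemize}
Commute the derivatives in the second-order term using \eqref{c0:eq:ricci}. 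This gives $\nabla_{\bar p}\nabla_pT^\circ$ plus curvature actions on the three vector slots. The term $\nabla_pT^\circ$ is expressed through $W$ and $\eta\otimes A$ via \eqref{c0:r1:definitions}, and its barred derivative is then controlled by \eqref{c0:r1:Sbar} and \eqref{c0:r1:etabar}. The same treatment applies to $c_p$, which yields the $-6|S|^2$ square together with cross terms. It also applies to $r_p$. There, $\nabla_p\overline{a_{pk}}$ brings in the barred derivative of $\eta$, hence $Q$ and $\Xi$ via \eqref{c0:r1:etabar}. Its square term gives $\tfrac12|a|^2$, with the factor $\tfrac12$ coming from the ordered-index convention, since $a$ is alternating. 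The $-\eta_qX^q$ piece of the divergence is designed to absorb the torsion terms generated by the commutator $[\nabla_p,\nabla_q]$ and by the $\eta$-shift in $W$.

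\emph{Second step: matching the curvature terms.} Substitute the decomposition \eqref{c0:eq:decomp} with $c=0$, together with \eqref{c0:r1:Cbar}, \eqref{c0:r1:Sbar}, into every curvature term produced above. The resulting zeroth-order terms are cubic expressions in $(A,\cP,Q,\eta)$. The combination $b-3c-r$ is chosen precisely so that these cancel. The coefficient $3$ in front of $c$ matches the one in $\Rq$. The reason is that $\nabla''A$ splits as $S$ plus a $Q$-part. Its $\cP$-component is governed by $S=-2\Sym^3\cP$ and by the non-symmetric parts of $\cP$; the latter must combine with the $r$-term, through $\Xi$, into total derivatives. I will verify the cancellation by passing to the epsilon encoding $(T^\circ)^l{}_{ij}=\eps_{ija}A^{al}$ and reducing everything to contractions of $A,\bar A,\cP,Q,\eta$. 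The identity $\eps\eps=\delta\delta-\delta\delta$ and the norm convention $|T^\circ|^2=2|A|^2$ are used throughout.

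\emph{Conclusion and main difficulty.} The integral statement \eqref{c0:eq:barred} then follows from $\int_X\operatorname{Re}\operatorname{div}X\,dV=0$, by Lemma~\ref{c0:lem:div} and Stokes on the compact $X$. I expect the main obstacle to be the exact cancellation of the cubic and quartic zeroth-order terms. This concerns in particular the $Q$-terms coming from $\nabla_{\bar p}\eta$ and from the curvature action on the canonical-line factor. If a residue remains, the first thing I will try is to adjust the coefficients of $c$ and $r$, keeping the required squares fixed, so that the residue becomes a divergence. I expect this bookkeeping to belong in the appendix.
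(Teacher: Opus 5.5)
Your outline has the right ingredients (product rule, the mixed Ricci identity, the curvature decomposition, Stokes), but one step is false and the mechanism that makes the identity true is missing.

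\textbf{The free second-order jets.} The claim that the barred derivative of $\nabla_pT^\circ$ ``is then controlled by \eqref{c0:r1:Sbar} and \eqref{c0:r1:etabar}'' fails for the $W$ part. By Lemma~\ref{c0:lem:MA} and \eqref{c0:eq:Qtr}, $\nabla_{\bar l}W_p$ contains $\nabla'Q$, and hence the undetermined jet $\overline{\nabla''\cP}$. The same jet enters your $r$-term, since $\nabla_p\overline{a_{ik}}$ involves $\nabla'(\nabla''\eta)$, not just $\nabla''\eta$. So $\operatorname{div}X$ is not an algebraic expression in $(A,\cP,Q,\eta)$, and matching cubic and quartic zeroth-order terms cannot close the argument.

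What actually happens is that the free second-order terms occur as a conjugate pair. Put $B_x=\nabla_{\bar x}A$ and $D_{rx}=\nabla_rB_x$. The current first collapses to
$b_p-3c_p-r_p=-4\sum_{a,b}A^{ab}\overline{B_a^{bp}}+\sum_a\vartheta^a\bar A^{ap}$.
Its divergence contains the free jets only through $4L-4\bar L$, with $L=\sum A^{ab}\overline{D_{pa}^{bp}}$. The remaining Lee terms enter only through $4Z-4\bar Z$, with $Z=\sum\eta_rB_x^{ra}\bar A^{ax}$. Both combinations have zero real part, which is why the statement concerns $\Rea\operatorname{div}X$.

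Your fallback of adjusting the coefficients of $c$ and $r$ is not available. They are fixed by the statement, and the coefficient $3$ is exactly what removes the terms $\sum|B_p^{ab}|^2$. With $K=\sum B_p^{ab}\overline{B_a^{bp}}$, this makes $|\nabla''T^\circ|^2-6|S|^2=-2(K+\bar K)$ match the real part of the product-rule term $-4\bar K$.

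\textbf{The pure Bianchi identity.} The second missing ingredient is the pure $(3,0)$ first Bianchi identity, Lemma~\ref{c0:lem:J1}. Set $\vartheta^a=\sum_{i,k}\eps_{ika}a_{ik}$, so that $\tfrac12|a|^2=\tfrac14|\vartheta|^2$. The identity gives $\vartheta^a=4\sum_r(\nabla_rA^{ra}-\eta_rA^{ra})$.

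The product rule on the $r$-current, together with $-\eta_qX^q$, produces $\sum_a\bar\vartheta^a\sum_p(\nabla_pA^{ap}-\eta_pA^{ap})$. Only this identity turns that expression into $\tfrac14|\vartheta|^2$. Without it there is no $|a|^2$ square at all, and the factor $\tfrac12$ is not a matter of index conventions.

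\textbf{The zeroth-order terms are harmless.} The terms you expected to be the main obstacle cause no difficulty. After commuting $\nabla_{\bar x}\nabla_rA$ inside $\nabla_{\bar x}\vartheta$, the curvature action together with $E_{r\bar x}$ contracts to $\sum_d\eps_{xad}\sum_{i,j}A^{ij}\overline{\cP^{d,ij}}$. This vanishes against the symmetric $\bar A^{ax}$.

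With these two ideas one gets $\Rea\operatorname{div}X=-2(K+\bar K)+\tfrac14|\vartheta|^2$, which is the asserted pointwise identity.
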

\begin{proof}
The pointwise divergence identity, including its free second-derivative
cancellation, is proved in Appendix~\ref{c0:app:barred-proof}. Integrate it
using Lemma~\ref{c0:lem:div}. The left side consists of two squared norms;
therefore $S=0$ forces both $\nabla''T^\circ=0$ and $a=0$.
\end{proof}

\subsection{The strict scalar alternative}
\begin{theorem}[Strict dichotomy]\label{c0:thm:A}
On a compact connected zero-Chern-HSC threefold, either the original metric is Chern flat, or the largest eigenvalue of $\sL$ is negative and $\delta_0<0$ everywhere. In particular every nonflat solution has the smooth global positive function $p=4\tr Q-2|A|^2$.
\end{theorem}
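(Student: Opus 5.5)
The plan is to pair the scalar equation of Proposition~\ref{c0:prop:P} with the positive principal eigenfunction $\varphi$ of $\sL$ supplied by Lemma~\ref{c0:lem:F}. Since $\sL$ is formally self-adjoint, integrating $\sL\delta_0=\Rq$ against $\varphi$ gives
\[
 \lambda_1\int_X\varphi\,\delta_0\dV=\int_X\varphi\,\Rq\dV\ge0 .
\]
I then split into cases according to the sign of $\lambda_1$.

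\emph{The case $\lambda_1=0$.} Here $\int\varphi\Rq=0$, and since $\varphi>0$ this forces $\Rq\equiv0$. Thus $S=0$ and $\nabla'T^\circ=\tfrac12\eta\otimes T^\circ$. Proposition~\ref{c0:prop:barred} then gives $\nabla''T^\circ=0$ and $a=0$. Next I use the equality case of \eqref{c0:eq:F}: $\lambda_1=0$ means $\partial\varphi=\tfrac12\varphi\eta$, so $\eta=2\partial\log\varphi$ is $\partial$-exact. From here I would read off flatness from the Bianchi identities of Lemma~\ref{c0:r1:bianchi}. Equation \eqref{c0:r1:Sbar} with $\nabla''A=0$ constrains the pair $(\cP,Q)$. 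Symmetrizing it expresses $\Sym^3\cP=0$, and the remaining components tie $\cP$ to $Q$. Equation \eqref{c0:r1:etabar}, with $\nabla_{\bar j}\eta_i=2\partial_i\partial_{\bar j}\log\varphi$ Hermitian, gives information on $\Xi$ and $Q$. I then expect a further integration: for example $\int\tr Q=\tfrac14\int|\eta|^2$ from \eqref{c0:eq:stokes}, combined with the trace of \eqref{c0:r1:etabar} and $\int\varphi^{-2}\cdots$, should show $Q=\tau=0$ and $\Xi=0$, hence $\cP=0$. Then \eqref{c0:eq:decomp} gives $R^h=0$.

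This is the step I expect to be the main obstacle. The difficulty is extracting the vanishing of the full curvature, not just of $\Rq$, from the recurrence $W=0$ together with $\nabla''T^\circ=0$. I would first try conformally rescaling by $\varphi$ so that the torsion trace becomes zero, then apply the unnormalized decomposition and its pure-type commutator \eqref{c0:eq:ricci} to the recurrent $A$. The aim is to show that $Q$ and $\cP$ are forced to vanish, mirroring Proposition~\ref{alg14:recurrence}.

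\emph{The case $\lambda_1<0$.} Here I need $\delta_0<0$ pointwise. The plan is a maximum-principle comparison. Write $\delta_0=\varphi w$; then $w$ satisfies
\[
 Lw+2\Rea\langle\partial w,\partial\log\varphi\rangle+\lambda_1w=\Rq/\varphi\ge0 .
\]
At a maximum of $w$, if $\max w\ge0$ we would get $\lambda_1 w\ge0$, hence $w\le 0$ at the maximum. The strong maximum principle, applied to the operator with nonpositive zeroth-order term, then yields either $w<0$ everywhere or $w\equiv0$. If $w\equiv0$, then $\Rq\equiv0$ and $\delta_0\equiv0$. Integrating $\int\delta_0=0$ against \eqref{c0:eq:stokes} should feed into the same flatness argument as in the previous case; alternatively, $\delta_0\equiv0$ with $\Rq=0$ leads back to the $\lambda_1=0$ analysis. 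In either sub-case the metric is Chern flat.

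Putting the cases together, every nonflat solution has $\lambda_1<0$ and $\delta_0<0$, so $p=-\delta_0>0$ everywhere. This gives the smooth global positive function claimed in Theorem~\ref{c0:thm:A}.
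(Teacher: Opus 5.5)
There is a genuine gap in the case $\lambda_1=0$, at the step you flag yourself. Up to $\Rq\equiv0$, $\nabla''T^\circ=0$, $a=0$ and $\eta=2\partial\log\varphi$ you agree with the paper. The route you then propose, however, has no mechanism behind it. By \eqref{c0:r1:Cbar} and $C=A-\tfrac12[\eta]_\times$, once $\nabla''A=0$ the tensors $\cP$ and $Q$ are linear in $\nabla''\eta=2\partial\bar\partial\log\varphi$. So proving $\cP=Q=0$ is equivalent to proving $\partial\bar\partial\log\varphi=0$, which is the whole content.

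The integrations you name add nothing: once $\eta=2\partial\log\varphi$, both \eqref{c0:eq:stokes} and $\sL\varphi=0$ hold identically. A global conformal rescaling by $\varphi$ preserves zero HSC only if $\log\varphi$ is already pluriharmonic. Proposition~\ref{alg14:recurrence} is a $c\ne0$ result and does not apply.

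The missing idea is to prove $\eta\equiv0$ first, using the simplicity of $\lambda_1$ from Lemma~\ref{c0:lem:F}.
\begin{itemize}
\item From $\partial|T^\circ|^2=\tfrac12|T^\circ|^2\eta=|T^\circ|^2\partial\log\varphi$ one gets $|T^\circ|^2=C\varphi$ for a constant $C\ge0$.
\item From $\sL\delta_0=\Rq=0$ and simplicity, $\delta_0=b_0\varphi$. Hence $-2s=(b_0-C)\varphi$ has a fixed sign.
\item The trace of $\eta=2\partial\log\varphi$, together with $\sum_p\nabla_{\bar p}\eta_p=2s$, gives $\Delta\log\varphi=s$. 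A strict sign is impossible at a maximum or a minimum of $\log\varphi$. Therefore $s\equiv0$, $\varphi$ is constant, and $\eta=0$.
\end{itemize}
Then $T=T^\circ$ is $\nabla''$-parallel. The first Bianchi identity makes $R_{i\bar jk\bar l}$ symmetric in $(i,k)$, Hermitian symmetry makes it symmetric in $(j,l)$, and \eqref{c0:eq:pol} gives $R=0$. No separate analysis of $\cP$ and $Q$ is needed.

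The subcase $w\equiv0$ with $\lambda_1<0$ has a similar hole. It cannot "lead back to the $\lambda_1=0$ analysis", because $\eta=2\partial\log\varphi$ came from the equality case of \eqref{c0:eq:F}, which needs $\lambda_1=0$. Also, $\int\delta_0=0$ yields only $\int|T^\circ|^2=\int|\eta|^2$.

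Use $|T^\circ|^2$ as the weight instead.
\begin{itemize}
\item $\Rq=0$ and Proposition~\ref{c0:prop:barred} give $d|T^\circ|^2=\tfrac12|T^\circ|^2(\eta+\bar\eta)$. Along paths this is a linear ODE, so $|T^\circ|^2$ vanishes identically or nowhere.
\item $\delta_0=0$ gives $|T^\circ|^2=2s$.
\item If $|T^\circ|^2$ is nowhere zero, then $\eta=2\partial\log|T^\circ|^2$ and $\Delta\log|T^\circ|^2=s=\tfrac12|T^\circ|^2>0$, which is impossible at a maximum.
\end{itemize}
So $T^\circ=0$ and $s=0$. Then \eqref{c0:eq:stokes} gives $\eta=0$, hence $\sL=\Delta$ and $\lambda_1=0$, contradicting $\lambda_1<0$.

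Your maximum-principle argument for $\delta_0\le0$ and the strong minimum principle step are otherwise as in the paper.
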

\begin{proof}[Proof of Theorem~\ref{c0:thm:A}]
Let $\varphi>0$ satisfy $\sL\varphi=\lambda_1\varphi$.
Lemma~\ref{c0:lem:F} gives $\lambda_1\le0$.

\emph{The equality case $\lambda_1=0$.}
The operator square \eqref{c0:eq:F} first gives
\[
 \partial\varphi=\tfrac12\varphi\eta,
 \qquad \eta=2\partial\log\varphi.
\]
Self-adjointness and the scalar equation then give
\[
 0=\int_X\delta_0\sL\varphi\,dV
   =\int_X\varphi\Rq\,dV.
\]
Because $\varphi>0$ and $\Rq\ge0$, both squares in $\Rq$ vanish:
$S=0$ and $\nabla'T^\circ=\eta\otimes T^\circ/2$.
The barred-energy identity now gives $\nabla''T^\circ=0$ and $a=0$.
Consequently
\[
 \partial|T^\circ|^2
 =\tfrac12|T^\circ|^2\eta
 =|T^\circ|^2\partial\log\varphi.
\]
The real function $|T^\circ|^2/\varphi$ is therefore constant, say $C\ge0$.
Also $\sL\delta_0=0$, so simplicity gives $\delta_0=b_0\varphi$ for
a real constant $b_0$. The definition of $\delta_0$ and the trace of
the Lee identity yield
\[
 -2s=(b_0-C)\varphi,
 \qquad \Delta\log\varphi=s.
\]
If $b_0-C<0$, the last Laplacian is positive everywhere, contradicting
its sign at a maximum. If $b_0-C>0$, it is negative everywhere,
contradicting its sign at a minimum. Thus $s=0$, $\varphi$ is constant,
and $\eta=0$.

It follows that $T=T^\circ$ and $\nabla''T=0$. The first Bianchi
identity makes $R$ symmetric in $(i,k)$; Hermitian symmetry then makes
it symmetric in $(j,l)$. The zero-HSC polarization
\eqref{c0:eq:pol} therefore gives $R=0$.

\emph{The negative-eigenvalue case: first $\delta_0\le0$.}
Put $w=\delta_0/\varphi$. The product rule gives
\[
 Lw+2\Rea\langle\partial w,\partial\log\varphi\rangle
       +\lambda_1w=\Rq/\varphi\ge0.
\]
At a positive maximum of $w$, the gradient term vanishes,
$Lw=\Delta w\le0$, and $\lambda_1w<0$, a contradiction.
Hence $w\le0$ and $\delta_0\le0$.

\emph{The inequality is strict.}
The nonnegative function $g=-\delta_0/\varphi$ satisfies
\[
 Lg+2\Rea\langle\partial g,\partial\log\varphi\rangle
       +\lambda_1g=-\Rq/\varphi\le0.
\]
Since $\lambda_1<0$, the strong minimum principle gives either $g>0$
everywhere or $g=0$ identically. Suppose the latter occurs. Then
$\delta_0=\Rq=0$, and the same square and barred-energy identities give
\[
 \nabla''T^\circ=0,\qquad
 d|T^\circ|^2=\tfrac12|T^\circ|^2(\eta+\bar\eta),\qquad
 |T^\circ|^2=2s.
\]
Along any path the middle identity is a homogeneous scalar linear ODE.
Thus $|T^\circ|^2$ is identically zero or is everywhere positive.
In the positive case,
\[
 \eta=2\partial\log|T^\circ|^2,
 \qquad \Delta\log|T^\circ|^2=s=\tfrac12|T^\circ|^2>0,
\]
contradicting the maximum principle. Therefore $T^\circ=0$ and $s=0$.
Stokes' identity \eqref{c0:eq:stokes} gives $\eta=0$, whence
$\sL=\Delta$ and $\lambda_1=0$, another contradiction.
We conclude that $g>0$, so $\delta_0<0$ and $p=-\delta_0>0$ everywhere.
\end{proof}

On the strict branch $p>0$, put $f=\log p$ and $\zeta=\partial f-\eta/2$. Then \eqref{c0:r1:p-hessian} and \eqref{c0:r1:ricci-lee} become
\begin{equation}\label{c0:r1:log-hessian}\equationalias{c0:r1:zeta-eq}
 f_{i\bar j}+\zeta_i\bar\zeta_j
 =\rho^{(1)}_{i\bar j}-\frac2p\mathsf G(W)_{i\bar j}-\frac4p\mathsf M(\cP)_{i\bar j},\qquad \nabla_{\bar j}\zeta_i
 =\overline{\Xi_{ij}}-\zeta_i\bar\zeta_j
   -\frac2p\mathsf G(W)_{i\bar j}-\frac4p\mathsf M(\cP)_{i\bar j}.
\end{equation}

\subsection{Continuation and the elementary endpoints}
\begin{lemma}[Weak continuation]\label{c0:r1:weak}
If a smooth section $\mathfrak s$ of a holomorphic Hermitian bundle satisfies $\nabla'_i\mathfrak s=\mathsf E_i\mathfrak s$ with smooth coefficients and vanishes on a nonempty open subset of a connected complex manifold, it vanishes everywhere. In particular the nonzero locus of any nonzero solution among $A$ (when $W=0$), $\cP$, or $\wedge^2\cP$ is dense. The induced transport on exterior powers is still a homogeneous linear system.
\end{lemma}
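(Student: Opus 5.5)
The plan is to reduce to a unique-continuation statement for an operator whose principal part is the $\bar\partial$-type operator of a holomorphic bundle. The equation $\nabla'_i\mathfrak s=\mathsf E_i\mathfrak s$ is first order in the $(1,0)$ directions only. Taking the conjugate of the Hermitian pairing does not help directly. Instead, I will consider the dual section $\sigma=\mathfrak s^\flat$, a section of $\overline{E}^*$, obtained by lowering the index with $h$. Since the Chern connection is metric, $\nabla''\sigma$ is conjugate to $\nabla'\mathfrak s$. Hence $\sigma$ satisfies
\[
\nabla''\sigma=\mathsf E'\sigma
\]
with smooth zeroth-order coefficients. In a local holomorphic frame of the antiholomorphic-conjugate structure, this reads $\bar\partial\sigma=\mathsf F\,\sigma$ componentwise, i.e.\ a first-order system $|\bar\partial\sigma|\le C|\sigma|$.

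Next I will apply the standard unique continuation for such systems. Locally, $\sigma$ is a vector-valued function with $|\partial_{\bar z^k}\sigma|\le C|\sigma|$. Restricting to complex lines, each complex one-dimensional slice gives $|\partial_{\bar t}\sigma|\le C|\sigma|$. The similarity principle (Bers--Vekua / Carleman) then applies: on a disk one solves $\partial_{\bar t}\mu=\mathsf F_t$ for a matrix gauge. Alternatively, and more robustly for systems, I will use the Carleman inequality for $\partial_{\bar t}$ together with Aronszajn's theorem applied to $\Delta\sigma$. Indeed, differentiating once gives $|\Delta\sigma|\le C(|\nabla\sigma|+|\sigma|)$ after using $\partial_t\partial_{\bar t}$ and $|\partial_{\bar t}\sigma|\le C|\sigma|$. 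To handle this cleanly, note that $\partial_t(\partial_{\bar t}\sigma)=\partial_t(\mathsf F\sigma)$ involves only first derivatives of $\sigma$. Hence $|\Delta_{\mathbb R^{2n}}\sigma|\le C(|\sigma|+|D\sigma|)$ in coordinates, where we sum over $k$ the bounds $4\partial_k\partial_{\bar k}\sigma=4\partial_k(\mathsf F_k\sigma)$. Aronszajn's strong unique continuation for vector systems with diagonal Laplacian principal part then applies directly. This is the key step and the one I expect to require care: the principal part must be scalar (the flat Laplacian acting diagonally), which holds because $\mathsf F$ enters only at lower order.

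Connectedness finishes the argument. The zero set of $\mathfrak s$ has a nonempty interior. The interior of the zero set is open, and by the above it is also closed: at a boundary point, $\sigma$ vanishes on an open subset of a small coordinate ball, so it vanishes on the whole ball. Hence the zero set is the whole manifold. Real analyticity (Proposition~\ref{univ:analyticity}) offers an alternative route when the coefficients are analytic, but the differential argument avoids needing that.

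For the listed applications, I will check that each section satisfies such a transport equation. When $W=0$, we have $\nabla_iA=\tfrac12\eta_iA$ by \eqref{c0:r1:definitions}. For $\cP$, we have $\nabla_i\cP=M_i^{(\mathrm{out})}\cP$ by \eqref{c0:r1:Utransport}. For $\wedge^2\cP$, the derivation extends the transport of $\cP$, so it is again homogeneous linear, $\nabla_i(\wedge^2\cP)=(M_i^{(\mathrm{out})})^{\wedge}\wedge^2\cP$. Density of the nonzero locus then follows: if a nonzero solution vanished on an open set, it would vanish identically.
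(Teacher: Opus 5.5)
Your proof is correct, but its key analytic step differs from the paper's. The two arguments agree at both ends. Both lower the index with $h$, so that metric compatibility turns $\nabla'\mathfrak s=\mathsf E\mathfrak s$ into $\bar\partial v=Bv$ in a local holomorphic frame. Both finish with the open--closed connectedness argument.

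In between, the paper stays in one complex variable. On a small coordinate disc it solves $\bar\partial G=B_jG$ with $G$ invertible, as the contraction fixed point of $G=I+\mathcal T(B_jG)$, where $\mathcal T$ is the Cauchy--Green operator. Then $G^{-1}v$ is holomorphic on the slice, and the one-variable identity theorem propagates the zero set slice by slice through a polydisc.

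You instead differentiate once to obtain $|\Delta_{\mathbb R^{2n}}v|\le C(|v|+|Dv|)$. You then invoke Aronszajn's unique continuation theorem for vector-valued functions whose principal part is the diagonal Laplacian. This is valid and gives more than needed: it is strong unique continuation, where infinite-order vanishing at one point suffices. The costs are a Carleman-estimate black box and the need for differentiable coefficients. The paper's route is elementary and self-contained, works with merely bounded coefficients, and delivers exactly the open-set continuation that is used later.

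One caution about your aside on the similarity principle. For a genuinely matrix-valued $\mathsf F_t$, solving $\partial_{\bar t}\mu=\mathsf F_t$ and multiplying by $e^{-\mu}$, as in the scalar Bers--Vekua argument, does not produce a holomorphic function, because the matrices need not commute. The correct matrix version is the paper's invertible solution of $\partial_{\bar t}G=\mathsf F_tG$. Your main line rests on Aronszajn, so this does not affect your proof.

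Your verification of the three applications is fine. This includes the derivation extension of $M_i^{(\mathrm{out})}$ to the output $\Lambda^2$ for $\wedge^2\cP$.
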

\begin{proof}
After taking the Hermitian antilinear dual, local coefficients obey $\bar\partial_jv=B_jv$. On a sufficiently small one-variable coordinate disk the Cauchy--Green operator solves $\bar\partial G=B_jG$ with $G$ invertible: its fixed-point equation $G=I+\mathcal T(B_jG)$ is a contraction when the disk is small. Then $G^{-1}v$ is holomorphic. The one-variable identity theorem propagates a zero through the disk. Apply this successively in each coordinate of a small polydisc and then along connected coordinate chains. No integrability of the full matrix system is required.
\end{proof}

For a Hermitian endomorphism $B$ of a rank-$d$ bundle, define $e_m(B)$ by
\[
 \det(I+tB)=\sum_{m=0}^d e_m(B)t^m,\qquad e_0(B)=1.
\]
Thus $e_m(B)$ is the $m$th elementary symmetric polynomial in its
eigenvalues and is smooth even where their multiplicities change.
If $\rank B=m$, then $e_m(B)=\det(B|_{\im B})$. For a real $(1,1)$-form,
$h^{-1}\alpha$ denotes the Hermitian endomorphism associated to its
coefficient matrix.

\begin{lemma}[The determinant identity on a moving image]\label{c0:r1:projector}\label{c0:lem:hess}
Suppose along a holomorphic distribution $\mathcal D$ that
$\nabla_iQ=M_iQ$ and $\nabla_{\bar i}Q=QM_i^*$.
On any open set of constant rank $m>0$, let $E=\im Q$, $\Pi=\Pi_E$ and $\Phi=\log|e_m(Q)|$. For $i,j\in\mathcal D$ put
$\beta_i=(I-\Pi)\nabla_i|_E$. Then
\begin{equation}\label{c0:r1:projector-hessian}
 \Phi_i=\tr(\Pi M_i),\qquad
 \Phi_{i\bar j}=\tr(\Pi\nabla_{\bar j}M_i)+\tr(\beta_j^*\beta_i).
\end{equation}
This holds for all inertias of $Q$.
\end{lemma}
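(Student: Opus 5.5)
The plan is to work entirely on the open set of constant rank $m>0$ and reduce both formulas to the differential identity $d\log e_m(Q)=\tr(Q^+dQ)$, where $Q^+$ is the pseudo-inverse. On this set $E=\im Q$ is a smooth subbundle. Since $Q$ is Hermitian, $\ker Q=E^\perp$, so $Q=\Pi Q\Pi$ and $e_m(Q)=\det(Q|_E)$ is nowhere zero. Let $Q^+$ be the inverse of $Q|_E$ on $E$, extended by zero on $E^\perp$. Differentiating $\det(Q|_E)$ with respect to the induced connection $\Pi\nabla\Pi$ on $E$ gives
\[
\nabla_i\log e_m(Q)=\tr\bigl(Q^+\,\Pi(\nabla_iQ)\Pi\bigr).
\]
This uses only $\Pi$-compressions and holds for every inertia, because $\log|e_m|$ has real part-derivative equal to the $\log$-derivative of the smooth nonvanishing function $e_m$. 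Substituting $\nabla_iQ=M_iQ$ gives $\tr(Q^+\Pi M_iQ\Pi)=\tr(\Pi M_i\,QQ^+)=\tr(\Pi M_i)$, since $QQ^+=\Pi$. Because $\Phi=\Rea\log e_m$ and $e_m$ is real for Hermitian $Q$, this yields $\Phi_i=\tr(\Pi M_i)$.

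Next, for $j\in\mathcal D$, I apply $\nabla_{\bar j}$ to $\Phi_i=\tr(\Pi M_i)$. This gives $\tr(\Pi\nabla_{\bar j}M_i)+\tr((\nabla_{\bar j}\Pi)M_i)$, and the task is to show that the second term equals $\tr(\beta_j^*\beta_i)$. I would first compute $\nabla\Pi$ from the constancy of the image. Differentiating $Q=\Pi Q$ gives $(I-\Pi)\nabla_iQ=(\nabla_i\Pi)Q$. With $\nabla_iQ=M_iQ$, this gives $(\nabla_i\Pi)\Pi=(I-\Pi)M_i\Pi$ on $E$. Since the image of $M_iQ$ lies in $M_iE$, the second fundamental form is $\beta_i=(I-\Pi)M_i|_E$, which agrees with $(I-\Pi)\nabla_i|_E$ on sections of $E$. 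Combining the identity $\nabla\Pi=(\nabla\Pi)\Pi+\Pi(\nabla\Pi)$ with Hermitian symmetry $\Pi^*=\Pi$ gives
\[
\nabla_{\bar j}\Pi=\beta_j^*+\beta_j ,
\]
where $\beta_j^*$ maps $E^\perp\to E$ and $\beta_j$ here denotes the barred derivative part. The hypothesis $\nabla_{\bar j}Q=QM_j^*$ fixes this part as $\Pi M_j^*(I-\Pi)$, the adjoint of $\beta_j$.

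Finally, $\tr((\nabla_{\bar j}\Pi)M_i)$ picks out the off-diagonal blocks. The block $E^\perp\to E$ of $\nabla_{\bar j}\Pi$ pairs with the block $E\to E^\perp$ of $M_i$, which is $\beta_i$. The other off-diagonal block pairs with $\Pi M_i(I-\Pi)$, and it must be shown to vanish or to cancel. I expect this bookkeeping to be the main obstacle: deciding whether the $(0,1)$-derivative of $\Pi$ has only the block $\beta_j^*$. I would settle it by using $\nabla_{\bar j}Q=QM_j^*$, which forces $(\nabla_{\bar j}\Pi)\Pi=(I-\Pi)\nabla_{\bar j}\Pi\,\Pi$ to be computed from $(I-\Pi)QM_j^*=0$. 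That quantity vanishes, so $(\nabla_{\bar j}\Pi)\Pi=0$ restricted to the relevant block, and only $\Pi(\nabla_{\bar j}\Pi)(I-\Pi)=\beta_j^*$ survives. This gives $\tr(\beta_j^*\beta_i)$ and hence \eqref{c0:r1:projector-hessian}. No eigenframe is differentiated at any point, which justifies the claim that the formula holds for every inertia.
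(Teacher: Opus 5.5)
Your proof is correct and is essentially the paper's argument. The $E\to E^\perp$ block of $\nabla_iQ=M_iQ$, together with invertibility of $Q|_E$, identifies $\beta_i=(I-\Pi)M_i|_E$, and the $E$-block determinant gives $\Phi_i=\tr(\Pi M_i)$. The barred equation forces $(\nabla_{\bar j}\Pi)\Pi=0$, so by metric compatibility only the block $\Pi(\nabla_{\bar j}\Pi)(I-\Pi)=\beta_j^*$ survives, and it pairs with $\beta_i$.

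The one step you should make explicit is the barred differentiation of $\Phi_i=\tr(\Pi M_i)$. That identity holds only for derivative inputs in $\mathcal D$, so differentiating it covariantly in $\bar j$ without an extra term requires $\nabla''\mathcal D\subset\mathcal D$. This is exactly where the holomorphicity of $\mathcal D$ enters, as the paper notes.
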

\begin{proof}
The transport preserves $E$ in barred directions and $\ker Q$ in unbarred directions. In an orthogonal splitting $E\oplus\ker Q$, the off-diagonal block of the first equation gives $\beta_i=(M_i)_{\ker Q,E}$, since $Q|_E$ is invertible. Taking its determinant gives the first formula. Metric compatibility gives $(\nabla_{\bar j}\Pi)_{E,\ker Q}=\beta_j^*$; the other block is zero. Differentiate the first formula. Since the distribution is holomorphic, its derivative inputs remain in $\mathcal D$ under barred covariant differentiation. The displayed positive square is exactly the moving-projector term.
\end{proof}

For later use, if ${\cP}^{a,jl}=0$ for every $a,l$ at the point, then \eqref{c0:r1:Sbar}--\eqref{c0:r1:etabar} give, for every $i$,
\begin{equation}\label{c0:r1:dLambda}
 (\nabla_{\bar j}M_i)_{ab}
 =2\delta_{ij}Q_{ab}-2Q_{ai}\delta_{jb}
   +\tau\delta_{ai}\delta_{bj}-\delta_{ai}Q_{jb}.
\end{equation}
In particular this holds for every $i,j$ if ${\cP}=0$.
\begin{lemma}[Vanishing mixed curvature is impossible on the strict branch]\label{c0:r1:Uzero}\label{c0:lem:Qtransport}\label{c0:lem:inertia}
The strict inequality $p>0$ and $\cP\equiv0$ are incompatible on a compact
connected threefold with zero Chern HSC.
\end{lemma}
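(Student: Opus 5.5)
Assume $\cP\equiv0$ and $p>0$ everywhere, and look for a contradiction. With $\cP=0$ the decomposition \eqref{c0:eq:decomp} reduces to $R_{i\bar jk\bar l}=\eps_{ika}\eps_{jlb}Q_{ab}$, so the whole curvature is carried by $Q$. The sourced Bianchi identity \eqref{c0:r1:Qsource} becomes $\nabla_iQ=M_iQ$, and taking adjoints gives $\nabla_{\bar i}Q=QM_i^*$. These are exactly the hypotheses of Lemma~\ref{c0:r1:projector} with $\mathcal D=V$. Formula \eqref{c0:r1:dLambda} also holds for all $i,j$. Since $\cP=0$ we have $\Xi=0$ and $\mathsf M(\cP)=0$. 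Equation \eqref{c0:r1:log-hessian} then reads $f_{i\bar j}+\zeta_i\bar\zeta_j=\rho^{(1)}_{i\bar j}-\frac2p\mathsf G(W)_{i\bar j}$. By \eqref{c0:r1:ricci-lee}, $\rho^{(1)}=\frac12\nabla_{\bar j}\eta_i$, and \eqref{c0:r1:etabar} gives $\rho^{(1)}_{i\bar j}=\tau\delta_{ij}-Q_{ji}$.

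The first step uses $\tau>0$, which follows from $p=4\tau-2|A|^2>0$. So $Q\ne0$ everywhere, and the rank $m\ge1$ of $Q$ is constant on a dense open set $\mathcal U$. The transport equation implies that $e_m(Q)$ satisfies a homogeneous $\nabla'$-equation, so Lemma~\ref{c0:r1:weak} lets us pick $m$ maximal with $e_m(Q)\not\equiv0$. On $\{e_m(Q)\ne0\}$ I would apply \eqref{c0:r1:projector-hessian} to $\Phi=\log|e_m(Q)|$. Substituting \eqref{c0:r1:dLambda} gives
\[
\tr(\Pi\nabla_{\bar j}M_i)=2m\delta_{ij}\cdot\tfrac{\tau}{m}\cdots,
\]
and more precisely a combination $2\delta_{ij}\tr(\Pi Q)-2(Q\Pi)_{ji}+\tau\Pi_{ij}-(\Pi Q)_{ji}$. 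Contracting with $h^{i\bar j}$ should give $\Delta\Phi=(2-3/m\cdot\ldots)$. Concretely, since $\tr(\Pi Q)=\tau$ because $\im Q=E$, the trace is $6\tau-2\tau+\tau-\tau=4\tau$ plus $|\beta|^2$. So
\[
\Delta\Phi=4\tau+|\beta|^2>0.
\]
I also need $\Phi_i=\tr(\Pi M_i)$. This is comparable to $\eta$: $\tr(\Pi M_i)=\tr(\Pi AE_i)+\frac m2\eta_i$.

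The second step is a global maximum argument. Form the combination $\Psi=\Phi-\lambda f$ with $\lambda$ chosen to cancel the Lee-form drift. A natural choice is $\Psi=\Phi-\frac m2\log\varphi$ with $\varphi$ the principal eigenfunction, or one works directly with $f$: we have $\Delta f=2\tau-|\zeta|^2-\frac2p|W|^2$ and $\partial f=\zeta+\eta/2$. The goal is a function with $\Delta\Psi>0$ up to first-order terms in $\partial\Psi$. The cleanest candidate is $\Psi=\Phi-mf$, which gives
\[
\Delta\Psi=4\tau-2m\tau+|\beta|^2+m|\zeta|^2+\tfrac{2m}{p}|W|^2.
\]
This is positive for $m\le2$. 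For $m=3$ I would instead use $\Phi-2f$ or another coefficient, adjusting so the $\tau$-coefficient stays positive.

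The difficulty is that $\Phi\to-\infty$ where $e_m(Q)\to0$. So a maximum of $\Psi$ over $X$ is attained inside $\{e_m(Q)\ne0\}$, since $f$ is bounded there. At that point $\Delta\Psi\le0$, which contradicts the strict positivity. The main obstacle is the term $\tr(\Pi AE_i)$ in $\Phi_i$, which makes $\partial\Phi$ differ from a multiple of $\eta$, and the resulting first-order cross terms. I expect that at the maximum, $\partial\Psi=0$ removes these cross terms, and the $-\frac2p\mathsf G(W)$ contribution enters with a favorable sign. If this fails, I would instead integrate $\Delta\Psi$ against $dV$ using Lemma~\ref{c0:lem:div}, with the cutoff technique from the proof of Theorem~\ref{thm:neg-kernel-exclusion}, and run the same sign argument in integrated form.
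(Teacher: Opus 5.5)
Your first step is the paper's proof. With $\cP\equiv0$, \eqref{c0:r1:Qsource} reduces to $\nabla_iQ=M_iQ$ and $\nabla_{\bar i}Q=QM_i^*$, so Lemma~\ref{c0:r1:projector} applies with $\mathcal D=V$. Summing \eqref{c0:r1:dLambda} then gives $\sum_i\nabla_{\bar i}M_i=3Q+\tau I$. Your trace has a slip: $\tr(\tau\Pi)=m\tau$, not $\tau$. The correct value is therefore $\Delta_h\Phi=(m+3)\tau+\sum_i|\beta_i|^2$, which is still strictly positive.

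The gap is in your second step, which rests on a misconception and, as written, does not close. On functions, $\Delta_h=h^{i\bar j}\partial_i\partial_{\bar j}$ has no first-order part, and Lemma~\ref{c0:r1:projector} already gives the full complex Hessian of $\Phi$. The form of $\Phi_i=\tr(\Pi M_i)$, including the term $\tr(\Pi AE_i)$, never enters, so there is no Lee-form drift to cancel.

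The modification $\Psi=\Phi-mf$ is therefore unnecessary, and it actually breaks the argument. Here $\Delta_hf=2\tau-|\zeta|^2-\frac2p|W|^2$, so the $\tau$-coefficient of $\Delta_h\Psi$ is $3-m$ (or $4-2m$ with your trace). This loses strict positivity at $m=3$, and there your proposal only gestures at other coefficients or an integrated version. Simply take $\lambda=0$. At a global maximum point of $|e_m(Q)|^2$ one has $\Delta_h\Phi\le0$, which contradicts $\Delta_h\Phi\ge(m+3)\tau>0$.

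Your worry that $\Phi\to-\infty$ somewhere is also unnecessary. Along a real path in a parallel unitary frame, the transport gives $\dot Q=BQ+QB^*$ with $B=\sum_i\dot z^iM_i$. Hence $Q(t)=G(t)Q(0)G(t)^*$, where $\dot G=BG$ and $G(0)=I$, so $G$ is invertible. Thus the rank and inertia of $Q$ are constant on connected $X$, $e_m(Q)$ never vanishes, and $\Phi$ is smooth everywhere; this is how the paper makes $\Phi$ global.

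Even without this observation, your choice of $m$ suffices, since it coincides with the maximal rank of $Q$. At a maximum point of $|e_m(Q)|^2$ we have $e_m(Q)\ne0$, which forces the rank to be at least $m$, hence exactly $m$ on a neighborhood. That is all Lemma~\ref{c0:r1:projector} needs.
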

\begin{proof}
The full sourced equation \eqref{c0:r1:Qsource} becomes
$\nabla_iQ=M_iQ$ and, by Hermitian reality,
$\nabla_{\bar i}Q=QM_i^*$. Along a real path in a parallel unitary frame,
$\dot Q=BQ+QB^*$, where $B=\sum_i\dot z^iM_i$.
The solution of $\dot G=BG$, $G(0)=I$, is invertible and gives
$Q(t)=G(t)Q(0)G(t)^*$. Thus its rank $m$ and inertia are constant.
Since $\tau=(p+2u)/4>0$, $m\ge1$. The smooth global function
$\Phi=\log|e_m(Q)|$ is therefore well defined. Summing
\eqref{c0:r1:dLambda} gives
$\sum_i\nabla_{\bar i}M_i=3Q+\tau I$; as $\Pi Q=Q$,
Lemma~\ref{c0:r1:projector} yields
\[
 \Delta_h\Phi=(m+3)\tau+\sum_i|\beta_i|^2>0.
\]
This contradicts the nonpositivity of the Laplacian at a global maximum.
The argument uses only $\tau>0$ and applies to every inertia of $Q$.
\end{proof}
\subsubsection{The local conformally K\"ahler identity}\label{c0:sec:endpoints}
The second endpoint follows from a local calculation that will also be
used to bound the output rank of $\cP$.
\begin{lemma}[Local conformally K\"ahler structure]\label{c0:lem:localLCK}
On an open set where $A=0$ in a smooth zero-Chern-HSC threefold,
there are locally a K\"ahler metric $g$ and a real function $\phi$
such that $h=e^\phi g$. Then
$F=\partial\bar\partial\phi$ is parallel for $g$, and
$\partial s=s\eta/2$. In particular these conclusions do not require
compactness.
\end{lemma}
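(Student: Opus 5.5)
Where $A=0$ the torsion is pure trace: by \eqref{b:eq:encoding}, $C=-\tfrac12[\eta]_\times$, equivalently $T^k{}_{ij}=-\tfrac12(\eta_i\delta_j^k-\eta_j\delta_i^k)$ by \eqref{b:eq:primitive}. As in the completion of Theorem~\ref{p:thm:main}(1), this gives $d\omega_h=\theta\wedge\omega_h$ with $\theta=-(\eta+\bar\eta)/2$. Applying $d$ gives $d\theta\wedge\omega_h=0$, and injectivity of $\omega_h\wedge\cdot$ on two-forms in real dimension six gives $d\theta=0$. So locally $\theta=d\phi$, and $g=e^{-\phi}h$ is K\"ahler with $h=e^\phi g$. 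Hence $\eta=-2\partial\phi$ up to the normalization fixed by Tang's convention, which I will check against \eqref{n:eq:eta-balanced-formula}. Nothing here uses compactness.

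Next I compare curvatures. For $h=e^\phi g$ with $g$ K\"ahler, \eqref{eq:curvature-coordinate} gives
\[
R^h_{i\bar jk\bar l}=e^\phi\bigl(R^g_{i\bar jk\bar l}-\phi_{i\bar j}g_{k\bar l}\bigr)
\]
in coordinates. The Chern connection of $h$ is $\nabla^g+\partial\phi\otimes\mathrm{Id}$, so the $(1,1)$-curvature only shifts by $-\partial\bar\partial\phi\otimes\mathrm{Id}$. Symmetrizing, $H_h\equiv0$ says that $R^g$ plus the symmetrization of $-F\otimes g$, with $F=\partial\bar\partial\phi$, has vanishing fully symmetric part. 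Since $R^g$ is already K\"ahler-symmetric, this yields
\[
R^g_{i\bar jk\bar l}=\tfrac14\bigl(F_{i\bar j}g_{k\bar l}+F_{k\bar j}g_{i\bar l}+F_{i\bar l}g_{k\bar j}+F_{k\bar l}g_{i\bar j}\bigr).
\]
So $R^g$ is a Bochner-type tensor built from $F$.

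The key step is to show $\nabla^gF=0$. I would apply the second Bianchi identity of the K\"ahler metric $g$, namely that $\nabla_pR^g_{i\bar jk\bar l}$ is symmetric in $p,i,k$, to the displayed formula. This gives a linear equation in $\nabla F$ with $g$-coefficients. Contracting with $g^{k\bar l}$ and using $n=3$, I expect an identity of the form $(n+1)\nabla_pF_{i\bar j}$ plus trace terms, symmetric in $(p,i)$. Combined with $\nabla_pF_{i\bar j}=\nabla_iF_{p\bar j}$, which holds because $F$ is closed and $g$ is K\"ahler, this should force first $\partial\tr_gF=0$ and then $\nabla'F=0$; reality of $F$ then gives $\nabla''F=0$. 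This is the main obstacle: it needs a careful contraction count, and the nondegeneracy of the resulting linear map in dimension three must be checked. If the algebra leaves a residual trace-free component, I would fall back on the differentiated polarization identity, Lemma~\ref{lem:general-curvature-transport} at $c=0$ together with \eqref{c0:r1:Utransport}, to kill it.

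Finally, $s=\tr_h\rho^{(1)}$. Here $\rho^{(1)}(h)=\Ric(g)-n\,\partial\bar\partial\phi$, and $\Ric(g)$ is the trace of the formula above, which is a combination of $F$ and $(\tr_gF)g$. Hence $s=e^{-\phi}\,\kappa\,\tr_gF$ for a constant $\kappa$. Since $\tr_gF$ is constant by parallelism,
\[
\partial s=-s\,\partial\phi=\tfrac12 s\,\eta,
\]
with the sign matching $\eta=-2\partial\phi$. I will confirm this sign once the normalization of $\eta$ is fixed.
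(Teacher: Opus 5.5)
Your outline follows the paper's route: the LCK reduction via injectivity of $\omega_h\wedge\cdot$, the conformal formula $R^h=e^\phi(R^g-F\otimes g)$, the Bochner-type formula for $R^g$, the K\"ahler second Bianchi identity, and the trace formula $s=-e^{-\phi}\tr_gF$. However, the contraction you propose for the key step does not deliver $\nabla^gF=0$. Insert the four-term formula into $\nabla_pR^g_{i\bar jk\bar l}=\nabla_iR^g_{p\bar jk\bar l}$ and use $\nabla_pF_{i\bar j}=\nabla_iF_{p\bar j}$. The $F_{i\bar j}g_{k\bar l}$ and $F_{i\bar l}g_{k\bar j}$ terms then cancel, leaving
\[
 \nabla_pF_{k\bar j}\,g_{i\bar l}+\nabla_pF_{k\bar l}\,g_{i\bar j}
 -\nabla_iF_{k\bar j}\,g_{p\bar l}-\nabla_iF_{k\bar l}\,g_{p\bar j}=0 .
\]
Contracting this with $g^{k\bar l}$, as you suggest, is just the contracted Bianchi identity for $\Ric^g=\tfrac{n+2}{4}F+\tfrac14\sigma_Fg$, where $\sigma_F=\tr_gF$. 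The $\nabla F$ terms cancel identically, and you obtain only $\partial_p\sigma_F\,g_{i\bar j}=\partial_i\sigma_F\,g_{p\bar j}$, i.e.\ $d\sigma_F=0$. No term of the form $(n+1)\nabla_pF_{i\bar j}$ survives, so ``then $\nabla'F=0$'' does not follow from that contraction.

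The contraction that works, and is the paper's, pairs $i$ with $\bar l$. It gives $n\,\nabla_pF_{k\bar j}=g_{p\bar j}\,\partial_k\sigma_F$. Symmetry in $(p,k)$ with $j=p\ne k$ then gives $\partial\sigma_F=0$, hence $\nabla'F=0$, and $\nabla''F=0$ follows by reality. The fallback you mention (the $c=0$ transport of the $h$-curvature and of $\cP$) concerns the Chern connection of $h$ and is unnecessary, since the uncontracted identity above already contains the needed information. Note also that your final step uses only constancy of $\sigma_F$. So $\partial s=s\eta/2$ does follow from your $g^{k\bar l}$ contraction; it is the parallelism of $F$ asserted in the lemma that needs the $g^{i\bar l}$ contraction.
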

\begin{proof}

Since $A=0$, the primitive torsion $T^\circ$ vanishes. Thus
$d\omega=\theta\wedge\omega$ with $\theta=-\frac12(\eta+\bar\eta)$.
Applying $d$ gives $d\theta\wedge\omega=0$, and injectivity of
$\wedge\omega:\Lambda^2\to\Lambda^4$ in complex dimension three
gives $d\theta=0$. Locally $\theta=d\phi$, so $g=e^{-\phi}h$ is
K\"ahler and $\eta=-2\partial\phi$.

The conformal curvature formula is
$R^h_{i\bar jk\bar l}=e^\phi(R^g_{i\bar jk\bar l}-\phi_{i\bar j}g_{k\bar l})$.
Since $R^g$ has K\"ahler symmetries, zero-HSC polarization
\eqref{c0:eq:pol} gives
\[
 R^g_{i\bar jk\bar l}=\tfrac14\bigl(F_{i\bar j}g_{k\bar l}+F_{k\bar j}g_{i\bar l}+F_{i\bar l}g_{k\bar j}+F_{k\bar l}g_{i\bar j}\bigr),\qquad F:=\partial\bar\partial\phi .
\]
Tracing gives $\Ric^g=\frac54F+\frac14\sigma_F g$, where
$\sigma_F=\tr_gF$. Write $D$ for the connection of $g$.
Because $F=\partial\bar\partial\phi$, one has
$D_pF_{k\bar j}=D_kF_{p\bar j}$. The K\"ahler Bianchi identity for
the displayed curvature becomes
\[
 D_pF_{k\bar j}\delta_{il}+D_pF_{k\bar l}\delta_{ij}
 -D_iF_{k\bar j}\delta_{pl}-D_iF_{k\bar l}\delta_{pj}=0.
\]
Contracting $i=l$ gives $3D_pF_{k\bar j}=\delta_{pj}D_k\sigma_F$.
Symmetry in $p,k$, with $j=p\ne k$, gives $d\sigma_F=0$ and then
$\nabla^gF=0$. Finally $s_g=2\sigma_F$, so the conformal trace formula
gives
\[
 s=s_h=e^{-\phi}(s_g-3\sigma_F)=-\sigma_F e^{-\phi},\qquad\text{so}\qquad \partial s=\tfrac12\,s\,\eta
\]
on the given open set. Since the final identity is expressed in the
original metric, it agrees on overlapping conformal charts.
\end{proof}

\begin{theorem}[The two elementary endpoints]\label{c0:thm:endpoints}
If a compact connected zero-Chern-HSC threefold satisfies $\cP\equiv0$
or $A\equiv0$, then its original metric is Chern flat.
\end{theorem}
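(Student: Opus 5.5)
Both endpoints will be proved by contradiction through the strict dichotomy of Theorem~\ref{c0:thm:A}. Suppose the original metric is not Chern flat. Then $\lambda_1<0$, and the function $p=4\tr Q-2|A|^2$ is everywhere positive. The case $\cP\equiv0$ is then immediately excluded by Lemma~\ref{c0:r1:Uzero}, which rules out $p>0$ together with $\cP\equiv0$ on a compact connected threefold. This case therefore needs no further work beyond quoting the dichotomy and that lemma.

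For the case $A\equiv0$, we work globally with the local conformally K\"ahler structure of Lemma~\ref{c0:lem:localLCK}. Since $A=0$ everywhere, we have $p=4\tau=2s$, so the nonflat branch gives $s>0$ on all of $X$. The lemma supplies the globally meaningful identity $\partial s=\tfrac12 s\eta$ in the original metric, so
\[
\eta=2\partial\log s .
\]
Thus $\eta$ is globally $\partial$-exact. Now trace the identity \eqref{c0:r1:etabar}, $\nabla_{\bar j}\eta_i=2\Xi_{ji}+2\tau\delta_{ij}-2Q_{ji}$. Using $\tr\Xi=0$, this gives $\sum_i\nabla_{\bar i}\eta_i=6\tau-2\tau=4\tau=2s$. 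With $\eta=2\partial\log s$ and $\log s$ real, this becomes $\Delta_h\log s=s$, up to the harmless convention check that the covariant and coordinate Laplacians agree on functions. This is exactly the relation used in the equality case of Theorem~\ref{c0:thm:A}. At a global maximum of $\log s$ we have $\Delta_h\log s\le0$, while $s>0$ there, which is a contradiction. Alternatively, one can substitute $u=s^{1/2}$ into the square identity \eqref{c0:eq:F}. The relation $\eta=2\partial\log s$ makes $\partial u-\tfrac12u\eta$ computable, and combined with the Stokes identity \eqref{c0:eq:stokes} this gives a second route to the contradiction if a sign slips in the first.

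The main obstacle is making sure that $\partial s=\tfrac12 s\eta$ is really global. Lemma~\ref{c0:lem:localLCK} is stated on conformal charts, but it explicitly notes that the final identity is expressed in $h$ and agrees on overlaps, so it holds on all of $X$. The other point to check is the constant in the trace of $\nabla_{\bar j}\eta_i$: it must give a positive multiple of $s$, and it does, since $4\tau=2s$. Any positive constant suffices for the maximum-principle contradiction. Hence both cases force the flat alternative of Theorem~\ref{c0:thm:A}, i.e.\ $R^h\equiv0$.
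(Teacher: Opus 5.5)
Your proof is correct and matches the paper's. The $\cP\equiv0$ case is exactly the paper's (Theorem~\ref{c0:thm:A} plus Lemma~\ref{c0:r1:Uzero}). In the $A\equiv0$ case you use the same global identity $\partial s=s\eta/2$ from Lemma~\ref{c0:lem:localLCK}, then $\eta=2\partial\log s$, $\Delta\log s=s$, and a maximum-principle contradiction.

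The one difference is where the sign of $s$ comes from. You take $s>0$ from the strict dichotomy, via $p=2s$ when $A=0$. The paper instead reads ``identically zero or nowhere zero'' off the path ODE, excludes both signs, and then gets $s=0$, $\eta=0$ from \eqref{c0:eq:stokes}, hence $T=0$ and flatness by K\"ahler polarization. That keeps the $A\equiv0$ case independent of Theorem~\ref{c0:thm:A}.
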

\begin{proof}
If $\cP=0$ and the metric were nonflat, Theorem~\ref{c0:thm:A} would
give $p>0$, contrary to Lemma~\ref{c0:r1:Uzero}.

If $A=0$, Lemma~\ref{c0:lem:localLCK} gives $\partial s=s\eta/2$
globally. The corresponding real equation along paths shows that $s$
is identically zero or nowhere zero. In the latter case its sign is
constant, and
\[
 \eta=2\partial\log|s|,\qquad \Delta\log|s|=s.
\]
For $s>0$ this is impossible at a maximum; for $s<0$ it is impossible
at a minimum. Thus $s=0$, and \eqref{c0:eq:stokes} gives $\eta=0$.
Now $T=0$, and K\"ahler curvature with zero HSC vanishes by polarization.
\end{proof}

\subsection{The local mixed-curvature rank bound}
\begin{theorem}[The mixed-curvature output-rank bound]\label{c0:thm:rank}
Every smooth zero-Chern-HSC Hermitian threefold has $\rank\cP\le2$ pointwise.
\end{theorem}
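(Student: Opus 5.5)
Because the statement is pointwise and local, I would work at a single point on a neighborhood, with no compactness. The model is the "differential compatibility" argument of Section~\ref{b:sec:threefold}. The output transport $\nabla_i\cP=M_i^{(\mathrm{out})}\cP$ of \eqref{c0:r1:Utransport} is a first-order equation acting only on the output slot. Its pure-type commutator is therefore an algebraic constraint on $\cP$, and that constraint is what I will exploit.

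\textbf{Step 1.} Differentiate \eqref{c0:r1:Utransport} once more in a holomorphic direction and antisymmetrize using the Ricci identity \eqref{c0:eq:ricci}. Since $[\nabla_p,\nabla_q]\cP=-T^r_{pq}\nabla_r\cP$, this gives
\[
 \bigl(\nabla_pM_q-\nabla_qM_p+[M_p,M_q]+T^r{}_{pq}M_r\bigr)^{(\mathrm{out})}\cP=0 .
\]
The term $\nabla_pM_q$ involves $\nabla'A$ and $\nabla'\eta$. I would write $\nabla'A=W+\tfrac12\eta\otimes A$ and $\nabla_p\eta_q-\nabla_q\eta_p=a_{pq}$. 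The result is three endomorphisms $N_k$ of $V\otimes K_X$, one for each cyclic pair indexed by $k$, that annihilate the image of $\cP$:
\[
 N_k\,\im\cP=0 .
\]

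\textbf{Step 2.} Use the mixed equation, which is much stronger. Apply $\nabla_{\bar j}$ to \eqref{c0:r1:Utransport}. The commutator $[\nabla_i,\nabla_{\bar j}]\cP$ is the curvature action on every slot of $\cP$, given by the decomposition \eqref{c0:eq:decomp}, so it is quadratic in $(\cP,Q)$. It must equal
\[
 \nabla_i\nabla_{\bar j}\cP-\nabla_{\bar j}\bigl(M_i\cP\bigr),
\]
in which $\nabla_{\bar j}M_i$ is known from \eqref{c0:r1:Sbar} and \eqref{c0:r1:etabar}. The unknown derivative $\nabla_{\bar j}\cP$ enters only through $\nabla_i(\nabla_{\bar j}\cP)$ and $M_i\nabla_{\bar j}\cP$. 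I would eliminate it by pairing with the sourced identity \eqref{c0:r1:Qsource}, or by contracting appropriately.

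\textbf{Step 3.} Suppose $\rank\cP=3$, i.e.\ $\cP\colon\Sym^2V^*\to V\otimes K_X$ is onto at a point, hence on a neighborhood. Then each $N_k$ from Step~1 vanishes identically, which gives explicit algebraic equations for $W$, $a$, $A$, $\eta$, $Q$. I expect these to force the trace-free part of $W$ to be determined by $A$ and $\eta$, essentially $W=0$. Differentiating $N_k=0$ again and using the barred data \eqref{c0:r1:Sbar} should produce a relation of the schematic form
\[
 \Bigl(\sum_{\text{cyclic}}\ \text{terms linear in }\cP\Bigr)\otimes(\text{nondegenerate factor})=0 .
\]
With full output rank this would force $\cP=0$, contradicting rank three. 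The mechanism is analogous to the step $6A^{ab}j^l=0$ in Proposition~\ref{alg14:recurrence}.

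The main obstacle is Step~3: obtaining a clean contradiction from the full rank assumption. The commutator constraints might only yield a lower-rank cut rather than outright vanishing. In that case I would first try the symmetrization trick used for $\mathscr E_1,\mathscr E_2$ in the recurrent certificate. Symmetrizing the mixed Chern equation in its output slots cancels the undetermined $\nabla Q$ and $\nabla\bar\cP$ terms. With $\cP$ surjective one can then evaluate in a unitary frame normalizing $\cP$, check the finitely many residual component equations, and show they are inconsistent with $\cP\ne0$. The resulting component calculation would be recorded in the appendix.
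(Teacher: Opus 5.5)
\textbf{Step 1 is correct; the rest of the argument is missing.} Your Step~1 is right and is exactly Proposition~\ref{c0:prop:rank3}. At full output rank the pure commutator operator must vanish. The quadratic $A$ terms cancel identically for symmetric $A$, and the map $X\mapsto(X_qE_p-X_pE_q)$ is injective and trace-free. This gives precisely $\partial\eta=0$ and $W=0$, not merely a constraint on a trace-free part.

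\textbf{Step~3 is where the gap lies.} It carries all the content, but you only state it as an expectation, and the endpoint you predict (a relation forcing $\cP=0$) is not what happens. The missing idea is to differentiate $W=0$ in a barred direction and totally symmetrize over $(a,j,l)$. This removes the undetermined $\nabla'Q$ terms and the $T^\circ\cP$ terms, leaving Lemma~\ref{c0:lem:mixed}: $\Sym[\overline{\Xi_{pl}}A^{aj}]=0$. Contracting with $z_az_jz_l$ gives $\bigl(\sum_l\overline{\Xi_{pl}}z_l\bigr)A(z)=0$ in the domain $\C[z]$. Hence $\Xi=0$ wherever $A\ne0$, so $\cP$ is totally symmetric, not zero. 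Since $\nabla'$ preserves total symmetry, transport makes $AE_p\cP$ totally symmetric. The Takagi-frame analysis of Lemma~\ref{c0:lem:symcompat} then gives $\cP=0$ only when $\rank A=3$. For $\rank A\in\{1,2\}$ there are nonzero compatible totally symmetric tensors, and one obtains only $\rank\cP\le2$, which is exactly what is needed. Your Step~2 (the mixed commutator of $\cP$ itself, which brings in the unknown $\nabla'\nabla''\cP$) is not what closes the argument, and pairing it with \eqref{c0:r1:Qsource} is not specified.

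\textbf{The branch $A\equiv0$ is not covered.} Your mechanism needs a nondegenerate factor coming from $A$, and on the interior of $\{A=0\}$ there is none. There $W\equiv0$ and $\partial\eta=0$ hold automatically, so both the Step~1 commutator and its symmetrized barred derivative are vacuous. The paper treats this branch by a separate argument. By Lemma~\ref{c0:lem:localLCK}, $h=e^\phi g$ locally with $g$ K\"ahler and $F=\partial\bar\partial\phi$ parallel for $g$. Then $F$ commutes with the curvature of $g$, and the four-term formula for that curvature gives $(\lambda_b-\lambda_a)(\lambda_a+\lambda_b)=0$. So two eigenvalues of $F$ coincide, and $\cP^{a,jl}=\tfrac14e^{-\phi}\eps_{jla}(\lambda_l-\lambda_j)$ has a zero output row.

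\textbf{The boundary points are also omitted.} To get the pointwise statement at points of the boundary of $\{A=0\}$, you need that $\{A\ne0\}\cup\operatorname{int}\{A=0\}$ is dense and that $\rank\cP\le2$ is a closed condition.
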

The pure and mixed commutator calculations used in this proof are given
in Appendix~\ref{c0:app:rank-compatibility}: the pure full-output-rank
identity forces $W=0$, the mixed symmetric identity forces
$\Sym(\bar\Xi_p\otimes A)=0$, and symmetric compatibility excludes
full rank for $\cP$ whenever $A\ne0$.
\begin{proof}[Proof of Theorem~\ref{c0:thm:rank}]
\emph{On the open set $\{A\ne0\}$.}
Suppose that the output rank is three at a point of this set. It remains
three on an open neighborhood $U\subset\{A\ne0\}$.
Proposition~\ref{c0:prop:rank3} gives $W=0$ on $U$, and
Lemma~\ref{c0:lem:mixed} then gives
\[
 \Sym[\overline{\Xi_{pl}}A^{aj}]=0.
\]
Contracting with $z_az_jz_l$ yields the product
\[
 \Bigl(\sum_l\overline{\Xi_{pl}}z_l\Bigr)
 \Bigl(\sum_{a,j}A^{aj}z_az_j\Bigr)=0.
\]
The second factor is a nonzero polynomial. Since $\C[z]$ is a domain,
$\Xi=0$ on $U$, equivalently $\cP$ is totally symmetric there.
Its covariant derivative is also totally symmetric, so transport
implies that $AE_p\cP$ is totally symmetric for every $p$.
Lemma~\ref{c0:lem:symcompat} now contradicts output rank three.
Thus $\rank\cP\le2$ on $\{A\ne0\}$.

\emph{On the interior of $\{A=0\}$.}
Lemma~\ref{c0:lem:localLCK} gives $h=e^\phi g$, with $g$ K\"ahler and
$F=\partial\bar\partial\phi$ parallel. Hence $F$ commutes with the
curvature operators of $g$. In a $g$-unitary eigenframe, the four-term
curvature formula in that lemma gives
\[
 0=(\lambda_b-\lambda_a)R^g_{a\bar b b\bar a}
   =\tfrac14(\lambda_b-\lambda_a)(\lambda_a+\lambda_b).
\]
The three eigenvalues therefore take at most the two values $\pm\lambda$;
in particular two of them coincide. In the corresponding $h$-unitary
frame the mixed-curvature components are
\[
 \cP^{a,jl}=\tfrac14e^{-\phi}\eps_{jla}(\lambda_l-\lambda_j).
\]
The output row complementary to a pair of equal eigenvalues is zero,
so $\rank\cP\le2$ on this open set as well.

Finally, $\{A\ne0\}\cup\operatorname{int}\{A=0\}$ is dense.
The rank bound is a closed condition, since it is the vanishing of all
three-row minors of $\cP$. Continuity proves the same bound at the
remaining boundary points.
\end{proof}

\subsection{A rank-free vanishing theorem and the mixed-rank reduction}
\begin{proposition}[The local rank-two conclusion]\label{c0:r2:localW}
On the open set $\Omega_{\cP,2}=\{\rank\cP=2\}$ of any smooth
zero-Chern-HSC threefold, $W=0$.
\end{proposition}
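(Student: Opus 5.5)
The plan is to argue locally on $\Omega_{\cP,2}$, which is open because output rank two is cut out by a nonzero two-row minor together with the vanishing of all three-row minors (Theorem~\ref{c0:thm:rank} makes rank three impossible). On this set $\im\cP\subset V\otimes K_X$ is a smooth rank-two subbundle. By the transport law \eqref{c0:r1:Utransport}, $\nabla_i\cP=M_i^{(\mathrm{out})}\cP$, and it is preserved in unbarred directions modulo the action of $M_i=AE_i+\tfrac12\eta_iI$. Differentiating a relation $\nu\circ\cP=0$ for a covector $\nu$ spanning the annihilator line gives
\[
 (\nabla_i\nu+\nu M_i)\circ\cP=0,
\]
so $\nabla_i\nu+\nu M_i$ is proportional to $\nu$. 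First I will extract from this the algebraic constraint that $\nu AE_i$ is proportional to $\nu$ modulo the image. This should be a pointwise condition tying the kernel covector $\nu$ to $A$.

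Next I will run the pure commutator on $\cP$. The commutator $[\nabla_p,\nabla_q]\cP=-T^r_{pq}\nabla_r\cP$ applied to the transport law gives
\[
 \bigl(\nabla_pM_q-\nabla_qM_p+[M_q,M_p]+T^r_{pq}M_r\bigr)^{(\mathrm{out})}\cP=0 .
\]
The derivative terms are $(\nabla_pA)E_q-(\nabla_qA)E_p$ plus $\eta$ terms. Writing $\nabla_pA=W_p+\tfrac12\eta_pA$ isolates $W$, and the $\eta$ and $A$-quadratic parts should cancel exactly as in the $\mathcal H$-computation of the full-rank case (Proposition~\ref{c0:prop:rank3}). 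The result is that the operator $W_pE_q-W_qE_p$, restricted to the rank-two image, must vanish. In the full-rank case this already forces $W=0$. Here it only says that $W_pE_q-W_qE_p$ kills the image plane. I will encode this with a frame in which the image is $\Span(e_1,e_2)$, so that the condition becomes linear equations on the six components of each symmetric $W_p$.

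The main obstacle is that these linear equations leave a residual space of $W$'s, namely those supported on the direction complementary to the image. To kill this residue I will add two inputs. The first is the mixed commutator $[\nabla_p,\nabla_{\bar j}]$ on $\cP$, using \eqref{c0:r1:Sbar}, \eqref{c0:r1:etabar} and the form of $\nabla_{\bar j}M_p$; symmetrizing in output slots, as in Lemma~\ref{c0:lem:mixed}, should yield $\Sym(\bar\Xi_p\otimes A)$ restricted to the image. The second is the constraint from the first step relating $\nu$ and $A$. I would try first to show that the residual $W$ is proportional to $\nu\otimes\nu$-type terms, and then differentiate the constraint on $\nu$ once more in unbarred directions to force the proportionality factor to vanish. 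If $A=0$ on an open piece, $W=0$ trivially, so only the region $\{A\ne0\}\cap\Omega_{\cP,2}$ needs the algebra. Density, via weak continuation (Lemma~\ref{c0:r1:weak}), then handles the remaining points.
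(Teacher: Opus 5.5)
Your setup matches where the paper starts: the annihilator line of $\im\cP$, its transport, and the pure commutator $\widetilde\Omega_{qp}\cP=0$. However, the step that carries the proof is missing, and your proposed mechanism relies on a constraint that the first derivative does not give.

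\textbf{The first step yields no constraint.} Differentiating $\nu\circ\cP=0$ gives no pointwise relation between $\nu$ and $A$. Because $\nabla'\nu$ is unknown, $(\nabla_i\nu+\nu M_i)\circ\cP=0$ only determines it: $\nabla_i\nu=\gamma_i\nu-\nu M_i$, which is \eqref{c0:eq:elltr} with $\gamma$ a gauge form. Worse, the condition you hope to extract, $\nu AE_i\in\C\nu$ for every $i$, is equivalent to $r=A\nu=0$. Indeed $(\nu AE_i)_b=\sum_c\eps_{cib}r^c$, and these covectors span a plane when $r\ne0$. That statement is Proposition~\ref{c0:r2:rvanishes}, one of the last steps of the paper's argument, not a first-order consequence.

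\textbf{The pure commutator leaves more than you assume.} It does not reduce to $W_pE_q-W_qE_p$ killing the image. The term $\tfrac12(\partial\eta)_{qp}I$ survives and must be rewritten using Lemma~\ref{c0:lem:J1}. With that term, the solution space is $W_p=\ell_pK-\tfrac13(e_pk^{\mathsf T}+ke_p^{\mathsf T})$, $k=K\ell$, where $\ell$ is your $\nu$ and $K$ is an \emph{arbitrary} symmetric tensor. That is six free unknowns, not a $\nu\otimes\nu$-type line.

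\textbf{Eliminating $K$ is the whole proposition, and your tools do not reach it.} Further unbarred differentiation is not enough. Pure prolongation (Lemma~\ref{c0:lem:pureprol}) kills $K$ only where $q=A(\ell,\ell)\ne0$, but Theorem~\ref{c0:thm:qzero} shows $q\equiv0$ on $\Omega_{\cP,2}$. On that set the first prolongation of the pure system is consistent with $K\ne0$: in the branch $k=\varkappa r$ it is solved exhaustively and leaves four free jets.

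The mixed identity of Lemma~\ref{c0:lem:mixed} also does not reduce to $\Sym(\bar\Xi_p\otimes A)$ on the image. Its left side is $\Sym\nabla''W$, which introduces the new unknowns $\nabla''K$ and $\beta=\nabla''\ell$. Wedging with $\ell$ and $z$ removes $\nabla''K$ and leaves the quartic identity $\lambda K=\mu A$, which splits into genuinely different branches. The paper closes these branches with ingredients absent from your plan:
\begin{itemize}
\item the holomorphic-annihilator obstruction (Theorem~\ref{c0:thm:hol}), derived from the mixed Ricci identity for $\ell$ and the full curvature decomposition;
\item the determinant argument excluding $A\ell\wedge K\ell\ne0$ (Theorem~\ref{c0:maink1});
\item the explicit residual certificate of Lemma~\ref{c0:r2:betaell} in the branch $K\ell=\varkappa A\ell\ne0$;
\item the successive conclusions $k=0$, $r=0$, $K=tA$ and $t=0$ (Theorem~\ref{c0:r2:Wfinal}).
\end{itemize}

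Your closing density step is harmless; continuity on the dense set $\{A\ne0\}\cup\operatorname{int}\{A=0\}$ suffices, and weak continuation is not needed. As it stands, though, the proposal contains no argument that $K=0$.
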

\begin{proof}
The local calculation is completed in Theorem~\ref{c0:r2:Wfinal} of
Appendix~\ref{c0:app:c0-ranktwo}. The annihilator of the two-dimensional
output image is a line. The pure commutator expresses $W$ in terms of
one symmetric tensor attached to that line. The mixed identities first
eliminate its contractions with the annihilator, then force the
remaining tensor to be proportional to $A$, and finally exclude a
nonzero proportionality coefficient. These are identities on open sets;
the proof differentiates them before making any pointwise frame choice.
Continuity handles the boundaries of the open cases.
\end{proof}
\begin{theorem}[Vanishing of the derivative defect forces flatness]\label{c0:r2:globalw}
Let $(X^3,h)$ be a compact connected smooth Hermitian threefold with zero Chern holomorphic sectional curvature. If $W\equiv0$, then $h$ is Chern flat.
\end{theorem}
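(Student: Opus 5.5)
Assume $W\equiv0$ and, for contradiction, that $h$ is not Chern flat. Theorem~\ref{c0:thm:A} then gives $p=4\tau-2|A|^2>0$ everywhere. The introduction says this case should close with ``three integrations''. So the plan is to use $W=0$, that is $\nabla'T^\circ=\tfrac12\eta\otimes T^\circ$, to kill the unbarred part of the scalar source, and then show that the barred part must vanish as well.

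\textbf{Step 1: reduce the scalar source.} With $W=0$, the first square in $\Rq$ vanishes, so Proposition~\ref{c0:prop:P} reads $\sL\delta_0=3|S|^2$ with $\delta_0=-p$. Pair this with the positive principal eigenfunction $\varphi$ of Lemma~\ref{c0:lem:F}:
\[
\lambda_1\int_X\varphi\delta_0\,dV=3\int_X\varphi|S|^2\,dV.
\]
Because $\lambda_1<0$ and $\delta_0<0$, this identity is consistent and gives no contradiction on its own. A second integral is needed.

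\textbf{Step 2: a second integral for $|A|^2$.} Take the trace of the Hessian identity \eqref{c0:r1:p-hessian} with $\mathsf G(W)=0$. Separately, use $W=0$ to compute $\sL u$ for $u=|A|^2$ directly. The unbarred derivative gives $\partial u=\tfrac12u\eta$ exactly, so the first-order part of $\sL u$ collapses. What remains is $\sL u=|\nabla''A|^2$ plus curvature terms, where the curvature terms come from the commutator $[\nabla_i,\nabla_{\bar j}]$ acting on $A$. Express those curvature terms through $\cP$, $Q$ and $\tau$ using \eqref{c0:r1:Sbar}.

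Integrate $\sL u$ against $dV$, where $\sL$ is self-adjoint with $\int\sL u=\int u(\tfrac14|\eta|^2-s)$. Also integrate \eqref{c0:eq:stokes} (this is the third integration). Combining these with the barred energy identity \eqref{c0:eq:barred} should give
\[
\int_X\bigl(|\nabla''T^\circ|^2+\tfrac12|a|^2\bigr)dV=6\int_X|S|^2\,dV,
\]
together with an opposite inequality that bounds $\int|S|^2$ by a nonpositive combination. The coefficient bookkeeping, and in particular fixing the factor $3$ against $6$ from Propositions~\ref{c0:prop:P} and \ref{c0:prop:barred}, is the main obstacle.

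If that combination fails, the fallback is to avoid the direct route. Instead use $\partial u=\tfrac12u\eta$, which makes $u/\varphi$-type quotients behave like the equality case of Theorem~\ref{c0:thm:A}. The same ODE argument along paths shows that $u$ is identically zero or nowhere zero. If $u\equiv0$, Theorem~\ref{c0:thm:endpoints} gives flatness. If $u>0$ everywhere, then $\eta=2\partial\log u$, and $\sL u=\sL(e^{\log u})$ becomes a Laplacian identity for $\log u$ whose sign at a maximum contradicts the sign of the curvature terms.

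\textbf{Step 3: conclude.} Once $S=0$, Proposition~\ref{c0:prop:barred} gives $\nabla''T^\circ=0$ and $a=0$. Then $\Rq=0$, so $\sL\delta_0=0$. This forces $\lambda_1=0$ or $\delta_0\equiv0$, and either contradicts the strict branch established by Theorem~\ref{c0:thm:A}. Hence $h$ is Chern flat.
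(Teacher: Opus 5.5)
There is a genuine gap, and Step~2 starts from a false identity.

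The hypothesis $W=0$ says only $\nabla'A=\tfrac12\eta\otimes A$; it gives no information about $\nabla''A$. Since
\[
\partial_iu=\langle\nabla_iA,A\rangle+\langle A,\nabla_{\bar i}A\rangle
=\tfrac12\eta_iu+\sum_{a,b}A^{ab}\overline{\nabla_{\bar i}A^{ab}},
\]
you get $\partial u=\tfrac12u\eta$ only if the last contraction vanishes. That contraction pairs $A$ with the $\cP$ and $Q$ terms of \eqref{c0:r1:Sbar}, and it has no reason to vanish. It is exactly the quantity $D_iu=\partial_iu-\tfrac12\eta_iu$ of \eqref{c0:r2:H}. (In the equality case of Theorem~\ref{c0:thm:A}, the analogous identity for $|T^\circ|^2$ also used $\nabla''T^\circ=0$.)

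Consequently the first-order part of $\sL u$ does not collapse. Your fallback, namely the path-ODE dichotomy for $u$ and the formula $\eta=2\partial\log u$, therefore has no basis.

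Nor is ``$\sL u=|\nabla''A|^2$ plus curvature terms'' available. The mixed Hessian of $u$ contains $\langle\nabla_i\nabla_{\bar j}A,A\rangle$, i.e.\ derivatives of \eqref{c0:r1:Sbar}. These bring in $\nabla'\cP$, $\nabla'Q$ and, through \eqref{c0:r1:Qsource}, the free source $\nabla'\bar\cP$; this is the term $\mathsf D$ in Proposition~\ref{c0:up:fullhessian}.

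Finally, the main route never produces the ``opposite inequality''. None of Step~1, \eqref{c0:eq:barred} or \eqref{c0:eq:stokes} bounds $\int_X|S|^2$ above by a nonpositive quantity. So the difficulty is not the bookkeeping of $3$ against $6$; an ingredient is missing.

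That ingredient is the structural consequence of $W=0$ in Lemma~\ref{c0:lem:mixed}. It gives $\Sym_{(a,j,l)}[\overline{\Xi_{pl}}A^{aj}]=0$. Contracting with $z_az_jz_l$ factors this as $(\sum_l\overline{\Xi_{pl}}z_l)(\sum_{a,j}A^{aj}z_az_j)=0$, so $\Xi=0$ wherever $A\neq0$.

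Because $\nabla'A=\tfrac12\eta A$ is a homogeneous linear system, Lemma~\ref{c0:r1:weak} makes $\{A\ne0\}$ dense unless $A\equiv0$, and $A\equiv0$ is the LCK endpoint of Theorem~\ref{c0:thm:endpoints}. Thus $\cP$ is totally symmetric and $\mathsf M(\cP)_{i\bar j}=3\sum_{a,b}\cP^{a,jb}\overline{\cP^{a,ib}}$ is a Gram matrix. Then \eqref{c0:r1:zeta-eq} reduces to $\nabla_{\bar j}\zeta_i=-\zeta_i\bar\zeta_j-\frac4p\mathsf M_{i\bar j}$, with $\zeta=\partial\log p-\eta/2$.

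The weight is $p$, not the eigenfunction $\varphi$. The three integrations are the divergences of $p\sum_{a,b}A^{ab}\overline{\cP^{a,ib}}$, $pu\bar\zeta^\sharp$ and $p\sum_{i,a}A^{ja}\bar A^{ia}\bar\zeta_i$, which combine to
\[
0=6\int_Xp|\cP|^2\dV+6\int_X|A|^2|\cP|^2\dV
+12\int_X\sum_{a,b,c}\Bigl|\sum_jA^{ja}\overline{\cP^{b,jc}}\Bigr|^2\dV .
\]
Hence $\cP\equiv0$, contradicting Lemma~\ref{c0:r1:Uzero}.

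Your target is equivalent to this. With $\Xi=0$, \eqref{c0:eq:barA} gives $S=-2\cP$, and your Step~3 does close once $S=0$. But nothing in your Step~2 reaches it.
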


\begin{proof}
\emph{Reduction to a global symmetric curvature tensor.}
If $A\equiv0$, the compact LCK endpoint in Theorem~\ref{c0:thm:endpoints} applies. Otherwise the homogeneous equation $\nabla'A=(\eta/2)A$ implies that $\{A\ne0\}$ is dense. This is Lemma~\ref{c0:r1:weak}. The symmetric mixed identity with $W=0$ says
\[
 \operatorname{Sym}_{a,j,l}[\overline{\Xi_{pl}}A^{aj}]=0.
\]
At each point where $A\ne0$, contraction with three copies of a dummy covector is a product of a linear and a nonzero quadratic polynomial, so $\Xi=0$. Density and continuity give $\Xi\equiv0$. Equivalently, $\cP$ is totally symmetric. The first Bianchi formulas give
\[
 E_{i\bar j}:=\nabla_{\bar j}\eta_i=2((\operatorname{tr}Q)\delta_{ij}-Q_{ji}),
 \qquad \rho^{(1)}_{i\bar j}=\tfrac12E_{i\bar j}.
\]

Suppose for contradiction that $h$ is not Chern flat. Theorem~\ref{c0:thm:A} gives the smooth positive function
\[
 p=4\operatorname{tr}Q-2|A|^2>0.
\]
Put
\[
 \zeta=\partial\log p-\tfrac12\eta,\qquad u=|A|^2,
 \qquad \mathsf M_{i\bar j}=3\sum_{a,b}\cP^{a,jb}\overline{\cP^{a,ib}}.
\]
The full mixed Hessian identity for $p$ reduces, because $W=\Xi=0$, to
\begin{equation}\label{c0:r2:lee}
 \nabla_{\bar j}\zeta_i=-\zeta_i\bar\zeta_j-\frac4p\mathsf M_{i\bar j}.
\end{equation}
Here $\mathsf M=\mathsf M(\cP)$ by total symmetry; the first-Bianchi equality $\rho^{(1)}=E/2$ cancels the Ricci term after division of \eqref{c0:r1:p-hessian} by $p$. Taking the conjugate trace gives
\begin{equation}\label{c0:r2:trace}
 \sum_i\nabla_i\bar\zeta_i=-|\zeta|^2-\frac{12}{p}|\cP|^2.
\end{equation}

\emph{Three exact integrations.}
Define the globally well-defined contractions
\[
 C_i=\sum_{a,b}A^{ab}\overline{\cP^{a,ib}},\quad
 \mathsf H_i=\sum_{a,b}A^{ab}\overline{\nabla_{\bar i}A^{ab}},\quad
 B_i=\sum_{j,a}A^{ja}\overline{\nabla_{\bar j}A^{ia}}.
\]
These contractions use the Hermitian metric, with matching canonical-line factors; the resulting currents are global. The equation $\nabla'A=\eta A/2$ gives
\begin{equation}\label{c0:r2:H}
 \mathsf H_i=\partial_i u-\tfrac12\eta_i u=:D_i u.
\end{equation}
Since $\operatorname{Sym}^3\nabla''A=-2\cP$,
\begin{equation}\label{c0:r2:B}
 B_i=-3C_i-\tfrac12\mathsf H_i.
\end{equation}
Indeed the three permutations in the symmetric derivative, contracted with $A$, give $\mathsf H_i+2B_i=-6C_i$.

Set
\[
 J=\Rea\int_Xp\sum_i\bar\zeta_iC_i\dV,\qquad
 \mathcal N=\int_X\sum_{i,j,a}A^{ja}\bar A^{ia}\mathsf M_{j\bar i}\dV.
\]
The three currents are chosen to remove the derivative terms in
\eqref{c0:r2:lee}. The curvature contraction $C$ produces $p|\cP|^2$;
the scalar current $pu\bar\zeta^\sharp$ produces $u|\cP|^2$; and the
current with coefficient $A^{ja}\bar A^{ia}$ relates these terms to a
Gram-matrix contraction of $\mathsf M$. Their divergence identities are
\begin{align}
 J&=2\int_Xp|\cP|^2\dV,\label{c0:r2:J}\\
 \Rea\int_Xp\sum_i\bar\zeta_iD_i u\dV&=12\int_Xu|\cP|^2\dV,\label{c0:r2:Hu}\\
 0&=3J+\tfrac12\Rea\int_Xp\sum_i\bar\zeta_iD_i u\dV+4\mathcal N.\label{c0:r2:mainint}
\end{align}

For the first identity, integrate the divergence of the contraction
$pC$. For the second, use $pu\bar\zeta^\sharp$ and the trace
\eqref{c0:r2:trace}. For the third, use
$X^j=p\sum_{i,a}A^{ja}\bar A^{ia}\bar\zeta_i$.
Here the half-Lee terms cancel against the divergence correction,
while the derivative of $p$ cancels the $\zeta\otimes\bar\zeta$ term
in \eqref{c0:r2:lee}. Formula~\eqref{c0:r2:B} then gives exactly the
coefficients $3$ and $1/2$ in \eqref{c0:r2:mainint}. The complete
component integrations are recorded in
Appendix~\ref{c0:app:three-integrations}.

\emph{Positivity and conclusion.}
The first two integrals in \eqref{c0:r2:mainint} have now been expressed as nonnegative quantities. The remaining contraction is also nonnegative, because total symmetry makes $\mathsf M$ a Gram matrix. Explicitly,
\[
 \mathcal N
 =3\int_X\sum_{a,b,c}\left|\sum_jA^{ja}\overline{\cP^{b,jc}}\right|^2\dV\ge0.
\]
Combining the three identities gives
\[
 \boxed{\quad
 0=6\int_Xp|\cP|^2\dV+6\int_X|A|^2|\cP|^2\dV
 +12\int_X\sum_{a,b,c}\left|\sum_jA^{ja}\overline{\cP^{b,jc}}\right|^2\dV.
 \quad}
\]
Since $p>0$, $\cP\equiv0$. The $\cP=0$ case of Theorem~\ref{c0:thm:endpoints} gives Chern flatness, contradicting the strict branch and proving Theorem~\ref{c0:r2:globalw}. In particular the conclusion requires no hypothesis on the output rank of $\cP$.

\end{proof}

\begin{corollary}\label{c0:r2:compact-exclusion}
A compact connected nonflat zero-Chern-HSC threefold would satisfy
$p>0$ and $\max_X\rank\cP=1$.
\end{corollary}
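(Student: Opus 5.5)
The corollary combines the results of this section, so the plan is to assemble them in order. Let $(X,h)$ be compact, connected, nonflat, with $H_h\equiv0$. The first claim, $p>0$, is the strict branch of Theorem~\ref{c0:thm:A}. Since $h$ is not Chern flat, the other alternative must hold: $\lambda_1<0$ and $\delta_0<0$ everywhere. Hence $p=-\delta_0=4\tr Q-2|A|^2>0$ on all of $X$.

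For the rank statement, the upper bound $\max_X\rank\cP\le2$ is Theorem~\ref{c0:thm:rank}, which holds pointwise on any smooth zero-HSC threefold. The lower bound $\max_X\rank\cP\ge1$ is the $\cP\equiv0$ case of Theorem~\ref{c0:thm:endpoints}. That case would force Chern flatness, or equivalently contradict Lemma~\ref{c0:r1:Uzero} on the strict branch. It remains to rule out $\max_X\rank\cP=2$.

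Suppose the rank equals $2$ somewhere. Output rank is lower semicontinuous, being the nonvanishing of some $2\times2$ minor, and it is bounded above by $2$. Therefore $\Omega_{\cP,2}=\{\rank\cP=2\}$ is a nonempty open set. On it, Proposition~\ref{c0:r2:localW} gives $W=0$.

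The main point is to pass from $W=0$ on this open set to $W\equiv0$ on $X$, so that Theorem~\ref{c0:r2:globalw} applies. I would use the weak continuation Lemma~\ref{c0:r1:weak}, but it requires a homogeneous first-order equation $\nabla'_i W=\mathsf E_i W$, which is not immediate for $W$. I would instead argue through real analyticity. By Proposition~\ref{univ:analyticity}, $h$ is real analytic, so $W$ is a real-analytic tensor field. It vanishes on a nonempty open subset of the connected manifold $X$, and hence vanishes identically.

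Theorem~\ref{c0:r2:globalw} then makes $h$ Chern flat, contradicting the assumption. Thus the rank is never $2$, and $\max_X\rank\cP=1$.
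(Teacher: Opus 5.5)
Your proof is correct and has the same structure as the paper's. You get $p>0$ from Theorem~\ref{c0:thm:A} and $\rank\cP\le2$ from Theorem~\ref{c0:thm:rank}, exclude $\cP\equiv0$, obtain $W=0$ on $\Omega_{\cP,2}$ from Proposition~\ref{c0:r2:localW}, and conclude with Theorem~\ref{c0:r2:globalw}.

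The one genuine difference is how you globalize $W=0$. You rightly note that Lemma~\ref{c0:r1:weak} cannot be applied to $W$ itself. The paper applies it instead to $\wedge^2\cP$, which inherits a homogeneous transport from $\nabla_i\cP=M_i^{(\mathrm{out})}\cP$. This makes $\Omega_{\cP,2}$ dense, and $W=0$ then extends to $X$ by continuity. You instead invoke Proposition~\ref{univ:analyticity}: $W$ is built algebraically from $h$ and its derivatives, so it is real analytic, and vanishing on a nonempty open subset of connected $X$ forces $W\equiv0$.

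Your route is shorter and needs no density statement for $\Omega_{\cP,2}$, but it relies on the Morrey-type analytic regularity theorem. The paper's route uses only the first-order transport of $\cP$ and a one-variable Cauchy--Green continuation, so it would still work where analyticity was unavailable. Here both are valid, since the paper proves analyticity for every constant $c$.
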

\begin{proof}
The strict scalar alternative gives $p>0$, and Theorem~\ref{c0:thm:rank}
gives $\rank\cP\le2$. If $\Omega_{\cP,2}$ is nonempty, then
$\wedge^2\cP$ is a nonzero solution of the induced homogeneous transport
system. Lemma~\ref{c0:r1:weak} makes $\Omega_{\cP,2}$ dense. By
Proposition~\ref{c0:r2:localW}, $W=0$ there and then everywhere by
continuity. Theorem~\ref{c0:r2:globalw} gives a contradiction.
Thus $\max\rank\cP\le1$, and the zero case is excluded by
Lemma~\ref{c0:r1:Uzero}.
\end{proof}

\section{Zero curvature: null foliations and the global contradiction}\label{c0:sec:null-foliations}
The rank reduction has two stages. Section~\ref{c0:sec:c0} concerns the
\emph{output rank} of the mixed-curvature tensor $\cP$: a hypothetical
nonflat metric has $p>0$, and the vanishing and rank-two arguments leave
only $\max_X\rank\cP\le1$. Here we use that conclusion to construct a
closed semipositive $(1,1)$-form $\alpha$. Its rank is at most two, and
its null spaces determine the geometry of the remaining cases.

If $\alpha$ has rank two, its null curves are complete and flat. The
normal metric along such a curve satisfies a constant-coefficient
matrix equation. A nonconstant normal metric produces a local Killing
field, whose extension and finite-cover descent contradict compactness.
If all normal metrics are constant, the leaf-direction curvature
vanishes and a positive Bochner identity gives the contradiction.
When $\alpha$ has rank one, its null spaces are complex surfaces; a
smooth global scalar attains its positive maximum on their regular
locus, where its leafwise Hessian has strictly positive trace.

\begin{theorem}\label{c0:r1:main}
There is no compact connected zero-Chern-HSC threefold satisfying
$p>0$ and $\max_X\rank\cP\le1$.
\end{theorem}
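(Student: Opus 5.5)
The plan is to argue by contradiction. Assume a compact connected zero-Chern-HSC threefold with $p>0$ and $\max_X\rank\cP\le1$; by Lemma~\ref{c0:r1:Uzero}, $\rank\cP=1$ on a dense open set.

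\textbf{Step 1: the form $\alpha$.} Set $f=\log p$ and $\zeta=\partial f-\eta/2$ as in \eqref{c0:r1:log-hessian}, and define
\[
 \alpha:=\Ric_h^{(1)}-i\partial\bar\partial f,
 \qquad
 \alpha_{i\bar j}=\zeta_i\bar\zeta_j+\tfrac2p\mathsf G(W)_{i\bar j}+\tfrac4p\mathsf M(\cP)_{i\bar j}.
\]
This form is closed, being a difference of a closed form and an exact one. On the rank-one branch write $\cP=v\otimes\sigma$ with $v\in V\otimes K_X$ and $\sigma\in\Sym^2V$. Substituting into \eqref{c0:r1:GZ} should turn $\mathsf M(\cP)$ into a Gram matrix, hence semipositive, so $\alpha\ge0$.

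\textbf{Step 2: $\alpha^3=0$ and rank of $\alpha$ at most two.} I first want $\alpha^3=0$ pointwise. To get it, I would use the transport $\nabla_i\cP=M_i\cP$ together with the pure and mixed commutator identities of Appendix~\ref{c0:app:rank-compatibility}, specialised to rank one. These should express $\zeta$, $W$ and the Gram vectors of $\mathsf M$ through at most two independent covectors, built from $v$, $\sigma$ and $A$. If this linear-algebra reduction gives the rank bound only on the dense open set $\{\rank\cP=1\}$, continuity extends $\alpha^3=0$ and $\rank\alpha\le2$ everywhere. Cohomologically, $\int_X\alpha\wedge\omega_h^2>0$ unless $\alpha\equiv0$. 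The case $\alpha\equiv0$ gives $\zeta=W=\mathsf M=0$, hence $\cP=0$, which is already excluded; so $\max\rank\alpha\in\{1,2\}$. By Proposition~\ref{univ:analyticity} everything is real analytic, so the maximal-rank locus is open and dense. On it, $\ker\alpha$ is an integrable holomorphic distribution, since $\alpha$ is closed and semipositive.

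\textbf{Step 3: rank two (null curves).} Here the leaves of $\ker\alpha$ are curves along which $\zeta$, $W$ and $\cP$ are annihilated. From \eqref{c0:r1:zeta-eq} and the decomposition \eqref{c0:eq:decomp}, I would show that the leaves are Chern totally geodesic and flat. Completeness follows from compactness, exactly as in the proof of Theorem~\ref{thm:positive-kernel-all-n}. Along each leaf the transverse metric should then obey a constant-coefficient linear ODE, with coefficients read off from $Q$ and $A$.
\begin{itemize}
\item If the transverse metric is nonconstant, its exponential modes define a local holomorphic Killing field. I would continue it analytically, pass to a finite cover to kill monodromy, and apply a maximum principle to its norm to reach a contradiction.
\item If the transverse metric is constant, the curvature in the leaf direction vanishes. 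A Bochner-type integration of $|\cP|^2$ weighted by $p$, modelled on the three integrations of Theorem~\ref{c0:r2:globalw}, should then force $\cP=0$, again a contradiction.
\end{itemize}

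\textbf{Step 4: rank one (null surfaces).} Here $\alpha=i\xi\wedge\bar\xi$, and its null distribution is a foliation by complex surfaces. I would choose a smooth global scalar, first trying $\Phi=\log|e_m(Q)|$ as in Lemma~\ref{c0:r1:projector}, or else $f$ itself. The first task is to show its maximum is attained on the regular locus. Then I would restrict the Hessian to the leaves: by Lemma~\ref{c0:r1:projector} and \eqref{c0:r1:dLambda}, its leafwise trace should equal a positive multiple of $\tau=(p+2u)/4>0$ plus squares. That contradicts maximality.

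\textbf{Main obstacle.} I expect the hardest part to be the nonconstant-transverse-metric case in Step 3. Extending the local Killing field globally across the singular locus of $\ker\alpha$, and controlling its monodromy, is delicate. I would first try to show the Killing field is actually the real part of a global holomorphic section, obtained from $\xi$ and $\cP$ by an algebraic formula, so that no continuation is needed.
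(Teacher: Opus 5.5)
\textbf{Step~2 (the rank bound).} This step has a genuine gap, and everything after it depends on it. You propose to get $\alpha^3=0$ pointwise from a local linear-algebra reduction, but you give no mechanism. The pointwise algebra also permits rank three. At rank one, $\mathsf M(\cP)=B_{\cP}^*(|v|^2I+2vv^*)B_{\cP}$ alone has the rank of the symmetric matrix $B_{\cP}$. Moreover $\zeta$ and $W$ involve first derivatives that none of the cited identities confines to a two-dimensional span.

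The paper's Lemma~\ref{c0:r1:alpha-rank} is global. Put $\Theta=\rho_h+\tfrac i2d\eta$; the definitions give $\alpha-\Theta=i\,d\zeta$. The $(1,1)$ part of $\Theta$ has coefficient matrix $\overline{\Xi}$, where $\Xi=B_{\cP}[v]_\times$ has $\det\Xi=0$. The $(2,0)$ part cannot contribute to a cube in dimension three. Hence $\Theta^3=0$ and $\alpha^3=d\bigl[i\zeta\wedge(\alpha^2+\alpha\wedge\Theta+\Theta^2)\bigr]$. Stokes gives $\int_X\alpha^3=0$, and $\alpha\ge0$ makes $\alpha^3$ vanish pointwise.

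\textbf{Holomorphicity and completeness of the null leaves.} Your claim that $\ker\alpha$ is holomorphic because $\alpha$ is closed and semipositive is unjustified. Closedness yields an integrable distribution with complex leaves, not a holomorphic one. In rank two the paper gets holomorphicity from the global identity $\int_Xi\dd\log(r_2+\epsilon)\wedge\alpha^2=0$. The same identity gives the total geodesy and flatness of the leaves that you propose to derive locally. Its integrand is a sum of three squares, and the third square gives $w(r_2)=0$.

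That last fact is what makes the leaves complete. The null line $\ker\alpha$ exists only on the dense set $\{q_2>0\}$. Leafwise constancy of $r_2=q_2/p$ confines each leaf to a compact level set $\{r_2=r_{2,0}\}$ inside it. This also provides the bounds for the Liouville arguments that make the coefficients of the normal-metric equation constant. Your comparison with Theorem~\ref{thm:positive-kernel-all-n}, where the line is global, does not supply this. In rank one, holomorphicity comes instead from transport: writing $B_{\cP}=b\otimes b$, one gets $\nabla'b\in\C b$.

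\textbf{Step~4 (rank one).} This step fails as stated. Lemma~\ref{c0:r1:projector} needs $\nabla'_{\leafv}Q=M_{\leafv}Q$ along $\cK$, which is not automatic. The source terms in \eqref{c0:r1:Qsource} must be shown to vanish in kernel-plane directions (Lemma~\ref{c0:r1:plane-transport}). Even then, \eqref{c0:r1:dLambda} gives $\Delta_{\cK}\Phi=(m+1-a)\tau+3Q_{00}+|\beta^E|^2$, and $Q_{00}$ has no sign because $Q$ may be indefinite. Your alternative $f$ has the same defect: on leaves $i\dd f=\rho_h$, whose leafwise trace is $\tau+Q_{00}$.

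The missing ingredient is the second leafwise identity $\Delta_{\cK}\log r_1=-2Q_{00}+|\beta^{\cK}|^2$. It follows from the vanishing of $\alpha=-i\dd\log(d_hp)$ on leaves and the Gauss equation. The paper takes $F=e_m(Q)^2r_1^3$, so that $\Delta_{\cK}\log F=2(m+1-a)\tau+2|\beta^E|^2+3|\beta^{\cK}|^2>0$. Since $F$ is smooth on $X$ and vanishes where $r_1=0$ or $\rank Q<m$, its positive maximum lies where both formulas hold. This settles your ``first task''. (Pairing $f$ with $r_1$ instead also cancels $Q_{00}$: $\Delta_{\cK}\log(p\,q_1)=2\tau+|\beta^{\cK}|^2$, and this needs no $Q$-transport.)

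\textbf{Step~3, constant profiles.} This case cannot be modelled on Theorem~\ref{c0:r2:globalw}. Its three integrations use $W=0$ and total symmetry of $\cP$ everywhere, whereas here $W$ vanishes only on the null line. The paper instead uses $R^V(\leafv,\bar{\leafv})=0$ to get $Q=q_L\Pi_L$ and $\im\cP\subset L$. It then applies a maximum principle to $|\cP|^2$ for the modified connection $D'=\nabla'-M$, which has $D'\cP=0$ and Bochner potential at least $(2p+2u)|\cP|^2$.

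\textbf{Step~3, nonconstant profiles.} Your maximum principle for the norm of the Killing field needs the curvature sign $R^L\le0$ (Lemma~\ref{c0:r1:line-curvature}). It also needs the discriminant $\sigma\in H^0(X,L^{-2})$ (Lemma~\ref{c0:r1:line-extension}), which normalizes the field and descends it to a cover of degree at most two.
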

By Lemma~\ref{c0:r1:Uzero}, $\cP$ cannot vanish identically. Its nonzero
locus $\Omega_{\cP,1}$ is dense by Lemma~\ref{c0:r1:weak}.
In this section $f=\log p$ and $\zeta=\partial f-\eta/2$.
The forms and norms are those already fixed in
\eqref{c0:r1:definitions}--\eqref{c0:r1:zeta-eq}.
\subsection{The global semipositive form}
Assume henceforth $p>0$ and $\rank {\cP}\le1$. At a nonzero point write ${\cP}=v\otimes B_{\cP}$ with $B_{\cP}=B_{\cP}^{\mathsf T}$. Direct contraction gives
\begin{equation}\label{c0:r1:rankone-algebra}
 \Xi=B_{\cP}[v]_\times,\qquad
 \mathsf M(\cP)=B_{\cP}^*(|v|^2I+2vv^*)B_{\cP}\ge0,\qquad
 \rank \mathsf M(\cP)=\rank B_{\cP},\qquad \det\Xi=0.
\end{equation}
For a geometric vector $w$, the corresponding quadratic form is
\begin{equation}\label{c0:r1:kernel-B}
 \mathsf M(\cP)(w,\bar w)=|v|^2|B_{\cP}\bar w|^2
       +2|\bar v^{\mathsf T}B_{\cP}\bar w|^2.
\end{equation}
Write $\rho_h$ for the first Chern--Ricci form of $h$:
\[
 \rho_h=i\rho^{(1)}_{i\bar j}\theta^i\wedge\bar\theta^j
       =-i\partial\bar\partial\log\det(h_{a\bar b})
\]
in a holomorphic frame. These local expressions define a global closed real
form. Define
\begin{equation}\label{c0:r1:alpha-def}
 \alpha=\rho_h-i\dd f.
\end{equation}
Equation~\eqref{c0:r1:log-hessian} says that its coefficient matrix is
\begin{equation}\label{c0:r1:alpha-gram}
 \alpha_{i\bar j}=\zeta_i\bar\zeta_j+\frac2p\mathsf G(W)_{i\bar j}+\frac4p\mathsf M(\cP)_{i\bar j}\ge0.
\end{equation}
The de Rham class of $\alpha/(2\pi)$ is $c_1(X)=c_1(K_X^{-1})$.
The sum of squares in \eqref{c0:r1:alpha-gram} is useful precisely where
$\alpha$ degenerates: a null vector annihilates each of the tensors on
the right. Closedness then turns these pointwise null spaces into
foliations on every constant-rank open set. Indeed, for real null
vector fields $U,V$, the identity $d\alpha(U,V,\cdot)=0$ gives
$\iota_{[U,V]}\alpha=0$. The kernel is also invariant under the complex
structure because $\alpha$ has type $(1,1)$. We will establish the
holomorphicity of the resulting distribution in each rank separately.

\begin{lemma}\label{c0:r1:alpha-rank}
The form $\alpha$ has rank at most two everywhere.
\end{lemma}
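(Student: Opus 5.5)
The plan is to prove $\det(h^{-1}\alpha)=0$ pointwise. I would do this on the dense open set $\Omega_{\cP,1}$ and then pass to all of $X$ by continuity, since the rank bound is the closed condition $\alpha^3=0$. By \eqref{c0:r1:alpha-gram}, $\alpha$ is a sum of three semipositive Hermitian forms. So it suffices to produce, at each point of $\Omega_{\cP,1}$, a nonzero vector $w$ that is simultaneously null for $\zeta\bar\zeta^{\mathsf T}$, for $\mathsf G(W)$ and for $\mathsf M(\cP)$. Equivalently, $w$ must satisfy
\[
 \zeta(w)=0,\qquad W_w:=\textstyle\sum_i w^iW_i=0,\qquad B_{\cP}\bar w=0 .
\]
The last condition already gives $\bar v^{\mathsf T}B_{\cP}\bar w=0$, so by \eqref{c0:r1:kernel-B} it implies that $w$ is null for $\mathsf M(\cP)$. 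The natural candidate is tied to the output line: write $\cP=v\otimes B_{\cP}$. By \eqref{c0:r1:rankone-algebra}, $\Xi=B_{\cP}[v]_\times$ kills $v$ and $\det\Xi=0$. I expect the common null vector to be the direction determined by $v$, or, if $B_{\cP}$ is degenerate, a vector in $\ker B_{\cP}$.

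First I would extract the algebraic consequences of the zeta equation \eqref{c0:r1:zeta-eq}. Comparing it with \eqref{c0:r1:alpha-gram} gives
\[
 \nabla_{\bar j}\zeta_i=\overline{\Xi_{ij}}-\alpha_{i\bar j}.
\]
Since $\alpha=\rho_h-i\dd f$ is closed, the $\dd$ part is symmetric after conjugation. Using \eqref{c0:r1:ricci-lee}, the non-Hermitian part of $\nabla''\zeta$ is then carried entirely by $\bar\Xi$. Next I would apply $\nabla'$ to this relation and use the transport $\nabla_i\cP=M_i^{(\mathrm{out})}\cP$ of \eqref{c0:r1:Utransport}. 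Under that transport the output line $v$ moves by $M_i=AE_i+\tfrac12\eta_iI$, while $B_{\cP}$ moves by a scalar. Taking the Chern commutators \eqref{c0:eq:ricci} on $\cP$ and on $\zeta$ should then give linear relations among $\zeta$, $W$, $A$ and $B_{\cP}$. These are the rank-one analogues of the mixed identities of Appendix~\ref{c0:app:rank-compatibility}, now with $\Xi\ne0$. I would aim to show from them that $\zeta$ and each $W_i$ are annihilated on the kernel line of $\Xi^{\mathsf T}$, that is, on the direction $\bar v$ up to the epsilon identification. I would also aim to show that $B_{\cP}$ has a kernel vector in the same direction.

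The main obstacle is the step showing that $W$ vanishes along this null direction. The paper's control of $W$ so far comes only through $\Sym[\bar\Xi\otimes A]$ and the pure commutator at output rank at least two, and neither is available here. I would first try the symmetric mixed identity of Lemma~\ref{c0:lem:mixed} in its rank-one form, namely $\Sym[\overline{\Xi_{pl}}A^{aj}]$ together with the $W$ terms that no longer cancel. Contracting it with a dummy covector $z$ factors as a polynomial identity, and I would specialize $z$ to the null direction, using that $\C[z]$ is a domain as in the proof of Theorem~\ref{c0:thm:rank}. If this only bounds $\rank\mathsf G(W)+\rank\mathsf M(\cP)\le2$ and leaves $\zeta$ transverse, I would fall back on a global argument. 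In that case I would take $\Phi=\log(\det h^{-1}\alpha)$ on the open set where $\alpha>0$, compute $\Delta\Phi$ using closedness of $\alpha$ and the sign of the Hessian identity, and derive a maximum-principle contradiction in the style of Lemma~\ref{c0:r1:Uzero}.
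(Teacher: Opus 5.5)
Your reformulation is correct: the three summands in \eqref{c0:r1:alpha-gram} are semipositive, so $\ker\alpha$ is the intersection of their kernels. However, the proposal never produces a common null vector, and the local route you outline gives no reason to expect one.

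At a point of $\Omega_{\cP,1}$, \eqref{c0:r1:kernel-B} shows that $\mathsf M(\cP)$ is definite unless $B_{\cP}$ is degenerate. Nothing in the local Bianchi and Ricci identities you list forces $\rank B_{\cP}\le2$. In the paper this degeneracy is a \emph{consequence} of the present lemma (``no rank-three quadratic factor occurs''), not an input to it. Similarly, $\zeta=\partial\log p-\eta/2$ involves the derivative of the global weight $p$, and you give no mechanism that makes it vanish on the kernel line of $\Xi^{\mathsf T}$. The commutator step is only stated as an expectation. The fallback with $\Phi=\log\det(h^{-1}\alpha)$ identifies no sign for $\Delta\Phi$, and gives no control on where a maximum lies in $\{\alpha>0\}$.

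The missing idea is a global integration, and its key ingredient is already in your first step. The identity $\nabla_{\bar j}\zeta_i=\overline{\Xi_{ij}}-\alpha_{i\bar j}$ is the $(1,1)$ part of the global identity $\alpha=\Theta+i\,d\zeta$. Here $\Theta=\rho_h+\tfrac i2\,d\eta$ is closed, has types $(2,0)$ and $(1,1)$, and satisfies $\Theta^{1,1}=i\overline{\Xi_{ij}}\theta^i\wedge\bar\theta^j$. In complex dimension three, every term of $\Theta^3$ containing the $(2,0)$ part vanishes by type. The remaining term $(\Theta^{1,1})^3$ is a multiple of $\det\bar\Xi$, which is zero because $\Xi v=0$ by \eqref{c0:r1:rankone-algebra}. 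Hence
\[
\alpha^3=d\bigl[i\zeta\wedge(\alpha^2+\alpha\wedge\Theta+\Theta^2)\bigr],
\]
so $\int_X\alpha^3=0$ by Stokes. Since $\alpha\ge0$, the top-degree density of $\alpha^3$ is nonnegative, and therefore it vanishes pointwise. This argument uses compactness and only the scalar fact $\det\Xi=0$. No common null vector for the three summands has to be constructed, and the degeneracy of $B_{\cP}$ follows afterwards.
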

\begin{proof}
The global complex closed two-form
$\Theta=\rho_h+(i/2)d\eta$ has types $(2,0)$ and $(1,1)$, with
\[
 \Theta^{1,1}=i\overline{\Xi_{ij}}\theta^i\wedge\bar\theta^j.
\]
Its cube is zero: terms containing a $(2,0)$ factor have holomorphic degree at least four, and the remaining cube is proportional to $\det\bar\Xi=0$. Moreover $\alpha-\Theta=i\,d\zeta$, so
\[
 \alpha^3=d\bigl[i\zeta\wedge(\alpha^2+\alpha\wedge\Theta+\Theta^2)\bigr].
\]
Stokes gives $\int_X\alpha^3=0$. Since $\alpha\ge0$, its continuous top-degree density is nonnegative and thus vanishes pointwise.
\end{proof}
Whenever $w\in\ker\alpha$, \eqref{c0:r1:alpha-gram} implies
\begin{equation}\label{c0:r1:common-null}
 \zeta(w)=0,\qquad W(w)=0,\qquad B_{\cP}\bar w=0\quad({\cP}\ne0).
\end{equation}
In particular no rank-three quadratic factor occurs.

\subsection{Rank two of the semipositive form}
In this case the null distribution is a complex line. The first step
uses a global integral identity to prove that this line is holomorphic
and that its leaves are flat. The positive scalar $r_2$ below is
constant along each leaf; its compact level sets will also provide the
completeness and boundedness needed for the normal-metric equation.
Suppose $\alpha$ has rank two somewhere. Put
\[
 q_2=e_2(h^{-1}\alpha),\qquad r_2=q_2/p,\qquad \Omega_{\alpha,2}=\{q_2>0\}.
\]
By Proposition~\ref{univ:analyticity}, $h$, its curvature, $p$ and $\alpha$ are real analytic. Thus $q_2$ is a nonnegative real-analytic function which is not identically zero. Its zero set has empty interior on connected $X$, so this open set is dense.

\subsubsection{Two squares make the null line holomorphic and flat}
\begin{lemma}\label{c0:r1:two-square}
On $\Omega_{\alpha,2}$, $L=\ker\alpha$ is a holomorphic line, its leaves with the induced metric are complete and flat, and for a unit local generator $w$,
\[
 (I-\Pi_L)\nabla'_w w=0,\qquad w(r_2)=0.
\]
\end{lemma}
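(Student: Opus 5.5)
Throughout, $\Omega_{\alpha,2}$ carries the closed semipositive form $\alpha$ of rank exactly two, so $L=\ker\alpha$ is a smooth complex line distribution. It is involutive because $\alpha$ is closed and of type $(1,1)$, as recorded after \eqref{c0:r1:alpha-gram}. The plan is to read off, from the sum-of-squares structure \eqref{c0:r1:alpha-gram} and the null conditions \eqref{c0:r1:common-null}, two pointwise squares whose vanishing gives the two displayed identities. One square will be the second fundamental form $(I-\Pi_L)\nabla'_w w$ measuring failure of holomorphicity and geodesy; the other will be the leafwise derivative of $r_2$.

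\textbf{First square.} For a unit local generator $w$ of $L$, differentiate $\alpha(w,\cdot)=0$ covariantly in $\bar w$ and in $w$, using closedness $d\alpha=0$ written with the Chern connection and its torsion. The relation $\nabla_{\bar w}\zeta=\overline\Xi-\zeta\bar\zeta-\frac2p\mathsf G(W)-\frac4p\mathsf M(\cP)$ from \eqref{c0:r1:zeta-eq} controls the derivative of the Gram factors. The null conditions $\zeta(w)=0$, $W(w)=0$ and $B_{\cP}\bar w=0$ kill the first-order terms. What should remain is an identity of the form
\[
 \alpha\bigl((I-\Pi_L)\nabla'_w w,\overline{(I-\Pi_L)\nabla'_w w}\bigr)=-(\text{nonneg.\ terms})+\text{a divergence}.
\]
The divergence would be built from $w$ and the weight $q_2$. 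Since $\alpha$ is positive definite on $L^\perp$, integrating over $X$ forces $(I-\Pi_L)\nabla'_w w=0$. The cutoff near $\{q_2=0\}$ would be handled as in the proof of Theorem~\ref{thm:neg-kernel-exclusion}, using the weight $\sqrt{q_2}$ or $q_2$. Then $\nabla'L\subset L$, so by metric compatibility $\nabla''L^\perp\subset L^\perp$, and $L$ is holomorphic. Moreover $\nabla_w w\in L$ makes the leaves Chern, hence real, totally geodesic; the contorsion argument after Definition~\ref{def:common-kernel-structure} applies because $T(w,\bar w)=0$ and $T(w,w)=0$.

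\textbf{Second square and flatness.} The leaf Gaussian curvature is $R(w,\bar w,w,\bar w)$. This vanishes because $H_h\equiv0$, so totally geodesic leaves are flat. For $r_2$, apply Lemma~\ref{c0:r1:projector} to the rank-two endomorphism $h^{-1}\alpha$ along the holomorphic distribution $L$, or differentiate $e_2(h^{-1}\alpha)$ directly. Using $\zeta(w)=0$, one should get $w(\log q_2)=w(f)+(\text{terms vanishing by }W(w)=0,\ B_{\cP}\bar w=0)$. Since $w(f)=\zeta(w)+\eta(w)/2$, the $\eta$-terms should cancel against the canonical weight of $e_2$. This would give $w(r_2)=0$ up to a second nonnegative square. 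That square would be killed by the same integration, taken jointly with the first, or by a maximum principle applied to $\log r_2$ along leaves.

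\textbf{Completeness.} The function $r_2$ is constant on leaves. Its level sets $\{r_2=\mathrm{const}>0\}$ are compact subsets of $\Omega_{\alpha,2}$ and are saturated by leaves. As in Theorem~\ref{thm:positive-kernel-all-n}, the geodesic spray restricted to the unit circle bundle of $L$ over such a compact set is complete. Hence each leaf is complete in its intrinsic flat metric.

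The main obstacle is the first square: one must find the correct weighted current whose divergence produces exactly $|(I-\Pi_L)\nabla'_ww|^2_\alpha$ with no sign-indefinite remainder, including the torsion terms. I would first try to use the current $\sqrt{q_2}\,Z$ with $Z$ built from $w$ and $\zeta$, mimicking the $\sigma=1/2$ weight selection in Theorem~\ref{thm:neg-kernel-exclusion}.
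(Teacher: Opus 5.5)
There is a genuine gap, and it is exactly the step you flag as the main obstacle: you never produce the integral identity, and the tools you point to do not supply it. Lemma~\ref{c0:r1:projector} does not apply to $h^{-1}\alpha$, which satisfies no transport law of the form $\nabla_iQ=M_iQ$. Differentiating $\alpha(w,\cdot)=0$ against the $\zeta$-equation is likewise not shown to leave a sign-definite remainder.

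The missing idea is to let the closed form $\alpha^2$ project the complex Hessian onto the null line. For each $\epsilon>0$, Stokes gives $\int_X i\partial\bar\partial\log(r_2+\epsilon)\wedge\alpha^2=0$. Since $\alpha^2$ vanishes off $\Omega_{\alpha,2}$, no cutoff is needed. Work in a chart $(z^1,z^2,t)$ flattening one leaf, with null field $\leafv=\partial_t+w^a\partial_a$ ($w=0$ on the leaf), $G=(\alpha_{a\bar b})$ and $a=h_{t\bar t}$. Along the leaf $r_2=(\det G)\,a/(d_hp)$, and the three factors behave as follows.
\begin{itemize}
\item The factor $d_hp$ drops out of the leafwise Hessian, because $\alpha=-i\partial\bar\partial\log(d_hp)$ restricts to zero on the leaf.
\item Closedness of $\alpha$ together with $[\leafv,\bar\leafv]=0$ gives $(\log\det G)_{t\bar t}=\tr(\overline{\mathsf V}\mathsf V)$, where $\mathsf V_a{}^c=\partial_{\bar a}w^c$.
\item The Chern Gauss formula with $H_h\equiv0$ gives $a(\log a)_{t\bar t}=|N|^2$, where $N=(I-\Pi_L)\nabla'_{\leafv}\leafv$.
\end{itemize}
Hence the integrand equals $2q_2\bigl[\frac{r_2}{r_2+\epsilon}\bigl(\tr(\overline{\mathsf V}\mathsf V)/a+|(I-\Pi_L)\nabla'_ww|^2\bigr)+\frac{\epsilon|w(r_2)|^2}{r_2(r_2+\epsilon)^2}\bigr]dV_h$, and all three squares vanish at once. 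In particular, $w(r_2)=0$ falls out of the $\epsilon$-regularization and needs no separate projector computation.

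Your derivation of holomorphicity is also incorrect. The identity $(I-\Pi_L)\nabla'_ww=0$ controls only the $(1,0)$-derivative of $L$ along $L$ itself, so it does not give $\nabla'L\subset L$. Even $\nabla'L\subset L$ would, by metric compatibility, make $L^\perp$ holomorphic, not $L$. Compare Lemma~\ref{hd-kernel-lemma}, where it is $\nabla^{1,0}E\subset E$ for $E=L^\perp$ that yields $\nabla^{0,1}L\subset L$. Holomorphicity of $L$ means $\nabla''L\subset L$ in the transverse antiholomorphic directions. That is the independent square $\tr(\overline{\mathsf V}\mathsf V)$ above, not a consequence of a vanishing second fundamental form.

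Once these squares are in hand, your flatness step (Gauss formula with $N=0$ and $H_h\equiv0$) and your completeness argument via the compact level sets of $r_2$ are fine and agree with the paper.
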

\begin{proof}
Closedness of $\alpha$ first gives complex integral curves of its null distribution. Flatten one such leaf by holomorphic coordinates $(z^1,z^2,t)$, with that leaf $z=0$. Off the leaf choose the smooth null vector
${\leafv}=\partial_t+w^a\partial_a$, with $w=0$ on the leaf. Write $G=(\alpha_{a\bar b})$ there, and
$\mathsf U_a{}^c=\partial_a w^c$, $\mathsf V_a{}^c=\partial_{\bar a}w^c$.
Closure and $\iota_{\leafv}\alpha=0$ imply
\[
 G_t=-\mathsf U G,\qquad \mathsf V G\text{ is symmetric}.
\]
The bracket $[{\leafv},\bar {\leafv}]$ is null and has zero $t,\bar t$ coefficients, so it vanishes. Differentiating
$\partial_{\bar t}w^c+\bar w^b\partial_{\bar b}w^c=0$
in $z^a$ along the leaf gives $\mathsf U_{\bar t}=-\overline{\mathsf V}\mathsf V$. Therefore
\[
 (\log\det G)_{t\bar t}=\tr(\overline{\mathsf V}\mathsf V)\ge0.
\]
At a point make $G=I$ by a constant normal coordinate change; then $\mathsf V$ is symmetric and the last quantity is $\sum|\mathsf V_a{}^c|^2$.

Let $a=h_{t\bar t}$ and $d_h=\det h$ along the leaf. The determinant coefficient gives $q_2=(\det G)a/d_h$. Locally $\alpha=-i\dd\log(d_hp)$, whose leaf restriction is zero. If $N=(I-\Pi_L)\nabla'_{\leafv} {\leafv}$, the Chern curve Gauss formula and HSC zero give
\[
 0=R({\leafv},\bar {\leafv},{\leafv},\bar {\leafv})=-a(\log a)_{t\bar t}+|N|^2.
\]
Consequently
\[
 (\log r_2)_{t\bar t}=\tr(\overline{\mathsf V}\mathsf V)+|N|^2/a.
\]
For $w={\leafv}/\sqrt a$, the second term after division by $a$ is $|\beta_w|^2$, with $\beta_w=(I-\Pi_L)\nabla'_w w$.

For every fixed $\epsilon>0$, Stokes on the whole compact manifold gives
\[
 \int_X i\dd\log(r_2+\epsilon)\wedge\alpha^2=0.
\]
On $\Omega_{\alpha,2}$ the integrand is
\[
 2q_2\left[
 \frac{r_2}{r_2+\epsilon}
 \left(\frac{\tr(\overline{\mathsf V}\mathsf V)}a+|\beta_w|^2\right)
 +\frac{\epsilon|w(r_2)|^2}{r_2(r_2+\epsilon)^2}
 \right]dV_h\ge0.
\]
Outside $\Omega_{\alpha,2}$ the smooth integrand is zero because $\alpha^2=0$. For any fixed $\epsilon>0$, every coefficient of the three squares is strictly positive on $\Omega_{\alpha,2}$, so the vanishing integral makes all three squares zero there. The first says $\nabla''L\subset L$, so $L$ is holomorphic; the second makes the leaf metric flat, and the third gives $w(r_2)=0$.

A leaf has $r_2=r_{2,0}>0$ and its closure lies in the compact set $\{r_2=r_{2,0}\}\Subset \Omega_{\alpha,2}$. Its unit leafwise geodesic spray remains in the unit tangent bundle over this compact set, so the ordinary ODE extension theorem gives completeness. The universal cover of a complete flat complex curve is $\mathbb C$.
\end{proof}

\subsubsection{The bounded normal metric}
Pass now to the quotient $N=V/L$. The closed form $\alpha$ measures
normal vectors and is preserved by transport along a null leaf.
Comparing this fixed transverse metric with $h_N$ reduces the geometry
along the complete leaf to a bounded positive $2\times2$ matrix on
$\C$.
For endomorphism calculations in a normal frame $(e_a)$ we use the
matrix convention ${G_N}_{ab}=h_N(e_b,\bar e_a)$. Thus
$h_N(v,\bar w)=\bar w^{\mathsf T}{G_N}v$ for column vectors, and the
normal Chern connection matrix is ${G_N}^{-1}\partial {G_N}$.
This ${G_N}$ is the transpose of the coefficient array with first index
holomorphic used for differential forms; its determinant and
positivity are unchanged.
\begin{lemma}[Bounded trace-free matrix equation]\label{c0:profile-classification}
Let $G_N:\C\to\operatorname{Herm}(2)$ be bounded and $C^2$, with
$G_N(t)>0$ for every $t$. Suppose
$\partial_tG_N=G_NJ_0-C_0$ for constant complex matrices $J_0,C_0$,
where $\tr J_0=0$. Then either $G_N$ is constant, or a constant
change of frame gives
\begin{equation}\label{c0:r1:profile-form}
 \begin{split}
 J_0&=\begin{pmatrix}\lambda&0\\0&-\lambda\end{pmatrix},\quad
 C_0=\lambda\begin{pmatrix}a&-m\\\bar m&-b\end{pmatrix},\\
 {G_N}&=\begin{pmatrix}a&m+\nu\theta\\\bar m+\bar\nu\theta^{-1}&b\end{pmatrix},\quad
 \theta=e^{-\lambda t+\bar\lambda\bar t},\quad
 ab>(|m|+|\nu|)^2,\quad \lambda\nu\ne0.
 \end{split}
\end{equation}
Here $a,b>0$ and $\lambda,m,\nu$ are complex constants. In both cases $G_N^{-1}$ is uniformly bounded.
\end{lemma}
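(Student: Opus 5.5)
The plan is to solve the linear equation explicitly and then use boundedness on all of $\C$ to kill every growing mode. Since $G_N$ is Hermitian, conjugating the equation gives the companion equation
\[
\partial_{\bar t}G_N=J_0^*G_N-C_0^*.
\]
Computing $\partial_t\partial_{\bar t}G_N$ from the two orders of differentiation yields
\[
J_0^*G_NJ_0-C_0^*J_0=J_0^*G_NJ_0-J_0^*C_0,
\]
so the compatibility condition is $J_0^*C_0=C_0^*J_0$. The mixed derivative is then the constant-coefficient expression
\[
\partial_t\partial_{\bar t}G_N=J_0^*G_NJ_0-J_0^*C_0.
\]
In particular, after a first step, every derivative of $G_N$ is determined by $G_N$ at one point. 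Hence $G_N$ is determined by finitely many constants and is real analytic.

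\emph{Case where $J_0$ is invertible.} Since $\tr J_0=0$, the eigenvalues are $\pm\lambda$ with $\lambda\ne0$. After a constant frame change (which transforms $G_N\mapsto S^*G_NS$, $J_0\mapsto S^{-1}J_0S$, $C_0\mapsto S^*C_0S$), we may take $J_0=\operatorname{diag}(\lambda,-\lambda)$. Set $Y=G_N-C_0J_0^{-1}$. Then $\partial_tY=YJ_0$, so $Y=K(\bar t)e^{tJ_0}$. Hermiticity, together with compatibility (which makes $C_0J_0^{-1}$ Hermitian), gives $K(\bar t)=e^{\bar tJ_0^*}K_0$ for a constant Hermitian $K_0$. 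The entries of $e^{\bar tJ_0^*}K_0e^{tJ_0}$ are $(K_0)_{ij}e^{\bar\mu_i\bar t+\mu_jt}$ with $\mu_1=\lambda$, $\mu_2=-\lambda$.
\begin{itemize}
\item A diagonal exponent $2\Rea(\mu_it)$ is unbounded on $\C$ because $\mu_i\ne0$. Boundedness therefore forces $(K_0)_{11}=(K_0)_{22}=0$.
\item The off-diagonal exponents are $\pm(-\lambda t+\bar\lambda\bar t)$. These are purely imaginary, which produces the unimodular factor $\theta^{\pm1}$.
\end{itemize}
Writing $C_0J_0^{-1}=\begin{pmatrix}a&m\\\bar m&b\end{pmatrix}$ (Hermitian by compatibility) and $(K_0)_{12}=\nu$ gives exactly the form \eqref{c0:r1:profile-form}. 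If $\nu=0$, then $G_N$ is constant.

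Positivity for every $t$ means $a,b>0$ and $ab>|m+\nu\theta|^2$ for all $|\theta|=1$. The supremum over the unit circle is $(|m|+|\nu|)^2$. Since $\lambda\ne0$, $\theta$ really sweeps the whole circle, so this supremum is attained and we get $ab>(|m|+|\nu|)^2$. In that case
\[
\det G_N\ge ab-(|m|+|\nu|)^2>0
\]
uniformly, and with $G_N$ bounded this gives a uniform bound on $G_N^{-1}$. If $G_N$ is constant and positive, the bound on $G_N^{-1}$ is trivial.

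\emph{Case where $J_0$ is singular.} Then $J_0$ is nilpotent, and I expect this to be the main, though short, obstacle. If $J_0=0$, then $\partial_tG_N=-C_0$ is constant. A Hermitian matrix that is affine in $t$ and $\bar t$ and bounded on $\C$ is constant, so $C_0=0$ and $G_N$ is constant. If $J_0\ne0$, normalize $J_0=\begin{pmatrix}0&1\\0&0\end{pmatrix}$. Since $\partial_t\partial_{\bar t}G_N=J_0^*G_NJ_0-J_0^*C_0$, the solution is a polynomial in $t,\bar t$, because every iterated derivative eventually meets $J_0^2=0$. I would write this polynomial out explicitly:
\begin{itemize}
\item First, $\partial_t(G_NJ_0)=G_NJ_0^2-C_0J_0=-C_0J_0$, so $G_NJ_0$ is affine in $t$.
\item Integrating once more shows that $G_N$ has degree at most two in $t$ and in $\bar t$.
\end{itemize}
Boundedness forces every nonconstant coefficient to vanish, so $\partial_tG_N=0$ and $G_N$ is constant. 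In every constant case the uniform bound on $G_N^{-1}$ is immediate.
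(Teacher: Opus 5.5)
Your proof is correct and follows essentially the same route as the paper. Both arguments make a constant frame change, split by the Jordan type of the trace-free $J_0$, solve the distinct-eigenvalue case explicitly (boundedness kills the real-exponential diagonal modes and leaves the unimodular phase $\theta$), and use boundedness to force constancy in the zero and nilpotent cases. The differences are cosmetic: you solve the diagonal case in matrix form via $Y=G_N-C_0J_0^{-1}=e^{\bar tJ_0^*}K_0e^{tJ_0}$ where the paper works entry by entry, and in the nilpotent case you use that a bounded polynomial in $t,\bar t$ is constant where the paper applies Liouville to bounded functions with constant $t$-derivative, one column at a time.
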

\begin{proof}
Appendix~\ref{c0:app:profile-classification} treats the zero, nonzero
nilpotent, and distinct-eigenvalue Jordan forms of $J_0$. Boundedness
makes the first two cases constant and removes the exponential
diagonal terms in the third. The two off-diagonal equations give
\eqref{c0:r1:profile-form}; positivity at every phase gives the strict
inequality and the inverse bound.
\end{proof}

Thus the only possible nonconstant profile has a single unit complex
phase in its off-diagonal entry and is constant in one real direction.
We next derive its equation and boundedness from the null-leaf geometry.

The algebraic null relation \eqref{c0:r1:common-null} and \eqref{c0:eq:decomp} imply
\begin{equation}\label{c0:r1:sym-null-curvature}
 R(Y,\bar W){\leafv}+R({\leafv},\bar W)Y=0\qquad({\leafv}\in L).
\end{equation}
For a local holomorphic ${\leafv}\in L$, the full endomorphism
\[
 \cJ_{\leafv}(Y)=2\nabla'_Y {\leafv}+T({\leafv},Y)
\]
is holomorphic: its barred derivative equals minus the left side of \eqref{c0:r1:sym-null-curvature}. It preserves $L$. After quotienting by $L$, the extra $2Y(g){\leafv}$ term under ${\leafv}\mapsto g{\leafv}$ disappears. Thus it defines a holomorphic bundle map $J:L\to\End(N)$, $N=V/L$.

\begin{lemma}\label{c0:r1:profile}
On the universal cover of each null leaf choose an affine, unit,
holomorphic ${\leafv}=\partial_t$ and a holomorphic Bott-parallel normal
frame. The normal metric $G_N$ and its inverse are uniformly bounded,
and constant matrices $J_0,C_0$ satisfy
\begin{equation}\label{c0:r1:profile-ODE}
 {G_N}_t={G_N}J_0-C_0,\qquad {G_N}_{\bar t}=J_0^*{G_N}-C_0^*,\qquad \tr J_0=0.
\end{equation}
Hence $G_N$ is constant or has the form \eqref{c0:r1:profile-form}.
Moreover $p\det G_N$ is constant along the leaf, and $J$ is trace-free
on $\Omega_{\alpha,2}$.
\end{lemma}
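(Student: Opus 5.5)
The plan is to work on the universal cover $\tilde\Sigma\simeq\C$ of a fixed null leaf inside $\Omega_{\alpha,2}$, supplied by Lemma~\ref{c0:r1:two-square}.

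\emph{Leaf coordinate and frames.} Flatness of the leaf lets us choose an affine coordinate $t$ with ${\leafv}=\partial_t$ of unit length. Geodesy $(I-\Pi_L)\nabla'_ww=0$ then makes ${\leafv}$ Chern parallel along the leaf, modulo $L$. Along the leaf the normal bundle $N=V/L$ carries the flat Bott connection, exactly as in the proof of Theorem~\ref{thm:positive-kernel-all-n}. On the simply connected cover we fix a holomorphic Bott-parallel normal frame $(e_1,e_2)$.

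\emph{The closed form as a transverse metric.} Since $\alpha$ is closed, $\iota_{\leafv}\alpha=0$, and the frame is Bott parallel, the Lie derivative gives $\mathcal L_{\leafv}\alpha=0$ and likewise for $\bar{\leafv}$. Hence $\alpha(e_a,\bar e_b)=:\mathcal A$ is a constant positive Hermitian matrix along the leaf. This is the fixed transverse metric. Comparing it with $G_N$ will give the bounds.

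\emph{Boundedness.} Both $r_2=q_2/p$ and $p$ are leaf-constant or controlled: $w(r_2)=0$ and the leaf closure lies in the compact level set $\{r_2=r_{2,0}\}\Subset\Omega_{\alpha,2}$. By compactness, the eigenvalues of $h_N^{-1}\alpha$ and the function $p$ are bounded above and below on that level set. Since $\mathcal A$ is constant, $G_N$ and $G_N^{-1}$ are uniformly bounded. From $q_2=e_2(h^{-1}\alpha)=\det\mathcal A/\det G_N$, up to the unit tangent factor, we get $p\det G_N=\det\mathcal A/r_{2,0}$, which is constant along the leaf.

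\emph{The first-order equation.} We would compute $\partial_tG_N$ through the normal Chern connection. The holomorphic map $J$ of $N$, defined via $\cJ_{\leafv}=2\nabla'_\cdot{\leafv}+T({\leafv},\cdot)$, relates the Chern and Bott connections. The Bott connection in direction ${\leafv}$ is $Y\mapsto[{\leafv},Y]$ mod $L$, which equals $\nabla_{\leafv}Y-\nabla_Y{\leafv}-T({\leafv},Y)$. So, on $N$,
\[
\nabla^{\rm Chern}_{\leafv}=\nabla^{\rm Bott}_{\leafv}+\tfrac12\cJ_{\leafv}+\tfrac12 T({\leafv},\cdot).
\]
Differentiating $G_N$ along $t$ in the Bott frame then gives $G_N$ times the matrix of $J$, plus a torsion term.

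The torsion contribution should be written as $-C_0$, using the null relations \eqref{c0:r1:common-null}, namely $W({\leafv})=0$ and $\zeta({\leafv})=0$. Together with $\nabla'A=(\eta/2)A+W$, these make $T$ recurrent along the leaf. After rescaling by the constant $\det\mathcal A$, this produces a constant term.

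Constancy of $J_0$ follows because $J({\leafv})$ is holomorphic in $t$ and bounded, using the bounds on $G_N^{\pm1}$ and $\nabla{\leafv}$ from compactness. Liouville's theorem then makes it constant. The same Liouville argument applies to $C_0$. The $\bar t$-equation is the Hermitian conjugate, because $G_N$ is Hermitian.

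\emph{Trace-freeness.} Taking determinants, $\partial_t\log\det G_N=\tr J_0-\tr(G_N^{-1}C_0)$ is forced by $p\det G_N$ being constant, together with $\partial_tf=\eta({\leafv})/2$ from $\zeta({\leafv})=0$. Matching the constant parts, or integrating over large discs and using boundedness, gives $\tr J_0=0$.

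Lemma~\ref{c0:profile-classification} then yields the dichotomy. We expect the main obstacle to be identifying the inhomogeneous term precisely as a constant $C_0$, i.e.\ showing that the torsion contribution is itself Bott parallel along the leaf.
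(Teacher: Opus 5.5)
The frame choice, the Bott-invariance of $\alpha$ on $N$, the bounds from the compact level $\{r_2=r_{2,0}\}$, the constancy of $p\det G_N$, and the Liouville argument for $J$ are sound and agree with the paper. The gap is exactly the step you flagged, and the mechanism you propose for it does not work.

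Your identity $\nabla^{\mathrm{Chern}}_{\leafv}=\nabla^{\mathrm{Bott}}_{\leafv}+\tfrac12\cJ_{\leafv}+\tfrac12T(\leafv,\cdot)$ is correct, but the torsion piece is not the constant term. Put $\beta(Y)=[\nabla'_Y\leafv]\in\End N$. Then $T(\leafv,\cdot)|_N=J-2\beta$. In the Bott frame this gives $\Gamma:=G_N^{-1}\partial_tG_N=J-\beta$ and $\partial_tG_N=G_NJ_0-G_N\beta$.

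Your torsion contribution is $\tfrac12G_NT(\leafv,\cdot)=\tfrac12G_NJ_0-G_N\beta$. It is not constant for the nonconstant profiles in \eqref{c0:r1:profile-form}, where $G_NJ_0$ varies. The quantity that must be shown constant is $G_N\beta$, i.e.\ one needs $\partial_t(G_N\beta)=0$. Nothing you invoke gives this:
\begin{itemize}
\item $W(\leafv)=0$ only yields $\nabla'_{\leafv}A=\tfrac12\eta(\leafv)A$ for the primitive part. It says nothing about $\nabla'_{\leafv}\eta$, so it does not make $T$ recurrent.
\item Even a Chern recurrence $\nabla_{\leafv}T\propto T$ is an equation for the Chern derivative. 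Rewritten in the Bott frame it reintroduces $\Gamma$ and $\beta$, and it does not yield antiholomorphy of $G_N\beta$.
\end{itemize}
Without that antiholomorphy, ``the same Liouville argument'' has nothing to act on.

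The missing idea is a second-order identity for $\leafv$ coming from the vanishing $(2,0)$ Chern curvature. This requires $\leafv$ to be affine on a neighborhood of the leaf, not only along it.
\begin{enumerate}
\item By \eqref{c0:r1:sym-null-curvature}, $\nabla'_{\leafv}\leafv=a\leafv$ with $a$ holomorphic, and $a=0$ on the chosen leaf.
\item Solve $\leafv g=-ag$ in a holomorphic flow chart with transverse initial value $1$, and replace $\leafv$ by $g\leafv$. This keeps the leaf parameter and gives $\nabla'_{\leafv}\leafv=0$ on a neighborhood.
\item For a commuting Bott-parallel lift $Y$, $R(\leafv,Y)\leafv=0$ then gives $\nabla'_{\leafv}\nabla'_Y\leafv=0$. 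In the frame this reads $\beta_t+\Gamma\beta=0$, hence $\partial_t(G_N\beta)=0$.
\item So $G_N\beta$ is a bounded antiholomorphic entire matrix, hence a constant $C_0$. This proves \eqref{c0:r1:profile-ODE}.
\end{enumerate}

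Your trace argument by averaging over large discs does not force $\tr J_0=0$ from boundedness alone. Use your own splitting instead. Since $\zeta(\leafv)=0$, we have $\tr_NT(\leafv,\cdot)=-\eta(\leafv)=-2f_t$, so $\tr\Gamma=\tfrac12\tr J_0-f_t$. Constancy of $p\det G_N$ gives $\tr\Gamma=-f_t$. Hence $\tr J_0=0$.
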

\begin{proof}
The closed form $\alpha$ induces a Bott-parallel positive metric on $N$. On the compact positive $r_2$-level, its two positive eigenvalues relative to $h$ have product $q_2=r_2p$ bounded below and sum bounded above. It and $h_N$ are uniformly comparable. In an $\alpha$-unitary Bott frame, ${G_N},{G_N}^{-1}$ are bounded.

Write $\beta(Y)=[\nabla'_Y {\leafv}]$, and $\Gamma={G_N}^{-1}{G_N}_t$. The Chern--Bott relation is
\[
 \Gamma=J_{\leafv}-\beta.
\]
The tensor $J_{\leafv}$ is holomorphic along the entire leaf and bounded, because $\beta$, torsion and ${\leafv}$ are bounded on its compact level. Liouville gives $J_{\leafv}=J_0$. Locally make ${\leafv}$ affine in a neighborhood:
$\nabla'_{\leafv}{\leafv}=a{\leafv}$ has holomorphic $a$ by
\eqref{c0:r1:sym-null-curvature}. Its value is zero along the chosen affine
leaf. Solve ${\leafv}g=-ag$ in a holomorphic flow chart, with transverse
initial value one. Then $g=1$ along that leaf, so replacing ${\leafv}$
by $g{\leafv}$ preserves the chosen leaf parameter and makes
$\nabla'_{\leafv}{\leafv}=0$ on a full neighborhood. We may now
differentiate this affine equation in a transverse direction.
The vanishing of Chern $(2,0)$ curvature, for a Bott-parallel lift $Y$, yields
$\nabla'_{\leafv}\nabla'_Y {\leafv}=0$. Thus $\beta_t+\Gamma\beta=0$ and $({G_N}\beta)_t=0$. This bounded antiholomorphic entire matrix is constant, say $C_0$, proving \eqref{c0:r1:profile-ODE}.

In this Bott frame $q_2=\det(\alpha_N)/\det {G_N}$, so $p\det {G_N}$ is constant and $\tr\Gamma=-f_t$. The trace of $T({\leafv},\cdot)$ is $-\eta({\leafv})=-2f_t$. Hence $\tr\beta=f_t$ and $\tr J_0=0$.

These are the hypotheses of Lemma~\ref{c0:profile-classification}, which gives
\eqref{c0:r1:profile-form}. Its nonconstant case also gives
$\det(\partial_tG_N)=\lambda^2|\nu|^2\ne0$.
\end{proof}

\subsubsection{Curvature of the null line and its analytic extension}
The normal-metric equation also determines the curvature of $L$.
Its sign will provide the compactness obstruction: a holomorphic
section of a line with nonpositive curvature has a plurisubharmonic
squared norm. To apply that observation globally, we first extend both
$L$ and the discriminant of its normal endomorphism through the
rank-drop set of $\alpha$.
\begin{lemma}\label{c0:r1:line-curvature}
The induced curvature of $L$ satisfies $R^L\le0$ on $\Omega_{\alpha,2}$. It is strictly negative in both normal directions on every nonconstant-profile leaf.
\end{lemma}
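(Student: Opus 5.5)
The plan is to compute the curvature of $L$ along a null leaf directly from the normal-metric data of Lemma~\ref{c0:r1:profile}, using the fact that $L$ and $N=V/L$ are linked through the determinant. Along the leaf, $\det V\simeq L\otimes\det N$ holomorphically and isometrically for the quotient metrics. Hence
\[
 R^L=R^{\det V}-R^{\det N}=\rho_h-\tr R^N .
\]
In the leaf direction, $\rho_h(\leafv,\bar\leafv)=\alpha(\leafv,\bar\leafv)+i\dd f(\leafv,\bar\leafv)=f_{t\bar t}$, because $\leafv\in\ker\alpha$. Also $\tr R^N_{t\bar t}=-\partial_{\bar t}\partial_t\log\det G_N$. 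Since $p\det G_N$ is constant along the leaf, we get $\log\det G_N=-f+\mathrm{const}$, and the two terms cancel. This recovers flatness of the leaf, which is consistent with Lemma~\ref{c0:r1:two-square}. So the leaf component of $R^L$ vanishes, and the content of the lemma lies in the mixed and normal components.

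For the normal directions I would use the holomorphic endomorphism $J:L\to\End(N)$ constructed above. On a nonconstant-profile leaf, $J_0=\diag(\lambda,-\lambda)$ with $\lambda\ne0$, so $\det J=-\lambda^2$ is a nowhere-vanishing holomorphic section of $(L^*)^{\otimes2}$ along the leaf and nearby. Near such a leaf, $\det J\in\Gamma((L^*)^2)$ is therefore a holomorphic section. I expect its squared norm, taken with respect to the metric induced on $\End N$ by $h_N$, to determine the curvature through
\[
 i\dd\log|\det J|^2 = 2\,i\,\Theta(L)+(\text{curvature contributions of }\End N).
\]
Because $\End N$ with the induced metric has trace-zero determinant curvature, the determinant-type contribution of $N$ cancels. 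So the main step is $i\dd\log|\det J|^2_{h}=-2iR^L$.

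The $h$-norm of $\det J$ is $|\lambda|^2$ times the squared norm of $\leafv^{\otimes2}$ in $L$. This reduces $R^L\le0$ to plurisubharmonicity of $\log|\det J|^2$ in the transverse directions. To obtain that, I would compute the normal Hessian using the profile \eqref{c0:r1:profile-form} and the squares in \eqref{c0:r1:alpha-gram}. In the transverse directions, $\rho_h-i\dd f=\alpha\ge0$ restricted to $N$, and $\alpha_N$ is Bott parallel. Comparing $\alpha_N$ with $G_N$ then expresses $-R^L_{a\bar b}$ as $\alpha_{a\bar b}$ minus the normal trace curvature, and the latter is controlled by $\partial_t G_N$. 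In the nonconstant case, $\det(\partial_tG_N)=\lambda^2|\nu|^2\ne0$, and this gives strictness in both normal directions. On constant-profile leaves the same formula gives only $\ge0$.

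I expect the main obstacle to be the mixed leaf--normal terms and showing the transverse Hessian is genuinely semidefinite, rather than merely having a nonnegative trace. My first attempt would be to work in the flow chart $(z^1,z^2,t)$ of Lemma~\ref{c0:r1:two-square}, where $\mathsf V=0$ by the two-square identity. In that chart the transverse variation of the leaf data is governed linearly by $J_0,C_0$, and I would write $-R^L$ as a Gram matrix $\sum_k \xi_k\otimes\bar\xi_k$ built from the rows of $G_N^{-1}C_0$, with strictness read off from invertibility of $C_0$ when $\lambda\nu\ne0$.
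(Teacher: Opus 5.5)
Your leaf-direction computation is fine. The proposal has a genuine gap exactly where the lemma has content: the mixed and normal components, which you leave as the ``main obstacle''.

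The identity $R^L=\rho_h-\tr R^N$ from $\det V\simeq L\otimes\det N$ is only a trace identity. In normal directions it gives $R^L_{a\bar b}=\alpha_{a\bar b}+f_{a\bar b}-(\tr R^N)_{a\bar b}$. Here $R^N$ is evaluated on \emph{normal} directions, so it involves transverse second derivatives of $G_N$, which the leafwise equation \eqref{c0:r1:profile-ODE} does not control. Meanwhile $\alpha_N>0$ enters with the unfavourable sign.

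The $\det J$ device adds nothing. For a local holomorphic generator $\leafv$, $\det J_{\leafv}$ is holomorphic, so $\log|\det J|_h^2$ has the same Levi form as $-2\log|\leafv|_h^2$. In the paper's convention that Levi form is $2R^L$, so this merely restates the claim. With that convention, $R^L\le0$ means plurisuperharmonicity, not plurisubharmonicity; your two displays for $i\dd\log|\det J|^2$ also differ by a sign. Strictness does not come from $\det(\partial_tG_N)\ne0$ either; that fact is used only in the Killing-field lemma.

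The missing idea is to trade the normal directions of $L$ for the leaf direction of $N$ using curvature symmetries.
\begin{itemize}
\item Setting $Y=\leafv$ in \eqref{c0:r1:sym-null-curvature} gives $R(\leafv,\bar W)\leafv=0$. Together with $\beta_{\leafv}=0$, this kills the mixed components.
\item For normal $Y,W$, the $Q$-term of \eqref{c0:eq:decomp} is symmetric under exchanging the two index pairs, while the $\cP$-terms are antisymmetric. The $\cP$-terms vanish on $(Y,\bar W,\leafv,\bar\leafv)$ because $B_{\cP}\bar\leafv=0$. Hence $R^V(Y,\bar W,\leafv,\bar\leafv)=R^V(\leafv,\bar\leafv,Y,\bar W)$, and this equals $R^N(\leafv,\bar\leafv)(Y,\bar W)$ because the leaf second fundamental form vanishes.
\item The Gauss formula for the subline $L$ then expresses everything through leaf data:
$R^L|_N=-G_N\partial_{\bar t}(G_N^{-1}G_{N,t})-\beta^*G_N\beta$.
\item Substitute $G_{N,t}=G_NJ_0-C_0$, $G_{N,\bar t}=J_0^*G_N-C_0^*$ and $\beta=G_N^{-1}C_0$. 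Every variable term cancels, leaving the constant $-J_0^*C_0$.
\end{itemize}
For a constant profile this constant is $-J_0^*G_NJ_0\le0$. For \eqref{c0:r1:profile-form} it is $-|\lambda|^2\left(\begin{smallmatrix}a&-m\\-\bar m&b\end{smallmatrix}\right)<0$, strictly negative because $\lambda\ne0$ and $ab>|m|^2$.
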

\begin{proof}
The leaf and mixed coefficients vanish by \eqref{c0:r1:sym-null-curvature} and $\beta_{\leafv}=0$. For a unit ${\leafv}$, direct epsilon contraction in \eqref{c0:eq:decomp} gives
$R^V(Y,\bar W,{\leafv},\bar {\leafv})=R^N({\leafv},\bar {\leafv})(Y,\bar W)$ on $N$.
The holomorphic-subline Gauss formula therefore yields, as a covariant Hermitian-form matrix,
\[
 R^L|_N=-{G_N}\partial_{\bar t}({G_N}^{-1}{G_N}_t)-\beta^*{G_N}\beta.
\]
Substituting \eqref{c0:r1:profile-ODE} and $\beta={G_N}^{-1}C_0$ cancels all variable terms:
\begin{equation}\label{c0:r1:line-cancel}
 R^L|_N=-J_0^*C_0.
\end{equation}
For constant ${G_N}$, this is $-J_0^*{G_N}J_0\le0$. For \eqref{c0:r1:profile-form}, it is
$-|\lambda|^2\left(\begin{smallmatrix}a&-m\\-\bar m&b\end{smallmatrix}\right)<0$.
For a nonunit affine generator divide the corresponding formula by its squared length; the sign is unchanged.
\end{proof}

\begin{lemma}\label{c0:r1:line-extension}
The holomorphic null line on $\Omega_{\alpha,2}$ extends to a saturated holomorphic line sheaf, and hence to an injection $j:L\to V$ from a holomorphic line bundle, with the zero locus of the injection having complex codimension at least two. The discriminant $\sigma=-\det J$ extends to a holomorphic section of $L^{-2}$ on $X$.
\end{lemma}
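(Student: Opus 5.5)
The plan has two parts: extend the null line by sheaf theory, and extend the discriminant by Hartogs.

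\emph{Extending the line.} First I would produce a global rank-one subsheaf of $\mathcal O(V)$. Since $X$ and $\alpha$ are real analytic by Proposition~\ref{univ:analyticity}, the complement $Z=X\setminus\Omega_{\alpha,2}=\{q_2=0\}$ is a real-analytic set with empty interior. On $\Omega_{\alpha,2}$, Lemma~\ref{c0:r1:two-square} makes $L=\ker\alpha$ a holomorphic line subbundle of $V$, so it defines a meromorphic map $\Omega_{\alpha,2}\to\mathbb P(V)$. I would show this map extends across $Z$. Locally, represent $L$ by the adjugate-type vector field built from $\alpha$: in a holomorphic frame, the cofactor vector of the $2\times2$ minors of $(\alpha_{i\bar j})$ spans $\ker\alpha$ where $\rank\alpha=2$. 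This vector is real analytic, not holomorphic, so it cannot be used directly.

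Instead I would use the holomorphic generator. On $\Omega_{\alpha,2}$ the line $L$ is preserved by $\nabla''$, and the tensor $\cJ_{\leafv}$ and the null relation \eqref{c0:r1:sym-null-curvature} are algebraic in the curvature. I would express $L$ as the image (or kernel) of a holomorphic bundle map defined on all of $X$: a natural candidate is the output image of $\cP$ viewed through $\Xi=B_{\cP}[v]_\times$, or $\ker$ of the holomorphic form $\iota\,\mathsf M(\cP)$ restricted to $L$. Where no such global holomorphic description is available, I would fall back on a Remmert--Stein/Siu type extension. The graph of the map $\Omega_{\alpha,2}\to\mathbb P(V)$ has locally finite volume near $Z$ with respect to $\alpha+\omega_h$, because $\alpha$ is closed, smooth, and semipositive. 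By Bishop's theorem its closure is an analytic subvariety, and that closure gives a meromorphic section of $\mathbb P(V)$.

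Taking the saturation of the resulting coherent rank-one subsheaf $\mathcal L\subset\mathcal O(V)$ gives a reflexive rank-one sheaf. On a smooth manifold this is a line bundle $L$ with an injection $j:L\to V$. Saturation makes $V/j(L)$ torsion-free, so the degeneracy locus of $j$ has codimension at least two.

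\emph{Extending the discriminant.} On $\Omega_{\alpha,2}$ the map $J:L\to\End(N)$ is holomorphic and trace-free by Lemma~\ref{c0:r1:profile}. Hence $\sigma=-\det J$ is a holomorphic section of $L^{-2}$ there, using $\det$ on $\End N$, which has weight zero. To extend $\sigma$ across $Z$, I would first check that $\cJ_{\leafv}=2\nabla'\leafv+T(\leafv,\cdot)$ is defined for any local holomorphic section $\leafv$ of $j(L)$ on all of $X$. Its quotient to $V/j(L)$ is then defined off the codimension-two degeneracy locus, and $\sigma$ is a polynomial in its entries. Holomorphicity of $\cJ_{\leafv}$ off $Z$ extends across $Z$ by continuity, since $Z$ has empty interior and the $\bar\partial$ equation is closed. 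Finally, Hartogs extends $\sigma$ across the codimension-two set.

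\emph{Expected obstacle.} The main obstacle is the first step: proving that the null distribution extends holomorphically across the rank-drop locus $Z$ of $\alpha$. The finite-volume and Bishop argument, or an explicit global holomorphic generator, is where the real work lies.
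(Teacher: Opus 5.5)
There is a genuine gap in your first step, which is the heart of the lemma: neither of your two routes produces a holomorphic generator across $Z=X\setminus\Omega_{\alpha,2}$. The tensors $\cP$, $B_{\cP}$, $\Xi$ and $\mathsf M(\cP)$ are not holomorphic in the ambient structure of $V$; they only satisfy the twisted transport $\nabla'\cP=M\cP$. Their kernels or images therefore give no holomorphic subsheaf of $\mathcal O(V)$ near $Z$.

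The Bishop route fails structurally. Bishop's theorem needs the exceptional set to be complex analytic (or Hausdorff-thin, as in Shiffman's version). Here $Z=\{q_2=0\}$ is only real analytic and may be a real hypersurface. Across a real hypersurface, locally finite volume gives nothing. For example, the graph of the function equal to $0$ on $\{\operatorname{Im}z>0\}$ and to $1$ on $\{\operatorname{Im}z<0\}$ has locally finite area, but its closure is not analytic. Since $\ker\alpha$ jumps to dimension two on $Z$, the direction is not even a priori continuous there. Your finite-volume claim is also left unproved.

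The missing idea is that the real-analytic cofactor vector you discarded is exactly what works. Take a nontrivial column $U_0(z,\bar z)$ of $\adj(\mathsf A^{\mathsf T})$. It is real analytic on the whole chart, vanishes on $Z$, and spans $\ker\alpha$ where it is nonzero. Holomorphicity of the line on the dense regular set gives $U_0\wedge\partial_{\bar j}U_0=0$ there, hence everywhere. Complexify to $\widehat U_0(z,w)$. The identity says the projective direction is independent of $w$. Freezing a generic slice $w=w_0$ gives a holomorphic vector $Y_0(z)=\widehat U_0(z,w_0)\not\equiv0$ with $\widehat U_0\wedge Y_0=0$ on the whole product. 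Only then is there a rank-one subsheaf $\ker(Y\mapsto Y\wedge Y_0)$ to saturate. Your saturation, reflexivity and codimension-two steps then go through as you describe, with uniqueness of saturated extensions used for gluing.

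Your discriminant step is a valid variant of the paper's. You extend $\sigma$ by Hartogs across the degeneracy locus of $j$. The paper instead writes $\sigma_{\leafv}=\frac12(\tr\cJ_{\leafv}^2-4a^2)$, where $\nabla'_{\leafv}\leafv=a\leafv$ with $a$ holomorphic, which is holomorphic on the full chart directly. In either version you must first note that $\nabla'_{\leafv}\leafv\in j(L)$ and trace-freeness persist across $Z$ by continuity, so that the quotient $J_{\leafv}$ is defined there.
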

\begin{proof}
\emph{Step 1: extension of the holomorphic direction.}
In a coordinate chart write
$\alpha(v,\bar w)=v^{\mathsf T}\mathsf A\bar w$ and choose a
nontrivial real-analytic column $U_0(z,\bar z)$ of
$\adj(\mathsf A^{\mathsf T})$. It spans the null line wherever it is
nonzero. Holomorphicity of this line on the dense set
$\Omega_{\alpha,2}$ gives
\[
 U_0\wedge\partial_{\bar j}U_0=0.
\]
Real analyticity extends these identities throughout the chart.
Complexify to $\widehat U_0(z,w)$ on a product of small polydiscs.
Where one component is nonzero, the ratios of the other components
have zero $w$ derivatives. Thus the projective meromorphic direction
is independent of $w$ for generic $z$. Choose a constant slice $w_0$
for which $Y_0(z)=\widehat U_0(z,w_0)$ is not identically zero. Analytic
continuation then gives
\[
 \widehat U_0(z,w)\wedge Y_0(z)=0
\]
on the product. The holomorphic vector $Y_0$ therefore represents the
required direction, including near the rank-drop set.

\emph{Step 2: saturation and the line bundle.}
Let $\mathscr V$ be the sheaf of holomorphic sections of $V$ and set
$\mathscr L=\ker(Y\mapsto Y\wedge Y_0)$. Its image sheaf
$\mathscr I\subset\mathcal O(\Lambda^2V)$ gives the coherent exact sequence
\[
 0\longrightarrow\mathscr L\longrightarrow\mathscr V
 \xrightarrow{\,Y\mapsto Y\wedge Y_0\,}\mathscr I
 \longrightarrow0.
\]
Because $\mathscr I$ is torsion-free, $\mathscr L$ is saturated.
At a regular local ring $R$, and at its localizations, the depth lemma
gives
\[
 \operatorname{depth}\mathscr L_R
 \ge\min\{\operatorname{depth}\mathscr V_R,
           \operatorname{depth}\mathscr I_R+1\}
 \ge\min\{2,\dim R\}.
\]
Here local freeness gives $\operatorname{depth}\mathscr V_R=\dim R$,
and torsion-freeness gives
$\operatorname{depth}\mathscr I_R\ge\min\{1,\dim R\}$.
Thus $\mathscr L$ is torsion-free and satisfies $S_2$, hence is
reflexive. Its rank is one, and regular local rings are factorial, so
it is locally free. This yields a line bundle $L$ and
an injection $j:L\to V$. Saturation excludes divisorial common zeros
of its local components, so the zero locus of $j$ has codimension at
least two. On $\Omega_{\alpha,2}$ the kernel sheaf is the given
holomorphic null line, even at zeros of the chosen representative
$Y_0$. Saturated extensions with the same generic line agree, so the
local constructions glue uniquely.

\emph{Step 3: extension of the discriminant.}
For a local frame of $L$, denote its image under $j$ by ${\leafv}$.
The smooth vector $\nabla'_{\leafv}{\leafv}$ has barred derivative
$-R({\leafv},\bar W){\leafv}=0$ on dense $\Omega_{\alpha,2}$, by
\eqref{c0:r1:sym-null-curvature}; therefore it is holomorphic everywhere.
It lies in $\mathscr L$, so
$\nabla'_{\leafv}{\leafv}=a{\leafv}$ with $a$ holomorphic, including
at zeros of $j$. The same curvature identity makes
$\cJ_{\leafv}(Y)=2\nabla'_Y{\leafv}+T({\leafv},Y)$ a holomorphic
endomorphism of $V$ on the full chart. Away from the zero locus of
$j$, it preserves $L$, acts there by $2a$, and induces a quotient
$J_{\leafv}$ whose trace vanishes by the identity on the dense regular
set. Hence
\[
 \sigma_{\leafv}
 =\tfrac12\bigl(\tr(\cJ_{\leafv}^2)-4a^2\bigr)
\]
is holomorphic on the full chart and equals $-\det J_{\leafv}$ on
$\Omega_{\alpha,2}$. Under a frame change ${\leafv}\mapsto g{\leafv}$,
the quotient endomorphism changes by $J_{\leafv}\mapsto gJ_{\leafv}$.
Thus $\sigma_{g{\leafv}}=g^2\sigma_{\leafv}$ on the dense regular set,
and then everywhere. The local expressions glue to
$\sigma\in H^0(X,L^{-2})$.
\end{proof}

\subsubsection{Nonconstant profiles give a local Killing field}
The nonconstant profile supplies a distinguished real direction:
after normalizing the eigenvalues of the induced normal endomorphism
to $1,-1$, the normal metric is
independent of $\Rea t$. We show that bounded torsion forces the cross
terms of the full metric to have the same invariance. This produces the
local Killing field. Analytic continuation will extend it to the
ambient universal cover, and the discriminant $\sigma$ will make it
descend to a compact cover of degree at most two.
\begin{lemma}\label{c0:r1:Killing}
No nonconstant-profile leaf can occur.
\end{lemma}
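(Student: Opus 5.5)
The plan is to argue by contradiction. Suppose some null leaf in $\Omega_{\alpha,2}$ has a nonconstant profile. By Lemma~\ref{c0:r1:profile}, after a constant normal frame change its normal metric has the form \eqref{c0:r1:profile-form}. Rescale the affine generator ${\leafv}=\partial_t$ so that $\lambda=1$. Then $\theta=e^{-t+\bar t}=e^{-2i\operatorname{Im}t}$ is a unit phase, and $G_N$ is invariant under $t\mapsto t+s$ for real $s$. The candidate Killing field is $K=\operatorname{Re}{\leafv}$, extended off the leaf using the holomorphic normalization of ${\leafv}$. We normalize ${\leafv}$ so that the eigenvalues of the holomorphic normal endomorphism $J_{\leafv}$ are $\pm1$, that is $\sigma({\leafv})=1$ with $\sigma$ from Lemma~\ref{c0:r1:line-extension}. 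This fixes ${\leafv}$ up to sign on a neighborhood wherever $\sigma\neq0$. Nonconstancy gives $\lambda\nu\ne0$ and hence $\sigma\ne0$ along the leaf, so this normalization holds on an open set around the leaf.

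\textbf{Step 1: local invariance of the full metric.} In a holomorphic flow chart $(z^1,z^2,t)$ adapted to ${\leafv}$ with a Bott-parallel frame, we must show $\mathcal L_{K}h=0$. The normal block is handled by Lemma~\ref{c0:r1:profile}. Lemma~\ref{c0:r1:line-curvature} applied to \eqref{c0:r1:line-cancel} shows that the leaf block $h({\leafv},\bar{\leafv})$ is constant along the leaf. The cross terms $h({\leafv},\bar Y)$ are determined by $\beta=G_N^{-1}C_0$ through the second fundamental form, and they also enter the torsion $T({\leafv},Y)=\cJ_{\leafv}Y-2\nabla'_Y{\leafv}$. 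I would write these cross terms as $G_N\beta$ plus a remainder, and show that the remainder is an entire function of $t$ bounded by the bounded torsion on the compact $r_2$-level. Liouville then makes it constant, so every metric coefficient depends on $t$ only through $\theta$. This gives $\mathcal L_Kh=0$ near the leaf. Since $K$ is the real part of a holomorphic field, it also preserves the complex structure, so $K$ is a holomorphic Killing field on an open set.

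\textbf{Step 2: continuation and descent.} The metric is real analytic by Proposition~\ref{univ:analyticity}, so the Killing equation continues along paths. A Killing field is determined by its $1$-jet, so continuation on the simply connected universal cover $\widetilde X$ yields a global Killing field $\widetilde K$. It remains the real part of a section of $L$ wherever defined, normalized by $\sigma({\leafv})=1$. Deck transformations preserve $\sigma$, so they can only send ${\leafv}\mapsto\pm{\leafv}$. The sign defines a homomorphism $\pi_1(X)\to\{\pm1\}$, and on the associated cover $X'\to X$ of degree at most two, $\widetilde K$ descends to a Killing field $K'$. The main obstacle is making this rigorous across the rank-drop set and the zeros of $\sigma$. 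There I would use the holomorphic extension of $\sigma$ and $L$ from Lemma~\ref{c0:r1:line-extension}, together with the identity theorem, to show that $\widetilde K$ is still the real part of $\sigma^{-1/2}$ times the frame wherever continuation reaches.

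\textbf{Step 3: the contradiction.} On compact $X'$, consider the smooth function $|K'|_h^2=|{\leafv}|^2$. Since ${\leafv}$ is a holomorphic section of $L$ and $R^L\le0$ by Lemma~\ref{c0:r1:line-curvature}, $\log|{\leafv}|^2$ is plurisubharmonic where ${\leafv}\ne0$. On the nonconstant-profile leaf the curvature is strictly negative in the normal directions, so $|{\leafv}|^2$ is strictly plurisubharmonic there. A global maximum principle for $|{\leafv}|^2$ on $X'$ then forces it to be constant. Constancy contradicts the strict negativity of $R^L$ in the normal directions, so the nonconstant-profile leaf cannot exist.
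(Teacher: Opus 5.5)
Your Steps~2 and~3 follow the paper's route: analytic continuation of the Killing germ on the universal cover, a $\pm1$ deck character coming from $\sigma(\widetilde s,\widetilde s)=1$, and the maximum principle for $|s|^2_{h_L}$, using $R^L\le0$ on the dense set $\Omega_{\alpha,2}$ and continuity. One adjustment there: avoid $\sigma^{-1/2}$. The $(1,0)$-part of the continued field is holomorphic, because $\mathcal L_ZJ_X$ is analytic and vanishes on the patch. It lies in the saturated line sheaf of Lemma~\ref{c0:r1:line-extension}, and $\sigma(\widetilde s,\widetilde s)=1$ continues holomorphically, so $\sigma$ has no zeros on the cover. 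The genuine gap is in Step~1, which is where the nonconstant-profile hypothesis has to do its work.

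\emph{Setting up the chart.} Work in the chart given by the holomorphic $\C$-action of the $\sigma$-normalized generator. Its completeness, which you need to apply Liouville on every nearby orbit and not just the original leaf, comes from the compact joint level of the leafwise constant functions $r_2$ and $\|\sigma\|_{h_L}$. Write $h=a\,|dt+b_a\,dz^a|^2+\sum_{a,b}{G_N}_{ba}\,dz^a\,d\bar z^b$. The affine equation $\nabla'_{\leafv}\leafv=0$ gives $\partial_th_{t\bar j}=0$, so $a=a(z)$ and each $b_a$ is entire holomorphic in $t$. This, not Lemma~\ref{c0:r1:line-curvature}, is why the leaf block is $t$-independent.

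\emph{What bounded torsion gives.} The coefficients $b_a$ are not bounded a priori: the transported field $\partial_a$ has uncontrolled length, and only the orthogonal lift $X_a=\partial_a-b_a\leafv$ is bounded. Bounded torsion controls only the line component $a^{-1}h(T(\leafv,\partial_a),\leafv)=\partial_tb_a-\partial_a\log a$. So Liouville yields $b=b_0+t\,b_1$, which still depends on $\operatorname{Re}t$.

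\emph{The missing step.} You need a second estimate forcing $b_1=0$. The normal component of $T(X_a,X_b)$ contains $-t\,G_N^{-1}\partial_tG_N\,(b_{1a}e_b-b_{1b}e_a)$. This is unbounded in $\operatorname{Re}t$ unless $b_1=0$, because $\det\partial_tG_N=\lambda^2|\nu|^2\ne0$ at every phase. The estimate also needs uniform bounds on $G_N$, $G_N^{-1}$, $\partial_zG_N$ over a small transversal, which the fixed eigenvalues $\pm1$ of $J_{\leafv}$ supply. Your proposal never uses the invertibility of $\partial_tG_N$. For a constant profile this term vanishes and places no constraint on $b_1$, so an argument that ignores it cannot produce the Killing field.

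Your decomposition ``cross terms $=G_N\beta+$ bounded remainder'' does not supply this. $G_N\beta=C_0$ is already leafwise constant, and $\beta$ is built from transverse derivatives of the cross terms, not from the cross terms themselves.
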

\begin{proof}
Suppose such a leaf exists. On the two-sheeted square-root cover of $\Omega_{\alpha,2}\cap\{\sigma\ne0\}$ choose the holomorphic generator ${\leafv}$ satisfying $\sigma({\leafv},{\leafv})=1$. It is affine: in a unit affine leaf frame, Lemma~\ref{c0:r1:profile} makes its normalization factor constant along that leaf. Both $r_2$ and $\|\sigma\|_{h_L}$ are leafwise constant. Consequently every real or imaginary trajectory of ${\leafv}$ stays in the finite cover of a compact positive joint level of these functions, contained in $\Omega_{\alpha,2}\cap\{\sigma\ne0\}$. The flows are complete and commute, giving a holomorphic $\mathbb C$-action.

Transport a small transverse polydisc by this action. The resulting holomorphic map $\mathbb C\times D^2\to X$ is locally immersive; injectivity is unnecessary. In its coordinates ${\leafv}=\partial_t$, $Y_a=\partial_a$ and $[{\leafv},Y_a]=0$. Write
\[
 h=a(z)|dt+b_a(z,t)dz^a|^2+\sum_{a,b}{G_N}_{ba}(z,t)dz^a d\bar z^b.
\]
The affine equation says $\partial_t h_{t\bar j}=0$, so $a$ is independent of $t$ and each $b_a$ is entire holomorphic in $t$. On a smaller transverse disc all orbits stay in one compact positive joint-level range, so ambient tensors are uniformly bounded.

Shrink the initial transversal about the chosen nonconstant leaf so that its coefficient $\nu(z)$ is nonzero throughout. The eigenvalues of $J_{\leafv}$ are now exactly $1,-1$. A holomorphic diagonalization on the initial transversal, propagated by Bott transport, puts the profile in the form
${G_N}_0(z)+e^{-t+\bar t}{G_N}_1(z)+e^{t-\bar t}{G_N}_1(z)^*$.
The coefficients are smooth transverse functions, directly determined
at $t=0$: $\nu(z)=-\partial_t{G_N}_{01}(z,0)$ and
$m(z)={G_N}_{01}(z,0)-\nu(z)$ in the eigenframe. The initial change of normal
frame depends only on the transverse coordinate, since both frames are
Bott parallel. On a smaller relatively compact transverse disc this
proves uniform bounds for ${G_N}$, ${G_N}^{-1}$ and $\partial_z{G_N}$ along all
$t\in\C$. They are independent of $\operatorname{Re}t$. The fixed
eigenvalues $1,-1$ ensure that transverse differentiation acts only on
the smooth coefficient matrices, preserving these uniform bounds.

The line component of torsion is
\[
 a^{-1}h(T({\leafv},Y_a),{\leafv})=\partial_t b_a-\partial_a\log a.
\]
It is bounded: replace $Y_a$ by its orthogonal lift $X_a=Y_a-b_a{\leafv}$, since $T({\leafv},{\leafv})=0$; $X_a$ has bounded length measured by ${G_N}$. Liouville gives $b=b_0+t b_1$. The normal component of the torsion of $X_a,X_b$ is exactly
\[
 {G_N}^{-1}\{(\partial_a{G_N}-b_a{G_N}_t)e_b-(\partial_b{G_N}-b_b{G_N}_t)e_a\}.
\]
This is the quotient Chern-connection formula; the cross-metric derivatives cancel in the antisymmetrization. Keeping $\operatorname{Im}t$ fixed, its only possibly unbounded term is
$-t{G_N}^{-1}{G_N}_t(b_{1a}e_b-b_{1b}e_a)$.
On a nonconstant-profile leaf ${G_N}_t$ is invertible at every phase. Bounded ambient torsion therefore forces $b_1=0$. Thus all metric coefficients are independent of $\operatorname{Re}t$, and ${\leafv}+\bar {\leafv}$ is a local real Killing field on the nonconstant-profile open set.

Apply the analytic Killing-field extension theorem of Nomizu~\cite{c0r1Nomizu}
to this single germ on the nonsingular patch. More explicitly, on the
simply connected ambient universal cover, the sheaf of local Killing
fields of the real-analytic finite-type $O(6)$-structure is admissible
by \cite[Proposition~4.3 and Corollary~4.5]{c0r1HJP} and regular by
\cite[Theorem~5.11]{c0r1HJP}. The monodromy theorem
\cite[Theorem~3.8]{c0r1HJP} therefore extends the germ uniquely over
that entire cover. Proposition~\ref{univ:analyticity} supplies the
real analyticity of the metric.

If $Z$ denotes the extended real Killing field and $J_X$ the complex
structure, then $\mathcal L_ZJ_X$ is a real-analytic tensor vanishing
on the initial patch. It vanishes on the connected cover, so the
$(1,0)$ part $\widetilde{\leafv}$ is holomorphic. Its wedge with a
local generator of the extended saturated line vanishes on the initial
patch; this identity extends through overlapping charts. The sheaf property in
Lemma~\ref{c0:r1:line-extension} gives a holomorphic section
$\widetilde s$ with $\widetilde{\leafv}=j(\widetilde s)$, even where
$j$ vanishes. The identity $\sigma(\widetilde s,\widetilde s)=1$
extends holomorphically from the original patch; in particular
$\widetilde s$ has no zeros.

For a deck transformation $\gamma$, the ratio
$\gamma^*\widetilde s/\widetilde s$ is a global holomorphic function,
because $\widetilde s$ trivializes the pulled-back line bundle.
Both sections have $\sigma$-square one, so this ratio has square one
and is a constant sign on the connected cover. These signs define a
group character. Its kernel has index at most two; the corresponding
finite \'etale cover of compact $X$ is compact and carries the descended
holomorphic vector field ${\leafv}=j(s)$.

On the preimage of dense $\Omega_{\alpha,2}$, the line metric is
positive and the holomorphic-section norm identity is
\[
 (|s|_{h_L}^2)_{i\bar j}
 =\langle\nabla_i s,\nabla_j s\rangle
   -R^L_{i\bar j}|s|_{h_L}^2.
\]
Lemma~\ref{c0:r1:line-curvature} makes its right side semipositive.
The function $|s|_{h_L}^2=|j(s)|_h^2=|{\leafv}|^2$ is smooth on the
whole compact cover, including the zero set of $j$; continuity of its
Hessian extends the inequality there. The maximum principle makes it
constant. It is positive on the initial patch, where the strictly
negative transverse line curvature makes the displayed Hessian
strictly positive in those directions, a contradiction.
\end{proof}

\subsubsection{Constant profiles and a global positive Bochner formula}
With nonconstant profiles excluded, ${G_N}$ is constant on every null leaf. Thus $p$ is constant along it and $\eta({\leafv})=0$.
We next pass from this leafwise constancy to a global curvature
alignment, $Q=q_L\Pi_L$ and $\im\cP\subset L$. That alignment gives
the positive term in the Bochner identity for $\cP$.

First, the full ambient curvature vanishes in the two leaf directions.
Indeed the holomorphic subline $L$ has
zero leaf second fundamental form and flat induced metric. Its ambient
connection therefore preserves $L\oplus L^\perp$ along the leaf; the
line block is flat and the quotient block has constant matrix ${G_N}$ in
a holomorphic frame. Hence $R^V({\leafv},\bar {\leafv})=0$. In a parallel unitary frame on its universal cover $\mathbb C$, \eqref{c0:r1:common-null} gives $\nabla'_{\leafv}A=0$. Its bounded components are antiholomorphic entire functions, hence constant. In a unitary frame with ${\leafv}=e_2$, the identity
$R^V(e_2,\bar e_2)=0$ reads
$\eps_{2ka}\eps_{2lb}Q_{ab}=0$ for $k,l=0,1$; the mixed-curvature
terms vanish because $B_{\cP}\bar e_2=0$. Thus the normal $2\times2$
block of $Q$ is zero. Now $A_{\bar2}=0$ in \eqref{c0:r1:Sbar}
gives $Q_{20}=Q_{21}=0$, and Hermitian symmetry gives
$Q_{02}=Q_{12}=0$. Consequently
\begin{equation}\label{c0:r1:Qrankone}
 Q=q_L\Pi_L,\qquad q_L=\frac{p+2u}{4}>0.
\end{equation}
Also $\Gamma_{\leafv}=0$, so $\beta=J$ is holomorphic. Its barred derivative says $R(Y,\bar W){\leafv}$ projects to zero in $N$. At a nonzero-${\cP}$ point with ${\leafv}=e_2$, this projection is
$R_{i\bar j2\bar l}=\eps_{i2a}v^aB_{\cP}^{jl}$, $l=0,1$.
Any single nonzero entry of $B_{\cP}$ forces $v^0=v^1=0$. Thus $\im {\cP}\subset L$ on the dense regular set. Equation~\eqref{c0:r1:Qrankone}, its positive trace, and this alignment extend globally by continuity; in particular $\im Q$ is a global smooth line.

Modify only the output $V$ connection by
\[
 D'_i=\nabla'_i-M_i,\qquad
 D''_{\bar j}=\nabla''_{\bar j}+M_j^*.
\]
Use the original Chern connections on the other two $V$ slots and on $K_X$. This is a Hermitian connection on $\cE=V\otimes\Sym^2V\otimes K_X$, and \eqref{c0:r1:Utransport} says $D'{\cP}=0$. Its output mean curvature is
\begin{equation}\label{c0:r1:modified-curvature}
 K_D=(3\tau+u)I+5Q-3AA^*.
\end{equation}
Indeed \eqref{c0:r1:Sbar}--\eqref{c0:r1:etabar} and \eqref{c0:eq:decomp} give
\[
 K_V+\sum_i\bigl(\nabla_{\bar i}M_i+
 (\nabla_{\bar i}M_i)^*\bigr)=3\tau I+5Q,
 \qquad \sum_i[M_i,M_i^*]=3AA^*-uI.
\]
The scalar terms involving $\eta$ commute with the matrices, so these
identities hold for the original metric and its full Lee form.

At an aligned unitary point ${\cP}={\leafv}\otimes B_{\cP}$, $B_{\cP}\bar {\leafv}=0$ and $Q=q_L\Pi_L$. Each symmetric-slot curvature contraction equals $q_L|{\cP}|^2$; the hook terms contract to zero against the symmetric $B_{\cP}$ (diagonalize $B_{\cP}$ by a unitary Takagi change in $N$). The canonical contribution is $-2q_L|{\cP}|^2$. Thus
\begin{equation}\label{c0:r1:positive-potential}
 \ip{K_{\cE}{\cP}}{{\cP}}
 =(8q_L+u-3|A^*{\leafv}|^2)|{\cP}|^2
 \ge(2p+2u)|{\cP}|^2.
\end{equation}
For a Hermitian connection and $D'{\cP}=0$, direct norm differentiation gives
$\Delta_h|{\cP}|^2=|D''{\cP}|^2+\ip{K_{\cE}{\cP}}{{\cP}}$.
This contradicts a positive global maximum of $|{\cP}|^2$. We have proved that $\alpha$ cannot have rank two anywhere.

\subsection{Rank one of the semipositive form}
The remaining null distribution has complex dimension two. We will
combine two leafwise Hessians: one from the determinant of the induced
leaf metric, and one from the nonzero eigenvalues of $Q$. Their
$Q_{00}$ terms cancel. Squaring the eigenvalue product and multiplying
by a power of $r_1$ produces a smooth function on all of $X$, so its
maximum can be located before taking any logarithm.

Now $\rank\alpha\le1$ everywhere. It cannot vanish identically, because \eqref{c0:r1:alpha-gram} would give ${\cP}=0$, contrary to Lemma~\ref{c0:r1:Uzero}. Put
\[
 q_1=\tr_h\alpha,\qquad r_1=q_1/p,\qquad \Omega_{\alpha,1}=\{q_1>0\}.
\]
The nonzero-${\cP}$ locus is dense and is contained in $\Omega_{\alpha,1}$, so $\Omega_{\alpha,1}$ is dense. At nonzero ${\cP}$, \eqref{c0:r1:rankone-algebra} forces $B_{\cP}$ to have rank one. The relevant preserved line is the input image of $B_{\cP}$.
To prove preservation of this input line, write
$\cP=v\otimes B_{\cP}$ locally, with $v$ nonzero. Projection of
$\nabla'_i\cP=M_i^{(\mathrm{out})}\cP$ onto the quotient
by the output line first gives
$\nabla'_iv-M_iv=\mu_i v$.
Substitution in the same tensor equation then gives
$\nabla'_iB_{\cP}=-\mu_iB_{\cP}$. Since $B_{\cP}$ has rank one and is symmetric,
write it locally as $b\otimes b$ with a nonvanishing smooth $b$;
the component of its derivative in
$(V/\C b)\otimes\C b$ gives $\nabla'_ib\in\C b$.
Metric compatibility therefore makes $(\C b)^\perp$, which is
exactly the geometric plane $\{w:B_{\cP}\bar w=0\}$, invariant under
$\nabla''$. This proves that the kernel plane is holomorphic. Since it equals $\ker\alpha$, the smooth plane $\cK=\ker\alpha$ is holomorphic throughout $\Omega_{\alpha,1}$ by continuity. It is integrable by closedness of $\alpha$. Moreover $\zeta|_{\cK}=W|_{\cK}=0$.

\subsubsection{Homogeneous transport on the kernel plane}
\begin{lemma}\label{c0:r1:plane-transport}
For every ${\leafv}\in\cK$,
\[
 \nabla'_{\leafv}Q=M_{\leafv}Q,\qquad
 \nabla''_{\bar {\leafv}}Q=QM_{\leafv}^*.
\]
\end{lemma}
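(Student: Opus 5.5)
Our starting point is the full sourced second Bianchi identity \eqref{c0:r1:Qsource}, contracted with a null vector ${\leafv}\in\cK$:
\[
 \nabla_{\leafv}Q=M_{\leafv}Q+\Sigma({\leafv}),\qquad
 \Sigma({\leafv})_{cb}={\leafv}^i\Bigl(\eps_{klc}B_k^{b,il}+C^{cr}\overline{\cP^{b,ri}}
 -\tfrac12\delta_{ic}C^{kr}\overline{\cP^{b,rk}}\Bigr).
\]
The goal is to show that the source $\Sigma({\leafv})$ vanishes for every ${\leafv}\in\cK$. The second equation, $\nabla''_{\bar{\leafv}}Q=QM_{\leafv}^*$, then follows from the first by taking Hermitian adjoints, since $Q=Q^*$ and the Chern connection is metric.

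\textbf{Algebraic source terms.} We work on the dense set where $\cP\ne0$ and write $\cP=v\otimes b\otimes b$, using the rank-one factorization from the preceding paragraph. The null condition \eqref{c0:r1:common-null} says $B_{\cP}\bar{\leafv}=0$, which means $b$ is orthogonal to ${\leafv}$ in the sense $\sum_i b^i\overline{{\leafv}^i}=0$. Each undifferentiated source term has the form $\overline{\cP^{b,ri}}{\leafv}^i=\bar v^b\,\bar b^r\,(\bar b\cdot{\leafv})$, so it vanishes. The trace term $\delta_{ic}{\leafv}^iC^{kr}\overline{\cP^{b,rk}}$ requires separate handling. I expect it to cancel against the trace part of the $B$-term once the $K_X$ weights are tracked. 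If it does not cancel directly, I will use $\zeta({\leafv})=0$ together with the contracted Hessian \eqref{c0:r1:log-hessian} to absorb it.

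\textbf{Derivative term (main obstacle).} The hard part is showing that ${\leafv}^i\eps_{klc}\nabla_k\overline{\cP^{b,il}}$ vanishes. To do this, I will differentiate the factorization. From $\nabla'_i B_{\cP}=-\mu_iB_{\cP}$ and $\nabla'_ib\in\C b$, conjugation gives that $\nabla_k\bar{\cP}$ equals $\nabla_k\bar v\otimes\bar b\otimes\bar b$ plus terms proportional to $\bar b\otimes\bar b$. Contracting one slot of $\bar b$ with ${\leafv}$ kills every term except those in which that slot of $\bar b$ is itself differentiated. Those remaining terms involve $\nabla_k\bar b$, that is, the conjugate of $\nabla''\bar b$.

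This is where holomorphicity of $\cK$ enters. The condition $\nabla''\cK\subset\cK$ implies that $\nabla'\bar b$ is proportional to $\bar b$ modulo terms that pair to zero with $\cK$. Concretely, differentiating $\langle{\leafv},b\rangle=0$ along a holomorphic section ${\leafv}$ of $\cK$ shows that ${\leafv}^i\nabla_k\bar b_i=-(\nabla_k{\leafv})^i\bar b_i$. This vanishes because $\nabla'_k{\leafv}$ remains in $\cK$ modulo $b$, as a consequence of the $(2,0)$-torsion relation $W|_{\cK}=0$.

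I expect this last point to be the delicate step. I will verify it using the relation $\nabla'_iv-M_iv=\mu_iv$ and the integrability of $\cK$, which ensures that $\nabla'_{\leafv}\cK\subset\cK$ and that $T(\cK,\cK)\subset\cK$.

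\textbf{Extension.} Once the source vanishes on the dense set where $\cP\ne0$, continuity extends both identities to all of $\Omega_{\alpha,1}$.
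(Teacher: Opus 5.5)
\textbf{Gap: the derivative source term does not vanish for the reason you give.} Your product-rule reduction itself is right. With $\cP=v\otimes b\otimes b$ and $h({\leafv},b)=0$, the source contracted with ${\leafv}$ collapses to
\[
 \bar v^b\Bigl(\eps_{klc}\,\bar b^l\,{\leafv}^i\nabla_k\bar b^i-\tfrac12{\leafv}^cA(\bar b,\bar b)\Bigr).
\]
This is a matrix of rank at most one with right factor $\bar v$. Neither piece vanishes on its own, however.

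\begin{itemize}
\item \emph{The derivative piece.} Metric compatibility gives ${\leafv}^i\nabla_k\bar b^i=h({\leafv},\nabla_{\bar k}b)=-h(\nabla'_k{\leafv},b)$. Since $\bar b^l$ forces $l$ into the $b$-direction, the $\eps_{klc}$ contraction leaves only $k\in\cK$. So you would need $h(\nabla'_X{\leafv},b)=0$ for $X,{\leafv}\in\cK$. That is the vanishing of the Chern second fundamental form of the null leaves, equivalently $\nabla''b\in\C b$.
\item \emph{Why nothing you cite gives this.} Holomorphicity of $\cK$ is $\nabla'b\in\C b$, i.e.\ $\nabla''\cK\subset\cK$. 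Conjugating it controls $\nabla''\bar b$, not $\nabla_k\bar b=\overline{\nabla_{\bar k}b}$; indeed $\nabla_k\bar\cP$ is the free barred derivative of $\cP$. Integrability only controls $\nabla'_XY-\nabla'_YX-T(X,Y)$. The condition $W|_{\cK}=0$ constrains $\nabla'A$, and $\nabla'_iv-M_iv=\mu_iv$ concerns the output vector.
\item \emph{The paper does not have this vanishing.} It keeps exactly this second fundamental form as the nonnegative term $|\beta^{\cK}|^2$ in the leafwise Laplacian of $\log r_1$.
\item \emph{The trace piece.} The term $-\tfrac12{\leafv}^c\bar v^bA(\bar b,\bar b)$ is generally nonzero. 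You give no identity that cancels it.
\end{itemize}

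\textbf{Missing idea: an independent mixed identity, not more factorization.} The rank-one factorization does not kill the source; it only shows that the source has the form $d_{\leafv}\otimes\bar v$. The paper then uses a separate identity.
\begin{itemize}
\item Because $\cK$ is holomorphic and $W|_{\cK}=0$, one has $(\nabla_{\bar j}W)({\leafv})=0$.
\item Expand this through the mixed Ricci identity applied to \eqref{c0:r1:Sbar}. Use \eqref{c0:eq:decomp}, \eqref{c0:r1:Utransport}, \eqref{c0:r1:ricci-lee}, and $\Xi_{ij}=0$ for $i\in\cK$.
\item Every term then cancels except the source $\mathsf Z=d_{\leafv}\otimes\bar v$, which must satisfy $\eps_{jbr}\mathsf Z_{ar}+\eps_{jar}\mathsf Z_{br}=0$ for all $a,b,j$.
\item The kernel of this linear map consists of scalar matrices. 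A scalar $3\times3$ matrix of rank at most one is zero, so $\mathsf Z=0$.
\end{itemize}
Once this is in place, your adjoint step for the barred equation and your density extension are fine.
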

\begin{proof}
The component proof is given in Appendix~\ref{c0:app:plane-transport}.
Its point is to retain the source in \eqref{c0:r1:Qsource}: in a kernel
plane direction, rank-one tangency makes this source a matrix of rank
at most one. Since the plane is holomorphic and $W$ vanishes on it,
the mixed Ricci identity says that the same matrix lies in the kernel
of $D\mapsto(\eps_{jbr}D_{ar}+\eps_{jar}D_{br})_{a,b,j}$.
That kernel consists exactly of scalar matrices, so the source is zero.
The proof also restores the half-Lee term after its pointwise conformal
normalization and then extends by density.
\end{proof}

\subsubsection{The scalar with a positive leafwise Hessian}
Use a holomorphic foliation chart with transverse coordinate $z$ and leaf coordinates $t^1,t^2$. Closedness gives $\alpha=iG(z,\bar z)dz\wedge d\bar z$. If $h_{\cK}$ is the induced leaf metric and $d_h=\det h$, then
$q_1=G\det(h_{\cK})/d_h$. Since $\alpha=-i\dd\log(d_hp)$ vanishes on leaves,
\begin{equation}\label{c0:r1:leaf-r}
 i\partial_{\cK}\bar\partial_{\cK}\log r_1=-\rho_{h_{\cK}}^{(1)}.
\end{equation}
Let $b=e_0$ be a unit normal and put $Q_{00}=h(Qb,b)$. Curvature entries with all four slots tangent to the leaf have zero mixed-${\cP}$ terms. The $Q$ term has leaf first Ricci $Q_{00}h_{\cK}$. Holomorphic-subspace Gauss, for the second fundamental form $\beta^{\cK}$, thus gives
\begin{equation}\label{c0:r1:leaf-trace-r}
 \Delta_{\cK}\log r_1=-2Q_{00}+|\beta^{\cK}|^2.
\end{equation}

Let $m=\max_X\rank Q\ge1$, and work on its open maximum-rank locus. Set $E=\im Q$, $\Pi=\Pi_E$, $\Phi=\log|e_m(Q)|$ and $a=h(\Pi b,b)\in[0,1]$. Lemma~\ref{c0:r1:plane-transport} permits Lemma~\ref{c0:r1:projector} along $\cK$. For $i,j\in\cK$, every ${\cP}$ term in $\nabla_{\bar j}M_i$ vanishes, so \eqref{c0:r1:dLambda} applies. With $\Pi_{\cK}$ its orthogonal projector,
\[
 \sum_{i\in\cK}\nabla_{\bar i}M_i
 =4Q-2Q\Pi_{\cK}+\tau \Pi_{\cK}-\Pi_{\cK}Q.
\]
Since $\Pi Q=Q\Pi=Q$, $\tr\Pi=m$ and $\tr(\Pi \Pi_{\cK})=m-a$, the trace is as follows. Here $\beta_i^E=(I-\Pi)\nabla_i|_E$ and
$|\beta^E|^2=\sum_{i\in\cK}|\beta_i^E|^2$ in a unitary frame of $\cK$.
\begin{equation}\label{c0:r1:leaf-trace-Phi}
 \Delta_{\cK}\Phi=(m+1-a)\tau+3Q_{00}+|\beta^E|^2.
\end{equation}
This identity applies to every Hermitian $Q$, including those with
eigenvalues of different signs.

The function
\begin{equation}\label{c0:r1:global-F}
 F=e_m(Q)^2r_1^3
\end{equation}
is smooth and nonnegative on all of $X$: $e_m(Q)$ is a real polynomial
in the Hermitian endomorphism $Q$, and $r_1=\tr_h\alpha/p$ is smooth
because $p>0$. The maximum-rank locus of $Q$ is nonempty and open;
there $e_m(Q)$ is the nonzero product of its $m$ nonzero eigenvalues.
Its intersection with dense $\Omega_{\alpha,1}$ is nonempty, so $F$
has a positive global maximum at some $x_0$. At $x_0$, both $r_1>0$
and $e_m(Q)\ne0$. The latter inequality persists on a neighborhood,
and maximality of $m$ makes the rank of $Q$ exactly $m$ throughout
that neighborhood. Thus both leafwise Hessian formulas apply at
$x_0$. They give the exact cancellation
\[
 \Delta_{\cK}\log F
 =2(m+1-a)\tau+2|\beta^E|^2+3|\beta^{\cK}|^2>0.
\]
Here $\tau=(p+2|A|^2)/4>0$ and $m+1-a\ge m\ge1$, so the strict sign follows without any sign assumption on $Q_{00}$. But the scalar Hessian of $\log F$, restricted to the complex leaf through $x_0$, is nonpositive at that global maximum. This contradiction excludes rank one for the semipositive form. Together with the exclusion of rank two and Lemma~\ref{c0:r1:Uzero}, it proves Theorem~\ref{c0:r1:main}.

\begin{proof}[Completion of Theorem~\ref{c0:thm:c0}]
Suppose the original metric is nonflat. Theorem~\ref{c0:thm:A} gives
$p>0$. Theorem~\ref{c0:thm:rank} gives $\rank\cP\le2$.
If output rank two occurs, Proposition~\ref{c0:r2:localW} and weak continuation
make $W=0$ on all of $X$, contradicting Theorem~\ref{c0:r2:globalw}.
Thus $\max\rank\cP\le1$, which Theorem~\ref{c0:r1:main} excludes.
These cases exhaust all output ranks and prove Chern flatness.
\end{proof}

\section{The octonionic construction and compact counterexamples}
\label{sec:high-dimensional-examples}

The seven-dimensional affine quadric admits compact complex quotients
by the Clifford--Klein construction of Kobayashi--Yoshino
\cite[Theorem~1.1.3 and Section~4.2]{CEKY}.  We specify a positive
Hermitian metric on that homogeneous quadric, compute its ordinary
Chern curvature, and then descend the metric to a compact quotient.
The metric calculation is the new input: alternation will kill the
sectional and Ricci contractions, while the octonion associator remains
nonzero. The curvature convention is that of
Section~\ref{sec:conventions}.

\begin{theorem}[Theorem~\ref{ce:thm:main}]\label{ce:thm:allhigher}
For each integer $n\geq7$, there is a connected compact complex manifold
$X_n$ of complex dimension $n$ with a smooth positive Hermitian metric
$h_n$ such that
\[
 H_{h_n}\equiv0,\qquad R^{h_n}\not\equiv0.
\]
For every such $n$, the metric is balanced and both its first and second
Chern--Ricci tensors vanish.  At a point it admits a unitary frame
in which $R_{1\bar2 4\bar7}=-2$.
\end{theorem}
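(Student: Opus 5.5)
The construction is carried out on the seven-dimensional affine quadric $\mathcal Q^7=\{z\in\C^8:\sum z_j^2=1\}$, viewed as the homogeneous space $SO(8,\C)/SO(7,\C)$, or better as $\mathrm{Spin}(7,\C)/G_2(\C)$. The second description lets the octonions enter directly. The plan has six steps.

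\textbf{Step 1: the metric.} Identify $\C^8$ with the complexified octonions $\mathbb O_\C$, so that the tangent space at the base point $1$ is $\operatorname{Im}\mathbb O_\C\cong\C^7$. We write down an $\mathrm{Spin}(7)$-invariant, or at least $G$-invariant, positive Hermitian metric $h$. Here $G$ is a real form acting transitively and containing the discrete cocompact groups of Kobayashi--Yoshino \cite{CEKY}. Concretely, left-invariant vector fields on $G$ project to a global frame of $\mathcal Q^7$, and we declare a suitable family of $(1,0)$-fields unitary. Invariance under $G$ is the property that lets $h$ descend: for a torsion-free cocompact discrete $\Gamma\subset G$ acting freely and properly, $X_7=\Gamma\backslash\mathcal Q^7$ is a compact complex manifold carrying the descended metric $h_7$.

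\textbf{Step 2: the Chern connection.} In the invariant unitary frame $(e_1,\dots,e_7)$, use \eqref{eq:curvature-coordinate}, or equivalently the Maurer--Cartan structure constants of the frame. The Lie brackets of the frame are governed by octonion multiplication on $\operatorname{Im}\mathbb O$, with structure constants $\phi_{ijk}$ of the $G_2$ three-form. Hence the torsion is $T^\ell{}_{ik}=\lambda\,\phi_{ik\ell}$ for a constant $\lambda$, and the Chern connection forms are linear in $\phi$.

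\textbf{Step 3: the curvature.} We then compute
\[
 R_{i\bar jk\bar\ell}=\kappa\,\psi_{ijk\ell},
\]
where $\psi=\ast\phi$ is the coassociative four-form. Up to normalization, $\psi$ is the associator $[e_i,e_j,e_k]$ paired with $e_\ell$. Because $\psi$ is totally alternating, every symmetrization in $(i,k)$ or in $(j,\ell)$ vanishes. This gives the following consequences.
\begin{itemize}
\item The polarization \eqref{p:eq:HSC} holds with $c=0$, so $H_h\equiv0$.
\item The contractions $\sum_kR_{i\bar jk\bar k}$ and $\sum_kR_{k\bar ki\bar j}$ vanish, so $\Ric^{(1)}=\Ric^{(2)}=0$.
\item Nonassociativity gives a nonzero component. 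With the normalization chosen, this should be $R_{1\bar24\bar7}=-2$.
\end{itemize}

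\textbf{Step 4: balancedness.} The torsion trace is $\eta_i=\sum_rT^r{}_{ri}=\lambda\sum_r\phi_{rir}=0$. By \eqref{n:eq:eta-balanced-formula}, this gives $\partial\omega^{6}=0$, and by reality $d\omega^6=0$.

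\textbf{Step 5: higher dimensions.} For $n>7$ take $X_n=X_7\times\C^{n-7}/\Lambda$ with the product metric. Curvature, the Ricci tensors, HSC and torsion all split, and the flat factor contributes nothing, so every property persists.

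\textbf{Main obstacle.} The hardest step is Steps 2--3: producing an invariant metric whose connection coefficients actually collapse to the $\phi$ structure constants. The worry is that the obvious homogeneous metric is not simply structured. Its Chern curvature includes quadratic terms $\Gamma\Gamma$ and a bracket term, and these must combine into a pure multiple of $\psi$ through the identity
\[
 \phi_{ija}\phi_{k\ell a}=\delta_{ik}\delta_{j\ell}-\delta_{i\ell}\delta_{jk}+\psi_{ijk\ell}.
\]
The $\delta$ terms have the space-form shape and must cancel exactly against the bracket term. The first thing to try is tuning the single scale between the frame and the Killing-form metric so that this cancellation happens. After that we verify positivity, and check that the chosen real form contains the Kobayashi--Yoshino lattice so that the descent in Step 1 is legitimate.
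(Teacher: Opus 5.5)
Your outline has the same architecture as the paper: octonionic quadric, invariant metric, curvature a multiple of the associator four-form, alternation killing $H_h$ and both Ricci contractions, zero torsion trace, lattice quotient, product with a flat torus. The gap is the step that carries the theorem. That the Chern curvature of the invariant metric is exactly $-[u,\bar v,w]_{\mathbb O}$ is asserted rather than derived, and the mechanism you propose for deriving it does not work.

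\begin{itemize}
\item \textbf{No global invariant frame exists.} The isotropy group is the compact $G_2$, which acts irreducibly and nontrivially on $T_1^{1,0}\mathscr Q_7=V_{\C}$. So no left-invariant field of $G$ descends to a nonzero field on $G/G_2$. The global frame of your Step~1 does not exist, and there are no frame structure constants to feed into the Chern connection.
\item \textbf{``Octonionic brackets'' do not determine curvature.} The fields $z\mapsto ze_i$ form a global holomorphic frame of $\mathscr Q_7$, and declaring them unitary gives a Chern-flat metric.
\item \textbf{There is no scale to tune.} The same irreducibility shows that a $G$-invariant Hermitian metric is unique up to a constant factor. A homothety leaves the Chern connection unchanged and multiplies $H_h$ by a constant, so it cannot create the cancellation of the $\delta$ terms you are hoping for.
\item \textbf{The transitive group is misidentified.} The group acting transitively with compact isotropy and carrying the Kobayashi--Yoshino lattices is $\mathrm{Spin}_0(7,1)$, in its half-spin realization inside $SO(8,\C)$. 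It has dimension $28$, so it lies in no real form of $\mathrm{Spin}(7,\C)$.
\end{itemize}

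What is missing is an honest computation of the curvature of that unique metric. The paper does it in three steps.
\begin{itemize}
\item It writes the metric globally as $\mathsf M_z=(|x|^2+|y|^2)I-2iL_{yx^\star}=(g^{-1})^\dagger g^{-1}$ for $z=x+iy=g1$. This matrix is positive because its eigenvalues are $(|x|\pm|y|)^2$.
\item It pulls $\mathsf M_z$ back by the holomorphic chart $z(w)=\sqrt{1-\sum_a(w^a)^2}\,1+\sum_aw^ae_a$. It then expands to second order, keeping the derivatives of the moving tangent frame. With $C_i=e_i\times(\cdot)$ this gives $\mathbf g(0)=I$, $\mathbf g_i(0)=-C_i$, $\mathbf g_{\bar j}(0)=C_j$, and $\mathbf g_{i\bar j}(0)=\delta_{ij}I+C_{e_i\times e_j}-e_je_i^{\mathsf T}$.
\item Then $R_{i\bar j}(0)=\mathbf g_{\bar j}(0)\mathbf g_i(0)-\mathbf g_{i\bar j}(0)$ expands to $-[e_i,e_j,\cdot\,]_{\mathbb O}$. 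The first jets give $T(e_i,e_j)=-2e_i\times e_j$, hence $\eta=0$.
\end{itemize}
Your Steps~3 and~4, including $T=\lambda\phi$ and $R_{1\bar24\bar7}=-2$, become valid only once such a computation is supplied. An equivalent reductive-decomposition calculation that retains the isotropy part of the connection would also do.
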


\subsection{The homogeneous complex sevenfold}

Let $\mathbb O=\R1\oplus V$ be the real normed octonion algebra, where
$V=\Span_{\R}\{e_1,\ldots,e_7\}$ and $1,e_1,\ldots,e_7$ is an
orthonormal basis.  We fix the multiplication table by $e_i^2=-1$,
anticommutation of distinct imaginary units, and the oriented triples
\begin{equation}\label{ce:eq:fano}
 (123),\quad(145),\quad(176),\quad(246),\quad(257),\quad(347),\quad(365).
\end{equation}
For example, $(abc)$ means $e_ae_b=e_c$, $e_be_c=e_a$, and
$e_ce_a=e_b$.  The multiplication is extended complex bilinearly to
$\mathbb O_{\C}$.  A bar means conjugation of complex coefficients,
whereas $a\mapsto a^\star$ denotes octonion conjugation, extended complex
linearly.  These are different operations.  Let $B$ be the complex
bilinear extension of the real Euclidean inner product and set
\begin{equation}\label{ce:eq:quadric}
 \mathscr Q_7=\{z\in\mathbb O_{\C}:B(z,z)=1\}.
\end{equation}
The derivative of $B(z,z)$ in the direction $z$ equals $2$ on this
level set.  Thus $\mathscr Q_7$ is a smooth complex hypersurface and
\[
 T_z^{1,0}\mathscr Q_7=\{u:B(z,u)=0\}.
\]
In particular, its complex dimension is seven and its complex structure
is integrable.

Write $L_v(a)=va$ for left multiplication by $v\in V$.  The Clifford
relations are
\begin{equation}\label{ce:eq:clifford}
 L_v^{\mathsf T}=-L_v,\qquad
 L_uL_v+L_vL_u=-2\langle u,v\rangle I.
\end{equation}
Let $\mathsf G$ be the connected half-spin matrix realization of
$\operatorname{Spin}_0(7,1)$ in $\operatorname{SO}(8,\C)$ whose real
Lie algebra is
\begin{equation}\label{ce:eq:lie-algebra}
 \mathfrak g=
 \Span_{\R}\{L_{e_i}L_{e_j}:i<j\}
 \oplus i\Span_{\R}\{L_{e_j}:1\leq j\leq7\}.
\end{equation}
Here $\mathsf G$ acts on the complex spin module $\mathbb O_{\C}$, and
$K=\operatorname{Spin}(7)\subset\operatorname{SO}(8,\R)$ is its compact
spin subgroup.  The Clifford construction and the associated compact
quadric quotients are described in \cite[Sections~4.2--4.9]{CEKY}.
We give the orbit, stabilizer, and metric arguments explicitly because
the positive Hermitian metric is essential to the counterexample.

\begin{lemma}\label{ce:lem:orbit}
The action of $\mathsf G$ on $\mathscr Q_7$ is transitive.  The stabilizer
$K_{\mathrm{stab}}=\mathsf G_1$ is contained in $K$ and is compact.  More precisely,
$K_{\mathrm{stab}}=\operatorname{Aut}(\mathbb O)=G_2$.
\end{lemma}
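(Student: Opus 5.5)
The plan has three parts: transitivity by the Spin$(7)$ triality picture combined with a noncompact boost, identification of the stabilizer of $1$ as the automorphism group, and compactness/containment in $K$.

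\emph{Transitivity.} Start from the compact part. The real span of $L_{e_i}L_{e_j}$ generates $\operatorname{Spin}(7)$ acting on $\mathbb O=\R^8$ by the spin representation. This action is transitive on the real unit sphere $S^7\subset\mathbb O$, with stabilizer of $1$ equal to $G_2$. So every real unit vector is $K$-equivalent to $1$. Next I will reduce an arbitrary $z=x+iy\in\mathscr Q_7$ (with $x,y$ real, $|x|^2-|y|^2=1$, $\langle x,y\rangle=0$) to a real unit vector. The generators $iL_{e_j}$ give one-parameter subgroups
\[
\exp(itL_v)=\cosh t\,I+i\sinh t\,L_v,\qquad |v|=1,
\]
using $L_v^2=-I$ from \eqref{ce:eq:clifford}. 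Applied to $1$, this gives $\cosh t\cdot1+i\sinh t\cdot v$.

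Using $K$, I first move $x/|x|$ to $1$. Then $y$ becomes orthogonal to $1$, i.e. $y\in V$. Writing $y=|y|v$ and choosing $t$ with $\cosh t=|x|$ and $\sinh t=|y|$ (possible since $|x|^2-|y|^2=1$) exhibits $z$ as $\exp(itL_v)\cdot 1$ up to $K$. This proves that the orbit of $1$ is all of $\mathscr Q_7$. The group $\mathsf G$ does preserve $B$, since it sits in $\operatorname{SO}(8,\C)$, so orbits stay in $\mathscr Q_7$.

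\emph{Stabilizer.} I will use a Cartan decomposition $\mathsf G=K\exp(\mathfrak p)$ with $\mathfrak p=i\Span\{L_{e_j}\}$. This holds because the given real form is $\mathfrak{spin}(7,1)$ with maximal compact $\mathfrak{spin}(7)$: indeed $[\mathfrak p,\mathfrak p]\subset\mathfrak k$ by the Clifford relations, and $\mathfrak p$ consists of Hermitian matrices. Write $g\in\mathsf G_1$ as $g=k\exp(iL_v)$ with $v\in V$ arbitrary. Then
\[
g\cdot1=k\bigl(\cosh|v|+i\sinh|v|\,\hat v\bigr).
\]
Since $k$ is real, the imaginary part of $g\cdot 1$ equals $\sinh|v|\,k\hat v$. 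Requiring $g\cdot1=1$ forces this imaginary part to vanish, so $\sinh|v|=0$ and $v=0$. Hence $g=k\in K$ with $k\cdot1=1$.

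It remains to see that the stabilizer of $1$ in $\operatorname{Spin}(7)$, acting via the spin representation, is $G_2=\operatorname{Aut}(\mathbb O)$. This is the classical fact that $\operatorname{Spin}(7)$ acts on $\R^8$ transitively on $S^7$ with isotropy $G_2$. To match the specific realization, I would argue at the Lie algebra level. The stabilizer algebra $\{\xi\in\mathfrak{spin}(7):\xi(1)=0\}$ has dimension $21-7=14$, by transitivity on $S^7$. Each element of $\mathfrak{spin}(7)$ fixing $1$ acts on $\mathbb O$ as a derivation; this can be checked via the triality relation $\xi(ab)=\xi(a)b+a\,\xi'(b)$ specialised at $a=1$ or $b=1$. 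Hence the stabilizer algebra equals $\operatorname{Der}(\mathbb O)=\mathfrak g_2$. Connectedness of the stabilizer follows from the homotopy sequence of $G\to S^7$ together with simple connectivity of $\operatorname{Spin}(7)$, using $\pi_1(S^7)=\pi_2(S^7)=0$.

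Compactness of $K_{\mathrm{stab}}$ follows from its being a closed subgroup of compact $K$.

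\emph{Main obstacle.} I expect the hardest step to be the identification of the isotropy with $\operatorname{Aut}(\mathbb O)$ in the given sign convention \eqref{ce:eq:fano}, i.e. verifying that the elements $L_{e_i}L_{e_j}$ fixing $1$ act as derivations. My first attempt will be the dimension count combined with the triality identity. Failing that, I would check directly that the stabilizer preserves the multiplication by exhibiting the $7$ relations among the $L_{e_i}L_{e_j}$ that cut out $\mathfrak g_2$.
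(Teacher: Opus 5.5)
Your proposal is correct. Transitivity and the containment $K_{\mathrm{stab}}\subset K$ follow the paper's argument: the same reduction $k^{-1}z=|x|\,1+i\,k^{-1}y$, the same boost $\exp(riL_u)1=\cosh r\,1+i\sinh r\,u$, and the same Cartan-decomposition step in which $\sinh|v|\,k(v/|v|)$ must vanish. One difference there is that the paper proves $K1=S^7$ instead of citing it: $(L_{e_i}L_{e_j})1=e_ie_j$ spans $V$, so $K1$ is open and closed in $S^7$.

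You diverge in identifying the stabilizer with $G_2$. The paper argues at group level:
\begin{itemize}
\item The spin covering satisfies $kL_vk^{-1}=L_{\rho(k)v}$. So $k1=1$ gives $kv=\rho(k)v$ and then $k(va)=(kv)(ka)$, which is $K_1\subset\operatorname{Aut}(\mathbb O)$.
\item Conversely, an automorphism $a$ restricts to $\operatorname{SO}(V)$ and lifts to some $k\in K$. Then $ak^{-1}$ commutes with every $L_v$, so it is $\pm I\in K$ by the scalar commutant of the irreducible Clifford module.
\end{itemize}
This gives the full group equality with no dimension count and no connectedness input.

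Your infinitesimal route also works, but two points should be fixed or stated.
\begin{itemize}
\item In this left-multiplication realization the triality relation carries $\xi$ on the second factor, not the first. Alternativity gives $u(ab)=(ua+au)b-a(ub)$. Hence for $\xi=[L_u,L_v]$ one has $\xi(ab)=\xi'(a)b+a\,\xi(b)$ with $\xi'=[L_u+R_u,L_v+R_v]$. The form you wrote is the right-multiplication mirror. Your specialization argument still goes through with $a$ and $b$ interchanged: $a=1$ gives $\xi'(1)=0$, and $b=1$ together with $\xi(1)=0$ gives $\xi'=\xi$.
\item The chain (stabilizer algebra $\subset\operatorname{Der}(\mathbb O)$, equal dimensions, $K_1$ connected) only yields $K_1=\operatorname{Aut}(\mathbb O)_0$. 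Connectedness of $K_1$ needs only $\pi_1(S^7)=0$ and connectedness of $K$. The stated equality additionally needs two classical facts: $\dim\operatorname{Der}(\mathbb O)=14$ and connectedness of $\operatorname{Aut}(\mathbb O)$.
\end{itemize}
The paper trades these two facts for the equivariance identity, the commutant, and orientation-preservation of automorphisms on $V$. Your route does give $K_1\subset\operatorname{Aut}(\mathbb O)$ without either extra fact, since a connected $K_1$ is generated by exponentials of derivations. That inclusion, together with compactness and real orthogonality, is all the later curvature argument uses.
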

\begin{proof}
The vectors $(L_{e_i}L_{e_j})1=e_ie_j$, for $i<j$, span $V$ by
\eqref{ce:eq:fano}.  The orbit $K1$ therefore has full dimension in the
real unit sphere $S^7$.  It is open, and it is closed since $K$ is
compact.  Connectedness of $S^7$ implies $K1=S^7$.

Write $z=x+iy\in\mathscr Q_7$ with $x,y\in\mathbb O$.  The quadric
equation gives
\begin{equation}\label{ce:eq:quadric-real}
 \langle x,y\rangle=0,\qquad |x|^2-|y|^2=1.
\end{equation}
Choose $k\in K$ with $k1=x/|x|$.  If $y\ne0$, put
$u=k^{-1}y/|y|\in V$ and $r=\operatorname{arsinh}|y|$.  Then $|u|=1$
and $(iL_u)^2=I$, so
\[
 k\exp(riL_u)1
 =k(\cosh r\,1+i\sinh r\,u)=x+iy.
\]
If $y=0$, take $r=0$.  This proves transitivity.

The Cartan decomposition for this representation is
$\mathsf G=K\exp(iL_V)$; the representation and the closedness of its image
are specified in Appendix~\ref{ce:app:construction}.  If
$k\exp(iL_v)1=1$ and $v\ne0$, its imaginary part is
$\sinh|v|\,k(v/|v|)\ne0$, a contradiction.  Thus $v=0$ and $K_{\mathrm{stab}}\subset K$.
The stabilizer is closed, hence compact.  The identification $K_1=G_2$
is proved in the same appendix; only its compactness and real
orthogonality are needed below.
\end{proof}

\subsection{The positive metric and its Chern curvature}

At $1\in\mathscr Q_7$, identify $T_1^{1,0}\mathscr Q_7=V_{\C}$ and
give it the Hermitian form
$h_1(u,\bar v)=\sum_{a=1}^7u_a\overline{v_a}$.
Since $K_{\mathrm{stab}}\subset\operatorname{SO}(8,\R)$ fixes $1$, its action preserves
this form.  Hence
\begin{equation}\label{ce:eq:invariant-metric}
 h_{g1}(gu,\overline{gv})=h_1(u,\bar v)
 \qquad(g\in \mathsf G,\ u,v\in V_{\C})
\end{equation}
defines a well-defined $\mathsf G$-invariant positive Hermitian metric.  Indeed,
two choices of $g$ differ on the right by an element of $K_{\mathrm{stab}}$, so the
definition is independent of the choice.  Local smooth sections of
$\mathsf G\longrightarrow \mathsf G/K_{\mathrm{stab}}$ show that it is smooth.

For the curvature calculation we use the following global matrix formula.  For $z=x+iy\in\mathscr Q_7$ set
\begin{equation}\label{ce:eq:ambient-metric}
 \mathsf M_z=(|x|^2+|y|^2)I-2iL_{yx^\star},\qquad
 h_z(u,\bar v)=\bar v^{\mathsf T}\mathsf M_z u
 \quad (u,v\in z^{\perp_B}).
\end{equation}
In Appendix~\ref{ce:app:construction} we prove that $yx^\star\in V$,
that $\mathsf M_z$ is positive definite, and that for every $g\in \mathsf G$
with $g1=z$,
\begin{equation}\label{ce:eq:metric-gram}
 \mathsf M_z=(g^{-1})^\dagger g^{-1},
\end{equation}
where $\dagger$ denotes the usual conjugate transpose of a complex
matrix.  Consequently \eqref{ce:eq:ambient-metric} is precisely
\eqref{ce:eq:invariant-metric}.

\begin{proposition}\label{ce:prop:curvature}
For the metric \eqref{ce:eq:invariant-metric}, the Chern curvature at
$1$, in the unitary frame $e_1,\ldots,e_7$, is
\begin{equation}\label{ce:eq:associator-curvature}
 R(u,\bar v)w=-[u,\bar v,w]_{\mathbb O},\qquad
 [a,b,c]_{\mathbb O}=(ab)c-a(bc).
\end{equation}
In particular $H_h\equiv0$, whereas
\begin{equation}\label{ce:eq:nonzero-component}
 R(e_1,\bar e_2)e_4=-2e_7,
 \qquad R_{1\bar2 4\bar7}=-2.
\end{equation}
Moreover $h$ is balanced and $\rho^{(1)}=\rho^{(2)}=0$.
\end{proposition}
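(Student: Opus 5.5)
The plan is to compute everything at the base point $1\in\mathscr Q_7$ from the explicit matrix formula \eqref{ce:eq:ambient-metric}, using the coordinate curvature formula \eqref{eq:curvature-coordinate}; by $\mathsf G$-invariance of $h$ nothing is lost. First I take the holomorphic chart $u\mapsto z(u)=\sqrt{1-B(u,u)}\,1+u$, $u\in V_{\C}$ near $0$. Its coordinate vector fields are $\partial_kz=e_k-u_k(1-B(u,u))^{-1/2}1$, which equal $e_k$ at $u=0$. So the coordinates are unitary at the base point, $h_{k\bar l}(u)=\overline{\partial_l z}^{\mathsf T}\mathsf M_{z(u)}\partial_kz$, and I expand this to second order in $(u,\bar u)$.

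Writing $u=a+ib$ with $a,b\in V$, I get $x=1+a+O(2)$, $y=b+O(2)$ and $|x|^2+|y|^2=1+2|y|^2$ by \eqref{ce:eq:quadric-real}. Hence
\[
 yx^\star=b-ba+(\text{second-order real-part corrections})+O(3),
\]
where $a^\star=-a$ for imaginary $a$. Its imaginary part is $b+\operatorname{Im}(a b)$ to second order, and Appendix~\ref{ce:app:construction} already guarantees $yx^\star\in V$. Two contributions then have to be tracked.

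\begin{enumerate}
\item The linear term is $-2iL_b=-(L_u-L_{\bar u})$, with $L_{\bar u}$ meaning complex-linear extension in the conjugate variable. The product term $h^{p\bar q}\partial_ih_{k\bar q}\partial_{\bar j}h_{p\bar l}$ of \eqref{eq:curvature-coordinate} therefore becomes a compression of $L_{e_j}L_{e_i}$-type products to $V$.
\item The $u_i\bar u_j$ coefficient of $-2iL_{yx^\star}$ yields $L_{e_ie_j}$-type terms. The frame corrections $\propto 1$ and the scalar term $2|y|^2 I$ must also be kept; the latter contributes multiples of $\delta_{ij}\delta_{kl}$.
\end{enumerate}

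Assembling these, the aim is the identity $R(e_i,\bar e_j)e_k=-\bigl(L_{e_ie_j}-L_{e_i}L_{e_j}\bigr)e_k=-[e_i,e_j,e_k]_{\mathbb O}$. The Clifford relations \eqref{ce:eq:clifford} should absorb the $\delta$ terms and the component along $1$ produced by $L_{e_i}L_{e_j}e_k$. This second-order bookkeeping (the expansion of $yx^\star$ together with the frame correction) is the step I expect to be the main obstacle. I would organize it by checking that the final tensor is $\operatorname{Aut}(\mathbb O)$-equivariant, which fixes its form up to a few constants, and then fix those constants on two sample triples.

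Once \eqref{ce:eq:associator-curvature} holds, the remaining claims follow from properties of the associator.

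\begin{itemize}
\item \emph{Vanishing HSC.} The associator is alternating, so $R(u,\bar u)u=-[u,u,u]=0$ after complex-bilinear extension, and $H_h\equiv0$.
\item \emph{Nonzero component.} From the Fano triples, $e_1e_2=e_3$ and $e_3e_4=e_7$ give $(e_1e_2)e_4=e_7$. Also $e_2e_4=e_6$ and $e_1e_6=-e_7$ give $e_1(e_2e_4)=-e_7$. So $[e_1,e_2,e_4]=2e_7$, which yields \eqref{ce:eq:nonzero-component}.
\item \emph{Ricci contractions.} For $\rho^{(2)}$, $\sum_kR_{k\bar k i\bar j}$ involves $[e_k,e_k,\cdot]=0$. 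For $\rho^{(1)}$, I take the trace of $w\mapsto[e_i,e_j,w]$ on $V_{\C}$. Alternation gives $[e_i,e_j,e_k]\perp e_k$, using the known total skewness of $\langle[a,b,c],d\rangle$, so every diagonal entry vanishes.
\end{itemize}

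Finally, for balancedness, the torsion trace $\eta$ at $1$ is a covector on $V_{\C}$ invariant under $K_{\mathrm{stab}}=G_2$, by Lemma~\ref{ce:lem:orbit}. Since $G_2$ acts irreducibly on $V$ with no fixed vector, $\eta=0$ at $1$, and by homogeneity everywhere. Then \eqref{n:eq:eta-balanced-formula} gives $\partial\omega_h^{n-1}=0$ and hence $d\omega_h^{n-1}=0$.
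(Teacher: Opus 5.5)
Your curvature computation follows the paper's route. You use the same chart, the second-order jet of $\overline{E}^{\mathsf T}\mathsf M_zE$ including the moving frame, and the coordinate curvature formula. The bookkeeping you leave open works out as you predict. The paper uses the holomorphic germ $\mathsf M(z,\zeta)$ with independent $w=se_i$, $v=te_j$ and obtains $\mathbf g_i(0)=-C_i$, $\mathbf g_{\bar j}(0)=C_j$, and $\mathbf g_{i\bar j}(0)=\delta_{ij}I+C_{e_i\times e_j}-e_je_i^{\mathsf T}$. The ambient mixed term is $\delta_{ij}I+L_{e_i\times e_j}$, coming from your $yx^\star=b+a\times b$ and $2|y|^2$. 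The three terms involving the moving frame net to $-e_je_i^{\mathsf T}$. The product term is $C_jC_i$, a product of compressions, and $E_0^{\mathsf T}L_jL_iE_0=C_jC_i-e_je_i^{\mathsf T}$. So the frame term is exactly what turns the product term into the compression of $L_jL_i$. The Clifford relation then gives $R_{i\bar j}=-E_0^{\mathsf T}(L_{e_ie_j}-L_{e_i}L_{e_j})E_0$, which is your target.

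\emph{Balancedness, where your route differs.} You argue that $\eta$ at $1$ is a $G_2$-fixed covector on $V_{\C}$, hence zero. The paper instead reads $T(e_i,e_j)=-2e_i\times e_j$ off the first jets and uses $\langle e_r\times e_i,e_r\rangle=0$. Both arguments are valid. Yours needs only that $K_{\mathrm{stab}}$ fixes no nonzero vector of $V$; the paper's is a by-product of jets it has already computed.

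\emph{Ricci contractions.} For $\rho^{(1)}$ you quote total skewness of $\langle[a,b,c],d\rangle$ as known. The paper derives it from the reality of the curvature entries together with Hermitian symmetry.

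\emph{Two small corrections.}
\begin{itemize}
\item The HSC numerator is $-[u,\bar u,u]$, not $-[u,u,u]$. It vanishes because the first and third slots coincide, which is the paper's argument.
\item Your equivariance shortcut undercounts. $G_2$-invariant four-tensors on $V$ form a four-dimensional space, spanned by the three $\delta\delta$ pairings and $\langle[e_i,e_j,e_k],e_l\rangle$. Each basis triple detects only one coefficient, so you would need four evaluations, for example index patterns $(1,1,2)$, $(1,2,1)$, $(1,2,2)$, $(1,2,4)$, not two. Each evaluation still requires the same jet bookkeeping, which the paper carries out uniformly in $i,j$.
\end{itemize}
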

\begin{proof}
Use the holomorphic chart
$z(w)=\sqrt{1-\sum_a(w^a)^2}\,1+\sum_aw^ae_a$, with the square-root
branch equal to $1$ at the origin.  Write $\mathbf g(w)$ for the Hermitian
matrix satisfying $h_w(u,\bar v)=\bar v^{\mathsf T}\mathbf g(w)u$ and put
\[
 uv=-\langle u,v\rangle+u\times v,\qquad
 C_u(v)=u\times v,\qquad C_i=C_{e_i}
\]
for real imaginary octonions, extended complex bilinearly.
The matrix $\mathbf g$ is the pullback of $\mathsf M_z$ by the
varying tangent frame of this chart. Its mixed derivative therefore
contains both the ambient metric derivative and the derivatives of
that frame. Appendix~\ref{ce:app:jets} expands these four contributions
and obtains
\begin{equation}\label{ce:eq:metric-first}\equationalias{ce:eq:metric-mixed}
 \mathbf g(0)=I,\quad \mathbf g_i(0)=-C_i,\quad \mathbf g_{\bar j}(0)=C_j,\qquad \mathbf g_{i\bar j}(0)=\delta_{ij}I+C_{e_i\times e_j}
                         -e_je_i^{\mathsf T}.
\end{equation}
The Chern connection matrix is $\Gamma_i=\mathbf g^{-1}\partial_i\mathbf g$.
The identity $\partial_{\bar j}\mathbf g^{-1}
=-\mathbf g^{-1}(\partial_{\bar j}\mathbf g)\mathbf g^{-1}$ gives
\begin{equation}\label{ce:eq:curvature-jet}
 R_{i\bar j}(0)=\mathbf g_{\bar j}(0)\mathbf g_i(0)-\mathbf g_{i\bar j}(0)
 =-C_jC_i-\delta_{ij}I-C_{e_i\times e_j}+e_je_i^{\mathsf T}.
\end{equation}
Expansion of the octonion product shows that the final expression
applied to $e_k$ is $[e_j,e_i,e_k]_{\mathbb O}$; alternativity makes
this $-[e_i,e_j,e_k]_{\mathbb O}$.  The expansion is included in
Appendix~\ref{ce:app:jets}.  Complex multilinearity in the unbarred
slots and conjugate linearity in the barred slot prove
\eqref{ce:eq:associator-curvature} for arbitrary complex vectors.

The same alternating tensor accounts for both sectional and Ricci
vanishing. On the real vector space $V$, define
\[
 \Psi_{\mathbb O}(a,b,d,e)=\langle[a,b,d]_{\mathbb O},e\rangle,
\]
and extend it complex multilinearly. Alternativity and polarization
make the first three slots alternating. The entries of
\eqref{ce:eq:curvature-jet} are real, so Hermitian curvature symmetry
gives
\[
 \Psi_{\mathbb O}(e_i,e_j,e_k,e_l)
 =\Psi_{\mathbb O}(e_j,e_i,e_l,e_k)
 =-\Psi_{\mathbb O}(e_i,e_j,e_l,e_k).
\]
Thus $\Psi_{\mathbb O}$ is alternating in all four arguments.
Every $a\in K_{\mathrm{stab}}=G_2$ preserves octonion multiplication
and the real inner product. Consequently
\[
 \Psi_{\mathbb O}(au,av,aw,ax)
 =\langle a[u,v,w]_{\mathbb O},ax\rangle
 =\Psi_{\mathbb O}(u,v,w,x).
\]
This proves $G_2$-invariance of the real four-form and of its complex
multilinear extension on $V_{\C}$. In the stated complexification convention,
\[
 R(u,\bar v,w,\bar x)=-\Psi_{\mathbb O}(u,\bar v,w,\bar x).
\]
The first and third inputs coincide in a sectional numerator, so it
vanishes for every complex direction. In a real unitary basis, each
summand in either Ricci contraction repeats two inputs of
$\Psi_{\mathbb O}$:
\[
 \rho^{(1)}_{i\bar j}=-\sum_k\Psi_{\mathbb O}(e_i,e_j,e_k,e_k)=0,
 \qquad
 \rho^{(2)}_{k\bar l}=-\sum_i\Psi_{\mathbb O}(e_i,e_i,e_k,e_l)=0.
\]
These identities hold at $1$. Transitivity and invariance of the
Chern connection under holomorphic isometries give $H_h=0$ and both
Ricci tensors zero everywhere. The tensor $\Psi_{\mathbb O}$ is defined
on the model vector space $V_{\C}$.

For nonflatness, the fixed table gives
\[
 (e_1e_2)e_4=e_3e_4=e_7,
 \qquad e_1(e_2e_4)=e_1e_6=-e_7.
\]
The difference is $2e_7$, proving \eqref{ce:eq:nonzero-component}.
Thus alternation annihilates the sectional contraction but does not
annihilate the tensor: the octonion product is not associative, and
the three distinct inputs above detect its associator.

At the origin, \eqref{ce:eq:metric-first} gives
$T(e_i,e_j)=-2e_i\times e_j$.  Since
$\langle e_r\times e_i,e_r\rangle=0$, its trace
$\eta_i=\sum_rT^r{}_{ri}$ is zero.  Invariance makes $\eta=0$
everywhere, and \eqref{n:eq:eta-balanced-formula} gives
$d\omega_h^6=0$.
\end{proof}

\subsection{The compact quotient and the product construction}

\begin{proof}[Proof of Theorem~\ref{ce:thm:allhigher}]
The group $\mathsf G$ is a connected semisimple linear Lie group.  Borel's
uniform-lattice theorem supplies a discrete cocompact subgroup
$\Gamma_0\subset \mathsf G$ \cite[Theorem~C]{CEBorel}.
A cocompact discrete subgroup of a connected Lie group is finitely
generated.  Selberg's lemma therefore supplies a torsion-free subgroup
$\Gamma\subset\Gamma_0$ of finite index; see also
\cite[Proposition~2.3(2), p.~114]{CEBorel}.  The finite-index step preserves
cocompactness, so $\Gamma\backslash \mathsf G$ is compact.

Since $K_{\mathrm{stab}}$ is compact, $\mathsf G$ acts properly on $\mathsf G/K_{\mathrm{stab}}$.  Restriction to the
closed discrete subgroup $\Gamma$ shows that its action on
$\mathscr Q_7=\mathsf G/K_{\mathrm{stab}}$ is properly discontinuous.  To check freeness, if
$\gamma\in\Gamma$ fixes $gK_{\mathrm{stab}}$, then
$\gamma\in\Gamma\cap gK_{\mathrm{stab}}g^{-1}$.  This intersection is a closed
discrete subset of a compact group, hence a finite subgroup.
Every element of it has finite order, so torsion-freeness forces
$\gamma=1$.  Thus
\[
 X_7=\Gamma\backslash\mathscr Q_7
     =\Gamma\backslash \mathsf G/K_{\mathrm{stab}}
\]
is a smooth complex manifold and the quotient map is a holomorphic
covering.  It is connected since $\mathsf G$ is connected.  It is compact
because it is the continuous image of the compact space
$\Gamma\backslash \mathsf G$.

The invariant metric descends to $h_7$ on $X_7$.
The covering is locally a holomorphic isometry, so its Chern curvature
and torsion pull back to those of $h$. Proposition~\ref{ce:prop:curvature}
therefore supplies all the asserted properties on $X_7$.

For $n>7$, take a flat complex torus $A$ of dimension $n-7$, and set
$X_n=X_7\times A$ with its product Hermitian metric.  In product
holomorphic coordinates its metric matrix is block diagonal, so
$\mathbf g^{-1}\partial \mathbf g$ and its $\bar\partial$ derivative are block
diagonal.  Thus the Chern connection and curvature split.  For
$\xi=(u,v)\ne0$ the sectional numerator is
\[
 R^{h_n}(\xi,\bar\xi,\xi,\bar\xi)
 =R^{h_7}(u,\bar u,u,\bar u)+0=0.
\]
The component \eqref{ce:eq:nonzero-component} persists on the first
factor, so the product is not Chern flat.

To verify the remaining assertions for every $n$, choose a product
unitary frame.  The only nonzero torsion and curvature components
have all their indices in the sevenfold factor.  Hence the product
torsion trace restricts to that of $h_7$ on the first factor and is
zero on the torus factor; it vanishes identically.  The identity
\eqref{n:eq:eta-balanced-formula} in dimension $n$, together with
its conjugate, gives $d\omega_{h_n}^{n-1}=0$.
For either Chern--Ricci contraction, components with a free torus
index are zero.  When both free indices lie in the first factor,
all summands with a contracted torus index vanish, while the
remaining sum is exactly the corresponding Chern--Ricci tensor
of $h_7$.  Thus $\rho^{(1)}_{h_n}=\rho^{(2)}_{h_n}=0$.
This proves all assertions simultaneously for every $n\geq7$.
\end{proof}

\begin{remark}\label{ce:rem:scope}
The counterexamples concern the implication from zero Chern HSC to
Chern flatness.  They make no assertion about nonzero constant HSC
or about dimensions four, five, and six.  Their existence is
compatible with Theorem~\ref{n:thm:main}: that theorem implies that
these compact manifolds are outside Fujiki's class $\mathcal C$.

\end{remark}

\appendix
\section{Integral comparison and common-kernel calculations}\label{r16:compression}
\subsection{Curvature contractions for the integrated Chern--Lu identity}
\label{app:icl-calculations}
\begin{proof}[Computations for Lemma~\ref{n:lem:ICL}]
With $D$ defined by \eqref{n:eq:D-coordinate}, the Chern curvature formula
gives, in Tang's conventions,
\begin{equation}\label{n:eq:D-U-C}
D_{i\bar j k\bar\ell}
=h_{p\bar q}T^p_{ik}\overline{T^q_{j\ell}}-C_{i\bar j k\bar\ell},
\end{equation}
where \cite[(2.11)]{TangConstant}
\[
C_{i\bar j k\bar\ell}
=R^h_{i\bar j k\bar\ell}+R^h_{k\bar\ell i\bar j}
-R^h_{i\bar\ell k\bar j}-R^h_{k\bar j i\bar\ell}.
\]
Here $T$ is the Chern torsion of $h$.

Since $H_h\equiv c$, polarization of the quartic identity
$R^h(\xi,\bar\xi,\xi,\bar\xi)=c|\xi|_h^4$ gives
\begin{equation}
R^h_{i\bar j k\bar\ell}+R^h_{k\bar j i\bar\ell}
 +R^h_{i\bar\ell k\bar j}+R^h_{k\bar\ell i\bar j}
=2c\bigl(h_{i\bar j}h_{k\bar\ell}
+h_{i\bar\ell}h_{k\bar j}\bigr).\label{n:eq:polarization}
\end{equation}
Adding \eqref{n:eq:polarization} to the identity obtained from
\eqref{n:eq:D-U-C}, and then dividing by two, yields
\begin{equation}\label{n:eq:general-tang}
R^h_{i\bar j k\bar\ell}+R^h_{k\bar\ell i\bar j}
=c\bigl(h_{i\bar j}h_{k\bar\ell}+h_{i\bar\ell}h_{k\bar j}\bigr)
+\frac12h_{p\bar q}T^p_{ik}\overline{T^q_{j\ell}}
-\frac12D_{i\bar j k\bar\ell}.
\end{equation}
When $h$ is pluriclosed, \eqref{n:eq:general-tang} reduces to Tang's
curvature identity \cite[(4.1)]{TangConstant}. Here we retain the
$i\partial\bar\partial\omega_h$ term.

Fix a point $x\in X$.  Choose holomorphic coordinates that are normal for
$g$ at $x$ and diagonalize $h$ there:
\[
g_{i\bar j}=\delta_{ij},\qquad \partial g=0,
\qquad h_{i\bar j}=\lambda_i\delta_{ij}.
\]
Set
\[
\mathfrak s_g=g^{i\bar j}g^{k\bar\ell}R^h_{i\bar j k\bar\ell},
\qquad
\mathcal T=\sum_{i,k,p}\lambda_p|T^p_{ik}|^2.
\]
Contracting \eqref{n:eq:general-tang} by
$g^{i\bar j}g^{k\bar\ell}$, the two curvature terms on its left have the
same contraction after interchanging $i$ and $k$.  Hence
\[
2\mathfrak s_g=c(u^2+q)+\frac12\mathcal T-\frac12\mathcal D_g,
\]
This gives \eqref{n:eq:Q-contraction}.

To verify \eqref{n:eq:D-wedge}, choose a $g$-unitary coframe
$\{\varphi^1,\ldots,\varphi^n\}$ at $x$. In the convention
\eqref{n:eq:D-ordered-form}, wedging with $\omega_g^{n-2}$ leaves only
the terms with matching unordered holomorphic and antiholomorphic index
pairs. Consequently
\[
i\partial\bar\partial\omega_h
\wedge\frac{\omega_g^{n-2}}{(n-2)!}
=\left(\sum_{i<k}D_{i\bar i k\bar k}\right)dV_g
=\frac12\,g^{i\bar j}g^{k\bar\ell}
D_{i\bar j k\bar\ell}\,dV_g
=\frac12\mathcal D_g\,dV_g.
\]
This proves \eqref{n:eq:D-wedge}.
\end{proof}

\subsection{The kernel transport formula}
\label{app:kernel-transport-calculation}
\begin{proof}[Proof of Lemma~\ref{hd-transport-lemma}]
Lemma~\ref{lem:general-curvature-transport} gives the general
transport identity \eqref{hd-full-F}. With the notation of
Lemma~\ref{hd-transport-lemma}, contract it with $v,w$ to obtain
\begin{equation}\label{hd-P-coordinate-free-transport}
\begin{aligned}
 2(\nabla_uP)(\cdot,\cdot)(v,w)
 ={}&\tfrac12\{w(u)\vartheta_v+v(u)\vartheta_w
       +w\wedge\iota_u\vartheta_v+v\wedge\iota_u\vartheta_w\}\\
 &-T_u\cdot P(\cdot,\cdot)(v,w)-(\iota_uP(\cdot,\cdot)(v,w))\circ T.
\end{aligned}
\end{equation}
Now use
$T_u\cdot(v\wedge w)=-w\wedge\iota_u\vartheta_v+v\wedge\iota_u\vartheta_w$
and $(\iota_u(v\wedge w))\circ T=v(u)\vartheta_w-w(u)\vartheta_v$.
Replacing $2P$ by $\mathscr B+v\wedge w$ proves the formula.
\end{proof}

\subsection{The common-kernel defect equations}
\label{app:kernel-defect-calculation}
\begin{proof}[Proof of Proposition~\ref{hd-defect-equations}]
Put $N(X_0)=\pi_ER(Y,\bar Y)X_0$ and
$\mathcal A(X_0)=\pi_ER(X_0,\bar Y)Y$.
Equation~\eqref{hd-kernel-curvature} and Hermitian reality give
$N=N^*$, $N+\mathcal A^*=2cI$, and therefore $\mathcal A=\mathcal A^*$.
They also give
\[
 R(X_0,\bar Z,Y,\bar Y)=-h(\mathcal AX_0,Z),\qquad
 R(Y,\bar Z,X_0,\bar Y)=h(\mathcal AX_0,Z)
 \quad(X_0,Z\in E).
\]
Differentiating the zero $L$ component of $T(X_0,Y)=-{\mathsf S_L}X_0$ along
$\bar Z$ contributes $h({\mathsf S_L}X_0,{\mathsf B_L}Z)$ from the output connection.
First Bianchi therefore gives $2\mathcal A={\mathsf B_L}^*{\mathsf S_L}$, hence
\begin{equation}\label{hd-curvature-H}
 {\mathsf H_L}={\mathsf H_L}^*,\qquad \mathcal A={\mathsf H_L}/2,\qquad N=2cI-{\mathsf H_L}/2.
\end{equation}
First Bianchi in $Y,X_0,\bar Y$ gives
$D_-{\mathsf S_L}=\mathcal A-N={\mathsf H_L}-2cI$; the possible tangent component of
$\nabla_{\bar Y}X_0$ contributes $T(Y,Y)=0$.

At a point choose frames with zero induced $L,E$ connection coefficients.
The off-diagonal coefficients are
$\pi_E\nabla_{X_0}Y={\mathsf B_L}X_0$, $\pi_E\nabla_YY=a$, and
$\pi_L\nabla_{\bar Y}X_0=-h(X_0,a)Y$.
The normal component of $R(X_0,\bar Y)Y$ is
$-(D_-{\mathsf B_L})X_0-aa^*X_0$, proving the equation for ${\mathsf B_L}$.
Similarly, $R(Y,\bar Y)Y=cY$ has normal component $-D_-a$, so
$D_-a=0$. Finally,
$R(U,\bar Y,V_0,\bar Y)=c\nu(U)\nu(V_0)$ is symmetric in $U,V_0$.
First Bianchi and the derivative of $\nu\circ T=0$ give
\begin{equation}\label{hd-normal-annihilator}
 (\nabla_{\bar Y}\nu)\circ T=0.
\end{equation}
Its normal part is $a^*\circ T=0$, so ${\mathsf S_L}^*a=0$.
\end{proof}

\subsection{Differential identities along a totally geodesic leaf}
\label{app:kernel-differential-calculation}
\begin{proof}[Proof of Lemma~\ref{neg-differential-system}]
The mixed equations are \eqref{hd-defect-system} with $a=0$; the
corresponding curvature blocks are
\begin{equation}\label{neg-curvature-blocks}
 \pi_ER(Y,\bar Y)|_E=2cI-{\mathsf H_L}/2,\qquad
 \pi_ER(\,\cdot\,,\bar Y)Y={\mathsf H_L}/2.
\end{equation}
Write $\nabla_{X_0}Y={\mathsf B_L}X_0+\ell_{X_0}Y$, $\nabla_YY=\ell_YY$,
and $\nabla_YX_0=D_YX_0$. Then
$[X_0,Y]_E={\mathsf B_L}X_0-D_YX_0+{\mathsf S_L}X_0$.
The normal component of $R(X_0,Y)Y=0$ is
$-(D_Y{\mathsf B_L})_Y(X_0)-{\mathsf B_L}({\mathsf B_L}+{\mathsf S_L})X_0$, which proves the pure equation.
Finally, metric compatibility gives
\[
 D_-{\mathsf H_L}=-({\mathsf B_L}^*+{\mathsf S_L}^*){\mathsf H_L}+{\mathsf B_L}^*({\mathsf H_L}-2cI)=-{\mathsf S_L}^*{\mathsf H_L}-2c{\mathsf B_L}^*;
\]
its adjoint is the equation for $D_+{\mathsf H_L}$.
\end{proof}

\subsection{The weighted divergence formula}
\label{app:kernel-divergence-calculation}
\begin{proof}[Proof of Lemma~\ref{neg-divergence-lemma}]
In holomorphic coordinates,
\[
 \operatorname{div}(V_0+\overline{V_0})
 =2\Rea\left(\sum_i\nabla_iV_0^i-\eta_iV_0^i\right),\qquad
 \sum_i\Gamma^i{}_{ik}-\eta_k=\partial_k\log\det h.
\]
With $V_0=\overline wY/2$, the horizontal trace, leaf derivative,
and torsion terms give respectively
$\Rea(\overline w\beta)$, $\Rea(D_-w)$, and $\Rea(\overline ws)$.
The second identity follows by the product rule.
\end{proof}

\section{Source recovery and the local threefold certificates}\label{app:alternative}\label{auto-reg:inputs}\label{alg14:coprime-section}
After inverse polarization and the recurrent certificate, the
calculations follow the order of Section~\ref{b:sec:threefold}:
regular sources first, then the rank-one cut arising from a singular
pencil.
\paragraph*{Index of the calculations in Appendices~\ref{app:alternative}--\ref{c0:app:c0-ranktwo}.}
The following entries specify the inputs and outputs of the longer
calculations. In each case, differentiate the tensor identities on their
stated open set before imposing a pointwise frame normalization.
\begin{description}
\item[Regular sources]
For distinct factors, the nine derivatives of $(a,d,v)$ remain free;
the seven tensor residuals listed before
\eqref{alg14:distinct-unit} combine to $-1$.
For repeated factors, determinant transport applied to
\eqref{geo14:repeated-normal-family} gives
\eqref{geo14:repeated-parameter-roots}, retaining the derivatives of
$\kappa$ and $\delta$. The endpoint checks retain all $21$ derivatives
of $(a,v,\kappa)$ in \eqref{geo14:polynomial-F} and
\eqref{geo14:pure-slot-definition}.
The $\mathcal E$ rows are polynomial coefficients, so off-diagonal
coefficients are twice symmetric tensor entries; $\mathcal U$ uses
tensor entries. Equations \eqref{geo14:minus-quarter} and
\eqref{geo14:plus-half} exclude the two endpoints.
\item[Nonorthogonal cuts]
The variables are the chart moduli, their derivatives, the unrestricted
connection entries, and all nine real entries of $Q$.
Equation \eqref{geo13:two-source-identities} uses $162$ interlaced real
rows: barred, then first rows, for each direction, with real part before
imaginary part. Its scaling rules retain $d\varrho$.
The root calculations instead use the $93$ complex rows
\eqref{geo13:residual-order}--\eqref{geo13:Riccati-rows}, denoting their
parts by $R_j,I_j$. Their outputs are
\eqref{geo13:zero-twelve}, \eqref{geo13:generic-identity}, and
\eqref{geo13:rzero-identity}; $dr$ remains free at $r=0$.
\item[Recurrent torsion]
Appendix~\ref{app:recurrent-certificate} retains $18$ complex
mixed-curvature entries and a general Hermitian $Q$.
The first $108$ real rows are the symmetrized mixed residual divided
by $\varrho$; the last $24$ are unscaled barred tangent rows.
Independent $\nabla'Q$ terms cancel under symmetrization. The two
displayed combinations give $a^2c$ and $7c+a\mathscr K_1$.
\item[Zero-curvature identities]
Appendix~\ref{c0:app:c0-identities} uses tensor contractions, without
row numbering. The free second derivatives in the barred divergence
cancel after taking the real part; the free
$\nabla'\overline{\cP}$ source is retained in the full Hessian
calculation. The outputs are \eqref{c0:eq:barred} and
\eqref{c0:up:hessian}.
\item[Output rank two]
Appendix~\ref{c0:app:c0-ranktwo} orders the $27$ pure and proportionality
equations and $21$ unknowns explicitly. Its full solution leaves the four free
jets $H_2^{00},H_2^{01},H_2^{11},t_2$.
Lemma~\ref{c0:r2:betaell} generates $B_0,B_1,B_{21},B_{29}$ from
the named differentiated Ricci residuals; their full expansions retain
the conjugate-curvature derivatives until the final cancellation.
The output is $16R^2\varkappa^2\beta_{\bar2,1}$.
Only the stated nonzero $R,\varkappa$ are divided out; the component
$\beta_{\bar2,1}$ is free during the expansion.
\end{description}

\subsection{Inverse polarization for source recovery}
\begin{lemma}[Recovering a biquadratic polynomial]
\label{lem:biquadratic-all-n}
Let $W$ have dimension $n\ge2$, and let $F(w,z)$ be homogeneous of degree
two in each variable.  Put
\[
 F^\pm(w,z)=\tfrac12\{F(w,z)\pm F(z,w)\},\quad
 d_F(z)=F(z,z),\quad
 j_F(z)[u]=\left.\frac{d}{dt}F^-(z+tu,z)\right|_{t=0}.
\]
The two conditions $d_F=0$ and $j_F=0$ are equivalent to a unique
representation
\begin{equation}\label{eq:biquadratic-curvature}
 F(w,z)=\mathcal R(w,z,w,z),
\end{equation}
where $\mathcal R$ is alternating in its first two and last two slots,
is unchanged by exchanging these pairs, and satisfies
\[
 \mathcal R(a,b,c,d)+\mathcal R(b,c,a,d)+\mathcal R(c,a,b,d)=0.
\]
More explicitly, write $F(w,z)=\mathcal T(w,w;z,z)$ by polarizing each
pair.  Under the two conditions above,
\begin{equation}\label{eq:biquadratic-explicit-recovery}
 \mathcal R(a,b,c,d)=\tfrac23\{
 \mathcal T(a,c;b,d)-\mathcal T(a,d;b,c)\}.
\end{equation}
\end{lemma}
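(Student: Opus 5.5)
The plan is to treat this as classical linear algebra on the space of biquadratic forms. Write $F(w,z)=\mathcal T(w,w;z,z)$, where $\mathcal T$ is symmetric in its first pair and in its second pair. The argument has two directions. For the easy one, suppose $F(w,z)=\mathcal R(w,z,w,z)$ with $\mathcal R$ an algebraic curvature tensor. Then $d_F(z)=\mathcal R(z,z,z,z)=0$ by alternation in the first pair. Pair symmetry gives $\mathcal R(z,w,z,w)=\mathcal R(w,z,w,z)$, so $F$ is symmetric under $w\leftrightarrow z$. Hence $F^-=0$ and in particular $j_F=0$.

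For the converse, define $\mathcal R$ by \eqref{eq:biquadratic-explicit-recovery}.
\begin{enumerate}
\item Alternation in $(a,b)$. Swapping $a,b$ in $\mathcal T(a,c;b,d)-\mathcal T(a,d;b,c)$ gives $\mathcal T(b,c;a,d)-\mathcal T(b,d;a,c)$. Showing this equals the negative of the original uses exactly the two hypotheses. Expanding $d_F=0$ by full polarization says that the total symmetrization of $\mathcal T$ over all four slots vanishes. The condition $j_F=0$ is the first-order diagonal derivative of $F^-$. Polarized, it says that $\mathcal T(a,b;c,c)-\mathcal T(c,c;a,b)$ plus its partner terms vanish. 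More precisely, it expresses $\mathcal T(u,z;z,z)=\mathcal T(z,z;u,z)$, which polarizes to the statement that the symmetrization of $\mathcal T-\mathcal T^{\mathrm{swap}}$ over three slots is zero.
\item The remaining symmetries. Alternation in $(c,d)$ is immediate from the definition. Pair symmetry and the cyclic Bianchi identity then follow from the same two polarized relations.
\item Recovery of $F$. Evaluate $\mathcal R(w,z,w,z)=\tfrac23\{\mathcal T(w,w;z,z)-\mathcal T(w,z;z,w)\}$. Use the relations to show $\mathcal T(w,z;z,w)=-\tfrac12F(w,z)$ (after symmetrization), which yields $F$.
\end{enumerate}

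The cleanest route is by representation theory and a dimension count. The space $\Sym^2W^*\otimes\Sym^2W^*$ decomposes under $GL(W)$ into the swap-symmetric part $\Sym^2(\Sym^2W^*)$ and the swap-antisymmetric part $\Lambda^2(\Sym^2W^*)$. The first further splits as $\Sym^4W^*\oplus\mathcal C(W)$, where $\mathcal C(W)$ is the space of algebraic curvature tensors. The condition $d_F=0$ kills exactly the $\Sym^4$ component. The derivative $j_F$ is the $GL$-equivariant map from $\Lambda^2(\Sym^2W^*)$ (an irreducible $\mathbb S_{(3,1)}$) to cubic-times-linear forms. Since it is nonzero on an irreducible module, it is injective. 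Therefore the two conditions cut out $\mathcal C(W)$ exactly. Uniqueness follows because $\mathcal R\mapsto\mathcal R(w,z,w,z)$ is injective on curvature tensors, which is the standard fact that sectional-type data determine $\mathcal R$.

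The main obstacle is fixing the constant $\tfrac23$ in the explicit formula. I would check it on a single basis tensor in dimension two, for example $F=(w_1z_2-w_2z_1)^2$. For this $F$ one computes $\mathcal T$ directly, applies \eqref{eq:biquadratic-explicit-recovery}, and confirms that it returns the tensor with $\mathcal R(e_1,e_2,e_1,e_2)=1$. Equivariance together with the irreducibility of $\mathcal C(W)$ under $GL(W)$ then extends the formula to all curvature tensors by linearity. The only nontrivial point in the converse direction is injectivity of $j_F$ on the antisymmetric part. If the equivariant argument seems too slick, I would verify it by showing that $F^-$ is recovered from its first diagonal derivative via Euler's identity, since $F^-$ vanishes on the diagonal.
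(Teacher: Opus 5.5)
Your proof is correct, but it takes a different route from the paper's.

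\textbf{The paper's route.} The paper stays elementary throughout:
\begin{enumerate}
\item It polarizes $j_F=0$ into $\mathcal T^-(u,v;x,y)+\mathcal T^-(u,x;v,y)+\mathcal T^-(u,y;v,x)=0$.
\item It interchanges $u,v$ and adds. Pair antisymmetry cancels the last two terms, which gives $\mathcal T^-=0$.
\item With $\mathcal T$ now pair-symmetric, $d_F=0$ polarizes to the three-pairing identity $\mathcal T(a,b;c,d)+\mathcal T(a,c;b,d)+\mathcal T(a,d;b,c)=0$.
\item From this identity the curvature symmetries of the explicit formula are checked by hand. The value $\mathcal T(w,z;w,z)=-F/2$ then gives $\mathcal R(w,z,w,z)=F$.
\item Uniqueness and the converse come from the inverse polarization $\mathcal T(a,b;c,d)=\tfrac12\{\mathcal R(a,c,b,d)+\mathcal R(a,d,b,c)\}$.
\end{enumerate}
The swap-and-add step in (2) is exactly what your elementary Step~1 leaves unstated. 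It is also what recovers $F^-$ from its diagonal derivative; Euler's identity at the diagonal only yields $j_F(z)[u]=0$ for $u=z$.

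\textbf{Your route.} You replace these hand verifications with the decomposition $\Sym^2W^*\otimes\Sym^2W^*=\mathbb S_{(4)}\oplus\mathbb S_{(2,2)}\oplus\mathbb S_{(3,1)}$ and Schur's lemma. This explains structurally why the two conditions single out curvature tensors. The cost is that you import the plethysm, the identification $\mathcal C(W)\cong\mathbb S_{(2,2)}W^*$, and characteristic-zero complete reducibility. The paper's argument is self-contained and needs only division by $2$ and $3$.

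Two small verifications should be made explicit in your version:
\begin{itemize}
\item You assert but do not exhibit that $j$ is nonzero on $\Lambda^2(\Sym^2W^*)$. For instance, $F=w_1^2z_1z_2-z_1^2w_1w_2$ gives $j_F(z)[u]=z_1^2z_2u_1-z_1^3u_2$.
\item Your check on $(w_1z_2-w_2z_1)^2$ must compare the full output tensor, not just the $(e_1,e_2,e_1,e_2)$ component. Alternation in $(a,b)$ is not built into the formula. It does hold here: $\mathcal T(a,c;b,d)-\mathcal T(a,d;b,c)=\tfrac32(a_1b_2-a_2b_1)(c_1d_2-c_2d_1)$.
\end{itemize}

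Extending from that one tensor to all of $\mathcal C(W)$ is then valid, though not quite ``by linearity.'' The difference of the two equivariant maps out of the irreducible $\mathcal C(W)$ has a kernel containing a nonzero vector, so the difference vanishes. A multiplicity argument alone would not suffice, since $\mathbb S_{(2,2)}$ occurs twice in $(W^*)^{\otimes4}$.
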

\begin{proof}
Let $\mathcal T^-$ be the part antisymmetric under exchange of its pairs.
The condition $j_F=0$ says $\mathcal T^-(u,z;z,z)=0$.
Polarizing the three occurrences of $z$ gives
\[
 \mathcal T^-(u,v;x,y)+\mathcal T^-(u,x;v,y)
                   +\mathcal T^-(u,y;v,x)=0.
\]
Interchanging $u,v$ and adding cancels the last two terms by pair
exchange, so $\mathcal T^-=0$. Polarization of $F(z,z)=0$ now gives
\begin{equation}\label{eq:biquadratic-three-pairings}
 \mathcal T(a,b;c,d)+\mathcal T(a,c;b,d)
                         +\mathcal T(a,d;b,c)=0.
\end{equation}
This verifies the claimed symmetries of the recovery formula; its
cyclic sum cancels term by term. Taking $a=c=w$, $b=d=z$ gives
$\mathcal T(w,z;w,z)=-F(w,z)/2$, hence
$\mathcal R(w,z,w,z)=F(w,z)$.
Conversely, polarization recovers
\[
 \mathcal T(a,b;c,d)=\tfrac12\{
 \mathcal R(a,c,b,d)+\mathcal R(a,d,b,c)\}.
\]
The symmetries and cyclic identity recover the same $\mathcal R$,
proving uniqueness and the converse.
\end{proof}

\begin{corollary}[Six-value recovery]
\label{cor:six-evaluations}
Suppose $F$ is exchange-symmetric, biquadratic on $\C^3$, and $F(z,z)=0$.
There is a unique symmetric matrix $\Sigma$ for which
\[
 F(w,z)+\tfrac12\Sigma(z\times w,z\times w)=0.
\]
Set $q_0=F(e_1,e_2)$, $q_1=F(e_2,e_0)$ and $q_2=F(e_0,e_1)$.
Then
\begin{equation}\label{eq:six-evaluations}
\begin{aligned}
\Sigma_{ii}&=-2q_i\quad(0\le i\le2),\qquad \Sigma_{01}=q_0+q_1-F(e_0-e_1,e_2),\\
\Sigma_{02}&=q_0+q_2-F(e_2-e_0,e_1),\qquad \Sigma_{12}=q_1+q_2-F(e_1-e_2,e_0).
\end{aligned}
\end{equation}
\end{corollary}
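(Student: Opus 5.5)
The plan is to derive the corollary from Lemma~\ref{lem:biquadratic-all-n} with $n=3$. Exchange symmetry makes $F^-=0$, so the condition $j_F=0$ holds trivially, and $d_F=0$ is the hypothesis. The lemma therefore yields a unique algebraic curvature tensor $\mathcal R$ on $\C^3$, alternating in each pair, pair-symmetric and satisfying the first Bianchi identity, with $F(w,z)=\mathcal R(w,z,w,z)$.

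In dimension three $\Lambda^2\C^3$ is identified with $\C^3$ through $(a,b)\mapsto a\times b$, which is the parallel epsilon identification. A bilinear form on $\Lambda^2$ that is symmetric under pair exchange then corresponds to a symmetric $3\times3$ matrix. The Bianchi identity imposes no further condition, because $\Lambda^4\C^3=0$. Hence every such $\mathcal R$ has the form $\mathcal R(a,b,c,d)=-\tfrac12\Sigma(a\times b,c\times d)$ for a unique symmetric $\Sigma$. Since $(w\times z)=-(z\times w)$ enters quadratically, this gives $F(w,z)=-\tfrac12\Sigma(z\times w,z\times w)$. Uniqueness of $\Sigma$ follows from uniqueness of $\mathcal R$ together with injectivity of this identification. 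Equivalently, the map $\Sigma\mapsto\Sigma(z\times w,z\times w)$ is injective, which the evaluation formulas below also show directly.

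To obtain \eqref{eq:six-evaluations}, evaluate the identity $F(w,z)=-\tfrac12\Sigma(z\times w,z\times w)$ at the listed pairs.
\begin{itemize}
\item For $(w,z)=(e_1,e_2)$ the cross product is $\pm e_0$, which gives $q_0=-\tfrac12\Sigma_{00}$; the other two diagonal entries follow cyclically.
\item For $(w,z)=(e_0-e_1,e_2)$ one has $e_2\times(e_0-e_1)=e_1+e_0$ up to sign. Hence $F(e_0-e_1,e_2)=-\tfrac12(\Sigma_{00}+2\Sigma_{01}+\Sigma_{11})=q_0+q_1-\Sigma_{01}$, which is the stated formula for $\Sigma_{01}$.
\item The entries $\Sigma_{02}$ and $\Sigma_{12}$ follow in the same way from $e_1\times(e_2-e_0)$ and $e_0\times(e_1-e_2)$.
\end{itemize}

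The only real care needed is the sign bookkeeping for the cross products with the convention $\eps_{012}=1$. The off-diagonal formulas are insensitive to these signs, since each cross product enters quadratically, but I would check that the two summed components appear with the same relative sign, so that the cross term is $+2\Sigma_{01}$. With the orientation choices in \eqref{eq:six-evaluations} this does hold.
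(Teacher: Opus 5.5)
Your proposal is correct and follows the paper's proof. You get existence and uniqueness from Lemma~\ref{lem:biquadratic-all-n} ($j_F=0$ being automatic from exchange symmetry), together with $\Lambda^4\C^3=0$ and the cross-product identification of $\Lambda^2\C^3$. You then read off the entries from the six evaluations, where the cross products are $\pm e_i$ and, exactly as you check with $\eps_{012}=1$ (e.g.\ $e_2\times(e_0-e_1)=e_0+e_1$), $e_0+e_1$, $e_0+e_2$, $e_1+e_2$, so that the cross terms are indeed $+2\Sigma_{01}$, $+2\Sigma_{02}$, $+2\Sigma_{12}$.
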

\begin{proof}
Existence and uniqueness follow from the preceding lemma and
$\Lambda^4\C^3=0$.  The first three evaluations give the diagonal
entries.  The other three cross products are $e_0+e_1$, $e_0+e_2$ and
$e_1+e_2$, respectively; subtract the already known diagonal terms.
\end{proof}

\subsection{The recurrent mixed certificate}\label{app:recurrent-certificate}
For the mixed commutator, put $\mathcal P=c\mathsf P$ and
\[
D=\operatorname{Ric}^{(3)*}-\operatorname{Ric}^{(1)}.
\]
Set $\mathcal F=cF(C,\mathsf P)$ and use the full curvature reconstruction
\eqref{c0:eq:decomp}, with $Q=Q^*$. The contracted first Bianchi
identity gives $\nabla_{\bar j}\eta_p=-D_{p\bar j}$.
Apply \(\nabla_p\) to \eqref{eq:fund-barC} and subtract
\(\nabla_{\bar j}(\nabla_pC)\), using
\(\nabla_pC=2\eta_pC\).  The mixed Chern torsion is zero,
so the result is \(R_{p\bar j}\cdot C\).  Adding the equation
with \(j,l\) interchanged cancels only the terms
\(\eps_{jlb}\nabla_pQ_{ab}\).  After division by four,
the symmetrized mixed commutator is
\[
M_p^{a,jl}={}\mathcal F_p^{a,jl}
       -2\eta_p\mathcal P^{a,jl}
       -\tfrac12(D_{p\bar j}C^{al}+D_{p\bar l}C^{aj}) +\tfrac14\bigl((R_{p\bar j}\cdot C)^{al}
                     +(R_{p\bar l}\cdot C)^{aj}\bigr)=0 .
\]
The curvature action is
\[
 R_{p\bar j}\cdot C=R_{p\bar j}C+
      CR_{p\bar j}^{\mathsf T}
      -\operatorname{Ric}^{(1)}_{p\bar j}C.
\]
At one point choose a unitary frame with
\(C=\varrho e_0\otimes(ae_0+e_2)\), \(\varrho>0\), \(a\ge0\).
The nonzero transverse factor exists because \(\eta\ne0\).
Divide the linear mixed system by \(\varrho\), without setting \(d\varrho=0\).
Let \(\mathcal I=(00,01,02,11,12,22)\).
Order the real and imaginary parts of \(M_p^{a,jl}\) by
\((p,a,(j,l)\in\mathcal I)\), with \(E_{2t}=\operatorname{Re}(M_t/\varrho)\) and \(E_{2t+1}=\operatorname{Im}(M_t/\varrho)\), to obtain
\(E_0,\ldots,E_{107}\).
For \(k=1,2\), \(j=0,1,2\), append the real and imaginary parts
of \(B_{\bar j}^{k0}-aB_{\bar j}^{k2}\) and \(B_{\bar j}^{k1}\),
where
\[
 B_{\bar j}^{kl}
 =-2\mathcal P^{k,jl}-2\eps_{jlb}Q_{kb}.
\]
These last \(24\) residuals are used without division by \(\varrho\).
They are precisely the barred rank-one tangent equations, giving
\(132\) real residuals in total.

Write
\(\mathcal P^{\mu,bd}=p_{6\mu+h}+\mathrm i q_{6\mu+h}\) for
\((b,d)=\mathcal I_h\), and \(Q_{02}=r_2+\mathrm i s_1\).
The full curvature reconstruction gives
\[
S:=(a^2+2)E_{24}
 +\tfrac{a^3}{9}(8E_{130}+4E_{68}+7E_{10})-4aE_{96}
+\tfrac a2E_{108}+\tfrac{a^2}{2}E_{116}
 +\tfrac{a^2}{3}E_{72}=a^2c,
\]
\[
 R_0:=E_{62}+6E_{116}+E_{122}-8E_{52}-48E_{100}
       =7c+a\mathscr K_1,\qquad
 \mathscr K_1=\frac{9p_1+7p_{16}-9r_2}{4}.
\]
All residuals vanish. The first identity and $c\ne0$ give $a=0$;
the second then gives $7c=0$, a contradiction.
\subsection{Reduction to a linear source}\label{quot14:section}
\begin{lemma}[Exclusion of the radial endpoint]
\label{quot14:radial-endpoint}
There is no open set with \(q\ne0\), \(\Delta\ne0\), and
\[
 p=2\rho Az-qr,\qquad \rho=r^{\mathsf T}z,\qquad
 \Delta=q\ell,\quad \ell\mid\mathfrak r.
\]
\end{lemma}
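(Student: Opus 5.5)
The plan is to show that the radial normal form is too rigid to survive the Chern transport and the divisibility constraints. The argument is algebraic at the base point, followed by one differentiation. We work on the putative open set, in a determinant-one frame with a $\nabla'$-parallel volume section, as in Lemma~\ref{lem:local-factor-charts}. We keep $r$, $\ell$ and their derivatives as unknowns and normalize the frame only at a single point after differentiating.

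\textbf{Step 1 (the pencil).} Compute $B$ from $p=2\rho Az-qr$. Since $\partial_z\rho=r$ and $\partial_zq=2Az$,
\[
J_p=2Az\,r^{\mathsf T}+2\rho A-2r z^{\mathsf T}A,\qquad
B=J_p^{\mathsf T}-[z]_\times=2\rho A+2r z^{\mathsf T}A-2Az\,r^{\mathsf T}-[z]_\times .
\]
The two rank-one terms combine into $2[Az\times r]_\times$ up to sign, because $xy^{\mathsf T}-yx^{\mathsf T}=[y\times x]_\times$. So $B=2\rho A+[\pm2(Az\times r)-z]_\times$.

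Next compute $\Delta=\det(2\rho A+[v]_\times)$ using $\det(S+[v]_\times)=\det S+v^{\mathsf T}Sv$ for symmetric $S$. This gives
\[
\Delta=8\rho^3\det A+2\rho\,v^{\mathsf T}Av .
\]
The condition $\Delta=q\ell$ then becomes a polynomial identity in $z$. We expect it to force $\det A=0$ unless $\rho\mid q$, and otherwise to pin down $\ell$ as a multiple of $\rho$. In particular, $\Delta=q\ell$ with $\ell\propto\rho$ holds, after expanding $v^{\mathsf T}Av$, only when $r$ satisfies a relation with $A$, such as $\operatorname{adj}A\,r$ being tied to $r$.

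\textbf{Step 2 (using $\ell\mid\mathfrak r$).} Compute $J=\operatorname{adj}B$ and $\mathfrak r=\operatorname{tr}(JC^{\mathsf T})$, with $C=A-\tfrac12[\eta]_\times$. Requiring $\ell\mid\mathfrak r$ gives further linear constraints on $\eta$ relative to $r$. We also impose the source equation $BW=3qC^{\mathsf T}z$. Multiplying by $J$ gives $q\ell W=3qJC^{\mathsf T}z$, so $\ell W=3JC^{\mathsf T}z$, and $\ell$ must divide each component of $JC^{\mathsf T}z$. We aim to conclude that the $z$-linear part forces $W$ into an explicit form, presumably $W=3\lambda\,q\,r$ or similar.

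\textbf{Step 3 (differentiation, the main obstacle).} Apply the first transport line of \eqref{eq:fund-transport}, $\nabla_ip=(Cz)z_i-\tfrac12e_iq+L_ip$, to the ansatz $p=2\rho Az-qr$. Use \eqref{eq:fund-a-derivative} for $\nabla_iq$, and treat $\nabla_ir$ as unknown. Comparing cubic coefficients in $z$ should express $\nabla_ir$ algebraically and leave a residual. For example, the $e_iq$ term cannot be absorbed unless a specific coefficient vanishes; we expect this to force $r$ to be degenerate ($\rho\equiv0$, contradicting $q\ell=\Delta\ne0$) or to force $A=0$, contradicting $q\ne0$.

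I expect this coefficient matching to be the hardest step. It must retain $V_i$, $\beta$ and $\nabla r$ freely, exactly in the style of the appendix certificates. I would first test it in a frame where $r=e_0$ at the point, and choose $A$ in diagonal or Takagi form. The aim is to extract a scalar identity of the form (nonzero multiple of $c$ or $q$) $=0$, paralleling $\mathscr E_2=7c+a\mathscr K_1$ in Appendix~\ref{app:recurrent-certificate}. If that fails, I would additionally use determinant transport $\nabla_i\Delta$ from \eqref{eq:fund-transport} together with $\Delta=q\ell$ to get a second identity, and combine the two.
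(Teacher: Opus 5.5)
Your proposal has a genuine gap. You have the right raw ingredients, but you misjudge what both hypotheses force, and Step~3 contains no argument.

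\textbf{Steps 1 and 2.} Carry out Step~1 with $w=z+2r\times Az$, so that $B=2\rho A-[w]_\times$. Then $z^{\mathsf T}A(r\times Az)=0$ and $(r\times Az)^{\mathsf T}A(r\times Az)=(r^{\mathsf T}\operatorname{adj}A\,r)\,q-(\det A)\rho^2$. Hence the $\rho^3\det A$ terms cancel and
\[
\Delta=2Dq\rho,\qquad D=1+4r^{\mathsf T}(\operatorname{adj}A)r,
\]
identically, with $D$ a scalar independent of $z$. So $\Delta=q\ell$ forces neither $\det A=0$ nor any relation between $\operatorname{adj}A\,r$ and $r$. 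It only says $D\ne0$ and $\ell=2D\rho$.

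Likewise, $\ell\mid\mathfrak r$ gives no constraint on $\eta$. Since $J=\operatorname{adj}B\equiv ww^{\mathsf T}\pmod\rho$, one has $\mathfrak r\equiv w^{\mathsf T}Aw\equiv Dq\pmod\rho$, and every $\eta$-term of $\mathfrak r$ carries a factor $\rho$. Thus $\ell\mid\mathfrak r$ is exactly $\rho\mid q$. That gives $q=\rho m$ with $m=s^{\mathsf T}z$, $A=\tfrac12(rs^{\mathsf T}+sr^{\mathsf T})$, and $p=s\rho^2$. This factorization is the key reduction, and your proposal never reaches it. The source equation $BW=3qC^{\mathsf T}z$ is not needed.

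\textbf{Step 3.} As written, Step~3 is a hope, and its predicted outcome is not what happens. There is no $\rho\equiv0$, no $A=0$, and no scalar identity in $c$; the constant $c$ plays no role. For the general radial form, $p|_{\ker\rho}=-qr\neq0$, and the transport restricted to $\ker\rho$ shows no contradiction by itself.

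What makes the transport bite is that, after the factorization, both $p$ and $\nabla_ip$ vanish on $\ker\rho$. The first line of \eqref{eq:fund-transport} then reduces there to $(Cz)z_i=0$. This gives $C=sr^{\mathsf T}$ and $\eta=s\times r$.

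Dividing the transport by $\rho$ gives
\[
\rho\nabla_is+2s\nabla_i\rho=sz_i-\tfrac12e_im+\eta_is\rho .
\]
Project to $V/\C s$ and restrict to $\rho=0$; this forces $m|_{\rho=0}=0$. Hence $s=\varrho r$ with $\varrho\ne0$.

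Now $C=\varrho rr^{\mathsf T}$, $\eta=0$, and $\mathsf P^{a,jl}=\varrho r_ar_jr_l$ is totally symmetric. So $\nabla_i\mathsf P$ must be totally symmetric too. But \eqref{eq:fund-F} gives $F_i^{a,jl}-F_i^{j,al}=\varrho(r_a\delta_i^j-r_j\delta_i^a)r_l\ne0$. These are the missing steps.
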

\begin{proof}
For symmetric $A$,
\begin{align}
 D&=1+4r^{\mathsf T}(\operatorname{adj}A)r,
 &\Delta&=2Dq\rho,\label{quot14:radial-det}\\
 \mathfrak r&=Dq+8(\det A)\rho^2-2\rho\eta^{\mathsf T}Az
 +4\rho\bigl((\operatorname{adj}A)r\times\eta\bigr)^{\mathsf T}z.
 &&\label{quot14:radial-trace}
\end{align}
For a direct verification, put \(w=z+2r\times Az\).
Then \(B=2\rho A-[w]_\times\).  Expand
\[
 \operatorname{adj}(K-[w]_\times)
   =\operatorname{adj}K+ww^{\mathsf T}+[Kw]_\times,
 \qquad \det(K-[w]_\times)=\det K+w^{\mathsf T}Kw
\]
for symmetric \(K\), and use
\[
 (r\times Az)^{\mathsf T}A(r\times Az)
 =\{r^{\mathsf T}(\operatorname{adj}A)r\}q-(\det A)\rho^2,
 \qquad A[r]_\times A=[(\operatorname{adj}A)r]_\times.
\]
These identities prove \eqref{quot14:radial-det}--\eqref{quot14:radial-trace}.

Nonzero \(\Delta\) implies \(D\ne0\) and
\(\ell=2D\rho\).  Since \(\ell\mid\mathfrak r\),
\eqref{quot14:radial-trace} implies \(\rho\mid q\).
Write \(q=\rho m\), \(m=s^{\mathsf T}z\).  Then
\[
 A=\tfrac12(rs^{\mathsf T}+sr^{\mathsf T}),\qquad p=s\rho^2.
\]
At $z\in\ker\rho$, both $p(z)$ and its spatial derivative vanish.
Equation~\eqref{eq:fund-transport} implies \(C(\ker\rho)=0\).  Comparing the symmetric part
with \(A\) yields \(C=sr^{\mathsf T}\), and hence
\(\eta=s\times r\).  The same full transport, now divided by the
polynomial \(\rho\), reads
\begin{equation}\label{quot14:square-tangency}
 \rho\nabla_i s+2s\nabla_i\rho
   =s z_i-\tfrac12e_i m+\eta_i s\rho.
\end{equation}
Project this equation to \(V/\mathbb Cs\) and restrict to
\(\rho=0\).  At least one basis vector has a nonzero class in that
quotient; its equation gives \(m|_{\rho=0}=0\).
Thus \(s=\varrho{}r\) for a nonzero scalar \(\varrho\), on the whole open set.
Consequently
\[
 C=\varrho{}rr^{\mathsf T},\quad \eta=0,\quad
 p(z)=\varrho{}r\rho(z)^2,\qquad \mathsf P^{a,jl}=\varrho{}r_ar_jr_l
\]
with the common canonical-line weight understood. Thus the encoded
tensor $\mathsf P$ is completely symmetric. Since the induced
connection preserves total symmetry, \(\nabla_i\mathsf P\) must be symmetric
in all three encoded indices.  Its prescribed Chern transport has
\[
 F_i^{a,jl}-F_i^{j,al}
 =\varrho(r_a\delta_i^j-r_j\delta_i^a)r_l.
\]
This tensor is nonzero for \(\varrho\ne0\), \(r\ne0\), in dimension
at least two: choose \(r_a\ne0\), \(j\ne a\), \(i=j\), and
\(r_l\ne0\).  This contradiction proves the lemma.
\end{proof}

\begin{proposition}[Reduction of a divisible determinant to a linear source]
\label{quot14:reduction}
On an open subset of a Hermitian threefold with constant Chern HSC \(c\ne0\),
suppose \(q\ne0\) and \(\Delta=q\ell\ne0\).  After restriction to
the nonempty coefficient charts needed below, every surviving state
has a linear-source presentation
\[
 q=a m,\qquad W=a{\mathsf H_{\rm src}}z,\qquad B{\mathsf H_{\rm src}}z=3mC^{\mathsf T}z,
 \qquad {\mathsf H_{\rm src}}=I+b\otimes m.
\]
Here \(a,m\) are nonzero linear polynomials and may coincide.
In particular $\operatorname{rank}{\mathsf H_{\rm src}}\ge2$.
\end{proposition}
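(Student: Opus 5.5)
The plan starts from the exact source $BW=3qC^{\mathsf T}z$ in \eqref{eq:fund-source}, now with $\Delta=q\ell$. Multiplying by $J=\operatorname{adj}B$ gives $q\ell W=3qJC^{\mathsf T}z$. Since $q\ne0$ in the polynomial domain, this becomes
\[
 \ell W=3JC^{\mathsf T}z .
\]
So the key datum is how the linear form $\ell$ divides $JC^{\mathsf T}z$. To control this I will use the determinant transport in \eqref{eq:fund-transport}, which compares two expressions for $\nabla_i\Delta$. On one side, $\nabla_i\Delta=\ell\nabla_iq+q\nabla_i\ell$, with $\nabla_iq$ taken from \eqref{eq:fund-a-derivative}. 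On the other side, $\nabla_i\Delta=\tfrac32\eta_i q\ell+2z_i\mathfrak r-(JC^{\mathsf T}z)_i$. Equating them and using $JC^{\mathsf T}z=\tfrac13\ell W$ gives, modulo $\ell$,
\[
 2z_i\mathfrak r\equiv q\,\nabla_i\ell \pmod{\ell}\qquad(i=0,1,2).
\]
First I would restrict to a chart from Lemma~\ref{lem:local-factor-charts} on which the factorization type of $q$ and the multiplicity of $\ell$ in $\Delta$ are constant. All derivatives are taken on that chart before any normalization.

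The next step is to split according to whether $\ell$ divides $q$.

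\emph{Case $\ell\mid q$.} Write $q=\ell m$ with $m$ linear (possibly $m=\ell$). Then $W=3JC^{\mathsf T}z/\ell$ is quadratic. Substituting into $BW=3\ell mC^{\mathsf T}z$ and reducing modulo each linear factor, I would show that one factor $a\in\{\ell,m\}$ divides $W$. The residual quotient is then $W=a\mathsf H_{\rm src}z$ with $B\mathsf H_{\rm src}z=3m'C^{\mathsf T}z$, where $m'$ is the complementary factor. This is the required presentation.

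\emph{Case $\gcd(\ell,q)=1$.} Restrict the displayed congruence to the plane $\{\ell=0\}$. Its left side is the radial vector $z$ times $\mathfrak r$. Its right side is $q$ times the constant covector $\nabla\ell$, and $q$ is nonzero on that plane. A radial field cannot equal a constant direction unless both vanish there. This forces $\ell\mid\mathfrak r$ and $\nabla\ell\equiv0$ modulo $\ell$, so $\ell$ is recurrent. Feeding this back into $BW=3qC^{\mathsf T}z$ and $\ell W=3JC^{\mathsf T}z$ should produce one of two outcomes. In the first, $W$ is a multiple of $q\eta$; then $V=0$ and $G=0$, and exact recovery (Proposition~\ref{prop:fund-compression}) gives $\mathcal D=0$. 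That is the recurrent endpoint, excluded by Proposition~\ref{alg14:recurrence}. In the second, $p$ has the radial shape $p=2\rho Az-qr$ with $\rho=r^{\mathsf T}z$ a multiple of $\ell$. Together with $\ell\mid\mathfrak r$, this is exactly the radial endpoint excluded by Lemma~\ref{quot14:radial-endpoint}.

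To get the radial shape I would write $W=\ell\,\Omega$ modulo the appropriate correction. I would then use the Euler identities $z^{\mathsf T}Bz=2z^{\mathsf T}p$ and $Bz=\partial_z(z^{\mathsf T}p)-p$ to solve for $p$ on $\{\ell=0\}$.

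Finally, in the surviving linear-source states I would normalize $\mathsf H_{\rm src}=I+b\otimes m$. To do this, compare the $\ell$-adic and $m$-adic parts of $B\mathsf H_{\rm src}z=3mC^{\mathsf T}z$. Modulo $m$, the equation says that $B\mathsf H_{\rm src}z$ vanishes on $\{m=0\}$. Since $B$ has no constant right kernel (Lemma~\ref{auto-sing:nondegeneracy}) and $\Delta\ne0$, the part of $\mathsf H_{\rm src}$ invisible modulo $m$ must be a rank-one term $b\otimes m$. The remaining part must be a scalar, which can be absorbed into $a$. From this form, $\rank\mathsf H_{\rm src}\ge2$ is immediate.

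I expect the main obstacle to be the coprime case. Showing that the non-divisible branch collapses \emph{exactly} onto the radial form $p=2\rho Az-qr$, rather than onto some other degenerate pencil, seems to need the full transport of $p$ from \eqref{eq:fund-transport}, not just the determinant. My first attempt would be to restrict that transport to the plane $\{\ell=0\}$ and read off $p$ and its derivative there.
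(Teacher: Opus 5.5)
There is a genuine gap, and it starts in your coprime branch. The coefficients of $\ell$ vary over the manifold, so each $\nabla_i\ell$ is a \emph{linear form} in $z$, not a constant covector. Restricting the differentiated identity $\Delta=q\ell$ to $\{\ell=0\}$ and cross-multiplying its $i=0,1$ components gives only
\[
 \mathfrak r\equiv hq,\qquad \nabla_i\ell\equiv 2h z_i\pmod{\ell}
\]
for a scalar $h$. A radial right side matches the radial left side, so no algebra forces $h=0$. The paper removes $h$ geometrically. If $h\ne0$, the Chern $(2,0)$ commutator of $\nabla_X\ell=2hX+u(X)\ell$ forces $T(X,Y)=\lambda(X)Y-\lambda(Y)X+b(X,Y)\ell$. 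Its encoded symmetric part is then $A=\Sym(w\otimes\ell)$, so $\ell\mid q$, contradicting the case. Your conclusion that $\ell$ is recurrent is right, but this geometric input is missing. In the $\ell\mid q$ charts you also need the factor law $\mathfrak r=\ell r_1$, $\nabla_i\ell=\varrho z_i+u_i\ell$. It comes from the multiplicity identity after substituting $\nabla_iq$, and your plan never derives it there.

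More seriously, you are missing the formula that actually produces the linear source. Substituting the factor law into the differentiated determinant identity and the definition of $V$ gives, in every chart,
\[
 V=z\,a+q\,v,\qquad v=-\tfrac34(\eta+u),
\]
with $a$ a new linear polynomial. The case split must be on this $a$, not on whether $\ell\mid q$:
\begin{itemize}
\item If $a\equiv0$, then $Bv=3Gz$, so $v=0$, $G=0$ and $\mathcal D=0$: the recurrent endpoint.
\item If $\gcd(a,q)=1$, then $q\mid Bz$, and Euler's identity gives $p=2\rho Az-qr$: the radial endpoint.
\item If $a\mid q$ with $q=am$, then $W=a\{z+m(v+\tfrac32\eta)\}$. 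This is the asserted presentation with $\mathsf H_{\rm src}=I+b\otimes m$, with no further normalization needed.
\end{itemize}

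Without this formula, your branch assignments fail.
\begin{itemize}
\item The radial endpoint forces $\ell=2D\rho$ with $\rho\mid q$, so it lives in your $\ell\mid q$ branch, not the coprime one. There $W\equiv za\not\equiv0$ modulo every factor of $q$. So the claim that one of $\ell,m$ divides $W$ cannot be proved without first excluding both endpoints in that branch.
\item Conversely, nothing prevents the coprime branch from ending in a linear source with $a\mid q$ and $a$ unrelated to $\ell$. The paper keeps such states, so it does not reduce to the two endpoints.
\item Your normalization of $\mathsf H_{\rm src}$ is also invalid. Reducing $B\mathsf H_{\rm src}z=3mC^{\mathsf T}z$ modulo $m$ only makes $\mathsf H_{\rm src}z|_{m=0}$ a linear syzygy of $B|_{m=0}$. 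The absence of a \emph{constant} right kernel does not force that syzygy to be radial. At the recurrent endpoint, $\mathsf H_{\rm src}=\tfrac32\eta\otimes m$ has no scalar part to absorb into $a$.
\end{itemize}
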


\begin{proof}
Put \(E_i=\operatorname{Sym}\mathcal D_i\),
\(\beta^b=\sum_i E_i^{ib}\), and \(j=-C\eta\).  The exact equations are
\begin{equation}\label{quot14:source}\equationalias{quot14:dq}
 V_i=E_i(z)-2z_i\beta(z),\quad
 W=V+\tfrac32q\eta,\quad BW=3qC^{\mathsf T}z,\qquad \nabla_iq=E_i(z)+2\eta_iq+z_i j(z),
\end{equation}
\begin{equation}\label{quot14:jet}
 \ell\{\tfrac43E_i(z)+z_i(j-\tfrac23\beta)(z)\}
 +q(\nabla_i\ell+\eta_i\ell)=2z_i\mathfrak r.
\end{equation}
The last equation follows by differentiating \(\Delta=q\ell\),
using the complete determinant transport and
\(JC^{\mathsf T}z=\ell W/3\).

First suppose \(\ell\nmid q\).  On the plane \(\ell=0\),
the restrictions of \eqref{quot14:jet} imply
\[
 \mathfrak r=hq+\ell r_1,\qquad
 \nabla_i\ell=2h z_i+u_i\ell.
\]
Indeed choose \(\ell=z_2\) at a point, cross-multiply the equations
for \(i=0,1\), and use coprimality of \(z_0,z_1\); the equation for
\(i=2\) gives zero transverse restriction.  This proves the assertion
without requiring the quadratic \(q\) to be irreducible.
If \(h\ne0\) on a smaller open set, the Chern \((2,0)\)
commutator of \(\nabla_X\ell=2hX+u(X)\ell\) gives
\[
 T(X,Y)=\lambda(X)Y-\lambda(Y)X+b(X,Y)\ell.
\]
In dimension three its symmetric encoded part is
\(A=\operatorname{Sym}(w\otimes\ell)\), so \(\ell\mid q\), a contradiction.
Thus \(h=0\) and
\(\mathfrak r=\ell r_1\), \(\nabla_i\ell=u_i\ell\).
Indeed, with $\varrho=2h$, the full commutator is
\[
0=(X\varrho-\varrho{}u(X))Y-(Y\varrho-\varrho{}u(Y))X+\varrho{}T(X,Y)
  +(\partial u)(X,Y)\ell,\qquad \varrho=2h.
\]
For the other multiplicities write
\(q=\ell^k h\), with \(k=1\) or \(2\) and
\(h|_{\ell=0}\ne0\).  When \(k=2\), \(h\) is a nonzero scalar
section.  Restriction of \eqref{quot14:jet} first gives
\(\mathfrak r=\ell r_1\).  Substitution of \eqref{quot14:dq} gives
the single multiplicity identity
\begin{equation}\label{quot14:multiplicity}
 (4k+3)\ell^{k-1}h\nabla_i\ell
 +4\ell^k\nabla_i h-5\eta_i\ell^k h
 =z_i(6r_1+j+2\beta).
\end{equation}
For \(k=1\), restriction to \(\ell=0\) and the same two-coordinate
comparison imply \(\nabla_i\ell=\varrho{}z_i+u_i\ell\).  For \(k=2\),
the left side is divisible by \(\ell\), so the linear polynomial
\(6r_1+j+2\beta\) is divisible by \(\ell\); divide once more to
obtain the same derivative form. The leading coefficients are $7h$
for $k=1$ and $11h$ for $k=2$.

We have proved in every multiplicity chart
\begin{equation}\label{quot14:factor-law}
 \mathfrak r=\ell r_1,\qquad
 \nabla_i\ell=\varrho{}z_i+u_i\ell,
\end{equation}
where \(\varrho=0\) if \(\ell\nmid q\).
Substitution in \eqref{quot14:jet}, followed by the definition of \(V\),
therefore gives
\begin{equation}\label{quot14:radial-V}
 V=za+qv,\qquad
 a=\tfrac32r_1-\tfrac34j-\tfrac32\beta
       -\tfrac34\varrho q/\ell,\qquad
 v=-\tfrac34(\eta+u).
\end{equation}
The term \(\varrho q/\ell\) is interpreted as zero when \(\varrho=0\) and
\(\ell\nmid q\); every term displayed is polynomial.

If \(a\) in \eqref{quot14:radial-V} vanishes on an open set,
the source gives \(Bv=3Gz\).  Its skew part forces
\(v=0\), then \(G=0\), and injectivity of the regular
symbol gives \(\mathcal D=0\).  The mixed recurrence certificate
excludes this open set for \(c\ne0\).

If \(a\ne0\) and \(\gcd(a,q)=1\), the equation
\(BV=3qGz\) gives \(q\mid Bz\).
Write \(Bz=2qr\).  Euler's formula, applied to
\(g=z^{\mathsf T}p\), says
\(Bz=\partial_zg-p\) and \(z^{\mathsf T}Bz=2g\).
It follows that \(g=q\rho\), \(\rho=r^{\mathsf T}z\), and
\(p=2\rho Az-qr\).  Equations \eqref{quot14:factor-law} and
Lemma~\ref{quot14:radial-endpoint} exclude this open set as well.

The only remaining possibility is \(a\ne0\) with \(a\mid q\).
Write \(q=am\).  Equation \eqref{quot14:source} now gives
\[
 W=a\{z+m(v+\tfrac32\eta)\}=a{\mathsf H_{\rm src}}z,\qquad
 {\mathsf H_{\rm src}}=I+(v+\tfrac32\eta)\otimes m.
\]
Canceling \(a\) in the source proves the proposition.
\end{proof}

\subsection{The linear-source form}
\label{alg14:linear-section}

\begin{theorem}[Linear sources]
\label{alg14:linear-source}
On an open subset of a Hermitian threefold with constant Chern HSC $c\ne0$, suppose
\[
 q=\ell m\ne0,\quad\Delta\not\equiv0,\quad \ell\mid\Delta,\quad
 B{\mathsf H_{\rm src}}z=3\ell C^{\mathsf T}z,\quad W=m{\mathsf H_{\rm src}}z .
\]
Choose smooth factor charts and exact multiplicity charts for the
chosen factor of \(\Delta\).  The factors may coincide.
Then
\[
 {\mathsf H_{\rm src}}=\lambda I+v a^{\mathsf T},\qquad \ell=a^{\mathsf T}z.
\]
The scalar \(\lambda\) cannot vanish on a nonempty open subregion.
Rescaling the factors where $\lambda\ne0$, set
\({\mathsf H_{\rm src}}=I+v a^{\mathsf T}\).  On this chart put
\(\delta=1+a\cdot v\).  Every point with \(\delta=0\) is excluded
by the regular-pencil hypothesis.  
\end{theorem}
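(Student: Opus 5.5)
The plan is to extract the normal form $\mathsf H_{\rm src}=\lambda I+va^{\mathsf T}$ purely algebraically from the source identity. The ingredients are the cofactor factorization of $B$, divisibility by $\ell$, and the constraint that $\mathsf H_{\rm src}z$ is a linear right ``quasi-syzygy'' modulo $\ell$. The exclusion statements then come from transport.

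\emph{Step 1: the normal form.} Restrict $B\mathsf H_{\rm src}z=3\ell C^{\mathsf T}z$ to the plane $\ell=0$, i.e.\ to $z\in\ker a^{\mathsf T}$ with $\ell=a^{\mathsf T}z$. There $B(z)\mathsf H_{\rm src}z=0$. Contract with $z$ and use Euler's identity $z^{\mathsf T}B=(\partial_z g-p)^{\mathsf T}$ for $g=z^{\mathsf T}p$. Arguing as in Lemma~\ref{auto-sing:invertible-U}, the cubic $p^{\mathsf T}\mathsf H_{\rm src}z$ satisfies $\partial_z(p^{\mathsf T}\mathsf H_{\rm src}z)=z\times \mathsf H_{\rm src}z+\mathsf H_{\rm src}^{\mathsf T}p$ modulo $\ell$. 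This forces $\mathsf H_{\rm src}^{\mathsf T}p\equiv -z\times\mathsf H_{\rm src}z$ modulo $\ell$. Differentiate this identity along the plane and compare its symmetric and skew parts, exactly as in Lemma~\ref{auto-sing:rank-two-U}. The conclusion is that $\mathsf H_{\rm src}$ acts as a scalar on the plane modulo the direction $a$, which gives $\mathsf H_{\rm src}=\lambda I+va^{\mathsf T}$. Smoothness of $\lambda$ and $v$ follows from the coefficient charts of Lemma~\ref{lem:local-factor-charts}.

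\emph{Step 2: $\lambda\not\equiv0$.} Suppose $\lambda=0$ on an open set. Then $W=m(a^{\mathsf T}z)v=qv$ and $B v=3C^{\mathsf T}z$. The skew part of this equation, as in Corollary~\ref{alg14:coprime}, gives $v=\eta/2$, hence $V=-qv$, which forces $G=0$. Exact recovery (Proposition~\ref{prop:fund-compression}) then gives $\mathcal D=0$. Proposition~\ref{alg14:recurrence} now yields $A=0$, contradicting $q\ne0$.

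\emph{Step 3: $\delta=0$ is excluded.} After rescaling to $\lambda=1$, we have $\det\mathsf H_{\rm src}=\delta$. If $\delta=0$, then $\mathsf H_{\rm src}$ has a kernel vector $k=v$, and $B\mathsf H_{\rm src}z$ is divisible by $\ell$ with $\mathsf H_{\rm src}z$ ranging over a plane. Use $J=\operatorname{adj}B$ with $JB=\Delta I$ and the multiplicity hypothesis $\ell\mid\Delta$ from the chart. I would show that $\ell$ must then divide $\Delta$ to order exceeding the chart multiplicity, or else that $\Delta\equiv0$, contradicting regularity. Concretely, $B(z)(z+v\ell)=3\ell C^{\mathsf T}z$ with $a^{\mathsf T}v=-1$ makes $z+v\ell$ vanish identically along a line in $\ker\mathsf H_{\rm src}$ directions. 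This gives an extra linear syzygy of $B$ modulo $\ell$, which combines with $Bz$ to produce a rank drop of $B$ on $\ell=0$ of order two, and hence $\Delta\equiv0$.

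The main obstacle is Step 1: making the mod-$\ell$ syzygy argument rigorous when $\ell$ and $m$ coincide or when $\ell$ is repeated in $\Delta$. In that case the restriction to $\ell=0$ loses information, and one must divide once more and carry the quotient, as in \eqref{quot14:multiplicity}.
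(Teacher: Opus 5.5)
Your Step~1 cannot work as designed, because the normal form is not a pointwise algebraic consequence of the source identity, however it is differentiated in $z$. Take $a=e_0$ (so $\ell=z_0$), $\eta=0$, $m=d^{\mathsf T}z$ with $d_2\ne0$, $C=A=\tfrac12(ad^{\mathsf T}+da^{\mathsf T})$, $\mathsf H_{\rm src}=\diag(1,2,3)$, and $p=(\tfrac32z_0m-z_1z_2,\ z_0z_2,\ -\tfrac13z_0z_1)^{\mathsf T}$. A direct check gives the following:
\begin{itemize}
\item $B\mathsf H_{\rm src}z=3\ell Az$ holds identically;
\item $\Delta=2d_2z_0^2z_2\not\equiv0$, so $\ell\mid\Delta$ with exact multiplicity two;
\item $V=W=m\mathsf H_{\rm src}z$ solves $BV=3qGz$ with $G=A$.
\end{itemize}
Yet $\mathsf H_{\rm src}z\wedge z=-z_1z_2\ne0$ on $\ker\ell$. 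More generally, for every invertible $\mathsf H_{\rm src}$ you can simply solve the exact identity behind your congruence, $\mathsf H_{\rm src}^{\mathsf T}p=\ell(\tfrac32ma-\tfrac32\eta\times z)-z\times\mathsf H_{\rm src}z$, for $p$. The device of Lemma~\ref{auto-sing:rank-two-U} needed a kernel vector of $U$, which you do not have.

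The missing idea is first-order: compute the normal jet $\nabla_i\ell\bmod\ell$ in two independent ways.
\begin{itemize}
\item Source recovery from $V=m\mathsf H_{\rm src}z-\tfrac32q\eta$ gives $2\beta+j=-\tfrac13(\mathsf H_{\rm src}^{\mathsf T}d+\tr\mathsf H_{\rm src}\,d)$. Then \eqref{eq:fund-a-derivative} and tangency of $q=\ell m$ give, for a scalar $t$, $\nabla_i\ell\equiv(\mathsf H_{\rm src}z)_i-\tfrac{t+\tr\mathsf H_{\rm src}}3z_i$ for distinct factors, and $\tfrac12(\mathsf H_{\rm src}z)_i-\tfrac{t+\tr\mathsf H_{\rm src}}6z_i$ when $q=\ell^2$.
\item Determinant transport with $JC^{\mathsf T}z=\tfrac{\Delta}{3\ell}\mathsf H_{\rm src}z$ and $\Delta=\ell^kR$ gives, from the $\ell^{k-1}$ coefficient on $\ker\ell$, $kR\nabla_i\ell=2z_i(\mathfrak r/\ell^{k-1})-\tfrac13R(\mathsf H_{\rm src}z)_i$.
\end{itemize}
Wedging with $z$ removes the unknown scalars and leaves $(k+\tfrac13)R\,\mathsf H_{\rm src}z\wedge z=0$, respectively $(\tfrac k2+\tfrac13)R\,\mathsf H_{\rm src}z\wedge z=0$, on $\ker\ell$. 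That is the normal form. So the exact-multiplicity chart is the essential input, and coincident factors only change the jet formula.

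Step~3 fails for the same reason. At $\delta=0$ the algebra yields only $m=-(v\times\eta)\cdot z$ and $p=ah+\ell(b\times z)$, with $b=v-\tfrac32\eta$ and $h$ an arbitrary quadratic. With $a=e_0$, the lower right $2\times2$ block of $B$ is $(1+a\cdot b)z_0\bigl(\begin{smallmatrix}0&1\\-1&0\end{smallmatrix}\bigr)$. One computes $\Delta=(1+a\cdot b)z_0\Theta$ with $\Theta|_{\ell=0}=2(1-a\cdot b)h|_{\ell=0}$. So for $a\cdot b\ne\pm1$ and $h|_{\ell=0}\ne0$ the pencil is regular, $\ell$ has multiplicity one, and there is no rank drop of order two.

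The paper argues differently. It excludes a repeated factor because $m(v)=0$ while $\ell(v)=-1$. For distinct factors it uses that the normal jet of $\ell$ vanishes at $\delta=0$: differentiating $p=ah+\ell(b\times z)$ with \eqref{eq:fund-transport} gives $C(\ker\ell)\subset\C a$. Hence $a\cdot\eta=0$ and $a\cdot b=-1$, and only then does the block vanish and $\Delta\equiv0$.

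Step~2 follows the paper, but correct the constant. The skew part of $Bv=3C^{\mathsf T}z$ gives $v=\tfrac32\eta$, so $V=q(v-\tfrac32\eta)=0$, and $BV=3qGz$ then gives $G=0$. With your $v=\eta/2$ you get $V\ne0$, and $G=0$ does not follow.
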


\begin{proof}
Write \(m=d^{\mathsf T}z\).  Since
\(V=m{\mathsf H_{\rm src}}z-3q\eta/2\), the coefficient contraction in source recovery
is
\[
 \gamma=\tfrac12({\mathsf H_{\rm src}}^{\mathsf T}d+\operatorname{tr}{\mathsf H_{\rm src}}\,d)
 -\tfrac34\{(a\cdot\eta)d+(d\cdot\eta)a\}.
\]
Using \(\beta=-\gamma/3\) and
\(j=-((d\cdot\eta)a+(a\cdot\eta)d)/2\) gives
\[
 2\beta+j=-\tfrac13({\mathsf H_{\rm src}}^{\mathsf T}d+\operatorname{tr}{\mathsf H_{\rm src}}\,d).
\]
For different factors, tangency of \(q=\ell m\), reduced modulo
\((\ell,m)\), forces \({\mathsf H_{\rm src}}^{\mathsf T}d=s a+t d\).  Therefore
\[
 \nabla_i\ell\equiv({\mathsf H_{\rm src}}z)_i
          -\tfrac{t+\operatorname{tr}{\mathsf H_{\rm src}}}{3}z_i\pmod\ell.
\]
For \(q=\ell^2\), reduction modulo \(\ell\) gives
\({\mathsf H_{\rm src}}^{\mathsf T}a=t a\), and
\[
 \nabla_i\ell\equiv\tfrac12({\mathsf H_{\rm src}}z)_i
          -\tfrac{t+\operatorname{tr}{\mathsf H_{\rm src}}}{6}z_i\pmod\ell.
\]
Write \(\Delta=\ell^kR\), with \(k\in\{1,2,3\}\) and
\(R|_{\ell=0}\not\equiv0\), and set
\(r=\operatorname{tr}(\operatorname{adj}(B)C^{\mathsf T})\).
The source and determinant transport give
\[
 \operatorname{adj}(B)C^{\mathsf T}z
       =\frac{\Delta}{3\ell}{\mathsf H_{\rm src}}z,\qquad
 \nabla_i\Delta=\tfrac32\eta_i\Delta+2z_ir
                  -\frac{\Delta}{3\ell}({\mathsf H_{\rm src}}z)_i .
\]
They imply \(\ell^{k-1}\mid r\).  Comparing the coefficient of
\(\ell^{k-1}\) on its plane gives
\[
 kR\nabla_i\ell
       =2z_i(r/\ell^{k-1})-\tfrac13R({\mathsf H_{\rm src}}z)_i.
\]
Wedge with \(z\).  The two factor-jet formulas give respectively
\[
 (k+\tfrac13)R({\mathsf H_{\rm src}}z\wedge z)=0,\qquad
 (\tfrac k2+\tfrac13)R({\mathsf H_{\rm src}}z\wedge z)=0
 \quad\text{on }\ker\ell.
\]
Canceling in the polynomial domain of that plane gives \({\mathsf H_{\rm src}}z\wedge z=0\)
there.  A linear map preserving every line of a two-dimensional
space is scalar on it, proving \({\mathsf H_{\rm src}}=\lambda I+v a^{\mathsf T}\).

If $\lambda=0$ on an open set, then ${\mathsf H_{\rm src}}=va^{\mathsf T}$.
Canceling $\ell$ in the source gives $Bv=3C^{\mathsf T}z$.
Its skew part forces $v=3\eta/2$, hence $V=G=0$ and
$\mathcal D=0$, contradicting Proposition~\ref{alg14:recurrence}.

On a nonzero-\(\lambda\) chart, replace
\(({\mathsf H_{\rm src}},\ell,m)\) by \(({\mathsf H_{\rm src}}/\lambda,\ell/\lambda,\lambda m)\).
Set \(b=v-3\eta/2\).  The polar source equation becomes
\[
 p+a(v\cdot p)=\ell\{\tfrac32m a+b\times z\}.
\]
Equivalently, for a scalar quadratic \(h\),
\begin{equation}
\label{alg14:source-h}
 p=a h+\ell(b\times z),\qquad
 \delta h=\tfrac32\ell\{m+(v\times\eta)\cdot z\}.
\end{equation}
There is no division by \(\delta\): one can define
\(h=3\ell m/2-v\cdot p\).
The exact quadratic jet is
\[
 \nabla_iq=q(v_i+\eta_i/2)
       -\tfrac\delta3m z_i-\tfrac{m(v)}3\ell z_i.
\]
Hence
\[
 \nabla_i\ell\equiv-\tfrac\delta3z_i\pmod\ell
 \quad(\ell,m\text{ independent}),\qquad
 \nabla_i\ell\equiv\tfrac{1-2\delta}{6}z_i\pmod\ell
 \quad(q=\varrho\ell^2,\ \varrho\ne0).
\]
After normalizing ${\mathsf H_{\rm src}}$, the repeated factor has the form $q=\varrho\ell^2$;
its nonzero coefficient $\varrho$ remains variable.

Suppose now \(\delta=0\) at a point.  Equation
\eqref{alg14:source-h} gives \(m=-v\times\eta\).
A repeated factor is impossible, because \(m(v)=0\) while
\(\ell(v)=-1\).  For different factors, the normal
derivative of the line \(\mathbb C a\) vanishes at that point.
Differentiate \eqref{alg14:source-h} and use
\eqref{eq:fund-transport}. Indeed,
\(A=(ad^{\mathsf T}+da^{\mathsf T})/2\) gives
\[
 L_i a=\tfrac12\bigl((a\times d)_i+\eta_i\bigr)a .
\]
Modulo \(\mathbb C a\), on \(\ker\ell\), all derivatives of
the arbitrary \(h\) vanish from this equation.  What remains is
\(z_i Cz=0\) in the quotient, so
\[
 C(\ker\ell)\subset\mathbb C a .
\]
This implies \(a\cdot\eta=0\), since
\(a\times(\eta\times z)=-(a\cdot\eta)z\) on that plane.
Consequently
\[
 a\times m=-\eta,\qquad C=d a^{\mathsf T},
 \qquad a\cdot b=-1.
\]
For the sole purpose of displaying its determinant, take a
general complex frame with \(a=e_0\).  Then
\[
 p=\bigl(h+x(b_1z-b_2y),\
           x(b_2x+z),\ x(-y-b_1x)\bigr).
\]
The lower right \(2\times2\) block of
\(B=Dp^{\mathsf T}-[z]_\times\) is zero.  Thus
\(\Delta\equiv0\), the desired contradiction.  
\end{proof}

\subsection{Different factors}
\label{alg14:distinct-section}

Both factor branches use the same compatibility: the vanishing
multiplicity of $\Delta$ along $\ell=0$ must agree with the first
transport and the normal derivative of $\ell$. This comparison
restricts $\delta$ to finitely many values, which the endpoint
residual identities then exclude. The adjugate expansions supply
the coefficients for this comparison.

\begin{theorem}[Exclusion of a distinct-factor source]
\label{alg14:distinct-closed}
There is no nonempty open subset of a Hermitian threefold with
constant Chern HSC $c\ne0$ on which the regular source satisfies
\[
 q=\ell m\ne0,\quad \ell,m\text{ independent},\quad
 B{\mathsf H_{\rm src}}z=3\ell C^{\mathsf T}z,\quad \Delta\not\equiv0,\quad \ell\mid\Delta.
\]
\end{theorem}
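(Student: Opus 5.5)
Starting from Theorem~\ref{alg14:linear-source}, I would work on a nonzero-$\lambda$ chart with the normalized source ${\mathsf H_{\rm src}}=I+va^{\mathsf T}$, $\ell=a^{\mathsf T}z$, $m=d^{\mathsf T}z$ independent of $\ell$, and $\delta=1+a\cdot v\neq0$. That theorem already excludes every point with $\delta=0$ and every open set with $\lambda=0$. The source then has the explicit form \eqref{alg14:source-h}, $p=ah+\ell(b\times z)$ with $b=v-\tfrac32\eta$. The factor jet is $\nabla_i\ell\equiv-\tfrac\delta3z_i\pmod\ell$. Since $\ell,m$ are independent, I would fix a general complex frame at a point with $a=e_0$ and $d=e_1$, so that $\ell=z_0$ and $m=z_1$. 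As the convention before Lemma~\ref{lem:local-factor-charts} requires, I would differentiate all parameter identities before imposing this normalization.

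\emph{Step 1: determinant multiplicity versus factor transport.} Write $B=Dp^{\mathsf T}-[z]_\times$ with $p$ as above and expand $\Delta=\det B$ modulo $\ell^2$ using the adjugate identities. Let $k\in\{1,2,3\}$ be the exact multiplicity of $\ell$ in $\Delta$, and write $\Delta=\ell^kR$. The determinant transport in \eqref{eq:fund-transport}, together with $\operatorname{adj}(B)C^{\mathsf T}z=\tfrac{\Delta}{3\ell}{\mathsf H_{\rm src}}z$, gives on $\ker\ell$ the identity $kR\nabla_i\ell=2z_i(r/\ell^{k-1})-\tfrac13R({\mathsf H_{\rm src}}z)_i$. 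Substituting $\nabla_i\ell\equiv-\tfrac\delta3 z_i$ and ${\mathsf H_{\rm src}}z=z$ on $\ker\ell$ turns this into a scalar relation of the form $(-k\delta/3+1/3)R=2r/\ell^{k-1}$ on the plane. Independently, the explicit form of $p$ gives $R|_{\ell=0}$ and $r/\ell^{k-1}|_{\ell=0}$ as polynomials in $\delta$, $h|_{\ell=0}$, $b$, and $\eta$. Comparing coefficients should confine $\delta$ to a finite list, which I expect to be of the form $\delta\in\{1/k,\ -1/(2k),\dots\}$ for each $k$. Because $\delta$ is then locally constant, its derivatives vanish, and this gives extra equations.

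\emph{Step 2: endpoint residuals.} For each admissible constant value of $\delta$, I would impose the remaining compatibility equations:
\begin{itemize}
\item the full first transport of $p$, namely $\nabla_ip=(Cz)z_i-\tfrac12e_iq+L_ip$;
\item its consequence for the derivatives of $a$, $d$, and $v$, with all nine derivatives of $(a,d,v)$ kept free;
\item the pure commutator \eqref{eq:fund-pure} with $Z=G$.
\end{itemize}
The key mechanism, as in the index's description of the distinct-factor case, is to find a linear combination of the tensor residuals in which every free derivative cancels. Equating the result with the curvature reconstruction should yield a nonzero constant, such as $-1$ times a power of $c$, at each endpoint value of $\delta$. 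That contradicts $c\neq0$.

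\emph{Main obstacle.} I expect Step~2 to be the hard part: finding an explicit certificate combination of residuals, since derivatives of the arbitrary quadratic $h$ and of the factor data appear everywhere. To handle this, I would first reduce modulo $\mathbb C a$ on $\ker\ell$, where derivatives of $h$ drop out, as in the $\delta=0$ argument of Theorem~\ref{alg14:linear-source}. I would then organize the remaining unknowns as a linear system and search for an element of the left kernel whose constant term is nonzero.
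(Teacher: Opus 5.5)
Your skeleton matches the paper's: a multiplicity comparison pins down $\delta$, and an endpoint residual certificate excludes the surviving value. But Step~1 cannot work as you set it up, because you omit the zeroth-order consequence of the first transport that the paper uses before any determinant computation.

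Since $\delta\ne0$, \eqref{alg14:source-h} shows that $h$ is divisible by $\ell$. So $p=\ell Lz$, and $h|_{\ell=0}=0$ is not a free parameter. Restrict $\nabla_ip=(Cz)z_i-\tfrac12e_iq+L_ip$ to $\ker\ell$, where $q=p=0$, and use $\nabla_i\ell\equiv-\tfrac\delta3z_i$. This gives $C|_{\ker\ell}=-\tfrac\delta3L|_{\ker\ell}$. Now substitute $C=A-\tfrac12[\eta]_\times$ with $A=\operatorname{Sym}(ad^{\mathsf T})$. You get $a\times d=-\eta+\gamma v$, with $\gamma=-\tfrac12a\cdot\eta+\delta/3$ and $\gamma(\delta+1)=2\delta/3$. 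Hence $\delta\ne-1$, $\eta$ is eliminated, and $C,p$ take the closed form \eqref{alg14:distinct-family}. You do list the first transport, and your idea of working modulo $\mathbb C a$ on $\ker\ell$ is the right device. But you use them only in Step~2, after the comparison that needed them.

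Without this elimination, $R|_{\ell=0}$ and $(r/\ell^{k-1})|_{\ell=0}$ depend on $\eta$, $d$, $v$ as free unknowns. The determinant transport is only a scalar consequence of the first transport, so you have no reason for $\delta$ to land in a finite list, and you do not even know $k$.

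With the family in hand, one finds $\Delta=\tfrac{6\delta}{\delta+1}\ell^2\bigl(m-\tfrac{\sigma}{\delta+1}\ell\bigr)$ with $\sigma=v\cdot d$. So $k=2$ exactly, and no case split is needed. Moreover $R|_{\ell=0}=\tfrac{6\delta}{\delta+1}m$ and $(r/\ell)|_{\ell=0}=\tfrac{\delta(3-\delta)}{\delta+1}m$. Your own relation at $k=2$ reads $r/\ell=\tfrac{1-2\delta}{6}R$ on $\ker\ell$, and it gives $\delta(\delta+2)=0$. So the only endpoint is $\delta=-2$. Your predicted values $1/k,-1/(2k)$ become $1/2,-1/4$ at $k=2$, which are the repeated-factor roots, not the roots here.

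Finally, Step~2 is only a search plan, and the exclusion rests entirely on the certificate. The paper works at $\delta=-2$ in the frame $a=e_0$, $v=-3e^0$, $d=(\mu,1,0)^{\mathsf T}$, keeping all nine derivatives of $(a,d,v)$ free. It exhibits the explicit combination \eqref{alg14:distinct-unit} of four $\mathcal E_C$ and three $\mathcal E_{\mathsf P}$ residuals, which equals $-1$. No power of $c$ appears. Without such a combination the theorem is not proved.
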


\begin{proof}
The uniform source reduction first excludes the scalar-zero and
singular boundaries, and normalizes
\[
 {\mathsf H_{\rm src}}=I+v a^{\mathsf T},\qquad
 \ell=a^{\mathsf T}z,\quad m=d^{\mathsf T}z,\quad
 \delta=1+a\cdot v\ne0.
\]
Writing $p=\ell Lz$, normal transport gives
$C|_{\ker\ell}=-(\delta/3)L|_{\ker\ell}$. Substitution yields
\[
 a\times d=-\eta+\gamma v,\qquad
 \gamma=-\tfrac12a\cdot\eta+\delta/3,\qquad
 \gamma(\delta+1)=2\delta/3.
\]
Thus $\delta\ne-1$, and
\begin{equation}\label{alg14:distinct-family}
C=d a^{\mathsf T}-\frac{\delta}{3(\delta+1)}[v]_\times,\qquad p(z)=\frac{\ell(z)}{\delta+1}(v\times z)
       +\frac32\ell(z)^2
         \left(d-\frac{v\cdot d}{\delta}a\right).
\end{equation}
Put \(\sigma=v\cdot d\).  Direct substitution in
\(B=Dp^{\mathsf T}-[z]_\times\) gives the polynomial identity
\begin{equation}
\label{alg14:distinct-det}
 \Delta=\frac{6\delta}{\delta+1}\ell^2
       \left(m-\frac{\sigma}{\delta+1}\ell\right).
\end{equation}
Since \(m\) and \(\ell\) are independent, the multiplicity of
\(\ell\) in \(\Delta\) is exactly two at every point of this family.
Let
\[
 r=\operatorname{tr}(\operatorname{adj}(B)C^{\mathsf T}),\qquad
 R=\Delta/\ell^2 .
\]
The same substitution yields
\begin{equation}
\label{alg14:distinct-r}
 r|_{\ell=0}=0,\qquad
 \left.(r/\ell)\right|_{\ell=0}
       =\left.\frac{\delta(3-\delta)}{\delta+1}m\right|_{\ell=0},
 \qquad
 R|_{\ell=0}
       =\left.\frac{6\delta}{\delta+1}m\right|_{\ell=0}.
\end{equation}
The covariant derivative of the factor is
\(\nabla_i\ell\equiv-\delta z_i/3\pmod\ell\).
The coefficient of \(\ell\) in the determinant transport, with
its exact multiplicity two, is
\[
 2R\nabla_i\ell
     =2z_i(r/\ell)-\tfrac13R({\mathsf H_{\rm src}}z)_i
       \quad\text{on }\ker\ell .
\]
Since \({\mathsf H_{\rm src}}z=z\) on that plane, this requires
\[
 \left.(r/\ell)\right|_{\ell=0}
     =\left.\frac{1-2\delta}{6}R\right|_{\ell=0}.
\]
Subtracting \eqref{alg14:distinct-r} gives
\[
 \left.\frac{\delta(\delta+2)}{\delta+1}m\right|_{\ell=0}=0.
\]
The polynomial \(m|_{\ker\ell}\) is nonzero, and
\(\delta\ne0,-1\).  Therefore
\begin{equation}
\label{alg14:distinct-delta}
 \delta=-2 .
\end{equation}

Choose a determinant-one complex frame at the point with
\[
 a=e_0,\qquad v=-3e^0,\qquad d=(\mu,1,0)^{\mathsf T}.
\]
This is possible because \(a\cdot v=-3\): take the complementary
plane \(\ker v\), then use its residual \(\mathrm{SL}(2,\mathbb C)\)
action to normalize the nonzero transverse component of \(d\).
The scalar $\mu$ is arbitrary.

The state is
\begin{equation}
\label{alg14:distinct-endpoint-state}
 C=\begin{pmatrix}\mu&0&0\\1&0&-2\\0&2&0\end{pmatrix},
 \qquad
 p(z)=\left(-\frac34\mu z_0^2,\,
                  \frac32z_0^2-3z_0z_2,\,3z_0z_1\right)^{\mathsf T}.
\end{equation}
Compute the derivative of the full family
\eqref{alg14:distinct-family} before this pointwise substitution.
In direction \(e_2\) denote all nine resulting parameter derivatives
by
\[
 \dot a_0,\dot a_1,\dot a_2,\quad
 \dot d_0,\dot d_1,\dot d_2,\quad
 \dot v_0,\dot v_1,\dot v_2.
\]
Recover $X=\nabla'C=\mathcal H+\mathcal D$ from
$V=m{\mathsf H_{\rm src}}z-3q\eta/2$ by \eqref{eq:fund-recovery},
\eqref{eq:fund-Dsplit}, and Proposition~\ref{prop:fund-compression}.
Define the state residuals in direction \(2\) by
\[
 \mathcal E_C^{ab}
    =d_{\mathrm{par}}C^{ab}(\dot a,\dot d,\dot v)-X_2^{ab},
 \quad
 \mathcal E_{\mathsf P}^{a,jl}
    =d_{\mathrm{par}}\mathsf P^{a,jl}(\dot a,\dot d,\dot v)
                                  -F_2^{a,jl}(C,\mathsf P).
\]
The coefficients \(\mathsf P^{a,jl}\) are symmetric matrix entries;
an off-diagonal polynomial coefficient is twice that entry.
Substitution of \eqref{alg14:distinct-endpoint-state} and the
source recovery gives the following seven complete residuals:
\[
\begin{aligned}
\mathcal E_C^{01}
 &=\mu\dot a_1+\tfrac23\dot v_2,\qquad \mathcal E_C^{10}
 =\dot a_0+\dot d_1-\tfrac23\dot v_2+\tfrac12,\\
\mathcal E_C^{12}
 &=3\dot a_0+\dot a_2-\tfrac13\dot v_0-2,\qquad \mathcal E_C^{21}
 =-3\dot a_0+\tfrac13\dot v_0+\tfrac43,\\
\mathcal E_{\mathsf P}^{0,01}
 &=-\tfrac34\mu\dot a_1+\tfrac12\dot v_2,\qquad \mathcal E_{\mathsf P}^{1,00}
 =3\dot a_0-\tfrac94\mu\dot a_1
       +\tfrac32\dot d_1-\dot v_2+\tfrac32,\\
\mathcal E_{\mathsf P}^{1,02}
 &=3\dot a_0+\tfrac32\dot a_2-\dot v_0-2.
\end{aligned}
\]
Their displayed coefficients cancel term by term in
\begin{equation}
\label{alg14:distinct-unit}
\boxed{
 -\tfrac34\mathcal E_C^{01}
 +\mathcal E_C^{10}
 +\tfrac14\mathcal E_C^{12}
 -\tfrac14\mathcal E_C^{21}
 +\mathcal E_{\mathsf P}^{0,01}
 -\tfrac23\mathcal E_{\mathsf P}^{1,00}
 -\tfrac16\mathcal E_{\mathsf P}^{1,02}=-1 .
}
\end{equation}
All seven residuals must vanish for a given metric.  This is
the final contradiction.

\end{proof}

\subsection{Repeated factors}
\label{geo14:repeated-source}

Consider
\begin{equation}\label{geo14:linear-source-convention}
 q=\ell m,\qquad W=m{\mathsf H_{\rm src}}z,\qquad B{\mathsf H_{\rm src}} z=3\ell C^{\mathsf T}z,
 \qquad {\mathsf H_{\rm src}}=I+v a^{\mathsf T},\qquad \ell=a^{\mathsf T}z.
\end{equation}
Put
\[
 \delta=1+a^{\mathsf T}v=\det {\mathsf H_{\rm src}}.
\]
By Theorem~\ref{alg14:linear-source}, $\delta\ne0$.

\begin{theorem}[Exclusion of a repeated-factor source]
\label{geo14:repeated-closed}
There is no nonempty open subset of a constant nonzero Chern-HSC
threefold on the regular-pencil locus carrying the repeated
linear-source module
\eqref{geo14:linear-source-convention} together with its
quotient jet \eqref{geo14:ell-normal-jet}.
\end{theorem}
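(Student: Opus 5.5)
I will follow the template of Theorem~\ref{alg14:distinct-closed}, now in the repeated case $m=\varrho\ell$, so $q=\varrho\ell^2$ with $\varrho\ne0$ a variable scalar section whose derivative is kept free. First I normalize as in Theorem~\ref{alg14:linear-source}: ${\mathsf H_{\rm src}}=I+va^{\mathsf T}$ with $\delta=1+a\cdot v\ne0$. The quotient jet then reads $\nabla_i\ell\equiv\tfrac{1-2\delta}{6}z_i\pmod\ell$. Writing $p=ah+\ell(b\times z)$ as in \eqref{alg14:source-h}, I will use the normal transport of $p$ on $\ker\ell$ to express $C$ and $p$ in terms of $(a,v,\varrho)$ and at most one further scalar. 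This yields a repeated analogue of \eqref{alg14:distinct-family}, which I expect to be a normal family in which $A=\tfrac12\varrho(a\otimes a)$ up to canonical weight.

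Next I compute $\Delta=\det B$ for this family as an explicit polynomial. I will read off its exact multiplicity $k\in\{1,2,3\}$ along $\ell=0$ and the restrictions of $R=\Delta/\ell^k$ and of $r/\ell^{k-1}$, where $r=\operatorname{tr}(\operatorname{adj}(B)C^{\mathsf T})$. Substituting these into the coefficient identity
\[
kR\nabla_i\ell=2z_i(r/\ell^{k-1})-\tfrac13R({\mathsf H_{\rm src}}z)_i
\]
on $\ker\ell$, with the repeated-factor jet inserted, turns the multiplicity compatibility into a polynomial equation in $\delta$. The multiplicity chart is fixed by Lemma~\ref{lem:local-factor-charts}. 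Since $\delta\ne0$ and the degenerate boundaries are already removed, I expect this equation to leave finitely many admissible roots. For each root I also track the derivatives of $\varrho$, which appear through $\nabla_iq$.

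For each remaining root, I pick a determinant-one frame at the point with $a=e_0$ and $v$ normalized, and write the endpoint state $(C,p)$ explicitly. I differentiate the full family before specializing, leaving all $21$ parameter derivatives of $(a,v,\varrho)$ free. I then recover $X=\nabla'C$ from $V=m{\mathsf H_{\rm src}}z-\tfrac32q\eta$ using Proposition~\ref{prop:fund-compression}, \eqref{eq:fund-recovery} and \eqref{eq:fund-Dsplit}. From this I form the residuals $\mathcal E_C=d_{\rm par}C-X$ and $\mathcal E_{\mathsf P}=d_{\rm par}\mathsf P-F(C,\mathsf P)$. The final step is to exhibit, for each endpoint, a linear combination of finitely many residuals in which every free derivative cancels, leaving a nonzero constant as in \eqref{alg14:distinct-unit}.

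I expect the main obstacle to be finding these cancelling combinations. Because $\varrho$ and its derivatives stay free, more unknowns enter than in the distinct case, and a combination independent of them may require residuals in two directions or of the pure-commutator type. If no constant certificate exists at some root, I would fall back on the mixed-curvature residuals used in Appendix~\ref{app:recurrent-certificate} and aim for a multiple of $c$, which is nonzero by hypothesis.
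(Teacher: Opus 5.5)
Your outline follows the paper's route for this theorem. With $q=\kappa\ell^2$ (your $\varrho$), the source gives $p=\ell Lz$ with $L=ak^{\mathsf T}+[b]_\times$. Restricting the first transport to $\ker\ell$ gives $C|_{\ker\ell}=\tfrac{1-2\delta}{6}L|_{\ker\ell}$. The multiplicity comparison, with $\Delta=\kappa D\ell^3$ and $\mathfrak r=\kappa E\ell^2$, reduces to $(2\delta-1)(4\delta-1)(4\delta+1)=0$, and residual identities exclude the surviving roots. (Note that $A=\kappa aa^{\mathsf T}$, not $\tfrac12\kappa\,a\otimes a$.) Two steps, however, fail as written.

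\textbf{The degenerate values of $\delta$ are not already removed.} Theorem~\ref{alg14:linear-source} removes only $\lambda=0$ and $\delta=0$. The bad values come out of your normal-transport step. With $h=-a^{\mathsf T}\eta$, the restricted transport reads $\tfrac12(1+2\delta)h=\tfrac13(1-2\delta)(\delta-1)$ and $\tfrac12(4\delta-1)\eta=-\tfrac16(1-2\delta)(2\delta+3h)v$.
\begin{itemize}
\item At $\delta=-\tfrac12$ the first equation is $0=-1$, so this value is excluded outright.
\item At $\delta=\tfrac14$ the first equation gives $a^{\mathsf T}\eta=\tfrac16$ and the second becomes vacuous. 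The two transverse components of $\eta$ therefore remain free, and the generic relation $\eta=-\tfrac{2(1-2\delta)}{3(1+2\delta)}v$ is unavailable; the state is not ``$(a,v,\kappa)$ plus at most one scalar.''
\end{itemize}
Moreover, $\delta=\tfrac14$ is one of the three roots of your multiplicity equation, precisely because $D=3(4\delta-1)^2/(\delta(2\delta+1)^2)$ vanishes there. So the multiplicity-three comparison is empty at that root, and it cannot be handled by a residual identity. You must instead compute $B$ for the full $\delta=\tfrac14$ family, including the free transverse components of $\eta$. Its lower-right $2\times2$ block vanishes, so $\Delta\equiv0$, which contradicts the regular-pencil hypothesis.

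\textbf{You must add $\nabla\delta=0$ once $\delta$ is pinned.} Keeping all $21$ derivatives of $(a,v,\kappa)$ free is correct while differentiating. But once the multiplicity equation pins $\delta$ to one of the separated roots $\tfrac12,-\tfrac14$ on an open set, $\delta$ is locally constant. You must then add $\nabla_i\delta=(\nabla_ia)^{\mathsf T}v+a^{\mathsf T}\nabla_iv=0$ to your residual list.
\begin{itemize}
\item At $\delta=-\tfrac14$ the contradicting identity is $\mathcal E_{0;1,02}+24\nabla_0\delta=-\tfrac{15}{2}$, where $\mathcal E_{0;1,02}$ is a first-transport residual of $p$. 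This identity gives a contradiction only because $\nabla_0\delta=0$.
\item At $\delta=\tfrac12$ your expectation is right. The identity is $\kappa(\mathcal E_{1;0,01}+\mathcal E_{1;1,00})+18\kappa^2\mathcal E_{2;1,22}-3\mathcal U_{12}^{0,00}=-\tfrac92\kappa^2$, which uses two directions and a pure-commutator residual.
\end{itemize}
Your $\mathcal E_C$, built from the recovered $X$, does encode $\mathcal U$: the recovered derivative satisfies the pure commutator identically, so $\mathcal U$ evaluated at the actual derivative is linear in $\mathcal E_C$. The right side at $\delta=\tfrac12$ is a nonzero multiple of $\kappa^2$ rather than a constant. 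Neither identity uses $c$, so the mixed-curvature fallback is unnecessary.
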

\begin{proof}
On its repeated-factor part write
\begin{equation}\label{geo14:repeated-free-scale}
 m=\kappa\ell,\qquad q=\kappa\ell^2,\qquad \kappa\ne0.
\end{equation}
The coefficient $\kappa$ remains variable after normalizing ${\mathsf H_{\rm src}}$.

The source equation and its polarization give
\begin{equation}\label{geo14:repeated-unreduced}
 \begin{split}
 C&=\kappa aa^{\mathsf T}-\tfrac12[\eta]_\times,\qquad
 p(z)=\ell Lz,\\
 L&=a k^{\mathsf T}+[b]_\times,\qquad
 k=\frac{3}{2\delta}(\kappa a+v\times\eta),\qquad
 b=v-\tfrac32\eta .
 \end{split}
\end{equation}
The covariant derivative of the quotient, including the derivative of $\kappa$,
gives the normal derivative law
\begin{equation}\label{geo14:ell-normal-jet}
 \nabla_i\ell\equiv \varrho z_i\pmod{\ell},
 \qquad \varrho=\frac{1-2\delta}{6}.
\end{equation}
The term $(\nabla_i\kappa)\ell^2$ is divisible by $\ell^2$ and therefore
does not alter this normal coefficient.  Differentiating $p=\ell Lz$
and restricting the exact first transport to $\ell=0$ now yields
\begin{equation}\label{geo14:repeated-normal}
 C|_{\ker\ell}=\varrho{}L|_{\ker\ell}.
\end{equation}
Let $h=-a^{\mathsf T}\eta$ and $\vartheta=a^{\mathsf T}v=\delta-1$.
The three nonzero component equations in
\eqref{geo14:repeated-normal} are equivalently
\begin{equation}\label{geo14:normal-scalar-vector}
 (1-3\varrho)h=2\varrho(\delta-1),\qquad
 (\delta-3\varrho)\eta=-\varrho(2\delta+3h)v .
\end{equation}
For example, at a point where $a=e_0$, write
$v=(\delta-1,x,y)^{\mathsf T}$ and $\eta=-(h,j,k)^{\mathsf T}$.
The $(1,2)$ entry of $C-\varrho{}L$ is
\[
 -\frac{3(2\delta+1)h+2(2\delta-1)(\delta-1)}{12},
\]
and its $(0,1),(0,2)$ entries give the transverse components of the
second equation.  
At $\delta=-1/2$, the first equation in
\eqref{geo14:normal-scalar-vector} is $0=-1$, so this value is impossible.
At $\delta=1/4$, it gives $h=-1/6$ and leaves the transverse entries
of $\eta$ unrestricted.  Write those entries as $-j,-k$, keep $\kappa$
arbitrary, and use $a=e_0$, $v=(-3/4,x,y)^{\mathsf T}$ only at the
evaluation point.  Direct substitution in
$B=Dp^{\mathsf T}-[z]_\times$ gives the form
\[
 B=
 \begin{pmatrix}
 12\kappa z_0+12jy z_0+6jz_2-12kx z_0-6kz_1
     &(3k+2y)z_0+2z_2&-(3j+2x)z_0-2z_1\\
 -6kz_0-z_2&0&0\\
 6jz_0+z_1&0&0
 \end{pmatrix}.
\]
Hence $\Delta\equiv0$, contradicting regularity.  

Away from these two values,
\eqref{geo14:normal-scalar-vector} gives
\begin{equation}\label{geo14:repeated-normal-family}
\begin{aligned}
\eta&=-g(\delta)v,\qquad
     g(\delta)=\frac{2(1-2\delta)}{3(1+2\delta)},\\
C&=\kappa aa^{\mathsf T}+\frac{g(\delta)}2[v]_\times,\qquad p(z)=\frac{3\kappa}{2\delta}a\ell^2
             +\frac{2}{1+2\delta}\ell[v]_\times z .
\end{aligned}
\end{equation}
Put
\[
 D=\frac{3(4\delta-1)^2}{\delta(2\delta+1)^2},\qquad
 E=\frac{(4\delta-1)(5\delta-2)}{\delta(2\delta+1)^2}.
\]
Substituting \eqref{geo14:repeated-normal-family} in
$B=D_zp^{\mathsf T}-[z]_\times$ and $J=\operatorname{adj}B$ gives
\begin{equation}\label{geo14:repeated-determinant}
 \Delta=\kappa D\ell^3,\qquad
 \mathfrak r=\operatorname{tr}(JC^{\mathsf T})=\kappa E\ell^2.
\end{equation}
These are algebraic identities: they may be checked at a point with
$a=e_0$ and $v=(\delta-1,x,y)^{\mathsf T}$, keeping $\kappa$ arbitrary.
Multiplication of the source equation by $J$ gives the polynomial identity
\[
 JC^{\mathsf T}z
 =\frac{\kappa D}{3}\ell^2 {\mathsf H_{\rm src}}z
 =\frac{\kappa D}{3}\ell^2(z+\ell v).
\]
The complete derivative of the determinant, before imposing any
condition on the parameter derivatives, is
\begin{equation}\label{geo14:repeated-determinant-derivative}
\begin{aligned}
 \nabla_i\Delta
 &=(D\nabla_i\kappa+\kappa D'\nabla_i\delta)\ell^3
      +3\kappa D\ell^2\nabla_i\ell\\
 &=\tfrac32\eta_i\kappa D\ell^3
      +2\kappa E\ell^2z_i
      -\tfrac{\kappa D}{3}\ell^2(z_i+\ell v_i).
\end{aligned}
\end{equation}
Here $D'=dD/d\delta$ and the second line is
\eqref{eq:fund-transport}. Use
\eqref{geo14:ell-normal-jet}, cancel $\kappa\ell^2$, and reduce
modulo $\ell$. Equivalently, comparison modulo $\ell^3$ gives
\[
 (3D\varrho-2E+D/3)z_i\equiv0\pmod\ell\qquad(i=0,1,2).
\]
The coordinate linear forms $z_i$ cannot all be divisible by $\ell$.
Thus $3D\varrho=2E-D/3$, or
\begin{equation}\label{geo14:repeated-parameter-roots}
 0=E-\frac{5-6\delta}{12}D
  =\frac{3(2\delta-1)(4\delta-1)(4\delta+1)}
          {4\delta(2\delta+1)^2}.
\end{equation}
The exclusions of $\delta=0,-1/2,1/4$ were established before
\eqref{geo14:repeated-normal-family}. Thus $\delta=1/2$ or $-1/4$
throughout the chart, without any restriction on $v$.
The two values are separated, so $\delta$ is locally constant.
Only now may we set $d\delta=0$.

For the endpoint calculations, differentiate
\eqref{geo14:repeated-normal-family} in the determinant-one frames
of Section~\ref{b:sec:threefold}. Here $a$ is a vector, $v$ a
covector, and $\delta=1+a^{\mathsf T}v$ is a scalar. Retain the
seven covariant derivatives of $(a,v,\kappa)$ in each direction,
and impose the pointwise frame choices only after differentiating.

For a polynomial vector $p(z)$, define its exact first-transport source
by
\begin{equation}\label{geo14:polynomial-F}
 \mathbf F_i(C,p)
 =(Cz)z_i-\tfrac12e_iq+
       \left(C[e_i]_\times+\tfrac12e_i\eta^{\mathsf T}\right)p .
\end{equation}
Let $\mathcal E_{i;a,jk}$ denote the coefficient of $z_jz_k$ in
$\nabla_i p_a-\mathbf F_i^a(C,p)$, with $j\le k$.
For $j<k$ this coefficient is twice the corresponding symmetric tensor
component.  Each $\mathcal E$ vanishes on a geometric solution.

Also put $X_i=\nabla_iC$ and use the full symmetric-tensor version
$F_i(C,\mathsf P)$ of \eqref{geo14:polynomial-F}.  Its part linear in $\mathsf P$ is
denoted by $F_{i,\mathsf P}$.  Define the pure tensor residual
\begin{equation}\label{geo14:pure-slot-definition}
 \begin{split}
 \mathcal U_{qp}^{a,jl}
 ={}&F_p(X_q,\mathsf P)^{a,jl}-F_q(X_p,\mathsf P)^{a,jl}\\
 &+F_{p,\mathsf P}(C)[F_q(C,\mathsf P)]^{a,jl}
       -F_{q,\mathsf P}(C)[F_p(C,\mathsf P)]^{a,jl}
       +T^r_{qp}F_r(C,\mathsf P)^{a,jl}.
 \end{split}
\end{equation}
The pure Chern commutator gives $\mathcal U=0$ in tensor components.

At either root $\delta-1\ne0$, another pointwise determinant-one change
of frame gives $a=e_0$ and $v=(\delta-1)e^0$.
All parameter derivatives are again taken before this specialization.
For $\delta=-1/4$, the complete first equation has the short identity
\begin{equation}\label{geo14:minus-quarter}
 \mathcal E_{0;1,02}+24\nabla_0\delta=-\frac{15}{2}.
\end{equation}
Here, in the chosen point frame,
$\nabla_0\delta=-\tfrac54(\nabla_0a)_0+(\nabla_0v)_0$.
Both terms on the left of \eqref{geo14:minus-quarter} vanish, a
contradiction.

For $\delta=1/2$, the first and pure equations give
\begin{equation}\label{geo14:plus-half}
 \kappa(\mathcal E_{1;0,01}+\mathcal E_{1;1,00})
 +18\kappa^2\mathcal E_{2;1,22}
 -3\mathcal U_{12}^{0,00}
 =-\frac92\kappa^2.
\end{equation}
The right side is nonzero, excluding the second root.

\end{proof}

\subsection{The torsion image of a rank-one cut}
Write $u^2=u\otimes u$. For vectors $x,y\in V$, $P(x,y)$
evaluates the alternating input slots,
so $P(x,y)=-P(y,x)$. For covectors $v,w$,
$P(\cdot,\cdot)(v,w)$ denotes the two-form obtained by contracting
the symmetric output slots.
Write $F_p(T,P)$ for the right side of \eqref{hd-full-F},
and $H_p(T,P)$ for its part bilinear in $T,P$.
Thus $\nabla_pP=F_p(T,P)$. With $X_p=\nabla_pT$, the cyclic
torsion identity and pure commutation give
\begin{align}
0={}&X_p(q,r)+X_q(r,p)+X_r(p,q)
 +T(T(p,q),r)+T(T(q,r),p)+T(T(r,p),q),\label{eq:cyclic}\\
0={}&F_q(X_p,P)-F_p(X_q,P)
 +H_q(T,F_p(T,P))-H_p(T,F_q(T,P))
 +F_{T(p,q)}(T,P).\label{eq:pure-main}
\end{align}
The first identity is the $(3,0)$ part of the first Bianchi identity;
the second follows by differentiating the transport formula and applying
\eqref{p:eq:purecomm}, including the connection on its derivative slot.

\label{r17:rank-one-entry-section}

\begin{theorem}[A rank-one cut determines the torsion image]
\label{r17:rank-one-entry}
Let $\dim_{\mathbb C}X=3$ and $c\ne0$. Suppose that on an open set there
are smooth $u,\xi$ with $u\ne0$, $\xi(u)=1$, and
\begin{equation}\label{r17:cut}
 P(u,x)=-\tfrac12u\odot x+\xi(x)u^2\qquad(x\in V).
\end{equation}
Then, for $W=\mathbb Cu$,
\[
 \nabla'W\subset W,\qquad T(V,V)\subset W.
\]
On the open subset where $T\ne0$, the equations further give
\[
 T=-\eta\wedge(u\otimes\xi),\quad\eta(u)=0,\quad\eta\ne0,
 \qquad \operatorname{im}P\subset W\odot V.
\]
\end{theorem}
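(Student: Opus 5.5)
The plan is to feed the cut identity \eqref{r17:cut} into the transport law \eqref{hd-full-F} of Lemma~\ref{lem:general-curvature-transport}, and then into the cyclic and pure-commutator identities \eqref{eq:cyclic}--\eqref{eq:pure-main}. Throughout, differentiate before making any pointwise frame normalization.

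First, complete $u$ to a local frame $(u,e_1,e_2)$ with $\xi=e^0$ in the dual frame. Equation \eqref{r17:cut} fixes $P(u,e_1)$ and $P(u,e_2)$. The only remaining unknown is $P(e_1,e_2)\in\Sym^2V$, which we call $\Pi$. Write $\nabla_pu=\mu_pu+\nu_p$ with $\nu_p$ in a complement of $W$, and also differentiate $\xi$. Apply $\nabla_p$ to \eqref{r17:cut} evaluated at $x=e_1,e_2$, and compare with \eqref{hd-full-F} contracted on $(u,x)$. The derivative of the cut contributes terms linear in $\nu_p$ and $\nabla\xi$. The right side of \eqref{hd-full-F} contributes the torsion terms $\tfrac14(T^j{}_{ip}\delta^l_k+\cdots)$ together with $\tfrac12 T^r P_r$ contractions. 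Then project the resulting identity in $\Sym^2V$ onto $\Sym^2(V/W)$. On the left only $\nu_p$ survives, through $-\tfrac12\nu_p\odot x$. On the right the projected terms involve $T(u,\cdot)$, $T(p,\cdot)$ modulo $W$, and $\Pi$ modulo $W$. Comparing coefficients in the three input slots (symmetric versus alternating in $p,x$) should separate into two conclusions: $\nu_p=0$, which is $\nabla'W\subset W$, and the vanishing of the projection of $T$ to $V/W$, which is $T(V,V)\subset W$. I expect to check that the $\delta$-terms with coefficient $\tfrac14$ exactly balance the $\nu$-terms, and that the leftover is a linear system whose only solution is zero.

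Second, with $T$ taking values in $W$, write $T(x,y)=\theta(x,y)u$ for a two-form $\theta$. Feed this into the full (unprojected) transport equation, now keeping the $W\odot V$ components, and into the cyclic identity \eqref{eq:cyclic}. Since $T(T(p,q),r)=\theta(p,q)\theta(u,r)u$, cyclicity forces $\theta(u,\cdot)\wedge\theta=0$ modulo derivative terms. The mixed components of the transport equation then force $\theta=\omega\wedge\xi$ for some $\omega$ with $\omega(u)=0$. The trace formula $\eta_i=\sum_kT^k{}_{ki}$ gives $\eta=-\omega$ up to the sign convention, which produces $T=-\eta\wedge(u\otimes\xi)$, $\eta(u)=0$, and $\eta\ne0$ wherever $T\ne0$. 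Finally, return to the projected transport of $\Pi$ along $u$ and $x$ to obtain $\Pi\in W\odot V$, that is, $\im P\subset W\odot V$. If the first-order equations do not fix $\Pi$, I will use the pure commutator \eqref{eq:pure-main} in the $\Sym^2(V/W)$ projection, where most torsion terms vanish.

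The main obstacle is the bookkeeping in the first step. The unknown derivatives of $\xi$ and the unknown $\Pi$ both enter the $W\odot V$ components, so the separation must rely only on the $\Sym^2(V/W)$ projection. I would organize the calculation as an explicit component table in a frame with $u=e_0$, keeping $\nabla u$ and $\nabla\xi$ as free variables until the final comparison.
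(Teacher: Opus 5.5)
Your overall plan of differentiating the cut against \eqref{hd-full-F} and then invoking \eqref{eq:pure-main} is the right general strategy. However, Step~1 cannot work as designed, because it discards exactly the component that carries the information.

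Differentiating the cut gives $F_p(u,x)+P(\nabla_pu,x)=-\tfrac12\nabla_pu\odot x+(\nabla_p\xi)(x)u^2+\xi(x)\,u\odot\nabla_pu$, where $F_p$ is the right side of \eqref{hd-full-F}. Three problems follow.
\begin{enumerate}
\item You dropped $P(\nabla_pu,x)$. Its $\Sym^2(V/W)$ part is $\det(\pi\nabla_pu,\pi x)\,\pi^2\Pi$, with $\det$ taken in the basis $\pi e_1,\pi e_2$ and your $\Pi=P(e_1,e_2)$. So the projected system is bilinear in the unknowns, not linear.
\item $\pi T(p,x)$ for $p,x$ transverse to $u$ does not appear in the projection at all. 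In $F_p(u,x)$ it enters only through $T(p,x)\odot u$ and $P(T(p,x),u)=\tfrac12u\odot T(p,x)-\xi(T(p,x))u^2$, and both are killed by $\pi^2$.
\item The remaining unknowns are not forced to vanish. At a point, the choice $\pi\nabla_pu=\pi p$, $\pi T(u,x)=-\pi x$, $\pi^2\Pi=0$ satisfies every projected equation.
\end{enumerate}

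Two ideas are missing.
\begin{itemize}
\item[(i)] Evaluate at $x=\nabla_pu$. Alternation gives $P(\nabla_pu,\nabla_pu)=0$, which removes $\Pi$. Hence $\pi^2F_p(u,\cdot)=0$ forces $\pi(\nabla_pu)^2=0$.
\item[(ii)] Keep the $W\odot(V/W)$ component, which you throw away. Note that $\nabla\xi$ enters only the $u^2$ component. In the direction $p=u$, transport gives $F_u(u,x)=u\odot T(u,x)-\xi(T(u,x))u^2$, with no $\Pi$. So (i) yields $\nabla_uu\in W$. Then the full equation says $F_u(u,x)\in\C u^2$, i.e.\ $T(u,V)\subset W$. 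Only after this does $F_p(u,x)=\tfrac12u\odot T(p,x)+(\cdot)\,u^2$ hold for every $p$, and the same two moves give $\nabla_pu\in W$ and $T(p,x)\in W$.
\end{itemize}

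Steps~2 and~3 also lack the decisive inputs.

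For Step~2, the cyclic identity \eqref{eq:cyclic} contains the free jet $\nabla'T$, so ``modulo derivative terms'' it gives no pointwise constraint. Once $T=u\alpha$, first-order transport only determines $\nabla'\xi$, so it cannot give $\alpha|_{\Lambda^2\ker\xi}=0$. That conclusion comes from the pure commutator \eqref{eq:pure-main} evaluated on $p,q\in\ker\xi$. There the two terms linear in $\nabla T$ vanish, since $F_p(T,P)(x,y)=\tfrac12\{\alpha(x,p)\xi(y)+\alpha(x,y)\xi(p)+\alpha(p,y)\xi(x)\}u^2$. Each of the three quadratic terms then contributes $\tfrac12\alpha(p,q)^2u^2$.

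For Step~3, the same formula shows that once $T=u\alpha$, the right sides of \eqref{hd-full-F} and \eqref{eq:pure-main} take values in $\C u^2$. In $\Sym^2(V/W)$ they therefore only say that $\pi^2P$ is parallel for the quotient connection, or nothing at all. Your fallback cannot force $\pi^2\Pi=0$. You need barred information instead:
\begin{itemize}
\item By \eqref{b:eq:P-torsion}, $P$ is the symmetrized barred derivative of $T$.
\item Modulo $W$, $\nabla_{\bar Z}T\equiv\alpha\,\pi(\nabla_{\bar Z}u)$, so $\pi^2P(x,y)=\alpha(x,y)B$ for a single $B\in\Sym^2(V/W)$.
\item The cut gives $\pi^2P(u,\cdot)=0$, while $\alpha(u,\cdot)=\eta\ne0$. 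Hence $B=0$.
\end{itemize}
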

\begin{proof}
Put $v_p=\nabla_pu$, $\pi:V\to V/W$, and write $\pi^2$ for
the induced map on symmetric squares. Differentiate the
cut, cancelling the differentiated $x$ terms:
\begin{equation}\label{r17:cut-derivative}
 F_p(T,P)(u,x)+P(v_p,x)
 =-\tfrac12v_p\odot x+(\nabla_p\xi)(x)u^2
                         +\xi(x)u\odot v_p.
\end{equation}
This identity gives a useful elementary implication. If
$\pi^2F_p(u,x)=0$ for every $x$, then
\[
 \pi^2P(v_p,x)=-\tfrac12\pi(v_p)\odot\pi(x).
\]
Evaluate at $x=v_p$ at the point. Alternation gives $P(v_p,v_p)=0$,
so $\pi(v_p)^2=0$, hence $v_p\in W$. Substituting
$v_p=\lambda_pu$ back into \eqref{r17:cut-derivative} gives
\begin{equation}\label{r17:diagonal-test}
 F_p(u,x)=\{(\nabla_p\xi)(x)+\lambda_p\xi(x)\}u^2.
\end{equation}

First use direction $p=u$. The full transport, with the cut substituted
only where one lower input is $u$, gives
\[
 F_u(u,x)=u\odot T(u,x)-\xi(T(u,x))u^2.
\]
Its projection by $\pi^2$ vanishes. The implication above makes it
a multiple of $u^2$, so $T(u,x)\in W$. Write
$T(u,x)=a(x)u$, with $a(u)=0$.

Now let $p$ be arbitrary. Substituting this newly proved identity
in the same complete transport gives
\begin{equation}\label{r17:second-entry-identity}
 F_p(u,x)=\tfrac12u\odot T(p,x)
 +\tfrac12\{a(p)\xi(x)+a(x)\xi(p)-\xi(T(p,x))\}u^2.
\end{equation}
Again its $\pi^2$ projection vanishes. The implication proves
$v_p\in W$ for every $p$, and \eqref{r17:diagonal-test} now
forces $T(p,x)\in W$ for every $p,x$. This proves both initial
conclusions.

Write $T=u\alpha$.  Direct substitution of the entire cut in
the full transport, retaining the $u^2$ term, gives
\begin{equation}\label{alg16:cut-F}
 F_p(T,P)(x,y)=\tfrac12\{\alpha(x,p)\xi(y)+\alpha(x,y)\xi(p)
                              +\alpha(p,y)\xi(x)\}u^2.
\end{equation}
Here the mixed outputs of the source cancel the
$-u\odot(\cdot)/2$ terms of the cut.

Since $\nabla'W\subset W$ is already known, write
$X_p=\nabla_pT=u\gamma_p$ with arbitrary alternating two-forms
$\gamma_p$.  Take $p,q\in\ker\xi$ and evaluate the full pure
equation~\eqref{eq:pure-main} on the input pair $(p,q)$.  The two
terms linear in $X$ vanish by the displayed formula.  If $H$ denotes
the part of $F$ linear in $P$, the remaining three terms are
\[
\begin{aligned}
H_q(T,F_p)(p,q)&=\tfrac12\alpha(p,q)^2u^2,\qquad -H_p(T,F_q)(p,q)=\tfrac12\alpha(p,q)^2u^2,\\
F_{T(p,q)}(T,P)(p,q)&=\tfrac12\alpha(p,q)^2u^2.
\end{aligned}
\]
For the first one, for example, the first and second homogeneous
summands both reduce to $\alpha(p,q)F_p(u,q)/2$, and
$F_p(u,q)=\alpha(p,q)u^2/2$.  Thus
$3\alpha(p,q)^2u^2/2=0$ and $\alpha|_{\Lambda^2\ker\xi}=0$.
As $\eta(x)=-\operatorname{tr}T_x=\alpha(u,x)$ and $\xi(u)=1$,
this is precisely $\alpha=-\eta\wedge\xi$, $\eta(u)=0$.
Nonzero torsion implies $\eta\ne0$.

It remains to recover the full output support. In any derivative
direction $Z$, the component of $(\nabla_ZT)(x,y)$ modulo $W$
is $\alpha(x,y)\pi(\nabla_Zu)$. Consequently the barred-symmetric
definition of $P$ gives one tensor
$B\in\operatorname{Sym}^2(V/W)$, independent of $x,y$, such that
\[
 \pi^2P(x,y)=\alpha(x,y)B.
\]
The cut has $\pi^2P(u,x)=0$. Since
$\alpha(u,x)=\eta(x)$ and $\eta\ne0$ wherever $T\ne0$,
it follows that $B=0$. Thus $\operatorname{im}P\subset W\odot V$.
\end{proof}

\subsection{The orthogonal endpoint}
\label{geo16:partial-section}

On the torsion-nonzero cut locus, the preceding theorem gives
\begin{equation}\label{geo16:partial-input}
\begin{gathered}
 T\ne0,\quad W=\operatorname{im}T=\mathbb Cu,\quad
 \operatorname{im}P\subset W\odot V,\quad \nabla'W\subset W,\\
 \Pi=u\otimes\xi,\quad \xi(u)=1,\quad \eta(u)=0,\quad
 T=-\eta\wedge\Pi,\quad
 P(u,x)=-\tfrac12u\odot x+\xi(x)u^2 .
\end{gathered}
\end{equation}

\begin{lemma}[Transport on a rank-one cut]
\label{geo16:partial-transport}
Set $\alpha=-\eta\wedge\xi$, so $T=u\alpha$. Then
\begin{equation}\label{geo16:rank-one-F}
 \nabla_pP(x,y)=\xi(p)\alpha(x,y)u^2
 =\tfrac12\bigl((\Pi p)\odot T(x,y)\bigr).
\end{equation}
There is a covector $\zeta$, with $\zeta(u)=0$, such that
\begin{equation}\label{geo16:partial-derivatives}
 \nabla_p\Pi=(\Pi p)\otimes\eta,\qquad
 \nabla_p\eta=2\eta(p)\eta-\xi(p)\zeta.
\end{equation}
\end{lemma}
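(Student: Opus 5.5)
The plan is to prove \eqref{geo16:rank-one-F} by direct substitution, and then to read off the derivatives of $\Pi$ and $\eta$ from the differentiated structural identities in \eqref{geo16:partial-input}.

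\emph{Step 1: the formula \eqref{geo16:rank-one-F}.} Theorem~\ref{r17:rank-one-entry} already gives \eqref{alg16:cut-F}:
\[
 F_p(T,P)(x,y)=\tfrac12\{\alpha(x,p)\xi(y)+\alpha(x,y)\xi(p)+\alpha(p,y)\xi(x)\}u^2 .
\]
Insert $\alpha=-\eta\wedge\xi$ and expand the wedge in each of the three terms. The cyclic expression $\alpha(x,p)\xi(y)+\alpha(p,y)\xi(x)+\alpha(y,x)\xi(p)$ is the alternation of $\eta\otimes\xi\otimes\xi$, so it vanishes because two of the slots carry $\xi$. Hence $\alpha(x,p)\xi(y)+\alpha(p,y)\xi(x)=\alpha(x,y)\xi(p)$, and the bracket collapses to $2\xi(p)\alpha(x,y)$. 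This is the first equality. The second is the identity $(\Pi p)\odot T(x,y)=\xi(p)\alpha(x,y)\,u\odot u=2\xi(p)\alpha(x,y)u^2$, combined with the factor $\tfrac12$.

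\emph{Step 2: the derivative of $\Pi$.} Since $\nabla'W\subset W$, write $\nabla_pu=\lambda_pu$. Differentiating $\xi(u)=1$ gives $(\nabla_p\xi)(u)=-\lambda_p$. To determine $\nabla_p\xi$ completely, differentiate the cut identity $P(u,x)=-\tfrac12u\odot x+\xi(x)u^2$ as in \eqref{r17:cut-derivative}. With $v_p=\lambda_pu$, this gives \eqref{r17:diagonal-test}:
\[
 F_p(u,x)=\{(\nabla_p\xi)(x)+\lambda_p\xi(x)\}u^2 .
\]
By Step~1, $F_p(u,x)=\xi(p)\alpha(u,x)u^2=\xi(p)\eta(x)u^2$, using $\eta(u)=0$ and $\xi(u)=1$. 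Therefore $\nabla_p\xi=-\lambda_p\xi+\xi(p)\eta$, and so $\nabla_p\Pi=\lambda_pu\otimes\xi+u\otimes(-\lambda_p\xi+\xi(p)\eta)=(\Pi p)\otimes\eta$.

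\emph{Step 3: the derivative of $\eta$.} Differentiate $T=-\eta\wedge\Pi$, obtaining
\[
 \nabla_pT=-\nabla_p\eta\wedge\Pi-\eta\wedge(\Pi p)\otimes\eta ,
\]
and the last term vanishes because the wedge alternates the two slots occupied by $\eta$. Substitute this into the cyclic Bianchi identity \eqref{eq:cyclic}. The quadratic torsion terms vanish, since $T(T(p,q),r)=\alpha(p,q)T(u,r)$ and $T(u,\cdot)=-\eta(u)\Pi+\xi(u)\cdots$ reduces to $\alpha(u,r)u=\eta(r)u$. I expect this to leave a cyclic sum of the form $\xi(p)\eta(q)\eta(r)$, which is compatible with the stated $2\eta(p)\eta$ term. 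The resulting equation reads $\mathrm{Alt}(\nabla\eta\otimes\xi)=c\,\mathrm{Alt}(\eta\otimes\eta\otimes\xi)$-type. Restricted to $\ker\xi$, it forces $\nabla_p\eta-k\eta(p)\eta$ to be symmetric in its two slots and then zero. In the remaining $u$ direction it leaves $\nabla_u\eta$ free, and that free part is $-\zeta$. Finally, $\zeta(u)=0$ follows from differentiating $\eta(u)=0$, which gives $(\nabla_p\eta)(u)=-\lambda_p\eta(u)=0$.

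I expect Step~3 to be the main obstacle. The cyclic identity alone may fix only the alternating part of $\nabla\eta$ on $\ker\xi$, and it must also produce the coefficient $2$. If so, the plan is to supplement it with the contracted relation $\nabla_p\eta_i=\nabla_p\sum_kT^k_{ki}$. Alternatively, one can use the pure equation \eqref{eq:pure-main} evaluated on the pair $(p,u)$, with $p,q\in\ker\xi$ as in the proof of Theorem~\ref{r17:rank-one-entry}, to pin down the symmetric part on $\ker\xi$.
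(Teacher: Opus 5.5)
Your Steps~1 and~2 are correct and follow the paper's own route. In Step~1 you substitute $\alpha=-\eta\wedge\xi$ into \eqref{alg16:cut-F}, and $\alpha\wedge\xi=0$ collapses the bracket to $2\xi(p)\alpha(x,y)$. In Step~2 you read $\nabla_p\xi=-\lambda_p\xi+\xi(p)\eta$ off the $u^2$ coefficient of the differentiated cut, which gives $\nabla_p\Pi=(\Pi p)\otimes\eta$.

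The gap is Step~3: the cyclic identity \eqref{eq:cyclic} cannot produce the formula for $\nabla\eta$. Write $N(p,x)=(\nabla_p\eta)(x)$ and $N^{\mathrm a}(p,q)=N(p,q)-N(q,p)$, and use $\nabla_pT=-(\nabla_p\eta)\wedge\Pi$.
\begin{itemize}
\item The quadratic torsion terms of \eqref{eq:cyclic} sum to $(\alpha\wedge\eta)(p,q,r)\,u=0$.
\item The derivative terms sum to $-(N^{\mathrm a}\wedge\xi)(p,q,r)\,u$.
\end{itemize}
So the whole identity is only $N^{\mathrm a}\wedge\xi=0$, equivalently $\partial\eta\wedge\xi=0$ since $\eta(u)=0$. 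No $\eta\otimes\eta$ term survives, so no coefficient $2$ can come out. The symmetric part of $N$ on $\ker\xi\times\ker\xi$ is left completely free, whereas the claim requires $N=2\eta\otimes\eta$ there. Your inference ``symmetric, hence zero'' does not follow.

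Your two fallbacks do not close this either.
\begin{itemize}
\item The trace relation is vacuous. Once $\xi(u)=1$ and $\eta(u)=0$, the tensor $-\eta\wedge\Pi$ has trace exactly $\eta$, so differentiating only reproduces $(\nabla_p\eta)(u)=0$.
\item The pure equation \eqref{eq:pure-main} with both derivative directions in $\ker\xi$, as in the proof of Theorem~\ref{r17:rank-one-entry}, reduces to $0=0$ once $\alpha=-\eta\wedge\xi$ is known.
\end{itemize}

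The missing idea is to apply the pure $(2,0)$ commutator \eqref{p:eq:purecomm} to $\Pi$ itself, using your Step~2 formula. Since $\nabla\Pi=\xi\otimes u\otimes\eta$ (derivative slot first) and $\nabla u=\lambda\otimes u$, the $\lambda$ terms cancel and
\[
 \nabla^2_{q,p}\Pi=\xi(q)\eta(p)\,u\otimes\eta+\xi(p)\,u\otimes\nabla_q\eta,\qquad
 -\nabla_{T(q,p)}\Pi=-\alpha(q,p)\,u\otimes\eta .
\]
Equating $\nabla^2_{q,p}\Pi-\nabla^2_{p,q}\Pi$ with the second expression gives
\[
 \xi(p)\{\nabla_q\eta-2\eta(q)\eta\}=\xi(q)\{\nabla_p\eta-2\eta(p)\eta\}.
\]
One copy of $\eta(q)\eta$ comes from $\nabla\xi$ and the other from the torsion term. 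Setting $p=u$ yields $\nabla_q\eta=2\eta(q)\eta+\xi(q)\nabla_u\eta$. So $\zeta=-\nabla_u\eta$, and your argument then gives $\zeta(u)=0$.

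Equivalently, the pure commutator on $P$ also works if one derivative direction is $u$. Since $\nabla P=\tfrac12\Pi\odot T$, that is the same computation.
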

\begin{proof}
Substitute $\alpha=-\eta\wedge\xi$ in \eqref{alg16:cut-F}.
Writing $\nabla_pu=\lambda_pu$, differentiate the cut; its $u^2$
coefficient gives
\[
 \nabla_p\xi=-\lambda_p\xi+\xi(p)\eta,
\]
which proves the projector equation. The $(2,0)$ commutator
\[
 (\nabla^2_{p,q}-\nabla^2_{q,p})\Pi=-\nabla_{T(p,q)}\Pi
\]
then gives
\[
 \xi(p)\{\nabla_q\eta-2\eta(q)\eta\}
 -\xi(q)\{\nabla_p\eta-2\eta(p)\eta\}=0.
\]
This is the second equation; differentiating $\eta(u)=0$ gives
$\zeta(u)=0$.
\end{proof}

\begin{theorem}[The orthogonal endpoint]
\label{geo16:orthogonal-all-n}
On a threefold with $c\ne0$, no nonempty open set can satisfy
\eqref{geo16:partial-input} and
\begin{equation}\label{geo16:orthogonality}
T(W^\perp,W^\perp)=0 .
\end{equation}
\end{theorem}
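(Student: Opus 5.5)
On the open set we already have the rank-one cut structure \eqref{geo16:partial-input} and the transport laws of Lemma~\ref{geo16:partial-transport}. The plan is to add the orthogonality hypothesis \eqref{geo16:orthogonality}, derive an overdetermined algebraic system for the remaining unknowns, and find a combination of its equations whose only surviving term is a nonzero multiple of $c$.

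\textbf{Step 1: consequences of orthogonality.} Since $T=u\alpha$ with $\alpha=-\eta\wedge\xi$, the condition $T(W^\perp,W^\perp)=0$ says $\alpha$ vanishes on $\Lambda^2W^\perp$. Because $\eta(u)=0$ and $\eta\ne0$, the only way this can hold is if $\xi$ restricted to $W^\perp$ is proportional to $\eta$ restricted to $W^\perp$. Combined with $\xi(u)=1$, this should force $\xi=h(\cdot,u)/|u|^2$, so that $\Pi$ becomes the orthogonal projection onto $W$. I will normalize $|u|=1$ at a point. Differentiating this relation gives the key constraint. The holomorphic law $\nabla_p\Pi=(\Pi p)\otimes\eta$ from \eqref{geo16:partial-derivatives} must be compatible with $\Pi$ being Hermitian. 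By metric compatibility, the barred derivative $\nabla_{\bar p}\Pi$ is then the adjoint of $\nabla_p\Pi$, namely $\eta^\sharp\otimes\overline{\xi(p)}\,u^\flat$-type terms. This determines $\nabla''u$ modulo $W$, with $\pi\nabla_{\bar p}u$ proportional to $\overline{\xi(p)}\,\eta^\sharp$.

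\textbf{Step 2: the mixed equations.} Feed these barred derivatives into the first Bianchi identity \eqref{p:eq:firstBi}, using $T=-u\,\eta\wedge\xi$. The expression $\nabla_{\bar j}T$ contains $\nabla_{\bar j}u$, $\nabla_{\bar j}\eta$, and $\nabla_{\bar j}\xi$. Here $\nabla_{\bar j}\xi$ is now fixed by Step 1, and $\nabla_{\bar j}\eta$ is unknown. On the other side, the barred-symmetric part is prescribed by \eqref{b:eq:P-torsion}, namely $-4cP$, and $P$ is given by the cut together with $\operatorname{im}P\subset W\odot V$. I will choose a unitary frame $e_0=u$, $e_1\parallel\eta^\sharp$, $e_2$, and read off the components with $j,l$ in $\{1,2\}$ and $i,k$ transverse to $W$. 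The cut fixes entries such as $P_{01}^{11}=-\tfrac12$ (up to encoding conventions). The same entries computed from $\nabla_{\bar j}T$ should involve only $|\eta|^2$ and components of $\nabla''\eta$. The remaining barred-derivative components, together with the mixed commutator $[\nabla_p,\nabla_{\bar q}]\Pi=R_{p\bar q}\cdot\Pi$ evaluated using $\nabla_p\eta=2\eta(p)\eta-\xi(p)\zeta$, supply the additional equations. Taking the $(2,2)$ or $(1,2)$ entries of that commutator should express $R_{p\bar q 0\bar\ell}$ in terms of $\eta$ and $\zeta$, whereas polarization \eqref{hd-P-definition} with the cut prescribes a $c$-dependent value.

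\textbf{Step 3: elimination (main obstacle).} The hardest part is finding a linear combination of these residual equations in which all unknown derivative data cancel, namely $\nabla''\eta$, $\zeta$, $\lambda_p$, and the $Q$-type double-alternating curvature. What should remain is a multiple of $c$, or an identity such as $|\eta|^2=-\kappa c$ that contradicts a sign or a second relation. My first attempt will be to trace the commutator identity over $W^\perp$ and compare it with the trace of the polarization identity \eqref{p:eq:HSC} restricted to $e_0$. If that still leaves $|\eta|^2$ or $\zeta$ free, I will differentiate the orthogonality relation once more in a holomorphic direction, using \eqref{geo16:rank-one-F}, and apply the pure commutator to $\eta$. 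That should pin down $\zeta$ in terms of $\eta$. Once $\zeta$ is fixed this way, the resulting relation between $c$ and $|\eta|^2$, together with $\eta\ne0$ and $c\ne0$, gives the contradiction.
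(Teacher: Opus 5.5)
\textbf{The gap is in your Step~1.} With $T=u\alpha$ and $\alpha=-\eta\wedge\xi$, the hypothesis $T(W^\perp,W^\perp)=0$ only says $\eta|_{W^\perp}\wedge\xi|_{W^\perp}=0$. Since $\eta(u)=0$ and $\eta\ne0$, this gives $\xi|_{W^\perp}=\lambda\,\eta|_{W^\perp}$ for an undetermined scalar $\lambda$. The normalization $\xi(u)=1$ places no constraint on $\lambda$. For example, $u=e_0$, $\eta=e^1$, $\xi=e^0+\lambda e^1$ satisfies every algebraic condition in \eqref{geo16:partial-input} together with orthogonality, for any $\lambda$.

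So $\xi=\nu:=h(\cdot,u)$ (with $|u|=1$) is not an algebraic consequence. It is a differential statement that needs $c\ne0$. Everything after it rests on it: the Hermitian $\Pi$, the adjoint formula for $\nabla_{\bar p}\Pi$, and ``$\nabla_{\bar j}\xi$ fixed by Step~1''. Step~3 then exhibits no actual elimination, so the proposal contains no argument that reaches a contradiction.

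\textbf{The missing idea} is to prove first that $V=W\oplus H$, with $H=W^\perp$, is Chern parallel, and only then extract $\xi=\nu$.
\begin{itemize}
\item Since $\nabla'W\subset W$, metric compatibility gives $\nabla''H\subset H$. Differentiating $T(H,H)=0$ in barred directions therefore gives $(\nabla_{\bar z}T)(H,H)=0$. The symmetric barred Bianchi identity \eqref{b:eq:P-torsion} then yields $P(H,H)=0$.
\item Differentiate $P(H,H)=0$ holomorphically. By \eqref{geo16:rank-one-F}, $(\nabla_pP)(H,H)=\xi(p)\alpha(H,H)u^2=0$, so only the motion of $H$ survives. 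With $\mu_p(x)=\nu(\nabla_px)$ one gets $\mu_p(x)P(u,y)=\mu_p(y)P(u,x)$.
\item Modulo $\Sym^2W$, the cut turns this into $u\odot(\mu_p(x)y-\mu_p(y)x)=0$. Injectivity of $x\mapsto u\odot x$ on the two-dimensional $H$ forces $\mu_p=0$, so the splitting is parallel.
\item Parallelism gives $R(u,\bar u,x,\bar u)=0$ for $x\in H$. The normalized curvature formula gives $R(u,\bar u,x,\bar u)=cP(u,x)(\nu,\nu)=c\,\xi(x)$. Since $c\ne0$, $\xi|_H=0$, which is exactly your unproved claim.
\end{itemize}

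The contradiction then follows at once, in either of two ways. The paper uses that $\nu$ is parallel and $P(\cdot,\cdot)(\nu,\nu)=0$, whereas \eqref{geo16:rank-one-F} gives $(\nabla_uP)(u,y)(\nu,\nu)=\eta(y)\ne0$ for some $y\in H$. Alternatively, your own projector computation gives $0=\nabla_u(\Pi x)=(\eta(x)+\mu_u(x))u$ for $x\in H$, so $\mu_u=-\eta|_H\ne0$, contradicting $\mu=0$.
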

\begin{proof}
Choose $u$ to be a smooth unit section of $W$ and set
$H=W^\perp$, $\nu=h(\,\cdot\,,u)$.
Metric compatibility and $\nabla'W\subset W$ give
$\nabla''H\subset H$. Differentiate the identity on the neighborhood
$T(H,H)=0$ in any barred direction. The two differentiated input
terms again have both inputs in $H$, hence vanish. Therefore
\[
(\nabla_{\bar z}T)(x,y)=0\qquad(x,y\in H).
\]
The symmetric barred first Bianchi equation,
\[
(\nabla^jT)^l_{xy}+(\nabla^lT)^j_{xy}=-4cP_{xy}^{jl},
\]
now gives
\begin{equation}\label{geo16:HHzero}
P(H,H)=0 .
\end{equation}

For a holomorphic-type direction $p$, write
$\mu_p(x)=\nu(\nabla_px)$ for $x\in H$.
Equation \eqref{geo16:rank-one-F} gives $F_p(H,H)=0$.
Differentiate \eqref{geo16:HHzero}, retaining the motion of $H$:
\[
0=(\nabla_pP)(x,y)
  =-\mu_p(x)P(u,y)+\mu_p(y)P(u,x).
\]
In the quotient $(W\odot V)/\operatorname{Sym}^2W$, the cut makes this
\[
0=\tfrac12u\odot\{\mu_p(x)y-\mu_p(y)x\}.
\]
The map $H\longrightarrow(W\odot V)/\operatorname{Sym}^2W$,
$x\mapsto u\odot x$, is injective. Since $\dim H=2$, the identity
$\mu_p(x)y-\mu_p(y)x=0$ for all $x,y\in H$ implies $\mu_p=0$.
Thus $\nabla'H\subset H$. Together with the original preservation
and the metric adjoints this proves that $W\oplus H$ is fully
Chern parallel on the open set.

For $x\in H$, curvature preservation of this splitting gives
$R(u,\bar u,x,\bar u)=0$. The repeated barred argument eliminates
every barred-alternating curvature term, and the normalized symmetric
curvature formula gives exactly
\[
0=R(u,\bar u,x,\bar u)=cP(u,x)(\nu,\nu)=c\xi(x).
\]
Consequently $\xi|_H=0$ and hence $\xi=\nu$.
Together with \eqref{geo16:HHzero} and the cut, this proves
$P(x,y)(\nu,\nu)=0$ for every $x,y$ throughout the open set.
The covector line $\mathbb C\nu$ is now fully parallel, so the
complete covariant derivative of this double-$\nu$ contraction is zero:
\[
(\nabla_pP)(x,y)(\nu,\nu)=0 .
\]
But \eqref{geo16:rank-one-F}, evaluated at $p=x=u$ and $y\in H$,
gives
\[
0=F_u(u,y)(\nu,\nu)=\eta(y).
\]
This is impossible because $\eta(u)=0$ and $T\ne0$.
\end{proof}

\subsection{The nonorthogonal endpoint}
\label{geo13:partial-closure}
It remains to exclude $T(W^\perp,W^\perp)\ne0$ in dimension three.
We use \eqref{geo16:partial-input} and
Lemma~\ref{geo16:partial-transport} throughout.

\subsubsection*{A unitary chart and its state equation}
\label{geo13:unitary-chart}
Suppose $T(W^\perp,W^\perp)\ne0$.  A smooth unitary frame can be chosen so
that
\begin{equation}\label{geo13:chart}
 u=e_0,\qquad \xi=e^0+s e^1,\quad s>0,
 \qquad \eta=-r e^1-t e^2,\quad t>0,\quad r\in\mathbb R.
\end{equation}
On its nonzero locus a phase makes $r>0$; the remaining open case is
$r=0$. Derivatives of $r$ are retained even at a zero value.

Let $A_{\mathrm{cut}},B_{\mathrm{cut}},C_{\mathrm{cut}}$ be the
complex scalar coefficients in this unitary chart. The full tensors are
\begin{equation}\label{geo13:chart-tensors}
\begin{aligned}
 C&=\begin{pmatrix}-st\\t\\-r\end{pmatrix}\otimes e_0,
 &\mathsf P^0&=\begin{pmatrix}2A_{\mathrm{cut}}&B_{\mathrm{cut}}&C_{\mathrm{cut}}\\B_{\mathrm{cut}}&0&0\\C_{\mathrm{cut}}&0&0\end{pmatrix},\\
 \mathsf P^1&=\begin{pmatrix}0&0&1/2\\0&0&0\\1/2&0&0\end{pmatrix},
 &\mathsf P^2&=\begin{pmatrix}s&-1/2&0\\-1/2&0&0\\0&0&0\end{pmatrix}.
\end{aligned}
\end{equation}
Let $\Gamma_p$ denote the matrix of $\nabla_{e_p}$ in this frame.
Only $(\Gamma_p)_{10}=(\Gamma_p)_{20}=0$ is imposed initially.
Every other entry is free, and the barred connection is the same
connection's matrix $-\Gamma_p^*$.  Derivatives of the real moduli
$s,r,t$ are arbitrary complex numbers; their barred derivatives are
their complex conjugates.  The complex moduli $A_{\mathrm{cut}},B_{\mathrm{cut}},C_{\mathrm{cut}}$ and all their
unbarred derivatives are retained.

The normal components of $\nabla''u$ have the form
\[
 b=\begin{pmatrix}0&0&0\\d&0&\varrho_0\\e&-\varrho_0&0\end{pmatrix}.
\]
The zero normal--normal outputs of $\mathsf P$ give the skew lower block.
In the pair order $(01,02,12)$, the double-alternating barred-Bianchi
coefficients are
\[
 \begin{pmatrix}
 -2c-2rd&-2re&2r\varrho_0\\
 -2td&-2c-2te&2t\varrho_0\\
 4cB_{\mathrm{cut}}-2std&4cC_{\mathrm{cut}}-2ste&2st\varrho_0
 \end{pmatrix}.
\]
Their Hermitian symmetry implies
$e,\varrho_0\in\mathbb R$, $d=re/t$, and
\begin{equation}\label{geo13:real-state}
 B_{\mathrm{cut}}=r\delta,\qquad C_{\mathrm{cut}}=t\delta,\qquad
 \delta=\frac{\varrho_0+se}{2c}\in\mathbb R.
\end{equation}

To state the further relation intrinsically, choose $u$ unit, write
$C=v\otimes u$, and factor $p(z)=u(z)Lz$, where $u(z)=z(u)$.  Define
\begin{equation}\label{geo13:invariant-beta}
 w=\langle v,u\rangle,\quad k=-\langle L\eta,u\rangle,\quad
 \varrho^2=|\eta|^2,\qquad \beta=\overline w k.
\end{equation}
The phases of $u$ and of the canonical-line trivialization cancel in
$\beta,|w|^2,\varrho^2$.  The first and barred connection equations imply
\begin{equation}\label{geo13:state-polynomial}
 \beta\in\mathbb R,\qquad
 \beta\bigl(\beta+2|w|^2+3\varrho^2\bigr)=0.
\end{equation}
To prove this, take the alternative unitary chart
with $C=\varrho(e_0+a e_2)\otimes e_2$, $\eta=-\varrho{}e^1$, $\varrho>0$, $a>0$.
The cut and line support initially give the unreduced family
\[
 p(z)=z_2
 \begin{pmatrix}
 0&1&\mathfrak z\\-1&0&-a\\\upsilon&b&\varpi
 \end{pmatrix}z.
\]
Barred tangency to rank-one $C$ gives
$(\nabla_{\bar j}C)^{1l}=0$ for $l=0,1$.  The full barred equation then
gives $Q_{10}=Q_{12}=0$ and $Q_{11}=c/2$.
Hermitian reality gives $Q_{01}=Q_{21}=0$.  The other transverse
rank-one tangent condition is
\[
 (\nabla_{\bar2}C)^{20}
       -a(\nabla_{\bar2}C)^{00}=-c\upsilon=0.
\]
Thus $\upsilon=0$ identically, and we use the reduced family
\[
C=\varrho(e_0+a e_2)\otimes e_2,\quad \eta=-\varrho e^1,\quad a>0,\qquad
p(z)=z_2
 \begin{pmatrix}0&1&\mathfrak z\\-1&0&-a\\0&b&\varpi\end{pmatrix}z,
\]
where $w=\varrho{}a$ and $k=\varrho{}b$. For each $p$, list the nine complex barred
residuals using $(\Gamma_p)_{02}=(\Gamma_p)_{12}=0$ in this alternative
frame; its remaining seven connection entries are unrestricted.  List
the barred
residuals in lexicographic $(a,l)$ order, then the eighteen complex first
residuals in lexicographic $(a,j,l)$ order with $j\le l$, and list real
then imaginary parts.  Concatenate these lists for $p=0,1,2$ and number
the resulting real residuals $\widetilde E_0,\ldots,\widetilde E_{161}$.
The tensor residual formulas are given explicitly in
\eqref{geo13:residuals} below.

Write $b_R=\operatorname{Re}b$ and $b_I=\operatorname{Im}b$.
In this finite calculation the point value is normalized to $\varrho=1$,
while $d\varrho$ remains an independent derivative.
At the point, write
\[
 \Gamma=\varrho\widehat\Gamma,\quad
 \partial(a,\mathfrak z,b,\varpi)=\varrho\widehat{\partial(a,\mathfrak z,b,\varpi)},
 \quad\partial \varrho=\varrho^2\widehat{\partial \varrho},\quad
 Q=\varrho^2\widehat Q,\quad c=\varrho^2\widehat c.
\]
First residuals have the common factor $\varrho$, and barred residuals have
factor $\varrho^2$.  Dividing by those factors gives precisely the normalized
system, with $\widehat{\partial \varrho}$ still independent.  Omitting hats
only in the next display, direct expansion gives
\begin{align}
 acb_I={}&\widetilde E_{15}-a\widetilde E_3
                    +a\widetilde E_{123}-a^2\widetilde E_{111},\notag\\
 cb_R(ab_R+2a^2+3)={}&
 b_I(\widetilde E_{15}+a^2\widetilde E_{111}
       -\widetilde E_{11}-a\widetilde E_{119})\notag\\
 &+(ab_R+1)(\widetilde E_2+\widetilde E_{122})-2\widetilde E_{46}
       -b_R(\widetilde E_{10}+\widetilde E_{114})\notag\\
 &-2a\widetilde E_{154}-ab_R\widetilde E_{118}.
 \label{geo13:two-source-identities}
\end{align}
Since $a>0$ and $c\ne0$,
they give \eqref{geo13:state-polynomial} after restoring the scale.

In \eqref{geo13:chart-tensors}, $w=-st$,
$k=2\delta(r^2+t^2)$.  Write
\begin{equation}\label{geo13:LKJ}
 L_{\mathrm{cut}}=r^2+t^2,\qquad K_{\mathrm{cut}}=s^2t^2+L_{\mathrm{cut}},\qquad J_{\mathrm{cut}}=2s^2t^2+3L_{\mathrm{cut}}.
\end{equation}
The two roots are
\begin{equation}\label{geo13:two-roots}
 \delta=0\qquad\hbox{or}\qquad \delta=\frac{J_{\mathrm{cut}}}{2stL_{\mathrm{cut}}}.
\end{equation}
The roots are distinct, so continuity makes the chosen root an identity
on a neighborhood; its derivatives must therefore hold there.

\subsubsection*{Residuals}
\label{geo13:residual-definitions}
For any matrix $M$, the representation actions are
\[
M\cdot C=MC+CM^{\mathsf T}-(\operatorname{tr}M)C,\qquad (M\cdot \mathsf P)^a=\sum_bM_{ab}\mathsf P^b+M\mathsf P^a+\mathsf P^aM^{\mathsf T}
                              -(\operatorname{tr}M)\mathsf P^a.
\]
Let $Q=Q^*$ be the arbitrary physical alternating-curvature matrix;
all nine real entries are retained.  Set
\begin{equation}\label{geo13:residuals}
\mathcal F_p^{a,jl}
   =\bigl(e_p\mathsf P+\Gamma_p\cdot \mathsf P-F_p(C,\mathsf P)\bigr)^{a,jl},\qquad \mathcal B_{j;al}
   =\bigl(\bar e_jC-\Gamma_j^*\cdot C\bigr)^{al}
        +2c\mathsf P^{a,jl}+2\sum_b\eps_{jlb}Q_{ab}.
\end{equation}
All moduli are differentiated before pointwise specialization.

Number the complex residuals $E_0,\ldots,E_{80}$ as follows.  For the
ordered symmetric pairs
\[
 (j,l)_h=(00),(01),(02),(11),(12),(22),\qquad h=0,\ldots,5,
\]
put
\begin{equation}\label{geo13:residual-order}
 E_{18p+6a+h}=\mathcal F_p^{a,(j,l)_h},\qquad
 E_{54+9j+3a+l}=\mathcal B_{j;al}.
\end{equation}
For either root, write $B_{\mathrm{cut}}=B_{\mathrm{cut}}(s,r,t)$, $C_{\mathrm{cut}}=C_{\mathrm{cut}}(s,r,t)$ and set
\begin{align}
 E_{81+2p}&=e_pB_{\mathrm{cut}}-\sum_{z=s,r,t}(\partial_z B_{\mathrm{cut}}) e_pz,\qquad
 E_{82+2p}=e_pC_{\mathrm{cut}}-\sum_{z=s,r,t}(C_{\mathrm{cut}})_z e_pz,\label{geo13:derivative-rows}\\
 E_{87+3(p-1)+l}
 &=\bigl(-\nabla_p\eta+2\eta_p\eta+\xi_p\nabla_0\eta\bigr)_l,
       \qquad p=1,2,\quad l=0,1,2.\label{geo13:Riccati-rows}
\end{align}
The latter vanish by \eqref{geo16:partial-derivatives}, since $\eta_0=0$ and
$\xi_0=1$.  Denote the real and imaginary parts of $E_j$ by $R_j,I_j$.
Their numbering is different from the interlaced real numbering in
\eqref{geo13:two-source-identities}.

\subsubsection*{The zero root}
\label{geo13:zero-root}
Here $B_{\mathrm{cut}}=C_{\mathrm{cut}}=0$ on an open set, so
$E_{81+2p}=e_pB_{\mathrm{cut}}$ and $E_{82+2p}=e_pC_{\mathrm{cut}}$.
Substitution in \eqref{geo13:residuals} gives, for every real $r$,
\begin{equation}\label{geo13:zero-twelve}
\begin{aligned}
\Re(\Gamma_0)_{01}
   &=(-2rR_1-R_{58}-R_{62}+2rR_{81})/t,\qquad \Im(\Gamma_0)_{01}
   =(I_{58}-I_{62})/t-rI_{59}/t^2,\\
\Re(\Gamma_0)_{02}&=-2R_1+2R_{81},\qquad
 \Im(\Gamma_0)_{02}=I_{59}/t,\\
\Re(\Gamma_1)_{01}&=R_{64}/(st),\qquad
 \Im(\Gamma_1)_{01}=-I_{64}/(st),\\
\Re(\Gamma_1)_{02}&=-2sR_1-(R_{56}+R_{68})/t+2sR_{81},\qquad \Im(\Gamma_1)_{02}=-I_{65}/(st),\\
\Re(\Gamma_2)_{01}
   &=2sR_1+(R_{56}+R_{68})/t+(R_{65}+R_{73})/(st)-2sR_{81},\qquad \Im(\Gamma_2)_{01}=-I_{73}/(st),\\
\Re(\Gamma_2)_{02}&=R_{74}/(st),\qquad
 \Im(\Gamma_2)_{02}=-I_{74}/(st).
\end{aligned}
\end{equation}
The right sides vanish. Together with
$(\Gamma_p)_{10}=(\Gamma_p)_{20}=0$ and
$\Gamma_{\bar p}=-\Gamma_p^*$, they make
$W\oplus W^\perp$ fully Chern parallel on the open set.
Curvature must preserve that splitting, whereas the full curvature
decomposition gives
\begin{equation}\label{geo13:mixed-endpoint}
 R_{0\bar0\,1\bar0}=cP_{01}^{00}=cs\ne0.
\end{equation}
The repeated barred slot eliminates the alternating terms, while
parallelism forces this component to vanish, a contradiction.

\subsubsection*{The nonzero root}
\label{geo13:nonzero-root}
Substitute the geometric functions
\begin{equation}\label{geo13:nonzero-state}
 B_{\mathrm{cut}}=\frac{rJ_{\mathrm{cut}}}{2stL_{\mathrm{cut}}},\qquad C_{\mathrm{cut}}=\frac{J_{\mathrm{cut}}}{2sL_{\mathrm{cut}}}
\end{equation}
in all residuals, and differentiate these functions in
\eqref{geo13:derivative-rows}.  On $r\ne0$, the following is an exact
polynomial identity after the rational substitution:
\begin{equation}\label{geo13:generic-identity}
 \sum_{j\in\mathcal J}m_jR_j=-18crL_{\mathrm{cut}}^2K_{\mathrm{cut}}.
\end{equation}
Here $\mathcal J$ is the set of row indices listed in the following table.
All 23 nonzero coefficients are displayed; omitted coefficients are zero:
\begin{center}
\renewcommand{\arraystretch}{1.16}
\begin{tabular}{r|l}
 $j$&$m_j$\\\hline
 1&$2s^4rt^5L_{\mathrm{cut}}$\\
 2&$-2s^4r^2t^4L_{\mathrm{cut}}$\\
 37&$2s^3t^2L_{\mathrm{cut}}^3$\\
 42&$-2s^3r^2t^2L_{\mathrm{cut}}^2$\\
 48&$-2s^3rt^3L_{\mathrm{cut}}^2$\\
 56&$-srL_{\mathrm{cut}}(2s^4t^4+2s^2t^2L_{\mathrm{cut}}+3L_{\mathrm{cut}}^2)$\\
 57&$s^3r^2t^3J_{\mathrm{cut}}$\\
 58&$2s^4r^2t^3K_{\mathrm{cut}}$\\
 59&$2s^4rt^4L_{\mathrm{cut}}$\\
 60&$s^3rt^4J_{\mathrm{cut}}$\\
 62&$s^2t(2s^4r^2t^4+2s^2t^2L_{\mathrm{cut}}^2+3L_{\mathrm{cut}}^3)$\\
 65&$-2s^2rt^2L_{\mathrm{cut}}K_{\mathrm{cut}}$\\
 68&$-2s^3rt^2L_{\mathrm{cut}}K_{\mathrm{cut}}$\\
 72&$-3srL_{\mathrm{cut}}^3$\\
 73&$-2s^2rt^2L_{\mathrm{cut}}K_{\mathrm{cut}}$\\
 74&$-2s^2t^3L_{\mathrm{cut}}K_{\mathrm{cut}}$\\
 75&$2s^4rt^4L_{\mathrm{cut}}$\\
 78&$s^2tL_{\mathrm{cut}}(2s^2t^4+3L_{\mathrm{cut}}^2)$\\
 81&$-2s^4rt^5L_{\mathrm{cut}}$\\
 82&$2s^4r^2t^4L_{\mathrm{cut}}$\\
 85&$-2s^3t^2L_{\mathrm{cut}}^3$\\
 91&$2s^4r^2t^3L_{\mathrm{cut}}$\\
 92&$-2s^4r^3t^2L_{\mathrm{cut}}$
\end{tabular}
\end{center}
The last two rows use \eqref{geo16:partial-derivatives}. Every residual
vanishes, whereas $c,r\ne0$ and $L_{\mathrm{cut}},K_{\mathrm{cut}}>0$ make the right side nonzero.

The remaining chart $r=0$ has the shorter identity
\begin{equation}\label{geo13:rzero-identity}
 2s^3t(R_{38}-R_{48}-R_{86})
 -3s(R_{56}+R_{72})-3s^2(R_{59}+R_{75})
 =-18c(s^2+1).
\end{equation}
Here \eqref{geo13:nonzero-state} is differentiated before $r=0$ is
substituted, so $dr$ remains free. Again the residuals vanish and the
right side is nonzero. Thus both roots are excluded.
\label{geo13:conclusion}

\begin{proof}[Proof of Corollary~\ref{r17:no-rank-one-cut}]
On the open subset where $T\ne0$, Theorem~\ref{r17:rank-one-entry}
supplies the line support and projector identities on the open set.
If $T(W^\perp,W^\perp)$ is nonzero at any point, it is nonzero
on a smaller open set, contrary to Section~\ref{geo13:partial-closure}.
It therefore vanishes identically on this torsion-nonzero open set,
which is ruled out by Theorem~\ref{geo16:orthogonal-all-n}.
Hence $T=0$ throughout the original open set.  Its barred derivative
then vanishes, so \eqref{b:eq:P-torsion} gives $P=0$ there.
The cut is nonzero for any $x$ independent of $u$, a contradiction.
\end{proof}

\section{Zero-curvature component and scalar calculations}\label{c0:app:c0-identities}
The conventions and threefold dictionary are those in the preliminary
section. This appendix proves the raw identities used in the scalar
argument before giving the local mixed-rank computations.
\subsection{Bianchi transport and mixed commutators}
\subsubsection{First Bianchi consequences}
Put
\[
 \Xi_{jp}:=-\sum_{a,k}\eps_{pak}\cP^{a,jk},\qquad \beta^Q:=(\tr Q)I-Q^{\mathsf T},\qquad E_{p\bar j}:=\nabla_{\bar j}\eta_p .
\]

\begin{lemma}\label{c0:lem:bianchi1}
If $H_h\equiv0$ then
\begin{align}
 \nabla_{\bar j}C^{al}&=-2\cP^{a,jl}-2\eps_{jlb}Q_{ab},\label{c0:eq:barC}\\
 E&=2(\Xi^{\mathsf T}+\beta^Q),\qquad \tr\Xi=0,\label{c0:eq:E}\\
 s&=2\tr Q=-\hat s,\qquad \tr E=\textstyle\sum_p\nabla_{\bar p}\eta_p=2s,\label{c0:eq:scal}\\
 \nabla_{\bar j}A^{al}&=-(\cP^{a,jl}+\cP^{l,ja})-(\eps_{jlb}Q_{ab}+\eps_{jab}Q_{lb}),\qquad
 S:=\Sym(\nabla''A)=-2\Sym\cP .\label{c0:eq:barA}
\end{align}
\end{lemma}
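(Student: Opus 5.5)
All four identities in Lemma~\ref{c0:lem:bianchi1} are contractions of the first Bianchi identity \eqref{p:eq:firstBi} against the curvature decomposition \eqref{c0:eq:decomp} with $c=0$, translated through the encoding \eqref{b:eq:encoding}. Throughout, the raised barred slot is conjugate-linear, as in the threefold conventions, and every $\eps$-contraction is reduced with $\eps_{ika}\eps_{ikb}=2\delta_{ab}$ and $\eps_{ijk}\eps_{uvk}=\delta_{iu}\delta_{jv}-\delta_{iv}\delta_{ju}$.

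\emph{Step 1: the barred derivative of $C$.} Contract \eqref{p:eq:firstBi} with $\tfrac12\eps_{ika}$ and use $T^l{}_{ik}=\eps_{ika}C^{al}$. The left side isolates the part of $R$ alternating in $(i,k)$. From \eqref{c0:eq:decomp} that part consists of two terms. The first is $\eps_{ika}\cP^{a,jl}$. The second is $\eps_{ika}\eps_{jlb}Q_{ab}$; the term symmetric in $(i,k)$ and the term $\eps_{jlb}\overline{\cP^{b,ik}}$, which is symmetric in $(i,k)$ because $\cP$ is symmetric in its last two slots, both drop out. Dividing out the factor $2$ gives \eqref{c0:eq:barC}. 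This is the case $c=0$ of \eqref{eq:fund-barC}.

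\emph{Step 2: the barred derivative of $A$ and its symmetric part.} Since $A=(C+C^{\mathsf T})/2$, symmetrizing \eqref{c0:eq:barC} in $(a,l)$ yields the formula in \eqref{c0:eq:barA}. Then symmetrize fully in $(a,j,l)$. The $Q$-terms vanish because $\eps_{jlb}$ is alternating in a pair of the symmetrized indices. The remaining $\cP$-terms contribute two copies of the symmetrization of $\cP^{a,jl}$, which gives $S=-2\Sym\cP$.

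\emph{Step 3: the torsion trace and scalars.} Write $\eta_p=\eps_{kpa}C^{ak}$, differentiate in $\bar j$, and insert \eqref{c0:eq:barC}. The $\cP$-part produces $-2\eps_{kpa}\cP^{a,jk}$, which is $2\Xi_{jp}$ after reordering the epsilon; this is why the sign convention for $\Xi$ is fixed in the display before the lemma. The $Q$-part gives $-2\eps_{kpa}\eps_{jkb}Q_{ab}=2(\delta_{pj}\tr Q-Q_{jp})$, that is, $2\beta^Q_{pj}$. Together these give $E=2(\Xi^{\mathsf T}+\beta^Q)$. The statement $\tr\Xi=0$ follows because $\eps_{pak}$ is alternating in $(p,k)$ while $\cP^{a,pk}$ is symmetric in $(p,k)$. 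Taking the trace of $E$ then gives $\tr E=2(3\tr Q-\tr Q)=4\tr Q$.

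For the scalars, contract \eqref{c0:eq:decomp} directly. The space-form term is absent at $c=0$. In $s=\sum_{i,k}R_{i\bar i k\bar k}$ the $\cP$-terms vanish, since $\eps_{ika}\cP^{a,ik}=0$ by the same alternating-versus-symmetric pairing. The $Q$-term gives $\eps_{ika}\eps_{ikb}Q_{ab}=2\tr Q$, so $s=2\tr Q$. Finally, \eqref{n:eq:two-scalars} with $c=0$ gives $\hat s=-s$. Combining this with $\tr E=4\tr Q$ yields $\tr E=2s$, which completes \eqref{c0:eq:scal}.

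\emph{Main obstacle.} The only delicate point is bookkeeping: keeping the sign and index order in $\Xi$ consistent with the conjugation convention on the barred slot, so that $\Xi^{\mathsf T}$ rather than $\Xi$ appears in $E$. I will fix this by checking both sides on a single basis component of $\cP$ before writing the general contraction.
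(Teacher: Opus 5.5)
Your proposal is correct and follows the paper's proof step for step. You antisymmetrize the first Bianchi identity against \eqref{c0:eq:decomp} to get \eqref{c0:eq:barC}, contract with $\eps$ to obtain $E=2(\Xi^{\mathsf T}+\beta^Q)$ and $\tr\Xi=0$, trace the decomposition for $s=2\tr Q$, and symmetrize for \eqref{c0:eq:barA}. The only cosmetic difference is that you get $\hat s=-s$ from \eqref{n:eq:two-scalars} at $c=0$, whereas the paper traces \eqref{c0:eq:decomp} directly to get $\hat s=-2\tr Q$; the two are equivalent.
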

\begin{proof}
The first Chern--Bianchi identity  reads $R_{i\bar jk\bar l}-R_{k\bar ji\bar l}=-\nabla_{\bar j}T^l_{ik}$. By \eqref{c0:eq:decomp} the left side is $2\eps_{ika}(\cP^{a,jl}+\eps_{jlb}Q_{ab})$, while the right side is $-\eps_{ika}\nabla_{\bar j}C^{al}$; cancel $\eps_{ika}$. Contracting \eqref{c0:eq:barC} with $\eps_{pal}$ and using $\eta_p=\sum\eps_{pak}C^{ak}$ gives \eqref{c0:eq:E}; $\tr\Xi=0$ because $\cP^{a,pk}$ is symmetric in $(p,k)$. Taking traces of \eqref{c0:eq:decomp} gives $s=2\tr Q$, $\hat s=-2\tr Q$. Finally \eqref{c0:eq:barA} is the $(a,l)$-symmetric part of \eqref{c0:eq:barC}; the $Q$ terms disappear under total symmetrization.
\end{proof}

\begin{lemma}[Pure first Bianchi]\label{c0:lem:J1}
Let $W_p:=\nabla_pA-\frac12\eta_pA$. For every Hermitian threefold,
\begin{equation}\label{c0:eq:J1}
 (\partial\eta)_{qp}=\sum_r\eps_{qpr}\nu^r,\qquad \nu^l=2\sum_aW_a^{al}.
\end{equation}
\end{lemma}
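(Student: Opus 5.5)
The plan is to derive \eqref{c0:eq:J1} from the $(3,0)$ part of the first Bianchi identity, namely the cyclic identity \eqref{eq:cyclic}, contracted once to produce the torsion trace. First I would trace \eqref{eq:cyclic} over the output index together with one input. Write $X_p=\nabla_pT$ and set $r=l$ (the output) in $X_p(q,r)^l+X_q(r,p)^l+X_r(p,q)^l$, summing over $l$. Using $\eta_i=\sum_kT^k{}_{ki}$ and the antisymmetry of $T$, the terms $\sum_l X_p(q,l)^l$ and $\sum_lX_q(l,p)^l$ give $-\nabla_p\eta_q+\nabla_q\eta_p$. This is $-a_{pq}$ in the notation of \eqref{c0:eq:a}. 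The remaining term $\sum_l(\nabla_lT)^l{}_{pq}$ is a divergence of torsion.

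The quadratic torsion terms $\sum_l T(T(p,q),l)^l+\dots$ contract to $-T^r{}_{pq}\eta_r$ plus the two terms $\sum_lT^l{}_{T(q,l)\,p}+\sum_lT^l{}_{T(l,p)\,q}$. By \eqref{c0:eq:a}, the term $T^r_{pq}\eta_r$ combines with $a$ to give exactly $(\partial\eta)_{pq}$. So I expect an identity of the shape
\[
(\partial\eta)_{qp}=\sum_l(\nabla_lT)^l{}_{qp}+\text{quadratic terms},
\]
with signs fixed by the conventions $T^l{}_{ij}=\eps_{ija}C^{al}$.

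Second, I would pass to the threefold encoding. Write $\sum_l(\nabla_lT)^l{}_{qp}=\eps_{qpr}\sum_l\nabla_lC^{rl}$, and substitute $C=A-\tfrac12[\eta]_\times$. The skew part contributes $\nabla_l$ of $\eta$ entries contracted with epsilon. This feeds back into $\partial\eta$ and should be absorbed by solving the linear relation: in three dimensions a skew pair $qp$ corresponds to a vector, so everything reduces to a vector identity. The quadratic terms, written through $C$, become $\eps_{qpr}$ times bilinear expressions in $A$ and $\eta$. The claim amounts to these bilinear terms equalling $-\eps_{qpr}\sum_a\eta_aA^{ar}$, so that together with $\nabla_aA^{ar}$ they assemble into $2\sum_aW_a^{ar}$. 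I would verify this with the $\eps\eps$ contraction identity and the symmetry of $A$; the $\eta\eta$ terms should cancel since $\eta\times\eta=0$.

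The main obstacle is bookkeeping of the factor $2$ and the signs. The $\eta$-derivative terms appear on both sides, and one must check that they produce the coefficient $2$ rather than cancel. As a check I would first test a pure-trace case, $A=0$: then $\nu=0$ requires $\partial\eta=0$. This is consistent with the locally conformally K\"ahler structure of Lemma~\ref{c0:lem:localLCK}, and it calibrates the constant in front of the $\eta$-derivative terms. A case with $\eta=0$ then calibrates the divergence term $\nu^l=2\nabla_aA^{al}$. Note that this is purely a $(2,0)$ identity, so no curvature or HSC hypothesis enters, consistent with "for every Hermitian threefold".
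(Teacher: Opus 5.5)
Your route is correct. It differs from the paper's only in how the $(3,0)$ first Bianchi identity is contracted.

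The paper evaluates \eqref{eq:cyclic} on the input triple $(0,1,2)$, i.e.\ contracts with $\eps$. This retains the quadratic term $-\sum_b\eta_bC^{bl}=-(A\eta)^l$ and gives the divergence $\sum_a\nabla_aC^{al}$ over the first index of $C$. You instead trace the output against one input, obtaining $(\partial\eta)_{qp}=\sum_l(\nabla_lT)^l{}_{qp}=\eps_{qpr}\sum_l\nabla_lC^{rl}$, a divergence over the second index. Your first step holds in every dimension. In dimension three the trace $\Lambda^3V^*\otimes V\to\Lambda^2V^*$ is an isomorphism, so nothing is lost. From there both arguments use the same substitution $C=A-\tfrac12[\eta]_\times$, the same $\eps\eps$ contractions, and the same conversion \eqref{c0:eq:a}.

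One correction to your bookkeeping: you expect the leftover quadratic terms to supply the $\eta A$ term, but they vanish.
\begin{itemize}
\item The two leftover terms $\sum_{l,m}T^m{}_{ql}T^l{}_{mp}+\sum_{l,m}T^m{}_{lp}T^l{}_{mq}$ cancel after exchanging the dummy indices $l,m$. So your traced identity has no quadratic terms at all.
\item The needed $-\sum_a\eta_aA^{ar}$ comes from the skew part of $C$. Write $\sum_{j,l}\eps_{rjl}\nabla_l\eta_j=\tfrac12\sum_{j,l}\eps_{rjl}a_{lj}$.
\item Then use $a_{lj}=(\partial\eta)_{lj}-T^s{}_{lj}\eta_s$, where $T^s{}_{lj}\eta_s=\eps_{ljb}(C\eta)^b$ and $C\eta=A\eta$. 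This is where $\eta\times\eta=0$ actually enters.
\item Since $\sum_{j,l}\eps_{rjl}\eps_{ljm}=-2\delta_{rm}$, this gives $\sum_l\nabla_lC^{rl}=\sum_l\nabla_lA^{lr}+\tfrac12\nu^r-\tfrac12(A\eta)^r$, hence $\nu^r=2\sum_lW_l^{lr}$.
\end{itemize}
If you converted $\eps\nabla\eta$ directly into $\partial\eta$ without this torsion correction, you would lose exactly that term.

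Your test cases $A\equiv0$ and $\eta\equiv0$ are only consistency checks. They reduce the identity to $\partial\eta=0$ and $\sum_a\nabla_aA^{al}=0$ respectively, so neither detects the factor $2$. That factor is fixed by the $\eps\eps$ contraction above.
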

\begin{proof}
Since $R$ has no $(2,0)$-part, the $(3,0)$-component of the first Bianchi identity of a connection with torsion reads $\sum_{\mathrm{cyc}(i,j,k)}\bigl(\nabla_iT^l_{jk}+T^m_{ij}T^l_{mk}\bigr)=0$; take $(i,j,k)=(0,1,2)$. With $T^l_{jk}=\eps_{jka}C^{al}$ the derivative terms are $\sum_a\nabla_aC^{al}$, and the quadratic terms are $\sum_{k,m,b}C^{km}\eps_{mkb}C^{bl}=-\sum_b\eta_bC^{bl}=-(\eta A)^l$. Next, $\sum_a\nabla_aA^{al}=\frac12(\eta A)^l+\sum_aW_a^{al}$, and $\sum_{a,m}\eps_{alm}\nabla_a\eta_m=\frac12\sum_{a,m}\eps_{alm}a_{am}$. Define $\nu$ by $(\partial\eta)_{am}=\sum_s\eps_{ams}\nu^s$. By \eqref{c0:eq:a}, $a_{am}=\eps_{ams}\nu^s-\eps_{amb}C^{br}\eta_r$, and $\sum_{a,m}\eps_{alm}\eps_{ams}=-2\delta_{ls}$, $\sum_{a,m}\eps_{alm}\eps_{amb}C^{br}\eta_r=-2(A\eta)^l$. Hence $\sum_a\nabla_aC^{al}=(\eta A)^l+\sum_aW_a^{al}-\frac12\nu^l$, and the Bianchi identity gives $\nu^l=2\sum_aW_a^{al}$.
\end{proof}

\subsubsection{\texorpdfstring{Second Bianchi: transport of $\cP$ and of $Q$}{Second Bianchi: transport of mixed curvature and Q}}
The differential Chern--Bianchi identity is 
\begin{equation}\label{c0:eq:bianchi2}
 \nabla_pR_{i\bar jk\bar l}-\nabla_iR_{p\bar jk\bar l}=T^r_{ip}R_{r\bar jk\bar l}.
\end{equation}

\begin{lemma}[Transport algebra]\label{c0:lem:transalg}
Let $V^a$ be a vector and $N_{pa}$ a matrix. The system
\begin{equation}\label{c0:eq:transsys}
 \eps_{ika}N_{pa}-\eps_{pka}N_{ia}=T^r_{ip}\eps_{rka}V^a\qquad(\text{all }p,i,k)
\end{equation}
has the unique solution $N_{pc}=(M_pV)^c$.
\end{lemma}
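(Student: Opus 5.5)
The plan is a linear-algebra argument: count equations against unknowns, prove the homogeneous system has only the zero solution, and then identify the solution with $M_pV$ by an explicit contraction.

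\textbf{Step 1: the system is square.} Both sides of \eqref{c0:eq:transsys} are antisymmetric under $p\leftrightarrow i$.
\begin{itemize}
\item The left side visibly changes sign when $p$ and $i$ are interchanged.
\item On the right, $T^r_{ip}=\eps_{ipc}C^{cr}$ changes sign as well.
\end{itemize}
So the independent equations are indexed by an unordered pair $\{p,i\}$, or equivalently by $\eps_{pib}$ with $b=0,1,2$, together with the free index $k$. This gives $3\times3=9$ scalar equations for the $9$ unknowns $N_{pa}$. It therefore suffices to show that the homogeneous system has only the zero solution. Existence and uniqueness then follow at once.

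\textbf{Step 2: uniqueness.} Suppose $\eps_{ika}N_{pa}=\eps_{pka}N_{ia}$ for all $p,i,k$. Contract with $\eps_{ikb}$ over $i,k$.
\begin{itemize}
\item On the left, $\sum_{i,k}\eps_{ikb}\eps_{ika}=2\delta_{ab}$, giving $2N_{pb}$.
\item On the right, rewrite $\eps_{ikb}=\eps_{kbi}$ and $\eps_{pka}=\eps_{kap}$. The identity $\sum_k\eps_{kbi}\eps_{kap}=\delta_{ba}\delta_{ip}-\delta_{bp}\delta_{ia}$ then gives $N_{pb}-\delta_{bp}\tr N$.
\end{itemize}
Hence $N_{pb}=-\delta_{pb}\tr N$. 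Taking the trace gives $\tr N=-3\tr N$, so $\tr N=0$, and therefore $N=0$.

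\textbf{Step 3: identifying the solution.} Apply the same contraction to the inhomogeneous system. It yields
\[
 N_{pb}+\delta_{pb}\tr N=\sum_{i,k,r,a}\eps_{ikb}T^r_{ip}\eps_{rka}V^a .
\]
Take the trace to find $\tr N$, and then solve for $N_{pb}$. To evaluate the right side, substitute
\[
 T^r_{ip}=\eps_{ipc}C^{cr},\qquad C=A-\tfrac12[\eta]_\times ,
\]
and use $\eta_p=\sum\eps_{pak}C^{ak}$. Reducing the epsilon pairs with the same identity as in Step 2 should produce the two pieces of $M_pV$:
\begin{itemize}
\item the symmetric part $A$ should produce $(AE_pV)^b$, where $(E_p)_{ab}=\eps_{apb}$;
\item the skew part should produce the trace term $\tfrac12\eta_pV^b$.
\end{itemize}
Because Step 1 makes the system square and Step 2 makes it injective, the vector obtained this way is automatically a genuine solution, not just a necessary candidate. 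Equivalently, one may substitute $N_{pc}=(M_pV)^c$ directly into \eqref{c0:eq:transsys} and check one representative equation per pair $\{p,i\}$.

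\textbf{Main obstacle.} The only real work is the index bookkeeping in Step 3: signs and the factor $\tfrac12$ on $\eta$ must match the conventions $C=A-\tfrac12[\eta]_\times$ and $\eta_i=\sum_k T^k{}_{ki}$. I would check it in the frame $(p,i)=(1,2)$, where $T^r_{12}=C^{0r}$, and then invoke cyclic symmetry for the other two pairs.
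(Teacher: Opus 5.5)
Your proposal is correct and follows essentially the paper's argument: contracting with $\eps_{ikc}$ turns the left side into $N_{pc}+\delta_{pc}\tr N$ and the right side into $\sum_{r,a}C^{cr}\eps_{rpa}V^a+\delta_{pc}\eta_aV^a$. Taking the trace gives $\tr N=\tfrac12\eta_aV^a$, and inserting $C=A-\tfrac12[\eta]_\times$ yields exactly $(AE_pV)^c+\tfrac12\eta_pV^c$, as you anticipated. Your explicit count of nine independent equations (antisymmetric in $p,i$) against nine unknowns makes precise the paper's brief remark that injectivity of $N\mapsto$ (left side) means the contraction loses nothing. That count is what supplies existence.
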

\begin{proof}
Contract with $\eps_{ikc}$. The left side becomes $N_{pc}+\delta_{pc}\tr N$. Using $T^r_{ip}=\eps_{ipd}C^{dr}$ and $\sum_{k,r}C^{kr}\eps_{rka}=-\eta_a$, the right side becomes $\sum_{r,a}C^{cr}\eps_{rpa}V^a+\delta_{pc}\eta_aV^a$. Taking the trace gives $\tr N=\frac12\eta_aV^a$, and inserting $C$ yields $N_{pc}=\sum_{r,a}A^{cr}\eps_{rpa}V^a+\frac12\eta_pV^c$. The map $N\mapsto$ left side is injective, so the contraction loses nothing.
\end{proof}

\begin{proposition}[Transport of $\cP$]\label{c0:prop:Ptransport}
If $H_h\equiv0$, the $(j,l)$-symmetric part of \eqref{c0:eq:bianchi2} is equivalent to
\begin{equation}\label{c0:eq:Ptransport}
 \nabla_p\cP^{a,jl}=\sum_e(M_p)_{ae}\cP^{e,jl}
 =\sum_eT^\circ{}^a_{pe}\cP^{e,jl}+\tfrac12\eta_p\cP^{a,jl}.
\end{equation}
\end{proposition}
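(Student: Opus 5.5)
The plan is to substitute the curvature decomposition \eqref{c0:eq:decomp} (with $c=0$) into the differential Bianchi identity \eqref{c0:eq:bianchi2}, and then symmetrize in the antiholomorphic pair $(j,l)$. The decomposition has four pieces.

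\begin{itemize}
\item The space-form term is absent because $c=0$.
\item The term $\eps_{jlb}\overline{\cP^{b,ik}}$ is alternating in $(j,l)$, so it drops out under symmetrization.
\item The $Q$ term $\eps_{ika}\eps_{jlb}Q_{ab}$ is also alternating in $(j,l)$ and drops out.
\item Only $\eps_{ika}\cP^{a,jl}$ survives.
\end{itemize}

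Since $\eps$ is parallel for the induced connection on $\Lambda^2V^*\simeq V\otimes K_X$, and $\cP$ carries the matching $K_X$ weight, the covariant derivative passes through the epsilon. The $(j,l)$-symmetric part of \eqref{c0:eq:bianchi2} therefore becomes
\[
 \eps_{ika}\nabla_p\cP^{a,jl}-\eps_{pka}\nabla_i\cP^{a,jl}
 =T^r_{ip}\eps_{rka}\cP^{a,jl}\qquad(\text{all }p,i,k),
\]
for each fixed symmetric pair $(j,l)$. The right side needs one check: $R_{r\bar jk\bar l}$ symmetrized in $(j,l)$ is again $\eps_{rka}\cP^{a,jl}$, by the same cancellations.

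This is exactly the system \eqref{c0:eq:transsys} of Lemma~\ref{c0:lem:transalg}, with $N_{pa}=\nabla_p\cP^{a,jl}$ and $V^a=\cP^{a,jl}$. That lemma gives the unique solution $N_{pc}=(M_pV)^c$. Since $M_p=AE_p+\tfrac12\eta_pI$, this reads
\[
 \nabla_p\cP^{a,jl}=\sum_e(M_p)_{ae}\cP^{e,jl}.
\]
To get the second form in \eqref{c0:eq:Ptransport}, note that $(AE_p)_{ae}=\sum_rA^{ar}\eps_{rpe}$ and $(T^\circ)^a{}_{pe}=\eps_{pe r}A^{ra}$. These agree by cyclicity of $\eps$ together with the symmetry of $A$.

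The equivalence in the converse direction follows because the map used in Lemma~\ref{c0:lem:transalg} is injective. Conversely, if the transport law holds, substituting it back reproduces the symmetrized identity. I would verify this by retracing the epsilon contraction in that lemma, which is reversible.

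The main obstacle is bookkeeping rather than ideas, and it has two parts. First, I need to confirm that after symmetrization the $\nabla\overline{\cP}$ and $\nabla Q$ terms really cancel on both sides; this uses only the alternation in $(j,l)$. Second, I need to confirm that the barred slots $j,l$, which are raised antiholomorphic slots, transform as upper holomorphic indices under $\nabla_p$. Both the derivative and the torsion term then act on those slots in the same way, so nothing beyond the index $a$ enters the transport matrix.
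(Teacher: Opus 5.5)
Your proposal is correct and follows the paper's argument. At $c=0$ the $(j,l)$-symmetric part of the curvature is $\eps_{ika}\cP^{a,jl}$, which turns \eqref{c0:eq:bianchi2} into the system \eqref{c0:eq:transsys} with $V^a=\cP^{a,jl}$ and $N_{pa}=\nabla_p\cP^{a,jl}$; Lemma~\ref{c0:lem:transalg}, whose contraction is injective and so gives the equivalence, then yields $N=M_pV$. Your extra check that $(AE_p)_{ae}=(T^\circ)^a{}_{pe}$, by cyclicity of $\eps$ and symmetry of $A$, correctly supplies the second form in \eqref{c0:eq:Ptransport}.
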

\begin{proof}
By \eqref{c0:eq:pol} the $(j,l)$-symmetric part of $R$ is $\eps_{ika}\cP^{a,jl}$. For fixed $(j,l)$, \eqref{c0:eq:bianchi2} becomes \eqref{c0:eq:transsys} with $V^a=\cP^{a,jl}$, $N_{pa}=\nabla_p\cP^{a,jl}$.
\end{proof}

\begin{lemma}[Transport of $Q$]\label{c0:lem:Qtr}
Put $Y_m:=\nabla_{\bar m}\cP$ and $\langle A,\overline{\cP^b}\rangle:=\sum_{k,r}A^{kr}\overline{\cP^{b,rk}}$. If $H_h\equiv0$,
\begin{equation}\label{c0:eq:Qtr}
 \nabla_kQ_{cb}=\tfrac12\eta_kQ_{cb}+(AE_kQ)_{cb}+\sum_rC^{cr}\,\overline{\cP^{b,rk}}-\tfrac12\delta_{ck}\langle A,\overline{\cP^b}\rangle-\sum_{i,m}\eps_{imc}\,\overline{Y_m^{b,ik}},
\end{equation}
and $\nabla_{\bar k}Q_{cb}=\overline{\nabla_kQ_{bc}}$.
\end{lemma}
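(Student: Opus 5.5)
The plan is to read \eqref{c0:eq:Qtr} off the $(j,l)$-alternating part of the differential Bianchi identity \eqref{c0:eq:bianchi2}, in the same way that Proposition~\ref{c0:prop:Ptransport} came from its symmetric part. Insert the decomposition \eqref{c0:eq:decomp} with $c=0$. Contracting the barred pair with $\eps_{jlb}$ isolates the alternating part, which in the barred slots is
\[
 \eps_{jlb}\overline{\cP^{b,ik}}+\eps_{ika}\eps_{jlb}Q_{ab}.
\]
Note that $\cP$ does not drop out here: its conjugate sits in the symmetric--alternating block. The contracted identity therefore reads
\[
 \nabla_p\bigl(\overline{\cP^{b,ik}}+\eps_{ika}Q_{ab}\bigr)-(p\leftrightarrow i)
 =T^r_{ip}\bigl(\overline{\cP^{b,rk}}+\eps_{rka}Q_{ab}\bigr).
\]

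Move the $\nabla_p\overline{\cP}$ terms to the right, alongside the torsion term. For fixed $b$, the remaining equation has exactly the form \eqref{c0:eq:transsys}, with $N_{pa}=\nabla_pQ_{ab}$ and $V^a=Q_{ab}$, plus an inhomogeneous source
\[
 \Sigma_{pik}=T^r_{ip}\overline{\cP^{b,rk}}-\nabla_p\overline{\cP^{b,ik}}+\nabla_i\overline{\cP^{b,pk}}.
\]
The proof of Lemma~\ref{c0:lem:transalg} has two ingredients: contract with $\eps_{ikc}$, then take the trace to recover $\tr N$. Both apply verbatim to the sourced system. The homogeneous part yields $(M_kQ)_{cb}=\tfrac12\eta_kQ_{cb}+(AE_kQ)_{cb}$, after renaming the derivative index to $k$.

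The source is handled term by term. The torsion source $T^r_{ip}=\eps_{ipd}C^{dr}$, contracted with $\eps_{ikc}$, produces $C^{cr}\overline{\cP^{b,rk}}$ together with a $\delta_{ck}$ trace term. The trace correction, which divides by the factor $2$ from $N+\delta\,\tr N$, converts that trace term into $-\tfrac12\delta_{ck}\langle A,\overline{\cP^b}\rangle$. Only the symmetric part $A$ survives, because $\cP^{b,rk}$ is symmetric in $r,k$ and kills the skew part $-\tfrac12[\eta]_\times$ of $C$. The derivative sources are $\nabla_p\overline{\cP}=\overline{\nabla_{\bar p}\cP}=\overline{Y_p}$, and they contract to $-\sum\eps_{imc}\overline{Y_m^{b,ik}}$. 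In these contractions the symmetry of $\cP$ in its last two slots cancels one of the two epsilon terms and removes that term's trace contribution.

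The main obstacle is bookkeeping the signs and the index order in these epsilon contractions, and checking that the trace of the $\overline{Y}$ source really is zero. My check would be a single-component specialization, for instance $c=k$. The conjugate statement $\nabla_{\bar k}Q_{cb}=\overline{\nabla_kQ_{bc}}$ follows directly from $Q=Q^*$ and metric compatibility.
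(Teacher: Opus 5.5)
Your proposal is correct and is essentially the paper's argument. Both take the $(j,l)$-alternating part of the second Bianchi identity with $V_b(i,k)=\overline{\cP^{b,ik}}+\eps_{ika}Q_{ab}$, apply an $\eps$-contraction, use a trace to fix the trace part, and use symmetry of $\cP$ in its last two slots.

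The only difference is the choice of contraction. You reuse the $\eps_{ikc}$ contraction of Lemma~\ref{c0:lem:transalg}, whereas the paper contracts the antisymmetric derivative pair with $\eps_{ipc}$, so that its trace directly gives $\sum_a\nabla_aQ_{ab}$. Both contractions are invertible on the nine equations for fixed $b$, so they yield the same formula.

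One small correction to your bookkeeping: the trace of $N+\delta\,\tr N$ is $4\tr N$, not $2\tr N$. The coefficient $-\tfrac12$ of $\delta_{ck}\langle A,\overline{\cP^b}\rangle$ arises as $-1+\tfrac12$, not from a division by $2$.
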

\begin{proof}
The $(j,l)$-alternating part of $R$ is $\eps_{jlb}V_b(i,k)$ with $V_b(i,k)=\overline{\cP^{b,ik}}+\eps_{ika}Q_{ab}$, and \eqref{c0:eq:bianchi2} gives $\nabla_pV_b(i,k)-\nabla_iV_b(p,k)=T^r_{ip}V_b(r,k)$. Contract with $\eps_{ipc}$. The $\cP$-terms give $2\sum_{i,p}\eps_{ipc}\nabla_p\overline{\cP^{b,ik}}$, the $Q$-terms give $2\nabla_kQ_{cb}-2\delta_{ck}(\operatorname{div}Q)_b$ with $(\operatorname{div}Q)_b=\sum_a\nabla_aQ_{ab}$, and the right side gives $2\sum_rC^{cr}V_b(r,k)$. The trace $c=k$ yields $(\operatorname{div}Q)_b=-\frac12\langle A,\overline{\cP^b}\rangle+\frac12(\eta Q)_b$ (the $\cP$-term drops by symmetry, and $\sum_{k,r}C^{kr}\eps_{rka}=-\eta_a$). Substituting back and inserting $C=A+\frac12\eps\eta$ gives \eqref{c0:eq:Qtr}, since $\nabla_p\overline{\cP}=\overline{\nabla_{\bar p}\cP}$. The last statement is $Q=Q^*$.
\end{proof}

\subsubsection{Mixed commutators}
\begin{lemma}[Mixed identity for $\nabla''W$]\label{c0:lem:MA}
Put $\widehat Q_p:=\nabla_pQ-\frac12\eta_pQ$ and $T^\circ{}_p:=AE_p$. If $H_h\equiv0$, then
\begin{equation}\label{c0:eq:MA}
 \nabla_{\bar l}W_p^{aj}=-(T^\circ{}_p\cP)^{a,lj}-(T^\circ{}_p\cP)^{j,la}-\eps_{ljb}(\widehat Q_p)_{ab}-\eps_{lab}(\widehat Q_p)_{jb}-(R_{p\bar l}\cdot A)^{aj}-\tfrac12E_{p\bar l}A^{aj},
\end{equation}
where $(R_{p\bar l}\cdot A)^{aj}=\sum_mR_{p\bar lm\bar a}A^{mj}+\sum_mR_{p\bar lm\bar j}A^{am}-\rho_{p\bar l}A^{aj}$ includes the $K_X$-factor.
\end{lemma}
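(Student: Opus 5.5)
The identity is a mixed Ricci commutator computation applied to $A$. The plan is to compute $\nabla_{\bar l}W_p$ from $\nabla_{\bar l}\nabla_pA-\tfrac12\nabla_{\bar l}(\eta_pA)$, commute the two derivatives with the Ricci identity \eqref{c0:eq:ricci}, and substitute the first and second Bianchi consequences already proved.

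\textbf{Step 1: expand and commute.} Write
\[
\nabla_{\bar l}W_p=\nabla_{\bar l}\nabla_pA-\tfrac12(\nabla_{\bar l}\eta_p)A-\tfrac12\eta_p\nabla_{\bar l}A.
\]
The middle term is $-\tfrac12E_{p\bar l}A$. For the first term, \eqref{c0:eq:ricci} gives
\[
\nabla_{\bar l}\nabla_pA=\nabla_p\nabla_{\bar l}A-R_{p\bar l}\cdot A.
\]
The action on $A\in\Sym^2V\otimes K_X$ has two upper-index terms and the canonical factor $-\rho_{p\bar l}A$, which is exactly the stated $(R_{p\bar l}\cdot A)$. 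There is no torsion term, because the mixed torsion vanishes. The barred index of $\nabla_{\bar l}A$ is raised, so it transforms like an upper holomorphic slot. That is consistent with the convention stated after \eqref{eq:fund-barC}.

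\textbf{Step 2: differentiate the barred Bianchi formula.} Apply $\nabla_p$ to \eqref{c0:eq:barA}:
\[
\nabla_p\nabla_{\bar l}A^{aj}=-\nabla_p\cP^{a,lj}-\nabla_p\cP^{j,la}-\eps_{ljb}\nabla_pQ_{ab}-\eps_{lab}\nabla_pQ_{jb}.
\]
The $\eps$ tensors are parallel, because the canonical weight is carried by $Q$ and $\cP$. By Proposition~\ref{c0:prop:Ptransport}, $\nabla_p\cP=T^\circ{}_p\cP+\tfrac12\eta_p\cP$ with $T^\circ{}_p=AE_p$.

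\textbf{Step 3: absorb the Lee terms.} Write $\nabla_pQ=\widehat Q_p+\tfrac12\eta_pQ$. All the $\tfrac12\eta_p$ pieces then combine into $\tfrac12\eta_p\nabla_{\bar l}A$, again by \eqref{c0:eq:barA}, and this cancels the last term from Step 1. What remains is the claimed formula \eqref{c0:eq:MA}.

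\textbf{Expected obstacle.} The only delicate step is index bookkeeping in Step 2. One must check that the $\cP$ output term is $(T^\circ{}_p\cP)^{a,lj}$ with $T^\circ_p$ acting on the first slot only, and that the $K_X$ weights match on both sides. I would verify these by writing everything in a unitary frame and comparing $\eps$ contractions, using that $Q$ transforms as an endomorphism with canceling weights.
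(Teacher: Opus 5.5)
Your proposal is correct and follows the paper's proof essentially line by line. You commute the mixed derivatives on $A$ with the Ricci identity \eqref{c0:eq:ricci}, differentiate \eqref{c0:eq:barA} using the transport $\nabla_p\cP=M_p^{(\mathrm{out})}\cP$, and observe that the $\tfrac12\eta_p$ parts of $M_p\cP$ and of $\nabla_pQ$ reassemble into $\tfrac12\eta_p\nabla_{\bar l}A$, which cancels the corresponding term coming from $\nabla_{\bar l}(\eta_pA)$.
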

\begin{proof}
By \eqref{c0:eq:ricci}, $\nabla_{\bar l}\nabla_pA=\nabla_p\nabla_{\bar l}A-R_{p\bar l}\cdot A$. The left side equals $\frac12E_{p\bar l}A+\frac12\eta_p\nabla_{\bar l}A+\nabla_{\bar l}W_p$. Differentiate \eqref{c0:eq:barA} with \eqref{c0:eq:Ptransport}: $\nabla_p\nabla_{\bar l}A^{aj}=-(M_p\cP)^{a,lj}-(M_p\cP)^{j,la}-\eps_{ljb}\nabla_pQ_{ab}-\eps_{lab}\nabla_pQ_{jb}$. The $\frac12\eta_p$-parts of $M_p\cP$ and of $\nabla_pQ$ combine with $-\frac12\eta_p\nabla_{\bar l}A$ into the stated form.
\end{proof}
Total symmetrization of \eqref{c0:eq:MA} over $(a,j,l)$ kills the $\widehat Q$-terms and gives Lemma~\ref{c0:lem:mixed} below. Combined with \eqref{c0:eq:Qtr}, identity \eqref{c0:eq:MA} expresses $\nabla''W$ through $A,\eta,\cP,Q$ and $\overline{\nabla''\cP}$.

\begin{lemma}[Ricci identity for $\nabla''\cP$]\label{c0:lem:RicciP}
If $H_h\equiv0$, then $\nabla_pY_x=\nabla_{\bar x}(M_p\cP)+R_{p\bar x}\cdot\cP$, where $M_p\cP$ is differentiated with \eqref{c0:eq:barA} and \eqref{c0:eq:E}, and $R_{p\bar x}$ acts on the three slots of $\cP$ and on its $K_X$-factor.
\end{lemma}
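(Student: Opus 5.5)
The identity is a Ricci commutation applied to $\cP$, after which the known barred derivatives are substituted; I would organize it in that order. By \eqref{c0:eq:ricci}, for any tensor field $F$,
\[
\nabla_p\nabla_{\bar x}F-\nabla_{\bar x}\nabla_pF=R_{p\bar x}\cdot F .
\]
Apply this to $F=\cP\in V\otimes\Sym^2V\otimes K_X$. Since $Y_x=\nabla_{\bar x}\cP$, the left-hand term is $\nabla_pY_x$. By Proposition~\ref{c0:prop:Ptransport}, $\nabla_p\cP=M_p\cP$, so $\nabla_{\bar x}\nabla_p\cP=\nabla_{\bar x}(M_p\cP)$. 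Rearranging gives the stated form.

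Two points need care in the curvature term. First, the second covariant derivatives must include the connection on the derivative index. The pure $(2,0)$ commutator carries a torsion term, but the mixed $(1,1)$ commutator in \eqref{c0:eq:ricci} has none, because the Chern torsion has no mixed-type component. So no torsion correction enters here. Second, $R_{p\bar x}$ must act on every slot with the conventions of \eqref{c0:eq:ricci}:
\begin{itemize}
\item on the output slot $a$, as on an upper index;
\item on the two raised barred slots $j,l$, which behave as upper holomorphic slots (as noted after \eqref{eq:fund-barC}), by $v\mapsto\sum_mR_{p\bar x m\bar\cdot}v^m$;
\item on the $K_X$ factor, by $-\rho_{p\bar x}$.
\end{itemize}

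Next I would expand $\nabla_{\bar x}(M_p\cP)$ by the product rule, $(\nabla_{\bar x}M_p)\cP+M_pY_x$. Since $M_p=AE_p+\tfrac12\eta_pI$ and $E_p$ is parallel (epsilon tensors are parallel for the Chern connection on the determinant-adjusted bundle), we get $\nabla_{\bar x}M_p=(\nabla_{\bar x}A)E_p+\tfrac12(\nabla_{\bar x}\eta_p)I$. The factor $\nabla_{\bar x}A$ is supplied by \eqref{c0:eq:barA} and $\nabla_{\bar x}\eta_p=E_{p\bar x}$ by \eqref{c0:eq:E}; this is exactly the phrase ``differentiated with \eqref{c0:eq:barA} and \eqref{c0:eq:E}'' in the statement.

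The main obstacle is bookkeeping rather than analysis: the canonical weights of $A\in\Sym^2V\otimes K_X$ and $\cP$ must match, and one must confirm that the mixed commutator of $\nabla$ on the canonical line is $-w\rho$ with the correct weight. I would check this against the contracted identity $\rho^{(1)}-\tfrac12\nabla''\eta=\bar\Xi$ from \eqref{c0:r1:ricci-lee}.
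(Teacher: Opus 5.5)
Your proposal is correct and follows the paper's proof: the paper's argument is the mixed Ricci identity \eqref{c0:eq:ricci} applied to $\cP$, combined with the transport $\nabla_p\cP=M_p\cP$ from Proposition~\ref{c0:prop:Ptransport}. Your further remarks are consistent with the paper's conventions but add nothing the statement needs: the absence of a mixed torsion term, the slot-by-slot curvature action with weight one on $K_X$, and the product-rule expansion of $\nabla_{\bar x}(M_p\cP)$ via \eqref{c0:eq:barA} and \eqref{c0:eq:E}.
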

\begin{proof}
\eqref{c0:eq:ricci} applied to $\cP$, with $\nabla_p\cP=M_p\cP$.
\end{proof}

\subsection{The complete barred divergence calculation}\label{c0:app:barred-proof}
\begin{proof}[Proof of the pointwise identity in Proposition~\ref{c0:prop:barred}]

We give the contraction, including the cancellation of the free second
derivatives. Set
\[
 B_x^{ab}=\nabla_{\bar x}A^{ab},\quad
 D_{rx}^{ab}=\nabla_r B_x^{ab},\quad
 \vartheta^a=\sum_{i,k}\eps_{ika}a_{ik},\quad
 N=\sum_{p,a,b}|B_p^{ab}|^2,\quad
 K=\sum_{p,a,b}B_p^{ab}\overline{B_a^{bp}}.
\]
The last two slots of $B$ and $D$ are symmetric. Epsilon contraction and
the three distinct placements in full symmetrization give
\begin{align}
 b_p-3c_p-r_p
 &=-4\sum_{a,b}A^{ab}\overline{B_a^{bp}}
       +\sum_a\vartheta^a\bar A^{ap},\label{c0:eq:barred-short}\\
 |\nabla''T^\circ{}|^2&=2N,\qquad
 6|S|^2=2N+2(K+\bar K),\qquad
 \tfrac12|a|^2=\tfrac14|\vartheta|^2.\label{c0:eq:barred-norms}
\end{align}
For example $S^{abp}=(B_a^{bp}+B_b^{pa}+B_p^{ab})/3$;
expanding its squared norm gives the middle equality.
The pure Bianchi equation gives
\[
 \vartheta^a=4\sum_rW_r^{ra}-2\sum_rC^{ar}\eta_r
 =4\sum_r(\nabla_rA^{ra}-\eta_r A^{ra}).
\]
The last equality uses $C\eta=A\eta$, since
$\eps_{arm}\eta_r\eta_m=0$. Therefore
\begin{equation}\label{c0:eq:barred-beta-square}
 \sum_{p,a}\vartheta^a
   \overline{\nabla_pA^{ap}-\eta_p A^{ap}}
 =\tfrac14|\vartheta|^2.
\end{equation}

In differentiating \eqref{c0:eq:barred-short}, retain
\[
 \nabla_{\bar x}W_r^{ab}
 =D_{rx}^{ab}-(R_{r\bar x}\cdot A)^{ab}
 -\tfrac12E_{r\bar x}A^{ab}-\tfrac12\eta_rB_x^{ab}.
\]
This is the mixed Ricci identity, including the negative canonical
trace in $R\cdot A$. Write a subscript $0$ for the part of a polynomial
expansion independent of $D$; this does not set any geometric derivative
to zero. Differentiating the formula for $\vartheta$ gives
\[
 (\nabla_{\bar x}\vartheta^a)_0
 =-4\sum_r\bigl((R_{r\bar x}\cdot A)^{ra}
             +E_{r\bar x}A^{ra}+\eta_r B_x^{ra}\bigr).
\]
The required curvature contraction is
\begin{equation}\label{c0:eq:barred-curvature-hook}
 \sum_r\bigl((R_{r\bar x}\cdot A)^{ra}
             +E_{r\bar x}A^{ra}\bigr)
 =\sum_d\eps_{xad}\sum_{i,j}A^{ij}\overline{\cP^{d,ij}}.
\end{equation}
To check it, expand the two vector-slot actions and the negative
canonical trace. In the second vector action, the $\cP$ and $Q$
terms vanish against symmetric $A$; its conjugate-$\cP$ term is the
right side. The first vector action minus the canonical trace has
coefficient $-2\Xi_{xr}+2Q_{xr}-2\tau\delta_{rx}$ multiplying
$A^{ra}$; this is canceled by $E_{r\bar x}$ in \eqref{c0:eq:E}.
Thus contraction of \eqref{c0:eq:barred-curvature-hook} against
$\bar A^{ax}$ vanishes, and, with
$Z=\sum_{x,a,r}\eta_r B_x^{ra}\bar A^{ax}$, we have
\[
 \sum_{x,a}(\nabla_{\bar x}\vartheta^a)_0\bar A^{ax}=-4Z.
\]
Differentiating the short current and including its Lee divergence
correction now gives
\[
 (\operatorname{div}X)_0
 =-4\bar K-4\bar Z+\tfrac14|\vartheta|^2+4Z.
\]
Here the last term uses the reindexing
$Z=\sum_{p,a,b}\eta_p\bar A^{ab}B_a^{bp}$, valid by the two
symmetries already stated. The omitted $D$ terms are exactly
$4L-4\bar L$, where
$L=\sum_{p,a,b}A^{ab}\overline{D_{pa}^{bp}}$.
Indeed the derivative of the first term of the short current contributes
$-4\bar L$ after conjugation, whereas the $4\sum_rD_{rx}^{ra}$
part of $\nabla_{\bar x}\vartheta^a$ contributes $4L$ after
reindexing. Both $4Z-4\bar Z$ and $4L-4\bar L$ have zero real part.
Consequently
\[
 \Rea\operatorname{div}X
 =-2(K+\bar K)+\tfrac14|\vartheta|^2
 =|\nabla''T^\circ{}|^2+\tfrac12|a|^2-6|S|^2.
\]
This is a statement about the complete real divergence; the discarded
complex terms need not vanish separately. Compactness and the divergence
formula of Lemma~\ref{c0:lem:div} give \eqref{c0:eq:barred}.
\end{proof}\subsection{The full mixed Hessian and Lee-defect equations}
Put $u=|A|^2$, $\tau=\tr Q$, $p=4\tau-2u$, and $F=\Sym^3\cP$. Define
\[
 \mathsf G_{i\bar j}=\langle W_i,W_j\rangle,\qquad
 \mathsf M_{i\bar j}=\sum_{a,b}\bigl(\cP^{a,jb}\overline{\cP^{a,ib}}
 +2\cP^{a,jb}\overline{\cP^{b,ia}}\bigr).
\]
\begin{proposition}[The uncontracted scalar Hessian]\label{c0:up:fullhessian}
For every smooth zero-Chern-HSC threefold,
\begin{align}\label{c0:up:hessian}
 p_{i\bar j}-\tfrac12\eta_i p_{\bar j}-\tfrac12\bar\eta_jp_i
 +\tfrac14p\eta_i\bar\eta_j
 =p\rho^{(1)}_{i\bar j}-2\mathsf G_{i\bar j}-4\mathsf M_{i\bar j}.
\end{align}
On the strict branch $p>0$, the form $\zeta=\partial\log p-\eta/2$ therefore satisfies
\begin{equation}\label{c0:up:lee}
 \nabla_{\bar j}\zeta_i=\overline{\Xi_{ij}}-\zeta_i\bar\zeta_j
 -\frac2p\mathsf G_{i\bar j}-\frac4p\mathsf M_{i\bar j}.
\end{equation}
Moreover $\tr\mathsf M=3|F|^2$; no semipositivity is asserted for $\mathsf M$ unless further symmetry is established.
\end{proposition}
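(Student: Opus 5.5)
The proof is a direct second-derivative computation of $p=4\tau-2u$ in a unitary frame at a point, using only the raw Bianchi/Ricci identities of the first appendix subsection. I treat the two pieces $\tau=\tr Q$ and $u=|A|^2$ separately, since their Hessians are driven by different transport laws.

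\textbf{Step 1: the $u$ part.} Write $\nabla_iA=W_i+\tfrac12\eta_iA$, so that
\[
u_i=\langle W_i,A\rangle+\tfrac12\eta_iu+\langle A,\nabla_{\bar i}A\rangle .
\]
Applying $\nabla_{\bar j}$ produces four kinds of term:
\begin{itemize}
\item $\langle W_i,W_j\rangle=\mathsf G_{i\bar j}$;
\item $\langle\nabla_{\bar j}W_i,A\rangle$, which I rewrite through \eqref{c0:eq:MA};
\item $\tfrac12E_{i\bar j}u$ and the conjugate-$\eta$ pieces, which assemble into the shifted operator on the left of \eqref{c0:up:hessian};
\item terms $\langle\nabla_{\bar i}A,\nabla_{\bar j}A\rangle$ and $\langle A,\nabla_i\nabla_{\bar j}A\rangle$, which are evaluated by \eqref{c0:eq:barA} and the mixed Ricci identity.
\end{itemize}

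\textbf{Step 2: the $\tau$ part.} Take the trace of \eqref{c0:eq:Qtr} to get $\tau_i$. This introduces $\tfrac12\eta_i\tau$, a contraction $\langle A,\overline{\cP}\rangle$-type term, and a trace of $\overline{\nabla''\cP}$. Differentiating again in $\bar j$ brings in $\nabla_{\bar j}\overline{Y}$. By Lemma~\ref{c0:lem:RicciP} (conjugated), this is $\overline{\nabla_{\bar x}(M_p\cP)+R\cdot\cP}$, and here the terms quadratic in $\cP$ appear, which will build $\mathsf M$.

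\textbf{Step 3: combine.} Take $4\times$(Step 2)$-2\times$(Step 1). The claim is that the $\nabla''\cP$ terms cancel exactly: the free derivative $\overline{\nabla''\cP}$ occurs in $\tau$ only through $\sum\eps\,\overline{Y}$, and in $u$ through $\widehat Q$ inside \eqref{c0:eq:MA}, with matching coefficient $4$ versus $2\cdot2$. This cancellation is exactly why $p$ has the ratio $4\!:\!-2$. After it, the remaining terms should sort as follows:
\begin{itemize}
\item the $Q$- and $\tau$-linear curvature terms, using \eqref{c0:eq:E} and $\rho^{(1)}=\tfrac12E+\overline\Xi$ from \eqref{c0:r1:ricci-lee}, produce $p\rho^{(1)}_{i\bar j}$;
\item the $\cP\overline{\cP}$ terms produce $-4\mathsf M$;
\item the $W$ terms produce $-2\mathsf G$.
\end{itemize}
The remaining $\eta$ terms give the conformal shifts $-\tfrac12\eta_ip_{\bar j}-\tfrac12\bar\eta_jp_i+\tfrac14p\eta_i\bar\eta_j$.

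\textbf{Step 4: the consequences.} Equation \eqref{c0:up:lee} follows by dividing by $p$, noting that $(\log p)_{i\bar j}=p_{i\bar j}/p-p_ip_{\bar j}/p^2$, and substituting $\rho^{(1)}-\tfrac12\nabla_{\bar j}\eta_i=\overline{\Xi}$. The identity $\tr\mathsf M=3|F|^2$ is the symmetrization expansion already displayed before Lemma~\ref{c0:r1:hessian}.

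\textbf{Main obstacle.} I expect Step 3 to be the hardest part: the curvature actions on all three slots plus the $K_X$ factor, and the cross terms $C\overline{\cP}$ from \eqref{c0:eq:Qtr}, must be shown to collapse to $p\rho^{(1)}$ and $\mathsf M$ without leftover $Q\cP$ terms. I would first verify this in the special case $A=0,\eta=0$, and then track the $A$-dependent terms by homogeneity degree in $A$.
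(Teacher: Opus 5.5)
Your plan is essentially the paper's proof: compute the Hessian of $\tau$ (via the sourced $Q$-transport and the Ricci identity for $\nabla'\overline{\cP}$) and the Hessian of $u$ (via norm differentiation and the mixed Ricci identity), combine them as $4\tau-2u$ so that the freely occurring $\nabla'\overline{\cP}$ terms cancel, then divide by $p$ and use $\rho^{(1)}-\tfrac12E=\overline{\Xi}$.

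The paper shortens your Step~3 in two ways you may want to adopt.
\begin{itemize}
\item It first makes $\eta=0$ at the point by a real pluriharmonic conformal change $\widehat h=e^{\varphi}h$ with $\varphi=0$ and $\partial\varphi=\eta/2$ there. This fixes $A$, $W$ and $\rho^{(1)}$ and gives $\widehat p=e^{-\varphi}p$, so the three shift terms are simply the Hessian of $e^{-\varphi}p$.
\item At that point it organizes the computation as $u_{i\bar j}=\mathsf G+\mathsf N+\mathsf D-\mathsf C$ and $\tau_{i\bar j}=\tfrac12(\mathsf N+\mathsf D-\mathsf C)+(\tau-\tfrac u2)\rho^{(1)}-\mathsf M$. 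Here $\mathsf N,\mathsf D,\mathsf C$ are the $\langle\nabla''A,\nabla''A\rangle$, $\langle\nabla'\nabla''A,A\rangle$ and $\langle R\cdot A,A\rangle$ contractions. The whole common block, not only its free-derivative part, then cancels in $4\tau-2u$.
\end{itemize}
One bookkeeping correction: your $\tfrac12E_{i\bar j}u$ is not a shift term. It cancels against the $-\tfrac12E_{p\bar l}A$ term of \eqref{c0:eq:MA}.
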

\begin{proof}
A real pluriharmonic conformal change $\widehat h=e^\varphi h$ preserves zero HSC and changes $\widehat\eta=\eta-2\partial\varphi$, while $A,W$ are unchanged as the indicated intrinsic tensors and $\widehat p=e^{-\varphi}p$. At any prescribed point take $\varphi=0$, $\partial\varphi=\eta/2$. This sets $\widehat\eta=0$ at the point without setting any derivative of $\eta$ to zero. We first compute there.

Write $A_{\bar j}=\nabla_{\bar j}A$ and
\[
\mathsf N_{i\bar j}=\langle A_{\bar j},A_{\bar i}\rangle,\qquad \mathsf D_{i\bar j}=\langle\nabla_i A_{\bar j},A\rangle
       +\langle A,\nabla_j A_{\bar i}\rangle,\qquad \mathsf C_{i\bar j}=\langle R_{i\bar j}A+AR_{i\bar j}^{\mathsf T}
       -\rho^{(1)}_{i\bar j}A,A\rangle.
\]
Norm differentiation, including the canonical factor, gives
\[
 u_{i\bar j}=\mathsf G_{i\bar j}+\mathsf N_{i\bar j}
              +\mathsf D_{i\bar j}-\mathsf C_{i\bar j}.
\]
The full sourced $Q$ transport gives the separate exact contraction
\begin{equation}\label{c0:up:tauhessian}
 \tau_{i\bar j}=\tfrac12(\mathsf N_{i\bar j}+\mathsf D_{i\bar j}-\mathsf C_{i\bar j})
 +(\tau-u/2)\rho^{(1)}_{i\bar j}-\mathsf M_{i\bar j}.
\end{equation}
Here is a finite reconstruction of the algebra in this last identity, retaining its free derivative source. Set $B_k=\nabla_k\overline{\cP}$, $D_iQ=\nabla_iQ$, and $J_{jk}=\nabla_{\bar j}B_k$. The mixed Ricci identity is
\[
 J_{jk}=(\nabla_k\bar M_j)^{(1)}\overline{\cP}
 +\bar M_j^{(1)}B_k-R^{\overline{\cP}}_{k\bar j}\overline{\cP},\qquad
 \nabla_k\bar M_j=\overline{A_{\bar k}}E_j+\tfrac12\overline{E_{j\bar k}}I.
\]
The curvature on $\overline{\cP}$ has minus the three vector-slot actions and plus $\rho^{(1)}$ on its conjugate canonical factor. Differentiating the sourced $Q$ equation and tracing gives
\begin{align*}
 \tau_{i\bar j}={}&\tr[(A_{\bar j}E_i+E_{i\bar j}I/2)Q+M_i(D_jQ)^*]
 +\sum_{k,l,c}\eps_{klc}J_{jk}^{c,il}\\
 &+\sum_{c,r}[(C_{\bar j})^{cr}\overline{\cP^{c,ri}}
       +C^{cr}\overline{(\nabla_j\cP)^{c,ri}}]\\
 &-\tfrac12\sum_{k,r}[(C_{\bar j})^{kr}\overline{\cP^{i,rk}}
       +C^{kr}\overline{(\nabla_j\cP)^{i,rk}}].
\end{align*}
At the chosen point $C=A$, but $C_{\bar j}$ must retain its full first-Bianchi value. Inserting the displayed first/second Bianchi equations and the preceding Ricci formula yields \eqref{c0:up:tauhessian} by epsilon contraction. These are identities of the unrestricted component arrays: $A$ is symmetric, $\cP$ and $B$ are symmetric only in their last two indices, and $Q$ is Hermitian. The nine contractions use the same $B$ in both expressions. The source $B$ is never set to zero. Four times \eqref{c0:up:tauhessian} minus twice the norm identity proves \eqref{c0:up:hessian} at this point.

Under the pluriharmonic conformal change with $\varphi=0$ at the point,
\[
 \widehat p_{i\bar j}=p_{i\bar j}-\varphi_i p_{\bar j}-\bar\varphi_jp_i+p\varphi_i\bar\varphi_j.
\]
The first Ricci form is unchanged and the other pointwise quantities agree. Taking $\varphi_i=\eta_i/2$ restores \eqref{c0:up:hessian} in every gauge. Finally divide by $p$ and use the first-Bianchi contraction $\rho^{(1)}_{i\bar j}-E_{i\bar j}/2=\overline{\Xi_{ij}}$ to obtain \eqref{c0:up:lee}. The trace follows by full symmetrization of the three indices of $\cP$.
\end{proof}

\subsection{The three integrations when the derivative defect vanishes}\label{c0:app:three-integrations}
This subsection proves \eqref{c0:r2:J}--\eqref{c0:r2:mainint} under
the hypotheses used in Theorem~\ref{c0:r2:globalw}: $X$ is compact,
$W=0$, $p>0$, and $\cP$ is totally symmetric. Retain the global
contractions $C_i,\mathsf H_i,B_i$ and the integrals $J,\mathcal N$
defined in that proof. No assumption on the output rank is used.
\begin{proof}
To prove \eqref{c0:r2:J}, contract transport $\nabla_i\cP=(AE_i)^{(1)}\cP+(\eta_i/2)\cP$. The $AE_i$ term in $\sum_i\nabla_i\cP^{a,ib}$ vanishes because it contracts $\eps_{ri e}$ with the symmetric pair $(i,e)$ of $\cP$. Thus
\[
 \sum_i\nabla_{\bar i}C_i=\tfrac12\sum_i\bar\eta_iC_i-2|\cP|^2.
\]
The negative torsion-trace convention gives
\[
 \int_Xp\nabla_{\bar i}C_i\dV
 =\int_X(p\bar\eta_i-\partial_{\bar i}p)C_i\dV.
\]
Since $\partial_{\bar i}p=p(\bar\zeta_i+\bar\eta_i/2)$, comparison proves \eqref{c0:r2:J}.

For \eqref{c0:r2:Hu}, integrate the divergence of the global $(1,0)$ vector $pu\bar\zeta^\sharp$. The terms involving the weight and the half-Lee shift combine into $|\zeta|^2$, and \eqref{c0:r2:trace} gives
\[
 \Rea\int_Xp\bar\zeta_iD_i u\dV
 =-\Rea\int_Xpu\left(\sum_i\nabla_i\bar\zeta_i+|\zeta|^2\right)\dV
 =12\int_Xu|\cP|^2\dV.
\]

Finally integrate the divergence of
\[
 X^j=p\sum_{i,a}A^{ja}\bar A^{ia}\bar\zeta_i.
\]
The divergence is $\sum_j\nabla_jX^j-\eta_jX^j$. Since $\partial_jp=p(\zeta_j+\eta_j/2)$ and $\nabla_jA^{ja}=\eta_jA^{ja}/2$, all Lee terms cancel. The $\zeta_j\bar\zeta_i$ term from the weight cancels the corresponding term from the conjugate of \eqref{c0:r2:lee}. What remains is
\[
 0=\Rea\int_Xp\sum_i\bar\zeta_iB_i\dV-4\mathcal N.
\]
Substitute \eqref{c0:r2:B} and \eqref{c0:r2:H}, then multiply by $-1$ to obtain \eqref{c0:r2:mainint}.

\end{proof}

\subsection{Compatibility at full output rank}\label{c0:app:rank-compatibility}
\begin{proposition}[Full-rank stratum]\label{c0:prop:rank3}
On an open set where $\rank\cP=3$ (as a map $\Sym^2(T^{*1,0}X)\to T^{1,0}X\otimes K_X$),
\[
 \partial\eta=0\qquad\text{and}\qquad\nabla'A=\tfrac12\eta\otimes A .
\]
\end{proposition}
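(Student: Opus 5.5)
The plan is to exploit the pure $(2,0)$ commutator of the transport equation for $\cP$, which is the only place where $\nabla'A$ enters algebraically. By Proposition~\ref{c0:prop:Ptransport}, $\nabla_p\cP=M_p^{(\mathrm{out})}\cP$ with $M_p=AE_p+\tfrac12\eta_pI$. Differentiating once more and antisymmetrizing in $p,q$ gives, through \eqref{c0:eq:ricci},
\[
 (\nabla_qM_p-\nabla_pM_q+[M_p,M_q]+T^r_{qp}M_r)^{(\mathrm{out})}\cP=0 .
\]
The endomorphism in parentheses, call it $\mathcal R_{qp}$, acts only on the output slot. Full output rank three means $\cP:\Sym^2V^*\to V\otimes K_X$ is surjective, so $\mathcal R_{qp}\cP=0$ forces $\mathcal R_{qp}=0$ as an endomorphism of $V$ for every pair $(q,p)$. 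This gives three matrix equations, one per cyclic pair $(q,p)=(1,2),(2,0),(0,1)$, which are linear in $\nabla'A$ and $\partial\eta$.

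The next step is to expand $\mathcal R_{qp}$. Write $\nabla_qA=W_q+\tfrac12\eta_qA$. The algebraic terms come from $[AE_p,AE_q]$, from the torsion term $T^r_{qp}(AE_r+\tfrac12\eta_rI)$ with $T=\eps C$ and $C=A-\tfrac12[\eta]_\times$, and from the $\eta$-parts of $\nabla A$. I expect these to cancel identically, leaving
\[
 \mathcal R_{qp}=W_qE_p-W_pE_q+\tfrac12 a_{qp}I,
\]
where $a$ is the covariant alternation from \eqref{c0:eq:a}. The scalar part needs the identity $(\partial\eta)_{qp}=a_{qp}+T^r_{qp}\eta_r$ to absorb the torsion contribution, and this cancellation check is a finite epsilon computation of the same kind as in the proof of Lemma~\ref{c0:lem:transalg}.

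It then remains to solve the linear system $W_qE_p-W_pE_q=-\tfrac12 a_{qp}I$ for symmetric $W_p$. There are eighteen unknowns in the $W_p$ and three in $a$. The trace and antisymmetric parts of each equation should determine $a$ in terms of the contractions $\sum_aW_a^{al}$, consistent with Lemma~\ref{c0:lem:J1}. The symmetric trace-free parts should then force $W=0$. Once $W=0$, Lemma~\ref{c0:lem:J1} gives $\nu=0$, hence $\partial\eta=0$, and the vanishing of $W$ is exactly $\nabla'A=\tfrac12\eta\otimes A$.

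The main obstacle is this last linear-algebra step: showing that the map $W\mapsto(W_qE_p-W_pE_q)$, together with the scalar $a$ terms, has trivial kernel on triples of symmetric matrices. My first attempt would be to contract the matrix equation with $\eps$ and with the trace, and to decompose $W$ into $\mathrm{GL}(3)$-irreducible pieces. The kernel should then be checked on the pieces $\Sym^3$, the part of $V\otimes\Sym^2$ of type $(2,1)$, and the trace part $V$ separately. If the totally symmetric piece survives, I would add the mixed identity \eqref{c0:eq:MA} to remove it.
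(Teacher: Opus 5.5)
Your overall route is the paper's: take the pure commutator of $\nabla_p\cP=M_p^{(\mathrm{out})}\cP$, use surjectivity of $\cP$ to kill the output endomorphism, cancel the quadratic $A$-terms, and finish with linear algebra. The cancellation you expect does hold. For symmetric $A$ one has $[AE_p,AE_q]=[e_p\times e_q]_\times\adj A=A[A(e_p\times e_q)]_\times=-\sum_r(T^\circ)^r{}_{qp}AE_r$.

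Your reduced operator is misstated, however. The $\eta$-part of $(\nabla_qM)_p-(\nabla_pM)_q$ gives $\tfrac12a_{qp}I$, and $T^r_{qp}M_r$ adds $\tfrac12T^r_{qp}\eta_rI$. By \eqref{c0:eq:a} the scalar term is therefore $\tfrac12(\partial\eta)_{qp}I$, not $\tfrac12a_{qp}I$. Keeping $\tfrac12a_{qp}I$ would lead you to the unjustified conclusion $a=0$, equivalently $A\eta=0$.

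The trace of the equation also contains no $W$ at all. Since $W_q$ is symmetric and $E_p$ is skew, $\tr(W_qE_p)=0$, so the trace is simply $\tfrac32(\partial\eta)_{qp}=0$. Thus $\partial\eta=0$ comes first and directly, with no contraction $\sum_aW_a^{al}$ and no need for Lemma~\ref{c0:lem:J1}.

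The genuine gap is the step you yourself call the main obstacle. You never show that $W\mapsto(W_qE_p-W_pE_q)_{p<q}$ is injective on triples of symmetric matrices. Your fallback to \eqref{c0:eq:MA} is not the right tool, since that identity constrains $\nabla''W$ rather than $W$ at a point. No representation theory is needed; the short argument runs as follows.
\begin{itemize}
\item Suppose $W_qE_p=W_pE_q$ for all $p\ne q$, and apply both sides to $e_p$. Since $E_pe_p=0$ and $E_qe_p=\pm e_r$ with $r$ the third index, taking $q$ to be each of the two other indices gives $W_pe_r=0$ for both $r\ne p$. Symmetry then gives $W_p=x_pe_pe_p^{\mathsf T}$.
\item Now apply the same relation to the third basis vector $e_r$. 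This gives $\pm x_qe_q=\pm x_pe_p$, hence $x_p=x_q=0$.
\end{itemize}
So the kernel is zero on all symmetric triples, including the totally symmetric piece you worried about. After the trace step, $W_qE_p=W_pE_q$ forces $W=0$, i.e.\ $\nabla'A=\tfrac12\eta\otimes A$.
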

\begin{proof}
Differentiate \eqref{c0:eq:Ptransport} and use the pure commutator formula $\nabla^2_{q,p}-\nabla^2_{p,q}=-T^r_{qp}\nabla_r$ : $\widetilde\Omega_{qp}\cP=0$ with
\[
 \widetilde\Omega_{qp}:=(\nabla_qM)_p-(\nabla_pM)_q+[M_p,M_q]+T^r_{qp}M_r .
\]
If $\rank\cP=3$ then $\widetilde\Omega=0$. Write $\nabla_qA=\frac12\eta_qA+X_q$. Using $T^r_{qp}=T^\circ{}^r_{qp}+\frac12(\eta_p\delta^r_q-\eta_q\delta^r_p)$, all terms containing $\eta\otimes A$ cancel and
\[
 \widetilde\Omega_{qp}=X_qE_p-X_pE_q
 +\bigl(AE_pAE_q-AE_qAE_p+T^\circ{}^r_{qp}AE_r\bigr)+\tfrac12(\partial\eta)_{qp}I .
\]
The bracket vanishes identically for symmetric $A$ by substitution of the epsilon contraction and $A^{ab}=A^{ba}$. The map $X\mapsto(X_qE_p-X_pE_q)_{p<q}$ is trace free (the trace of a symmetric times a skew matrix vanishes) and injective on $\Sym^2$-valued $(1,0)$-forms: if $X_qE_p=X_pE_q$ for all $p\ne q$, apply it to $e_p$; since $E_pe_p=0$, $X_p$ kills the basis vectors other than $e_p$, so $X_p=x_pe_pe_p^{\mathsf T}$, and applying the relation to the third basis vector forces $x_p=x_q=0$. Taking the trace gives $\partial\eta=0$, and then $X=0$.
\end{proof}
\begin{lemma}[Mixed-derivative identity]\label{c0:lem:mixed}
For every metric with $H_h\equiv0$, put $W_p:=\nabla_pA-\frac12\eta_pA$. Then, with the index $\bar l$ raised by $h$ and $\rho^{(1)}_{p\bar l}=\sum_kR_{p\bar lk\bar k}$,
\[
 \Sym_{(a,j,l)}\nabla_{\bar l}W_p^{aj}
 =-\Sym_{(a,j,l)}\bigl[(\tfrac12E_{p\bar l}-\rho^{(1)}_{p\bar l})A^{aj}\bigr]
 =\Sym_{(a,j,l)}\bigl[\,\overline{\Xi_{pl}}\,A^{aj}\bigr].
\]
\end{lemma}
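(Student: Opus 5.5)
The plan is to symmetrize the mixed identity \eqref{c0:eq:MA} of Lemma~\ref{c0:lem:MA} over $(a,j,l)$ and then evaluate the curvature action on $A$ using the decomposition \eqref{c0:eq:decomp}.

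\textbf{Step 1 (symmetrization).} In \eqref{c0:eq:MA}, the two $\widehat Q_p$ terms carry $\eps_{ljb}$ and $\eps_{lab}$. Each of these is alternating in a pair among $(a,j,l)$, so both disappear under $\Sym_{(a,j,l)}$. The terms $(T^\circ{}_p\cP)^{a,lj}+(T^\circ{}_p\cP)^{j,la}$ are handled next. The output of $T^\circ{}_p\cP$ is $\sum_e A^{ar}\eps_{rpe}\cP^{e,lj}$. After full symmetrization I expect this to reduce, using the symmetry of $A$ and of the last two slots of $\cP$, to a contraction of $\eps_{rpe}$ against the totally symmetric part of $\cP$ together with $A$. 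I will check whether this piece cancels against the $\cP$-part of $R_{p\bar l}\cdot A$ coming from Step~2. If it does not cancel there, it must combine into the $\Xi$ term.

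\textbf{Step 2 (curvature action).} Expand $(R_{p\bar l}\cdot A)^{aj}=\sum_m R_{p\bar l m\bar a}A^{mj}+\sum_m R_{p\bar lm\bar j}A^{am}-\rho_{p\bar l}A^{aj}$ with \eqref{c0:eq:decomp} at $c=0$, and symmetrize over $(a,j,l)$. Write $R_{p\bar l m\bar a}=\eps_{pma}\cP^{\cdot}\ldots$ plus its conjugate-type pieces. Terms with $\eps_{lab}$ or $\eps_{ljb}$ on the barred pair vanish under symmetrization, as in Step~1. What survives should be the $\eps_{pm\cdot}\cP^{\cdot,la}A^{mj}$ terms, which should exactly cancel the $T^\circ{}_p\cP$ terms of Step~1. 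The leftover is then $\rho_{p\bar l}A^{aj}$ with a sign, giving the first equality:
\[
\Sym\nabla_{\bar l}W_p^{aj}=-\Sym\bigl[(\tfrac12E_{p\bar l}-\rho^{(1)}_{p\bar l})A^{aj}\bigr].
\]

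\textbf{Step 3 (second equality).} This follows directly from \eqref{c0:r1:ricci-lee}, which gives $\rho^{(1)}_{p\bar l}-\tfrac12E_{p\bar l}=\overline{\Xi_{pl}}$, together with the index conventions $E_{p\bar l}=\nabla_{\bar l}\eta_p$.

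The main obstacle is Step~2: tracking which hook terms of $R\cdot A$ survive symmetrization and confirming that they cancel the $T^\circ_p\cP$ contribution exactly, with the correct sign on $\rho$. I would verify this first by brute-force component expansion at a single point, with $A$ diagonal.
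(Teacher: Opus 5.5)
Your proposal is correct and is essentially the paper's argument. The paper applies the Ricci identity to $A$, which is exactly how Lemma~\ref{c0:lem:MA} is derived, and remarks that the result is the total symmetrization of Lemma~\ref{c0:lem:MA}; as in your plan, the $\widehat Q$, $R^{\mathrm{sa}}$ and $R^{\mathrm{aa}}$ terms are removed by $\eps$-alternation, and $\rho^{(1)}-\tfrac12E=\overline{\Xi}$ gives the second equality.

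The cancellation you flag as the main obstacle is immediate and needs no brute-force check; in particular, your fallback that the $T^\circ{}_p\cP$ terms might instead feed into the $\Xi$ term does not arise. Since $(E_p)_{mc}=\eps_{mpc}=-\eps_{pmc}$ and $A$ is symmetric, one has $\sum_{m,c}\eps_{pmc}\cP^{c,la}A^{mj}=-(T^\circ{}_p\cP)^{j,la}$, and likewise with $a,j$ exchanged. The $T^\circ{}_p\cP$ terms therefore cancel the $\cP$-parts of the two hook terms of $R_{p\bar l}\cdot A$ pointwise, even before symmetrizing.
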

\begin{proof}
There is no torsion in mixed directions, so the Ricci identity reads $\nabla_{\bar l}\nabla_pA=\nabla_p\nabla_{\bar l}A-R(\partial_p,\partial_{\bar l})\cdot A$, where $R\cdot A$ acts on both tangent slots and by $-\rho^{(1)}$ on the $K_X$-slot. By \eqref{c0:eq:barA}, $\Sym\nabla''A=-2\Sym\cP$, hence $\Sym\nabla_p\nabla''A=-2\Sym(M_p\cP)$ by \eqref{c0:eq:Ptransport}. Since $\nabla''\eta=E$,
\[
 \Sym\nabla_{\bar l}W_p=-\Sym\bigl[2T^\circ{}_p\cP+R^{\mathrm{as}}_{p\bar l}\cdot A-\rho^{(1)}_{p\bar l}A+\tfrac12E_{p\bar l}A\bigr];
\]
the terms $\pm\eta_p\Sym\cP$ cancel, and $R^{\mathrm{sa}},R^{\mathrm{aa}}$ drop out because they alternate in $(\bar l,\bar a)$, resp.\ $(\bar l,\bar j)$. By \eqref{c0:eq:decomp}, $\Sym[2T^\circ{}_p\cP+R^{\mathrm{as}}_{p\bar l}\cdot A]=\Sym[\eps_{per}(A^{ra}\cP^{e,jl}-A^{rj}\cP^{e,la})]=0$, the two terms being exchanged by $a\leftrightarrow j$. Finally \eqref{c0:eq:decomp} gives $\rho^{(1)}=\Xi^{\mathsf T}+\overline{\Xi}+\beta^Q$, so $-\frac12E+\rho^{(1)}=\overline{\Xi}$ by \eqref{c0:eq:E}. It is also the total symmetrization of Lemma~\ref{c0:lem:MA}. 
\end{proof}

\begin{lemma}[Symmetric compatibility]\label{c0:lem:symcompat}
Let $A$ be a symmetric $3\times3$ matrix and $H^{ajl}$ totally symmetric with $\sum_b(AE_p)_{ab}H^{bjl}=\sum_b(AE_p)_{jb}H^{bal}$ for all $p,a,j,l$. If $\rank A=3$ then $H=0$; if $\rank A\in\{1,2\}$ then $\rank H\le2$ as a map $\Sym^2\C^3\to\C^3$.
\end{lemma}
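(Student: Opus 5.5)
The plan is to treat this as a pure linear-algebra statement, reduce $A$ to a normal form by a frame change, and then solve the resulting linear system for $H$ explicitly. First, rewrite the hypothesis as the vanishing of the skew part in $(a,j)$. Since $(AE_p)_{ab}=\sum_rA_{ar}\eps_{rpb}$, contracting with $\eps_{kaj}$ shows that the hypothesis is equivalent to
\[
 \Sigma_{kpl}:=\sum_{a,j,r,b}\eps_{kaj}A_{ar}\eps_{rpb}H^{bjl}=0\qquad(\text{all }k,p,l).
\]

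Next, check that this system is covariant under a complex change of frame $e'=eM$, with the weights of \eqref{eq:fund-frame}. The tensor $A$ transforms by congruence up to the scalar $\det M$. The epsilon symbols produce only determinant factors. $H$ transforms as an element of $\Sym^3$ tensored with a line. Hence the solution space of $\Sigma=0$ is carried to the solution space for the transformed $A$. The rank of $H$, viewed as a map $\Sym^2\C^3\to\C^3$, is unchanged under this change of frame. Since complex symmetric matrices are classified under congruence by their rank, we may therefore assume $A=\diag(1,1,1)$, $\diag(1,1,0)$ or $\diag(1,0,0)$.

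For $\rank A=3$, take $A=I$. Then $\eps_{kaj}\eps_{apb}=\delta_{jp}\delta_{kb}-\delta_{jb}\delta_{kp}$, and the system becomes
\[
 H^{kpl}=\delta_{kp}\sum_jH^{jjl}.
\]
Taking $k\ne p$ gives $H^{kpl}=0$ for every $l$. This kills all components with two distinct indices, in particular $H^{kkl}=H^{klk}=0$ for $k\ne l$. Now take $k=p\ne l$: the relation reads $H^{kkl}=\sum_jH^{jjl}=H^{lll}$, and the left side vanishes, so $H^{lll}=0$. Hence $H=0$.

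For $\rank A=2$ and $\rank A=1$, the same contraction holds with $A_{ar}$ restricted to the diagonal entries that equal $1$. I will write out the $27$ equations $\Sigma_{kpl}=0$ in the ten unknowns $H^{(ijk)}$. I expect that they force every component involving a fixed index pattern to vanish, so that the image of $H$ in $\C^3$ lies in a fixed proper subspace. The natural candidate is the plane orthogonal to the kernel direction $e_2$ in the rank-two case, or the plane spanned by $\ker A$ in the rank-one case. This would give $\rank H\le2$. The main obstacle is this case-by-case bookkeeping, in particular identifying exactly which output row is annihilated in each rank. I would first try the guess that $H^{2jl}=0$ when $A=\diag(1,1,0)$, and verify it against the equations with $p=2$. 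If that guess fails, I would instead exhibit a nonzero vector of $\Sym^2\C^3$ lying in the kernel of $H$, and compute the remaining $3\times3$ minors of $H$ directly.
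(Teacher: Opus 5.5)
Your normalization and the rank-three case are correct. The hypothesis says that the $(a,j)$-skew part of $\sum_{r,b}A^{ar}\eps_{rpb}H^{bjl}$ vanishes. This is a genuine tensor identity, equivariant under $GL_3(\C)$ up to a determinant factor, so reducing $A$ by complex congruence to $\diag(1,1,1)$, $\diag(1,1,0)$ or $\diag(1,0,0)$ is legitimate. The paper uses the unitary Takagi form $\diag(\sigma_0,\sigma_1,\sigma_2)$ instead, which serves equally well here. Your identity $H^{kpl}=\delta_{kp}\sum_jH^{jjl}$ does force $H=0$.

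The gap is that the cases $\rank A\in\{1,2\}$ are only guessed at, not proved. These are exactly the cases the paper needs: Theorem~\ref{c0:thm:rank} uses the lemma to exclude output rank three for $\cP$ wherever $A\neq0$. Moreover, the mechanism you propose fails in rank one. That mechanism is that $\im H$ lies in a proper subspace fixed by $A$, namely $\ker A$ in rank one.

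For $A=\diag(1,0,0)$ only the row $a=0$ of $AE_p$ is nonzero, so the hypothesis reduces to $\sum_b\eps_{0pb}H^{bjl}=0$ for $j\in\{1,2\}$. Taking $p=1,2$ gives $H^{2jl}=H^{1jl}=0$. Hence every component with two indices in $\{1,2\}$ vanishes, while $H^{000},H^{001},H^{002}$ are completely unconstrained. The columns of $H$ indexed by $(01)$ and $(02)$ are then $H^{001}e_0$ and $H^{002}e_0$, and the $(00)$ column is $(H^{000},H^{001},H^{002})$. So $e_0\in\im H$ whenever $H\neq0$, and $\im H$ is never contained in $\ker A=\Span(e_1,e_2)$. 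For example, $H^{001}=1$ gives image $\Span(e_0,e_1)$, while $H^{001}=H^{002}=1$ gives $\Span(e_0,e_1+e_2)$. No plane depending only on $A$ contains all images.

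What actually happens is that rows $a=1,2$ of $H$ are the functionals $Z\mapsto H^{100}Z_{00}$ and $Z\mapsto H^{200}Z_{00}$ on $\Sym^2\C^3$. They are therefore linearly dependent, which gives $\rank H\le2$. When $(H^{001},H^{002})\neq0$, the annihilating covector $H^{002}e^1-H^{001}e^2$ depends on $H$.

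Your fallback does not supply this. A nonzero element of $\Sym^2\C^3$ in $\ker H$ always exists, since $\dim\ker H\ge6-3=3$, so it says nothing about $\rank H$. What is needed is a nonzero covector on the target vanishing on $\im H$, equivalently a linear dependence among the three rows.

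In rank two your guess $H^{2jl}=0$ for $A=\diag(1,1,0)$ is correct, but it still has to be derived. Write $(AE_p)_{ab}=\sigma_a\eps_{apb}$. The choices $p=a\neq j$ give $\sigma_jH^{cal}=0$ with $c$ the third index, which kills $H^{02l}$ and $H^{12l}$. The choice $(p,a,j)=(0,1,2)$ gives $\sigma_1H^{22l}=\sigma_2\eps_{201}H^{11l}=0$. Together these give $\im H\subset\Span(e_0,e_1)$, which is the paper's argument.
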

\begin{proof}
The condition is covariant under unitary changes of frame, so by Takagi's theorem we may take $A=\operatorname{diag}(\sigma_0,\sigma_1,\sigma_2)$ with $\sigma_i\ge0$; then $(AE_p)_{ab}=\sigma_a\eps_{apb}$. For $p=a\ne j$ the condition reads $\sigma_jH^{cal}=0$ for all $l$, where $c$ is the third index; for $(p,a,j)=(2,0,1)$ it reads $\sigma_0H^{11l}+\sigma_1H^{00l}=0$, and similarly for the other permutations. If all $\sigma_i\ne0$, the first family kills every component with two distinct indices, and the second then kills $H^{000},H^{111},H^{222}$. If $\sigma_2=0\ne\sigma_0\sigma_1$, every component containing the index $2$ vanishes (use $(p,a,j)=(0,1,2)$ for $H^{222}$), so $\im H\subset\operatorname{span}(e_0,e_1)$. If $\sigma_1=\sigma_2=0\ne\sigma_0$, the conditions with $a=0\ne j$ kill every component with two indices in $\{1,2\}$; only $H^{000},H^{001},H^{002}$ survive, and the rows $a=1,2$ of $H$ are both supported on the column $(00)$, so $\rank H\le2$. 
\end{proof}

\section{The local zero-curvature calculation at output rank two}\label{c0:app:c0-ranktwo}
The sole conclusion of this appendix is the local assertion
$W=0$ on $\Omega_{\cP,2}$ in Proposition~\ref{c0:r2:localW}.
It is proved for smooth metrics from the uncontracted Bianchi and Ricci
identities. The compact conclusion is obtained separately in
Theorem~\ref{c0:r2:globalw} by the three integrations above.

\subsection{Local notation and the steps of the proof}\label{c0:sec:rank2}
Throughout this appendix $h$ is a smooth Hermitian metric with
$H_h\equiv0$ on a complex threefold, and
\[
 \Omega_{\cP,2}:=\{x:\rank\cP(x)=2\}.
\]
This set is open by Theorem~\ref{c0:thm:rank}. Every argument below is
local on this set. The notation is collected here and constructed in
the following subsections.
\begin{center}
\begin{tabular}{@{}p{.25\linewidth}p{.69\linewidth}@{}}
$E$, $F$ & $E=T^{1,0}X\otimes K_X$ and $F=\im\cP$, a rank-two subbundle.\\[3pt]
$\ell$, $\beta$ & A nonzero local generator of $\operatorname{Ann}F\subset E^*$, and $\beta_{\bar j}=\nabla_{\bar j}\ell$.\\[3pt]
$K$ & The symmetric coefficient of $W$ in the pure normal form \eqref{c0:eq:Kform}.\\[3pt]
$r$, $k$ & The contractions $r=A\ell$ and $k=K\ell$.\\[3pt]
$q$, $\kappa$ & The second contractions $q=A(\ell,\ell)$ and $\kappa=K(\ell,\ell)$.\\[3pt]
$\varkappa$ & The coefficient in $k=\varkappa r$ on the open case $r\ne0$, $k\ne0$. It is distinct from $\kappa$.\\[3pt]
$t$ in $K=tA$ & The final proportionality coefficient, introduced only after $r=k=0$ and on $A\ne0$.\\[3pt]
$c_{\alpha\beta\gamma}$, $m_{\alpha\beta}$ & The adapted-frame blocks $\cP^{\alpha,\beta\gamma}$ and $\cP^{\alpha,\beta2}$, with indices $\alpha,\beta,\gamma\in\{0,1\}$.\\[3pt]
$H_p$, $G_l$ & The derivative tensors $\nabla_pK$ and $\nabla_{\bar l}K$ used in the component calculation.\\[3pt]
$\vartheta$, $\vartheta'$, $\vartheta_p$ & Local coefficients of $\nabla''\ell$. The global curvature trace is always $\tau=\tr Q$.
\end{tabular}
\end{center}
The binary block $c_{\alpha\beta\gamma}$ is unrelated to the constant
HSC parameter $c$; its total symmetry is proved on the open cases where
it is used as a binary cubic. All derivatives retain the canonical-line
connections. In particular $\varkappa$ and the final coefficient $t$
are locally weighted sections, with their weights specified when they
are introduced.

The proof proceeds through the following conclusions.
\begin{enumerate}
\item The pure commutator gives the normal form for $W$;
pure prolongation and mixed symmetry give $q=\kappa=0$
(Theorem~\ref{c0:thm:qzero}).
\item A polynomial factorization and an invariant determinant derivative
exclude $r\wedge k\ne0$ (Theorem~\ref{c0:maink1}).
\item On the remaining open case $k=\varkappa r\ne0$, the explicit
component identity in Lemma~\ref{c0:r2:betaell} controls the transverse
barred derivative of the annihilator. It then follows that $k=0$
(Theorem~\ref{c0:r2:parallel}).
\item Further mixed identities give $r=0$ and force $K$ to be
proportional to $A$ wherever $A\ne0$
(Propositions~\ref{c0:r2:rvanishes} and~\ref{c0:r2:KA}).
\item The nonzero proportionality coefficient is impossible.
The normal form and continuity then give $K=W=0$
(Theorem~\ref{c0:r2:Wfinal}).
\end{enumerate}
A frame may be aligned at a point to evaluate a tensor identity.
An identity is differentiated only after it has been established on
an open neighborhood; the derivatives of the frame and of all line
factors remain part of the covariant derivative. This distinction is
used throughout, especially in the component identity of Step~3.

\subsubsection{The annihilator line and its gauge}\label{c0:sec:gauge}
Let $E:=T^{1,0}X\otimes K_X$, $F:=\im\cP\subset E$ (rank two on $\Omega_{\cP,2}$) and $\mathcal L_{\mathrm{ann}}:=\operatorname{Ann}F\subset E^*=\Omega^1_X\otimes K_X^{-1}$. Let $\ell$ be a smooth nowhere-zero local section of $\mathcal L_{\mathrm{ann}}$. Differentiating $\ell\cdot\cP=0$ with \eqref{c0:eq:Ptransport} gives $(\nabla'_p\ell+\ell M_p)\cP=0$, hence a unique $(1,0)$-form $\gamma$ with $\nabla'_p\ell+\ell M_p=\gamma_p\ell$, i.e.
\begin{equation}\label{c0:eq:elltr}
 \nabla_p\ell_a=(\gamma_p-\tfrac12\eta_p)\ell_a-\sum_b\eps_{pab}r^b,\qquad r^b:=\sum_a\ell_aA^{ab}\quad(r=A\ell).
\end{equation}
Under $\ell\mapsto g\ell$, $\gamma\mapsto\gamma+\partial\log g$. Indeed $\widetilde\nabla'=\nabla'-M$ preserves $F$ by transport, and its $(2,0)$ curvature $\Theta$ annihilates $F$ by pure Ricci. Since $\tr M=3\eta/2$, $\tr\Theta=-3\partial\eta/2$. The dual annihilator line has curvature $-\tr\Theta$, so $\partial\gamma=3\partial\eta/2$. The smooth $(1,0)$ form $\gamma-3\eta/2$ is $\partial$-closed and hence locally $\partial$-exact. Rescaling $\ell$ therefore gives $\gamma=3\eta/2$, and two such normalized generators differ by a nonzero $g$ with $\partial g=0$. No statement below depends on the gauge. We put
\[
 \beta_{\bar m}:=\nabla_{\bar m}\ell\qquad(\text{so }\nabla''\ell\wedge\ell=0\iff\beta\text{ is proportional to }\ell),
\]
and the derivative of $\ell\cdot\cP=0$ in antiholomorphic directions is the \emph{rank tangency}
\begin{equation}\label{c0:eq:TG}
 \ell\cdot Y_{m}=-\beta_{\bar m}\cdot\cP\qquad(Y_m=\nabla_{\bar m}\cP).
\end{equation}

\subsubsection{The pure normal form}
The pure commutator of \eqref{c0:eq:Ptransport}, with
$\nu=2\sum_aW_a^{a\bullet}$ from Lemma~\ref{c0:lem:J1}, says
\[
 \bigl(W_qE_p-W_pE_q+\tfrac12\eps_{qpr}\nu^r I\bigr)\cP=0.
\]
The quadratic $A$ terms have canceled as in Proposition~\ref{c0:prop:rank3}.
In a frame $\ell=e^2$, the first two columns of each matrix on the left
are zero. Comparing them, using $W_p^{ab}=W_p^{ba}$, gives
\[
 W_0=-\tfrac13(e_0k^{\mathsf T}+ke_0^{\mathsf T}),\quad
 W_1=-\tfrac13(e_1k^{\mathsf T}+ke_1^{\mathsf T}),\quad
 W_2=K-\tfrac13(e_2k^{\mathsf T}+ke_2^{\mathsf T}),
 \qquad \nu=-\tfrac23 k,
\]
where $K$ is recovered without division by a variable coefficient by
\[
 K^{ab}=W_2^{ab}\ (a,b<2),\qquad
 K^{a2}=\tfrac32 W_2^{a2}\ (a<2),\qquad K^{22}=3W_2^{22},
 \qquad k=Ke_2.
\]
Conversely these formulas satisfy every one of the column equations,
so they give the whole kernel, including every degenerate $K$.
Returning to an arbitrary frame, at every point of $\Omega_{\cP,2}$
\begin{equation}\label{c0:eq:Kform}
 W_p^{aj}=\ell_pK^{aj}-\tfrac13\bigl(\delta_p^ak^j+\delta_p^jk^a\bigr),\qquad K=K^{\mathsf T},\quad k:=K\ell ,
\end{equation}
with $K$ uniquely determined (hence smooth); with Lemma~\ref{c0:lem:J1}, $\nu=2\tr W=-\frac23k$, i.e.\ $(\partial\eta)_{qp}=-\frac23\sum_r\eps_{qpr}k^r$. Thus $\partial\eta=0$ at a point of $\Omega_{\cP,2}$ iff $k=0$ there. Put
\[
 q:=A(\ell,\ell)=\ell\cdot r,\qquad \kappa:=K(\ell,\ell)=\ell\cdot k .
\]
Differentiating, $\nabla_pq=(\nabla_pA)(\ell,\ell)+2A(\nabla_p\ell,\ell)$; the $\eps rr$ term vanishes and $W_p(\ell,\ell)=\frac13\ell_p\kappa$, so
\begin{equation}\label{c0:eq:qrec}
 \nabla_pq=(2\gamma_p-\tfrac12\eta_p)\,q+\tfrac13\ell_p\,\kappa .
\end{equation}

\subsubsection{Pure prolongation}
The Ricci identity \eqref{c0:eq:ricci} for $A$ gives, with $W_p=\nabla_pA-\frac12\eta_pA$ and \eqref{c0:eq:a},
\begin{equation}\label{c0:eq:I1}
 \nabla_qW_p-\nabla_pW_q=-T^r_{qp}W_r-\tfrac12(\partial\eta)_{qp}A+\tfrac12(\eta_qW_p-\eta_pW_q)
\end{equation}
(the terms $T^r_{qp}\eta_rA$ from $\nabla_rA$ and from $a_{qp}$ cancel). Insert \eqref{c0:eq:Kform} and \eqref{c0:eq:elltr}; the unknowns are $H_q:=\nabla_qK$.

\begin{lemma}[Pure prolongation]\label{c0:lem:pureprol}
On $\Omega_{\cP,2}$:
\textup{(a)} $q\kappa=0$;
\textup{(b)} if $\kappa\equiv0$ on an open set $V$, then $q\,k=0$ on $V$;
\textup{(c)} if $k\equiv0$ on an open set $V$, then $q\,K=0$ on $V$.
\end{lemma}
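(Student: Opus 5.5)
The plan is to substitute the normal form \eqref{c0:eq:Kform} and the annihilator transport \eqref{c0:eq:elltr} into the pure prolongation identity \eqref{c0:eq:I1}. This gives a linear system whose only unknowns are the derivatives $H_q=\nabla_qK$. The two sides of \eqref{c0:eq:I1} are alternating in $(q,p)$ and symmetric in the two output slots. The left side is linear in $H$ and in $\nabla\ell$, while the right side is algebraic in $A,K,k,\ell,\eta$. The $\eta$ terms should cancel, or be absorbed by the gauge $\gamma=3\eta/2$, exactly as they did in Proposition~\ref{c0:prop:rank3}. What survives is an algebraic identity involving $\ell\wedge H$-type terms and products such as $\eps_{pab}r^b K$ and $T^\circ\cdot W$. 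I will write the torsion through $C=A-\tfrac12[\eta]_\times$, using $(\partial\eta)_{qp}=-\tfrac23\eps_{qpr}k^r$.

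Next I eliminate the unknowns $H$ by contraction. The $H$ terms enter only through $\ell_pH_q-\ell_qH_p$, together with $\tfrac13(\delta\otimes\nabla k)$ terms, where $\nabla k=H\ell+K\nabla\ell$. Contracting the output slots with $\ell\otimes\ell$, and the alternating pair $(q,p)$ against a bivector with suitable $\ell$-components, should kill every $H$-dependence. It should also leave scalar identities in $q,\kappa,r,k$. I will work in a pointwise frame with $\ell=e^2$, where $r=A e^2$. This is legitimate because the identity is already tensorial on the open set, so choosing a frame at a point introduces no error. For \textup{(a)} I expect the pair $(q,p)=(0,1)$, paired with outputs $(2,2)$, to give a multiple of $q\kappa$ once the $\eps r$ terms from $\nabla\ell$ contract with $W(\ell,\ell)=\tfrac13\ell_p\kappa$. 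Equation \eqref{c0:eq:qrec} serves as a consistency check on which combination carries $q\kappa$.

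For \textup{(b)}, $\kappa\equiv0$ on an open set, so I may differentiate it. Using the transports of $\ell$ and $K$, the derivative $\nabla_p\kappa=0$ becomes an equation in $H(\ell,\ell)$ and $\eps_{pab}r^bk^a$. Feeding this into the non-contracted components of the prolongation (outputs $(\alpha,2)$ with $\alpha<2$) should leave $q\,k^\alpha=0$. The remaining component $q\,k^2=q\kappa$ vanishes already. For \textup{(c)}, $k\equiv0$ on an open set, so $\nabla k=0$, which gives $H_p\ell=-K\nabla_p\ell=\eps_{pab}\ldots$. Then $\partial\eta=0$ and $W_p=\ell_pK$, and the prolongation reduces to $\ell\wedge H$ terms against $q$-weighted $K$ terms. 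Contracting one output slot with $\ell$ and using $H\ell$ known should give $qK^{ab}=0$ component by component.

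The main obstacle is the bookkeeping in \textup{(a)}: choosing a contraction that genuinely eliminates every component of $H$ while leaving a nonzero multiple of $q\kappa$, rather than something that vanishes trivially. If the $\ell\otimes\ell$ contraction loses the term, I will instead take the trace over $(q,p)$ against $\eps_{qpr}\ell^r$-type vectors, which are not available directly. In that case the plan is to use the dual vector of $\ell$ from a chosen complement, or else to solve for $H$ on the free components explicitly, which is feasible because the system is linear.
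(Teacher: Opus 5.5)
Your plan is correct and is essentially the paper's proof: substitute the normal form and the annihilator transport into the pure prolongation identity at a point where $\ell=e^2$, then read off individual components, with (a) coming from the $(2,2)$-entry at $(q,p)=(0,1)$ exactly as in the paper. For (b) and (c) the paper uses different components, namely the $(2,2)$-entries at $(0,2),(1,2)$ and the $(\alpha,\beta)$-entries at $(0,1)$; your choices also close, so the fallback in your last paragraph is not needed:
\begin{itemize}
\item For (b), take the $(\alpha,2)$-entries at the pair $(0,1)$, where no free $H_q^{\alpha2}$ enters. They give $-\tfrac73qk^\alpha=-\tfrac23qk^\alpha$.
\item For (c), take the $(a,2)$-entries at $(0,2)$ and $(1,2)$ and use $H_q\ell=-K\nabla_q\ell$. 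They give $qK^{a1}=qK^{a0}=0$.
\end{itemize}
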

\begin{proof}
Work at a point in a unitary frame with $\ell=e^2$ there (rescale $\ell$; the statements are homogeneous). Then $\ell_0=\ell_1=0$, $r^2=q$, $k^2=\kappa$, and from \eqref{c0:eq:elltr}: $\nabla_0\ell_1=-q$, $\nabla_1\ell_0=q$, $\nabla_0\ell_2=\gamma_0-\frac12\eta_0+r^1$, $\nabla_2\ell_0=-r^1$. We use $\nabla_qW_p=(\nabla_q\ell_p)K+\ell_pH_q-\frac13(e_p(\nabla_qk)^{\mathsf T}+(\nabla_qk)e_p^{\mathsf T})$ with $\nabla_qk=H_q\ell+K\nabla_q\ell$, $T^2_{01}=C^{22}=q$, $(\partial\eta)_{01}=-\frac23\kappa$, $(\partial\eta)_{02}=\frac23k^1$, and $W_r^{22}=\ell_r\kappa-\frac23\delta_{r2}\kappa$.

(a) The $(2,2)$-entry of \eqref{c0:eq:I1} for $(q,p)=(0,1)$: the left side is $(\nabla_0\ell_1-\nabla_1\ell_0)\kappa=-2q\kappa$; the right side is $-T^2_{01}W_2^{22}-\frac12(\partial\eta)_{01}q=-\frac13q\kappa+\frac13q\kappa=0$.

(b) Now $\kappa=0$ and $\nabla\kappa=0$. Since $\nabla_p\kappa=H_p^{22}+2(\gamma_p-\frac12\eta_p)\kappa-2\sum\eps_{pac}k^ar^c$, we get $H_p^{22}=2(k\times r)_p$ with $(k\times r)_p=\sum_{a,c}\eps_{pac}k^ar^c$; in particular $(k\times r)_0=k^1q$. Also $(\nabla_0k)^2=H_0^{22}-(k\times r)_0=k^1q$. The $(2,2)$-entry of \eqref{c0:eq:I1} for $(q,p)=(0,2)$ has left side $H_0^{22}-\frac23(\nabla_0k)^2=\frac43k^1q$ and right side $-\frac12(\partial\eta)_{02}q=-\frac13k^1q$ (all $W_r^{22}$ vanish). Hence $k^1q=0$; the pair $(1,2)$ gives $k^0q=0$, and $k^2=\kappa=0$.

(c) Now $k=0$, $\nabla k=0$ and $\partial\eta=0$, so $W_0=W_1=0$, $W_2=K$ at the point. For $(q,p)=(0,1)$ and $\alpha,\beta\in\{0,1\}$ the $(\alpha,\beta)$-entry of \eqref{c0:eq:I1} reads $(\nabla_0\ell_1-\nabla_1\ell_0)K^{\alpha\beta}=-T^2_{01}K^{\alpha\beta}$, i.e.\ $-2qK^{\alpha\beta}=-qK^{\alpha\beta}$. The remaining entries of $K$ are $K^{a2}=k^a=0$.
\end{proof}
\begin{theorem}[Vanishing of the two scalar contractions]\label{c0:thm:qzero}
On $\Omega_{\cP,2}$, $A(\ell,\ell)\equiv0$ and $K(\ell,\ell)\equiv0$.
\end{theorem}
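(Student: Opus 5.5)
The proof combines Lemma~\ref{c0:lem:pureprol} with the recursion \eqref{c0:eq:qrec} and a mixed-symmetry input, splitting $\Omega_{\cP,2}$ according to the vanishing of $q=A(\ell,\ell)$ and $\kappa=K(\ell,\ell)$. Both contractions are smooth local sections (weighted by $\ell$), and the conclusion is gauge independent. Part (a) of the lemma gives $q\kappa=0$ pointwise. So the open set $\{q\ne0\}\cup\{\kappa\ne0\}\cup\operatorname{int}\{q=\kappa=0\}$ is dense, and it suffices to exclude $\{q\ne0\}$ and $\{\kappa\ne0\}$ as nonempty open sets; continuity then finishes.

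\emph{The case $\kappa\ne0$ on an open set $V$.} Part (a) gives $q\equiv0$ on $V$. Then $\nabla q=0$, and \eqref{c0:eq:qrec} reads $0=\tfrac13\ell_p\kappa$. Since $\ell$ is nowhere zero, this forces $\kappa=0$, a contradiction.

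\emph{The case $q\ne0$ on an open set $V$.} Part (a) gives $\kappa\equiv0$ on $V$, and part (b) then gives $k\equiv0$. Part (c) gives $qK=0$, so $K\equiv0$ and therefore $W\equiv0$ on $V$ by \eqref{c0:eq:Kform}. Now use the mixed identity Lemma~\ref{c0:lem:mixed}: with $W=0$, it says $\Sym_{(a,j,l)}[\overline{\Xi_{pl}}A^{aj}]=0$. Contracting with $z_az_jz_l$ gives the product of a linear form and the quadratic $A(z,z)$. The quadratic is nonzero because $q=A(\ell,\ell)\ne0$, so $\Xi=0$ and $\cP$ is totally symmetric on $V$. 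Then $\nabla'\cP=M\cP$ must be totally symmetric, so $AE_p\cP$ is totally symmetric for each $p$. Lemma~\ref{c0:lem:symcompat} applies with $H=\cP$: if $\rank A=3$ it gives $\cP=0$, contradicting rank two. If $\rank A\le2$, I would instead use the structure of the proof of that lemma: in a Takagi frame with some $\sigma_i=0$, the image of $\cP$ lies in the span of the basis vectors with $\sigma\ne0$ (or is supported as described in the rank-one case). The annihilator $\ell$ of this rank-two image is then proportional to the dual of the null direction of $A$, which forces $q=A(\ell,\ell)=0$, a contradiction.

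The main obstacle is the degenerate-rank subcase of Lemma~\ref{c0:lem:symcompat}, specifically when $\rank A=1$. There the image is not obviously the span of nonzero-$\sigma$ directions, and I would need to recheck which rows survive and verify that the annihilator of the image is isotropic for $A$. If this fails, my fallback is the pure commutator applied to $\ell$ itself. With $W=0$ and $\partial\eta=0$ (since $k=0$), that commutator gives an equation of the form $\ell(AE_qAE_p-\dots)=0$, obtained from \eqref{c0:eq:elltr}, whose $(0,1)$ component should again yield $q^2=0$.
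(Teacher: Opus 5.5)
Your argument is sound and matches the paper's up to the point where, on the open set $\{q\ne0\}$, you have $W=0$ and then that $\cP$ and every $AE_p\cP$ are totally symmetric. Only the final contradiction is obtained differently.

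The rank-one subcase you flagged does close, though not in the way you describe it. When $\sigma_1=\sigma_2=0\ne\sigma_0$, the proof of Lemma~\ref{c0:lem:symcompat} leaves only $\cP^{0,00}$, $\cP^{0,01}=\cP^{1,00}$ and $\cP^{0,02}=\cP^{2,00}$. The columns of $\cP:\Sym^2\to\C^3$ are therefore:
\begin{itemize}
\item the $(00)$ column $\cP^{0,00}e_0+\cP^{0,01}e_1+\cP^{0,02}e_2$;
\item the $(01)$ column $\cP^{0,01}e_0$;
\item the $(02)$ column $\cP^{0,02}e_0$;
\item all remaining columns are zero.
\end{itemize}
Output rank two forces $(\cP^{0,01},\cP^{0,02})\ne0$, so $e_0\in\im\cP$. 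The image is not contained in $\C e_0$, and $\ell$ is not ``the dual of the null direction,'' since the null space of $A$ is two-dimensional here. Instead $\ell_0=0$, so $q=\sigma_0\ell_0^2=0$. In both degenerate cases the uniform statement is $\im A\subset\im\cP$, which even gives $r=A\ell=0$. Your speculative fallback via the pure commutator on $\ell$ is not needed.

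The paper avoids the Takagi case split altogether. In a frame with $\ell=e^2$, the identity $\ell\cdot\cP=0$ together with total symmetry kills every component of $\cP$ containing the index $2$. Total symmetry of $AE_p\cP$ then gives, for $j,l\in\{0,1\}$,
\[
\textstyle\sum_b(AE_p)_{2b}\cP^{b,jl}=(AE_p\cP)^{j,2l}=0 .
\]
For $p=0$ this reads $q\,\cP^{1,jl}=0$, and for $p=1$ it reads $-q\,\cP^{0,jl}=0$. Hence $q\ne0$ forces $\cP=0$ outright, independently of the rank of $A$. It also does not require looking inside the proof of Lemma~\ref{c0:lem:symcompat}, whose statement alone gives only $\rank\cP\le2$ when $\rank A\le2$, which is no contradiction.

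Your route extracts the slightly stronger $A\ell=0$ from the lemma's case structure. The paper's route is shorter and uses $q$ exactly where it is nonzero. Your separate treatment of $\{\kappa\ne0\}$ is correct but redundant: once $\{q\ne0\}$ is empty, \eqref{c0:eq:qrec} gives $\kappa\equiv0$ immediately.
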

\begin{proof}
The open set $U_0:=\{q\ne0\}\cap\Omega_{\cP,2}$ is independent of the choice of $\ell$. Suppose $U_0\ne\emptyset$. On $U_0$, Lemma~\ref{c0:lem:pureprol}(a) gives $\kappa\equiv0$, then (b) gives $k\equiv0$, then (c) gives $K\equiv0$; hence $W\equiv0$ and $\partial\eta=0$ on $U_0$. Lemma~\ref{c0:lem:mixed} then gives $\Sym[\overline{\Xi_{pl}}A^{aj}]=0$ on $U_0$, and since $A\ne0$ there, $\Xi=0$ as in the proof of Theorem~\ref{c0:thm:rank}: $\cP$ is totally symmetric on $U_0$. At a point with $\ell=e^2$, $\cP^{2,jl}=0$, so by total symmetry every component of $\cP$ with an index $2$ vanishes. Total symmetry is preserved by $\nabla'$, so $\nabla_p\cP=M_p\cP$, and hence $AE_p\cP$, is totally symmetric. For $j,l\in\{0,1\}$ this gives $\sum_b(AE_p)_{2b}\cP^{b,jl}=(AE_p\cP)^{j,2l}=0$. For $p=0$ the left side is $A^{22}\eps_{201}\cP^{1,jl}=q\cP^{1,jl}$, for $p=1$ it is $A^{22}\eps_{210}\cP^{0,jl}=-q\cP^{0,jl}$. Thus $\cP=0$ at the point, contradicting $\rank\cP=2$. Hence $q\equiv0$ on $\Omega_{\cP,2}$, and \eqref{c0:eq:qrec} gives $\ell_p\kappa\equiv0$, i.e.\ $\kappa\equiv0$.
\end{proof}

\begin{proposition}[A vanishing torsion contraction forces a vanishing defect contraction]\label{c0:prop:rzero}
If $r=A\ell\equiv0$ on an open set $V\subset\Omega_{\cP,2}$, then $k=K\ell\equiv0$, i.e.\ $\partial\eta\equiv0$, on $V$.
\end{proposition}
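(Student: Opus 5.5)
We would prove Proposition~\ref{c0:prop:rzero} by imitating the pure-prolongation computation of Lemma~\ref{c0:lem:pureprol}, now using the hypothesis $r\equiv0$ on the whole open set $V$ so that it may be differentiated. The starting inputs are the annihilator transport \eqref{c0:eq:elltr}, which for $r\equiv0$ collapses to
\[
 \nabla_p\ell=(\gamma_p-\tfrac12\eta_p)\ell,
\]
the normal form \eqref{c0:eq:Kform}, and the identities $q=\kappa=0$ from Theorem~\ref{c0:thm:qzero}. In particular $\ell$ spans a $\nabla'$-recurrent line, and $A\ell=0$ holds identically.

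The first step differentiates $A\ell=0$ in a holomorphic direction. Writing $\nabla_pA=\tfrac12\eta_pA+W_p$ and substituting the recurrence of $\ell$, we obtain $W_p\ell=0$ for every $p$. We then insert the normal form:
\[
 W_p\ell=\ell_pk-\tfrac13\bigl(e_p\,\ell(k)+k\,\ell_p\bigr).
\]
Since $\ell(k)=\kappa=0$, this reduces to $\tfrac23\ell_pk=0$. Because $\ell$ is nowhere zero, we choose $p$ with $\ell_p\ne0$ and conclude $k=0$ on $V$. The relation $(\partial\eta)_{qp}=-\tfrac23\sum_r\eps_{qpr}k^r$ recorded after \eqref{c0:eq:Kform} then gives $\partial\eta\equiv0$.

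The one point that needs care is the gauge and canonical-weight bookkeeping when differentiating $A\ell$. Here $\ell$ is a section of $E^*$, $A$ carries $K_X$, and the product $A\ell$ is a plain vector, so every connection term has to be tracked. We would check that the $\eta$ and $\gamma$ coefficients combine into a multiple of $A\ell$, which vanishes, so that only $W_p\ell$ survives. The rest of the argument is linear algebra.

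If that collapse fails because of a weight mismatch, the fallback is to work at a point in a unitary frame with $\ell=e^2$, as in the proof of Lemma~\ref{c0:lem:pureprol}. There the relevant entries are $W_p^{a2}$ for $p\in\{0,1\}$, and these give $k^0=k^1=0$ directly.
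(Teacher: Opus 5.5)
Your proposal is correct and follows the paper's proof: the paper also differentiates $r=A\ell$ using \eqref{c0:eq:elltr} and the normal form \eqref{c0:eq:Kform}, gets $\nabla_pr=\gamma_pr+W_p\ell+AE_pr$ with $W_p\ell=\tfrac23\ell_pk$ (using $\kappa=0$), and concludes from $r\equiv0$ that $\ell_pk=0$ for all $p$, hence $k=0$. The weight bookkeeping works out as you expected, with the $\eta$ and $\gamma$ terms collapsing to a multiple of $A\ell$, so the fallback is not needed.
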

\begin{proof}
By \eqref{c0:eq:elltr} and \eqref{c0:eq:Kform}, $\nabla_pr=(\nabla_pA)\ell+A\nabla_p\ell=\gamma_pr+W_p\ell+AE_pr$, and $W_p\ell=\ell_pk-\frac13(e_p\kappa+\ell_pk)=\frac23\ell_pk$ since $\kappa=0$ (Theorem~\ref{c0:thm:qzero}). If $r\equiv0$ on $V$ this gives $\ell_pk=0$ for all $p$, so $k=0$.
\end{proof}

\subsubsection{\texorpdfstring{The antiholomorphic direction dual to $\ell$}{The antiholomorphic direction dual to ell}}\label{c0:sec:ellbar}
For $\zeta\in E^*=(T^{1,0}X)^*\otimes K_X^{-1}$ and a tensor
$U$ with values in a bundle $E_U$, define the contraction
\[
 \nabla_{\bar\zeta}U:=\operatorname{contr}_h(\zeta\otimes\nabla''U)
       \in K_X^{-1}\otimes E_U.
\]
In unitary frames this is $\sum_l\zeta_l\nabla_{\bar l}U$.
It is a $K_X^{-1}$-valued differential operator; all subsequent covariant
derivatives retain the induced Chern connection on this factor. Put
\[
 \beta_\ell:=\nabla_{\bar\ell}\,\ell=\sum_l\ell_l\beta_{\bar l},\qquad
 a_\ell:=\sum_l\ell_l\,\overline{\Xi_{\cdot\,l}}\quad(\text{a }(1,0)\text{-form}),\qquad \psi:=k\cdot\beta_\ell=\sum_a k^a(\beta_\ell)_a .
\]
\begin{proposition}[Mixed identities in the annihilator direction]\label{c0:prop:ellbar}
On $\Omega_{\cP,2}$:
\textup{(a)} $\psi=0$;
\textup{(b)} for every $\zeta\in E^*$, $\ (k\cdot\zeta)\,\beta_\ell-(r\cdot\zeta)\,a_\ell\ \in\ \mathcal L_{\mathrm{ann}}$ (i.e.\ it is a multiple of $\ell$).
Consequently, on the open set $U_\times:=\{x\in\Omega_{\cP,2}:\ A\ell\wedge K\ell\ne0\}$ we have $\beta_\ell\wedge\ell=0$ and $a_\ell\wedge\ell=0$; in an adapted frame ($\ell=e_2$) the latter says that the block $(\cP^{\alpha,\beta\gamma})_{\alpha,\beta,\gamma\in\{0,1\}}$ is totally symmetric. Where $K\ell=\varkappa\,A\ell\ne0$, $\varkappa\,\beta_\ell-a_\ell\in\mathcal L_{\mathrm{ann}}$.
\end{proposition}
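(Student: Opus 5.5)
The plan is to obtain both identities by contracting the mixed-derivative identity of Lemma~\ref{c0:lem:mixed} with the annihilator $\ell$, using the normal form \eqref{c0:eq:Kform}, the vanishing $q=\kappa=0$ of Theorem~\ref{c0:thm:qzero}, and the rank tangency \eqref{c0:eq:TG}. Lemma~\ref{c0:lem:mixed} gives
\[
\Sym_{(a,j,l)}\nabla_{\bar l}W_p^{aj}=\Sym_{(a,j,l)}\bigl[\overline{\Xi_{pl}}A^{aj}\bigr].
\]
Substituting \eqref{c0:eq:Kform}, the left side becomes $\Sym[\beta_{\bar l,p}K^{aj}+\ell_p\nabla_{\bar l}K^{aj}-\tfrac23\delta_p^a\nabla_{\bar l}k^j]$, with the symmetrization over $(a,j,l)$.

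\textbf{Step 1: contract with $\ell$ in the three symmetrized slots.} Contract $a,j,l$ against $\ell\otimes\ell\otimes\ell$, and also against $\ell\otimes\ell\otimes\zeta$ for a general $\zeta$. The terms $\ell_a\ell_jA^{aj}=q$ and $\ell_a\ell_jK^{aj}=\kappa$ vanish, and so do their derivatives. This leaves only terms in $\beta_\ell$, $r$, $k$ and $a_\ell$.

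\textbf{Step 2: remove the derivative of $K$.} The term $\nabla_{\bar l}K$ is not controlled directly. I will eliminate it by conjugate-differentiating $\kappa=K(\ell,\ell)=0$, which gives
\[
(\nabla_{\bar\zeta}K)(\ell,\ell)=-2K(\nabla_{\bar\zeta}\ell,\ell)=-2\,k\cdot\beta_\zeta .
\]
Similarly, conjugate-differentiating $q=0$ gives
\[
(\nabla_{\bar\zeta}A)(\ell,\ell)=-2\,r\cdot\beta_\zeta .
\]
The quantity $(\nabla_{\bar\zeta}A)(\ell,\ell)$ is also given by \eqref{c0:eq:barA}. Its $\cP$-terms contracted with $\ell$ vanish because $\ell\cdot\cP=0$, and the $Q$-terms cancel under the $\ell,\ell$ contraction by the antisymmetry of $\eps$. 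This yields $r\cdot\beta_\zeta=0$ for every $\zeta$.

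\textbf{Step 3: extract (a) and (b).} After these substitutions, the fully $\ell$-contracted identity should reduce to a scalar multiple of $k\cdot\beta_\ell=\psi$, which proves (a). The $\ell\otimes\ell\otimes\zeta$ contraction should yield a $(1,0)$-form equal to $(k\cdot\zeta)\beta_\ell-(r\cdot\zeta)a_\ell$ modulo terms proportional to $\ell_p$. The $\ell_p$ terms come from $\ell_p\nabla_{\bar l}K$ and can be absorbed into the coefficient of $\ell$; this proves (b).

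\textbf{Consequences.} These follow by choosing $\zeta$. Where $r\wedge k\ne0$, pick $\zeta$ with $k\cdot\zeta=1$ and $r\cdot\zeta=0$; then (b) gives $\beta_\ell\wedge\ell=0$. Swapping the roles of $r$ and $k$ gives $a_\ell\wedge\ell=0$. In the frame $\ell=e_2$, the statement $a_\ell\propto e^2$ says that $\sum_{k,b}\eps_{jkb}\cP^{b,a2}$-type contractions vanish for $j\in\{0,1\}$, which is exactly the total symmetry of the binary block. Where $k=\varkappa r$, the form in (b) becomes $(r\cdot\zeta)(\varkappa\beta_\ell-a_\ell)$, and choosing $r\cdot\zeta\ne0$ gives the last claim.

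The main obstacle is the bookkeeping in Step 3. The $\tfrac23\delta_p^a\nabla_{\bar l}k^j$ term produces a further $\nabla''K$ contraction, $(\nabla_{\bar\zeta}K)\ell$, that is not obviously reducible. I would first try to show it always appears paired with $\ell_p$ or with $\ell\cdot(\cdot)$, so that the identity $\kappa=0$ differentiated as in Step 2 disposes of it. I would verify the final coefficients in the adapted frame $\ell=e^2$ at a single point, after all differentiations have been performed.
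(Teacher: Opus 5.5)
Your plan is the paper's proof. You contract the symmetrized identity of Lemma~\ref{c0:lem:mixed} with $\zeta\otimes\ell\otimes\ell$, substitute \eqref{c0:eq:Kform} with $q=\kappa=0$, and remove $G_l=\nabla_{\bar l}K$ through $G_l(\ell,\ell)=-2\,k\cdot\beta_{\bar l}$. Your guess about $G$ is right: apart from these $(\ell,\ell)$ contractions it appears only multiplied by $\ell_p$. The $\zeta=\ell$ contraction then gives $-4\ell_p\psi=0$.

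One bookkeeping point in Step~3. The $\delta$-part of \eqref{c0:eq:Kform} also contributes $\tfrac23\zeta_p\psi$, which is not a multiple of $\ell_p$. So (b) can be read off modulo $\ell$ only after (a) is known.

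Two statements in your write-up are wrong and should be corrected or dropped.

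First, in Step~2 the $Q$-terms of \eqref{c0:eq:barA} do not cancel under the $(\ell,\ell)$ contraction. The two terms coincide, so
\[
 (\nabla_{\bar j}A)(\ell,\ell)=-2\sum_{l,b}\eps_{jlb}\ell_l(Q^{\mathsf T}\ell)_b,
 \qquad
 r\cdot\beta_{\bar j}=\sum_{l,b}\eps_{jlb}\ell_l(Q^{\mathsf T}\ell)_b .
\]
Thus $\nabla''q=0$ gives $r\cdot\beta_\ell=0$, but not $r\cdot\beta_\zeta=0$ for every $\zeta$. In the frame $\ell=e^2$ it reads $r\cdot\beta_{\bar0}=-Q_{21}$ and $r\cdot\beta_{\bar1}=Q_{20}$. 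This is exactly how the paper later obtains $Q_{20}=Q_{21}=0$ once $r=0$ (proof of Proposition~\ref{c0:r2:KA}). Your false claim does no damage here only because no $\nabla''A$ survives the substitution of \eqref{c0:eq:Kform}.

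Second, your frame reading of $a_\ell\wedge\ell=0$ has the wrong block. In the adapted frame $(a_\ell)_p=\overline{\Xi_{p2}}=\overline{\cP^{1,p0}-\cP^{0,p1}}$, so $a_\ell\wedge\ell=0$ is the vanishing of these for $p=0,1$. That is a condition on the binary block $\cP^{\alpha,\beta\gamma}$, not on the mixed block $\cP^{b,a2}$ that your index expression involves.
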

\begin{proof}
Contract the identity of Lemma~\ref{c0:lem:mixed}, $\Sym_{(l,a,j)}\bigl[\nabla_{\bar l}W_p^{aj}-\overline{\Xi_{pl}}A^{aj}\bigr]=0$, with $\zeta_l\ell_a\ell_j$; with $D_\xi W_p(y,z):=\sum_l\xi_l(\nabla_{\bar l}W_p)(y,z)$ and $q=0$ this gives
\begin{equation}\label{c0:eq:mixedcontr}
 D_\zeta W_p(\ell,\ell)+2D_\ell W_p(\ell,\zeta)=2\,(a_\ell)_p\,(r\cdot\zeta).
\end{equation}
We evaluate the left side with \eqref{c0:eq:Kform}, $\kappa\equiv0$, $\nabla''\kappa=0$ (i.e.\ $G_l(\ell,\ell)=-2k\cdot\beta_{\bar l}$, $G_l:=\nabla_{\bar l}K$) and a local extension of $\zeta$ with $\nabla\zeta=0$ at the point. From $W_p(\ell,\ell)=\frac13\ell_p\kappa\equiv0$ and $W_p(v,\ell)=\ell_pK(v,\ell)-\frac13(v_p\kappa+(k\cdot v)\ell_p)=\frac23\ell_p(k\cdot v)$ we get $(\nabla_{\bar l}W_p)(\ell,\ell)=-2W_p(\beta_{\bar l},\ell)=-\frac43\ell_p\,k\cdot\beta_{\bar l}$, and
\[
 (\nabla_{\bar l}W_p)(\zeta,\ell)=\nabla_{\bar l}\bigl(W_p(\zeta,\ell)\bigr)-W_p(\zeta,\beta_{\bar l})
 =\beta_{\bar l,p}\,(k\cdot\zeta)+\ell_p\bigl(\tfrac23G_l(\ell,\zeta)-\tfrac13K(\zeta,\beta_{\bar l})\bigr)+\tfrac13\zeta_p\,k\cdot\beta_{\bar l}.
\]
Hence \eqref{c0:eq:mixedcontr} reads
\[
 -\tfrac43\ell_p\,k\cdot\beta_{\bar\zeta}+2(\beta_\ell)_p(k\cdot\zeta)+\tfrac23\zeta_p\psi+2\ell_p\bigl(\tfrac23G_\ell(\ell,\zeta)-\tfrac13K(\zeta,\beta_\ell)\bigr)=2(a_\ell)_p(r\cdot\zeta),
\]
with $\beta_{\bar\zeta}:=\sum\zeta_l\beta_{\bar l}$, $G_\ell:=\sum\ell_lG_l$. For $\zeta=\ell$ (using $k\cdot\ell=\kappa=0$, $r\cdot\ell=q=0$, $K(\ell,\beta_\ell)=\psi$ and $G_\ell(\ell,\ell)=-2\psi$) the left side is $\ell_p(-\frac43\psi+\frac23\psi-\frac83\psi-\frac23\psi)=-4\ell_p\psi$, and the right side is $0$; this is (a). Modulo $\ell$ and with $\psi=0$ the displayed identity is $2(k\cdot\zeta)\beta_\ell\equiv2(r\cdot\zeta)a_\ell$, which is (b). If $r\wedge k\ne0$ choose $\zeta$ with $r\cdot\zeta=0\ne k\cdot\zeta$ and then $\zeta$ with $k\cdot\zeta=0\ne r\cdot\zeta$. In an adapted frame $(a_\ell)_p=\overline{\Xi_{p2}}$ and $\Xi_{p2}=\cP^{1,p0}-\cP^{0,p1}$.
\end{proof}
Since the two identities of Proposition~\ref{c0:prop:ellbar} hold on the open set $U_\times$, their derivatives vanish there. In an adapted frame at a point of $U_\times$ write $c_{\alpha\beta\gamma}:=\cP^{\alpha,\beta\gamma}$ ($\alpha,\beta,\gamma\in\{0,1\}$; totally symmetric), $m_{\alpha\beta}:=\cP^{\alpha,\beta2}$, $r=(r^0,r^1,0)$, $\beta^\perp_{\bar y}:=(\beta_{\bar y,0},\beta_{\bar y,1})$, and
\[
 \mathsf M:=\begin{pmatrix}-(2\bar m_{10}-\bar m_{01})&\bar m_{00}\\-\bar m_{11}&2\bar m_{01}-\bar m_{10}\end{pmatrix}.
\]
\[
 \begin{aligned}
 \mathsf T_0&=\bigl(-\bar r^0\bar c_{1p1}+\bar r^1\bar c_{1p0}\bigr)_p,\qquad
 \mathsf T_1=\bigl(\bar r^0\bar c_{0p1}-\bar r^1\bar c_{0p0}\bigr)_p,\\
 \mathsf T_2&=\bigl(\bar A^{11}\bar c_{00p}-2\bar A^{01}\bar c_{01p}+\bar A^{00}\bar c_{11p}\bigr)_p .
 \end{aligned}
\]
\begin{lemma}[Differentiated transverse identities]\label{c0:lem:Ux}
At every point of $U_\times$: \textup{(a)} $\mathsf T_2=0$ (the adjugate of $\bar A|_F$ is apolar to the cubic $\bar c$); \textup{(b)} $\mathsf M\beta^\perp_{\bar y}+\mathsf T_y=0$ for $y=0,1$; \textup{(c)} $r^1\beta^\perp_{\bar0}-r^0\beta^\perp_{\bar1}=-(Q_{20},Q_{21})$; hence \textup{(d)} $\mathsf M\,(Q_{20},Q_{21})^{\mathsf T}=r^1\mathsf T_0-r^0\mathsf T_1$.
\end{lemma}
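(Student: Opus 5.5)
The plan is to differentiate the two open-set identities of Proposition~\ref{c0:prop:ellbar} on $U_\times$: $a_\ell\wedge\ell=0$, i.e.\ total symmetry of the block $c_{\alpha\beta\gamma}$, and $\beta_\ell\wedge\ell=0$. I would also differentiate the normal form \eqref{c0:eq:Kform} with $q=\kappa=0$ from Theorem~\ref{c0:thm:qzero}. These relations hold on a neighborhood, so all their covariant derivatives vanish there. Only after differentiating do I fix a unitary frame with $\ell=e_2$ at the point, keeping $\nabla\ell$ from \eqref{c0:eq:elltr} and $\nabla''\ell=\beta$ as the genuine connection data.

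For (a), apply $\nabla_p$ to the total symmetry of $c$ and use the transport $\nabla_p\cP=M_p\cP$ of Proposition~\ref{c0:prop:Ptransport}. The derivative of $\ell$ moves components between $c$ and $m$ through the terms $\eps_{pab}r^b$. Since $q=0$ forces $r^2=0$, these terms stay inside the $\{0,1\}$ block. The $\eta$ and $\gamma$ parts are scalar, so they preserve symmetry. What remains is the symmetrization defect of $AE_p\cP$ restricted to the binary block. Taking $p=2$ and using $\eps_{2\alpha\beta}$, this defect becomes the apolarity contraction of $\operatorname{adj}(A|_F)$ with the cubic. Conjugating gives the $\bar{}$ form $\mathsf T_2=0$.

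For (b), apply $\nabla_{\bar y}$ to the conjugate relation. Equivalently, differentiate the relation in the unbarred direction, substitute the rank tangency \eqref{c0:eq:TG} for $\ell\cdot Y_m$, and then conjugate. The free $\nabla''\cP$ terms (the sources) should drop out of the symmetric projection by the same mechanism as the $\widehat Q$ cancellation in Lemma~\ref{c0:lem:mixed}. The $\beta$ terms come from moving the annihilator: they give $\mathsf M\beta^\perp_{\bar y}$, whose coefficients are the $m_{\alpha\beta}$ entries, because $\beta$ mixes index $2$ into $\{0,1\}$. The $r$ terms from $\nabla\ell$ give $\mathsf T_y$.

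For (c), differentiate $\beta_\ell\wedge\ell=0$, or more precisely the identity (b) of Proposition~\ref{c0:prop:ellbar} with $\zeta$ chosen so that $k\cdot\zeta=0$. The cleaner route is the barred derivative of $\ell\cdot\cP=0$ combined with \eqref{c0:r1:Sbar}. I would contract \eqref{c0:r1:Sbar} with $\ell$ to get $\nabla_{\bar j}r$. Its normal component, combined with $\nabla_{\bar j}q=0$ (since $q\equiv0$), yields $r^1\beta_{\bar0}-r^0\beta_{\bar1}$ in terms of the $Q_{2\alpha}$ entries. The $\cP$ contributions vanish because $\cP^{2,\cdot\cdot}=0$ in the adapted frame.

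Finally, (d) follows algebraically. Apply $\mathsf M$ to (c) and substitute (b) for $y=0,1$, which gives $\mathsf M(Q_{20},Q_{21})^{\mathsf T}=-(r^1\mathsf M\beta^\perp_{\bar0}-r^0\mathsf M\beta^\perp_{\bar1})=r^1\mathsf T_0-r^0\mathsf T_1$.

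I expect the main obstacle to be (b): verifying that the free second-derivative terms $\nabla''\cP$ cancel, and that the coefficient matrix comes out exactly as $\mathsf M$. I would first check this on the symmetric projection, where the $Q$ terms are known to cancel, and keep the canonical weight on $\ell$ throughout.
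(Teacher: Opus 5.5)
\textbf{Main gap: part (c).} The route you actually settle on cannot produce (c). Using $\nabla_{\bar j}q=0$ with \eqref{c0:r1:Sbar} (equivalently, the normal component of $\nabla_{\bar j}r$) gives only $(\nabla_{\bar j}A)^{22}+2\,r\cdot\beta_{\bar j}=0$, that is,
\[
 r\cdot\beta^\perp_{\bar0}=-Q_{21},\qquad r\cdot\beta^\perp_{\bar1}=Q_{20},\qquad r\cdot\beta_{\bar2}=0 .
\]
Here $r$ is contracted with the covector index $a$ of $\beta_{\bar x,a}$. In (c), by contrast, $\eps_2(r)=(r^1,-r^0,0)$ is contracted with the derivative index $x$. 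Write $(\mathrm{c})_a$ for the component $r^1\beta_{\bar0,a}-r^0\beta_{\bar1,a}=-Q_{2a}$. Your identities imply only the combination $r^0(\mathrm{c})_0+r^1(\mathrm{c})_1$; the complementary combination is new information. The rank tangency \eqref{c0:eq:TG} cannot supply it, since it only fixes components of the otherwise free $\nabla''\cP$.

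Your first suggestion was the right one: differentiate the open identity $\beta_\ell\wedge\ell=0$, but in the \emph{unbarred} direction $y=2$, using the mixed Ricci identity $\nabla_2\beta_{\bar x}=\nabla_{\bar x}\nabla_2\ell+R_{2\bar x}\cdot\ell$ together with \eqref{c0:eq:elltr}. Then:
\begin{itemize}
\item the term $\sum_x(\nabla_2\ell_x)\beta_{\bar x}$ gives $-(r^1\beta_{\bar0}-r^0\beta_{\bar1})$;
\item the term $\beta_\ell\wedge\nabla_2\ell$ cancels $\eps_2(A\beta_\ell)$;
\item the transverse part of $\eps_2((\nabla_{\bar2}A)\ell)-R_{2\bar2}\cdot\ell$ equals $(Q_{20},Q_{21})$, by the formula of Step~1 of Theorem~\ref{c0:thm:hol} with $x=y=2$.
\end{itemize}
In that last step the $\cP^{\alpha,22}$ terms do not vanish individually, contrary to your remark; they cancel between $\nabla_{\bar2}A$ and the curvature action. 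So (c) genuinely needs the curvature of the annihilator line.

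\textbf{Parts (a) and (b): the terms are misattributed, and this matters for (a).} The identity $a_\ell\wedge\ell=0$ involves $\ell$ and $\bar\cP$. Equivalently, the symmetry of the block $c$ involves $\cP$ and the projection cut out by $\ell^\sharp$, that is, by $\bar\ell$. Its $\nabla_{\bar y}$-derivative therefore contains only $\nabla_{\bar y}\bar\cP=\overline{M_y\cP}$ and $\nabla_{\bar y}\ell=\beta_{\bar y}$. Consequently:
\begin{itemize}
\item \eqref{c0:eq:elltr} never enters, and there are no free $\nabla''\cP$ terms to cancel; the obstacle you anticipate does not arise.
\item The $\beta$-terms, namely $\sum_l\beta_{\bar y,l}\overline{\Xi_{\cdot l}}$ together with $a_\ell\wedge\beta_{\bar y}$, assemble exactly into $\mathsf M\beta^\perp_{\bar y}$.
\item $\mathsf T_y$ comes from the $AE_y$ part of the transport (for $y=0,1$ via $r^b=A^{b2}$, for $y=2$ via $A|_F$), not from the $\eps_{pab}r^b$ term of $\nabla'\ell$.
\end{itemize}
Differentiating in direction $2$ thus yields $\mathsf M\beta^\perp_{\bar2}+\mathsf T_2=0$. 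Your identification of the residual symmetrization defect of $A_FE_2$ on $c$ with the apolarity contraction is correct. However, (a) follows only after you invoke $\beta^\perp_{\bar2}=0$, which is the content of $\beta_\ell\wedge\ell=0$ in Proposition~\ref{c0:prop:ellbar}. Your claim that the frame terms ``stay inside the block'' does not replace this; compare Lemma~\ref{c0:lem:rawmixed}, where the raw identity holds without that hypothesis and $\mathsf T_2=0$ is not obtained. Part (d) is correct as you derive it.
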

\begin{proof}
$\Phi_a:=a_\ell\wedge\ell$ and $\Phi_b:=\beta_\ell\wedge\ell$ vanish on $U_\times$. Differentiate $\Phi_a$ along $\bar y$ with $\nabla_{\bar y}\overline{\cP}=\overline{M_y\cP}$ and $\nabla_{\bar y}\ell=\beta_{\bar y}$: the transverse part is $\mathsf M\beta^\perp_{\bar y}+\mathsf T_y$ (the terms with $\beta_{\bar y,2}$ and $\eta$ drop by (b) of Proposition~\ref{c0:prop:ellbar}); for $y=2$ use $\beta^\perp_{\bar2}=0$. Differentiate $\Phi_b$ along $y$: by the Ricci identity $\nabla_y\beta_{\bar x}=\nabla_{\bar x}\nabla_y\ell+R_{y\bar x}\cdot\ell$ and \eqref{c0:eq:elltr}, modulo $\ell$ one gets $\bigl[\beta(\eps_y(r))+\eps_y((\nabla_{\bar\ell}A)\ell)-R_{y\bar\ell}\cdot\ell\bigr]\wedge\ell=0$ with $\beta(\zeta)=\sum\zeta_x\beta_{\bar x}$; for $y=0,1$, $\eps_y(r)\in\mathcal L_{\mathrm{ann}}$ and the transverse formula of Step~1 of Theorem~\ref{c0:thm:hol} (with $x=2$) gives $0$; for $y=2$ it gives (c). (d) is (b) combined with (c).
\end{proof}
\begin{lemma}[Raw differentiated mixed identity]\label{c0:lem:rawmixed}
On any open subset of $\Omega_{\cP,2}$ where $a_\ell\wedge\ell=0$, with the same adapted-frame definitions of $\mathsf M,\mathsf T$, one has
\[
 (\nabla_{\bar y}(a_\ell\wedge\ell))_{p2}
 =(\mathsf M\beta_{\bar y}^{\perp}+\mathsf T_y)_p=0,
 \qquad p=0,1,\quad y=0,1,2.
\]
No nonvanishing of $r$ and no vanishing of $\beta_{\bar2}^{\perp}$ is required.
\end{lemma}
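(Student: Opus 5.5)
The plan is to redo the first computation in the proof of Lemma~\ref{c0:lem:Ux}(b), now recording exactly which hypotheses it uses. The aim is to show that it needs only the vanishing of $a_\ell\wedge\ell$ on an open set, together with the standing facts on $\Omega_{\cP,2}$: $q=0$ from Theorem~\ref{c0:thm:qzero}, the transport equation \eqref{c0:eq:Ptransport}, and the annihilator transport \eqref{c0:eq:elltr}.

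Since $a_\ell\wedge\ell\equiv0$ holds on the open set, its barred covariant derivative $\nabla_{\bar y}$ vanishes there, and this covers every direction $y=0,1,2$. I will work at a point in a unitary frame with $\ell=e_2$, but only after differentiating. The product rule gives
\[
\nabla_{\bar y}(a_\ell\wedge\ell)=(\nabla_{\bar y}a_\ell)\wedge\ell+a_\ell\wedge\beta_{\bar y}.
\]
Expanding $\nabla_{\bar y}a_\ell$ splits it into two pieces. The first is $\sum_l(\beta_{\bar y})_l\overline{\Xi_{\cdot l}}$. The second is $\sum_l\ell_l\nabla_{\bar y}\overline{\Xi_{\cdot l}}$, where $\nabla_{\bar y}\overline{\cP}=\overline{\nabla_y\cP}=\overline{M_y\cP}$ by Proposition~\ref{c0:prop:Ptransport}.

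I then read off the $(p,2)$ components for $p=0,1$. Several terms drop out here.
\begin{enumerate}
\item The term $a_\ell\wedge\beta_{\bar y}$ has $(p,2)$ component $(a_\ell)_p\beta_{\bar y,2}-(a_\ell)_2\beta_{\bar y,p}$. The hypothesis makes $a_\ell$ a multiple of $\ell=e^2$, so $(a_\ell)_p=0$, and the surviving piece $-(a_\ell)_2\beta_{\bar y,p}$ combines with the other $\beta^\perp$ terms.
\item The contributions proportional to $\beta_{\bar y,2}$ come with $\overline{\Xi_{p2}}=(a_\ell)_p=0$, so they vanish.
\item The $\tfrac12\eta_y$ part of $M_y$ reproduces a multiple of $a_\ell$, whose transverse components are zero.
\end{enumerate}

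What remains is linear in $\beta^\perp_{\bar y}$, with coefficients given by conjugates of the $m_{\alpha\beta}$ and $(a_\ell)_2$ entries. These should assemble exactly into the matrix $\mathsf M$. The $AE_y$ part of $M_y$ contributes $\overline{A}$ contracted against the block $c$. Using $q=0$, so that $A^{22}=0$ and $r=(r^0,r^1,0)$, this part collapses to $\mathsf T_y$ for each $y$. For $y=2$ the term $\bar A^{\alpha\beta}$ appears through $E_2$, and $A^{a2}$ enters only through $r$. This gives $\mathsf M\beta^\perp_{\bar y}+\mathsf T_y=0$.

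The main obstacle is the bookkeeping in this last collapse. Three points need care:
\begin{enumerate}
\item I must check that the $\mathsf T_2$ expression arises with no extra $r$-terms, using the total symmetry of $c$, which itself follows from $a_\ell\wedge\ell=0$ via the final sentence of Proposition~\ref{c0:prop:ellbar}.
\item I must check that the $\beta^\perp_{\bar 2}$ term does not vanish by assumption. It should appear as $\mathsf M\beta^\perp_{\bar2}$, rather than being dropped as it was in Lemma~\ref{c0:lem:Ux}.
\item I must confirm that no step divides by $r$, so that the identity also holds where $r=0$.
\end{enumerate}
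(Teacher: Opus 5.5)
Your proposal is correct and is essentially the paper's own argument: you differentiate $a_\ell\wedge\ell$ by the product rule using $\nabla_{\bar y}\overline{\cP}=\overline{M_y\cP}$, specialize to $\ell=e^2$ only afterward, and read off the $(p,2)$ components. There the transverse $\beta$ terms, including $-(a_\ell)_2\beta_{\bar y,p}$, assemble into $\mathsf M$; the $AE_y$ part gives $\mathsf T_y$, with total symmetry of $c$ needed only for $\mathsf T_2$; and the $\beta_{\bar y,2}$ and half-Lee terms drop because $(a_\ell)_p=\overline{\Xi_{p2}}=0$. One small remark: only the output rows $0,1$ of $AE_y\cP$ enter $\Xi_{p2}$, so the entry $A^{22}=q$ never appears and $q=0$ is not actually used in that collapse.
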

\begin{proof}
Differentiate the full tensor $a_\ell\wedge\ell$, using
$(a_\ell)_p=\sum_l\ell_l\overline{\Xi_{pl}}$,
$\nabla_{\bar y}\ell=\beta_{\bar y}$, and
$\nabla_{\bar y}\overline{\cP}=\overline{M_y\cP}$.
Only after the product rule specialize $\ell=e^2$, $q=0$ and $\ell\cP=0$.
The terms involving transverse $\beta$ are exactly the displayed $2\times2$ matrix $\mathsf M$; the remaining terms are $\mathsf T_y$. Terms proportional to $\ell$ and the half-Lee term cancel because $a_\ell\wedge\ell=0$. This is the same fixed $(p,2)$ component expansion as in Lemma~\ref{c0:lem:Ux}; setting $y=2$ requires no extra assumption until one wishes to replace the resulting equation by $\mathsf T_2=0$.
\end{proof}

\subsubsection{The polynomial mixed identity}\label{c0:sec:mixedfirst}
The totally symmetric part of \eqref{c0:eq:MA} is Lemma~\ref{c0:lem:mixed}; with \eqref{c0:eq:Kform} it reads, for each $p$ and with $z\in\C^3$ a dummy vector,
\[
 \beta_p(z)K(z)+\ell_p\,\mathfrak G(z)-\tfrac23z_p\,\mathfrak d(z)=\bar\Xi_p(z)A(z),
\]
with $\beta_p(z)=\sum_l\beta_{\bar l,p}z_l$, $\bar\Xi_p(z)=\sum_l\overline{\Xi_{pl}}z_l$, a cubic $\mathfrak G$ and a quadric $\mathfrak d$. Wedging with $\ell$ and $z$ eliminates $\mathfrak G,\mathfrak d$:
\begin{equation}\label{c0:eq:lambdamu}
 \lambda K=\mu A,\qquad \lambda(z):=\det(\ell,z,\beta(z)),\quad \mu(z):=\det(\ell,z,\bar\Xi(z)) .
\end{equation}

On $U_\times$ the quartic identity \eqref{c0:eq:lambdamu} of \S\ref{c0:sec:mixedfirst} becomes very restrictive. Since $\beta_\ell\wedge\ell=a_\ell\wedge\ell=0$, the quadrics $\lambda(z)=\det(\ell,z,\beta(z))$ and $\mu(z)=\det(\ell,z,\bar\Xi(z))$ depend only on the $F$-coordinates $z'=(z_0,z_1)$ of $z$ (adapted frame), while $A(z)=A_F(z')+2z_2\,(r\cdot z')$ and $K(z)=K_F(z')+2z_2\,(k\cdot z')$ because $q=\kappa=0$.
\begin{proposition}[The two polynomial alternatives]\label{c0:prop:Uxdich}
At every point of $U_\times$ exactly one of the following holds.
\begin{enumerate}[label=\textup{(\roman*)}]
\item $\lambda=\mu=0$; equivalently, in an adapted frame, $\beta_{\bar x,a}=\vartheta'\delta_{xa}$ for $x,a\in\{0,1\}$ and the block $m_{\alpha\beta}=\cP^{\alpha,\beta2}$ is antisymmetric ($m_{00}=m_{11}=0$, $m_{10}=-m_{01}$).
\item There are linear forms $s$ and $l\ne0$ on $F$ with $A(z)=(r\cdot z')\,(s(z')+2z_2)$, $K(z)=(k\cdot z')\,(s(z')+2z_2)$, $\lambda=(r\cdot z')\,l$ and $\mu=(k\cdot z')\,l$. In particular $\rank A\le2$, $\rank K\le2$, and the quadrics $A$ and $K$ have a common linear factor.
\end{enumerate}
\end{proposition}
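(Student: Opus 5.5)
The argument is pure polynomial algebra in the variables $z=(z',z_2)$, applied pointwise to the identity \eqref{c0:eq:lambdamu}, $\lambda K=\mu A$, in an adapted frame with $\ell=e^2$. On $U_\times$, Proposition~\ref{c0:prop:ellbar} gives $\beta_\ell\wedge\ell=a_\ell\wedge\ell=0$. Therefore the $z_2$-coefficients in $\beta(z)$ and $\bar\Xi(z)$ only add multiples of $\ell=e^2$, and these die in the determinants. So $\lambda,\mu$ are binary quadratics in $z'$. By Theorem~\ref{c0:thm:qzero} we have $q=\kappa=0$, which gives $A=A_F(z')+2z_2\,r(z')$ and $K=K_F(z')+2z_2\,k(z')$, writing $r(z')=r\cdot z'$ and $k(z')=k\cdot z'$. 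On $U_\times$ the linear forms $r(z'),k(z')$ are independent.

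First compare $z_2$-coefficients in $\lambda K=\mu A$. This gives $\lambda\,k(z')=\mu\,r(z')$ in $\C[z_0,z_1]$. Since $r(z')$ and $k(z')$ are coprime linear forms, $r(z')\mid\lambda$ and $k(z')\mid\mu$ with a common cofactor. Hence $\lambda=r(z')\,l$ and $\mu=k(z')\,l$ for a single linear form $l$.

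Next take the $z_2$-free part, $\lambda K_F=\mu A_F$, and cancel the common factor. This is legitimate when $l\ne0$, since the ring is a domain. We obtain $r(z')K_F=k(z')A_F$. Coprimality again forces $r(z')\mid A_F$ and $k(z')\mid K_F$ with a common linear cofactor $s$. This produces the factorizations $A=r(z')(s+2z_2)$ and $K=k(z')(s+2z_2)$ of alternative (ii). The rank bounds follow because each quadric is a product of two linear forms. If instead $l=0$, then $\lambda=\mu=0$, which is the first branch. The two branches are exclusive: in (ii) $l\ne0$ and $r(z')\ne0$, so $\lambda\ne0$.

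It remains to translate $\lambda=\mu=0$ into the stated component conditions; this bookkeeping step is the main obstacle. Expand $\lambda(z)=\det(e_2,z,\beta(z))$. Using $\beta_\ell\wedge\ell=0$, this becomes $z_0\beta_1(z')-z_1\beta_0(z')$, where $\beta_a(z')=\sum_{x\le1}\beta_{\bar x,a}z_x$. This binary quadratic vanishes iff the $2\times2$ matrix $(\beta_{\bar x,a})$ is a scalar $\vartheta'I$. Indeed, the $z_0^2$ and $z_1^2$ coefficients kill the off-diagonal entries, and the $z_0z_1$ coefficient equates the diagonal ones. The same computation applies to $\mu$, with $\overline{\Xi_{pl}}$ in place of $\beta$. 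Here I would substitute $\Xi_{pl}=-\sum\eps_{lak}\cP^{a,pk}$ and use the total symmetry of the block $c_{\alpha\beta\gamma}$ from Proposition~\ref{c0:prop:ellbar}. Then the transverse $2\times2$ block of $\Xi$ involves only the $m_{\alpha\beta}=\cP^{\alpha,\beta2}$ entries. The scalar condition then reads $m_{00}=m_{11}=0$ and $m_{10}=-m_{01}$, which is antisymmetry. I would check these sign conventions carefully against the $\eps$ normalization, since an error there would swap the result to symmetry instead.
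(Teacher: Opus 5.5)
Your proposal is correct and follows the paper's proof essentially verbatim. You compare the $z_2^1$ and $z_2^0$ coefficients of $\lambda K=\mu A$, use unique factorization in $\C[z_0,z_1]$ with the independent forms $r\cdot z'$ and $k\cdot z'$, and read off (i) from the explicit coefficients of $\lambda$ and $\mu$. One correction to the bookkeeping: the transverse block of $\Xi$ involves only the $m_{\alpha\beta}$ because $\ell\cdot\cP=0$, i.e.\ $\cP^{2,jk}=0$; it is not a consequence of the total symmetry of $c$, which is what you used (via $a_\ell\wedge\ell=0$) to make $\mu$ independent of $z_2$. Carrying out the substitution gives $\overline{\Xi_{00}}=-\bar m_{10}$, $\overline{\Xi_{11}}=\bar m_{01}$, $\overline{\Xi_{10}}=-\bar m_{11}$, $\overline{\Xi_{01}}=\bar m_{00}$, so the scalar condition is indeed antisymmetry.
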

\begin{proof}
Comparing the coefficients of $z_2^1$ and $z_2^0$ in $\lambda K=\mu A$ gives $\lambda\,(k\cdot z')=\mu\,(r\cdot z')$ and $\lambda K_F=\mu A_F$. On $U_\times$ the linear forms $r\cdot z'$, $k\cdot z'$ are independent, so by unique factorization in $\C[z_0,z_1]$ either $\lambda=\mu=0$, or $\lambda=(r\cdot z')l$ and $\mu=(k\cdot z')l$ with the same $l\ne0$; in the latter case $(r\cdot z')K_F=(k\cdot z')A_F$ gives $A_F=(r\cdot z')s$ and $K_F=(k\cdot z')s$ with the same $s$. Writing out $\lambda=z_0^2\beta_{\bar0,1}+z_0z_1(\beta_{\bar1,1}-\beta_{\bar0,0})-z_1^2\beta_{\bar1,0}$ and, with $\overline{\Xi_{00}}=-\bar m_{10}$, $\overline{\Xi_{11}}=\bar m_{01}$, $\overline{\Xi_{10}}=-\bar m_{11}$, $\overline{\Xi_{01}}=\bar m_{00}$, the analogous expression for $\mu$, gives the description of (i).
\end{proof}
\begin{corollary}\label{c0:cor:Uxi}
If (i) holds at a point of $U_\times$, then $\bar m_{01}\vartheta'=0$, the cubic $c$ is a multiple of $(r\cdot x)^3$, and $(Q_{20},Q_{21})=\vartheta'(-r^1,r^0)$.
\end{corollary}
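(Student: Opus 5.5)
The plan is to substitute the explicit description of case (i) from Proposition~\ref{c0:prop:Uxdich} into the differentiated transverse identities of Lemma~\ref{c0:lem:Ux}. We work throughout in an adapted frame at the point, with $\ell=e_2$ and $r=(r^0,r^1,0)$ where $r\ne0$ on $U_\times$. In case (i), $\beta^\perp_{\bar x}=\vartheta' e_x$ for $x\in\{0,1\}$, and the $m$-block is antisymmetric, i.e.\ $m_{00}=m_{11}=0$ and $m_{10}=-m_{01}$. Substituting into the definition of $\mathsf M$ gives
\[
 \mathsf M=\begin{pmatrix}3\bar m_{01}&0\\0&3\bar m_{01}\end{pmatrix}=3\bar m_{01}I.
\]
Parts (b) of Lemma~\ref{c0:lem:Ux} then reduce to $3\bar m_{01}\vartheta' e_y+\mathsf T_y=0$ for $y=0,1$.

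The last assertion comes first and is immediate. Lemma~\ref{c0:lem:Ux}(c) with $\beta^\perp_{\bar0}=\vartheta' e_0$ and $\beta^\perp_{\bar1}=\vartheta' e_1$ gives $\vartheta'(r^1,-r^0)=-(Q_{20},Q_{21})$, which is $(Q_{20},Q_{21})=\vartheta'(-r^1,r^0)$.

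Next I use the structure of $\mathsf T_0,\mathsf T_1$. Both are contractions of the binary cubic $\bar c$ with $\bar r$ through $r\times(\cdot)$. Write $\rho(x)=r\cdot x$ and let $\omega$ be the linear form dual to $r$ under the symplectic pairing on $F$; thus $\mathsf T_y$ is, up to sign, the polarization $\bar c(\bar\omega,e_y,\cdot)$. The identities $\mathsf T_y=-3\bar m_{01}\vartheta' e_y$ then say that the bilinear form $\bar c(\bar\omega,\cdot,\cdot)$ is a scalar multiple of the \emph{antisymmetric} pairing $(e_y,e_p)\mapsto\delta_{yp}$ twisted by the symplectic identification. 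Since $c$ is totally symmetric on $U_\times$ (Proposition~\ref{c0:prop:ellbar}), $\bar c(\bar\omega,\cdot,\cdot)$ is symmetric. Comparing its symmetric and antisymmetric parts forces both $\bar c(\bar\omega,\cdot,\cdot)=0$ and $\bar m_{01}\vartheta'=0$. I expect the main obstacle to be this sign and index bookkeeping in $\mathsf T_0,\mathsf T_1$, specifically confirming that the scalar multiple of the identity really appears as an antisymmetric form once the $\varepsilon$ is moved across. I would check this directly on the two components $p=0,1$ of each $\mathsf T_y$. For example, $(\mathsf T_0)_1=-\bar r^0\bar c_{111}+\bar r^1\bar c_{110}$ must equal $0$, while $(\mathsf T_0)_0=-3\bar m_{01}\vartheta'$, and similarly for $\mathsf T_1$. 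The trace combination $(\mathsf T_0)_0+(\mathsf T_1)_1=-\bar r^0\bar c_{101}+\bar r^1\bar c_{100}+\bar r^0\bar c_{011}-\bar r^1\bar c_{010}$ vanishes by total symmetry, which gives $6\bar m_{01}\vartheta'=0$.

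Finally, with $\bar m_{01}\vartheta'=0$, all four components of $\mathsf T_0,\mathsf T_1$ vanish. These say that the contraction of $c$ with the vector $\omega$ annihilated by $\rho$ is zero, i.e.\ $c(\omega,\cdot,\cdot)=0$. A binary cubic killed by one contraction vector $\omega\ne0$ is a multiple of the cube of the linear form vanishing on $\omega$. That linear form is $\rho=r\cdot x$, so $c$ is a multiple of $(r\cdot x)^3$. This completes the three claims; part (a) of Lemma~\ref{c0:lem:Ux} is not needed here, although it is consistent with the conclusion.
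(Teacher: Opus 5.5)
Your proposal is correct and follows essentially the same route as the paper. You substitute case (i) into Lemma~\ref{c0:lem:Ux}(b),(c) to get $\mathsf M=3\bar m_{01}I$ and $(Q_{20},Q_{21})=\vartheta'(-r^1,r^0)$, and your trace combination $(\mathsf T_0)_0+(\mathsf T_1)_1=0$ is the paper's observation that the $(0,0)$ and $(1,1)$ components give opposite values, forcing $\bar m_{01}\vartheta'=0$; the remaining components then make the binary cubic proportional to $r^{\otimes3}$, which you phrase as the contraction $c(\omega,\cdot,\cdot)$ vanishing for $\omega$ with $r\cdot\omega=0$.
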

\begin{proof}
In case (i), $\mathsf M=3\bar m_{01}I$ and $\beta^\perp_{\bar y}=\vartheta'e_y$. Lemma~\ref{c0:lem:Ux}(b) for $(y,p)=(0,0)$ and $(1,1)$ gives $3\bar m_{01}\vartheta'=\bar r^0\bar c_{011}-\bar r^1\bar c_{001}=-(\bar r^0\bar c_{011}-\bar r^1\bar c_{001})$, hence both vanish; the other two components give $\bar r^0\bar c_{111}=\bar r^1\bar c_{011}$ and $\bar r^0\bar c_{001}=\bar r^1\bar c_{000}$. Thus $(c_{000},c_{001},c_{011},c_{111})$ is proportional to $(r_0^3,r_0^2r_1,r_0r_1^2,r_1^3)$ (in the unitary frame, $r_\alpha=r^\alpha$). Lemma~\ref{c0:lem:Ux}(c) gives the last claim.
\end{proof}
\subsubsection{A holomorphic annihilator forces vanishing primitive torsion}\label{c0:sec:hol}
\begin{theorem}[The holomorphic-annihilator obstruction]\label{c0:thm:hol}
If $\nabla''\ell\wedge\ell\equiv0$ on an open set $V\subset\Omega_{\cP,2}$ (i.e.\ $\mathcal L_{\mathrm{ann}}|_V$ is a holomorphic subbundle of $\Omega^1_X\otimes K_X^{-1}$), then $A\equiv0$ on $V$.
\end{theorem}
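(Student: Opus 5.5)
We will argue by contradiction on an open set $V\subset\Omega_{\cP,2}$ where $\nabla''\ell=\vartheta\,\ell$ for a local $(0,1)$-form $\vartheta$, so that every transverse component $\beta^\perp_{\bar y}$ vanishes. The plan is to differentiate the transport law \eqref{c0:eq:elltr} of $\ell$ in barred directions and compare with the holomorphic derivative of $\nabla''\ell=\vartheta\ell$ through the mixed Ricci identity \eqref{c0:eq:ricci}. Specifically, $\nabla_y\beta_{\bar x}-\nabla_{\bar x}\nabla_y\ell=R_{y\bar x}\cdot\ell$. On the left, $\nabla_y(\vartheta_{\bar x}\ell)$ is a multiple of $\ell$ plus $\vartheta_{\bar x}\nabla_y\ell$. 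Meanwhile $\nabla_{\bar x}\nabla_y\ell$ is computed from \eqref{c0:eq:elltr}, with $\nabla_{\bar x}r=(\nabla_{\bar x}A)\ell+A\beta_{\bar x}$ evaluated by \eqref{c0:eq:barA} and $\nabla_{\bar x}\eta$ by \eqref{c0:eq:E}. On the right, the curvature action is read off from the decomposition \eqref{c0:eq:decomp}.

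We take the resulting identity modulo $\ell$. In an adapted frame $\ell=e^2$ it then gives, for each pair $(y,x)$, an explicit transverse relation among $\cP$, $Q$, $A$, $r$ and $\vartheta$. This is exactly the ``transverse formula'' already invoked in Lemma~\ref{c0:lem:Ux}; we write it out once in full. The rank condition $\ell\cdot\cP=0$ kills the row $\cP^{2,\cdot\cdot}$, and Theorem~\ref{c0:thm:qzero} gives $q=0$ and $\kappa=0$.

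From these relations we expect three consequences. The $(y,x)=(2,2)$ component should give $Q_{20}=Q_{21}=0$, consistent with Lemma~\ref{c0:lem:Ux}(c) when $\beta^\perp=0$. The components with $y,x\in\{0,1\}$ should force $\vartheta'$-type scalars to match $\tau$-terms, which pins the transverse block of $Q$. The mixed components $(y,x)$ with exactly one index equal to $2$ should yield $r^\alpha\,(\text{nonzero scalar})=0$. If this last step works we get $r=0$. Proposition~\ref{c0:prop:rzero} then gives $k=0$, and Lemma~\ref{c0:lem:pureprol}(c) leaves $K$ unconstrained only because $q=0$, so a further input is needed.

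For that input we use the pure prolongation \eqref{c0:eq:I1} with $r=k=0$, together with \eqref{c0:eq:lambdamu}, where now $\lambda\equiv0$ because $\beta^\perp=0$. This gives $\mu A=0$. If $\mu\ne0$ then $A=0$ immediately. If $\mu=0$ the $m$-block is antisymmetric as in Proposition~\ref{c0:prop:Uxdich}(i). We then apply the symmetric compatibility argument of Lemma~\ref{c0:lem:symcompat}, in its transported form $AE_p\cP$, to the $F$-block $A_F$ of $A$. Since $A\ell=0$, $A=A_F$ has rank at most two, and total symmetry of the $c$-block combined with the transport $\nabla_p\cP=M_p\cP$ should force $A_F\cP=0$ on the rank-two image. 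That would give $A_F=0$.

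The main obstacle is the bookkeeping in the Ricci-identity computation. We must check that the canonical-line weight of $\ell$ and the gauge $\gamma=3\eta/2$ contribute only multiples of $\ell$, and that the coefficient multiplying $r^\alpha$ in the mixed components is a genuinely nonzero constant (we expect something like $\tau$ or $3/2$) rather than a combination that could vanish. We will try first the frame $\ell=e^2$ with $\vartheta$ normalized at the point, keeping all derivatives of $\vartheta$ free.
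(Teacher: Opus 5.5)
Your first step, the Ricci identity for $\ell$ reduced modulo $\ell$ in a frame with $\ell=e^2$, is the right one. However, you have mispredicted what it yields, and the rest of your plan rests on that prediction.

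Modulo $\ell$, the terms $\vartheta_x\eps_y(r)$ from the two sides cancel. The derivatives of $\gamma,\eta,\vartheta$ and the canonical-weight action contribute only multiples of $\ell$. Also, \eqref{c0:eq:barA} expresses $\nabla''A$ through $\cP$ and $Q$ alone. After the $\cP$ terms and the $\eps_{x2d}Q$ terms cancel against the curvature decomposition, the transverse components are, for $b\in\{0,1\}$,
\[
 \delta_{xy}Q_{2b}-\delta_{bx}Q_{2y}+\sum_d\eps_{x2d}\overline{\cP^{d,yb}}=0 .
\]
No $r$, $\vartheta$ or $\tau$ survives. The components with $x=2\ne y$ are empty, and those with $x,y\in\{0,1\}$ involve only $Q_{2b}$ and the binary block of $\cP$. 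So no relation of the form $r^\alpha\cdot(\text{nonzero})=0$ can come out, and your chain $r=0\Rightarrow k=0$ never starts.

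What the identity does give is a normal form:
\begin{itemize}
\item $Q_{20}=Q_{21}=0$;
\item $\cP^{\alpha,\beta\gamma}=0$ for $\alpha,\beta,\gamma\in\{0,1\}$;
\item $\cP^{0,02}=\cP^{1,12}=0$;
\item $\cP^{0,12}=-\cP^{1,02}=Q_{22}$.
\end{itemize}
If $Q_{22}=0$, only $\cP^{a,22}$ survives and $\rank\cP\le1$. Hence $Q_{22}\ne0$ on $V$.

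The missing idea is to differentiate this antisymmetric normal form. Write $\cP^{a,jl}=\tfrac12(\bar\ell_jn^{al}+\bar\ell_ln^{aj})$ with $n\in F_t\otimes T^{1,0}X$ and $F_t=F\otimes K_X^{-1}$. The $F_t\times F_t$ block of $n$ is alternating, with $n^{01}=-n^{10}=2Q_{22}$.

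Your hypothesis is equivalent to $\nabla''F_t\subset F_t$, so $\nabla'$ preserves the alternation of that block. Meanwhile transport gives $\nabla_pn=M_p^{(\mathrm{out})}n-\bar\vartheta_pn$, where $\nabla_{\bar p}\ell=\vartheta_p\ell$. Hence the symmetric part of the $F_t\times F_t$ block of $M_pn$ must vanish. That symmetric part is $2Q_{22}A^{\alpha\beta}$ for $p=2$, and $-Q_{22}(r^\alpha\delta_{\beta p}+r^\beta\delta_{\alpha p})$ for $p=0,1$. Since $Q_{22}\ne0$, this gives $A^{\alpha\beta}=0$ and $r=0$, and $A^{22}=q=0$ completes $A=0$.

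Your proposed substitutes cannot do this job:
\begin{itemize}
\item The normal form makes $\mu\equiv0$ as well as $\lambda\equiv0$, so $\lambda K=\mu A$ is empty here.
\item Lemma~\ref{c0:lem:symcompat} needs a totally symmetric $H$. On $V$ the tensor $\cP$ is not totally symmetric, since total symmetry would force $Q_{22}=0$.
\item ``Total symmetry of the $c$-block'' is vacuous, because that block is zero.
\end{itemize}
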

\begin{proof}
Write $\beta_{\bar x}=\vartheta_x\ell$ on $V$.

\emph{Step 1: curvature normal form.} By \eqref{c0:eq:ricci}, $\nabla_y(\vartheta_x\ell)-\nabla_{\bar x}\bigl((\gamma_y-\frac12\eta_y)\ell-\eps_y(r)\bigr)=R_{y\bar x}\cdot\ell$, where $\eps_y(r)_b=\sum_c\eps_{ybc}r^c$ and $\nabla_{\bar x}r=(\nabla_{\bar x}A)\ell+\vartheta_xr$. Modulo $\ell$ the terms $\vartheta_x\eps_y(r)$ cancel and we obtain
\begin{equation}\label{c0:eq:Hol}
 \bigl(\eps_y\bigl((\nabla_{\bar x}A)\ell\bigr)-R_{y\bar x}\cdot\ell\bigr)\wedge\ell=0\qquad\text{for all }x,y .
\end{equation}
At a point with $\ell=e^2$, insert \eqref{c0:eq:barA} and \eqref{c0:eq:decomp}; the $\cP$-terms and the $\eps_{x2d}Q$-terms cancel, and with $\sum_c\eps_{ybc}\eps_{xcd}=\delta_{yd}\delta_{bx}-\delta_{yx}\delta_{bd}$ the transverse components ($b\in\{0,1\}$) of \eqref{c0:eq:Hol} become
\[
 \delta_{xy}Q_{2b}-\delta_{bx}Q_{2y}+\sum_d\eps_{x2d}\,\overline{\cP^{d,yb}}=0 .
\]
For $x=y=2$ this gives $Q_{2b}=0$ ($b=0,1$); for $x\in\{0,1\}$ and $y,b\in\{0,1\}$ it gives $\cP^{\alpha,\beta\gamma}=0$ for all $\alpha,\beta,\gamma\in\{0,1\}$; for $y=2$ it gives
\[
 \cP^{0,02}=\cP^{1,12}=0,\qquad \cP^{0,12}=Q_{22},\qquad \cP^{1,02}=-Q_{22}.
\]
Put $F_t=F\otimes K_X^{-1}\subset T^{1,0}X$ and let
$\ell^\sharp\in T^{1,0}X\otimes K_X$ be the Hermitian dual of
$\ell\in(T^{1,0}X\otimes K_X)^*$. The parallel identification
$\overline{(T^{1,0}X\otimes K_X)^*}\cong T^{1,0}X\otimes K_X$
includes the metric on $K_X$. There is a unique smooth tensor
\[
 n\in F_t\otimes T^{1,0}X
\]
such that, after moving the $K_X$ factor of $\ell^\sharp$ to the output slot,
\[
 \cP=\operatorname{Sym}_{\mathrm{inputs}}(n\otimes\ell^\sharp),
 \qquad
 \cP^{a,jl}=\tfrac12(\bar\ell_j n^{al}+\bar\ell_l n^{aj})
\]
in unitary frames of $T^{1,0}X$ and $K_X$. Here symmetrization is the
average of the two input placements. Under $\ell\mapsto g\ell$,
the factor changes by $n\mapsto\bar g^{-1}n$.
In the aligned frame above,
$n^{01}=2Q_{22}=-n^{10}$ and $n^{00}=n^{11}=0$.
If $Q_{22}=0$, then $\cP^0,\cP^1$ are both multiples of
$\bar\ell^2$ in that frame and $\rank\cP\le1$; hence $Q_{22}\ne0$ on $V$.

\emph{Step 2: differentiating the antisymmetry.}
On $F_t\times F_t$ define
\[
 \mathcal B(u,v)=h\bigl(n(\bar v),\bar u\bigr),
\]
where the second slot of $n$ is identified with
$\overline{(T^{1,0}X)^*}$ by $h$.
Thus $\mathcal B$ is a section of
$\overline{F_t}^{\,*}\otimes\overline{F_t}^{\,*}$ and is alternating
on $V$. Since $F_t$ is $\nabla''$-invariant,
$\overline{F_t}$ is $\nabla'$-invariant, so $\nabla'\mathcal B$
remains alternating. Write $\nabla_{\bar p}\ell=\vartheta_p\ell$.
Metric compatibility gives
$\nabla_p\ell^\sharp=\bar\vartheta_p\ell^\sharp$.
Differentiating the full tensor factorization and using
$\nabla_p\cP=M_p^{(\mathrm{out})}\cP$ yields
\[
 \nabla_p n=M_p^{(\mathrm{out})}n-\bar\vartheta_p n.
\]
The covariant derivative here acts on the two tangent slots of $n$;
the $K_X$ connection has already been retained in
$\nabla_p\ell^\sharp$. Consequently the $F_t\times F_t$ block
of $M_pn$ is alternating. With
$n^{e\beta}=2Q_{22}\eps_{e\beta}$ for $e,\beta\in\{0,1\}$
and $\eps_{01}=1$, its symmetric part is

\[
 p=2:\ \ 2Q_{22}\,A^{\alpha\beta};\qquad p\in\{0,1\}:\ \ -Q_{22}\bigl(r^\alpha\delta_{\beta p}+r^\beta\delta_{\alpha p}\bigr)\qquad(\alpha,\beta\in\{0,1\}).
\]
As $Q_{22}\ne0$, $A^{\alpha\beta}=0$ and ($\alpha=\beta=p$) $r^p=0$. Together with $A^{22}=q=0$ (Theorem~\ref{c0:thm:qzero}), $A=0$.
\end{proof}

\subsection{The invariant rank-two identities and their open alternatives}
We retain the normalized annihilator generator above and
\eqref{c0:eq:Ptransport}. For reference, this transport identity is
\begin{equation}\label{c0:transport}
 \nabla_i\cP=M_i^{(\mathrm{out})}\cP.
\end{equation}
On $\Omega_{\cP,2}=\{\rank \cP=2\}$ put $F=\im \cP$ and $\mathcal L_{\mathrm{ann}}=\operatorname{Ann}F\subset E^*$. For a nonvanishing local generator $\ell$ of $\mathcal L_{\mathrm{ann}}$, set
\[
 r=A\ell,\qquad k=K\ell,\qquad q=A(\ell,\ell),\qquad \kappa=K(\ell,\ell).
\]
The preceding lemmas give the following tensor identities:
\begin{align}
 \nabla_p\ell_a&=(\gamma_p-\tfrac12\eta_p)\ell_a-\eps_{pab}r^b,
 &\partial\gamma&=\tfrac32\partial\eta,\label{c0:ell}\tag{I1}\\
 W_p&=\ell_pK-\tfrac13(e_pk^{\mathsf T}+ke_p^{\mathsf T}),
 & (\partial\eta)_{pq}&=-\tfrac23\eps_{pqa}k^a,\label{c0:Kform}\tag{I2}\\
 q&=0,&\kappa&=0.\label{c0:qzero}\tag{I3}
\end{align}
The last line is Theorem~\ref{c0:thm:qzero}; its proof uses Lemma~\ref{c0:lem:pureprol} and the symmetric mixed identity. Write $\beta_{\bar p}=\nabla_{\bar p}\ell$.

Let $U_\times=\{r\wedge k\ne0\}\subset\Omega_{\cP,2}$. Proposition~\ref{c0:prop:ellbar}, Lemma~\ref{c0:lem:Ux}, Proposition~\ref{c0:prop:Uxdich}, Corollary~\ref{c0:cor:Uxi}, and Theorem~\ref{c0:thm:hol} give the following information. In an adapted unitary frame at a point, $\ell=e^2$, $r=(r^0,r^1,0)$ and $k=(k^0,k^1,0)$. On $U_\times$,
\begin{equation}\label{c0:uxinput}
 \beta_{\bar2}\bmod\ell=0,\qquad
 c_{\alpha\beta\delta}:=\cP^{\alpha,\beta\delta}\quad(\alpha,\beta,\delta\in\{0,1\})
 \text{ is totally symmetric}.
\end{equation}
There is a quartic identity $\lambda K=\mu A$ in the dummy covector variables $z=(z_0,z_1,z_2)$. The quadrics $\lambda,\mu$ depend only on $z'=(z_0,z_1)$. It has two alternatives:
\begin{align}
 \lambda=\mu=0&:\quad
 \beta_{\bar p,\alpha}=\vartheta\delta_{p\alpha}\ (p,\alpha=0,1),\quad
 m_{\alpha\beta}:=\cP^{\alpha,\beta2}\text{ is skew};\label{c0:alt0}\\
 \lambda\ne0&:\quad
 A(z)=(r\cdot z')\bigl(s(z')+2z_2\bigr),\quad
 K(z)=(k\cdot z')\bigl(s(z')+2z_2\bigr).\label{c0:alt1}
\end{align}
The linear form $s$ is allowed to be zero. In \eqref{c0:alt0}, Lemma~\ref{c0:lem:Ux} gives
\begin{equation}\label{c0:deginput}
 \overline{m_{01}}\vartheta=0,\qquad
 c=\xi\,r^{\otimes3}
\end{equation}
in the fixed local trivializations, with a locally defined scalar coefficient $\xi$. Finally, if $\beta\bmod\ell=0$ on an open subset of $\Omega_{\cP,2}$, Theorem~\ref{c0:thm:hol} gives $A=0$ there.

\subsection{The linearly independent branch is empty}
\begin{lemma}[Determinant obstruction]\label{c0:detlemma}
There is no nonempty open set $V\subset U_\times$ on which the common-factor alternative \eqref{c0:alt1} holds.
\end{lemma}
\begin{proof}
At every point of $V$, in an adapted frame, write
\[
 b=(s_0,s_1,2)^{\mathsf T},\qquad
 A=\tfrac12(rb^{\mathsf T}+br^{\mathsf T}),\qquad
 K=\tfrac12(kb^{\mathsf T}+bk^{\mathsf T}).
\]
The vectors $r,b$ are independent: $r^2=0$, $r\ne0$, and $b^2=2$. Consequently $\rank A=2$, $\im A=\operatorname{span}(r,b)$, and $\det A=0$ at every point of the open set $V$. For a symmetric rank-two matrix,
\[
 \ker(\adj A)=\im A.
\]
In particular $\adj A\,b=0$ and therefore
\[
 \tr((\adj A)K)=k^{\mathsf T}(\adj A)b=0.
\]
The determinant of $A$ is a section of $K_X$. The derivative-of-determinant identity, with its induced connection, is
\[
 \nabla_p(\det A)=\tr((\adj A)\nabla_pA).
\]
Since $\det A$ vanishes on $V$, \eqref{c0:Kform} yields at every point of $V$
\[
0=\tfrac32\eta_p\det A
    +\ell_p\tr((\adj A)K)
    -\tfrac23(\adj A\,k)_p =-\tfrac23(\adj A\,k)_p.
\]
Thus $k\in\im A=\operatorname{span}(r,b)$. Because $k^2=r^2=0$ and $b^2=2$, this forces $k\in\mathbb C r$, contradicting $r\wedge k\ne0$.

For a component check, put $w=r^0k^1-r^1k^0$. The first two components of $\adj A\,k$ are $(r^1w,-r^0w)$. They cannot both vanish when $w\ne0$.
\end{proof}

\begin{lemma}[Derivative of a vanishing mixed block]\label{c0:mixedblock}
Suppose $V\subset\Omega_{\cP,2}$ is an open set on which $q=0$ and $m=0$. In an adapted frame, for $p,\alpha\in\{0,1\}$, put $N^a=\cP^{a,22}$. Then
\begin{equation}\label{c0:n-beta}
 N^a\overline{\beta_{\bar p,\alpha}}=0.
\end{equation}
\end{lemma}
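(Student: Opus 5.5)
The plan is to differentiate the identity $m\equiv0$, which holds on the open set $V$, in the antiholomorphic directions. We use the covariant formula for $\nabla''\cP$ only through what is forced by rank tangency \eqref{c0:eq:TG}, because the full barred derivative $Y_m=\nabla_{\bar m}\cP$ is not otherwise controlled. Work at a point of $V$ in an adapted unitary frame with $\ell=e^2$; derivatives are taken before this frame is fixed. Since $\ell\cdot\cP=0$, we have $\cP^{2,jl}=0$. The hypothesis $m=0$ says $\cP^{\alpha,\beta2}=0$ for $\alpha,\beta\in\{0,1\}$, so in this frame the only possibly nonzero entries are the binary block $c_{\alpha\beta\gamma}$ and the column $N^\alpha=\cP^{\alpha,22}$. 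Covariantly, $m$ is the contraction $\cP(\cdot)(\cdot,\ell^\sharp)$ restricted to $F\otimes F_t$, where $F_t$ is as in Theorem~\ref{c0:thm:hol}. So $m\equiv0$ is an identity between tensors built from $\cP$, $\ell$, and the projection to $F$.

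The key step is to take $\nabla_{\bar p}$, for $p\in\{0,1\}$, of the scalar identities $\cP^{\alpha}(e_\beta^{\flat},\ell^\sharp)=0$. We extend $e_\beta$ in the covector slot by a frame whose moving part is recorded through $\beta_{\bar p}$. In the product rule, the derivative falling on $\ell^\sharp$ produces, by metric compatibility, $\overline{\beta_{\bar p}}$ contracted into the second symmetric input. Its transverse part $\overline{\beta_{\bar p,\alpha}}$ pairs with the $(\cdot,2)$ slot to give $\cP^{a,\beta\alpha}$, and pairs with the component $\overline{\beta_{\bar p,2}}$ to give terms carrying $N$.

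The term with $Y_p$ must be handled separately. Its transverse-output part is not directly known, so we contract with a second copy of $\ell^\sharp$: symmetrize in the two input slots and use the identity $\cP(\cdot)(\ell^\sharp,\ell^\sharp)=N$. Differentiating $N=\cP(\ell^\sharp,\ell^\sharp)$ and comparing with the derivative of the $m$-block contracted against $\ell$ isolates $2N^a\overline{\beta_{\bar p,\alpha}}$ minus a $Y$ term. We then eliminate that $Y$ term using the symmetry of the last two slots of $\cP$ and of $Y$. This gives the $(a,\alpha,2)$ entry of $\nabla_{\bar p}$ of the vanishing $m$-block, written in two ways that differ by exactly $N^a\overline{\beta_{\bar p,\alpha}}$.

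The main obstacle is this bookkeeping: checking that every $Y$ contribution cancels between the two expressions and that no $Q$ or $\Xi$ term survives. If direct symmetry fails, the fallback is to use the first Bianchi formula \eqref{c0:eq:barA} for $\nabla_{\bar p}A$ together with $q=0$, through \eqref{c0:eq:qrec}. A contraction of the mixed identity Lemma~\ref{c0:lem:mixed} should then remove the residual $Y$ term, leaving \eqref{c0:n-beta}.
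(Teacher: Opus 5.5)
There is a genuine gap: you differentiate in the wrong type of direction. Under $\nabla_{\bar p}$, metric compatibility gives $\nabla_{\bar p}\ell^\sharp=(\nabla_p\ell)^\sharp$, not $(\nabla_{\bar p}\ell)^\sharp$. By \eqref{c0:ell} with $r^2=q=0$, one has $\nabla_p\ell\in\mathbb C\ell$ at an aligned point for $p\in\{0,1\}$, so this term has no transverse part. The only other place $\ell$ is differentiated is $\cP(\cdot,\nabla_{\bar p}\ell)$, and there $\beta_{\bar p}$ appears unconjugated.

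Write the hypothesis, together with $\ell\cdot\cP=0$, as $\cP(\cdot,\ell)=N\otimes v$ with $v=\ell^\sharp/|\ell|^2$. Take its barred derivative and project the second slot onto $F\otimes K_X^{-1}$. The result is only $Y_p^{a,\alpha2}=-\sum_{\gamma\in\{0,1\}}\cP^{a,\alpha\gamma}\beta_{\bar p,\gamma}$. This is an equation for the uncontrolled jet $Y_p=\nabla_{\bar p}\cP$, and $N^a\overline{\beta_{\bar p,\alpha}}$ does not occur in it. Likewise, at an aligned point $\nabla_{\bar p}N^a=Y_p^{a,22}+2N^a\beta_{\bar p,2}$, which only sees $\beta_{\bar p,2}$.

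Rank tangency \eqref{c0:eq:TG} fixes only the output-$2$ component of $Y$. Symmetry of its last two slots relates none of the needed components to known quantities. So the claimed ``two expressions differing by exactly $N^a\overline{\beta_{\bar p,\alpha}}$'' cannot arise from barred derivatives. The fallback through Lemma~\ref{c0:lem:mixed} does not help either, since that identity contains no $Y$ term to cancel.

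The missing idea is to use the holomorphic derivative $\nabla_p$, $p\in\{0,1\}$, where everything is controlled. Differentiate $\cP(\cdot,\ell)=N\otimes v$ and project the second slot by $\Pi$; three of the resulting terms vanish:
\begin{itemize}
\item the transport \eqref{c0:transport} gives $(\nabla_p\cP)(\cdot,\ell)=(M_pN)\otimes v$, which projects to zero;
\item $\cP(\cdot,\nabla_p\ell)$ projects to zero because $\nabla_p\ell\in\mathbb C\ell$;
\item $(\nabla_pN)\otimes v$ projects to zero.
\end{itemize}
What remains is $N\otimes\Pi(\nabla_pv)=0$. Now metric compatibility gives $\nabla_p\ell^\sharp=(\nabla_{\bar p}\ell)^\sharp$. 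At an aligned point with $|\ell|=1$, its transverse components are exactly $\overline{\beta_{\bar p,\alpha}}$; the derivative of $|\ell|^{-2}$ only adds a multiple of $v$. No barred derivative of $\cP$ ever enters. This is the paper's argument.
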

\begin{proof}
We do not differentiate the individual equation $\cP^{a,\alpha2}=0$ in a moving adapted frame. Instead, let
\[
 v=\frac{\ell^\sharp}{|\ell|^2}\in E,\qquad \ell(v)=1,
 \qquad N=\cP(\ell,\ell),
\]
and let $\Pi$ be the Hermitian projection of $T^{1,0}X$ onto $F\otimes K_X^{-1}$. The last two slots of $\cP$ are raised with the parallel metric. The condition $m=0$ is precisely the tensor identity
\begin{equation}\label{c0:intrinsicmixed}
 \cP(\,\cdot\,,\ell)=N\otimes v.
\end{equation}
Line factors on both sides agree: the left side is in $T^{1,0}X\otimes T^{1,0}X$, while $N\in T^{1,0}X\otimes K_X^{-1}$ and $v\in T^{1,0}X\otimes K_X$.

Differentiate \eqref{c0:intrinsicmixed} covariantly and project its second vector slot by $\Pi$ at the point. By \eqref{c0:transport}, the differentiated-$\cP$ term is $(M_pN)\otimes v$, whose projection vanishes. For $p=0,1$, \eqref{c0:ell} and $r^2=q=0$ imply $\nabla_p\ell\in\mathbb C\ell$ at the point. Hence the term $\cP(\,\cdot\,,\nabla_p\ell)$ also has second slot in $\mathbb Cv$ and vanishes after projection. The term $(\nabla_pN)\otimes v$ on the right vanishes after projection as well. We obtain
\[
 N\otimes\Pi(\nabla_pv)=0.
\]
Metric compatibility gives
\[
 \Pi(\nabla_pv)=|\ell|^{-2}\Pi\bigl((\nabla_{\bar p}\ell)^\sharp\bigr).
\]
In an adapted unitary frame with $|\ell|=1$ at the point, this is exactly \eqref{c0:n-beta}. The derivative of $|\ell|^{-2}$ contributes only a multiple of $v$, which was projected out. All canonical-line and derivative-slot connections have been retained.
\end{proof}

\begin{theorem}[Dependence of the two vector contractions]\label{c0:maink1}
Under the preceding local identities,
\[
 \boxed{A\ell\wedge K\ell=0\quad\text{on }\Omega_{\cP,2}.}
\]
This is a local conclusion for smooth zero-Chern-HSC threefolds; compactness and real analyticity are not needed.
\end{theorem}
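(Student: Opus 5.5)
The plan is to argue by contradiction on the open set $U_\times=\{A\ell\wedge K\ell\ne0\}\subset\Omega_{\cP,2}$. Suppose $U_\times$ is nonempty. By Proposition~\ref{c0:prop:Uxdich}, at each point of $U_\times$ exactly one of the alternatives \eqref{c0:alt0} or \eqref{c0:alt1} holds. The condition $\lambda\ne0$ is open: it says that some coefficient of the quadric $\lambda$ is nonzero. Hence either the interior of the locus where \eqref{c0:alt1} holds is nonempty, or \eqref{c0:alt0} holds on a nonempty open subset $V\subset U_\times$. In fact the \eqref{c0:alt1} locus is exactly $\{\lambda\ne0\}$, so it is itself open. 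If it is nonempty, Lemma~\ref{c0:detlemma} applies directly and gives a contradiction. It therefore remains to exclude an open set $V$ on which $\lambda=\mu=0$.

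On such a $V$ we have, by \eqref{c0:uxinput} and \eqref{c0:alt0}, $\beta_{\bar2}\equiv0\bmod\ell$ and $\beta^\perp_{\bar p}=\vartheta e_p$ for $p=0,1$. The skew block $m$ and the cubic $c=\xi r^{\otimes3}$ are constrained by \eqref{c0:deginput}, and Corollary~\ref{c0:cor:Uxi} gives $(Q_{20},Q_{21})=\vartheta(-r^1,r^0)$. I would split $V$ further, according to whether $\vartheta$ vanishes on an open subset.

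\emph{First case: $\vartheta\equiv0$ on an open $V'\subset V$.} Then $\beta\equiv0\bmod\ell$ there. Theorem~\ref{c0:thm:hol} gives $A=0$ on $V'$, hence $r=A\ell=0$, which contradicts $r\wedge k\ne0$.

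\emph{Second case: $\vartheta\ne0$ on an open $V'$.} Then \eqref{c0:deginput} gives $m_{01}=0$, so the skew block $m$ vanishes identically on $V'$, and $q=0$ by \eqref{c0:qzero}. Lemma~\ref{c0:mixedblock} now applies and yields $N^a\overline{\beta_{\bar p,\alpha}}=0$. Taking $p=\alpha$ gives $N\bar\vartheta=0$, so $N=\cP(\ell,\ell)=0$ on $V'$.

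At this point every component $\cP^{a,j2}$ vanishes, and the block $\cP^{\alpha,\beta\gamma}=c_{\alpha\beta\gamma}=\xi\,r_\alpha r_\beta r_\gamma$ has rank at most one as a map $\Sym^2\to F$. The remaining rows $\cP^{2,\cdot\cdot}$ vanish because $\ell\cdot\cP=0$. So $\cP$ has only the rows $\alpha\in\{0,1\}$, each supported on the single quadric $(r\cdot z')^2$, and therefore $\rank\cP\le1$. This contradicts $V'\subset\Omega_{\cP,2}$.

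Combining the cases, $U_\times$ is empty. The main obstacle is the second case: I must check that Lemma~\ref{c0:mixedblock} applies exactly, using the intrinsic form $\cP(\cdot,\ell)=N\otimes v$ rather than a moving frame, and that the rank count after $N=0$ correctly uses total symmetry of $c$ together with the vanishing of the $m$-block. If the rank count fails, I would fall back on differentiating $c=\xi r^{\otimes3}$ with \eqref{c0:transport} to force $\xi=0$.
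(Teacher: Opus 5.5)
Your proposal is correct and uses the same ingredients as the paper's proof. These are Proposition~\ref{c0:prop:Uxdich} with Lemma~\ref{c0:detlemma} to force $\lambda=\mu=0$ on $U_\times$, then \eqref{c0:deginput}, Lemma~\ref{c0:mixedblock}, the output-rank count after $N=0$, and Theorem~\ref{c0:thm:hol}. The only difference is the order of the case split: the paper first shows $m\equiv0$ on all of $U_\times$ by applying Theorem~\ref{c0:thm:hol} where $m_{01}\ne0$ and then shows $\vartheta\equiv0$, whereas you split on $\vartheta$ directly and get $m=0$ from $\overline{m_{01}}\vartheta=0$ where $\vartheta\ne0$; your rank count is right as stated, so the fallback is unnecessary.
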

\begin{proof}
Suppose $U_\times$ is nonempty. If $\lambda$ is nonzero at any point, some coefficient of this quadratic polynomial remains nonzero on a neighborhood. On that neighborhood \eqref{c0:alt1} holds, contrary to Lemma \ref{c0:detlemma}. Thus $\lambda=\mu=0$ everywhere on the open set $U_\times$, so \eqref{c0:alt0} and \eqref{c0:deginput} hold there.

If $m_{01}$ is nonzero at a point, it stays nonzero on a smaller open set. Equation \eqref{c0:deginput} forces $\vartheta=0$ on that open set. Together with \eqref{c0:uxinput}, this says $\beta\bmod\ell=0$ in all three barred directions. Theorem~\ref{c0:thm:hol} then gives $A=0$, contradicting $r\wedge k\ne0$. It follows that $m=0$ throughout $U_\times$; only continuity and an open-set argument were used.

We may now apply Lemma \ref{c0:mixedblock} on $U_\times$. Since $\beta_{\bar p,\alpha}=\vartheta\delta_{p\alpha}$ for $p,\alpha=0,1$, it gives $\overline\vartheta N=0$. At a point with $\vartheta\ne0$ this forces $N=0$. All components with output index 2 vanish because $\ell \cP=0$; all mixed components vanish because $m=0$; the remaining components are $c=\xi r^{\otimes3}$. Thus the output rank of $\cP$ is at most one at that point, contradicting membership in $\Omega_{\cP,2}$. Consequently $\vartheta=0$ everywhere on $U_\times$. Theorem~\ref{c0:thm:hol} now gives $A=0$ on $U_\times$, again a contradiction.
\end{proof}

\subsection{Vanishing of the contraction controlling the Lee derivative}
We have proved $q=\kappa=0$ and $r\wedge k=0$.
The next task is to exclude the open case $r\ne0$, $k=\varkappa r\ne0$.
After it is excluded, Proposition~\ref{c0:prop:rzero} and continuity
will give $k=0$ on all of $\Omega_{\cP,2}$.

Use the normalized gauge $\gamma=3\eta/2$. Here
$A\in\Sym^2T^{1,0}X\otimes K_X$,
$\ell\in(T^{1,0}X\otimes K_X)^*$, and $K$ has canonical weight two
relative to the chosen generator $\ell$. All identities that will be
differentiated hold on open subsets of the rank-two locus.

\begin{theorem}[Vanishing of the Lee-derivative contraction]\label{c0:r2:parallel}
On $\Omega_{\cP,2}$, $k=K\ell=0$. Consequently $\partial\eta=0$ there.
\end{theorem}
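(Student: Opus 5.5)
The plan is to argue by contradiction on open sets. By Theorem~\ref{c0:maink1}, $r\wedge k=0$ throughout $\Omega_{\cP,2}$. Hence the open set $U_k=\{k\ne0\}\cap\Omega_{\cP,2}$ satisfies $r\ne0$ there as well. Indeed, if $r$ vanished on an open piece, Proposition~\ref{c0:prop:rzero} would give $k=0$ on that piece. So $k=\varkappa r$ with $\varkappa$ a smooth nowhere-zero locally weighted section. It suffices to show that $U_k$ is empty. Then $k=0$ on a dense subset of $\Omega_{\cP,2}$, continuity gives $k\equiv0$, and \eqref{c0:Kform} gives $\partial\eta=0$.

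On $U_k$ the next step is to extract information about the transverse barred derivative $\beta\bmod\ell$. Proposition~\ref{c0:prop:ellbar} gives $\psi=k\cdot\beta_\ell=0$ and $\varkappa\beta_\ell-a_\ell\in\mathcal L_{\mathrm{ann}}$. The quartic identity \eqref{c0:eq:lambdamu}, $\lambda K=\mu A$, combined with $q=\kappa=0$ and $k=\varkappa r$, should be split by comparing coefficients of $z_2$. This gives $\lambda\varkappa=\mu$ on the $z'$-part, together with $\lambda K_F=\mu A_F$, i.e.\ $\lambda(K_F-\varkappa A_F)=0$. Thus either $\lambda=0$ on an open piece, or $K=\varkappa A+$ (terms supported off $F$). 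In the first case the transverse block of $\beta$ is scalar, $\beta_{\bar p,\alpha}=\vartheta\delta_{p\alpha}$. In the second case I would differentiate $K_F=\varkappa A_F$ using \eqref{c0:Kform} and the pure prolongation \eqref{c0:eq:I1}, aiming for a determinant-type obstruction like Lemma~\ref{c0:detlemma}.

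The key new input is the component identity announced as Lemma~\ref{c0:r2:betaell}. I would differentiate the identity $k-\varkappa r=0$ in barred directions using Lemma~\ref{c0:lem:rawmixed}, the Ricci identity for $\nabla''\nabla'\ell$ as in Step~1 of Theorem~\ref{c0:thm:hol}, and the sourced $Q$ transport \eqref{c0:r1:Qsource}. The expected outcome is a relation of the form $16R^2\varkappa^2\beta_{\bar2,1}=0$, with $R$ a nonzero norm of $r$, so that the remaining transverse component of $\beta_{\bar2}$ vanishes. From there the argument should mirror the proof of Theorem~\ref{c0:maink1}. If the mixed block $m$ is nonzero somewhere, then $\vartheta=0$ there, so $\beta\equiv0\bmod\ell$ and Theorem~\ref{c0:thm:hol} gives $A=0$, contradicting $r\ne0$. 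Otherwise $m=0$, and Lemma~\ref{c0:mixedblock} gives $\bar\vartheta N=0$. Either $N=0$, which forces output rank at most one, or $\vartheta=0$, which again invokes Theorem~\ref{c0:thm:hol}.

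The main obstacle is the component identity controlling $\beta_{\bar2}^\perp$. In $U_\times$ that component came for free from $\beta_\ell\wedge\ell=0$, but here $r$ and $k$ are parallel and Proposition~\ref{c0:prop:ellbar}(b) only yields the weaker relation $\varkappa\beta_\ell\equiv a_\ell$. Handling it requires differentiating the identity on an open set before aligning frames. All conjugate-curvature derivatives $\nabla'\bar\cP$ must be retained until they cancel in the chosen combination of differentiated Ricci residuals, dividing only by $R$ and $\varkappa$.
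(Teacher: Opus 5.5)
\textbf{There is a genuine gap: the branch $K=\varkappa A$ is not excluded.} The rest of your plan matches the paper. This includes the reduction to the dense open part $U_k\cap\{r\ne0\}$ via Proposition~\ref{c0:prop:rzero}, the use of Lemma~\ref{c0:r2:betaell} as the key component identity, and the exclusion of the alternative $\lambda=\mu=0$ by mirroring Theorem~\ref{c0:maink1} with Lemma~\ref{c0:mixedblock} and Theorem~\ref{c0:thm:hol}.

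One point of order: comparing $z_2$-coefficients in $\lambda K=\mu A$ to get $\mu=\varkappa\lambda$ and $\lambda(K-\varkappa A)=0$ is only legitimate once $\lambda,\mu$ are independent of $z_2$. That requires $\beta_{\bar\ell}\wedge\ell=0$ and $a_\ell\wedge\ell=0$, so the component identity must precede the split.

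The proportional branch is not a side case. Once $\lambda=\mu=0$ is excluded, $K=\varkappa A$ holds on the whole open set. Your plan there cannot work: you propose to differentiate with \eqref{c0:Kform} and the pure prolongation \eqref{c0:eq:I1} and look for a determinant obstruction as in Lemma~\ref{c0:detlemma}.
\begin{itemize}
\item Since $\adj(A)k=\varkappa\,\adj(A)A\ell=\varkappa\det(A)\,\ell$ and $\tr(\adj(A)K)=3\varkappa\det A$, the determinant transport becomes $\nabla_p\det A=(\tfrac32\eta_p+\tfrac73\varkappa\ell_p)\det A$. This is homogeneous and carries no information.
\item Substituting $K=\varkappa A$ into the complete pure solution for $\nabla'K$ only forces $d=0$, so $\rank A=2$. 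It also fixes $\nabla_2\varkappa=-2b\varkappa+\varkappa^2/3-\varkappa\eta_2/2$.
\item Every other component is then consistent, with the free jets $h_{00},h_{01},h_{11}$ absorbing the rest.
\end{itemize}
So holomorphic-direction information alone leaves this branch intact.

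What is missing is barred information on the proportional branch. The paper argues as follows.
\begin{itemize}
\item The vanishing of $\nabla''(K-\varkappa A)$, combined with the mixed residuals $\mathcal M,\mathcal Z,\mathcal Q$, gives $c_{111}=m_{11}=\cP^{1,22}=0$.
\item $\nabla''q=0$ and the derivative of $\beta_{\bar\ell}\wedge\ell=0$ give $\beta_{\bar0,0}=-\beta_{\bar1,1}$. The coefficients of $\mu=\varkappa\lambda$ then tie $m$ to $\beta_F$ and give $\beta_{\bar0,1}=0$.
\item Set $s=\beta_{\bar1,0}$ and $t=\beta_{\bar1,1}$. The differentiated identities $\mathsf M\beta^\perp_{\bar y}+\mathsf T_y=0$ for $y=0,1$ give two expressions for $R\bar c_{011}$ differing by $2\varkappa t^2$. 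Hence $t=0$, $c_{011}=0$ and $R\bar c_{001}=3\bar m_{10}s$.
\item If $s\equiv0$, Theorem~\ref{c0:thm:hol} gives a contradiction. Otherwise rank two forces $m_{10}\ne0$, and $\mathsf T_2=0$ gives $b=0$.
\item Since $d=0$, the line $\mathbb Cr$ is recurrent. One writes $A=\chi_A\,r\otimes r+r\otimes v+v\otimes r$ with $v=\ell^\sharp/|\ell|^2$ and differentiates invariantly, getting $(\nabla_pA)^{01}=R\overline{\beta_{\bar p,1}}$.
\item On the other hand, \eqref{c0:Kform} with $K=\varkappa A$ gives $(\nabla_pA)^{01}=-(\varkappa R/3)\delta_{p1}$. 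At $p=1$ this says $\bar t=-\varkappa/3\ne0$, contradicting $t=0$.
\end{itemize}
A chain of this kind, using mixed identities and the holomorphic-annihilator theorem, has to replace your determinant argument.
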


\subsubsection{The geometric residuals used in the certificate}
All residuals in this section vanish for an actual metric. Put
\begin{align*}
 \mathcal M_{p l}^{aj}&=(\nabla_p\nabla_{\bar l}A-\nabla_{\bar l}\nabla_pA-R_{p\bar l}\cdot A)^{aj},\\
 \mathcal I_{qp}^{aj}&=(\nabla_q\nabla_pA-\nabla_p\nabla_qA+T^v_{qp}\nabla_vA)^{aj},\\
 \mathcal C_{p l}^{aj}&=(\nabla_p\nabla_{\bar l}K-\nabla_{\bar l}\nabla_pK-R_{p\bar l}\cdot K)^{aj},\\
 \mathcal Q_l&=\nabla_{\bar l}\bigl(A(\ell,\ell)\bigr).
\end{align*}
The curvature actions include canonical weights one on $A$ and two on $K$. Define the unnormalized symmetrization
\[
 \mathcal S_{z,p;(l,a,j)}=\sum_{(l',a',j')}\nabla_z\mathcal M_{p l'}^{a'j'},
\]
where the sum is over the distinct permutations of $(l,a,j)$ (one, three, or six terms).
On an actual open set $k=\varkappa{}r$, with $\varkappa{}\ne0$, also put
\[
 \mathcal Z_l^a=\nabla_{\bar l}(K\ell)^a-(\nabla_{\bar l}\varkappa{})r^a-\varkappa{}\nabla_{\bar l}(A\ell)^a.
\]
Here $\varkappa{}$ is not a scalar with trivial connection: it has canonical weight one and transforms as $\varkappa{}\mapsto g^{-1}\varkappa{}$ under $\ell\mapsto g\ell$. Thus every derivative above is the full induced covariant derivative.

At a point choose a unitary frame, and rescale the generator, so that
\[
 \ell=e^2,\quad r=R e_0,\quad R>0,\qquad
 A=\begin{pmatrix}a&b&R\\b&d&0\\R&0&0\end{pmatrix},\quad
 K=\begin{pmatrix}u&v&\varkappa{}R\\v&w&0\\\varkappa{}R&0&0\end{pmatrix}.
\]
This is only a pointwise frame choice; no component identity in that frame is differentiated as though the adapted frame were parallel.
The pure residual $\mathcal I=0$ together with $\nabla'(k-\varkappa{}r)=0$ determines $H_p=\nabla_pK$. Write $t_2=\nabla_2\varkappa{}$ and
\[
 J=\frac{Rb\varkappa{}}{3}-\frac{2R\varkappa{}^2}{9}-\frac{R\varkappa{}\eta_2}{6}-\frac{Rt_2}{3}.
\]
Then
\begin{align*}
 H_0&=\begin{pmatrix}-\frac43Rb\varkappa{}-\frac49R\varkappa{}^2-\frac13R\varkappa{}\eta_2-\frac23Rt_2&-\frac23R\varkappa{}d&0\\-\frac23R\varkappa{}d&0&0\\0&0&0\end{pmatrix},\\
 H_1&=\begin{pmatrix}R\varkappa{} a+Ru&J+Rv&2R^2\varkappa{}\\J+Rv&-\frac13R\varkappa{}d+Rw&0\\2R^2\varkappa{}&0&0\end{pmatrix},\\
 H_2&=\begin{pmatrix}h_{00}&h_{01}&Rb\varkappa{}+\frac23R\varkappa{}^2+\frac12R\varkappa{}\eta_2+Rt_2-Rv\\h_{01}&h_{11}&R\varkappa{}d-Rw\\Rb\varkappa{}+\frac23R\varkappa{}^2+\frac12R\varkappa{}\eta_2+Rt_2-Rv&R\varkappa{}d-Rw&0\end{pmatrix},
\end{align*}
with $\nabla_0\varkappa{}=-\varkappa{}\eta_0/2$, $\nabla_1\varkappa{}=(2R-\eta_1/2)\varkappa{}$. The parameters $t_2,h_{00},h_{01},h_{11}$ are not set to zero or assumed generic.

We verify that this table is exhaustive, since checking that a table
solves the equations would not suffice. Order the symmetric pairs as
$(00,01,02,11,12,22)$ and order the unknowns as the eighteen entries
$H_p^{aj}$, first by $p=0,1,2$ and then by this pair order, followed by
$t_0,t_1,t_2$, where $t_p=\nabla_p\varkappa$. Use the eighteen equations
$\mathcal I_{qp}^{aj}=0$, ordered by $(q,p)=(0,1),(0,2),(1,2)$ and
then by the symmetric pair, followed by the nine components of
$\nabla_p(k-\varkappa r)=0$, ordered by $(p,a)$ lexicographically.
The resulting $27$ by $21$ coefficient matrix has a $17$ by $17$
minor on rows
\[
 1,2,3,4,5,7,8,9,10,13,14,16,19,22,23,25,26
\]
and columns
\[
 1,2,3,4,5,6,7,8,9,10,11,12,15,17,18,19,20
\]
equal to $-4R^2/729$. The indices here are one-based. Substitution of
the displayed table makes all $27$ equations zero for arbitrary
$H_2^{00},H_2^{01},H_2^{11},t_2$. These four entries are independent
coordinates of the table. Thus the coefficient matrix has rank exactly
$17$, and the table describes every solution when $R\ne0$. This
calculation divides by no entry of $A$, no determinant of $A$, and no
value of the four free jets. In particular it includes $a=0$, $d=0$,
and every singular value of $A$ allowed by the hypotheses.

Define the following four exact combinations of differentiated geometric residuals, evaluated using the preceding pure identities:
\[
\begin{aligned}
B_0&=-\tfrac14\mathcal S_{1,1;(0,2,2)},\qquad B_1=\tfrac14\mathcal S_{1,0;(0,2,2)}+\tfrac12\mathcal S_{1,2;(2,2,2)},\\
B_{21}&=-\tfrac12\mathcal S_{2,1;(0,2,2)},\\
B_{29}&=6\nabla_{\bar0}\mathcal I_{01}^{02}-2\mathcal S_{1,0;(0,0,2)}-2\mathcal S_{1,2;(0,2,2)}
 +2\mathcal C_{10}^{22}-\mathcal S_{2,1;(0,2,2)}-2\mathcal S_{1,1;(0,1,2)}.
\end{aligned}
\]
Here is an explicit expansion of these four combinations. Write
$\beta_{la}=\beta_{\bar l,a}$, $G_l=\nabla_{\bar l}K$, and put
\[
 \mathfrak x=\overline{\cP^{0,11}}-\overline{\cP^{1,01}},\qquad
 \mathfrak y=\overline{\cP^{0,01}}-\overline{\cP^{1,00}},\qquad
 L=2R+\eta_1.
\]
The notation
\[
 \mathfrak x_z=(\nabla_z\bar\cP)^{0,11}-(\nabla_z\bar\cP)^{1,01},\qquad
 \mathfrak y_z=(\nabla_z\bar\cP)^{0,01}-(\nabla_z\bar\cP)^{1,00}
\]
means components of the full tensor derivative at the chosen point.
It does not mean differentiation of a component normalization. Define
\[
 J_\beta=12b\varkappa+9\eta_2\varkappa+6t_2-6v+4\varkappa^2.
\]
Then the raw Ricci and Bianchi equations give
\[
\begin{aligned}
B_0={}&\frac R4\bigl(\eta_1\mathfrak x+2\mathfrak x_1
                 +2\varkappa L\beta_{21}\bigr),\qquad B_1={}-\frac R4\bigl(\eta_1\mathfrak y+2\mathfrak y_1\bigr),\\
B_{21}={}&\frac R6\bigl(3G_0^{22}+6G_2^{02}
 +(3\eta_2+4\varkappa)\mathfrak x-6\varkappa Q_{21}
 +6R\varkappa\beta_{22}+6\mathfrak x_2
 +6a\varkappa\beta_{20}+J_\beta\beta_{21}\bigr),\\
B_{29}={}&\frac13\bigl(-(9R+6\eta_1)G_0^{22}+6RG_2^{02}
 +3a\eta_1\mathfrak y
 +(3R\eta_2-12R\varkappa+6b\eta_1)\mathfrak x\\
&\qquad -54R\varkappa Q_{21}-48R^2\varkappa\beta_{00}
 +12R\varkappa L\beta_{11}+6R^2\varkappa\beta_{22}\\
&\qquad +6R\mathfrak x_2+6a\mathfrak y_1+12b\mathfrak x_1
 -6Lu\beta_{20}+RJ_\beta\beta_{21}\bigr).
\end{aligned}
\]
To obtain these formulas one differentiates the tensor identities
before making the pointwise substitutions $\ell=e^2$ and $r=Re_0$.
The rules used are
\[
 \nabla_p\ell_a=\eta_p\ell_a-\eps_{pab}r^b,\qquad
 \nabla_pA=\tfrac12\eta_pA+W_p,\qquad
 \nabla_pK=H_p,\qquad \nabla_{\bar l}K=G_l,
\]
together with \eqref{c0:eq:barA}, \eqref{c0:eq:E},
\eqref{c0:eq:Ptransport}, and the sourced identity \eqref{c0:eq:Qtr}.
In particular the mixed derivative of the annihilator is
\[
\nabla_p\beta_{\bar l,a}
 ={}E_{p\bar l}\ell_a+\eta_p\beta_{\bar l,a}
 -\eps_{pab}\bigl((\nabla_{\bar l}A)^{bj}\ell_j
                        +A^{bj}\beta_{\bar l,j}\bigr) -\sum_bR_{p\bar l a\bar b}\ell_b+\rho^{(1)}_{p\bar l}\ell_a.
\]
The last two terms are respectively the dual tangent and
$K_X^{-1}$ curvature actions. On $K$ the corresponding canonical
term is $-2\rho^{(1)}K$. Keeping the independent jets
$\nabla_pG_l$ and $\nabla_{\bar l}H_p$ in the calculation makes their
cancellation in each $B_i$ explicit; the curvature derivative terms
$\mathfrak x_z,\mathfrak y_z$ are retained until the final combination.
The pure residual used under a barred derivative is obtained from the
raw pure Ricci residual by the already established identity
$(\partial\eta)_{qp}=-2\eps_{qpr}k^r/3$. It is an identity on the
open set, so its barred derivative can be used there.

\begin{lemma}[Transverse barred derivative]\label{c0:r2:betaell}
On an actual open set where $r\ne0$ and $k=\varkappa{}r$ with $\varkappa{}\ne0$,
\[
 \beta_{\bar\ell}\wedge\ell=0,\qquad a_\ell\wedge\ell=0.
\]
\end{lemma}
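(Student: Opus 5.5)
This follows Proposition~\ref{c0:prop:ellbar}(b) and the four combinations $B_0,B_1,B_{21},B_{29}$ defined above. On the open set where $k=\varkappa r\ne0$, Proposition~\ref{c0:prop:ellbar}(b) already gives $\varkappa\beta_\ell-a_\ell\in\mathcal L_{\mathrm{ann}}$. So it suffices to prove one of the two wedge identities; the other then follows because $\varkappa\ne0$.

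In the pointwise adapted frame ($\ell=e^2$, $r=Re_0$), the transverse components of $\beta_\ell=\beta_{\bar2}$ are $\beta_{20}$ and $\beta_{21}$. We therefore want to show $\beta_{20}=\beta_{21}=0$. Using $\varkappa\beta_{2\alpha}=(a_\ell)_\alpha$ together with the adapted-frame expression $(a_\ell)_p=\overline{\Xi_{p2}}$, the quantities $\mathfrak x$ and $\mathfrak y$ become linear expressions in $\beta_{20}$ and $\beta_{21}$ with coefficient $\varkappa$. Explicitly, $\Xi_{02}$ and $\Xi_{12}$ are differences of $\cP$ components that coincide with $\mathfrak y$ and $\mathfrak x$ up to sign.

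The first step is to eliminate the free curvature-derivative jets. Each $B_i$ vanishes for an actual metric, being a combination of differentiated geometric residuals. We form the linear combination of $B_0,B_1,B_{21},B_{29}$ that cancels:
\begin{itemize}
\item $G_0^{22}$ and $G_2^{02}$, using $B_{21}$ against $B_{29}$;
\item the curvature derivatives $\mathfrak x_z,\mathfrak y_z$, where $B_0$ and $B_1$ remove $\mathfrak x_1$ and $\mathfrak y_1$, and $B_{21}$ removes $\mathfrak x_2$;
\item $Q_{21}$ and the $J_\beta\beta_{21}$ terms.
\end{itemize}
To handle $\mathfrak x_z$ and $\mathfrak y_z$ we also need the derivative of the identity $\varkappa\beta_\ell-a_\ell\equiv0 \pmod \ell$. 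This identity holds on an open set, so it may be differentiated. Differentiating it in the unbarred directions $z=1,2$, via the mixed derivative formula for $\beta$ and $\nabla_z\overline{\cP}$, expresses $\mathfrak x_z$ and $\mathfrak y_z$ through $\nabla_z\beta$ and lower-order terms. The remaining $\beta_{00},\beta_{11},\beta_{22}$ terms must cancel using the tangency relations of Lemma~\ref{c0:lem:rawmixed}.

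The expected outcome, as announced in the appendix index, is an identity
\[
\text{(vanishing combination)}=16R^2\varkappa^2\beta_{\bar2,1}.
\]
With $R\ne0$ and $\varkappa\ne0$ this gives $\beta_{21}=0$. A symmetric argument, or one of the unused transverse equations, should give $\beta_{20}=0$. One option is Lemma~\ref{c0:lem:rawmixed} with $y=2$, which yields $\mathsf M\beta^\perp_{\bar2}+\mathsf T_2=0$. Another is the residual $\mathcal Z$, whose vanishing relates $G_l\ell$ to $\nabla_{\bar l}\varkappa$ and whose $\beta_{20}$ coefficient involves $a\varkappa$ and $Lu$. Since $\beta_{20}$ enters $B_{21}$ and $B_{29}$ only with coefficients $6a\varkappa$ and $-6Lu$, I would first take the combination that keeps $\beta_{21}$ alone. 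I would then try the $\mathcal Z$ contraction (or an $a\leftrightarrow$ transverse relabeling) to isolate $\beta_{20}$.

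The main obstacle is the bookkeeping in the first step. The free jets $\nabla_pG_l$, $\nabla_{\bar l}H_p$, and the full $\nabla\overline{\cP}$ must be shown to cancel exactly, while the four free pure parameters $h_{00},h_{01},h_{11},t_2$ are left unrestricted. The coefficients also must not depend on a division by $a$, $d$, or any singular value of $A$. Once this combination is found, the conclusion is immediate.
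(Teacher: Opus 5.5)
Your outer structure matches the paper. You build a polynomial certificate in the adapted frame ($\ell=e^2$, $r=Re_0$) whose value is $16R^2\varkappa^2\beta_{\bar2,1}$, divide by $R\varkappa\ne0$, and then use Proposition~\ref{c0:prop:ellbar}(b) to pass from $\beta_{\bar\ell}\wedge\ell=0$ to $a_\ell\wedge\ell=0$. But the whole content of the lemma is that certificate, and the way you propose to obtain it has two concrete defects.

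First, your routes to $\beta_{\bar2,0}=0$ fail. Lemma~\ref{c0:lem:rawmixed} \emph{assumes} $a_\ell\wedge\ell=0$ on the open set. By Proposition~\ref{c0:prop:ellbar}(b) and $\varkappa\ne0$, that hypothesis is equivalent to the conclusion you are proving. So invoking it, whether at $y=2$ or for the tangency relations you want to use on $\beta_{\bar0,0},\beta_{\bar1,1}$, is circular. A $0\leftrightarrow1$ relabeling is also unavailable, because $r=Re_0$ breaks that symmetry. The missing step is one line. By Theorem~\ref{c0:thm:qzero}, $q=A(\ell,\ell)\equiv0$ on $\Omega_{\cP,2}$, so $0=\mathcal Q_2=(\nabla_{\bar2}A)^{22}+2A^{a2}\beta_{\bar2,a}=2R\beta_{\bar2,0}$, since $\cP^{2,jl}=0$ and $\eps_{22b}=0$.

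Second, the four $B_i$ cannot be combined as you describe. In $B_{21}$ and $B_{29}$, both $G_2^{02}$ and $\mathfrak x_2$ appear with coefficients $R$ and $2R$, while $G_0^{22}$ appears with $R/2$ and $-(3R+2\eta_1)$. Hence any combination $\alpha B_{21}+\gamma B_{29}$ that kills $G_2^{02}$ leaves $-2\gamma L\,G_0^{22}$, with $L=2R+\eta_1$; and $B_0,B_1$ contain no $G$ terms at all. The paper's combination $3RB_{29}+12aB_1-24bB_0-6RB_{21}$ cancels every $\mathfrak x_z,\mathfrak y_z$, so differentiating $\varkappa\beta_{\bar\ell}-a_\ell$ is unnecessary. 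It also cancels $G_2^{02}$, $\mathfrak y$ and $\beta_{\bar2,2}$, but it leaves
\[
 -6RL\,G_0^{22}-16R^2\varkappa\,\mathfrak x-48R^2\varkappa Q_{21}-48R^3\varkappa\beta_{\bar0,0}+12R^2\varkappa L\beta_{\bar1,1}-6R(aR\varkappa+Lu)\beta_{\bar2,0}-12bR\varkappa L\beta_{\bar2,1}.
\]
These terms are removed by \emph{undifferentiated} first-order residuals, not by Lemma~\ref{c0:lem:rawmixed}:
\begin{itemize}
\item $24R^2\varkappa\mathcal Q_0$, with $\mathcal Q_0=2Q_{21}+2R\beta_{\bar0,0}$, cancels the $Q_{21}$ and $\beta_{\bar0,0}$ terms;
\item the mixed Ricci residuals $\mathcal M^{aj}_{pl}$, with coefficients $D=3bL+4R\varkappa$, $RL$ and $aL$, together with $\mathcal Z^2_2$ and $\mathcal Q_2$, remove the rest and leave exactly $16R^2\varkappa^2\beta_{\bar2,1}$.
\end{itemize}
Without these additional residuals, the step you call bookkeeping is the entire missing proof.
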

\begin{proof}
Set $L=2R+\eta_1$ and $D=6Rb+4R\varkappa{}+3b\eta_1$. Direct substitution of the structure equations and the pure solution above gives the following polynomial identity, with no unspecified remainder:
\begin{align*}
16R^2\varkappa{}^2\beta_{\bar2,1}={}&3R B_{29}+12aB_1-24bB_0-6R B_{21}\\
 &-4D\mathcal M_{12}^{02}-2D\mathcal M_{10}^{22}+24R^2\varkappa{}\mathcal Q_0\\
 &+12RL(\mathcal M_{12}^{01}+\mathcal M_{10}^{12}+\mathcal M_{11}^{02}+\mathcal M_{22}^{02})\\
 &+6RL(\mathcal M_{20}^{22}+\mathcal M_{02}^{00})+12RL\mathcal M_{00}^{02}\\
 &-6aL\mathcal M_{02}^{02}+2aL\mathcal Z_2^2-3aL\mathcal M_{00}^{22}
 -a\varkappa{}(R+2\eta_1)\mathcal Q_2.
\end{align*}
Every term on the right vanishes. Also $\mathcal Q_2=2R\beta_{\bar2,0}=0$. Since $R,\varkappa{}\ne0$, this proves $\beta_{\bar2,0}=\beta_{\bar2,1}=0$. The established first-order contraction $\varkappa{}\beta_{\bar\ell}-a_\ell\in\mathbb C\ell$ then gives the second assertion. The displayed polynomial combination, together with the four direct residual formulas, proves the required vanishing.
For precision, this is a complex polynomial certificate. The symbols
$B_0,B_1,B_{21},B_{29}$ name the four displayed combinations and are
not unspecified row numbers. The indices of $\mathcal M,\mathcal I,
\mathcal C,\mathcal Q,\mathcal Z$ determine every component used in
the certificate. Taking real and imaginary parts gives two real
identities with the same order of components. Formally,
$\bar\cP$ and $\nabla'\bar\cP$ can even be treated as independent
conjugate copies during the expansion: their coefficients cancel
without any additional conjugation constraint. The two entries
$\beta_{20}$ and $\beta_{21}$ remain unconstrained variables during
that expansion. Only afterward do the geometric residuals vanish,
giving $2R\beta_{20}=0$ and $16R^2\varkappa^2\beta_{21}=0$.
The required divisions are therefore by $R$ and $\varkappa$ only,
both nonzero on the open set in the lemma. The coefficient is
$\varkappa^2$, not $|\varkappa|^2$; nonvanishing suffices over $\C$.
No derivative of a specialized parameter value is set to zero.
\end{proof}

\subsubsection{Reduction to proportional tensors on the open set}
Write $c_{\alpha\beta\delta}=\cP^{\alpha,\beta\delta}$ for indices in $\{0,1\}$ and $m_{\alpha\beta}=\cP^{\alpha,\beta2}$. Lemma~\ref{c0:r2:betaell} implies that $c_{\alpha\beta\delta}$ is symmetric. It also makes both quadrics $\lambda,\mu$ independent of $z_2$. From $\lambda K=\mu A$ and $k=\varkappa{}r$, comparison of the coefficient of $z_2$ gives $\mu=\varkappa{}\lambda$. Hence
\[
 \lambda(K-\varkappa{}A)=0.
\]
Suppose $K-\varkappa{}A\ne0$ at some point. It is nonzero on an open neighborhood $V$, where therefore $\lambda=\mu=0$. On $V$ this says $\beta_F=\vartheta I$ and $m$ is skew. The previously proved differentiation identities for $a_\ell\wedge\ell=\beta_{\bar\ell}\wedge\ell=0$ apply: their derivation uses these two open identities, $q=0$, transport and mixed Ricci, and does not use independence of $r$ and $k$. They give $\vartheta\bar m_{01}=0$ and $c_{\alpha\beta\delta}=\xi r^\alpha r^\beta r^\delta$.
If $m$ were nonzero at some point, then on a smaller open set $\vartheta=0$ and all transverse components of $\nabla''\ell$ vanish; the established holomorphic-annihilator theorem would give $A=0$, impossible since $r\ne0$. Thus $m=0$ on $V$.
Now differentiate the invariant identity $\cP(\cdot,\ell)=N\otimes v$, where $N=\cP(\ell,\ell)$ and $v=\ell^\sharp/|\ell|^2$. Lemma~\ref{c0:mixedblock} gives $N\overline\vartheta=0$. At any point with $\vartheta\ne0$, $N=0$ and $\cP=\xi r^{\otimes3}$ has rank at most one. Thus $\vartheta=0$ on $V$, and the holomorphic-annihilator theorem again contradicts $r\ne0$. It follows that
\[
 K=\varkappa{}A
\]
on the entire open set under consideration.

\subsubsection{Exclusion of the proportional-tensor branch}
Differentiate this last open identity. Since $H_0^{01}= -2R\varkappa{}d/3$ from the pure solution, whereas $\nabla_0(\varkappa{}A)^{01}=0$, one has $d=0$. Thus $\rank A=2$ there. The same pure system gives
\[
 \nabla_2\varkappa{}=-2b\varkappa{}+\varkappa{}^2/3-\varkappa{}\eta_2/2.
\]
Put $\mathcal G_l^{ij}=(\nabla_{\bar l}K)^{ij}-(\nabla_{\bar l}\varkappa{})A^{ij}-\varkappa{}(\nabla_{\bar l}A)^{ij}$; it vanishes on this open set. Direct first-order identities are
\begin{align*}
2\varkappa{}\cP^{1,11}&=\mathcal G_1^{11}+\mathcal M_{21}^{11},\\
4\varkappa{}\cP^{1,12}&=2\mathcal Z_1^1-\mathcal M_{11}^{11}+\mathcal M_{22}^{11}+\mathcal G_2^{11}+2\mathcal M_{21}^{12},\\
4\varkappa{}\cP^{1,22}&=6\mathcal M_{22}^{12}+4\mathcal Z_2^1+3\mathcal M_{21}^{22}+\mathcal Z_1^2-\varkappa{}\mathcal Q_1-(b\varkappa{}/R)\mathcal Q_2.
\end{align*}
Consequently
\[
 c_{111}=m_{11}=\cP^{1,22}=0.
\]
From $\nabla''q=0$ and the derivative established above of $\beta_{\bar\ell}\wedge\ell=0$ one obtains $\beta_{\bar0,0}=-\beta_{\bar1,1}$. As $\mu=\varkappa{}\lambda$, its coefficients give
\[
 \bar m_{00}=\varkappa{}\beta_{\bar1,0},\quad -\bar m_{11}=\varkappa{}\beta_{\bar0,1},\quad
 \bar m_{01}+\bar m_{10}=\varkappa{}(\beta_{\bar1,1}-\beta_{\bar0,0}).
\]
In particular $\beta_{\bar0,1}=0$. Set $s=\beta_{\bar1,0}$, $t=\beta_{\bar1,1}$; the transverse barred derivative matrix is
\[
 \beta_F=\begin{pmatrix}-t&0\\s&t\end{pmatrix}.
\]
The identities established above $\mathsf M\beta^\perp_{\bar y}+\mathsf T_y=0$ for $y=0,1$ now yield, respectively,
\[
 R\bar c_{011}=3\bar m_{10}t-2\varkappa{}t^2,
 \qquad
 R\bar c_{011}=3\bar m_{10}t-4\varkappa{}t^2.
\]
Hence $t=0$ since $\varkappa{}\ne0$. They also give
\[
 c_{011}=0,\qquad R\bar c_{001}=3\bar m_{10}s.
\]
If $s$ vanishes identically on the open set, then all transverse components of $\nabla''\ell$ vanish and the holomorphic-annihilator theorem contradicts $r\ne0$. Otherwise restrict to a nonempty open set where $s\ne0$. At a point of this set, $m_{10}=0$ would imply $c_{001}=0$; every component of the output row $\cP^1$ would then vanish, contradicting $\rank \cP=2$. Thus $m_{10}\ne0$ throughout this open set. The derivative identity proved above, $\mathsf T_2=0$, has first component
\[
 -2\bar b\bar c_{001}+\bar a\bar c_{011}=0.
\]
Since $c_{001}\ne0$ and $c_{011}=0$, it follows that $b=0$ on this open set.

It remains to differentiate this fact invariantly, rather than differentiating a component in a moving frame. Since $d=0$, the identity
\[
 \nabla_pr=(3\eta_p/2+2\varkappa{}\ell_p/3)r+AE_pr
\]
shows that the Chern $(1,0)$ connection preserves the line spanned by $r$: in the aligned frame $AE_pr$ is a multiple of $r$ for every $p$. With $b=0$, put $v=\ell^\sharp/|\ell|^2\in T^{1,0}X\otimes K_X$. There is a smooth locally weighted coefficient $\chi_A$ such that the full tensor identity
\[
 A=\chi_A\,r\otimes r+r\otimes v+v\otimes r
\]
holds on this open set. At a chosen aligned point its covariant derivative in the $(0,1)$ matrix slot (indices $0,1$, not types) gives
\[
 (\nabla_pA)^{01}=R(\nabla_pv)^1=R\overline{\beta_{\bar p,1}}.
\]
Derivatives of $\chi_A$, of the normalization $|\ell|^{-2}$, and of $r$ contribute zero to this component; recurrence of $r$ is used here. On the other hand, $K=\varkappa{}A$ and the K-form give
\[
 (\nabla_pA)^{01}=-(\varkappa{}R/3)\delta_{p1}.
\]
For $p=1$ this says $\bar t=-\varkappa{}/3$, contradicting $t=0$. Thus there is no nonempty open set with $r\ne0$, $k=\varkappa{}r$, $\varkappa{}\ne0$.

\begin{proof}[Completion of Theorem~\ref{c0:r2:parallel}]
On $\{r\ne0\}$ the established $r\wedge k=0$ supplies a smooth locally weighted $\varkappa{}$, and the preceding argument shows $k=0$. On the interior of $\{r=0\}$, the established open-set identity $r\equiv0\Rightarrow k=0$ applies. These two open sets have dense union in $\Omega_{\cP,2}$; continuity gives $k=0$ at the remaining boundary points.
\end{proof}

\subsection{The remaining contractions and the conclusion}
\subsubsection{Vanishing of the annihilator contraction}
\begin{proposition}[Vanishing of the primitive-torsion contraction]\label{c0:r2:rvanishes}
On $\Omega_{\cP,2}$, $r=A\ell=0$.
\end{proposition}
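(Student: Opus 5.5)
We argue by contradiction on the open set $U_r=\{r\ne0\}\cap\Omega_{\cP,2}$. If it is empty, continuity of $r$ on $\Omega_{\cP,2}$ finishes the proof. On $U_r$ we already have $q=\kappa=0$ (Theorem~\ref{c0:thm:qzero}) and $k=0$ (Theorem~\ref{c0:r2:parallel}). Hence \eqref{c0:Kform} reduces to $W_p=\ell_pK$ with $K\ell=0$, and $\partial\eta=0$. In an aligned frame $\ell=e^2$, $r=Re_0$ with $R>0$, the tensor $A$ has the shape used in the previous subsection, and $K$ is supported on the $\{0,1\}$ block.

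The first step is to redo the pure prolongation \eqref{c0:eq:I1} with $k\equiv0$. Since $\nabla_p(K\ell)=0$ holds as an open identity, we get $H_p\ell=-K\nabla_p\ell=K E_p r$. Feeding this into the pure system should show that $K$ has rank at most one. More precisely, I expect the $(q,p)=(0,1)$ block to force $K^{\alpha\beta}$ to satisfy $RK^{1\beta}=0$, by the same mechanism as in Lemma~\ref{c0:lem:pureprol}(c) but with $T^2_{01}$ replaced by the $r$-terms.

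The second step uses the mixed identities. Proposition~\ref{c0:prop:ellbar}(b) with $k=0$ gives $(r\cdot\zeta)a_\ell\in\mathcal L_{\mathrm{ann}}$ for all $\zeta$. Since $r\ne0$, this yields $a_\ell\wedge\ell=0$, so the transverse block $c$ of $\cP$ is totally symmetric. The quartic identity \eqref{c0:eq:lambdamu} becomes $\lambda K=\mu A$, and comparing the $z_2$ coefficients (with $k=0$, $r\ne0$) gives $\mu=0$. This means the block $m$ satisfies the symmetry conditions of \eqref{c0:alt0} for $\mu$, while $\lambda K_F=0$.

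We then split into two open cases. If $\lambda\ne0$ somewhere, then $K=0$ there, so $W=0$ on an open set. Lemma~\ref{c0:lem:mixed} then gives $\Xi=0$, so $\cP$ is totally symmetric, and the symmetric-compatibility argument of Theorem~\ref{c0:thm:qzero} should be rerun with $r$ in place of $q$. I expect the row $(AE_p\cP)^{j,2l}=0$ to give $R\cP^{1,jl}=0$ and related relations, and hence $\rank\cP\le1$, a contradiction. If $\lambda\equiv0$, then $\beta_F=\vartheta I$ and Lemma~\ref{c0:lem:rawmixed} applies. As in Theorem~\ref{c0:maink1}, this yields $\vartheta\bar m_{01}=0$ and $c=\xi r^{\otimes3}$. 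If $m\ne0$, we get $\vartheta=0$ and Theorem~\ref{c0:thm:hol} gives $A=0$, contradicting $r\ne0$. If $m=0$, Lemma~\ref{c0:mixedblock} gives $\bar\vartheta N=0$, so either $\rank\cP\le1$ or $\vartheta=0$, and the latter again contradicts Theorem~\ref{c0:thm:hol}.

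The main obstacle is the first subcase. Total symmetry with $q=0$ but $r\ne0$ does not immediately kill $\cP$, and the compatibility condition only controls the output rows. If the direct component argument fails, I would use the invariant trick from the end of the previous subsection. Since $W=0$ on that open set, $\nabla'r$ is recurrent, and differentiating the tensor identity $A=\chi_A r\otimes r+r\otimes v+v\otimes r$ (or its rank-three analogue) in the $(0,1)$ slot produces a barred component of $\beta$ that must simultaneously vanish and equal a nonzero multiple of $R$. As elsewhere, every identity will be differentiated only after it has been established on an open set.
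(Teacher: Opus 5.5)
\textbf{There is a genuine gap, in your second step.} Comparing $z_2$-coefficients in $\lambda K=\mu A$ does not give $\mu=0$.

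With $k=0$, Proposition~\ref{c0:prop:ellbar}(b) yields only $a_\ell\wedge\ell=0$. That makes $\mu$ independent of $z_2$. It says nothing about $\beta_\ell\wedge\ell$: on $U_\times$ that came from the $(k\cdot\zeta)\beta_\ell$ term, which is now absent.

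The identity $\nabla_{\bar2}q=0$ gives $\beta_{\bar2,0}=0$, but $\beta_{\bar2,1}$ is unconstrained. Hence $\lambda=\lambda_0(z')+z_0z_2\beta_{\bar2,1}$. Since $K(z)$ has no $z_2$ and $A(z)=A_F(z')+2Rz_0z_2$, the $z_2$-coefficient, after cancelling $z_0$, actually reads $\beta_{\bar2,1}K=2R\mu$. This permits $\mu\ne0$ whenever $\beta_{\bar2,1}K\ne0$.

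Your dichotomy ``$\lambda\ne0\Rightarrow K=0$'' versus ``$\lambda\equiv0$'' therefore misses the branch $K\ne0$, $\beta_{\bar2,1}\ne0$, $\mu\ne0$, $m$ not skew. That branch is exactly where the difficulty of the proposition lies.

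\textbf{What that branch requires.} You must work with the unwedged identities $\beta_pK=\bar\Xi_pA$, $p=0,1$, and then take several further steps.
\begin{enumerate}
\item If $\det A\ne0$, then $A$ is irreducible and coprime to $K$. This forces $\beta_p=\bar\Xi_p=0$, and Theorem~\ref{c0:thm:hol} gives $A=0$, a contradiction.
\item So $\rank A=2$. Differentiating $\det A=0$ gives $\tr(\adj(A)K)=-R^2K^{11}=0$.
\item Hence $A=z_0L_A$ and $K=z_0L_K$ with independent linear forms, so $\beta_p=t_pL_A$ and $\bar\Xi_p=t_pL_K$ with $t_0=0$. The quantity $\beta_{\bar2,1}=2Rt_1$ is precisely the term you discarded.
\item Lemma~\ref{c0:lem:rawmixed}, applied also for $y=2$ where $\beta^\perp_{\bar2}\ne0$, gives $c_{001}=c_{011}=0$ and $K^{01}t_1^2=0$. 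On $\{t_1\ne0\}$ this leaves $K^{01}=0$, $m_{11}\ne0$, and all other entries of $c$ and $m$ zero.
\item The contradiction then comes from an invariant argument. The line $\C r$ is preserved by $\nabla'$. The $n^{\otimes3}$ coefficient of the transport of $\cP$ modulo this line yields $m_{11}\overline{\beta_{\bar p,1}}=0$, which is false for $p=2$.
\end{enumerate}
None of these ingredients is in your plan.

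\textbf{Two smaller remarks.}
\begin{itemize}
\item Your first step cannot work. With $k=0$, the pure system \eqref{c0:eq:I1} at an aligned point in the normalized gauge gives only $H_0=0$, $H_1=RK$ and $H_2\ell=-KE_2r$. It leaves the $\{0,1\}$ block of $K$ unconstrained, so it gives no rank bound on $K$.
\item The subcase $K=0$, which you call the main obstacle, is immediate by the very computation you sketch. Total symmetry gives $-R\cP^{1,jl}=(AE_2\cP)^{2,jl}=(AE_2\cP)^{j,2l}=0$.
\end{itemize}
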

\begin{proof}
We already know $k=0$ on $\Omega_{\cP,2}$. Suppose $r\ne0$ on a nonempty open set. If $K=0$ on an open subset, then $W=0$ there, and the symmetric mixed identity makes $\cP$ fully symmetric since $A\ne0$. At an adapted point $\ell=e^2$, $r=Re_0$, every component of $\cP$ containing an index $2$ vanishes. Symmetry of $\nabla_2\cP=M_2\cP$ gives $-R\cP^{1,jl}=0$, hence $\rank \cP\le1$, a contradiction. It suffices to exclude an open set with $r\ne0$ and $K\ne0$.

Since $k=\nabla''k=0$, the symmetric mixed identity in dummy covectors reads
\[
 \beta_p(z)K(z)+\ell_pG_{\rm sym}(z)=\bar\Xi_p(z)A(z).
\]
For $p=0,1$ in an adapted frame, this is $\beta_p K=\bar\Xi_p A$. If $\det A\ne0$ at a point, the quadratic $A$ is irreducible and coprime to $K$ (they cannot be proportional because $A\ell\ne0=K\ell$). These properties persist on a neighborhood. Degree comparison gives $\beta_p=\bar\Xi_p=0$ for $p=0,1$, so all transverse barred derivatives of $\ell$ vanish. The holomorphic-annihilator theorem would give $A=0$, a contradiction. Therefore $\det A=0$ throughout the open set. Rank one is impossible when $q=0$ and $r\ne0$, so $\rank A=2$.

The derivative of this determinant identity gives $\operatorname{tr}(\operatorname{adj}(A)K)=0$. At an adapted point,
\[
 A=\begin{pmatrix}a&b&R\\b&0&0\\R&0&0\end{pmatrix},\qquad
 K=\begin{pmatrix}u&v&0\\v&w&0\\0&0&0\end{pmatrix}.
\]
The trace is $-R^2w$, so $w=0$. Write $A=z_0L_A$, $K=z_0L_K$, where
\[
 L_A=az_0+2bz_1+2Rz_2,\qquad L_K=uz_0+2vz_1\ne0.
\]
These two linear forms are independent. Thus $\beta_p=t_pL_A$ and $\bar\Xi_p=t_pL_K$ for $p=0,1$. The equation $\nabla_{\bar2}q=0$ gives $\beta_{\bar2,0}=0$, whence $t_0=0$. Put $t=t_1$. Then
\[
 \beta_{\bar y}^{\perp}=(0,tL_A(e_y)),\quad
 \bar m_{00}=\bar m_{10}=0,\quad \bar m_{11}=-ut,\quad \bar m_{01}=2vt.
\]
Also $a_\ell\wedge\ell=0$ follows from the established mixed contraction with $k=0$, $r\ne0$; hence the binary cubic $c$ is symmetric.

The raw differentiated identity for $a_\ell\wedge\ell=0$ is
\[
 \mathsf M\beta_{\bar y}^{\perp}+\mathsf T_y=0\qquad(y=0,1,2).
\]
Its component derivation does not require $\beta_{\bar2}^{\perp}=0$; only the old conclusion $\mathsf T_2=0$ required that extra hypothesis. Here
\[
 \mathsf M=\begin{pmatrix}2vt&0\\ut&4vt\end{pmatrix},\quad
 \mathsf T_0=-R(\bar c_{011},\bar c_{111}),\quad
 \mathsf T_1=R(\bar c_{001},\bar c_{011}).
\]
The first components give $c_{001}=c_{011}=0$. The second component for $y=0$ gives $\bar c_{111}=4vat^2/R$. Using
\[
 (\mathsf T_2)_1=-2\bar b\bar c_{011}+\bar a\bar c_{111},
\]
the second component for $y=2$ becomes
\[
 \frac{4vt^2}{R}(2R^2+|a|^2)=0.
\]
Thus $vt^2=0$. If $t$ vanishes identically on this open set, the holomorphic-annihilator theorem gives the contradiction. Otherwise restrict to a nonempty open subset $t\ne0$. There $v=0$, $u\ne0$, and $m_{11}\ne0$, whereas
\[
 c_{001}=c_{011}=c_{111}=m_{00}=m_{01}=m_{10}=0.
\]

The Chern $(1,0)$ connection preserves the line $L=\mathbb Cr$, since $\nabla_pr=3\eta_pr/2+AE_pr$ and $AE_pr\in L$ in the displayed rank-two form. Quotient all three raised vector slots of $\cP$ by $L$. On $T^{1,0}X/L$ choose a smooth adapted basis $n,v$, with $n$ spanning $(F\otimes K_X^{-1})/L$ and $v$ the class of $\ell^\sharp/|\ell|^2$, retaining its canonical factor. The full open tensor identity has the form
\[
 \widehat \cP=n\otimes\{m(n\otimes v+v\otimes n)+N_1v\otimes v\},\qquad m\ne0.
\]
Transport descends because $M_p$ preserves $L$. Its right side has no component with both input slots $n$. In the covariant derivative of the displayed tensor, the coefficient of $n^{\otimes3}$ is precisely $2m(\nabla'_pv)^n$; derivatives of the coefficients, of $n$, and of $v^{\otimes2}$ contribute nothing to that component. Thus $m(\nabla'_pv)^n=0$. Metric compatibility gives $(\nabla'_pv)^n=\overline{\beta_{\bar p,1}}$ at an aligned point. For $p=2$ this contradicts $m\ne0$, $R>0$, $t\ne0$. The original open set is impossible, proving $r=0$ throughout $\Omega_{\cP,2}$.

\end{proof}

\subsubsection{Proportionality of the remaining tensors}
\begin{proposition}[Proportionality after the contractions vanish]\label{c0:r2:KA}
On $\Omega_{\cP,2}\cap\{A\ne0\}$, one has $K=tA$ for a smooth
locally weighted coefficient $t$.
\end{proposition}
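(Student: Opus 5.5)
We work on $\Omega_{\cP,2}\cap\{A\ne0\}$, where by Theorem~\ref{c0:r2:parallel} and Proposition~\ref{c0:r2:rvanishes} we already have $k=K\ell=0$ and $r=A\ell=0$, together with $q=\kappa=0$ and $\partial\eta=0$. In an adapted frame $\ell=e^2$, both $A$ and $K$ are therefore supported on the $2\times2$ block indexed by $\{0,1\}$; they are binary quadrics $A_F(z')$, $K_F(z')$ in $z'=(z_0,z_1)$. The aim is to show that these two quadrics are proportional wherever $A\ne0$. The coefficient $t$ is then obtained locally by dividing a fixed nonzero entry of $K$ by the corresponding entry of $A$. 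It is smooth, and it carries canonical weight one relative to $\ell$.

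The main input is the quartic mixed identity \eqref{c0:eq:lambdamu}, $\lambda K=\mu A$, with $\lambda(z)=\det(\ell,z,\beta(z))$ and $\mu(z)=\det(\ell,z,\bar\Xi(z))$. Since $k=0$ and $\nabla''k=0$, the symmetric mixed identity for $p=0,1$ reduces to $\beta_p(z)K(z)=\bar\Xi_p(z)A(z)$, as in the proof of Proposition~\ref{c0:r2:rvanishes}. All of these are now polynomials in $z'$ alone, apart from the $\beta_p(e_2)$ contributions; the first step is to check that these contributions drop out because $A$ and $K$ do not involve $z_2$. Suppose $K\wedge A\ne0$ at a point, so that the quadrics are not proportional. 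This persists on an open set $V$, and we split into cases.

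If $A_F$ and $K_F$ are coprime, degree comparison gives $\beta_p|_F=0$ and $\bar\Xi_p=0$ for $p=0,1$. Combined with $\nabla_{\bar2}q=0$ and the $p=2$ component of the mixed identity, this should give vanishing of all transverse components of $\nabla''\ell$. Theorem~\ref{c0:thm:hol} then yields $A=0$, a contradiction. If instead they share exactly one linear factor, write $A_F=L\,L_A$ and $K_F=L\,L_K$ with $L_A,L_K$ independent. Then $\beta_p=s_p L_A$ and $\bar\Xi_p=s_pL_K$. From here we follow the pattern of Proposition~\ref{c0:r2:rvanishes}: insert this structure into the raw differentiated identity $\mathsf M\beta^\perp_{\bar y}+\mathsf T_y=0$ of Lemma~\ref{c0:lem:rawmixed}, which is valid because $a_\ell\wedge\ell=0$ follows from Proposition~\ref{c0:prop:ellbar}(b) when $r=k=0$. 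This should force most of the cubic $c$ and the block $m$ to vanish. Either every $s_p$ vanishes, and Theorem~\ref{c0:thm:hol} applies again, or $\rank\cP\le1$, contradicting membership in $\Omega_{\cP,2}$.

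A complementary tool is the pure residual \eqref{c0:eq:I1} with $W_p=\ell_pK$. Since $r=0$, \eqref{c0:ell} gives $\nabla_p\ell=\eta_p\ell$ in the normalized gauge, so $\ell$ is $\nabla'$-recurrent. Then \eqref{c0:eq:I1} reads $\ell_qH_p-\ell_pH_q=-T^r_{qp}\ell_rK+\dots$, which controls $\nabla'K$ in the directions $0,1$. Together with $\nabla_pA=\frac12\eta_pA+\ell_pK$, this says that $A$ is recurrent in the directions $0,1$. Differentiating the open identity "$A_F,K_F$ share a factor" along these directions should supply the extra relation needed in the common-factor case.

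I expect the main obstacle to be that common-linear-factor case. There the barred data $\beta,\Xi$ are not forced to vanish by degree reasons alone, and excluding it requires carefully organized differentiated component identities of the kind used for $B_0,\ldots,B_{29}$. Each identity must be differentiated on an open set before any pointwise frame alignment, and the case $\det A_F=0$ (a repeated factor) has to be treated separately through its own factorization chart.
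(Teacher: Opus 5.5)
Your coprime case is fine. A linear $\beta_p$ divisible by a coprime quadric vanishes, so you do not even need $\nabla_{\bar2}q=0$ or the $p=2$ component. Two things go wrong after that.

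\emph{First}, Proposition~\ref{c0:prop:ellbar}(b) cannot give $a_\ell\wedge\ell=0$ here, because with $r=k=0$ it reads $0\in\mathcal L_{\mathrm{ann}}$. The correct input is the $z_2$-coefficient of $\beta_pK=\bar\Xi_pA$, namely $\beta_{\bar2,p}K=\overline{\Xi_{p2}}A$ for $p=0,1$. Under non-proportionality this forces $\beta_{\bar2,p}=\overline{\Xi_{p2}}=0$, i.e.\ $\beta_{\bar\ell}\wedge\ell=a_\ell\wedge\ell=0$, and makes the $F$-cubic symmetric. These terms do not drop out by themselves.

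\emph{Second}, and this is the real gap, your route through the common-factor case does not close. Since $r=0$, you have $\mathsf T_0=\mathsf T_1=0$. After a unitary change in $F$ making $s=(s_0,0)$, the identities $\mathsf M\beta^\perp_{\bar y}+\mathsf T_y=0$ give only $m_{01}=m_{11}=m_{10}=0$ and $m_{00}\ne0$. Apart from the two apolarity conditions $\mathsf T_2=0$, they say nothing about the cubic $c$. So neither $s=0$ nor $\rank\cP\le1$ follows: the row $\cP^1$ can keep $c_{111}\ne0$, and there is no contradiction at this stage.

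The missing idea is to produce recurrent factor lines of $A$, and then to use $\nabla_2A=\tfrac12\eta_2A+\ell_2K$ in the direction $p=2$, where $\ell_2\ne0$. The directions $0,1$ that you use carry no information about $K$.

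\begin{itemize}
\item The identity $\nabla_{\bar2}(A\ell)=0$ gives $N=\cP(\ell,\ell)=A\beta_{\bar2}=0$. Hence $\mathsf B_{\mathrm{ann}}=\cP(\cdot,\ell)$ is a nonzero rank-one element of $\Sym^2F$; write $L_0=\im\mathsf B_{\mathrm{ann}}$.
\item The transport $\nabla'_p\mathsf B_{\mathrm{ann}}=(AE_p)^{(1)}\mathsf B_{\mathrm{ann}}+\tfrac32\eta_p\mathsf B_{\mathrm{ann}}$ must stay symmetric. This forces $AE_pL_0\subset L_0$, so $L_0$ is Chern-$(1,0)$-recurrent.
\item If $A$ has rank one, then $A\in L_0^2\otimes K_X$, and recurrence of $L_0$ puts $K$ in the same line.
\item If $A$ has rank two, then $A\in L_0\odot L_1\otimes K_X$, where $L_0,L_1$ are the eigenlines of $A_FE_2$. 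The direction-$2$ transport of the $F$-cubic $C$ has no projection correction, since $\beta^\perp_{\bar2}=0$. Its symmetry gives $C=C_0u_0^3+C_1u_1^3$, with $C_1\ne0$ because $\rank\cP=2$.
\item The $L_0\odot L_1^2$ component of $\nabla'_pC$ then makes $L_1$ recurrent. So again $K\in L_0\odot L_1\otimes K_X$ is proportional to $A$.
\end{itemize}

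The final contradiction is with non-proportionality itself, not with Theorem~\ref{c0:thm:hol} or with $\rank\cP=2$, which is used only to get $C_1\ne0$. The rank-one branch also covers the repeated-factor case without a separate chart.
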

\begin{proof}
Here $\nabla'\ell=\eta\ell$ and $F_t:=F\otimes K_X^{-1}$ is Chern-$(1,0)$-invariant. Suppose there is an open set $A\ne0$ on which $K$ is not proportional to $A$. Both are binary quadrics in an adapted frame. The $z_2$ coefficients in $\beta_pK=\bar\Xi_pA$ give $\beta_{\bar2,p}=\bar\Xi_{p2}=0$ for $p=0,1$. Thus $\beta_{\bar\ell}\wedge\ell=a_\ell\wedge\ell=0$, and the $F_t$ cubic of $\cP$ is symmetric.

The coprime case would give $\beta^\perp=0$ on a neighborhood and hence $A=0$. Therefore the two quadrics have exactly one common linear factor at every point of the hypothetical open set. Constant gcd degree one permits smooth local choices of factor lines (the corresponding linear syzygy kernel has constant rank). Write
\[
 A=L L_A,\qquad K=L L_K,\qquad
 \beta_p=t_pL_A,\quad \bar\Xi_p=t_pL_K,
\]
where $L_A,L_K$ are independent. If the transverse $\beta$ vanishes identically the holomorphic-annihilator contradiction applies. Otherwise pass to an open subset $t\ne0$, and at a point make a unitary change within $F_t$ so that $t=(t_0,0)$, $t_0\ne0$.

Then $m_{11}=m_{01}=0$ and $\beta_{\bar y}^{\perp}=(t_0L_A(e_y),0)$. Since $r=0$, the raw differentiated identities have $\mathsf T_0=\mathsf T_1=0$. The matrix
\[
 \mathsf M=\begin{pmatrix}-2\bar m_{10}&\bar m_{00}\\0&-\bar m_{10}\end{pmatrix}
\]
therefore forces $m_{10}=0$. The nonzero form $t_0L_K=\bar\Xi_0$ forces $m_{00}\ne0$. Invariantly,
\[
 {\mathsf B_{\mathrm{ann}}}:=\cP(\cdot,\ell)\in\operatorname{Sym}^2F_t
\]
is nonzero of rank one. Indeed $\nabla''q=0$ gives $Q_{20}=Q_{21}=0$, and $\nabla_{\bar2}(A\ell)=0$ gives $N=\cP(\ell,\ell)=A\beta_{\bar2}=0$. The canonical weight of ${\mathsf B_{\mathrm{ann}}}$ is zero. Put $L_0=\operatorname{im}{\mathsf B_{\mathrm{ann}}}$. Metric compatibility also shows $\pi_{F_t}\nabla'_pv\in L_0$ for every $p$, where $v=\ell^\sharp/|\ell|^2$.

Transport gives
\[
 \nabla'_p{\mathsf B_{\mathrm{ann}}}=(AE_p)^{(1)}{\mathsf B_{\mathrm{ann}}}+\tfrac32\eta_p{\mathsf B_{\mathrm{ann}}}.
\]
Because ${\mathsf B_{\mathrm{ann}}}$ is a symmetric rank-one tensor on this open set, symmetry of its derivative implies $AE_pL_0\subset L_0$. It follows that the Chern $(1,0)$ connection preserves $L_0$. In a frame $L_0=\mathbb Ce_0$ one has
\[
 A|_{F_t}=\begin{pmatrix}a&b\\b&0\end{pmatrix}.
\]
If $A$ has rank one throughout an open subset, then $A\in L_0^2\otimes K_X$, and recurrence of $L_0$ together with $\nabla'A=\eta A/2+\ell K$ forces $K$ into the same line, a contradiction.

At a rank-two point, pass to its open neighborhood and write $A$ as a nonzero section of $L_0\odot L_1\otimes K_X$ with $L_1\ne L_0$. The endomorphism
\[
 A_FE_2=\begin{pmatrix}b&-a\\0&-b\end{pmatrix}
\]
has distinct eigenlines $L_0,L_1$. Let $C=\pi_{F_t}^{\otimes3}\cP$, a symmetric cubic. In direction $2$ the moving-projection correction is zero because $\beta_{\bar2}^\perp=0$. Therefore $(A_FE_2)^{(1)}C$ is symmetric; in an eigenbasis this forces all mixed cubic components to vanish:
\[
 C=C_0u_0^3+C_1u_1^3.
\]
Here $C_1$ never vanishes, for otherwise $N=0$ and the support of ${\mathsf B_{\mathrm{ann}}}$ would put every output of $\cP$ in $L_0$.
For any direction $p$, the two input-projection derivative corrections are contractions of ${\mathsf B_{\mathrm{ann}}}$ with $\pi_{F_t}\nabla'_pv$, and thus lie in $L_0^3$. Output-projection derivatives vanish because $F_t$ is Chern-$(1,0)$-invariant. Hence
\[
 \nabla'_pC=(AE_p)^{(1)}C+\tfrac12\eta_pC+\zeta_pu_0^3.
\]
The right side lies in the span of the two pure cubes. Since $L_0$ is already recurrent and $C_1\ne0$, the mixed $L_0\odot L_1^2$ component on the left forces $L_1$ to be Chern-$(1,0)$-recurrent. Differentiating $A\in L_0\odot L_1\otimes K_X$ now forces $K$ into that same line, again contradicting nonproportionality. If no rank-two point exists, the rank-one argument applies throughout. Thus $K=tA$ on $\{A\ne0\}\cap\Omega_{\cP,2}$, for a smooth locally weighted coefficient $t$.

\end{proof}

\subsubsection{Exclusion of the nonzero residual coefficient}
\begin{theorem}[Vanishing of the derivative defect at output rank two]\label{c0:r2:Wfinal}
For every smooth zero-Chern-HSC Hermitian threefold, $W=0$ on
$\Omega_{\cP,2}$. In every local annihilator gauge, the symmetric
coefficient $K$ in \eqref{c0:eq:Kform} also vanishes there.
\end{theorem}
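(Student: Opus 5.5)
At this stage the earlier results give $q=\kappa=0$ and $k=K\ell=0$ (Theorem~\ref{c0:r2:parallel}) and $r=A\ell=0$ (Proposition~\ref{c0:r2:rvanishes}) on $\Omega_{\cP,2}$. They also give $K=tA$ on $\Omega_{\cP,2}\cap\{A\ne0\}$ (Proposition~\ref{c0:r2:KA}). By \eqref{c0:eq:Kform} with $k=0$ we have $W_p=\ell_pK$, so $W=0$ is equivalent to $K=0$. The plan is to prove $t=0$ on the open set where $A\ne0$. On the interior of $\{A=0\}$ we have $W_p=\nabla_pA-\tfrac12\eta_pA=0$ directly, hence $\ell_pK=0$ and $K=0$. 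Density of the union of the two open sets and continuity then give $K=W=0$ on all of $\Omega_{\cP,2}$.

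\textbf{Step 1 (transport of $t$).} On $\{A\ne0\}$ the recurrence becomes
\[
\nabla'A=\tfrac12\eta A+t\,\ell\otimes A ,
\]
and $\nabla'\ell=\eta\ell$ because $r=0$. Differentiate $K=tA$ and compare with the pure prolongation \eqref{c0:eq:I1} written with $W_p=t\ell_pA$. The commutator term $\nabla_qW_p-\nabla_pW_q$ reduces to $(\partial t)\wedge\ell\otimes A$ plus $t\,(\nabla_q\ell_p-\nabla_p\ell_q)A$. Since $r=0$, the latter is $t(\partial\eta)$, which vanishes. The torsion term $-T^r_{qp}W_r=-tT^r_{qp}\ell_rA$ involves $\ell\circ T$, and this vanishes because $A\ell=0$ forces $\ell\circ T=-\tfrac12\ell\circ[\eta]_\times$-type terms that cancel against the $\eta$ terms. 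I expect the outcome to be $\partial t\wedge\ell=0$, so $\nabla't=\theta\,\ell$ for a scalar $\theta$ of the appropriate weight.

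\textbf{Step 2 (mixed identity).} Insert $W_p=t\ell_pA$ into Lemma~\ref{c0:lem:mixed}:
\[
\Sym\bigl[(\nabla_{\bar l}t)\ell_pA^{aj}+t\beta_{\bar l,p}A^{aj}+t\ell_p\nabla_{\bar l}A^{aj}\bigr]=\Sym[\overline{\Xi_{pl}}A^{aj}].
\]
Use \eqref{c0:eq:barA} for $\nabla''A$. Then contract against dummy covectors, as in \eqref{c0:eq:lambdamu}, taking $p\in\{0,1\}$ in an adapted frame. This gives $t\beta_p(z)=\bar\Xi_p(z)$ up to terms divisible by $A(z)$. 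Combined with the already proved symmetry of the $F_t$ cubic and $\beta_{\bar2}^\perp=0$, this pins $\beta^\perp$ and $m$ in terms of $t$.

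\textbf{Step 3 (the main obstacle: excluding $t\ne0$).} Suppose $t\ne0$ on an open set. Step~2 together with the raw differentiated identity $\mathsf M\beta^\perp_{\bar y}+\mathsf T_y=0$ (Lemma~\ref{c0:lem:rawmixed}, with $\mathsf T_0=\mathsf T_1=0$ since $r=0$) should force $\mathsf M\beta^\perp=0$. If $\beta^\perp\equiv0$, Theorem~\ref{c0:thm:hol} gives $A=0$, a contradiction. Otherwise the rank-one structure of ${\mathsf B_{\mathrm{ann}}}=\cP(\cdot,\ell)$ and the recurrence of its image line $L_0$, exactly as in Proposition~\ref{c0:r2:KA}, apply. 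I would then differentiate $K=tA$ in a barred direction and use the Ricci identity $\mathcal C$ for $K$ minus $t$ times $\mathcal M$ for $A$. The surviving term should be $t\,\beta\otimes A$ paired with a curvature contraction. I expect it to yield a nonzero multiple of $t^2$ times an entry of $A$, or of $m_{00}$, which must vanish, so $t=0$.

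This last component certificate is where I expect the difficulty to lie. The plan is to differentiate only the open tensor identities before aligning frames, and to keep the weight of $t$ and its derivatives $\nabla''t$ free.
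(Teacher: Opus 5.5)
Your reduction is sound: with $k=0$, \eqref{c0:eq:Kform} gives $W_p=\ell_pK$, the interior of $\{A=0\}$ is immediate, and density plus continuity finishes. Step~1 is never used afterwards. Its bookkeeping is also off: with $r=0$ one has $\nabla_q\ell_p-\nabla_p\ell_q=\eta_q\ell_p-\eta_p\ell_q$, not $(\partial\eta)_{qp}$.

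The genuine gap is that Steps~2--3 do not exclude $t\ne0$. There are three problems.
\begin{enumerate}
\item You take $\beta_{\bar2}^\perp=0$ and the symmetry of the $F_t$ cubic as already proved. In the proof of Proposition~\ref{c0:r2:KA}, both come from comparing $z_2$-coefficients in $\beta_pK=\bar\Xi_pA$ under the hypothesis that $K$ is \emph{not} proportional to $A$. When $K=tA$, that comparison only gives $t\beta_{\bar2,p}=\overline{\Xi_{p2}}$. Since Lemma~\ref{c0:lem:rawmixed} assumes $a_\ell\wedge\ell=0$, even your use of $\mathsf M\beta^\perp_{\bar y}=0$ is not yet justified in this branch.
\item The rank-one structure of ${\mathsf B_{\mathrm{ann}}}$ and the recurrence of $L_0$ cannot be imported ``exactly as in Proposition~\ref{c0:r2:KA}''. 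There they came from the common-linear-factor decomposition of $A$ and $K$ with independent cofactors $L_A,L_K$. Here $m=(\cP^{\alpha,\beta2})$ may have rank two and need not be symmetric.
\item The proposed certificate $\mathcal C-t\mathcal M$ cannot by itself give a contradiction. On an open set where $K=tA$, Leibniz reduces it to $A$ times the commutation identity for the section $t$. That identity holds for every smooth $t$, and with $\nabla''t$ free it only determines the jet $\nabla'\nabla''t$.
\end{enumerate}

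The missing idea is that no further differentiation is needed; the contradiction is first-order and algebraic.
\begin{itemize}
\item Use also the $p=2$ component of the mixed identity (you kept only $p=0,1$). It yields the cubic divisibility $\cP_{\mathrm{sym}}(z)=\xi(z)A(z)$ with $\xi$ linear.
\item Its $z_2^2$-coefficient gives $\cP^{0,22}=\cP^{1,22}=0$, so the barred derivative of $r=0$ gives $A\beta_{\bar2}=0$. Where $A$ has rank two, this is $\beta_{\bar2}^\perp=0$. On a rank-one open set, a Takagi frame, first Bianchi and the $z_0z_1^2$-coefficient of the divisibility give $\Xi_{12}=0$, hence $\beta_{\bar2,1}=0$; then $A\beta_{\bar2}=0$ gives the other component.
\item Now $t\beta_{\bar2,p}=\overline{\Xi_{p2}}$ gives $a_\ell\equiv0\bmod\ell$, so Lemma~\ref{c0:lem:rawmixed} applies.
\item In a Takagi frame $A_F=\diag(a,d)$ with $a>0$, $d\ge0$, the $z_2$-coefficient of the divisibility gives $m=\bigl(\begin{smallmatrix}as&h\\-h&ds\end{smallmatrix}\bigr)$. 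The relation $t\beta=\bar\Xi$ then fixes $\beta_F$ completely.
\item Put $u=\bar h/t$, $v=\bar s/t$. The identities $\mathsf M\beta^\perp_{\bar y}=0$ (here $\mathsf T_0=\mathsf T_1=0$ because $r=0$) reduce to $uv=0$ and $3u^2=adv^2$.
\item If $d>0$, this gives $u=v=0$. If $d=0$, it gives $u=0$; the barred derivative of $r=0$ then yields $Q_{01}=as$, $Q_{10}=0$, so Hermitian symmetry of $Q$ forces $s=0$, hence $v=0$.
\item Either way every transverse component of $\nabla''\ell$ vanishes, and Theorem~\ref{c0:thm:hol} gives $A=0$, a contradiction.
\end{itemize}
You had $\mathsf M\beta^\perp=0$ in view. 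What was missing is the preliminary derivation of $\beta_{\bar2}^\perp=0$ in this branch, the explicit parametrization that makes $\mathsf M\beta^\perp=0$ decisive, and the degenerate case $d=0$.
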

\begin{proof}
Proposition~\ref{c0:r2:KA} gives $K=tA$ wherever $A\ne0$.
This $t$ belongs to $K=tA$ after $r=k=0$, and is distinct from the earlier coefficient $\varkappa$ in $k=\varkappa r$. Its canonical weight is one and its generator weight is minus one, so $t\ell$ is an ordinary $(1,0)$ form. Suppose $A\ne0$, $t\ne0$ on an open set.

Differentiating $K=tA$ gives $\nabla''K=(\nabla''t)A+t\nabla''A$. The symmetric mixed identity gives
\[
 t\beta_{\bar l,p}=\overline{\Xi_{pl}}\quad(p=0,1),
 \qquad \cP_{\rm sym}(z)=\xi(z)A(z)
\]
for a linear form $\xi$. The coefficient of $z_2^2$ in the last equation gives $\cP^{0,22}=\cP^{1,22}=0$, and hence $A\beta_{\bar2}=0$ by the barred derivative of $r=0$.

Where $A$ has rank two this forces $\beta_{\bar2}^\perp=0$. On an rank-one open set, use a pointwise Takagi frame $A=\operatorname{diag}(a,0,0)$, $a>0$. Differentiating the rank-one identity invariantly gives $(\nabla_{\bar0}A)^{11}=0$; since $Q_{12}=0$, first Bianchi makes $\cP^{1,01}=0$. The $z_0z_1^2$ coefficient of cubic divisibility gives $\cP^{0,11}+2\cP^{1,01}=0$, hence $\Xi_{12}=0$ and $\beta_{\bar2,1}=0$. The other component follows from $A\beta_{\bar2}=0$. Thus in either positive-rank case $\beta_{\bar2}^{\perp}=a_\ell\bmod\ell=0$ on the relevant open set.

At a point choose a Takagi frame $A_F=\operatorname{diag}(a,d)$ with $a>0$, $d\ge0$. The $z_2$ coefficient of cubic divisibility says
\[
 m=\begin{pmatrix}as&h\\-h&ds\end{pmatrix},\qquad
 \beta_F=\frac1t\begin{pmatrix}\bar h&-d\bar s\\a\bar s&\bar h\end{pmatrix}.
\]
Set $u=\bar h/t$, $v=\bar s/t$. Then
\[
 \mathsf M=t\begin{pmatrix}3u&av\\-dv&3u\end{pmatrix},\quad
 \mathsf M\beta_{\bar0}^\perp=t(3u^2-adv^2,-4duv),\quad
 \mathsf M\beta_{\bar1}^\perp=t(4auv,3u^2-adv^2).
\]
Both vectors vanish because $r=0$ makes $\mathsf T_0=\mathsf T_1=0$. Thus $uv=0$ and $3u^2=adv^2$. If $d>0$, then $u=v=0$. If $d=0$, first $u=0$, so $m=\operatorname{diag}(as,0)$ and $\beta_{\bar0,0}=\beta_{\bar1,1}=0$. The barred derivative of $r=0$ gives $Q_{01}=as$ and $Q_{10}=0$. Hermitian symmetry of $Q$ forces $s=0$, hence $v=0$. Thus all transverse components of $\nabla''\ell$ vanish, and the holomorphic-annihilator theorem contradicts $A\ne0$.

The rank split is legitimate: a rank-two point gives a nonempty rank-two open neighborhood; if there is none, $A\ne0$ has rank one on the entire open branch. Thus $K=0$ on $\Omega_{\cP,2}\cap\{A\ne0\}$. On the interior of $\{A=0\}$ the K-form gives $K=0$ directly. Continuity across the remaining boundary proves
\[
 \boxed{K=0\quad\hbox{and}\quad W=0\quad\hbox{throughout }\Omega_{\cP,2}.}
\]
This proves the local assertion used in Proposition~\ref{c0:r2:localW}.
\end{proof}

\section{The kernel-plane transport calculation}\label{c0:app:plane-transport}
\begin{proof}
At a nonzero-${\cP}$ point choose a unitary frame with $\cK=\operatorname{span}(e_1,e_2)$ and $B_{\cP}=B_{\cP}^{00}e_0^2$. Use a local real pluriharmonic conformal change with value zero at the point to arrange $\eta=0$ there. The tangent space to the rank-one tensor variety gives, for $i=1,2$,
\[
 (\nabla_k\bar {\cP})^{b,il}=\bar v^b D_k^{il},\qquad
 D_k^{ij}=0\quad(i,j=1,2).
\]
The first assertion removes the differentiated-output term because $B_{\cP}^{il}=0$; the second is the kernel-by-kernel derivative condition for a symmetric rank-one matrix. The source formula~\eqref{c0:r1:Qsource} therefore has the form
\[
 \mathsf Q_i=AE_iQ+d_i\otimes\bar v\qquad(i=1,2).
\]
Because $\cK$ is holomorphic and $W|_{\cK}=0$, $(\nabla_{\bar j}W)(e_i)=0$ for all $j$. Differentiating \eqref{c0:r1:Sbar}, using the Chern commutator on $\Sym^2V\otimes K_X$ and \eqref{c0:r1:ricci-lee}, gives at this point
\begin{equation}\label{c0:r1:mixed-Z}
0={}-(\nabla_i{\cP})^{a,jb}-(\nabla_i{\cP})^{b,ja}
 -\eps_{jbr}(\mathsf Q_i)_{ar}-\eps_{jar}(\mathsf Q_i)_{br} -R_{i\bar j r\bar a}A^{rb}-R_{i\bar j r\bar b}A^{ar}
 +\overline{\Xi_{ij}}A^{ab}.
\end{equation}
Here $\Xi_{ij}=0$ because the $i$th row of $B_{\cP}$ is zero. Substitute \eqref{c0:eq:decomp} and \eqref{c0:r1:Utransport}; all terms cancel if $\mathsf Q_i=AE_iQ$. Hence the remaining matrix $\mathsf Z_i=d_i\otimes\bar v$ obeys
\[
 \eps_{jbr}(\mathsf Z_i)_{ar}+\eps_{jar}(\mathsf Z_i)_{br}=0\quad\hbox{for all }a,b,j.
\]
The kernel of this linear map on $3\times3$ matrices is the scalar matrices: taking $a=b$ first kills the off-diagonal entries, and then distinct $a,b,j$ equate the diagonal entries. A rank-at-most-one matrix cannot be a nonzero scalar identity, so $\mathsf Z_i=0$.

Under the pluriharmonic change $\hat h=e^\phi h$ at $\phi=0$, both $\nabla'_iQ$ and $M_iQ$ change by $-\phi_iQ$; thus the conclusion is covariant. It extends from ${\cP}\ne0$ to all $\Omega_{\alpha,1}$ by density. Hermitian adjoints give the barred equation.
\end{proof}

\section{The bounded normal-metric equation}\label{c0:app:profile-classification}
We prove Lemma~\ref{c0:profile-classification}, which classifies the
normal metric in Lemma~\ref{c0:r1:profile}. Write $J=J_0$ and $C=C_0$
for the constant matrices in that statement. Throughout this appendix,
$G_N:\C\to\operatorname{Herm}(2)$ is bounded and $C^2$, $G_N(t)>0$
for every $t$, and
\[
 \partial_tG_N=G_NJ-C,\qquad \tr J=0,
\]
where $J,C$ are constant complex $2\times2$ matrices. The conclusion
also gives a uniform bound for $G_N^{-1}$.

\begin{proof}[Proof of Lemma~\ref{c0:profile-classification}]
A constant change of frame transforms the three matrices by
${G_N}\mapsto S^*{G_N}S$, $J\mapsto S^{-1}JS$, $C\mapsto S^*CS$.
Hermitian reality gives ${G_N}_{\bar t}=J^*{G_N}-C^*$. A bounded complex
function with constant $t$ derivative has vanishing mixed derivative;
its real and imaginary parts are bounded entire harmonic functions.
Their gradients vanish by the disc estimate
$|\nabla v(t_0)|\le2\sup|v|/R$ as $R\to\infty$.
Thus this function is constant.

The characteristic polynomial of $J$ is $x^2+\det J$. Thus its
Jordan form is zero, nonzero nilpotent, or diagonal with distinct
opposite eigenvalues. These are all possibilities.

\emph{Zero and nilpotent cases.}
If $J=0$, apply the preceding observation entry by entry. If $J\ne0$ is
nilpotent, take $J=\left(\begin{smallmatrix}0&1\\0&0\end{smallmatrix}\right)$
and ${G_N}=\left(\begin{smallmatrix}a&z\\\bar z&b\end{smallmatrix}\right)$.
The first-column equations are $a_t=-C_{00}$ and
$(\bar z)_t=-C_{10}$, so $a,z$ are constant; then
$b_t=\bar z-C_{11}$ makes $b$ constant as well.

\emph{Distinct-eigenvalue case.}
The remaining case has $J=\diag(\lambda,-\lambda)$, $\lambda\ne0$.
Write again $G_N=\left(\begin{smallmatrix}a&z\\\bar z&b\end{smallmatrix}\right)$.
The first diagonal entry satisfies
\[
 a_t=\lambda a-C_{00},\qquad
 a_{\bar t}=\bar\lambda a-\bar C_{00}.
\]
Commuting the mixed derivatives gives
$\lambda\bar C_{00}=\bar\lambda C_{00}$, so
$a_0=C_{00}/\lambda$ is real. Both derivatives of
$(a-a_0)e^{-\lambda t-\bar\lambda\bar t}$ vanish. Hence
$a-a_0=k\exp(2\Rea(\lambda t))$ for a real constant $k$.
As $\lambda\ne0$, the quantity $\Rea(\lambda t)$ takes every real
value; boundedness forces $k=0$. For the second diagonal entry,
\[
 b_t=-\lambda b-C_{11},\qquad
 b_{\bar t}=-\bar\lambda b-\bar C_{11}.
\]
The same compatibility gives a real $b_0=-C_{11}/\lambda$, and
$b-b_0=k'\exp(-2\Rea(\lambda t))$. Boundedness gives $k'=0$.
Positivity of $G_N$ therefore gives constant $a,b>0$.
For $z={G_N}_{01}$ the two equations are
\[
 z_t=-\lambda z-C_{01},\qquad
 z_{\bar t}=\bar\lambda z-\bar C_{10}.
\]
Their compatibility gives
$-\lambda\bar C_{10}=\bar\lambda C_{01}$. With
$m=-C_{01}/\lambda$, both derivatives of
$(z-m)e^{\lambda t-\bar\lambda\bar t}$ vanish. Therefore
$z=m+\nu e^{-\lambda t+\bar\lambda\bar t}$, with constant $\nu$,
and $C_{10}=\lambda\bar m$. Thus
\[
 C=\lambda\begin{pmatrix}a&-m\\\bar m&-b\end{pmatrix},\qquad
 G_N=\begin{pmatrix}a&m+\nu\theta\\
 \bar m+\bar\nu\theta^{-1}&b\end{pmatrix},\qquad
 \theta=e^{-\lambda t+\bar\lambda\bar t}.
\]
If $\nu=0$, the profile is constant.
Otherwise its phase runs through the entire unit circle. Since the
maximum $|m|+|\nu|$ is attained for a finite value of the leaf parameter, pointwise strict
positivity gives $ab>(|m|+|\nu|)^2$. The resulting continuous family
on a compact circle has a uniform positive least eigenvalue, proving
boundedness of ${G_N}^{-1}$. For later use, the full derivative matrix is
\[
 \partial_tG_N=
 \begin{pmatrix}
 0&-\lambda\nu\theta\\
 \lambda\bar\nu\theta^{-1}&0
 \end{pmatrix},\qquad
 \det(\partial_tG_N)=\lambda^2|\nu|^2\ne0.
\]
This is the determinant of the derivative matrix, and supplies its
invertibility at every phase in the Killing-field argument.
In the constant cases, positivity of the single fixed matrix gives
the inverse bound directly. This completes all Jordan cases and all
claims of the lemma.
\end{proof}

\section{The octonionic model, positivity, and exact Chern computation}
\label{ce:app:construction}

This appendix supplies the calculations for
Section~\ref{sec:high-dimensional-examples}.  We use $\mathsf T$ for
matrix transpose, $\dagger$ for complex conjugate transpose, $\star$
for octonion conjugation, and a bar for conjugation of complex
coefficients.  The basis and multiplication table are
\eqref{ce:eq:fano}; hence the signs of the final curvature component
are fixed.

\subsection{Clifford representation and the full stabilizer}

The real octonion algebra is alternative and its Euclidean norm is
multiplicative.  Its real inner product satisfies
\begin{equation}\label{ce:eq:oct-inner}
 \langle ab,c\rangle=\langle b,a^\star c\rangle
                      =\langle a,cb^\star\rangle.
\end{equation}
These standard identities can also be obtained by bilinear expansion
of \eqref{ce:eq:fano}; see \cite[Sections~2.1--2.3]{CEBaez}.
For $v\in V$, $v^\star=-v$, so \eqref{ce:eq:oct-inner} gives
$L_v^{\mathsf T}=-L_v$.  Alternativity gives
$v(va)=(v^2)a=-|v|^2a$.  Polarizing this identity in $v$ yields
\eqref{ce:eq:clifford}.  In particular, if $|v|=1$, then
\begin{equation}\label{ce:eq:boost}
 \exp(riL_v)=\cosh r\,I+i\sinh r\,L_v.
\end{equation}

For clarity, the Lie-theoretic input is the classical real spin-group
construction and Cartan decomposition in its complex half-spin
representation.  The compact algebra is the span of $L_{e_i}L_{e_j}$,
$i<j$, and the noncompact part is the span of $iL_{e_j}$.
The Clifford relations give the required brackets directly; for
example
\[
 [L_{e_i}L_{e_j},L_{e_k}]
 =2\delta_{ik}L_{e_j}-2\delta_{jk}L_{e_i},\qquad
 [iL_{e_i},iL_{e_j}]=-2L_{e_i}L_{e_j}\quad(i\ne j).
\]
Thus the real Lie algebra is $\mathfrak{so}(7,1)$, with compact
subalgebra $\mathfrak{so}(7)$.  The integrated compact representation
is the real eight-dimensional spin representation of
$K=\operatorname{Spin}(7)$, and Cartan decomposition gives
$\mathsf G=K\exp(iL_V)$.  This is the $q=7$ Clifford representation in
\cite[Theorem~4.2.1 and Sections~4.3--4.9]{CEKY}.

The image is a closed matrix group.  Indeed, $iL_v$ is Hermitian,
its square is $|v|^2I$, and its eigenvalues are $\pm|v|$ when $v\ne0$.
Both signs occur because $\tr L_v=0$.
Consequently
\[
 \|k\exp(iL_v)\|_{\mathrm{op}}=e^{|v|}\qquad(k\in K).
\]
If a sequence of these matrices converges, its operator norms are
bounded, hence so are the corresponding $v$'s.  Compactness of $K$
and boundedness in the finite-dimensional space $V$ give a convergent
subsequence of parameters.  Its limit remains in $K\exp(iL_V)$.
This proves closedness.  The representation is faithful: a kernel
element has zero Cartan parameter by the displayed norm identity,
and the compact real spin representation is faithful.  In
particular $\mathsf G$ is a connected semisimple linear Lie group to which
the uniform-lattice theorem applies.

The compact spin covering $\rho:K\longrightarrow\operatorname{SO}(V)$
is characterized by
\begin{equation}\label{ce:eq:normalizer}
 kL_vk^{-1}=L_{\rho(k)v}.
\end{equation}
If $k1=1$, evaluating this equality at $1$ gives
$kv=\rho(k)v$ for $v\in V$.  Applying it to an arbitrary octonion
$a$ then gives $k(va)=(kv)(ka)$.  By decomposing the first factor
as a scalar plus an imaginary octonion, $k(ab)=(ka)(kb)$ for all
$a,b\in\mathbb O$.  Thus $K_1\subset\operatorname{Aut}(\mathbb O)$.

Conversely, an octonion automorphism $a$ fixes $1$, preserves the norm,
and restricts to an element of $\operatorname{SO}(V)$.  The latter
orientation assertion follows, for example, from preservation of the
octonionic three-form and its induced orientation; equivalently it is
part of the standard compact $G_2$ realization
\cite[Section~4.1]{CEBaez}.  Lift $a|_V$ through $\rho$ to $k\in K$.
Then $ak^{-1}$ commutes with every $L_v$, by
\eqref{ce:eq:normalizer}.  The irreducible real Clifford module has
scalar commutant: its even Clifford algebra acts as
$\operatorname{End}_{\R}(\mathbb O)$.
Thus $ak^{-1}$ is a real scalar matrix.  Orthogonality forces it to
be $\pm I$, and both signs belong to $K$.  Hence $a\in K_1$ and
$K_1=\operatorname{Aut}(\mathbb O)=G_2$.
Together with Lemma~\ref{ce:lem:orbit}, this identifies the full
stabilizer in $\mathsf G$, not merely its Lie algebra or identity component.

\subsection{The global matrix of the positive metric}

Let $z=x+iy\in\mathscr Q_7$.  By \eqref{ce:eq:quadric-real},
\[
 \Rea(yx^\star)=\langle y,x\rangle=0,
 \qquad |yx^\star|=|x|\,|y|.
\]
Thus $b=yx^\star$ is imaginary, $L_b^{\mathsf T}=-L_b$, and
$L_b^2=-|x|^2|y|^2I$.  The matrix $iL_b$ is Hermitian.  The
eigenvalues of
$\mathsf M_z=(|x|^2+|y|^2)I-2iL_b$ are therefore among
\[
 |x|^2+|y|^2\pm2|x||y|=(|x|\pm|y|)^2.
\]
Since $|x|^2-|y|^2=1$, both numbers are strictly positive.  This
proves positivity on the ambient space and hence on $z^{\perp_B}$.
The formula is polynomial in the real coordinates of $x,y$, so it
also proves smoothness directly.

We next verify the precise Gram formula
\eqref{ce:eq:metric-gram}.  Write
$g=k\exp(riL_u)$, with $k\in K$, $|u|=1$, and $r\ge0$;
the case $r=0$ allows any such $u$.  Put $s=k1$ and $t=ku$.
Then
\[
 x=\cosh r\,s,\qquad y=\sinh r\,t,
 \qquad t=L_{\rho(k)u}s=(\rho(k)u)s
\]
by \eqref{ce:eq:normalizer}.  Alternativity and
$s^\star=2\Rea(s)1-s$ give
$[a,s,s^\star]_{\mathbb O}=0$ for every $a$.
Since $ss^\star=1$, it follows that
\[
 ts^\star=((\rho(k)u)s)s^\star=\rho(k)u.
\]
Consequently
\[
\mathsf M_{g1}
 =\cosh(2r)I-i\sinh(2r)L_{\rho(k)u} =k\exp(-2riL_u)k^{-1}.
\]
Here $k^\dagger=k^{-1}$ and $iL_u$ is Hermitian, so
\[
 (g^{-1})^\dagger g^{-1}
 =k\exp(-riL_u)\exp(-riL_u)k^{-1}
 =\mathsf M_{g1}.
\]
The same calculation applies to every Cartan representation of $g$,
proving \eqref{ce:eq:metric-gram}.  It also gives the equivariance
identity
\begin{equation}\label{ce:eq:metric-equivariance}
 \mathsf M_{az}=(a^{-1})^\dagger\mathsf M_z a^{-1}
 \qquad(a\in \mathsf G),
\end{equation}
by writing $z=g1$ and using $ag$ in the Gram formula.
Thus the displayed positive metric is $\mathsf G$-invariant.

\subsection{The complete mixed second derivative}
\label{ce:app:jets}

Put $q(w)=\sum_a(w^a)^2$ and
$z(w)=(\sqrt{1-q(w)},w)$ near $w=0$.  Its derivative is
\begin{equation}\label{ce:eq:chart-derivative}
 E(w)=\frac{\partial z}{\partial w}
      =\begin{pmatrix}-w^{\mathsf T}/\sqrt{1-q(w)}\\ I_7\end{pmatrix}.
\end{equation}
Since $E(0)$ includes $V_{\C}$ as $1^{\perp_B}$, the coordinate
frame at zero is the unitary frame $e_1,\ldots,e_7$ used in the
main text.  The coordinate metric is
\begin{equation}\label{ce:eq:coordinate-metric}
 \mathbf g(w)=\overline{E(w)}^{\mathsf T}\mathsf M_{z(w)}E(w).
\end{equation}

To retain every mixed derivative, introduce independent complex
variables $w,v\in\C^7$.  Set $z=z(w)$ and $\zeta=z(v)$ and define
the holomorphic matrix germ
\begin{equation}\label{ce:eq:complexified-matrix}
 \mathsf M(z,\zeta)
 =B(\zeta,z)I-\frac12L_{(z-\zeta)(z+\zeta)^\star}.
\end{equation}
When $v=\bar w$, the square-root branch has real Taylor
coefficients and $\zeta=\bar z$.  Writing $z=x+iy$, we obtain
\[
 B(\bar z,z)=|x|^2+|y|^2,
 \qquad (z-\bar z)(z+\bar z)^\star=4i\,yx^\star.
\]
Thus \eqref{ce:eq:complexified-matrix} holomorphically complexifies
the matrix $\mathsf M_z$.  Define
\[
 \widetilde{\mathbf g}(w,v)=E(v)^{\mathsf T}\mathsf M(z(w),z(v))E(w).
\]
Then $\widetilde{\mathbf g}(w,\bar w)=\mathbf g(w)$ near zero.  The chain rule for
Wirtinger derivatives identifies
$\partial_{w_i}\partial_{v_j}\widetilde{\mathbf g}(0,0)$ with
$\mathbf g_{i\bar j}(0)$, and similarly for first derivatives.

Fix $i,j\in\{1,\ldots,7\}$ and substitute $w=s e_i$, $v=t e_j$.
All of the following equalities are identities of Taylor polynomials
through total degree two, with an analytic remainder whose derivatives
of order at most two vanish at zero.  In particular, extracting the
coefficient of $st$ gives the mixed derivative.  First,
\[
\begin{aligned}
z&=(1-s^2/2)1+s e_i+O_3,\qquad \zeta=(1-t^2/2)1+t e_j+O_3,\\
B(\zeta,z)&=1-\tfrac12(s^2+t^2)+st\delta_{ij}+O_3.
\end{aligned}
\]
Let $a=e_i\times e_j$, which is zero if $i=j$.
Multiplication in \eqref{ce:eq:complexified-matrix} gives
\begin{equation}\label{ce:eq:product-jet}
 (z-\zeta)(z+\zeta)^\star
 =2s e_i-2t e_j-2st a+O_3.
\end{equation}
Indeed, the scalar term from
$\tfrac12(t^2-s^2)1\cdot2$ is $t^2-s^2$; the scalar terms from
$s e_i(-s e_i)$ and $-t e_j(-t e_j)$ are $s^2-t^2$, so they cancel.
The two mixed products sum to
$st(e_je_i-e_ie_j)=-2st(e_i\times e_j)$, including the case $i=j$.
There are no other terms of degree at most two.

With $L_i=L_{e_i}$, equations
\eqref{ce:eq:complexified-matrix}--\eqref{ce:eq:product-jet} yield
\begin{equation}\label{ce:eq:ambient-jet}
 \mathsf M
 =I-sL_i+tL_j+st(\delta_{ij}I+L_a)
       -\tfrac12(s^2+t^2)I+O_3.
\end{equation}
The relevant block form is
\[
 L_i=\begin{pmatrix}0&-e_i^{\mathsf T}\\e_i&C_i\end{pmatrix}.
\]
Writing $E_0=\binom0{I_7}$, equation
\eqref{ce:eq:chart-derivative} gives
\[
 E(se_i)=E_0+sE_i+O(s^3),\qquad
 E_i=\begin{pmatrix}-e_i^{\mathsf T}\\0\end{pmatrix},
\]
and the analogous identity with $t,j$.
The constant and linear terms of
$E(te_j)^{\mathsf T}\mathsf M E(se_i)$ are consequently
$I-sC_i+tC_j$.  Its mixed coefficient is the sum of exactly four
terms:
\[
E_0^{\mathsf T}(\delta_{ij}I+L_a)E_0
     =\delta_{ij}I+C_a,\quad E_j^{\mathsf T}(-L_i)E_0=-e_je_i^{\mathsf T},\quad E_0^{\mathsf T}L_jE_i=-e_je_i^{\mathsf T},\quad E_j^{\mathsf T}E_i=e_je_i^{\mathsf T}.
\]
Their sum is
$\delta_{ij}I+C_a-e_je_i^{\mathsf T}$.
This proves \eqref{ce:eq:metric-first}, including every term contributed by the
moving tangent frame.

\subsection{From the metric derivatives to Chern curvature}

The inverse-matrix differentiation and the Chern sign convention have
already given \eqref{ce:eq:curvature-jet}. With the metric derivatives
computed above, its trilinear extension to real imaginary octonions is
\begin{equation}\label{ce:eq:cross-curvature}
 R(u,\bar v)w
 =-v\times(u\times w)-\langle u,v\rangle w
      -(u\times v)\times w+\langle u,w\rangle v.
\end{equation}
To identify this expression,
expand $[v,u,w]_{\mathbb O}=(vu)w-v(uw)$.  Its scalar part is
\[
 -\langle v\times u,w\rangle+\langle v,u\times w\rangle=0,
\]
because $\langle a\times b,c\rangle$ is alternating.  Its imaginary
part is
\[
 -\langle v,u\rangle w+(v\times u)\times w
       +\langle u,w\rangle v-v\times(u\times w),
\]
which equals \eqref{ce:eq:cross-curvature} since
$v\times u=-u\times v$.
Alternativity gives $[a,a,b]=[b,a,a]=0$; polarization gives a sign
change under each adjacent transposition.  Hence the associator is
alternating and
$[v,u,w]=-[u,v,w]$.  Complexifying and replacing the middle real
variable by the coefficient conjugate of a complex vector proves
\eqref{ce:eq:associator-curvature} with the prescribed tensor type.
This identifies the Chern curvature computed from the metric derivatives.

\subsection{The lattice and quotient hypotheses}

For the use of Selberg's lemma, a cocompact discrete subgroup
$\Gamma_0$ of the connected Lie group $\mathsf G$ is finitely generated.
Indeed, choose a relatively compact connected open set $U$ with
$\Gamma_0U=\mathsf G$.  Such a set follows from a compact fundamental
set, finitely many coordinate balls, and paths joining them.  The
intersection graph of the translates $\gamma U$ is connected, since
their union is connected.  The finite set
$\Gamma_0\cap\overline U\,\overline U^{-1}$ contains all adjacent
transition elements, so it generates $\Gamma_0$.  If $C$ is compact
with $\Gamma_0C=\mathsf G$ and
$\Gamma_0=\bigcup_{r=1}^m\Gamma\gamma_r$, then
$\Gamma\bigl(\bigcup_r\gamma_rC\bigr)=\mathsf G$; this proves that
the finite-index subgroup remains cocompact.

The properness, freeness, and descent argument is given in the proof
of Theorem~\ref{ce:thm:allhigher}.

\section*{AI use disclosure}
ChatGPT assisted with exploratory constructions and calculations, as well as the formal verification of key proof steps and computational results in Lean. The authors conceived and developed the paper’s core ideas and proof strategy and take full responsibility for its mathematical arguments, references, and conclusions.

\end{document}